\theoremstyle{plain}
\newtheorem{theorem}{Theorem}[section]
\newtheorem{lemma}[theorem]{Lemma}
\newtheorem{proposition}[theorem]{Proposition}
\newtheorem{corollary}[theorem]{Corollary}
\theoremstyle{definition}
\newtheorem{definition}[theorem]{Definition}
\newtheorem{definition/construction}[theorem]{Definition/Construction}
\newtheorem{construction}[theorem]{Construction}
\newtheorem{remark}[theorem]{Remark}
\newtheorem{acknowledgements}[theorem]{Acknowledgements}
\newtheorem{remarks}[theorem]{Remarks}
\newtheorem{notation}[theorem]{Notation}
\newtheorem{convention}[theorem]{Convention}
\newtheorem{examples}[theorem]{Examples}
\DeclareMathOperator{\Diff}{Diff}
\DeclareMathOperator{\Emb}{Emb}
\DeclareMathOperator{\id}{id}
\DeclareMathOperator{\tr}{tr}
\DeclareMathOperator{\inte}{int}
\DeclareMathOperator{\std}{std}
\DeclareMathOperator{\IA}{IA}
\DeclareMathOperator{\EA}{EA}
\DeclareMathOperator{\REA}{R-EA}
\DeclareMathOperator{\GEA}{G-EA}
\DeclareMathOperator{\bfI}{\textbf{I}}
\DeclareMathOperator{\I}{I}
\DeclareMathOperator{\LB}{LB}
\newcommand{\BN}{\mathbb N}
\newcommand{\BR}{\mathbb R}
\newcommand{\BZ}{\mathbb Z}
\newcommand{\mRs}{\mR^{\std}}
\newcommand{\mGs}{\mG^{\std}}
\newcommand{\Rs}{R^{\std}}
\newcommand{\Gs}{G^{\std}}
\newcommand{\mC}{\mathcal{C}}
\newcommand{\mD}{\mathcal{D}}
\newcommand{\mE}{\mathcal{E}}
\newcommand{\mF}{\mathcal{F}}
\newcommand{\mG}{\mathcal{G}}
\newcommand{\mK}{\mathcal{K}}
\newcommand{\mL}{\mathcal{L}}
\newcommand{\mR}{\mathcal{R}}
\newcommand{\mU}{\mathcal{U}}
\newcommand{\mW}{\mathcal{W}}
\newcommand{\stwostwo}{S^2\times S^2}
\newcommand{\mfw}{(\mF,\mW)}
\newcommand{\mfii}{\mF_{ii}}
\newcommand{\mwii}{\mW_{ii}}
\newcommand{\bC}{\textbf{C}}
\newcommand{\bI}{\textbf{I}}
\begin{document}

\title{Pseudo-Isotopy and Diffeomorphisms of the 4-Sphere I:  Loops of Spheres}

\author{David Gabai}
\address{Department of Mathematics\\Princeton
University\\Princeton, NJ 08544}
\email{gabai@princeton.edu}

\author{David Gay}
\address{Department of Mathematics\\University of Georgia\\Athens, GA 30602}
\email{dgay@uga.edu}

\author{Daniel Hartman}
\address{Department of Mathematics\\Duke University\\Durham, NC 27708}
\email{daniel.hhartman@gmail.com}

\thanks{
\newline\noindent\emph{Primary class:} 57R35
\newline\noindent\emph{secondary class:} 57R52, 57R50, 57N37
\newline\noindent\emph{keywords:} Smale, 4-sphere, diffeomorphism, pseudo-isotopy}

\begin{abstract}  We introduce new methods in pseudo-isotopy  and embedding space theory.  As an application we introduce an invariant $\mD$ that detects a non trivial loop of embedded 2-spheres in $\stwostwo$. 
In the sequel \cite{GGH} we will use these techniques to expand upon the applicability of the invariant and prove that $\Diff_+(S^4)$ has an exotic element.  \end{abstract}

\maketitle

\setcounter{section}{-1}

\section{Introduction}

August 2025 marks the 100'th anniversary of Emil Artin's introduction of spun knots and the initiation of 2-knot theory \cite{Ar}.  Since then, the theory of 2-knots in $S^4$ and other 4-manifolds has experienced explosive growth. Using the notation $\Emb(A,B)$ for the space of embeddings of $A$ into $B$, this $2$-knot theory is about $\pi_0(\Emb(S^2,X^4))$.  On the other hand, only recently has the first nontrivial element of $\pi_1(\Emb(S^2,S^4))$ been discovered \cite[Theorem 10.1 (4)]{BG1}. More recently \cite{FGHK} exhibited nontrivial loops of spheres in $S^2\times B^2$.   All of these examples rely on the study of 1-manifolds in 4-manifolds, to which embedding calculus is amenable \cite{Weiss,GW}.  In particular, crucial  to \cite{BG1} is the study of $\pi_2(\Emb(I, S^1\times B^3))$ and \cite{FGHK} relies on the analysis of $\pi_1(\Emb(S^1, S^1\times S^3),S^1 \times x_0)$ given in \cite{BG1}.  Theorem 10.1 (1) of \cite{BG1} computes $\pi_1(\Emb(B^2, S^2\times B^2), x_0 \times B^2)$,  giving loops of $S^2$'s in $S^2\times S^2$ via extension.  That proof derives from a Cerf fibration sequence one term of which is $\pi_2(\Emb(I, B^4))$, again a reduction to arcs in 4-manifolds.  

Note that embedding space theory has also undergone intense development in the last 30 years, but that theory is based on codimension $\ge 3$. Arguably, the second Hatcher-Wagoner obstruction  \cite{HW} from the early 1970's contributes to the codimension-2 theory for dimension $\ge 6$ for non-simply connected manifolds, and these techniques were partially extended to dimension four by Singh and Igusa \cite{Ig3, Singh}. The results in dimension four rely on the manifolds having nontrivial $\pi_1$ and $\pi_2$. Our main contribution is a first step in codimension-2 for simply connected 4-manifolds.

Our main result is the introduction of an invariant $\mD$ which detects loops of spheres in $S^2\times S^2$ that cannot be homotoped into loops arising from $\pi_1(\Emb(B^2, S^2\times B^2), x_0\times B^2)$, and more generally a similar statement for $k\ge 1$ spheres embedded in a connected sum of $k$ copies of $S^2 \times S^2$.  Here is the precise statement.

\begin{definition} Let $(g_0, r_0)\in \stwostwo$, let $\Gs:=S^2\times r_0$ and let $\Rs:=g_0\times S^2$. In $\#^k S^2 \times S^2$, let  $R_i^{\textrm{std}}$ (resp. $G_j^{\textrm{std}}$) denote the i'th $g_0\times S^2$ (resp. j'th $S^2\times r_0$) and $\mRs$ (resp. $\mGs)$ denote $\cup R_i^{\textrm{std}}$  (resp. $\cup G_j^{\textrm{std}})$  These are the \emph{standard red and green spheres}. 
\end{definition}

We will denote arbitrary elements of $\Emb(\sqcup^k S^2, \#^k S^2\times S^2)$ by $\mR$ and denote the $i$'th component of $\mR$ by $R_i$, thinking of these as {\em red spheres}. 

\begin{definition}
The space of {\em light-bulb embeddings}, denoted $\LB$, is the subset of $\Emb(\sqcup^k S^2, \#^k S^2\times S^2)$ consisting of embeddings $\mR$ which intersect $\mGs$ transversely, with the property that $|R_i \cap G^{std}_j| = \delta_{ij}$ for all $i,j$.   
\end{definition}

The point of this definition is that, when $k=1$, $\pi_1(\LB,\mRs)$ is precisely the space of loops arising from $\pi_1(\Emb(B^2, S^2\times B^2), x_0\times B^2)$ as in \cite{BG1}, and is so named because these are the embeddings to which the light-bulb theorem applies \cite{Ga1}.

\begin{theorem} \label{T:mainthm}
    For each $k$ there exists a surjective homomorphism 
    \[ \mD : \pi_1(\Emb(\sqcup^k S^2, \#^k (S^2 \times S^2)), \mRs) \to \BZ_2 \]
    with $\pi_1(\LB,\mRs)$ contained in the kernel of $\mD$.     Furthermore, there exists $\beta\in \pi_1(\Emb(S^2, S^2 \times S^2),\mRs)$ such that for all $k$, after the natural inclusion into $\pi_1(\Emb(\sqcup^k S^2, \#^k (S^2 \times S^2)), \mRs)$,   $\mD(\beta)=1$.    
  \end{theorem}
  
  Consider the case $k=1$, in which case we use $I$ instead of $\mD$.  Represent $[\alpha]$ by a generic isotopy $\alpha: S^2 \times [0,1]\to \stwostwo$ based at $\Rs$.  Here we denote $\alpha(S^2 \times t)$ by $R_t$.  As $t$ increases from 0 to 1, $R_t$ undergoes finger moves and Whitney moves with respect to $\Gs$, e.g see \cite[\S 1]{FQ}.  As in \cite[\S 4]{Qu} we can arrange that the finger moves occur before $t=1/2$ and the Whitney moves after $t=1/2$ and at $t=1/2$ the union of the finger and Whitney discs intersect both $\Gs$ and $R_{1/2} $ in embedded arcs.  By suitably adding up the intersections between the interiors of the Whitney and finger discs and reducing mod-2 we obtain our invariant, defined precisely in Definition \ref{invariant}.   Here is our fundamental example $\beta$.  
  
  \begin{examples}  \label{example} i) Figure 1, on the left, shows $R_{1/2} \cup \Gs$ together with the \emph{standard} finger and Whitney discs $f $ and $w'$.  Seen is the projection of $R_{1/2}$ to the present i.e. $S^2\times S^1\times 0$, with the dark lines indicating the intersection with the present. The middle of Figure 1 shows a 2-sphere $J$ linking the finger which intersects $f$ once.  Figure 1, on the right, shows a new Whitney disc $w$ obtained by tubing $w'$ to $J$.  I.e. we remove an open disc from each and attach a annulus. The annulus is pushed slightly in the future, from where it starts in $w'$ until after it crosses $G$. This is the example from \S 7.2 of  \cite{GGHKP}.  The finger and Whitney discs $f$ and $w$ give rise to a loop $\beta \in \Emb(S^2,\stwostwo)$ based at $\Rs$ as follows.   Starting at $R_{1/2}$, going backwards in $t$, construct $R_t, t\in [1/4,1/2]$ by first doing a Whitney move using $f$ to obtain $R_{1/4}$, geometrically dual to $\Gs$.   Staying geometrically dual, extend to $R_t,  t\in[0,1/4]$ so that $R_0=\Rs$.  In the other direction, use the Whitney disc $w$ to obtain the geometrically dual $R_{3/4}$,  then isotope staying geometrically dual, to obtain $R_1=\Rs$.  This can either be done explicitly or by appealing to  \cite{Ga1}.    Note that $|\inte(f)\cap\inte(w)|=1$.  Here $I(\beta)=1$.  
\vskip 6pt
ii) The loop $\alpha'$ corresponding to $f$ and $w'$   is homotopic to the trivial one and has $I(\alpha')=0$. \end{examples} 

\begin{figure}
    \centering
    \includegraphics[width=1.0\linewidth]{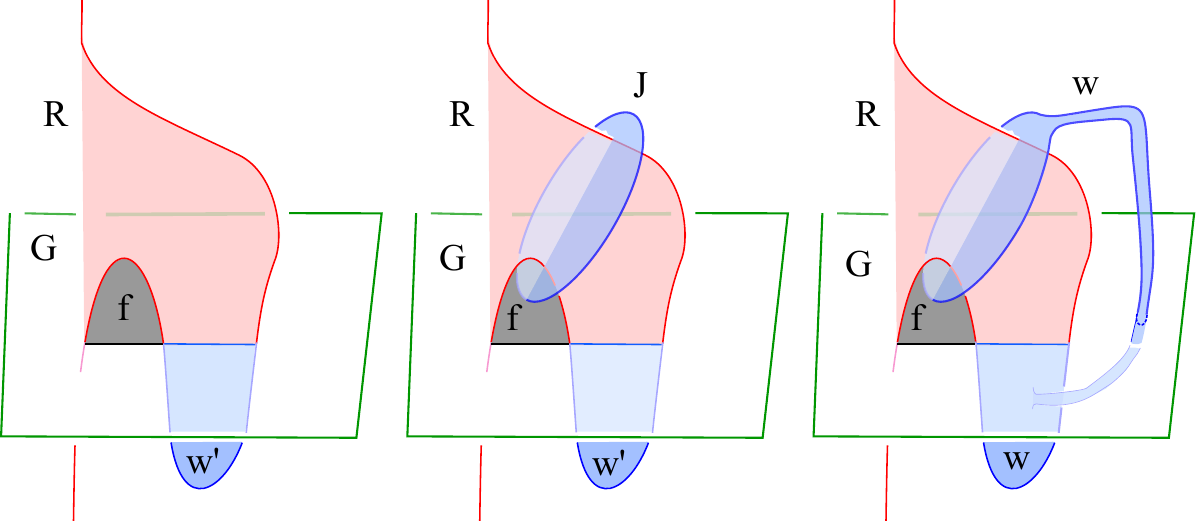}
    \caption{The Key Example}
    \label{fig:example}
\end{figure}

\begin{remark}\label{drc} The discs $w$ and $w'$ satisfy Quinn's Disc Replacement Criterion \cite[Lemma 4.5]{Qu}, discussed as problematic in \cite{GGHKP}.  Paper \cite{GGH} will show that these examples  give an explicit counterexample to \cite[Lemma 4.5]{Qu}.\end{remark}

 We now  precisely define the invariant, give an outline of the proof which uses purely codimension-2 methods, and then discuss the relation with pseudo-isotopy theory.   To start with as Quinn

\begin{definition}\label{D: fw system} Given $[\alpha] \in \pi_1(\Emb(\sqcup_{i=1}^kS^2, \#^k\stwostwo), \mRs)$, we say that the representative $\alpha$  is a \emph{finger first loop} if it intersects $\mGs$ first in finger moves say at $t=1/4$ and then in Whitney moves say when $t=3/4$.  We call such a loop a \emph{finger-first representative}.\end{definition}

\begin{remark} Any $[\alpha]$ has a finger first representative, by an argument due to Quinn \cite{Qu} that is in the same spirit that Morse functions can be modified so that critical points appear in monotonically increasing index and further that all indices of the same index can appear at once.  \end{remark}

 \begin{definition} For a finger first representative let $\mR_t = \alpha_t(\mR)$,   $R_1, \cdots, R_k$ be the components of $\mR_{1/2}$ and $G_1,\cdots, G_k$ be the components of $\mGs$. Let $\mF$ and $\mW$ denote the finger and Whitney discs corresponding to $\mR_{1/2}$ that cancel the excess points of $\mR_{1/2}\cap \mGs$.  For $1\le i,j\le k$ let $\mF_{ij}=(f^{ij}_1, \cdots, f^{ij}_{n_{ij}})$ and $\mW_{ij}=(w^{ij}_1, \cdots, w^{ij}_{n_{ij}})$ respectively denote the elements of $\mF$ and $\mG$ which cancel points of $R_i\cap G_j$. The tuple $(\mR_{1/2}, \mG^{\textrm{std}},\mF, \mW)$ is called a \emph{finger/Whitney system}. We will sometimes denote a finger/Whitney system by $(\mF, \mW)$, when $\mR_{1/2}$ and $\mG$ are understood.
\end{definition}

\begin{definition}
We say that a finger/Whitney system for a finger first loop $\alpha$  satisfies the  \emph{immersed arc condition} if each of $(\mF_{ii}\cup\mW_{ii})\cap R_i$ and $(\mF_{ii}\cup \mW_{ii}) \cap G_i$ is an immersed arc, and we write $(\mF,\mW) \in \IA$.  If these arcs are embedded we say that the finger/Whitney system satisfies the \emph{embedded arc condition}, $\alpha\in\EA$.   Orient the arcs from the end which contains an element of $\mF$ and rename the elements of $\mF_{ii}$ and $\mW_{ii}$ so that $f^{ii}_1, w^{ii}_1, f^{ii}_2, \cdots, w^{ii}_{n_{ii}}$ appear in succession. The induced orderings on $\mF_{ii}$ and $\mW_{ii}$ are called \emph{IA-orderings}. \end{definition}

In 1986 Frank Quinn \cite[\S4]{Qu} introduced the embedded arc condition and proved that any $[\alpha]$ has a finger-first loop representative $\alpha$ with a finger/Whitney system $(\mF, \mW)\in EA$.  He gave the proof for $k=1$, though the proof works in general.

\begin{definition} \label{invariant} We say that $\mfw$ is in \emph{full embedded arc position} or $\mfw\in$ FEA if $\mfw\in$ EA and for all $i\neq j$,  $(\mF_{ij}\cup\mW_{ij})\cap R_i$ and $(\mF_{ij}\cup\mW_{ij})\cap G_j$ is a union of $n_{ij}$ pairwise disjoint embedded circles bounding pairwise disjoint  discs that are disjoint from $(\mF\setminus \mF_{ij})\cup(\mW\setminus \mW_{ij})$. In the following definition  intersections among finger and Whitney discs refer to intersections of their interiors. Also for  $1\le i\le k$ each  $\mF_{ii}, \mW_{ii}$  has the IA ordering. If $\mfw\in FEA$, then define
 
 \begin{align} 
{\I_j}([\alpha])\in \BZ_2:&=\sum_{p\le q} | f^{jj}_p\cap w^{jj}_q |\  \textrm{mod}\ 2,\nonumber
\\
\I([\alpha])\in \BZ_2 :&=\sum_{j=1}^k \I_j([\alpha]) \  \textrm{mod}\ 2,\quad \nonumber  
\\
{C_{ij}}([\alpha])\in \BZ_2:&=| \mF_{ij}\cap \mW_{ji}|\  \textrm{mod}\ 2,\quad \nonumber 
\\
{C}([\alpha])\in \BZ_2:&=\sum_{i< j} C_{ij}\  \textrm{mod}\ 2\ \textrm{and} \quad \nonumber
\\
\mD([\alpha])\in \BZ_2:&=\I([\alpha])+C([\alpha])\ \textrm{mod}\ 2.\quad \nonumber
 \end{align}
 \end{definition}  

Much of this paper is about showing $\mD([\alpha])$ is well defined.  There are manifold ways to bring an $\alpha$ to full embedded arc position and we need to show $\mD$ is independent of how this is done.   We do this by slowly building up the set of representatives $\alpha$ for which we can define $\mD(\alpha)$ independently of previous choices.    In \S2 and \S3 we start with $k=1$.  A technical point explained in \S1 is that our finger and Whitney discs need to be \emph{untwisted}, which can be readily done, to avoid unwanted intersections between the finger and Whitney discs. We show that if $\mfw\in$ IA, then we can transform it using \emph{disc slides} to one $\in$ EA.  We show that the resulting value of $\I(\alpha)$ is independent of the choice of disc slides.  In general the union of finger and Whitney discs intersect $\Gs$ in a union of immersed circles plus possibly an arc.  We show in \S 3 how to transform such a loop to one satisfying the immersed arc condition by a technique called \emph{switching}.  This requires an a priori  ordering of the Whitney discs.  Again we show that $\I(\alpha)$ does not depend on the choice of switching.  In \S4 we extend these results to $k\ge 1$  to produce a well defined $\I(\alpha)$, provided that each $\mW_{ii}$ is ordered.  In \S5 we introduce the \emph{cross term invariant} $C(\alpha)=c(\alpha)+CU(\alpha)$, where $c(\alpha)$ and $CU(\alpha)$ are defined for $\alpha$ not necessarily in FEA.  We also introduce the \emph{special sum square move} which is a special case of Quinn's sum square move \cite{Qu}.  In \S7 we introduce the \emph{clasping} move.  In \S 8 we use special cases of clasping, disc sliding, switching and the special sum square move to show $\mD =\I+C$ is independent of the ordering of $\mW$.  This completes the proof that $\mD(F,W)$ is well defined and uses technical lemmas proved in \S6.  In \S9 we show $\mD(\mF,\mW)=\mD(\mW,\mF)$.  

It remains to show that $\mD(\alpha)$ is well defined on the homotopy class $[\alpha]$.  
Implicit until now is the assumption that the finger moves occur before the Whitney moves. In \S 11 we study how the data changes under a generic homotopy of $\alpha$.     This section also identifies the  indeterminacy in transforming a non-finger-first $\alpha$ to a finger-first one.  In \S12 we show, using \S2 - \S10, that all such operations give a well defined invariant, which is in fact a group homomorphism, thereby completing the proof of Theorem~\ref{T:mainthm}.   

We view the contents of \S11 to be of independent interest. Here we develop the tools for studying generic loops of embeddings of surfaces and based homotopies between them \emph{relative to a fixed embedded surface in a general 4-manifold}. In particular, we give a complete account of how the finger/Whitney moves between a family of embeddings $\mR_{s,t}$ and $\mG$ propagate in general 2-parameter families of embeddings, and use this to prove an ``ordering'' theorem; that is to say two paths that have all the finger moves happening before the Whitney moves and are homotopic are homotopic by a homotopy where each path has all the finger moves happening before all the Whitney moves \emph{except at controlled instances} (see Theorem \ref{T: 2-par ordering} for details). From Theorem \ref{T: 2-par ordering}, we are able study how the finger/Whitney systems (Definition \ref{D: fw system}) for two relatively homotopic paths of embeddings are related. We show in Theorem \ref{T: fw system moves} that any two finger/Whitney systems for homotopic paths of embeddings differ by the following five transformations, up to isotopy:
\begin{enumerate}
    \item disc slides (Definition \ref{hsf})
    \item sphere slides (Definition \ref{D: sphereslide}),
    \item birth/death moves (Definition \ref{D: b/d move}),
    \item $x^3$-moves (Definition \ref{D: x^3}) and,
    \item saddle moves (Definition \ref{D: saddle}).
\end{enumerate}

Theorem \ref{T: fw system moves} is a foundational result for identifying potential smooth 4-dimensional pseudo-isotopy invariants, as we now explain. Let $Y$ be any closed smooth 4-manifold and $X = Y\#^k S^2\times S^2$, and consider the set $\pi_1(\Emb(\sqcup^k S^2, X),\LB))$. Below we give more details on the following fact in the case where $Y = S^4$, but in short, any representative $\alpha$ of a class $[\alpha] \in \pi_1(\Emb(\sqcup^k S^2, X),\LB))$ can be used to construct a pseudo-isotopy of $Y$ from the identity; that is, a diffeomorphism $g:Y\times I\rightarrow Y\times I$ such that $g|_{Y\times 0  }=\id_Y$ and $g|_{Y\times 1}$ is some element in $\Diff^+(Y)$. If $[\alpha]$ contains a representative $\alpha$ such that $\mR_t\in \LB$ for all time, then the pseudo-isotopy determined by \emph{any} representative of $[\alpha]$ is always isotopic to an isotopy. Since Quinn's result about the ordering of finger and Whitney moves holds, one can study these relative classes by studying their associated finger/Whitney systems. A natural corollary of Theorem \ref{T: fw system moves} is that if a relative class $[\alpha]$ is trivial in $\pi_1(\Emb(\sqcup^k S^2, X),\LB))$, then that system can be modified by our five moves to the ``trivial'' finger/Whitney system. Thus, any potential invariant of a finger/Whitney system that is unchanged by the five listed moves is an invariant of the relative homotopy class of embedding that the system determines and is a good candidate for a smooth 4-dimensional pseudo-isotopy invariant.

We now discuss the connection between this paper and \cite{GGH}, the main result of which is:

\begin{theorem}  \label{main} There exists an exotic element of $\Diff_+(S^4)$, i.e. a diffeomorphism of $S^4$ topologically isotopic but not smoothly isotopic to $\id$.  \end{theorem}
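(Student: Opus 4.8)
The plan is to promote the $\BZ_2$--valued invariant $\I$ of Theorem~\ref{T:mainthm} to an obstruction to a pseudo-isotopy of $S^4$ being smoothly an isotopy, and then to realize a pseudo-isotopy on which that obstruction is nonzero. Take $k=1$ and set $Y=S^4$, $X=\stwostwo$, so that $X=Y\#(\stwostwo)$. By Theorem~\ref{T:mainthm}, $\I=\bfI$ is a surjective homomorphism $\pi_1(\Emb(S^2,X),\Rs)\to\BZ_2$ with $\pi_1(\LB,\Rs)\subseteq\ker\I$; using the light-bulb theorem to control $\pi_0(\LB)\to\pi_0(\Emb(S^2,X))$, $\I$ descends to a homomorphism on the relative group $\pi_1(\Emb(S^2,X),\LB)$. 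Choose a loop $\alpha$ based at $\Rs$ with $\I([\alpha])=1$ (such a loop exists by surjectivity; concretely one may take $\alpha$ to be the Key Example of Example~\ref{example}). Feeding $\alpha$ into the construction described in the introduction (in the case $Y=S^4$) produces a pseudo-isotopy $g\colon S^4\times I\to S^4\times I$ with $g|_{S^4\times 0}=\id$; put $f:=g|_{S^4\times 1}\in\Diff^+(S^4)$. I claim $f$ is exotic.

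First, $f$ is topologically isotopic to $\id_{S^4}$. Since $f$ is the restriction to $S^4\times 1$ of a diffeomorphism of $S^4\times I$ that is the identity on $S^4\times 0$, it is orientation preserving, hence a degree-one self-map of $S^4$, hence homotopic to $\id_{S^4}$; Quinn's results on topological isotopy of homeomorphisms of closed simply connected $4$-manifolds then supply a topological isotopy from $f$ to $\id_{S^4}$. (Alternatively, $f$ is topologically pseudo-isotopic to $\id$ by construction, and $4$-dimensional TOP pseudo-isotopy implies TOP isotopy.)

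Second --- the heart of the matter --- $f$ is not smoothly isotopic to $\id_{S^4}$. Suppose it were. The fact recalled in the introduction, that a loop lying entirely in $\LB$ yields a pseudo-isotopy isotopic to an isotopy, has a converse, to be established in \cite{GGH}: if $g|_{S^4\times 1}$ is smoothly isotopic to $\id$, then the relative class of $\alpha$ in $\pi_1(\Emb(S^2,X),\LB)$ is trivial. After replacing $\alpha$ by a finger-first representative (Quinn's procedure, with the ambiguities controlled as in \S6) and applying the corollary of Theorem~\ref{T: fw system moves} quoted above, a finger/Whitney system for this now-trivial relative class can be carried, up to isotopy, to the trivial system by the five moves: disc slides, sphere slides, birth/death moves, $x^3$--moves and saddle moves. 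Sections \S2--\S7 of the present paper show that $\I$ is unchanged by disc slides and by the reparametrizations and reorderings used in its definition; it then suffices --- and this is exactly the verification carried out in \cite{GGH} --- to check that $\I$ is also unchanged by sphere slides, birth/death moves, $x^3$--moves and saddle moves. Granting this, $\I([\alpha])=\I(\text{trivial system})=0$, contradicting $\I([\alpha])=1$. Hence $f$ is not smoothly isotopic to $\id_{S^4}$, and combined with the previous paragraph $f$ is an exotic element of $\Diff^+(S^4)$.

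The main obstacle is this invariance check. The argument is valid only if the count $\I$ of Definition~\ref{invariant} is genuinely unaffected by every one of the five moves of Theorem~\ref{T: fw system moves}; since birth/death, $x^3$ and saddle moves can change the number of finger/Whitney pairs and rearrange the arcs of intersection with $\mGs$, tracking the parity through each move is delicate, and this is where the bulk of the new work lies. A second substantial ingredient is the pseudo-isotopy dictionary --- in particular the converse direction used above, that a smoothly trivial top diffeomorphism forces the loop into $\LB$ --- which is a Cerf-theoretic, Hatcher--Wagoner-style analysis in dimension four, where the classical algebraic obstructions vanish because $\pi_1(S^4)=1$, so that the geometric data recorded by $\I$ is precisely what remains. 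The topological-triviality step is comparatively routine, resting on Quinn's $4$-dimensional isotopy theorems.
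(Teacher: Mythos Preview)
Your outline misidentifies where the genuine gap lies. The invariance of $\I$ under the five moves of Theorem~\ref{T: fw system moves} --- disc slides, sphere slides, birth/death, $x^3$, and saddle moves --- is \emph{not} deferred to \cite{GGH}; it is proved in \S7 of the present paper (Lemmas 7.3--7.8, culminating in Proposition~\ref{P: ff independence}). That is exactly what makes $\bfI$ a well-defined invariant of $\pi_1(\Emb(\sqcup^kS^2,\#^k\stwostwo),\mRs)$, which is Theorem~\ref{T:mainthm}. So the step you flag as ``the bulk of the new work'' is already done.

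The actual obstacle is the one the introduction spells out just after stating Theorem~\ref{main}: the passage from a pseudo-isotopy to a loop of spheres depends on a choice of Hatcher--Wagoner nested-eye 1-parameter family, and \emph{two such choices for the same pseudo-isotopy need not give homotopic loops of spheres}, even after stabilization. Consequently your claimed ``converse'' --- that if $g|_{S^4\times 1}$ is smoothly isotopic to $\id$ then the associated loop must lie in $\LB$ --- is not available; indeed the paper explicitly warns against this. What is required is that $\I$ descend to a well-defined invariant of $\pi_0\Diff^+_{\partial_0}(S^4\times I)$. Two nested-eye families for the same pseudo-isotopy are connected by a 2-parameter family of generalized Morse functions which may involve critical points of all indices, swallowtail singularities, and handle slides between same-index critical points --- phenomena that are \emph{not} captured by the five $\mF\mW$-moves, which only encode homotopies of loops of embeddings. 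The technical content of \cite{GGH} is to extend $\I$ and prove invariance under these additional Cerf-theoretic deformations, reducing to the invariance theorems proved here. Once that is in hand, the fibration argument with $\pi_0\Diff^+_\partial(S^4\times I)\cong\BZ_2$ (the Dehn twist, which is an isotopy and hence has $\I=0$) finishes the proof as in the introduction. Your argument, as written, assumes away precisely this step.
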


Our approach to Theorem \ref{main} is via pseudo-isotopy theory.  To start with since $S^5$ has a unique smooth structure \cite{Sm4}, \cite{KM},  any orientation preserving diffeomorphism $\varphi:S^4\to S^4$ is pseudo-isotopic to $\id$, see \cite[P. 157]{Br}.  This means that there is a diffeomorphism $g:S^4\times I \to S^4\times I$ such that $g|S^4\times 0=\id$ and $g|S^4\times 1=\varphi$. We denote the space of such diffeomorphisms by $\Diff_{\partial_0}(S^4\times I)$.

Every pseudo-isotopy induces a nonsingular map $q_1:S^4\times I \to I$ by post-composing with the standard projection map $q_0$ from $S^4\times [0,1]$ to $[0,1]$.  Since the space of smooth maps to $[0,1]$ is contractible, there is a 1-parameter family of smooth maps $q_t$ from the trivial projection $q_0$ to $q_1$. Standard singularity theory implies that a generic path $q_t$ is a path of generalized Morse functions, for which one can take a 1-parameter family of gradient like vector fields (g.l.v.f.) $V_t$ for the functions $q_t$ as well. Since the 4-sphere is a simply connected closed 4-manifold, the first pseudo-isotopy Hatcher-Wagoner obstruction vanishes and hence by  \cite[Proposition 3 p. 214]{HW}, the pair $(q_t,V_t)$ can be chosen to be a ``nested eye'' family. By this we mean that the family $q_t$ starts out with no critical points, has $k$ standard births of critical point pairs of index 2 and 3. Later, those same pairs of critical points are canceled, returning $q_t$ back to a function with no critical points. The g.l.v.f. for the births and deaths are independent (see \cite[Ch 1 \S6]{HW}). Furthermore, there are no flow lines between critical points of the same index, i.e. there are no handle slides. For more details see \cite{HW}, \cite[\S4]{Qu} or \cite[\S2]{GGHKP}.  

We can further assume that all births (resp. deaths) occur in $t\in (1/8, 1/4)$ (resp. $t\in (3/4,7/8))$ and for $t\in [1/4, 3/4]$ all the index-2 (resp. index-3) critical points occur in $y\in (1/8, 1/4)$ (resp. $y\in (3/4,7/8))$ where $y$ denotes the I coordinate of $S^4\times I$.  For $t\in [1/4,3/4]$ one sees the ascending spheres of the index-2 critical points and the descending spheres of the index-3 critical points in $q_t^{-1}(1/2)=S^4\#^k S^2\times S^2$, where $k$ is the number of eyes.  Except for finitely many t's corresponding to finger and Whitney moves these spheres are transverse.  As in \S 4 \cite{Qu} we can assume all the finger moves occur before $t=1/2$ and all the Whitney moves after $t=1/2$.      Thus, essentially all the data for the 1-parameter family is contained in the \emph{middle middle} level i.e. when $y=t=1/2$. As shown in Example \ref{example} from the  finger and Whitney discs  we obtain a loop in $\Emb(\sqcup^k S^2, \#^k S^2\times S^2; \Rs)$ where $\Gs$ is the union of ascending spheres.  For more details see \S 2  \cite{Ga3} or \cite{Gay}.   

The upshot is that a pseudo-isotopy induces a loop in $ \Emb(\sqcup^k S^2, \#^k S^2\times S^2;\Rs)$ and an element of $\pi_1( \Emb(\sqcup^k S^2, \#^k S^2\times S^2),\Rs)$ induces a pseudo-isotopy up to isotopy. First, if the two functions $q_0$ and $q_1$ can be connected by a path \emph{of nonsingular functions}, then the pseudo-isotopy can be isotoped to be an isotopy. Second, the criteria for performing a standard cancellation of all the critical points for a nested eye family is precisely if the associated loop of spheres is homotopic to a loop in $\LB$. Therefore knowing a loop cannot be deformed into $\LB$ is necessary for both the pseudo-isotopy and the diffeomorphism to be nontrivial. However, two Hatcher-Wagoner 1-parameter families associated to a pseudo-isotopy potentially need not correspond to homotopic loops in $ \Emb(\sqcup^k S^2, \#^k S^2\times S^2;\Rs)$, even allowing for a stabilization operation on $\Emb(\sqcup^k S^2, \#^k S^2\times S^2;\Rs)$.  On the other hand, using the contractibility of smooth maps $S^4\times I \to I$, they are connected by 2-parameter families of generalized Morse functions, possibly with critical points of all indices, swallowtail singularities, flow lines between critical points of the same index and flow lines from critical points of index $i$ down to critical points of index $i+1$   The technical content of \cite{GGH} is to extend the definition of $\mD([\alpha])$ and show invariance under such 2-parameter deformations by reducing to the invariance theorems proven in this paper. This gives that $\mD$ is a well defined invariant of $\pi_0 \Diff^+_{\partial_0}(S^4\times I)$. Consider the fibration sequence,
    \begin{equation*}
        \begin{tikzcd}
            \Diff^+_\partial (S^4\times I) \arrow[r, "i"]& \Diff^+_{\partial _0}(S^4\times I) \arrow[r,"\rho_1"]& \Diff^+(S^4)
        \end{tikzcd}
    \end{equation*}
where $\rho_1$ is the fibration given by restricting to $S^4\times \{1\}$ and $i$ is the natural inclusion. Now for $S^4$, $\pi_0\Diff^+_{\partial _0}(S^4\times I)/\pi_0\Diff_{\partial}(S^4\times I) \simeq \pi_0 \Diff^+(S^4)$ since every diffeomorphism is pseudo-isotopic to the identity. Furthermore it follows from Cerf's theorem that $\pi_0 \Diff^+_{\partial}(S^4\times I)\simeq \BZ_2$ \cite{Ce1}. The $\BZ_2$ is generated by the 5-dimensional ``Dehn twist'', which is realized by the trace of an essential loop in $\Diff(S^4)$, a.k.a. an isotopy. As such, if the pseudo-isotopy generated by the family of spheres is not isotopic to an isotopy, it is not in the image of $\Diff_\partial(S^4\times I)$, and therefore the diffeomorphism of $S^4$ given by restricting is not isotopic to the identity.

\begin{acknowledgements}  Our collaboration started in November 2022 during the Banff conference ``Topology in Dimension 4.5" where we ``\emph{proved}" that the pseudo-isotopy induced from the loop of 2-spheres in Example \ref{example} i) is trivial.  Subsequently, Slava Krushkal and Mark Powell informed us that our proof was probably wrong since it depended on Quinn's Disc Replacement Criterion, whose proof they found wanting.  We are also grateful to Krushkal and Powell for many helpful discussions on pseudo-isotopy over the years.  David Gabai thanks Ryan Budney for many conversations. David Gabai was partially supported by grants DMS-2003892 and DMS-2304841 of the National Science Foundation. Part of this research was conducted while he was a Member of the Institute for Advanced Study in fall 2025 and as a Clay Senior Scholar for the program \emph{Topological and Geometric Structures in Low Dimensions} in spring 2026. David Gay was supported by NSF Grant DMS-2005554, NSF Grant DMS-2342252 and Simons Foundation Travel Support for Mathematicians Grant MP-TSM-00002714. This work was also supported by the Centre International de Recontres Mathématiques (CIRM) in Marseille while David Gay was in residence from July to December 2025 through the CIRM Chaire Jean Morlet program. This program also supported two visits by the other co-authors to CIRM during that time.  Daniel Hartman thanks Peter Teich\-ner, Danica Kosanovi\'c, and Daniel Galvin for many conversations, and the Max Planck Institute for Mathematics for its hospitality and support. Daniel Hartman was partially supported by the Visiting Student Research Collaborator program at Princeton University and Max Planck Institute of Mathematics.     \end{acknowledgements}

\section{preliminaries}

In what follows we frequently picture  $S^2\times S^2$ as $S^2\times (S^1\times[-\infty,\infty]/\sim)$, where each of  $S^1\times \pm\infty$ are identified with points.  The first $S^2$ is often viewed as a compactified  $\BR^2$ and the $S^1$ as a compactified $\BR$.  So fixing $(g_0, r_0)\in S^2\times S^2$, a 3D slice often seen in this paper is $\BR^2\times (\BR\times 0)$ with $(g_0,r_0)$ corresponding to $(0,0)\times (0,0)$ and hence $\Gs$ corresponds to $\BR^2\times (0,0)$ with $\Rs$ intersecting this slice in the $z$-axis.

We give $\Rs$ and $\Gs$ the standard orientations.  I.e. that of $\Gs$ is induced from $\BR^2$ and $\Rs$ from $\BR\times \BR$.   We give $S^2\times S^2$ the product orientation.  Here $R$ will denote a surface isotopic to $\Rs$ and unless said otherwise $G$ will denote $\Gs$.  A set of Whitney discs for $R\cup G$ is \emph{complete} if they are pairwise disjoint and cancel all but one point of $R\cap G$.  

If $X\subset Y$, then $N(X)$ denotes a closed regular neighborhood.  It may have corners, e.g. $N(R\cup G)\subset \stwostwo$.  We use $|X|$ to denote the number of components of $X$.  

As explained in the introduction a representative $\alpha$ gives rise to a finger/Whitney system $\mfw$.  The goal of \S 1-10 is to transform $\mfw$ into a finger/Whitney system for which we can calculate the invariant independent of all choices.     In \S11-12 it will be shown that all finger/Whitney systems arising from $[\alpha]$ produce the same value.  Henceforth, functions such as $\I([\alpha])$ or $\mD([\alpha])$ will be denoted by $\I\mfw$ or $\mD\mfw$.

\begin{remark} Algebraic intersection number of properly mapped oriented surfaces with fixed disjoint boundaries into an oriented 4-manifold depends only on their relative homology class.  In our setting  the complete sets of Whitney discs  $\mF$ and $\mW$ have boundary on $G\cup R$, so  to take intersection number we need to view  them in  $E:=\stwostwo \setminus \inte(N(R\cup G))$.  How to do this near $\partial \mF\cup \partial \mW$ is a function of $N(\partial \mF)\cup N(\partial \mW)$, i.e. the \emph{germs} of the boundaries of  $\mF\cup \mW$.  It will be crucial to keep track of how the germ of a disc $D\subset \mF\cup \mW$ twists relative to other such discs.   Twisting of Whitney discs relative to other Whitney discs was studied in \cite{Ga3} while here we need to also keep track of twisting with respect to $G$ and $R$.  \end{remark}

\begin{definition}  Since $R$ and $G$ have trivial normal bundles we  identify $N(R):=R\times D^2$ and $N(G)=G\times D^2$.  Fix $[-1,1]$ subbundles of $N(R)$ and $N(G)$ where $[-1,1]$ corresponds to a diameter of $D^2$.  Call the $[0,1]$ (resp. $ [-1,0]$) subsubbundle the \emph{positive} (resp. \emph{negative}) $I$-bundles.  We can assume that $R$ (resp. $G$) intersects $N(G)$ (resp. $N(R)$) in $D^2$ fibers and these bundles align as in Figure \ref{fig:example}.  We say that $f$ (resp. $w$) is \emph{untwisted} if $f\cap N(R)$ (resp. $w\cap N(R))$ and $f\cap N(G)$ (resp. $w\cap N(G)$) are contained in the positive (resp. negative) $I$-bundles. Such discs are also respectively called \emph{finger germed} or \emph{Whitney germed}.  It will frequently be useful to isotope a finger germed $f\in \mF$ to be Whitney germed or a Whitney germed $w\in \mW$ to be finger germed.  Note that untwisted discs are determined near $R\cap G$. An  \emph{untwisted isotopy} is an isotopy of untwisted discs through untwisted discs.  Two untwisted finger/Whitney systems are \emph{equivalent} if the finger (resp. Whitney) discs  differ by disjointly supported untwisted isotopies.   By \emph{rotating} an untwisted disc near either $R$ or $G$ we obtain a new untwisted disc.   Such an isotopy is called a \emph{boundary twisting} or an \emph{$R$-rotation} or \emph{$R$-twist} or \emph{$G$-rotation} or \emph{$G$-twist} and an isotopy of an untwisted disc which is a concatenation of boundary twisting is called a \emph{reuntwisiting}. 

A  \emph{boundary bigon} is an embedded disc $D\subset R$ or $G$ with $\inte (D)\cap (\mF\cup \mW)=\emptyset$, $\partial D \subset \mF\cup \mW$, $|D\cap\mF|=D\cap \mW|=1$ and $|D\cap R\cap G|=1$.  See Figure 2 of Quinn \cite{Qu}. 
Unless said otherwise $G$ denotes $\Gs$ and $R$ denotes a surface isotopic to $\Rs$ transverse to $G$.  If $\mF, \mW$ are each complete sets of pairwise disjoint Whitney discs for $R$ and $G$, then as in Example \ref{example} they determine a loop $\alpha$ in $\Emb(S^2, S^2\times S^2; \Rs)$ well defined up to paths geometrically dual to $G$.  By this we mean starting at $R_{1/2}:=R$ we naturally obtain $R_{1/4}$ and $R_{3/4}$ respectively by doing  Whitney moves corresponding to $\mF$ and $\mW$.  Since $R_{1/4}$ and $R_{3/4}$ are geometrical dual to $G$ and homotopic to $\Rs$, they can be isotoped to $\Rs$ staying geometrically dual to $G$ \cite{Ga1}.  However, two such isotopies can differ by a loop which stays geometrically dual to $G$, i.e. a loop in $\LB$.  \end{definition}

\begin{lemma} \label{normal untwisting} If $\mfw$ is in immersed arc position, then we can isotope each $\mF_i$, $\mW_i$ and $R_i$ such that each disc in $\mF_i\cup \mW_i$ is untwisted and  there are no boundary bigons.  The $R_i$ are fixed setwise throughout the isotopy. \end{lemma}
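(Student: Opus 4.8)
The plan is to produce the untwisting and bigon-removal by a finite sequence of local moves, handled in two stages: first make all discs untwisted, then remove boundary bigons, checking that the second stage does not reintroduce twisting. Throughout we use the normal framings $N(R_i)=R_i\times D^2$, $N(G_i)=G_i\times D^2$ and the positive/negative $I$-bundles fixed in the Definition preceding the Lemma, and we work one index $i$ at a time since the discs in $\mF_i\cup\mW_i$ have boundary only on $R_i\cup G_i$.

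\emph{Step 1 (untwisting).} Fix a disc $D\in\mF_i\cup\mW_i$. Near each arc of $\partial D$ lying in $R_i$ (resp. $G_i$), the germ of $D$ is a section of the $D^2$-bundle over that arc, and since the arc is an interval this section is determined up to homotopy by an integer ``twisting'' count measured against the fixed $[-1,1]$-subbundle. If $D$ is finger germed we want the $R_i$- and $G_i$-germs to land in the positive $I$-bundle; if Whitney germed, in the negative one. Any discrepancy is removed by an $R_i$-twist or $G_i$-twist as in the Definition: rotating $D$ in a collar of $R_i$ (resp. $G_i$) changes the relevant twisting count by $\pm1$ without moving $\partial D$ on $R_i\cup G_i$ and without creating new intersections with $\mGs$ or with the other discs, provided the collar is taken thin enough (this uses only that $R_i$ and $G_i$ meet $N(G_i)$, $N(R_i)$ in $D^2$-fibers, so the collars of distinct boundary arcs are disjoint). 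Iterating over all finitely many discs in $\mF_i\cup\mW_i$ and all finitely many boundary arcs makes every disc untwisted. Since the arcs $(\mF_i\cup\mW_i)\cap R_i$ and $(\mF_i\cup\mW_i)\cap G_i$ are immersed (the immersed arc hypothesis), their self-intersections and mutual positions are unaffected by these collar rotations, so the isotopy keeps $R_i$ fixed setwise and may be taken supported in $N(R_i)\cup N(G_i)$.

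\emph{Step 2 (removing boundary bigons).} A boundary bigon is an embedded disc $D_0\subset R_i$ (or $G_i$) with interior disjoint from $\mF_i\cup\mW_i$, boundary on $\mF_i\cup\mW_i$, and meeting $R_i\cap G_i$ in a single point. As in Quinn \cite{Qu}, such a bigon can be eliminated by an isotopy of the relevant disc of $\mF_i\cup\mW_i$ that pushes its boundary arc across $D_0$ (a ``finger''/``Whitney'' boundary push), carrying that single intersection point of $R_i\cap G_i$ along and, if necessary, a corresponding small isotopy of $R_i$ supported near $D_0$ that restores $R_i$ to its original position (so $R_i$ is fixed setwise). Each such move strictly decreases the number of boundary bigons, or the total number of intersection points $|R_i\cap G_i|$ counted with the auxiliary complexity Quinn uses, so the process terminates. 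The one point to verify is that pushing a boundary arc across a bigon can be performed through untwisted discs: the bigon lies in $R_i$ or $G_i$, the push is supported in a collar of it, and we may choose the framing of the isotopy so that the germ of the pushed disc stays in the same ($\pm$) $I$-bundle it started in — equivalently, after the push we re-apply Step 1 locally, which does not recreate bigons since it is supported in an even smaller collar. Interleaving Steps 1 and 2 in this way and noting each move reduces a well-ordered complexity gives the result.

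\emph{Main obstacle.} The delicate point is the interaction between the two stages: a bigon-removing push is a codimension-one move on the boundary arcs and a priori can change the twisting of a disc relative to $R_i$, $G_i$, and relative to the \emph{other} discs of $\mF_i\cup\mW_i$, so one must argue that re-untwisting afterwards (a \emph{reuntwisting} in the paper's terminology) does not spawn new boundary bigons or new intersections among the discs. I expect this bookkeeping — choosing a single complexity (e.g.\ lexicographic in number of bigons, then $|R_i\cap G_i|$, then total twisting) that all the moves decrease, and checking that each elementary move is realizable by an ambient isotopy fixing $\bigcup_i R_i$ setwise and disjoint from $\mGs$ away from the arcs — to be the bulk of the work, though each individual move is standard 4-dimensional Whitney-disc manipulation.
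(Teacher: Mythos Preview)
Your two-stage plan (untwist everything, then remove bigons) is different from the paper's approach, and the ``main obstacle'' you identify is precisely what the paper sidesteps. The paper does not separate the two tasks: instead it uses the linear IA-ordering $f^i_1, w^i_1, f^i_2, \ldots, w^i_{n_i}$ and processes the discs \emph{one at a time starting from the end}. First $w^i_{n_i}$ is untwisted (with an arbitrary choice of representative); then $f^i_{n_i}$ is untwisted \emph{and} its untwisted representative is chosen so as to have no boundary bigon with $w^i_{n_i}$; then $w^i_{n_i-1}$ relative to $f^i_{n_i}$; and so on down to $f^i_1$. The key observation, which you do not use, is that in IA position a boundary bigon can only occur between two discs that are \emph{adjacent} in the ordering (a bigon contains exactly one point of $R_i\cap G_i$, and each such point is shared by exactly two consecutive discs). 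So when handling a new disc there is only one already-processed neighbor to worry about, and the freedom in choosing an untwisted representative (the $2\pi$-rotations near $R_i$ or $G_i$) is exactly what kills the bigon with that neighbor. Each step is local and supported away from the previously processed discs, so no complexity bookkeeping is needed; this is also why Remark~\ref{normalized choice}(i) says the no-bigon condition ``essentially imposes the choices'' once $w^i_{n_i}$ is fixed.

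Your Step~2 is also not quite the right elementary move. A boundary bigon $D_0\subset R_i$ between adjacent discs has the shared $R_i\cap G_i$ point at a corner of $\partial D_0$, so ``pushing a boundary arc across $D_0$'' does not make sense as written (that arc ends at the corner), and your claim that such a move can decrease $|R_i\cap G_i|$ is wrong---that number is fixed throughout the lemma. The correct move is the one implicit in the paper: replace one disc by a different untwisted representative via an $R$- or $G$-twist, which unwinds the bigon. Once you see this, the sequential processing along the IA arc makes the interaction problem you flagged vanish.
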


\begin{proof}  For each $i$, first isotope  $w^i_{n_i}$ near $R_i$ to be untwisted along $R_i$, then isotope $w^i_{n_i}$ and $R_i$ near $G_i$ to also be untwisted along $G_i$.  Next isotope $f^i_{n_i}$ to be untwisted near $R_i$ and to have no boundary bigons with $w^i_{n_i}$ and then isotope $f^i_{n_i}$ and $R_i$ near $G_i$, but away from $w^i_{n_i}$ to be also untwisted along $G_i$. Inductively isotope $R_i$ and $w^i_{n_i-1}, f^i_{n_i-1}, w^i_{n_i-2}, \cdots, f^i_1$ so that each disc is untwisted and has no boundary bigons with the preceding disc.  Being local, the isotopies for the various $i$'s can be done simultaneously.\end{proof}

\begin{definition}  Call  $(\mF,\mW)$ \emph{normalized} if each finger disk and each Whitney disk is untwisted and there are no boundary bigons.\end{definition}

\begin{remarks}\label{normalized choice} i) There is choice in how to isotope a Whitney or finger disc to make it untwisted,  by $2\pi$ rotations near either $G$ or $R$. However for an $\alpha\in \IA$, once each $w^i_{n_i}$ is chosen, the no boundary bigon condition  imposes the choices for the remaining discs.  

ii) A normalized $(\mF,\mW)$ gives rise to an intersection number that is well defined as a function of $\mF$ and $\mW$ and independent of the choice of normalization, however Remark \ref{new ints} will explain that such an intersection number is unsuitable for computing $\I\mfw$.     

iii) If $(\mF, \mW)\in EA$, then it can be normalized staying within EA and without changing the various $|\inte(w_i)\cap \inte(f_j)|$.  In this case $\I(\mF, \mW)$ does not require $\mF$ and $\mW$ be untwisted.

iv) In this paper we will define operations and functions on untwisted finger/Whitney systems.  We leave it to the reader to check that these operations preserve untwisted isotopy classes of finger/Whitney systems and these functions are well defined on such classes.\end{remarks}

\section{Immersed arc position} \label{immersed arc}

In this section  we introduce a method of transforming an $(\mF,\mW)\in \IA$ to  $(\mF',\mW')\in \EA$ and then define $\I(\mF,\mW)=\I(\mF', \mW')$.  We do this by a sequence of \emph{finger disc slides} and \emph{Whitney disc slides} starting from a normalized $(\mF,\mW)$.   We then show that $\I(\mF,\mW)$ is independent of both the slide sequence and normalization.

\begin{definition}  \label{hsf}  Let $\mW=\{w_1, \cdots, w_n\}$ be a union of untwisted Whitney discs.  We say that $\mW'=\{w_1', \cdots, w_n'\}$ is obtained by a \emph{G-Whitney disc slide} if $w'_i=w_i$ for all but one $i$, and for that $i,  w_i'$ is obtained by sliding $ w_i$ over a $w_j$  as in Figure \ref{fig:disc slide}.   Note that Figure \ref{fig:disc slide} b) shows finger discs intersecting $w_i'$.   In more detail construct the \emph{G-Whitney cap} $K$  about $w_j$ by taking two slightly enlarged parallel copies of $w_j$ and glue them together along a rectangle with two edges on $G$ and two on the parallel copies.  See Figure \ref{fig:disc slide} a).   Obtain $w_i'$ by banding  $w_i$ to $K$ along an embedded path $\omega\subset G$.  The band should lie in the negative I-bundle of $G$.  Similarly define an \emph{R-Whitney slide} of $\mW$, where the roles of $G$ and $R$ are reversed.  We analogously define $G$-\emph{finger} and \emph{R-finger disc} slides.  Here the band is required to lie in the positive I-bundle to $G$ or $R$.  It may be called a \emph{disc slide} when the type is clear from context or not specified.  

   \begin{figure}[!htbp]
    \centering
    \includegraphics[width=1.0\linewidth]{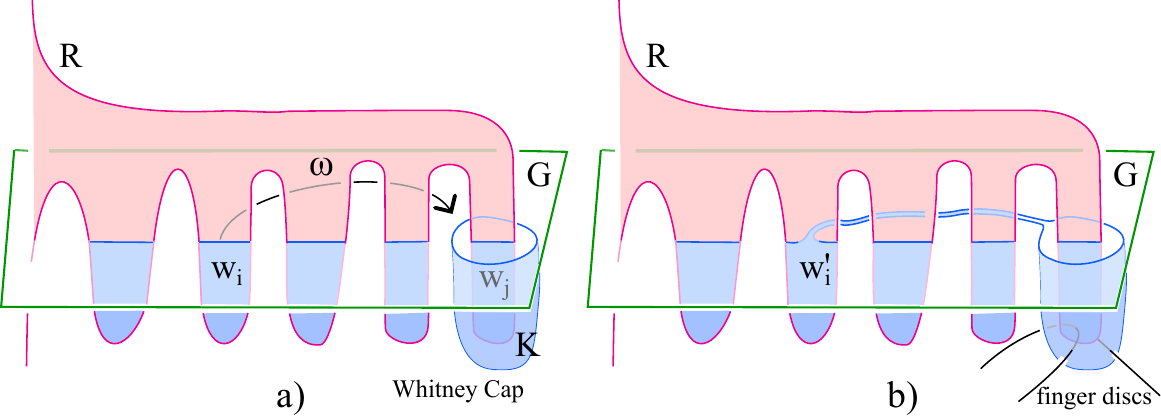}
    \caption{$G$-Disc Sliding $w_i$ over $w_j$ along $\omega$}
    \label{fig:disc slide}
\end{figure}

If  $\mW$ and $\mW'$ (resp. $\mF$ and $\mF'$) have the same boundary germs, then a disc slide on $\mW$ (resp. $\mF$) naturally \emph{corresponds} to one on $\mW'$ (resp. $\mF')$. We say that $(\mF_2, \mW_2)$ is obtained from $(\mF_1, \mW_1)$ by a \emph{disc slide} if exactly one of $\mF_2$ or $\mW_2$ is obtained that way. 

Define $T^G_{w_j}$, the $G$-\emph{normal torus} to $\partial K$, to be the tube about $\partial K$ disjoint from $G$.  In an analogous manner define $T^R_{w_j}$.  \end{definition}

\begin{lemma}\label{handle slide is homotopic}  If $(\mF', \mW')$ is obtained from $(\mF, \mW)$ by a disc slide, then the path $\alpha(\mF,\mW)$ is relatively homotopic to $\alpha(\mF', \mW')$.  \qed \end{lemma}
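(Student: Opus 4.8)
The plan is to reduce to the case of a disc slide on $\mW$; a finger disc slide on $\mF$ is handled by the mirror argument, since the restriction of $\alpha(\mF,\mW)$ to $[1/2,1]$ depends only on $\mW$ (and on the choice of geometrically dual isotopy at the end), while its restriction to $[0,1/2]$ depends only on $\mF$. So assume $\mW'=(w_1,\dots,w_i',\dots,w_n)$, where $w_i'$ is obtained by banding $w_i$ to the $G$-Whitney cap $K=K^G_{w_j}$ along a band $\omega\subset G$ lying in the negative $I$-bundle of $G$. Since $\mF$ is unchanged we may take $\alpha(\mF,\mW)$ and $\alpha(\mF,\mW')$ to agree on $[0,1/2]$, both ending at $R=R_{1/2}$; it then remains to prove that the two paths $\beta,\beta'\colon[1/2,1]\to\Emb(S^2,\stwostwo)$ from $R$ to $\Rs$ are homotopic rel endpoints.

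Write $\beta=M_\mW*Q$ and $\beta'=M_{\mW'}*Q'$, where $M_\mW$ (resp. $M_{\mW'}$) performs the Whitney moves, ending at the geometrically dual surface $R_{3/4}=R^\mW$ (resp. $R^{\mW'}$), and $Q$ (resp. $Q'$) is an isotopy to $\Rs$ staying geometrically dual to $G$, which exists by \cite{Ga1}. As the discs of $\mW$ are pairwise disjoint, $M_\mW$ and $M_{\mW'}$ are, up to homotopy rel endpoints, independent of the order in which the moves are performed, so we may do the move along $w_j$ first. Having done it, the disc $w_j$ is ``spent'': the surface $R^{\{w_j\}}$ no longer meets $G$ near the old site of $w_j$, and one checks that the cap $K$ --- two parallel copies of $w_j$ joined by a rectangle on $G$ --- isotopes rel $\partial K$, away from $R^{\{w_j\}}$ and fixing $G$ setwise, into a collar of $G$ inside which the band $\omega$ becomes trivial. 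Dragging $w_i$ along this trivializing band produces an ambient isotopy $\psi_s$ of $\stwostwo$, supported near $w_j\cup\omega$, disjoint from $R^{\{w_j\}}$ and from the remaining discs $w_\ell$ ($\ell\ne i,j$), fixing $G$ setwise, with $\psi_1(w_i')=w_i$; here the hypothesis that $\omega$ lie in the negative $I$-bundle of $G$ is exactly what keeps $w_i'$ Whitney germed, so that $\psi_1$ respects framings and the Whitney moves along $w_i'$ and along $w_i$ match up. Carrying the construction through, $\psi:=\psi_1$ takes $R^{\mW'}$ to $R^{\mW}$ while fixing $G$ setwise, so it traces a path $\delta$ in $\LB$ from $R^{\mW}$ to $R^{\mW'}$, and inserting the family $\{\psi_s\}$ between the $w_j$-move and the $w_i$-move shows $M_{\mW'}\simeq M_\mW*\delta$ rel endpoints.

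Finally, choose as the geometrically dual isotopy for $\mW'$ the concatenation $Q':=\bar\delta*Q$; this is legitimate, being a concatenation of paths geometrically dual to $G$. Then
\[
\beta'=M_{\mW'}*Q'\ \simeq\ (M_\mW*\delta)*(\bar\delta*Q)\ \simeq\ M_\mW*Q=\beta
\]
rel endpoints, which is what we want. The step I expect to be the main obstacle is the ``absorption'' claim in the middle paragraph --- that once the Whitney move along $w_j$ has been performed, the cap $K^G_{w_j}$ becomes inessential, so that the slide alters neither $R_{3/4}$ nor the path $M_\mW$ beyond an isotopy supported in $\LB$. This is the disc-slide analogue of the classical fact that handle slides do not change the underlying manifold, and in our setting it is cleanest to establish by exhibiting a ball bounded by $K^G_{w_j}$ together with the trace of the $w_j$-move and a disc in $G$; one must also track carefully how the various germs (positive versus negative $I$-bundles) twist, so that all the Whitney moves in sight genuinely agree and the framing hypothesis in the disc-slide definition is used in the right place.
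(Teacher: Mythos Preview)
The paper gives no proof --- the lemma carries a bare \qed, and the subsequent remark merely points to factorization \cite[Lemma~3.15]{Ga3} as the underlying reason. Your direct argument is thus more explicit than anything the paper offers, and the overall strategy is correct: reduce to a $G$-Whitney slide of $w_i$ over $w_j$, perform the $w_j$-move first, then exhibit an ambient isotopy $\psi_s$ taking $w_i'$ to $w_i$, fixing $G$ setwise and supported off $R^{\{w_j\}}$ and the remaining $w_\ell$; your concatenation with $Q'=\bar\delta*Q$ is then legitimate precisely because $\alpha(\mF,\mW)$ is only defined up to loops in $\LB$.

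The absorption claim you flag is indeed the crux, and it does hold, but be careful with the justification. The obvious $3$-ball bounded by $K\cup D_G$ (with $D_G\subset G$ the disc spanning $\partial K$) is \emph{not} disjoint from $R^{\{w_j\}}$: the Whitney-moved portion of $R^{\{w_j\}}$ sits inside it, so you cannot simply push $K$ through that ball. The fix is genuinely four-dimensional. In the local model, $K$, $w_j$, and the moved part of $R^{\{w_j\}}$ all lie in a single $3$-slice; push $\inte(K)$ slightly off that slice (keeping $\partial K$ on $G$), isotope it there to a pushoff of $D_G$ without ever meeting $R^{\{w_j\}}$ or the other $w_\ell$, and then retract the band $\omega$. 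This produces the ambient isotopy $\psi_s$ with all the required properties, and your argument then gives a self-contained alternative to the paper's one-line deferral to \cite{Ga3}.
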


\begin{remarks}  i) The proof can be found in \S 11.  However, for the purposes of this section, this lemma is not needed. In fact, in this paper we will perform operations on finger/Whitney systems that may move it out of its relative homotopy class.  In analogy, we might prove that a certain function on loops in a space is in fact a function on the homotopy class by showing that two homotopic loops are homotopic in a larger space through loops for which we know the function is invariant.

ii) A disc slide can be viewed as resulting from a special type of factorization again showing that the induced 1-parameter family does not change the associated pseudo-isotopy class. See  \cite[Lemma 3.15]{Ga3}. \end{remarks}

\begin{definition}  We say that $\mW'$ is obtained from $\mW$ by a $p$-twisted G-disc slide if it arises from a disc slide except that the band twists $p$-times around $G$ as it traverses $\omega$.  A 3D local slice of the band appears as in Figure \ref{fig:pspinning}  and from that one sees how to rotate in the $t,z$ plan as the band travels along $\omega$.  Here $G\subset x,y$ plane.  The reader can use the orientation of $G$ and $S^2\times S^2$ to define an orientation convention for this twisting.  \end{definition}

   \begin{figure}[!htbp]
    \centering
    \includegraphics[width=.2\linewidth]{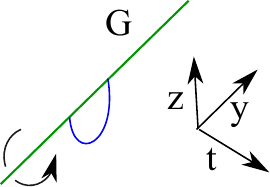}
    \caption{Constructing a Twisted $G$-Whitney Disc Slide}
    \label{fig:pspinning}
\end{figure}

\begin{remarks}  \label{renormalizing} i) Twisted disc slides naturally occur by choosing different untwisted representatives $\hat w_i$ (resp. $\hat w_j$)  for $w_i$ (resp. $w_j$).  For example, if $\hat w_i$ (resp. $\hat w_j$) is obtained from $w_i$ (resp. $w_j$) by locally rotating $p$ times about $G$ near $w_i\cap G$ (resp. $w_j\cap G$), the disc $\hat w_i'$ obtained by doing an untwisted G-disc slide to $\hat w_i$ is isotopic to a $-n$ twisted G-disc slide to $w_i$.   Further if $w_i'$ denotes the untwisted disc slide of $w_i$ over $w_j$ and $\hat w_i'$ is isotoped to have the same $\partial$ germ as $w_i'$, then $\hat w_i'$ is obtained from $w_i'$ by doing cut and paste with n copies of $T_{w_j}^G$.  Since we are working with $\BZ_2$ intersection numbers what matters is the parity of p.

ii) On the other hand if  $\hat w_i$ or $\hat w_j$ are obtained by twisting near $R$, then the corresponding G-disc slide is unchanged. \end{remarks}

\begin{definition}  \label{clifford}  Recall that if  $c$ is a point of transverse intersection of $R$ and $G$, then  $N(c)$ is a 4-ball with $\partial N(c)\cap (R\cup G)$ the Hopf link, so define $C_c$, a \emph{Clifford torus} to be an unknotted torus in $\partial N(c)$ separating the components.  If $w$ is a Whitney disc, then define $C_w$ to be a pair of Clifford tori for $w\cap R\cap G$.  Let $E:=\stwostwo\setminus \inte(N(G)\cup N(R))$.  We say that the complete sets of Whitney discs $\mW, \mW'$ \emph{differ by Clifford tori} or are \emph{Clifford equivalent} if the boundary germs of $\mW$ are isotopic to $\mW'$; for each $i, [w_i']=[w_i]+\sum_{j=1}^n n_{i,j}[C_{w_j}]\in  H_2(E, \partial w_i; \BZ_2)$ and $\sum_{i=1}^n n_{i,i}=0 \in \BZ_2$.  In a similar manner we define Clifford equivalence for $\mF, \mF'$ and use the notation $[f_i']=[f_i]+\sum_{j=1}^n m_{i,j}[C_{f_j}]\in  H_2(E, \partial f_i; \BZ_2)$.  If each of $\mF, \mF'$ and $\mW, \mW'$ are Clifford equivalent we say that $(\mF,\mW)$ and $(\mF',\mW')$ are \emph{Clifford equivalent}.   

We say that $\mW'$ is \emph{$H_2$-equivalent} to $\mW$ if after isotopy of $\mW'$ their boundary germs coincide and  for each $i$, $\partial w_i'=\partial w_i$ and $[w_i']=[w_i]\in H_2(E, \partial w_i; \BZ_2)$.    \end{definition}

\begin{lemma} \label{clifford homology} If $(\mF,\mW)$ is in embedded arc position and differs from $(\mF',\mW')$ by Clifford tori, then $\I(\mF,\mW)=\I(\mF',\mW')$.  \end{lemma}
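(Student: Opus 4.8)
The plan is to reduce the computation of $I(\mF',\mW')$ to that of $I(\mF,\mW)$ by analyzing how the mod-$2$ count $\sum_{p\le q}|f^j_p\cap w^j_q|$ changes when we replace each $w_i$ by a homologous disc $w_i+\sum_j n_{i,j}[C_{w_j}]$ (and symmetrically for the finger discs). Since everything is local to a fixed summand, I would first reduce to the case $k=1$, fix the IA-ordering, and handle the Whitney side and the finger side separately; by symmetry it suffices to treat the case $\mF=\mF'$ and $\mW'$ Clifford-equivalent to $\mW$. After an isotopy making the boundary germs of $\mW$ and $\mW'$ agree, the difference $[w_i']-[w_i]$ is represented by $\sum_j n_{i,j}[C_{w_j}]$ in $H_2(E,\partial w_i;\BZ_2)$, so the key input is the effect on intersection numbers of a \emph{single} Clifford torus $C_{w_j}$.

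The crucial local computation is: for finger and Whitney discs in embedded arc position, a Clifford torus $C_{w_j}$ (a pair of small Clifford tori at the two points of $w_j\cap R\cap G$) has $\BZ_2$-intersection number $1$ with $w_j$ itself and $0$ with every other $w_\ell$, $\ell\neq j$, and also $0$ with every finger disc $f_p$ \emph{except} for the ``adjacent'' finger discs whose arcs share an endpoint with the arc endpoints of $w_j$ on $R$ and on $G$. Here the embedded arc condition and the IA-ordering are doing real work: because $(\mF_i\cup\mW_i)\cap R_i$ and $(\mF_i\cup\mW_i)\cap G_i$ are \emph{embedded} arcs with $f_1,w_1,f_2,\dots,w_{n}$ in succession, the point $a_{2j-2}$ (shared by $f_j$ and $w_j$) and $a_{2j-1}$ (shared by $w_j$ and $f_{j+1}$) are the only intersection points near which $C_{w_j}$ can meet the other discs, so $|f_p\cap C_{w_j}| \equiv [\,p=j\,]+[\,p=j+1\,] \pmod 2$. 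This is precisely the kind of bookkeeping already used implicitly in the setup of Definition \ref{invariant}, and I expect it to follow from a direct picture in a neighborhood of $w_j\cap R\cap G$ together with the fact that a Clifford torus bounds on each side in $\partial N(c)$.

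Granting this, I would compute the change in $I$. Writing $w_i' = w_i + \sum_j n_{i,j} C_{w_j}$, bilinearity of the $\BZ_2$ intersection pairing gives
\[
\sum_{p\le q}|f_p\cap w_q'| \;=\; \sum_{p\le q}|f_p\cap w_q| \;+\; \sum_{p\le q}\sum_j n_{q,j}\,|f_p\cap C_{w_j}| \pmod 2 .
\]
By the local computation the inner double sum is $\sum_{q}\sum_{j}\sum_{p\le q} n_{q,j}\big([p=j]+[p=j+1]\big)$; for fixed $q$ and $j$ the coefficient of $n_{q,j}$ counts how many of $p=j,\ p=j+1$ satisfy $p\le q$, which is $0$ if $q<j$, is $1$ if $q=j$ (only $p=j$ qualifies), and is $0$ if $q>j$ (both qualify, $2\equiv 0$) — wait, that over-counts, so I would instead organize the sum by noting the total contribution of the $C_{w_j}$'s telescopes and collapses to the diagonal terms $n_{j,j}$, whose sum is $0\in\BZ_2$ by the defining hypothesis $\sum_i n_{i,i}=0$. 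The finger-disc modifications contribute symmetrically and again collapse to $\sum_i m_{i,i}=0$. Hence $I(\mF',\mW')=I(\mF,\mW)$.

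The main obstacle is the local intersection-number computation with the Clifford tori, and in particular getting the adjacency pattern and the signs/parities exactly right: one must be careful that the two small Clifford tori making up $C_{w_j}$ are placed at the correct points, that the germ-matching isotopy does not introduce extra intersections with the interiors of the $f_p$'s, and that the homology class $[C_{w_j}]\in H_2(E,\cdot;\BZ_2)$ is being paired in the group where the intersection form is actually defined (this is the role of passing to $E$ and of the boundary-germ bookkeeping from the Preliminaries). Once the single-Clifford-torus effect is pinned down, the rest is the linear-algebra collapse above, using only $\sum_i n_{i,i}=0$ and $\sum_i m_{i,i}=0$.
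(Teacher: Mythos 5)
Your proof is correct and follows essentially the same route as the paper's: identify the adjacency pattern $|f_p\cap C_{w_j}|\equiv[p=j]+[p=j+1]\pmod 2$ forced by the IA-ordering of the arc endpoints, then expand $\sum_{p\le q}|f_p\cap w_q'|$ by bilinearity of the $\BZ_2$ pairing and use $\sum_i n_{i,i}=0$ (and symmetrically $\sum_i m_{i,i}=0$ on the finger side). One clean-up note: your first calculation of the coefficient of $n_{q,j}$ was already complete and correct ($0$ when $q<j$, $1$ when $q=j$, $2\equiv 0$ when $q>j$), so the inner double sum collapses directly to $\sum_j n_{j,j}\equiv 0$ and the ``wait, that over-counts'' worry together with the vaguer telescoping replacement were unnecessary; the only genuine slip is an off-by-one in labeling $w_j$'s endpoints, which are $a_{2j-1}$ and $a_{2j}$ rather than $a_{2j-2}$ and $a_{2j-1}$, though this does not affect the adjacency conclusion $[p=j]+[p=j+1]$.
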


\begin{proof} This argument will repeatedly use the fact that only terms of the form $\langle f_i', w_j'\rangle, i\le j$ contribute to the invariant.  Assume that the various finger and Whitney discs are ordered as in Definition \ref{invariant}.  The result follows if each $m_{i,j}=n_{i,j}=0$, so it suffices to consider the $\BZ_2$-homological effect of raising one  value by 1 if $i\neq j$ or two values by 1 possibly the same one, if $i=j$.  The argument is symmetric in finger and Whitney discs so it suffices to consider where only values of the form $n_{i,j}$ are changed.  There are basically two cases, when $j\neq i$ and when $j=i$.    If $i<j$, then the added Clifford tori are disjoint from $f_k'$ for $k\le i$ and hence contribute 0 to the invariant.    If $j<i$, then the new Clifford tori contribute $1$ to $\langle f_j', w_i'\rangle$ and $1$ to $\langle f_{j+1}', w_i'\rangle$ and again the invariant is unchanged.  Now suppose $n_{i, i}$ is changed.  It follows that either $n_{i,i}$ is changed by two or some other $n_{j,j}$ is changed.  In the former case the invariant is unchanged and in the latter we have $1=\langle f_i, w_i'\rangle -\langle f_i, w_i\rangle= \langle f_j, w_j'\rangle-\langle f_j, w_j\rangle$ mod 2 and $\langle f_p,w_q'\rangle=\langle f_p,w_q\rangle$ for all $p< q$ and $(p,q)\neq (i,i)$ or $(j,j)$.  Again, the invariant is unchanged.  \end{proof}

\begin{lemma} \label{clifford persists}   If $(\mF_1, \mW_1)$ is Clifford equivalent to $(\mF_1', \mW_1')$, $(\mF_2,\mW_2)$ is obtained from $(\mF_1,\mW_1)$ by disc slides, $(\mF_2',\mW_2')$ is obtained from $(\mF_1', \mW_1')$ by the corresponding disc slides, then $(\mF_2, \mW_2)$ is Clifford equivalent to $(\mF_2', \mW_2')$.\qed\end{lemma}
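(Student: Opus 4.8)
The plan is to reduce to a single disc slide by induction, and then to observe that, over $\BZ_2$, a disc slide changes the homology classes of a finger/Whitney system by an increment that is completely insensitive to Clifford corrections. Throughout one keeps track of the relative classes $[w_i]\in H_2(E,\partial w_i;\BZ_2)$ and $[f_i]\in H_2(E,\partial f_i;\BZ_2)$; once the boundary germs of two systems are isotoped to agree, these groups are canonically identified, and Clifford equivalence of $(\mF_1,\mW_1)$ with $(\mF_1',\mW_1')$ is recorded by matrices $(n_{ij})$, $(m_{ij})$ over $\BZ_2$ with $\sum_i n_{ii}=\sum_i m_{ii}=0$.

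First I would treat one disc slide. By the symmetry between $R$ and $G$ and between $\mF$ and $\mW$, and since a disc slide alters only one of $\mF$ or $\mW$, it suffices to consider a (possibly twisted) $G$-Whitney disc slide of $w_i$ over $w_j$ along an embedded path $\omega\subset G$; the corresponding slide on $(\mF_1',\mW_1')$ is the one along the same $\omega$, with the same twisting and band, performed after the boundary germs of $\mW_1$ and $\mW_1'$ have been isotoped to coincide. Such a slide replaces only the $i$-th Whitney disc, banding $w_i$ to the $G$-Whitney cap $K$ formed from two parallel copies of $w_j$; it leaves every finger disc and every $w_k$ with $k\neq i$ fixed, and moves no double point. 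Hence $\mF_2=\mF_1$ componentwise, $[w_k]$ is unchanged for $k\neq i$, and the class of the $i$-th disc changes by a fixed increment $\delta$ (an element of the relevant relative $\BZ_2$ homology group), namely the class of $K\cap E$ carried across the band, plus a $p[T^G_{w_j}]$ term when the slide is $p$-twisted.

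Then I would extract two facts. (a) Because $K$ is literally two parallel copies of $w_j$ glued to a rectangle — and, for a twisted slide, to additional tube pieces — whose attaching data depends only on $\omega$, the twisting parameter, and the germ of $w_j$ along $\partial w_j$, the operation of doubling $w_j$ over $G$ produces a $\BZ_2$ class that is independent of the interior homology of $w_j$; so $\delta$ depends only on the shared slide data and the common boundary germs, and is therefore the same increment for the slide on $\mW_1$ and for the corresponding slide on $\mW_1'$. (b) Each Clifford torus $C_c$ is determined by the germ of $R\cup G$ at the fixed double point $c$, and no double point is moved by a slide, so $[C_{w_k}]$ is unchanged for every $k$ (and likewise for the finger discs); moreover the boundary germs of $\mW_2$ and $\mW_2'$, and of $\mF_2$ and $\mF_2'$, remain isotopic, since $\partial w_i$ is modified using the same $\omega$ and germ data on both sides. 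Feeding the original Clifford relations through the slide formula, I get for $k\neq i$ that the $k$-th classes of $\mW_2$ and $\mW_2'$ still differ by $\sum_j n_{kj}[C_{w_j}]$, and for $k=i$ that the $i$-th classes differ by the same amount $\sum_j n_{ij}[C_{w_j}]$ as before the slide, while the finger systems differ exactly as before. So $(\mF_2,\mW_2)$ is Clifford equivalent to $(\mF_2',\mW_2')$ with the same matrices, and in particular the diagonal-sum conditions persist. The general case follows by induction on the number of slides: after each slide the two systems still have boundary germs agreeing up to isotopy by (b), so one re-aligns them and applies the single-slide step to the next slide of the sequence.

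The one step I expect to require real care is (a): that the homological increment of a (twisted) disc slide does not distinguish $w_j$ from a Clifford-equivalent replacement. This is exactly where one uses that the cap $K$ (and its twisted variant) is obtained by doubling $w_j$ over $G$, so that modulo $2$ the two copies of any correcting surface cancel and only the $\omega$- and germ-dependent contributions survive; combined with the already-recorded fact (Remark \ref{renormalizing}) that only the parity of the twisting matters, everything else is routine bookkeeping in relative $\BZ_2$ homology of $E$.
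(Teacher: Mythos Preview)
The paper gives no proof at all — the lemma is stated with a \qed\ and left as immediate. Your argument is a correct and reasonably careful unpacking of why: after aligning boundary germs, a single $G$-Whitney slide of $w_i$ over $w_j$ replaces $[w_i]$ by $[w_i]+[K]$, and since $K$ is built from \emph{two} parallel copies of $w_j$, replacing $w_j$ by a Clifford-equivalent $w_j'$ changes $[K]$ by $2\sum_k n_{jk}[C_{w_k}]=0$ in $\BZ_2$; meanwhile the Clifford correction $\sum_k n_{ik}[C_{w_k}]$ on $w_i$ itself passes through unchanged, and all other discs and all $C_{w_k}$ are untouched. One minor imprecision: you phrase (a) as saying the increment $\delta$ is literally independent of the interior of $w_j$, but strictly $\delta$ does depend on $w_j$ — what you actually use (and what suffices) is that $\delta-\delta'=2([w_j]-[w_j'])=0$ over $\BZ_2$. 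With that tweak the argument is exactly the routine bookkeeping the paper declines to write out.
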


\begin{lemma} \label{twist hsf} If $w_{i,j}^0$ and $w_{i,j}^n$ are respectively obtained by G-disc slides with 0 and n twisting  along $\omega$, a path from $w_i$ to $w_j$, then up to isotopy, the boundary germ of $w_{i,j}^0$ is equal to that of $w_{i,j}^n$.  Furthermore, $[w_{i,j}^n]=[w_{i,j}^0] + n [C_{w_j}]\in H_2(E, \partial w_{i,j}^0, \BZ_2)$.
 \end{lemma}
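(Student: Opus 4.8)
The plan is to prove both assertions by realizing $w^n_{i,j}$ from $w^0_{i,j}$ through an explicit modification supported near $\omega$ and the $G$-Whitney cap $K$, and to read off the effect on the boundary germ and on $\BZ_2$-homology from the local models of Figures~\ref{fig:disc slide} and~\ref{fig:pspinning}, using the cut-and-paste description of twisted slides recorded in Remark~\ref{renormalizing}(i).

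First, the boundary germ. Both $w^0_{i,j}$ and $w^n_{i,j}$ are obtained from $w_i$ by banding to the \emph{same} cap $K$; they differ only along the band over $\omega$, which by construction lies in $N(G)$ over the interior path $\omega\subset\inte(G)$ and is attached in the interiors of $w_i$ and of $K$. Thus the $n$-fold twisting occurs away from $\partial w^0_{i,j}\subset R\cup G$, so the curves $\partial w^n_{i,j}$ and $\partial w^0_{i,j}$ literally coincide, and near them both discs lie in the negative $I$-bundle of $G$. After a small isotopy of $w^n_{i,j}$, supported near the band's attaching region in the interior of $K$ and fixing a neighborhood of $\partial w^n_{i,j}$, the two discs agree near their common boundary. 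This gives the first assertion.

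Next, the homology count. With germs matched, $w^n_{i,j}$ and $w^0_{i,j}$ agree outside a neighborhood $P$ of $\omega\cup K$, so $[w^n_{i,j}]-[w^0_{i,j}]$ is represented by a closed surface $\Sigma\subset P\setminus(R\cup G)$ obtained by gluing the twisted and untwisted discs along $\partial P$. Sliding the $n$ twists off the band, across the cap $K$, and off $\partial K$ — equivalently, invoking Remark~\ref{renormalizing}(i) — exhibits $\Sigma$ as $n$ parallel copies of the $G$-normal torus $T^G_{w_j}$ about $\partial K$, so $[w^n_{i,j}]=[w^0_{i,j}]+n[T^G_{w_j}]$ in $H_2(E,\partial w^0_{i,j};\BZ_2)$. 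It then remains to identify $[T^G_{w_j}]=[C_{w_j}]$ in $H_2(E;\BZ_2)$. Pushing $\partial K$ off $R\cup G$ into $E$, it becomes (up to isotopy in $E$) the doubled parallel pushoff of the $R$-arc of $\partial w_j$, closed up near the two points $p,q\in R\cap G$ that $w_j$ cancels; over the embedded arc along which this doubled curve bounds in $E$ its normal torus is nullhomologous, and a local computation in the $4$-balls $N(p)$, $N(q)$ (each meeting $R\cup G$ in a Hopf link) matches the remaining end contributions with the Clifford tori $C_p$, $C_q$. Hence $[T^G_{w_j}]=[C_p]+[C_q]=[C_{w_j}]$, which together with the displayed equality proves the lemma.

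I expect the main obstacle to be the middle step: checking that a single full $G$-twist of the band changes the $\BZ_2$-class of the slid disc by exactly $[T^G_{w_j}]$ and not by $0$. This is subtle precisely because the twist can be localized to the interior of the band, where it appears homologically inert; one must track how its effect is transmitted through the cap $K$ to $\partial K$, together with the identification $[T^G_{w_j}]=[C_{w_j}]$ via a Mayer--Vietoris (or Alexander-duality) computation of $H_2(E;\BZ_2)$ localized near $p$ and $q$. Orientation and sign conventions play no role, since we work mod~$2$ and only the parity of $n$ matters.
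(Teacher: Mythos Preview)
Your argument is correct and follows the paper's approach: both appeal to Remark~\ref{renormalizing}(i) for the equality of boundary germs and for $[w^n_{i,j}]-[w^0_{i,j}]=n[T^G_{w_j}]$, and then identify $[T^G_{w_j}]=[C_{w_j}]$. The paper's proof is a one-line citation of that remark; you have simply unpacked the last identification in more detail.

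One slip to fix: $\partial K$ is a simple closed curve lying entirely in $G$, encircling the $G$-arc $\partial w_j\cap G$ (the two parallel copies of $w_j$ are pushed off $R$, and the joining rectangle has its free edges on $G$), so your description via ``the $R$-arc of $\partial w_j$'' should read ``the $G$-arc''. With that correction the cobordism is immediate: in $G\setminus(R\cap G)$ the curve $\partial K$ cobounds a pair-of-pants with small circles about $p$ and $q$, and taking the product with the normal $S^1$ to $G$ gives a $3$-chain in $\partial N(G)\setminus R\subset E$ exhibiting $[T^G_{w_j}]=[C_p]+[C_q]=[C_{w_j}]$. Your closing worry is also unnecessary---Remark~\ref{renormalizing}(i) is precisely the statement that each full band-twist, pushed across the cap, contributes one copy of $T^G_{w_j}$, so no separate Mayer--Vietoris computation is required.
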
 
 
 \begin{proof} It follows by Remarks \ref{renormalizing} i) that the boundary germ of $w_{i,j}^n$ can be isotoped to be equal to that of $w_{i,j}^0$ and that  $[w_{i,j}^n]-[w^0_{i,j}]=n[T^G_{w_j}]=n[C_{w_j}]\in H_2(E, \partial w_i^0, \BZ_2).$  \end{proof}

\begin{corollary} \label{normalizing equivalent}  Let $(\mF,\mW)$ be in immersed arc position and $(\hat\mF,\hat\mW)$ be in embedded arc position and obtained from $(\mF,\mW)$ by a finite sequence of disc slides.  

i) Suppose up to twisting along bands, $( \hat\mF', \hat\mW')$ is obtained from $(\mF,\mW)$  by the corresponding sequence of disc slides, then $I(\hat\mF,\hat\mW)=I(\hat\mF', \hat\mW')$. 

ii) $I(\hat\mF,\hat\mW)$ is independent of the initial normalization, i.e. if we start with a different normalization but do the corresponding sequence of disc slides to obtain $(\hat\mF',\hat\mW')$, then $I(\hat\mF',\hat\mW')=I(\hat\mF,\hat\mW)$.\end{corollary}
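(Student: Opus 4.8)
The plan is to derive both parts from Lemma \ref{clifford homology} together with Lemmas \ref{clifford persists} and \ref{twist hsf}. The one auxiliary fact I need first is that \emph{Clifford equivalence is transitive}: if $(\mF_a,\mW_a)$, $(\mF_b,\mW_b)$, $(\mF_c,\mW_c)$ are complete systems with $(\mF_a,\mW_a)$ Clifford equivalent to $(\mF_b,\mW_b)$ and $(\mF_b,\mW_b)$ Clifford equivalent to $(\mF_c,\mW_c)$, then since Clifford-equivalent discs have isotopic boundary germs their associated Clifford tori agree in $H_2(E;\BZ_2)$, so the two defining homology relations compose term by term and the two constraints $\sum_i n_{i,i}=0$ add to $0$; hence $(\mF_a,\mW_a)$ is Clifford equivalent to $(\mF_c,\mW_c)$.

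For part i), I would induct on the length $m$ of the slide sequence $D_1,\dots,D_m$. Write $S_\ell$ (resp.\ $S'_\ell$) for the system obtained by applying $D_1,\dots,D_\ell$ untwisted (resp.\ with the prescribed twisting along bands) to $(\mF,\mW)$, so $S_0=S'_0=(\mF,\mW)$, $S_m=(\hat\mF,\hat\mW)$ and $S'_m=(\hat\mF',\hat\mW')$. Assuming $S_\ell$ is Clifford equivalent to $S'_\ell$, let $S''_{\ell+1}$ be the result of applying the \emph{untwisted} slide $D_{\ell+1}$ to $S'_\ell$. Then $S_{\ell+1}$ is Clifford equivalent to $S''_{\ell+1}$ by Lemma \ref{clifford persists}, while $S''_{\ell+1}$ is Clifford equivalent to $S'_{\ell+1}$ by Lemma \ref{twist hsf} (and its $R$-analog): the twisted slide $D_{\ell+1}$ changes exactly one disc, and changes its class rel boundary by a multiple of the Clifford torus $[C_{w_j}]=[T^G_{w_j}]$ of the disc slid \emph{over}; as a slide is always over a different disc this is an off-diagonal change, so the condition $\sum_i n_{i,i}=0$ holds automatically. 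By transitivity $S_{\ell+1}$ is Clifford equivalent to $S'_{\ell+1}$, completing the induction. Since twisting along a band is a local modification inside the $I$-bundles of $R$ and $G$ which does not change the boundary germs up to isotopy, $(\hat\mF',\hat\mW')\in\EA$, and Lemma \ref{clifford homology} yields $I(\hat\mF,\hat\mW)=I(\hat\mF',\hat\mW')$.

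For part ii), I would use Remarks \ref{normalized choice}(i) to reduce to the case where the second normalization differs from the first by a single $2\pi$-rotation of one disc near $R$ or near $G$ (and the ``imposed'' rotations this forces on the remaining discs), then compose. By Remarks \ref{renormalizing}, running the untwisted slide sequence from the rerotated normalization agrees with running the original slide sequence from the first normalization, up to two harmless discrepancies: (a) the rotations persist as ambient rotations of finitely many discs in the final $\EA$ system, which by Remarks \ref{normalized choice}(iii) leave every $|f^j_p\cap w^j_q|$, and hence $I$, unchanged; and (b) twisting along bands is introduced precisely when a $G$-slide (resp.\ an $R$-slide) slides over or off of a disc rotated near $G$ (resp.\ near $R$), a rotation near the other surface having no effect on the slide. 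Thus the two resulting $\EA$ systems differ only by an ambient rotation of discs and by twisting along bands, so their invariants agree, the rotation handled by Remarks \ref{normalized choice}(iii) and the twisting by part i).

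The step I expect to require the most care is the bookkeeping in part ii): tracking exactly how the $2\pi$-rotations of the normalizing discs (and the rotations these force on the remaining discs via the no-boundary-bigon constraint) propagate into twisting of the later bands as the slides are carried out, and confirming that the only other discrepancy between the two final systems is an ambient rotation of the discs to which Remarks \ref{normalized choice}(iii) applies. Part i) is comparatively routine once transitivity of Clifford equivalence is established.
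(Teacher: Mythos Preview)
Your proof is correct and follows essentially the same approach as the paper. The paper's argument is terser---for i) it simply invokes Lemmas \ref{twist hsf} and \ref{clifford persists} to conclude that $(\hat\mF,\hat\mW)$ and $(\hat\mF',\hat\mW')$ differ by Clifford tori and then applies Lemma \ref{clifford homology}, while you spell out the induction and transitivity explicitly; for ii) the paper appeals directly to Remark \ref{renormalizing} to say a different normalization modifies the slide sequence by band-twisting and then reduces to i), whereas you additionally isolate the residual ambient rotations and dispatch them via Remarks \ref{normalized choice}(iii). Your extra care in ii) is not wasted, but note that those rotations are isotopies and hence already absorbed into Clifford equivalence, so the paper's shorter phrasing is also complete.
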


\begin{proof} i) By Lemmas \ref{twist hsf} and \ref{clifford persists}  $(\hat \mF,\hat\mW)$ differs from $(\hat\mF',\hat \mW')$ by Clifford tori.  The result now follows from  Lemma \ref{clifford homology}.

ii)  It follows from Remark \ref{renormalizing} that  a different normalization of $(\mF,\mW)$ modifies the sequence of disc slides's by twisting along bands and hence the resulting $(\hat\mF',\hat \mW')$ differs from $(\hat \mF,\hat\mW)$ by Clifford tori.  The result now follows from i).\end{proof}

\begin{definition}  We say that  $(\mF_1,\mW_1)=(\mF,\mW), \cdots, (\mF_n,\mW_n)$ is a \emph{disc slide sequence} if for each $i>1$ either 

i) $(\mF_i,\mW_i)$ is obtained from $(\mF_{i-1},\mW_{i-1})$ by a single disc slide plus possibly untwisted isotopy of the resulting discs or 

ii) an isotopy where a single $D\in \mW_{i-1}\cup \mF_{i-1}$ is replaced by a new untwisted representative, i.e. $D$ undergoes a G-twist or an R-twist.
\vskip 6pt
If the sequence involves only R-disc slides and G-twistings (resp. G-disc slides and R-twistings), then the sequence is an \emph{R-disc (resp. G-disc) slide sequence}.
\end{definition}

\begin{definition}  We say that the untwisted $(\mF,\mW)\in \REA$ (resp. $\in \GEA$) if $(\mF\cup\mW)\cap R$ (resp.  $(\mF\cup\mW)\cap G)$ is an embedded arc.  If either hold, then we say that $(\mF,\mW)\in $ \emph{Half-EA}.  \end{definition}

\begin{lemma} \label{ia to ea} Let $(\mF,\mW)$ be normalized in immersed arc position.  Then there is a disc slide  sequence $(\mF_1,\mW_1)=(\mF,\mW), \cdots, (\mF_n,\mW_n)$ such that $(\mF_n,\mW_n)\in \EA$.  If  $(\mF,\mW)\in \REA$ (resp. $(\mF,\mW)\in \GEA$) , then the sequence can be chosen to  not involve $R$-disc (resp. $G$-disc) slides. \end{lemma}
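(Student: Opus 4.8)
The plan is to induct on the total number $N$ of self-intersection points of the immersed arcs $(\mF_i\cup\mW_i)\cap G_i$ and $(\mF_i\cup\mW_i)\cap R_i$ over $1\le i\le k$. If $N=0$ then $(\mF,\mW)\in\EA$ and there is nothing to prove, so I would show that when $N>0$ there is a disc slide---followed by an untwisted isotopy and a re-normalization using Lemma \ref{normal untwisting}---after which $N$ has strictly dropped; the conclusion then follows by induction. Since a disc slide on the index-$i$ discs uses a band in $G_i$ or $R_i$, it affects only the index-$i$ arcs, so it suffices to treat a single $i$.

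So fix $i$ and suppose, after relabeling $R$ and $G$ if necessary (this is where the $\REA$/$\GEA$ asymmetry will enter, see below), that $\gamma:=(\mF_i\cup\mW_i)\cap G_i$ has a self-intersection. Among all sub-loops $\gamma|_{[s,t]}$ with $\gamma(s)=\gamma(t)$, I would pick an innermost one, so that $L:=\gamma|_{[s,t]}$ is an embedded circle in $G_i\cong S^2$ bounding a disc $\Delta\subset G_i$ with $\inte\Delta\cap\gamma=\emptyset$. Then $\inte\Delta$ contains no point $a^i_*$, and a neighborhood of $\Delta$ meets neither $R_i$ nor the interior of any disc of $\mF_i\cup\mW_i$. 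Now $L$ is a concatenation of boundary pieces $\partial D\cap G_i$ with $D\in\mF_i\cup\mW_i$, and the reduction move is to slide the strand of $\gamma$ through $\gamma(s)=\gamma(t)$ along $L$, on the side of $\Delta$: each time this strand must pass a boundary piece of a \emph{same-type} disc (Whitney over Whitney, or finger over finger) one performs a $G$-disc slide of that disc, with the guiding band pressed tightly against $L$ inside $\Delta$; each time it must pass a finger/Whitney junction, or a further piece of its own disc, one uses an untwisted isotopy, legitimate because an untwisted finger disc and an untwisted Whitney disc occupy opposite $I$-bundles of $N(G_i)$ (and because $\inte\Delta$ is empty). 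This cancels the chosen self-intersection. Because the band never leaves the empty side of $L$, no new self-intersection of $\gamma$ is created; the only new intersections introduced are between interiors of discs (as in Figure \ref{fig:disc slide}(b)), which are irrelevant to the arc conditions. A small isotopy removes the trivial circle that the cap construction may insert into $(\mF_i\cup\mW_i)\cap R_i$, so the system is again in $\IA$, and Lemma \ref{normal untwisting} re-normalizes it via $R$- and $G$-twists, which are allowed in a disc slide sequence. Thus $N$ drops, completing the inductive step. (This is the ``controlled'' form of Quinn's passage from immersed to embedded arcs \cite[\S4]{Qu}.)

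For the refinement, suppose $(\mF,\mW)\in\REA$, so every $(\mF_i\cup\mW_i)\cap R_i$ is already embedded and every self-intersection lies in a $G_i$-arc. Running the induction then uses only $G$-disc slides and untwisted isotopies and never an $R$-disc slide, and one must check that these moves keep each $R_i$-arc embedded. The isotopies are supported near $G_i$, hence disjoint from $R_i$. A $G$-disc slide of $w^i_p$ over $w^i_q$ alters $(\mF_i\cup\mW_i)\cap R_i$ only by inserting parallel push-offs of the sub-arc $\partial w^i_q\cap R_i$; since that sub-arc is embedded, these push-offs can be taken inside a tubular neighborhood of it that meets the rest of the (embedded) $R_i$-arc only near its endpoints $a^i_*$, where they are spliced in without crossings; similarly for a finger slide. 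The case $(\mF,\mW)\in\GEA$ is symmetric. Finally, for the general statement I would first run the argument of the second paragraph with the roles of $R$ and $G$ reversed to make every $(\mF_i\cup\mW_i)\cap R_i$ embedded---using $R$-disc slides and isotopies and ignoring the damage this does to the $G_i$-arcs---reaching $\REA$, and then apply the $\REA$ case.

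The main obstacle is the geometric core of the second paragraph: making rigorous the controlled ``slide along $L$'' and verifying it is clean, i.e.\ that passing same-type boundary pieces by $G$-disc slides and opposite-type (and self-) junctions by isotopy genuinely cancels the crossing, creates no new self-intersection of either index-$i$ arc, and---after the cleanup isotopy---returns the system to $\IA$ with the $R_i$-arc perturbed only in the controlled way the $\REA$ case needs. The bookkeeping of boundary germs and twisting throughout, so that Lemma \ref{normal untwisting} applies verbatim at each stage, is the other point requiring care.
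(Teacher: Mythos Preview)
Your inductive strategy hinges on finding an embedded sub-loop $L=\gamma|_{[s,t]}$ bounding a disc $\Delta$ with $\inte\Delta\cap\gamma=\emptyset$; equivalently, a monogon face of $\gamma$ in $G$. Such a face need not exist. Here is a counter-example with two crossings $X,Y$ visited in the order $X,Y,X,Y$: start $\gamma$ at $a_0$, run right along the $x$-axis through $X$ and $Y$, make a large rectangular loop back down to $X$, drop below the axis and across to $Y$, then exit upward to $a_{end}$ sitting \emph{inside} the large loop. The three complementary faces have $2$, $3$, and $3$ corners respectively, so there is no monogon; each of the two embedded sub-loops bounds, on either side, a disc whose interior contains a sub-arc of $\gamma$ together with one of the arc endpoints. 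This configuration is realizable as $(\mF\cup\mW)\cap G$ with $n=3$: placing $a_1,\ldots,a_5$ appropriately along the arc one arranges both crossings to be of type $f_p\cap w_q$ while keeping the $f$-arcs (and separately the $w$-arcs) pairwise disjoint. So the ``empty side of $L$'' you rely on to ensure the sweep introduces no new self-crossings is in general unavailable; this is the gap, and it occurs \emph{before} the bookkeeping you flag as the main obstacle. (One could try innermost bigons instead, which do exist, but then the move is no longer a single strand-slide and the argument would need substantial reworking.)

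The paper's proof takes a quite different route. Rather than search for innermost regions, it works along the arc from $a_0$ outward. To remove the first excess crossing $x\in\inte(f_1\cap G)\cap w_i$, it ``swings'' the arc $\partial w_i\cap G$ all the way around the sphere $G$ to the far side of $x$ --- realized by $G$-disc-sliding $w_i$ once over every other $w_j$, $j\neq i$, followed by a re-untwisting to absorb the resulting boundary bigons near $a_{2i-1},a_{2i}$. On $S^2$ this swing is always available, regardless of how the rest of $\gamma$ sits. Iterating clears $f_1$, then $w_1$ (via $G$-finger-slides over $f_1$), then $f_2$, and so on; since the entire $G$-phase creates no new $\mF\cap\mW\cap R$ intersections, the $\REA$ refinement is immediate.
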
  

\begin{proof}  We will first do a sequence of G-disc slides to obtain $(\mF',\mW')\in \GEA$, then do a sequence of R-disc slides to obtain the desired $(\mF_n,\mW_n)$.  We begin with a sequence of G-Whitney disc slides to eliminate excess intersections of $f_1\cap G$ with $\mW\cap G$.  See Figure \ref{fig:iaea} a) which shows a schematic of $(\mF\cup\mW)\cap G$ where $\mW$ looks standard and intersects various components of $\mF$.  Also shown is the first intersection $x\in w_i$ of $f_1\cap \mW\cap G$ where $f_1\cap G$ starts at $a_0$.  By swinging $w_i\cap G$ about the 2-sphere G we go from Figure \ref{fig:iaea} a) to \ref{fig:iaea} c) by an isotopy which passes through each $w_j, j\neq i$, once.  Thus by doing a suitable sequence of G-disc slides of $w_i$, one for  each $w_j, j\neq i$ we can transform $w_i$ to appear as in Figure \ref{fig:iaea} c).  An example is shown in Figure \ref{fig:iaea} b).   Note that $x$ has been eliminated from $w_i$ at the cost of creating  boundary bigons  with $f_i$ and $f_{i+1}$, or just $f_i$ if $i=n$ which are then eliminated by replacing $w_i$ by a new untwisted representative.  The net result is the elimination of this single point of intersection.    A sequence of such operations eliminates all the  excess $f_1\cap\mW\cap G$ intersections.  

Next, by repeated G-finger disc slides of $\mF$ over $f_1$, one for each point of $\mF\cap\inte(w_1\cap G)$ we eliminate excess intersections of $w_1\cap G$ with $\mF$.   A  sequence of G-Whitney disc slides over $w_1$ then transfers all the excess $f_2\cap\mW\cap G$ intersections to $f_1$ and these intersections are then eliminated as in the previous paragraph.  By  induction we eliminate all excess $\mF, \mW$ intersections in $G$ to obtain $(\mF', \mW')$.   

Note that in the process we did not create any new $\mF\cap \mW\cap R$ intersections.  By a sequence of R-disc slides starting from $(\mF', \mW')$ we obtain an $(\mF_n,\mW_n)\in \EA$.  \end{proof} 

  \begin{figure}[!htbp]
    \centering
    \includegraphics[width=.8\linewidth]{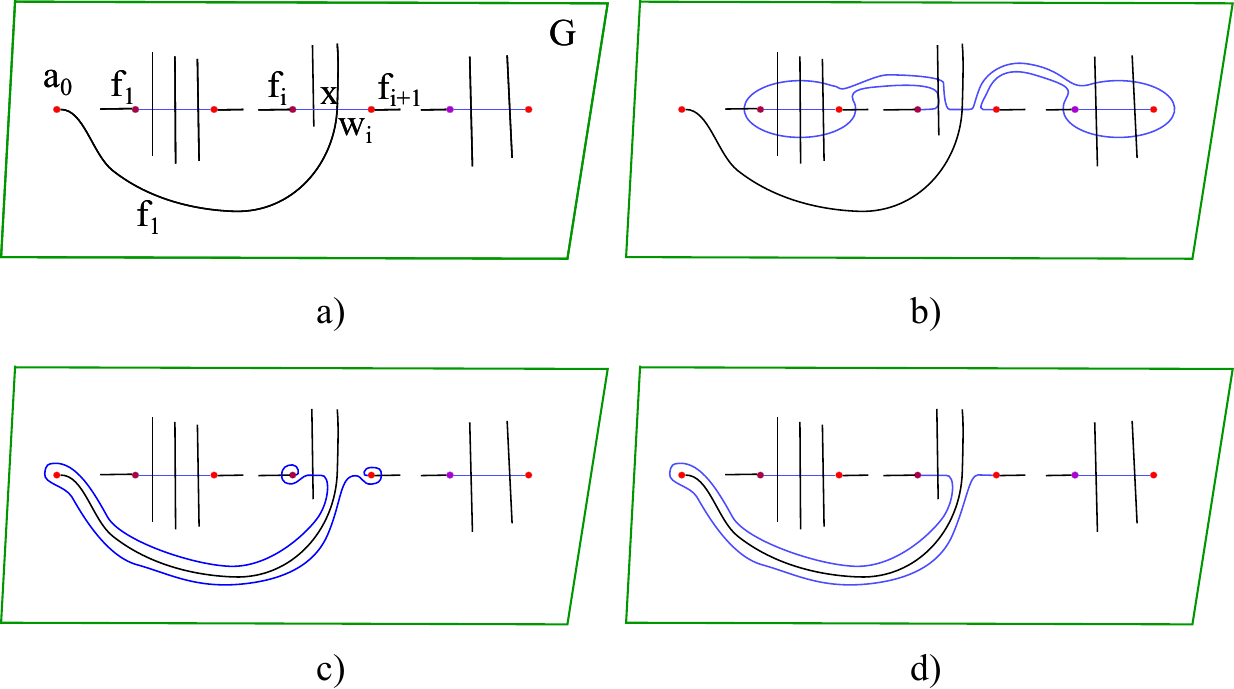}
    \caption{Going from Immersed Arc to Embedded Arc Position}
    \label{fig:iaea}
\end{figure}

\begin{lemma} \label{no new ints}  Let $\mF,\mW$ be untwisted sets  of discs  such that $(\mF\cup\mW)\cap R$ is the embedded union of circles and possibly an arc.  Let $(\mF',\mW')$ be obtained from $(\mF,\mW)$ by either an untwisted  G-disc slide or by an $R$-twist  of a single disc supported near $R$, then for all $i,j$,\ $|\inte(f_i)\cap\inte(w_j)|=|\inte(f_i')\cap\inte(w_j')|$ mod-2 where $\mF=\{f_1,\cdots, f_n\}$ and $\mW=\{w_1, \cdots, w_n\}$.  A similar statement holds with $R$ and $G$ reversed.  \end{lemma}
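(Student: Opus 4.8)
The plan is to handle the two moves one at a time. In each case I would write the modified disc(s) explicitly as the old disc(s) with extra pieces attached, and then use the hypotheses to see that those extra pieces are disjoint from the interiors of the discs in the \emph{opposite} family (finger versus Whitney).

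\textbf{The disc-slide case.} Suppose $(\mF',\mW')$ is obtained by an untwisted $G$-Whitney disc slide of $w_i$ over $w_j$ along a path $\omega\subset G$. Only $w_i$ changes, so it suffices to show $|\inte(f_k)\cap\inte(w_i')|\equiv|\inte(f_k)\cap\inte(w_i)|\pmod 2$ for each $k$. By the construction in Definition \ref{hsf}, $w_i'$ is built by deleting a small disc from $w_i$ and from the $G$-Whitney cap $K$ about $w_j$ — both chosen disjoint from every $\inte(f_k)$ — and inserting a band $b$ that runs along $\omega$ inside the negative $I$-bundle of $G$. Away from $N(R)\cup N(G)$ the cap $K$ consists of two parallel copies of $w_j$, while the remaining parts of $K$ (the connecting rectangle, and the collars of the two copies near $R$ and near $G$), together with $b$, lie entirely in the negative $I$-bundles of $N(R)$ and $N(G)$, since $w_j$ is untwisted. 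Because $f_k$ is untwisted, $\inte(f_k)$ meets $N(R)$ and $N(G)$ only inside the positive $I$-bundles; hence it is disjoint from $b$, from the rectangle, and from the collars of the copies of $w_j$, and the only intersections of $\inte(f_k)$ with the new material occur where $\inte(f_k)$ meets the two parallel copies of $w_j$ away from $N(R)\cup N(G)$. Counting intersection points then yields $|\inte(f_k)\cap\inte(w_i')|=|\inte(f_k)\cap\inte(w_i)|+2\,|\inte(f_k)\cap\inte(w_j)|$, which is congruent to $|\inte(f_k)\cap\inte(w_i)|$ mod $2$; this proves the claim (and the count is unchanged when $j=i$). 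An untwisted $G$-finger disc slide of some $f_i$ over $f_j$ is handled identically with the roles of the positive and negative $I$-bundles reversed: the cap and the band now lie in the positive $I$-bundles of $N(G)$ and $N(R)$, and one tests against the interiors of the untwisted Whitney discs, which lie in the negative ones.

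\textbf{The twist case.} Suppose $(\mF',\mW')$ is obtained by an $R$-twist of a single disc $D$ supported near $R$; say $D=w_i$, the case $D=f_i$ being symmetric. Since $(\mF\cup\mW)\cap R$ is an embedded $1$-manifold, it has no isolated points, so $\inte(f_k)\cap R=\emptyset$ for every $k$; as $\bigcup_k\inte(f_k)$ is compact, it lies in the complement of some closed tubular neighborhood $U$ of $R$. One may take the $R$-twist to be supported inside $U$, so $w_i'$ agrees with $w_i$ outside $U$, and then $\inte(f_k)\cap\inte(w_i')=\inte(f_k)\cap\inte(w_i)$ on the nose for each $k$, with every other disc unchanged. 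The statement with $R$ and $G$ interchanged follows by running these two arguments with $R$ and $G$ swapped, now using the hypothesis that $(\mF\cup\mW)\cap G$ is an embedded $1$-manifold.

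\textbf{Expected main obstacle.} The only step with real content is the disjointness verification in the disc-slide case: one must keep careful enough track of the germs near $R\cup G$ of the cap $K$, the band $b$, and the finger discs $f_k$ to conclude that an untwisted finger disc cannot meet $K$ or $b$, because they occupy opposite $I$-bundles of $N(R)$ and $N(G)$. Everything after that is a routine transversality count of intersection points modulo $2$.
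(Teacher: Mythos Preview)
Your overall strategy matches the paper's, but both cases have genuine gaps tied to the same issue: you never make essential use of the hypothesis that $(\mF\cup\mW)\cap R$ is embedded.

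\textbf{Disc-slide case.} Your argument would prove the lemma without the embeddedness hypothesis, but Remark~\ref{new ints}(i) shows that statement is false: if $|w_j\cap\inte(f_k\cap R)|=1$ then a $G$-disc slide over $w_j$ \emph{does} create one new interior intersection mod~$2$ with $f_k$. The flaw is in your description of $K$ near $R$. The cap is built from \emph{slightly enlarged} parallel copies of $w_j$, and that enlargement near $R$ wraps around $\partial w_j\cap R$; this wrap-around piece is not confined to the negative $I$-bundle, and it is exactly what meets $\inte(f_k)$ once when $\partial f_k$ crosses $\partial w_j$ on $R$ (this is what Figure~\ref{fig:disc slide}(b) is depicting). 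The paper's one-line fix is that embeddedness of $(\mF\cup\mW)\cap R$ rules out any such boundary crossing, so the only new interior intersections are the paired ones coming from the two parallel copies.

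\textbf{Twist case.} Your compactness claim is simply false: $\bigcup_k\inte(f_k)$ is a union of open discs, and its closure contains $\partial f_k\cap R\subset R$, so $\inte(f_k)$ meets every tubular neighborhood of $R$. Thus you cannot take the $R$-twist to have support disjoint from all $\inte(f_k)$. The paper's argument is different: since $(\mF\cup\mW)\cap R$ is embedded, the arc $\partial f_k\cap R$ is disjoint from $\partial w_i\cap R$, so $f_k\cap N(R)$ stays away from the support of the local rotation of $w_i$, and no new intersections can be created.
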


\begin{proof}  Suppose the G-disc slide arises from  sliding $w_i$ over $w_j$ along $\omega$ with $w_i'$ the resulting disc.  Since $\mF, \mW$ are untwisted the interior of the  band is disjoint from $\mF$.  Each intersection of $\inte(w_j)$ with $\mF$ gives rise to two new intersections of the Whitney cap with $\mF$.  Since $(\mF\cup\mW)\cap R$ is embedded  there are no other new intersections.  Similarly an $R$-twist corresponds to rotating a disc near its intersection with $R$. Since $\mF\cup \mW$ is embedded near $R$, there are no new intersections.  \end{proof}

\begin{remarks}  \label{new ints} i) If $|w_j\cap\inte(f_i\cap R)|=1$, then a G-disc slide over $w_j$  will create one new point of intersection mod 2 with $f_i$.  It is for this reason that an analogous invariant of $(\mF, \mW)$ in immersed arc position that just uses normalized discs is unsuitable.

ii) The next lemmas show that at the cost of possibly adding Clifford tori, G-disc sliding commutes with R-disc sliding, disc sliding commutes with boundary twisting, and G-twisting commutes with R-twisting. \end{remarks}

\begin{lemma} (Clifford Commutation Lemma 1) \label{clifford commutes}Let $\mW=\{w_1, \cdots, w_n\}$ be a complete system  of untwisted Whitney discs.  Let $\omega_g$ (resp $\omega_r$) be an embedded path in $G$ (resp. $R$) from $w_i$ to $w_j$ (resp. $w_p$ to $w_q$).  Let $\mW_1$ be obtained by doing a G-disc slide  to $\mW$ along $\omega_g$ and $\mW_2$ be obtained from $\mW_1$ by doing a R-disc slide along $\omega_r$.  Let $\mW_3$ be obtained by doing an R-disc slide to $\mW$ along $\omega_r$ and $\mW_4$ be obtained by doing a G-disc slide  to $\mW_3$ along $\omega_g$.  Then $\mW_2$ is Clifford equivalent to $\mW_4$.\end{lemma}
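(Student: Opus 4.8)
The plan is to put the configuration in standard position, reduce to a finite case analysis according to which of $i,j$ coincide with $p,q$, and isolate the two cases in which Clifford tori genuinely appear. First I would use general position to arrange that the arc $\omega_g\subset G$ (resp.\ $\omega_r\subset R$) misses the finitely many points of $R\cap G$; since sliding along isotopic paths yields isotopic discs, this changes nothing. With this normalization, a $G$-disc slide of $w_i$ over $w_j$ along $\omega_g$ is supported in an arbitrarily small neighborhood of $(\partial w_i\cap G)\cup\omega_g\cup w_j$, alters only the disc $w_i$, and replaces it by $w_i$ banded along $\omega_g$, inside the negative $I$-bundle of $G$, to the $G$-Whitney cap $K^G_{w_j}$ (a doubled parallel copy of $w_j$); symmetrically, the $R$-slide alters only $w_p$, replacing it by $w_p$ banded along $\omega_r$ to the $R$-Whitney cap $K^R_{w_q}$. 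Consequently every $w_l$ with $l\notin\{i,p\}$ is literally the same in $\mW_2$ and in $\mW_4$, and the boundary germs of $\mW_2$ and $\mW_4$ agree up to isotopy because the two bands meet $\partial\mW$ along disjoint arcs lying in disjoint neighborhoods of $G$ and of $R$ away from $R\cap G$. So it remains to compare the $\BZ_2$-homology classes of the (at most two) discs $w_i$ and $w_p$.

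If $\{i,j\}\cap\{p,q\}=\emptyset$, or if $i=p$, or if $j=q$, then the two modifications act on mutually undisturbed pieces of the configuration: the $G$-slide rebuilds $w_i$ near $G$ out of an unchanged $w_j$, the $R$-slide rebuilds $w_p$ near $R$ out of an unchanged $w_q$, and when $i=p$ the two bands attach to $w_i$ along the disjoint arcs $\partial w_i\cap G$ and $\partial w_i\cap R$. In all these cases $\mW_2$ and $\mW_4$ are isotopic and we are done. The remaining possibilities are $i=q$ with $j\ne p$, the mirror case $j=p$ with $i\ne q$, and $i=q$ together with $j=p$. Take $i=q$, $j\ne p$. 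In $\mW_4$ the $R$-slide leaves $w_j$ untouched, so $w_i^{\mW_4}=w_i\cup_{\omega_g}K^G_{w_j}=w_i^{\mW_2}$ and only $w_p$ can differ; in $\mW_2$ the $R$-slide instead bands $w_p$ to $K^R_{w_i'}$ with $w_i'=w_i\cup_{\omega_g}K^G_{w_j}$, and $K^R_{w_i'}$ differs from $K^R_{w_i}$ by two parallel copies of $K^G_{w_j}$ tubed onto it at two nearby points. Such a pair of parallel copies is a possibly-twisted double of $K^G_{w_j}$; arguing exactly as in Lemma~\ref{twist hsf} and Remarks~\ref{renormalizing}, it can be absorbed at the cost of at most one copy of the normal torus $T^G_{w_j}$, so $[w_p^{\mW_2}]=[w_p^{\mW_4}]+n_{p,j}[C_{w_j}]$ with $n_{p,j}\in\{0,1\}$ and every other coefficient $0$. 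Since $p\ne j$ this contribution is off-diagonal, so $\sum_l n_{l,l}=0$ and $\mW_2$ is Clifford equivalent to $\mW_4$. The mirror case $j=p$, $i\ne q$ is the same with the roles of the two slides interchanged.

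The main point --- and the reason the clause $\sum_l n_{l,l}=0$ is written into the definition of Clifford equivalence --- is the doubly coincident case $i=q$ and $j=p$, i.e.\ sliding $w_i$ over $w_j$ in $G$ and $w_j$ over $w_i$ in $R$, where both $w_i$ and $w_j$ genuinely change. Tracing the definitions, $w_i^{\mW_2}=w_i\cup_{\omega_g}K^G_{w_j}$ whereas $w_i^{\mW_4}=w_i\cup_{\omega_g}K^G_{w_j''}$ with $w_j''=w_j\cup_{\omega_r}K^R_{w_i}$, so these differ by two parallel copies of $K^R_{w_i}$; symmetrically $w_j^{\mW_2}$ and $w_j^{\mW_4}$ differ by two parallel copies of $K^G_{w_j}$. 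As before this yields $[w_i^{\mW_2}]=[w_i^{\mW_4}]+\epsilon_i[C_{w_i}]$ and $[w_j^{\mW_2}]=[w_j^{\mW_4}]+\epsilon_j[C_{w_j}]$ with $\epsilon_i,\epsilon_j\in\{0,1\}$ and no other coefficients, so now both Clifford tori are diagonal and Clifford equivalence holds precisely when $\epsilon_i=\epsilon_j$. This equality is the crux, and I would prove it by a direct framing computation in a small neighborhood of $w_i\cup w_j\cup\omega_g\cup\omega_r$: each $\epsilon$ measures the mod-$2$ twisting of the corresponding doubled cap, and both twistings are computed from the single commutator of the $G$-band and the $R$-band --- equivalently, the orientation-reversing involution of the local model that swaps the roles of $G$ and $R$ carries the $\mW_2$-configuration to the $\mW_4$-configuration and interchanges the two defects, forcing $\epsilon_i=\epsilon_j$. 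Then $\sum_l n_{l,l}=\epsilon_i+\epsilon_j=0$, which together with the earlier matching of germs finishes the proof. I expect this last framing bookkeeping to be the only genuinely delicate step.
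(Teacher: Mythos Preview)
Your overall strategy --- matching boundary germs first, then a case analysis on which of $i,j,p,q$ coincide --- is exactly the paper's approach, and your case breakdown is equivalent to theirs. The off-diagonal cases are handled correctly.

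The one place you make things harder than necessary is the doubly coincident case $i=q$, $j=p$. You treat the coefficients $\epsilon_i,\epsilon_j$ as unknowns in $\{0,1\}$ and then propose a symmetry or framing argument to force $\epsilon_i=\epsilon_j$. But no such indirect argument is needed: the very computation you sketched in the off-diagonal case applies verbatim here and gives $\epsilon_i=\epsilon_j=1$ directly. Concretely, $w_i^{\mW_4}$ is the $G$-slide of $w_i$ over $w_j'=w_j\cup_{\omega_r}K^R_{w_i}$, while $w_i^{\mW_2}$ is the $G$-slide over the original $w_j$; the two $G$-caps $K^G_{w_j'}$ and $K^G_{w_j}$ differ by exactly the $R$-normal torus $T^R_{w_i}$, so $[w_i^{\mW_4}]=[w_i^{\mW_2}]+[C_{w_i}]$. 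Symmetrically $[w_j^{\mW_2}]=[w_j^{\mW_4}]+[C_{w_j}]$. Hence $n_{i,i}=n_{j,j}=1$ and $\sum_l n_{l,l}=0\bmod 2$. The ``possibly-twisted double'' language and the appeal to Lemma~\ref{twist hsf} introduced ambiguity where there is none: two parallel copies of a Whitney cap, as produced by the cap-of-a-cap construction, always yield exactly one normal torus, not ``at most one''.

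Your proposed $G\leftrightarrow R$ involution argument would in any case need more justification --- there is no global involution swapping $G$ and $R$, and even locally it is not obvious it carries $\mW_2$ to $\mW_4$ rather than to some other configuration. The direct computation is both shorter and cleaner.
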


\begin{proof}  Since the boundary germ of a disc near $G$ (resp. $R$) resulting from a $G$ (resp. $R$) disc slide is determined by $\omega_g, w_i\cap G$ and $w_j\cap G$ (resp. $\omega_r, w_p\cap R$ and $w_q\cap R$) it follows that the germs of $ \mW_2$ and $\mW_4$ are isotopic.  It remains to show that their homology classes are Clifford equivalent.
\vskip 8pt
\noindent\emph{Case 1:} $ i,j,p,q$ are all distinct.
\vskip 8pt
\noindent\emph{Proof of Case 1:} Being distinct the disc slides can be done independently.\qed
\vskip 8pt
\noindent\emph{Case 2:} $i=p$ or $j=q$.
\vskip 8pt
\noindent\emph{Proof of Case 2:}  In all cases $\mW_2$ is isotopic to $\mW_4$.\qed
\vskip 8pt
\noindent\emph{Case 3:} $p=j$ and $q\neq i$.
\vskip 8pt
\noindent\emph{Proof of Case 3:}  Let $\mW^k:=\{w^k_1,\cdots, w^k_n\}$ for $1\le k\le 4$.  Here $w^2_r$ are $w^4_r$ are isotopic for $r\neq i$,  $w^2_i$ is the result of G-disc sliding $w_i$ over $w_j$ and $w^4_i$ is the result of G-sliding $w_i$ over $w^2_j$.  In this case $[w^4_i]=[w^2_i] + [T^R_{w_q}]=[w^2_i]+[C_{w_q}]$.\qed
\vskip 8pt
\noindent\emph{Case 4:} $p=j$ and $q=i$.
\vskip 8pt
\noindent\emph{Proof of Case 4:}  Let $w_i'$ denote the result of sliding $w_i$ over $w_j$ using $\omega_g$ and $w_j'$ the result of sliding $w_j$ over $w_i$ using $\omega_r$.  Let $w_i''$ the result of sliding $w_i$ over $w_j'$ using $\omega_g$ and $w_j''$ the result of sliding $w_j$ over $w_i'$ using $\omega_r$.  Then with notation as in the proof of Case 3, $w^2_i=w_i', w^2_j=w_j'', w^4_i=w_i'', w^4_j=w_j' $ and $[w_i'']=[w_i']+[T^R_{w_i}]=[w_i']+[C_{w_i}]$ while $[w_j'']=[w_j']+[T^G_{w_j}]=[w_j']+[C_{w_j}]$.\end{proof}  

\begin{lemma} (Clifford Commutation Lemma 2) \label{clifford commutes two} i)  Up to Clifford equivalence G-disc sliding commutes with G-twisting and R-disc sliding commutes with R-twisting,  

ii) G-disc sliding commutes with R-twisting and R-disc sliding commutes with G-twisting and

iii) G-twisting commutes with R-twisting.
\end{lemma}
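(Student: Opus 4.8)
The plan is to follow the template of the proof of Lemma~\ref{clifford commutes}. For each of the three statements one compares the two finger/Whitney systems produced by performing the two operations in the two possible orders, and there are two things to check: that their boundary germs agree up to isotopy, and that their $\BZ_2$-homology classes differ by Clifford tori with vanishing diagonal sum. For the germs, note that a $G$-disc slide alters the germ near $G$ of a single disc in a way determined by the slide path and the arcs $w_\bullet\cap G$, a $G$-twist alters it by a prescribed parity of rotations about $G$, and the $R$-operations alter germs near $R$ analogously; since modifications near $G$ and near $R$ do not interfere, and since two $G$-modifications (or two $R$-modifications) on distinct discs are supported in disjoint regions, the germ of each disc in the final system is the same function of the combinatorial data regardless of the order. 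So in every case the germs match, and it remains to do a short case analysis on the homology classes, exactly as in Cases~1--4 of Lemma~\ref{clifford commutes}, using $\BZ_2$ coefficients so that only parities matter.

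Parts~(ii) and (iii), together with their $R\leftrightarrow G$ mirrors, are the routine cases. For (iii), a $G$-twist of $w_k$ is supported in a collar of $w_k\cap G$ and an $R$-twist of $w_\ell$ in a collar of $w_\ell\cap R$; these can be taken disjoint even when $k=\ell$, so the operations commute up to ambient isotopy and no Clifford tori appear. For (ii), write the $G$-disc slide as sliding $w_i$ over $w_j$ along a path $\omega_g\subset G$, so it alters the system only over $G$ and over the $G$-Whitney cap $K$ of $w_j$. If $k\notin\{i,j\}$, or $k=i$ (here the slide touches $w_i$ near $G$ and the $R$-twist touches $w_i$ near $R$; cf.\ Remarks~\ref{renormalizing}~ii)), the supports are disjoint and the operations commute up to isotopy. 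If $k=j$, the cap $K$ is a \emph{double} of $w_j$, so $R$-twisting $w_j$ alters $K$ --- hence the slid disc $w_i'$ --- by a doubled $R$-twist, which is trivial in $\BZ_2$-homology, while $w_j$ itself acquires the same class in either order; thus the two systems are Clifford equivalent.

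The real work is in part~(i). With the $G$-disc slide again sliding $w_i$ over $w_j$ along $\omega_g$ and a $G$-twist by $p$ rotations on $w_k$, the cases $k\notin\{i,j\}$ and $k=j$ are handled exactly as in part~(ii): disjoint supports in the first, and a doubled (hence mod-$2$ trivial) twist of the cap in the second. The delicate case is $k=i$, where \emph{both} orders involve $G$-twisting $w_i$. One order produces $w_i'$ and then $G$-twists it, changing the class by $p[C_{w_i}]$ once germs are matched; the other order $G$-twists $w_i$ first, with the same contribution $p[C_{w_i}]$, and then performs the corresponding slide, which by Remarks~\ref{renormalizing}~i) is a $p$-twisted $G$-disc slide of the original $w_i$ over $w_j$ and hence by Lemma~\ref{twist hsf} contributes an additional $p[C_{w_j}]$. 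The $p[C_{w_i}]$ terms cancel in the comparison, leaving only the off-diagonal correction $p[C_{w_j}]$ on $w_i$, so $\sum_\ell n_{\ell,\ell}=0$ and the two systems are Clifford equivalent. I expect this last case --- making sure the Clifford-torus corrections produced by twisting and sliding the \emph{same} disc cancel on the diagonal, leaving a genuine Clifford equivalence rather than merely an $H_2$-change --- to be the main obstacle.
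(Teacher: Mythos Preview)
Your overall strategy coincides with the paper's: check that the two orders yield the same boundary germ, then track the $\BZ_2$-homology difference as a sum of Clifford tori and verify the diagonal vanishes. Parts~(ii) and~(iii) are fine; note that the paper disposes of~(ii) in one word via Remark~\ref{renormalizing}~ii), which says an $R$-twist of either disc leaves a $G$-disc slide literally unchanged --- no ``doubled twist'' bookkeeping is needed, because the $R$-twist is supported away from $\partial K\subset G$ and hence is an interior isotopy of the cap.

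The gap is in part~(i), case $k=j$. You transplant the part~(ii) argument, asserting that $G$-twisting $w_j$ alters the cap by a ``doubled (hence mod-$2$ trivial) twist''. That reasoning fails here: the $G$-twist is supported near $G$, exactly where $\partial K$ sits, so it is \emph{not} an interior isotopy of the cap. In fact Remark~\ref{renormalizing}~i) and Lemma~\ref{twist hsf} give that sliding over the $G$-twisted $w_j$ is a $(-p)$-twisted slide, so the two orders differ on the slid disc by $p[C_{w_j}]$, not by $0$. The paper states exactly this: ``In both cases they differ by $C_{w_j}$.'' Your conclusion survives only because $[C_{w_j}]$ lands on $w_i'$ with $i\neq j$, so $\sum_\ell n_{\ell,\ell}=0$ and Clifford equivalence still holds --- but the argument as written does not establish that.

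For $k=i$ your final answer $p[C_{w_j}]$ is correct and matches the paper, but the intermediate $p[C_{w_i}]$ contributions are spurious: a $G$-twist is an isotopy and contributes nothing once germs are matched consistently. The paper's route is more direct --- the slide after twisting $w_i$ is, by Remark~\ref{renormalizing}~i), a twisted slide, which by Lemma~\ref{twist hsf} differs from the untwisted one by $p[C_{w_j}]$; that is the entire comparison, with no $C_{w_i}$ terms appearing.
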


\begin{proof} i) If the disc sliding involves finger discs and the twisting involves a Whitney disc or vice versa, then these operations can be done independently.  When they both involve say Whitney discs and G-twisting and disc sliding, then it is immediate that they have the same boundary germ.  The only nontrivial case is when the twisting involves one of $w_i$ or $w_j$, where $w_i$ G-slides over $w_j$. In both cases they differ by $C_{w_j}$. 

ii) This is immediate. 

iii) The nontrivial case is when both twistings involve the same disc, say $w_i$ and again it is immediate that they have the same boundary germ and are homologous rel $\partial$.  \end{proof}

\begin{definition}  If $(\mF,\mW)\in IA$, then define $\I(\mF, \mW)=\I(\mF', \mW')$ where $(\mF', \mW')\in \EA$ is obtained from $(\mF, \mW)$ by a disc slide sequence.\end{definition}

\begin{proposition}  \label{hsf independence} Let $(\mF,\mW)$ be normalized and in immersed arc position, then $\I(\mF,\mW)$ is independent of the normalization and of the disc slide sequence.\end{proposition}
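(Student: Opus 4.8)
The plan is to reduce everything to the assertion that if $\tau$ is a disc slide sequence from a system $(\mF_0,\mW_0)\in\EA$ to a system $(\mF_1,\mW_1)\in\EA$, then $I(\mF_0,\mW_0)=I(\mF_1,\mW_1)$. Granting this, independence of all choices is immediate: a change of normalization of $(\mF,\mW)$ is realized by a prefix of $G$-twists and $R$-twists (which are legal moves in a disc slide sequence), and disc slides, twists and untwisted isotopies are each inverted by moves of the same kind, so any two choices of (normalization $+$ disc slide sequence to $\EA$) producing systems $(\mF_0,\mW_0)$ and $(\mF_1,\mW_1)$ are joined by a single disc slide sequence running $(\mF_0,\mW_0)$ backwards to $(\mF,\mW)$ and then forwards to $(\mF_1,\mW_1)$. (Alternatively, the normalization half alone is Corollary \ref{normalizing equivalent}.)

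To prove the assertion, first put $\tau$ into the normal form in which all $R$-disc slides and $G$-twists precede all $G$-disc slides and $R$-twists, ignoring untwisted isotopies. This is a bubble sort: whenever a $G$-disc slide or $R$-twist immediately precedes an $R$-disc slide or $G$-twist, swap them using the appropriate Clifford Commutation Lemma (Lemma \ref{clifford commutes} for a $G$-disc slide against an $R$-disc slide; the cases of Lemma \ref{clifford commutes two} for the mixed disc/twist and twist/twist pairs). Same-type pairs are never swapped, which is why no commutation lemma for two $G$-disc slides (etc.) is needed. Each swap changes the running system only by Clifford tori, and by Lemma \ref{clifford persists} this Clifford equivalence propagates through all later moves, so the endpoint $E$ of the reordered sequence is Clifford equivalent to $(\mF_1,\mW_1)$; since Clifford equivalence preserves boundary germs, it preserves membership in $\EA$, so $E\in\EA$ and Lemma \ref{clifford homology} gives $I(E)=I(\mF_1,\mW_1)$. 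One should also check that after each swap the remaining moves of $\tau$ (specified by arcs in $R$ or $G$ and by how discs meet $R\cup G$) remain well defined, which holds because the commutation lemmas preserve boundary germs up to isotopy.

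So it remains to show $I(\mF_0,\mW_0)=I(E)$ using a normal-form sequence, say $\rho$ (the $R$-disc slides and $G$-twists) taking $(\mF_0,\mW_0)$ to some $(\mF',\mW')$, followed by $\gamma$ (the $G$-disc slides and $R$-twists) taking $(\mF',\mW')$ to $E$. The moves of $\rho$ leave the finger/Whitney arcs on $G$ untouched, so $(\mF'\cup\mW')\cap G$ equals the embedded arc $(\mF_0\cup\mW_0)\cap G$; hence Lemma \ref{no new ints} (with $R$ and $G$ interchanged) applies at every step of $\rho$ and shows that $\rho$ preserves every $|\inte(f_i)\cap\inte(w_j)|$ mod $2$. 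The moves of $\gamma$ leave the finger/Whitney arcs on $R$ untouched; since those arcs are embedded in $E\in\EA$, they are already embedded in $(\mF',\mW')$, so in fact $(\mF',\mW')\in\EA$, with the same arc on $G$ — hence the same induced ordering of each $\mF_i$, $\mW_i$ — and the same intersection numbers as $(\mF_0,\mW_0)$; therefore $I(\mF',\mW')=I(\mF_0,\mW_0)$. Finally, running $\gamma$ from $(\mF',\mW')\in\EA$, the arc $(\mF'\cup\mW')\cap R$ is embedded, so Lemma \ref{no new ints} applies at every step of $\gamma$: it preserves all the intersection numbers while fixing the arcs on $R$ (hence the ordering), so $I(E)=I(\mF',\mW')=I(\mF_0,\mW_0)$. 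Combining with $I(E)=I(\mF_1,\mW_1)$ finishes the proof.

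The main obstacle is the observation exploited in the previous paragraph: in a normal-form sequence \emph{between two $\EA$ systems}, the intermediate system $(\mF',\mW')$ is itself in $\EA$. This is exactly what makes Lemma \ref{no new ints} applicable at every stage, and it fails for a general $\IA$-to-$\EA$ sequence, where a disc slide can genuinely alter an intersection number (Remark \ref{new ints}); it is why one first reduces to the $\EA$-to-$\EA$ case by running one sequence in reverse. The remaining care is routine bookkeeping: verifying that each commutation lemma applies to the relevant pair of discs and that the accumulated Clifford tori are harmless (Lemmas \ref{clifford persists}, \ref{clifford homology}), that $G$-disc slides and $R$-twists do not move the arcs on $R$ (and symmetrically) and that twists alter only normal framings of arcs, and that in $\EA$ the orderings induced by the $R$-arc and by the $G$-arc agree with the IA-ordering of Definition \ref{invariant}.
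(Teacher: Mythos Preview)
Your argument is correct and uses the same toolkit as the paper (the Clifford commutation lemmas, Lemma~\ref{clifford persists}, Lemma~\ref{clifford homology}, and Lemma~\ref{no new ints}), but the organization is genuinely cleaner. The paper does not reduce to the $\EA$-to-$\EA$ case; instead it keeps both $\IA\to\EA$ sequences, sorts each into a $G$-disc-slide/R-twist block followed by an $R$-disc-slide/$G$-twist block, and then introduces an auxiliary system $(\mF''',\mW''')$ obtained by grafting the second block of the \emph{second} sequence onto the midpoint $(\hat\mF',\hat\mW')$ of the \emph{first} sequence. Lemma~\ref{no new ints} is then applied twice, once to compare $(\mF',\mW')$ with $(\mF''',\mW''')$ and once (after a further commutation) to compare $(\mF''',\mW''')$ with $(\mF'',\mW'')$. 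Your reduction --- running one sequence backwards to obtain a single $\EA\to\EA$ disc slide sequence --- replaces this two-step pivot by the single transparent observation that in the sorted $\EA\to\EA$ sequence the midpoint is itself in $\EA$, so Lemma~\ref{no new ints} applies uniformly to both halves. What your route buys is economy: one sorting instead of two, and no auxiliary system. What the paper's route buys is that it never needs to invert a disc slide; you implicitly use that disc slides, twists, and untwisted isotopies are all reversible by moves of the same kind, which is true but is an extra (easy) check. Your remark that twists alter only the normal approach and not the arcs themselves is the correct reading of the definitions and is exactly what makes both arguments go through.
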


\begin{proof} We show that if $(\mF', \mW'), (\mF'', \mW'')\in EA$ are each obtained from a normalized $(\mF,\mW)$ by a disc slide sequence, then $\I(\mF', \mW') = \I(\mF'', \mW'')$.   Independence of normalization was proven in Corollary \ref{normalizing equivalent}.

By repeated applications of Lemmas \ref{clifford commutes}, \ref{clifford commutes two}, \ref{clifford homology}, \ref{clifford persists},  and  \ref{twist hsf} we can modify the sequence without altering the values $\I(\mF', \mW')$ and $ \I(\mF'', \mW'')$ so that all G-disc slides and R-twistings are done before all the R-disc slides and G-twistings and further all the disc slides are untwisted.  

Let $(\hat \mF', \hat\mW')$ (resp. $(\hat \mF'', \hat\mW'')$) denote the  pair  which occurs after the last G-disc slide or R-twisting in the disc slide sequence to $(\mF', \mW')$ (resp. $(\mF'', \mW'')$).   We obtained $(\mF', \mW')$ from $(\hat \mF', \hat\mW')$ by a sequence of R-disc slides and G-twistings. On the other hand we can obtain $(\mF''', \mW''')\in \EA$ from $(\hat \mF', \hat\mW')$ by using the R-disc slides and G-twistings corresponding to the $(\hat\mF'', \hat\mW'')$ to $(\mF'',\mW'')$ sequence.  By Lemma \ref{no new ints} we have $\I(\mF', \mW')=\I(\mF''', \mW''')$.  The sequences from $(\mF, \mW)$ to $(\mF'', \mW'')$ and $(\mF,\mW)$ to $(\mF''', \mW''')$ have the same subsequence of R-disc slides and G-twistings.  Using Lemmas \ref{clifford commutes} and \ref{clifford commutes two} we can commute all the R-disc slides and G-twistings with all the G-disc slides and R-twistings and then apply Lemma \ref{no new ints} to conclude $\I(\mF'', \mW'')=\I(\mF''', \mW''')$.\end{proof}

\begin{definition}\label{hat i}  Let $(\mF,\mW)\in IA$ be untwisted but not necessarily normalized such that $\mF$ is finger germed  and $\mW$ is Whitney germed or vice versa, then with the IA ordering define 
$$\hat \I(\mF,\mW)=\sum_{i\le j} |\inte(f_i)\cap \inte(w_j)|\ \textrm{mod}\ 2 \in \BZ_2.$$\end{definition}

\begin{lemma} \label{reverse} ($\hat \I$ and $\I$-symmetry for IA) If $\mfw\in$ IA, then $\hat \I(\mF,\mW)=\hat \I(\mW,\mF)$ and $\I(\mF,\mW)=\I(\mW,\mF)$.\end{lemma}

\begin{proof}  The first conclusion follows from Definition \ref{hat i}.  Since the second conclusion holds when $\mfw$ is in embedded arc position, the result follows from Lemma \ref{ia to ea} and Proposition \ref{hsf independence}.\end{proof}

The next result follows from Lemma \ref{no new ints} and Proposition \ref{hsf independence}.  

\begin{lemma}  \label{hati half ea}  If $(\mF, \mW)$ is untwisted and half-EA, then $\hat \I(\mF,\mW)=\I(\mF,\mW)=I(\mW, \mF)$.  \end{lemma}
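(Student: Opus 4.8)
The plan is to reduce $\hat I(\mF,\mW)$ to the already-defined invariant $I(\mF,\mW)$ and then invoke Lemma \ref{reverse} for the symmetry. First I would recall that for a half-EA system, say $(\mF,\mW)\in\GEA$, the intersection $(\mF\cup\mW)\cap G$ is already an embedded arc; the only obstruction to being in $\EA$ is that $(\mF\cup\mW)\cap R$ may be a union of embedded circles and possibly an arc. By Lemma \ref{ia to ea}, applied in the case $(\mF,\mW)\in\GEA$, there is a disc slide sequence transforming $(\mF,\mW)$ to some $(\mF',\mW')\in\EA$ that involves only R-disc slides (and G-twistings), i.e. no G-disc slides are needed. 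By definition of $I$ on immersed arc systems, $I(\mF,\mW)=I(\mF',\mW')$, and since $(\mF',\mW')\in\EA$ the right-hand side is computed by the formula in Definition \ref{invariant} with the IA ordering. So the task is to compare $\hat I(\mF,\mW)$ — computed before the R-disc slides, with the given untwisted discs and their ordering — against $I(\mF',\mW')$ computed after.

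The key step is that each move in this particular disc slide sequence preserves the relevant mod-2 count. This is exactly the content of Lemma \ref{no new ints}: an untwisted R-disc slide, or an R-twist of a single disc supported near $R$, leaves every $|\inte(f_i)\cap\inte(w_j)|$ unchanged mod 2; and since a G-twisting near $R$ of some disc is covered by the ``R-twist'' case while G-twistings away from the sliding region don't interact, the whole sequence preserves all the pairwise mod-2 intersection numbers, hence preserves $\sum_{i\le j}|\inte(f_i)\cap\inte(w_j)|$ mod 2. One subtlety to address: Lemma \ref{no new ints} requires $(\mF\cup\mW)\cap R$ to be an embedded union of circles and possibly an arc throughout, which is precisely maintained along an R-disc slide sequence of the type produced by Lemma \ref{ia to ea} (the proof of Lemma \ref{ia to ea} is careful not to create new $\mF\cap\mW\cap R$ intersections and the R-slides at the end only remove circles). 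Combining, $\hat I(\mF,\mW)=I(\mF',\mW')=I(\mF,\mW)$. If instead $(\mF,\mW)\in\REA$, the symmetric argument uses a sequence of G-disc slides and R-twistings, again covered by the ``$R$ and $G$ reversed'' version of Lemma \ref{no new ints}.

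Finally, for the equality $I(\mF,\mW)=I(\mW,\mF)$ in the half-EA case, I would note that swapping the roles of $\mF$ and $\mW$ turns a $\GEA$ system into a $\GEA$ system (the condition $(\mF\cup\mW)\cap G$ embedded is symmetric) and likewise for $\REA$, so the first part applies to $(\mW,\mF)$ as well, giving $\hat I(\mW,\mF)=I(\mW,\mF)$; then Lemma \ref{reverse} gives $\hat I(\mF,\mW)=\hat I(\mW,\mF)$, and the two halves combine. The main obstacle I anticipate is bookkeeping: making sure the ordering of discs used to compute $\hat I$ before the slides is compatible (after the slides and the induced IA reindexing) with the ordering used in Definition \ref{invariant} to compute $I(\mF',\mW')$, and that Lemma \ref{no new ints} is being applied to a slide sequence that genuinely stays within its hypotheses at every intermediate stage — this is where invoking the precise form of Lemma \ref{ia to ea} (``the sequence can be chosen to not involve $R$-disc slides'' in the $\GEA$ case) does the real work.
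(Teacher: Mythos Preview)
Your approach is the paper's approach: apply Lemma \ref{ia to ea} to get a one-sided disc slide sequence to $\EA$, use Lemma \ref{no new ints} to see each step preserves the mod-2 interior intersection counts, then use Lemma \ref{reverse} for the symmetry. The paper runs the $\REA$ case (G-disc slide sequence); you chose $\GEA$, which is the symmetric case.

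There is, however, a systematic $R$/$G$ swap in how you invoke Lemma \ref{no new ints}. In the $\GEA$ case the relevant statement is the ``$R$ and $G$ reversed'' version: the hypothesis is that $(\mF\cup\mW)\cap G$ is embedded, and the moves it covers are $R$-disc slides together with $G$-twists. Your write-up pairs $R$-disc slides with $R$-twists and checks embeddedness on the $R$ side, which matches neither version of the lemma. In particular, the ``subtlety'' you flag --- that $(\mF\cup\mW)\cap R$ stays an embedded union of circles and an arc throughout an $R$-disc slide sequence --- is backwards: $R$-disc slides change $(\mF\cup\mW)\cap R$ and leave $(\mF\cup\mW)\cap G$ alone, so it is the $G$-side embeddedness that persists trivially and feeds the (reversed) hypothesis at every stage. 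Once the labels are corrected, your argument is exactly the paper's.
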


\begin{proof}  If say $(\mF, \mW)\in \REA$, then by Lemma \ref{ia to ea} there exists $(\mF', \mW')\in EA$ obtained from $(\mF, \mW)$ by a G-disc slide sequence and by Lemma \ref{no new ints}, $\hat \I(\mF', \mW')=\I(\mF, \mW)$.   Lemma \ref{no new ints} also implies that when $\mfw\in \REA, \hat \I\mfw$ is unchanged under a $G$-disc slide sequence.   The last equality follows from Lemma \ref{reverse}.   \end{proof}

\section{Getting to immersed arc position} \label{finger first}

In this section   we give a process for transforming the finger/Whitney system $(\mF,\mW)\notin$ IA to one $(\mF, \mW_1)\in$ IA.  Here we require that the elements of $\mW$ are ordered.   While there are many ways to carry this process out, Proposition \ref{switch invariance}, the main result of this section, asserts that each gives the same $\I(\mF, \mW_1)$ value denoted $\I(\mF,\mW)$.   We start by giving a criterion for recognizing when $(\mF_i,\mW_i)\in$ IA, $i=1,2$ produce the same $\I$-value when transformed to an $EA$ pair by disc slide sequences.

\begin{definition}  We say that the  sets $\mW, \mW'$ of Whitney discs for $R$ and $ G$ \emph{similarly match} if there is a 1-1 correspondence such that if $w\in \mW$, then  the corresponding $w'\in \mW'$  pairs the same points of $R\cap G$.   Here both $\mW,\mW'$ can be  sets of Whitney discs or finger discs or one set of each.  More generally two Whitney discs are \emph{matched} if they cancel the same points of $R\cap G$.  \end{definition}

\begin{lemma}\emph{\textbf{(Parity Lemma)}}\label{parity}  For $i=1,2$ let $\mF_i=\{f_{i,1}, \cdots, f_{i,n}\}, \mW_i=\{w_{i,1}, \cdots, w_{i,n}\}$.  Assume that 
\vskip 8pt
i) $\mF_1, \mF_2$ are similarly matched as are $\mW_1, \mW_2$ with $f_{1,j}$ (resp. $w_{1,j}$) corresponding to $f_{2,j}$ (resp. $w_{2,j}$).  Furthermore, $\mF_1, \mF_2, \mW_1, \mW_2$ are untwisted, though not necessarily normalized, such that both $\mF_1, \mF_2$ are finger germed and $\mW_1, \mW_2$ are Whitney germed or vice versa.  
\vskip 8pt
ii) $(\mF_1, \mW_1), (\mF_2, \mW_2)\in IA$,
\vskip 8pt
iii) For each $i, j,\  |f_{1,i}\cap w_{1,j}\cap R|-|f_{2,i}\cap w_{2,j}\cap R|=|f_{1,i}\cap w_{1,j}\cap G|-|f_{2,i}\cap w_{2,j}\cap G|=0 \ \textrm{mod}\ 2$, 
\vskip 8pt
then $\I(\mF_1, \mW_1)=\I(\mF_2, \mW_2)$ if and only if $\hat \I(\mF_1, \mW_1)=\hat \I(\mF_2, \mW_2)$.  \end{lemma}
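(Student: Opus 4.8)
The plan is to prove a \emph{universal defect formula}: for every untwisted finger/Whitney system $(\mF,\mW)\in\IA$ with $\mF$ finger germed and $\mW$ Whitney germed, the quantity $I(\mF,\mW)-\hat I(\mF,\mW)\in\BZ_2$ is a function only of the matching of $\mF,\mW$ with the points of $R\cap G$ together with the mod-$2$ boundary intersection arrays $(|f_p\cap w_q\cap R| \bmod 2)_{p,q}$ and $(|f_p\cap w_q\cap G| \bmod 2)_{p,q}$; the ``vice versa'' germ case reduces to this one by Lemma \ref{reverse}. Granting this, the Parity Lemma follows at once: hypothesis (i) says the two systems have the same matching and germ type, hypothesis (iii) says they have the same mod-$2$ boundary arrays, so the defects agree, $I(\mF_1,\mW_1)-\hat I(\mF_1,\mW_1)=I(\mF_2,\mW_2)-\hat I(\mF_2,\mW_2)$, and hypothesis (ii) then forces $I(\mF_1,\mW_1)=I(\mF_2,\mW_2)$.

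To prove the defect formula, note first that $I(\mF,\mW)$ and $\hat I(\mF,\mW)$ are each well defined for the given untwisted system — the former by Proposition \ref{hsf independence}, the latter by Definition \ref{hat i} — so the defect is a genuine invariant of $(\mF,\mW)$. Now normalize $(\mF,\mW)$ and run the first phase of the algorithm in the proof of Lemma \ref{ia to ea}, i.e.\ the sequence of $G$-disc slides and re-normalizing isotopies that produces a system $(\mF',\mW')\in\GEA$. Since $(\mF',\mW')$ is obtained by a disc-slide sequence, $I(\mF',\mW')=I(\mF,\mW)$ by the definition of $I$ on $\IA$ together with Proposition \ref{hsf independence}; and since $(\mF',\mW')$ is untwisted and half-EA, Lemma \ref{hati half ea} gives $\hat I(\mF',\mW')=I(\mF',\mW')$. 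Therefore
\[ I(\mF,\mW)-\hat I(\mF,\mW)=\hat I(\mF',\mW')-\hat I(\mF,\mW), \]
and it remains to evaluate the right-hand side as a telescoping sum over the elementary moves of the algorithm (including the initial normalization).

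I would then check, step by step, that each move of the algorithm changes $\hat I$ by an amount that is, mod $2$, a function of the boundary arrays alone, and that the number of times each type of move is performed is determined mod $2$ by those same arrays. For a $G$-disc slide of $w_\ell$ over $w_j$ this is exactly Remark \ref{new ints}(i): the slide changes $|\inte(f_i)\cap\inte(w_\ell)|$ by $|w_j\cap\inte(f_i\cap R)| \bmod 2$ and, because the Whitney cap meets each $f_i$ in pairs (the computation of Lemma \ref{no new ints}), leaves every other interior intersection number fixed mod $2$; a re-normalizing isotopy of $w_i$ near $R$ (resp.\ near $G$) sweeps $w_i$ across a finger disc $f_k$ a number of times congruent mod $2$ to $|f_k\cap w_i\cap R|$ (resp.\ $|f_k\cap w_i\cap G|$). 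The algorithm removes the excess crossings of the $G$-arcs one at a time in a fixed order, and mod $2$ the multiset of such excess crossings — hence the total contribution to $\hat I$ — is read off from the arrays $(|f_p\cap w_q\cap G| \bmod 2)$ and $(|f_p\cap w_q\cap R| \bmod 2)$; in particular, replacing an integer boundary crossing count by one differing from it by an even number only changes the algorithm by pairs of moves, which alter $\hat I$ by an even amount. This yields the defect formula.

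The main obstacle is the move-by-move accounting in the last paragraph: one must verify, using the untwisted/germ conventions and the local picture of a disc slide (Figure \ref{fig:disc slide}), that no step of the algorithm — and in particular no re-normalization, and no intermediate state produced along the way — alters $\hat I$ by anything other than a mod-$2$ function of the original boundary arrays, and that the branching of the algorithm is controlled by those arrays mod $2$. Everything else is a formal consequence of Lemma \ref{hati half ea}, Proposition \ref{hsf independence}, and Remark \ref{new ints}(i).
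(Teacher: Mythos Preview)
Your proposal is correct and takes essentially the same approach as the paper: both arguments track how $\hat I$ changes under the disc-slide sequence from $\IA$ to half-EA and verify that the mod-$2$ change is governed entirely by the boundary intersection arrays. The differences are cosmetic — you package the conclusion as a ``universal defect formula'' $I-\hat I=\Phi(\text{mod-}2\text{ boundary data})$ and run $G$-slides toward $\GEA$, while the paper runs $R$-slides on both systems in parallel toward $\REA$ and checks that hypotheses (i)--(iii) persist at each stage — but the ``move-by-move accounting'' you correctly flag as the main obstacle is exactly the computation the paper carries out (grouping the slides by $b_j=|\inte(f_1\cap R)\cap w_j|$, computing $d_j=b_j\sum_{k\neq j}|\mF\cap\inte(w_k\cap G)|$, and handling the residual spiraling by $G$-rotations).

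One small caution on your framing: the algorithm's branching (which $w_j$ contains the first excess crossing, etc.) depends on the actual geometry, not just the mod-$2$ arrays, so two systems with identical mod-$2$ data will in general run \emph{different} sequences of moves. Your claim that ``replacing an integer boundary crossing count by one differing from it by an even number only changes the algorithm by pairs of moves'' therefore needs the same aggregation trick the paper uses — group all slides clearing $f_1$ by the target disc $w_j$, note that $G$-slides leave the $R$-array untouched so the per-slide contribution is constant, and then use that the \emph{count} $b_j$ has the right parity. Once you say it that way, your defect-formula argument and the paper's parallel-tracking argument are literally the same computation read two ways.
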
  

\begin{proof}  First assume that $\hat \I(\mF_1, \mW_1)=\hat \I(\mF_2, \mW_2)$.  We will replace each $(\mF_i, \mW_i)$ with an R-EA $(\mF_i', \mW_i')$ by an R-disc slide sequence such that $\hat \I(\mF_1', \mW_1') =\hat \I(\mF_2', \mW_2')$.  It will then follow that $\I(\mF_1, \mW_1)=\I(\mF_1', \mW_1')=\hat \I(\mF_1', \mW_1')=\hat I(\mF_2', \mW_2')=\I(\mF_2', \mW_2')=\I(\mF_2, \mW_2)$.  The second and fourth equalities follow from Lemma \ref{hati half ea} and the first and fifth equalities follow from Proposition \ref{hsf independence}. 

 Let $\rho_i$ denote the immersed arc $(\mF_i\cup\mW_i)\cap R$ oriented from $f_{i,1}$ to $w_{i,n}$.  To minimize notation we will usually suppress the subscript i.  Let $c_1, \cdots, c_p$ denote the points of $\inte(f_1\cap R)\cap \mW$ as they appear along $ f_1$.  We first eliminate $c_1$ as in the proof of Lemma \ref{ia to ea} except that we do not eliminate the resulting boundary bigons and so this operation is supported away from $G$.  The newly created points of $f_1\cap w_1\cap R$ will be eliminated later.  If $c_1\in w_j$, then let $w_j'$ be the result of eliminating $c_1$.   Compare Figures \ref{fig:iaea} a) and \ref{fig:iaea} c).  Here $w_j'$ is obtained by doing an R-Whitney disc slide over each $w_k, k\neq j$ followed by isotopy.  
We see that $\partial w_j'$ has two new intersections with $\mF$ if $j\neq n$ and one if $j=n$ which consists of $\partial$ bigons with $f_j$ and $f_{j+1}$, the latter not existing when $j=n$.  Also mod 2, $\inte(w_j')$ picks up new intersections with $\mF$, one for each point of $\inte(w_k\cap G)\cap \mF$, where $k\neq j$.  See Figure \ref{fig:disc slide} b).  Note that if $f_{1,1}\cap\mW_1\cap R$ corresponds to a single boundary bigon with $w_{1,1}$, then $f_{1,1}\cap\mW_1'\cap R$ will have the same feature, though $w_{1,1}'$ may have many new intersections with $\mF_1$.  These are needed to have the same parity with the resulting $w_{2,1}'$.

In a similar manner we sequentially eliminate $c_2, \cdots, c_p$ and continue to call $w_j'$ the result of these operations to $w_j$ as well as future ones and define $\mW':=\{w_1', \cdots, w_n'\}$ the result of these operations.  Note that $\mW'$ will change as the proof proceeds.   In what follows $\bar m$ will denote $ m\ \textrm{mod} \ 2$.  Before cataloging the new intersections we note that except for the $c_i$'s no old ones are lost.  

If $b_j=|\inte(f_1\cap R)\cap w_j|$, then $b_j$ new intersections will occur on $w_j'\cap f_j\cap R$ and also on $w_j'\cap f_{j+1}\cap R$ if $ j\neq n$.  Restoring the subscript $i\in \{1,2\}$ we have $\bar b_{1,j}=\bar b_{2,j}$ and for all $i,j, \ |f_{1,i}\cap \inte(w_{1,j}'\cap R))|-|f_{2,i}\cap\inte(w_{2,j}'\cap R)|=0\ \textrm{mod}\ 2$.  

We now consider new $\mF\cap \inte(\mW')$ intersections.  Since each operation is supported away from $G, w_j'\cap G=w_j\cap G$ and mod 2, $\inte (w_j)$ picks up $d_j=b_j(\sum_{k\neq j} \mF\cap \inte(w_k\cap G))$ intersections.  It follows that for all $j, \bar d_{1,j}=\bar d_{2,j}$ and hence $\hat \I(\mF_1, \mW_1')=\hat \I(\mF_2, \mW_2')$.  

By construction $f_1\cap R$ is disjoint from $w_k'\cap R$, all $k\neq 1$ and $|\inte (w_1'\cap R)\cap f_1|=b_1$.  After an isotopy supported near R and away from G we can assume that $w_1'$ spirals around 
$w_1'\cap f_1\cap R$, both geometrically and algebraically some $\hat b_1$ times.  This isotopy reduces $|f_1\cap w_1'|$ an even number of times  and leaves $\hat \I(\mF, \mW')$ 
unchanged.  Finally do $\pm \hat b_1$ G-rotations of $w_1'$ to make $\inte(w_1'\cap R)\cap f_1=\emptyset$.  Mod 2, these 
rotations create $|\hat b_1|\sum_{k=1}^n|\inte(w_1\cap G)\cap f_k|$ new $ \inte (w_1')\cap \mF$ intersections and $|\hat 
b_1|$ new $w_1'\cap R\cap f_2$ points.  In summary, we have that $(\mF_1, 
\mW_1'), (\mF_2, \mW_2')$ satisfy the hypotheses of the lemma and both $f_{1,1}\cap R$ and $f_{2,1}\cap R$ have no excess intersections.

An induction completes the proof following the organization of the proof of Lemma \ref{ia to ea}.   For example, using a similar  argument as in that proof we  reduce to the case that for $i=1,2$ each of $f_{i,1}\cap R$ and $w_{i,1}\cap R$ has no excess intersections.  Then  we clear each $f_{i,2}\cap R$ of excess intersections with $\mW_i$ at the cost of moving them to $f_{i,1}\cap R$ and then eliminate them as above, except that we eliminate resulting excess $f_{i,2}\cap w_{i,2}\cap R$ intersections by isotopy and G-twisting $w_{i,2}$.

If $\hat \I(\mF_1, \mW_1)\neq\hat \I(\mF_2, \mW_2)$, then the pairs of $\hat \I$ values remain unequal throughout.\end{proof}

\begin{definition}  Recall that two complete sets of Whitney discs $\mW_1$, $\mW_2$ are \emph{$H_2$-equivalent} if after an isotopy, for all $i,\ w^1_i, w^2_i$ have the same boundary germ and represent the same class in $H_2(E, \partial E; \partial w^1_i)$.  They are \emph{strongly $H_2$-equivalent} if they have the same boundary germs to start with. \end{definition}

\begin{corollary} \label{homological independence} If $(\mF, \mW)$ satisfies the immersed arc condition, both $\mF, \mW$ are untwisted and $\mF', \mW'$ are untwisted and respectively $H_2$-equivalent to $\mF$ and $\mW$, then $\I(\mF,\mW)=\I(\mF', \mW')$.\end{corollary}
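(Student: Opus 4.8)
The plan is to reduce the statement to the Parity Lemma (Lemma \ref{parity}), applied to $(\mF_1,\mW_1)=(\mF,\mW)$ and $(\mF_2,\mW_2)=(\mF',\mW')$. First I would use the proper isotopy provided by the definition of $H_2$-equivalence to arrange that for every index $f'_i$ and $f_i$ have the same boundary germ and $w'_j$ and $w_j$ have the same boundary germ. Since $\mF,\mW$ are untwisted we may take $\mF$ to be finger germed and $\mW$ to be Whitney germed; then $\mF',\mW'$, having acquired the same germs, are finger germed resp.\ Whitney germed as well, $\mF'$ similarly matches $\mF$, and $\mW'$ similarly matches $\mW$ (the discs cancel the same points of $R\cap G$). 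This gives hypothesis (i) of Lemma \ref{parity}.

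Next I would observe that for untwisted discs the intersections of $\mF\cup\mW$ with $R$ and with $G$ are determined by the boundary germs: $(\mF_i\cup\mW_i)\cap R_i$ is precisely the concatenation of the boundary arcs of the discs on $R_i$, and likewise over $G_i$. Consequently $(\mF'_i\cup\mW'_i)\cap R_i=(\mF_i\cup\mW_i)\cap R_i$ and $(\mF'_i\cup\mW'_i)\cap G_i=(\mF_i\cup\mW_i)\cap G_i$; since $(\mF,\mW)\in\IA$ these are immersed arcs, so $(\mF',\mW')\in\IA$, and moreover $f'_i\cap w'_j\cap R=f_i\cap w_j\cap R$ and $f'_i\cap w'_j\cap G=f_i\cap w_j\cap G$ as sets of points. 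Thus the $\IA$ part of hypothesis (ii) and all of hypothesis (iii) of Lemma \ref{parity} hold, the latter with the differences actually vanishing.

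It remains to verify the equality $\hat I(\mF,\mW)=\hat I(\mF',\mW')$ in hypothesis (ii). After the isotopy above we have $[f'_i]=[f_i]\in H_2(E,\partial f_i;\BZ_2)$ and $[w'_j]=[w_j]\in H_2(E,\partial w_j;\BZ_2)$. Because $\mF$ is finger germed and $\mW$ is Whitney germed, near any corner (including one shared by a finger disc and a Whitney disc) the interiors of $f_i$ and $w_j$ lie in the positive resp.\ negative $I$-bundle of $R$ (and of $G$), hence are disjoint there; so after viewing the discs in $E$ as in the remark on intersection numbers in \S\ref{preliminaries}, the mod-$2$ number $|\inte(f_i)\cap\inte(w_j)|$ equals the relative $\BZ_2$-homology pairing of $[f_i]$ and $[w_j]$ relative to their fixed, disjoint boundary germs, and similarly for the primed system. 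Therefore $|\inte(f'_i)\cap\inte(w'_j)|\equiv|\inte(f_i)\cap\inte(w_j)|\pmod 2$ for each pair, and summing over $i\le j$ gives $\hat I(\mF',\mW')=\hat I(\mF,\mW)$. Lemma \ref{parity} now yields $I(\mF,\mW)=I(\mF',\mW')$.

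The one genuinely delicate point is this last step: making precise that the mod-$2$ intersection number of the disc interiors is a relative-homology invariant despite the fact that finger and Whitney discs meet at corner points on $R\cap G$. This is exactly where the finger/Whitney germ dichotomy is used — it separates the boundaries and reduces matters to the standard invariance of intersection numbers of properly mapped surfaces with fixed disjoint boundaries. Everything else is routine bookkeeping with germs.
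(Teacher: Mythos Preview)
Your reduction to the Parity Lemma is the paper's approach, and your check of hypotheses (i)--(iii), including the observation that $\hat I$ depends only on the relative $\BZ_2$-homology classes once the finger/Whitney germ convention separates the boundaries near $R\cap G$, is correct and matches what the paper uses in the strongly $H_2$-equivalent case.

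There is, however, one gap. When you ``use the proper isotopy \ldots\ to arrange'' that the boundary germs of $\mF',\mW'$ coincide with those of $\mF,\mW$, you are silently replacing $(\mF',\mW')$ by an isotoped system and then comparing \emph{that} to $(\mF,\mW)$ via Lemma~\ref{parity}. You never argue that $I$ of the original $(\mF',\mW')$ equals $I$ of the isotoped one; this is not automatic, since the proper isotopy may pass through non-untwisted positions and change the germs, so it is not literally an untwisted isotopy. The paper handles exactly this point: it isotopes $\mW$ (not $\mW'$) to some $\mW''$ strongly $H_2$-equivalent to $\mW'$ and invokes Proposition~\ref{hsf independence} for the equality $I(\mF,\mW)=I(\mF,\mW'')$, the isotopy being absorbed by the normalization and disc-slide-sequence independence established there; then Lemma~\ref{parity} gives $I(\mF,\mW'')=I(\mF,\mW')$, and the argument is repeated for $\mF$. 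So the missing ingredient in your write-up is a citation of Proposition~\ref{hsf independence} for the isotopy step.
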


\begin{proof}  If they are strongly $H_2$ equivalent, then this is an immediate consequence of Lemma \ref{parity}. Now suppose that $\mW'$ is $H_2$-equivalent to $\mW$.   Then $\mW$ is properly isotopic to $\mW''$ which is strongly $H_2$-equivalent to $\mW'$.   We conclude that $\I(\mF,\mW)=\I(\mF, \mW'')=\I(\mF,\mW')$, where the second equality follows from Lemma \ref{parity} and the first follows from Proposition \ref{hsf independence}.  In a similar manner we conclude $\I(\mF,\mW')=\I(\mF', \mW')$. \end{proof}

\begin{definition}  Recall from 3-manifold theory  that a \emph{boundary compression} is the operation of compressing a properly embedded surface $S$ in the manifold with boundary $M$ along a framed half disc $H$ which has  one boundary component on $S$ and the other on $M$.  For example, see Figure \ref{fig:braiding3}.  Let $w$ be an untwisted Whitney disc whose boundary germ lies to the $+$-side of $G\cup R$.  A surface $w'$ obtained by $\partial$-compressing $w$ into the $+$ side of $G\cup R\setminus N(G\cap R)$ is called a \emph{$\partial_+$-compression}.  In a similar manner we define \emph{$\partial_-$-compression}.\end{definition}

\begin{remark}  Note that if $\mW'=\{w_1', \cdots, w_n'\}$ is obtained from the untwisted $\mW=\{w_1,\cdots, w_n\}$  by a sequence of $\partial_+$-compressions, then the $w_i'$'s need not be connected, embedded or pairwise disjoint.  Nevertheless, we can extend the definition of $\hat I$ to pairs of the form $(\mF, \mW')$.\end{remark}

\begin{lemma}  \label{boundary compression} Given $(\mF, \mW)$ and $(\mF, \mW')$ where $\mW$ is finger germed and $\mW'$ is obtained from $\mW$ by a sequence of compressions and  $\partial_+$-compressions, then $(\mF,\mW)$, $(\mF, \mW')$ satisfy hypotheses ii), iii) of Lemma \ref{parity}.\qed\end{lemma}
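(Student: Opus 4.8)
The plan is to reduce to a single move and then verify conditions ii) and iii) of Lemma~\ref{parity} by direct inspection; the only essential point is that a (boundary) compression modifies a surface by a mod~$2$ boundary and that the compressing disc, reattached as two parallel copies, contributes even intersection numbers everywhere.

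First I would note that conditions ii) and iii) are equalities (of $\hat I$, and of the $\BZ_2$ counts $|f_i\cap w_j\cap R|$ and $|f_i\cap w_j\cap G|$) and hence transitive, so it suffices to treat the case in which $\mW'$ arises from $\mW$ by a single interior compression, or a single $\partial_+$-compression, of one disc $w_j$. Every such move is supported away from $N(R\cap G)$: for a $\partial_+$-compression this is part of the definition (the half-disc lies on the $+$-side of $(G\cup R)\setminus N(G\cap R)$), and for an interior compression it holds after a small isotopy of the compressing disc off $R\cup G$ near $R\cap G$. Hence $\partial\mW'$ has the same germ near $R\cap G$ as $\partial\mW$, so $\mW'$ remains finger germed, $\mW$ and $\mW'$ are similarly matched, the immersed arcs $(\mF\cup\mW)\cap R$ and $(\mF\cup\mW)\cap G$ keep the same endpoints $\{a^j_k\}$ and IA-orderings, and $\hat I(\mF,\mW')$ is defined via the extension of $\hat I$ recorded in the remark preceding the statement; in particular $(\mF,\mW)$ and $(\mF,\mW')$ satisfy hypothesis i) of Lemma~\ref{parity} and the immersed arc condition in that extended sense.

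Next I would check ii) and iii) together. Write $w_j'$ as $w_j$ with a band $N(a)$ removed (for a $\partial_+$-compression along a half-disc $H$ with $\partial H=a\cup b$, $a\subset w_j$, $b\subset R\cup G$) or a subdisc neighborhood $N(\gamma)$ removed (for an interior compression along $H$ with $\partial H=\gamma\subset w_j$), and two parallel copies $H'$, $H''$ of $H$ glued in. Since $a$ (resp.\ $\gamma$) is a $1$-complex in the surface $w_j$, after a generic isotopy I may take it disjoint from the finitely many points $\inte(\mF)\cap\inte(w_j)$ and from the finitely many crossings of $\mF\cap R$ with $w_j\cap R$ and of $\mF\cap G$ with $w_j\cap G$. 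Then deleting the band destroys none of the intersection points counted in $\hat I$ or in condition iii), while reinserting $H'\cup H''$ changes each of these counts by $2\,|\inte(f_i)\cap H|$ (resp.\ twice the corresponding count on $R$ or on $G$), an even number, even though $H$ itself need not be disjoint from $\mF$, $R$ or $G$. Hence $|\inte(f_i)\cap\inte(w_j')|\equiv|\inte(f_i)\cap\inte(w_j)|\pmod 2$ and $|f_i\cap w_j'\cap R|\equiv|f_i\cap w_j\cap R|\pmod 2$ and likewise over $G$, for every $i$; summing the first over $i\le j$ gives $\hat I(\mF,\mW')=\hat I(\mF,\mW)$. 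This is exactly conditions ii) and iii).

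I expect the main obstacle to be the bookkeeping in the reduction step rather than any computation: one must confirm that in our setting a $\partial_+$-compression, whose attaching arc $b$ lies on $R\cup G$ and which therefore does alter $w_j$ near $\partial w_j$, can indeed be arranged so that the immersed-arc data near $R\cap G$ is untouched and the possibly disconnected, non-embedded intermediate surface still has the expected $\hat I$. The single genuinely mod~$2$ ingredient --- and the reason an integral statement of this kind would be false --- is that the compressing disc is reglued as two parallel copies, so every intersection it carries is counted an even number of times.
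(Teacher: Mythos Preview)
Your proposal is correct and is precisely the argument the paper has in mind; the paper gives no proof at all (the statement ends with \qed), treating it as immediate from the definition of compression and $\partial_+$-compression. Your reduction to a single move by transitivity, followed by the observation that compressing along $H$ deletes a band disjoint from the relevant intersection points and reglues two parallel copies of $H$ (so that every interior and boundary intersection count changes by an even number), is exactly the intended reasoning.
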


\begin{remark} In what follows we will need to show that certain pairs $(\mF, \mW_1), (\mF, \mW_2)$ satisfy the hypotheses of Lemma \ref{parity}.  We will sometimes do this by showing $(\mF, \mW_1), (\mF, \mW_2')$ satisfy these hypotheses where $\mW_2'$  is obtained from $\mW_2$ by a sequence of $\partial_+$- or $\partial_-$-compressions, the type depending on context.\end{remark}

Given $(\mF, \mW)$ with an ordering on $\mW$ we describe a way of modifying $\mW$ to $\mW^*$ so that $(\mF, \mW^*)$ satisfies the immersed arc condition.

\begin{definition} \label{switching}  Let $\mW=\{w_1, \cdots, w_n\}$ be a complete untwisted system of Whitney discs.  We say that the complete system $\mW^*=\{w_1^*, \cdots, w_n^*\}$ is a \emph{k-switching} if
\vskip 8pt

a) $a_{2i-1}\in w_i\cap w_i^*$, where $R\cap G=\{a_0, a_1, \cdots, a_{2n}\}$ such that $\langle R,G\rangle=+1$ at $a_0, a_2, \cdots, a_{2n}$ and $w_i\cap R\cap G=a_{2i-1}\cup a_{2i}$.
\vskip 8pt
b) Up to isotopy $w_i^*$=$w_i$ except for exactly $k\ w_i$'s.  The $w_j^*$'s with $w_j\neq w_j^*$ are called the \emph{switch discs} and their union is denoted $\mW^*_S$.  The $w_j$'s that get switched are called the \emph{switch out discs} the others, $\mW^*\setminus \mW^*_S$ are called the \emph{non switch discs}. 
\vskip 8pt
c) $\mW^*_S$, $\mW$ can be respectively isotoped to be finger germed and Whitney germed such that $\mW^*_S\cap \mW\subset R\cap G$ and $(\mW^*_S\cup\mW)\cap G$ contains no cycles.
\vskip 8pt

Call $k$ the \emph{switch index}. We say that  $\mW^*$ is \emph{$\mW$-framed} if $\mW$ is Whitney germed and $\mW^*_S$ satisfies c) without the need to isotope $\mW$.   A \emph{full switching} is one with switch index $= |\mF|=|\mW|$.  

Given $(\mF,\mW)$, then $\mW^*$ is a \emph{minimal switching} if $(\mF,\mW)\in$ IA and the switch index equals the number of immersed cycles in $(\mF\cup\mW)\cap G$.
\end{definition}

\begin{remark} Recall from \S0 that an $(\mF, \mW)$ gives rise to $[\alpha]\in\pi_1(\Emb(\sqcup_{i=1}^kS^2, \#^k\stwostwo), \mRs)$, an isotopy class of pseudo-isotopies and its associated diffeomorphism of $S^4$.  A priori an $(\mF, \mW^*)$ may give rise to different classes than $(\mF, \mW)$, however by Lemma 0.55, proven in \S11 we will show that they induce the same $[\alpha]$.  This is true both for order induced switching introduced in this section and the general form given in Proposition \ref{sum invariance}.  \end{remark}

 We now describe ways of viewing $R\cup G$ that will facilitate both visualizing and working with finger and Whitney discs.  

\begin{definition} \label{framed finger} The pair $R$ and $G$ are in \emph{framed finger form} if $G=\Gs$ and $R$ appears as in Figure \ref{fig:framed finger} a) or b). The choice will be clear from context.  Further the positive bundles of $G$ and $R$ are as indicated.  This is the \emph{standard framing} on $N(R\cup G)$.   In both cases the local 3D projection of $R$ to $G$ lies in the $y$-axis with $x\ge 0.$   We denote the points of $R\cap G$ by $a_0, a_1, \cdots, a_{2n}$ where these are points along the $x$-axis.   In the figures, the  $w_1, \cdots, w_n$ are the \emph{standard ordered Whitney germed discs} and the discs $f_1, \cdots, f_n$ are the \emph{standard ordered finger germed discs}. \end{definition}

\begin{remark} \label{fff differences} Here are two features of the version of Figure \ref{fig:framed finger} a).  First the various fingers can be locally rotated about the $z$-axis.  This will enable correcting a misframed R as in the proof of Lemma \ref{trivial w}.  Second, we readily see how to isotope $R\cup G$ so that $R$ becomes $\Rs$ and $G$ has fingers poking into $\Rs$.  The version of Figure \ref{fig:framed finger} b) will be the one most commonly used, e.g. in Figures \ref{fig:switch} and Definition \ref{1d}. \end{remark}

   \begin{figure}[!htbp]
    \centering
    \includegraphics[width=1.0\linewidth]{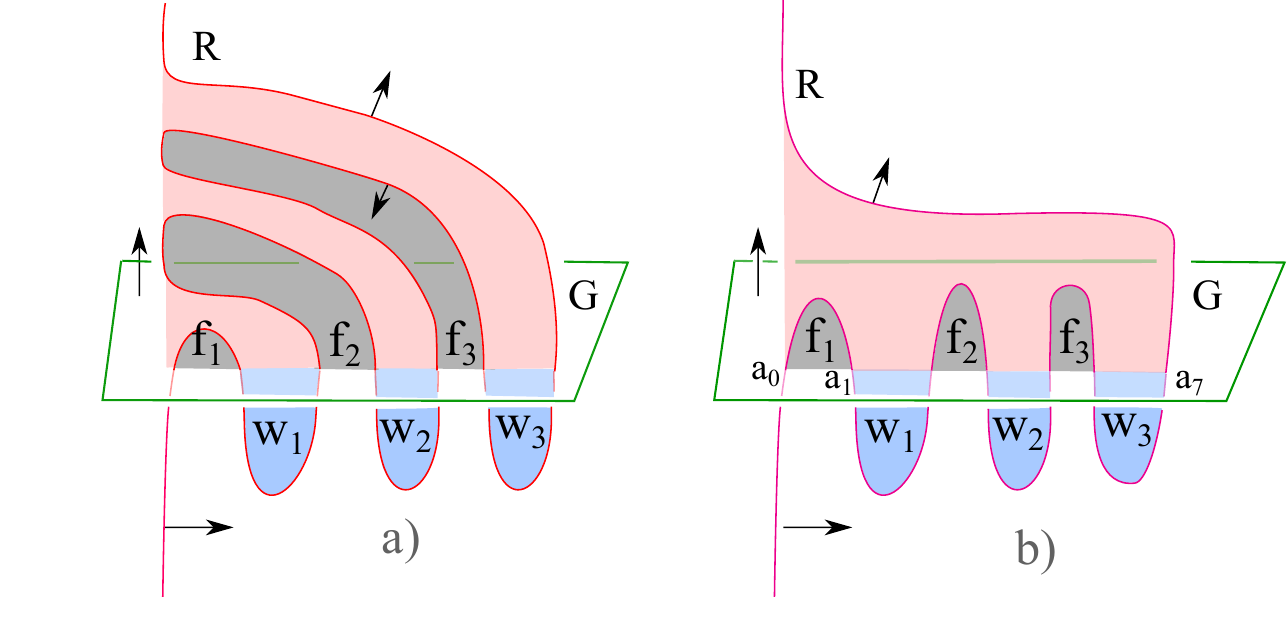}
    \caption{Framed Finger Form}

    \label{fig:framed finger}
\end{figure}

\begin{lemma} \label{trivial w} Let $\mW$ be a complete ordered system of Whitney germed Whitney discs, where $G=\Gs$.  Then there exists an ambient isotopy fixing $G$ setwise whose time 1 map $\kappa$ has the property that $G$ and $\kappa(R)$  are in framed finger form.  Furthermore, $\kappa(\mW)$ are the standard Whitney discs such that the ordering on $\mW$ equals the induced ordering on  $\kappa(\mW)$.  Similarly, if $\mW$ is finger germed, then there exists a $\kappa'$ such that $\kappa'(\mW)$ is the standard set of ordered finger discs.  \end{lemma}

\begin{proof} Without the condition on the normal framing, this is essentially \cite{Ga3} Lemma 3.11.  View $R\cup G$ as in Figure \ref{fig:framed finger} a).  After an initial isotopy of $N(R\cup G)$ we can assume that the framing is standard near $a_0$ and since $\mW$ is Whitney germed, we can assume that the normal framing of $R\cup G$ is standard near $\mW$.  After an isotopy of $N(R)$ fixing $R$ pointwise and supported away from $G$ we can make it standard near all the fingers of $R$, however, the positive bundle can rotate by an integral multiple of $2\pi$ as one goes from one finger to the next or to $a_0$.   After rotating the various fingers by multiples of $2\pi$ as in Figure \ref{fig:framed hinge} we can assume that the positive bundle of $R$ is standard in a 2-disc containing the fingers and $a_0$.  Since $R$ has trivial normal bundle we can isotope $N(R)$ to have the  standard framing on all of $N(R)$.  Note that if the positive bundle of $G$ was already standard, then it would stay standard throughout this process.  Since framed finger form is basically symmetric in $R$ and $G$ we can now correct the framing of $N(G)$ by ambient isotopy, preserving that of $N(R)$.\end{proof}

   \begin{figure}[!htbp]
    \centering
    \includegraphics[width=0.5\linewidth]{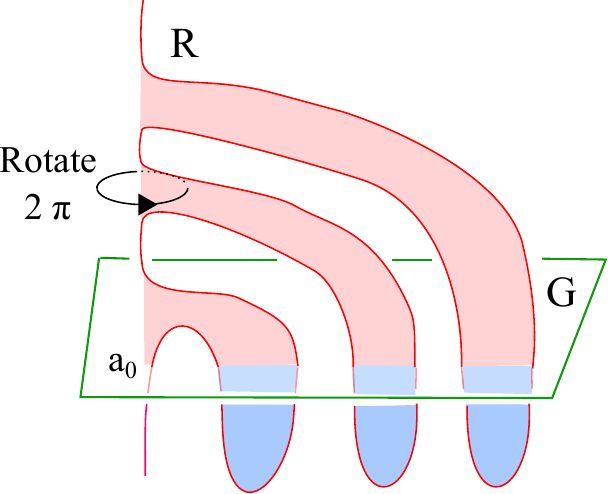}
    \caption{Rotating a Finger to Correct the Positive Framing}
    \label{fig:framed hinge}
\end{figure}

\begin{remark} In this paper given a complete set of   Whitney discs $\mW$ for $R\cup G$ we will frequently invoke Lemma \ref{trivial w} to put $R\cup G$ into framed finger form so that  $\mW$ becomes the standard set of finger or Whitney discs.  It will go without saying that $\mW$ first undergoes a preliminary isotopy to become finger germed or Whitney germed as needed. \end{remark}

\begin{definition} The $\kappa$ of Lemma \ref{trivial w} is called a \emph{standardizing} map.\end{definition}

\begin{corollary} \label{switch exists} Given a complete ordered set $\mW$ of $n$ untwisted Whitney discs, then for each $0\le k\le n, \mW$ has a $\mW$-framed switching of index $k$.  See Figure \ref{fig:switch}.  \qed\end{corollary}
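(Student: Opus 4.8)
The plan is to reduce to the ``framed finger form'' normal model supplied by Lemma~\ref{trivial w} and then read the switchings off an explicit picture, namely Figure~\ref{fig:switch}. First I would invoke Lemma~\ref{trivial w} to obtain a standardizing map $\kappa$: the time-$1$ map of an ambient isotopy fixing $G=\Gs$ setwise, carrying $R$ to framed finger form and carrying the ordered system $\mW$ to the standard ordered set $\{w_1,\dots,w_n\}$ of untwisted Whitney germed discs, where $w_i$ cancels the consecutive pair $a_{2i-1},a_{2i}\in R\cap G$ as in Figure~\ref{fig:framed finger}~b). Conditions (a)--(c) of Definition~\ref{switching}, together with the $\mW$-framed refinement, are all invariant under the order-preserving diffeomorphism $\kappa$, so it suffices to produce, for each $k$ with $0\le k\le n$ and each $k$-element subset $S\subseteq\{1,\dots,n\}$, a $\mW$-framed $k$-switching $\mW^*$ of the \emph{standard} $\mW$ whose switch-out discs are exactly $\{w_i : i\in S\}$; pulling back by $\kappa^{-1}$ then finishes.

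For $k=0$ take $\mW^*=\mW$. For $k\ge 1$ and each $i\in S$ I would replace $w_i$ by the finger germed disc $w_i^*$ that cancels the same pair $\{a_{2i-1},a_{2i}\}$ but is routed ``the long way'': near its boundary it lies in the positive $I$-bundles of $R$ and of $G$ (hence finger germed), and to match those germs it runs behind the intervening fingers of $R$, exactly as drawn in Figure~\ref{fig:switch}. One then checks directly from the model that: (i) the discs $\{w_i^* : i\in S\}\cup\{w_j : j\notin S\}$ are embedded and pairwise disjoint, so $\mW^*$ is again a complete system of Whitney discs; (ii) $a_{2i-1}\in w_i\cap w_i^*$, which is immediate since $w_i$ and $w_i^*$ cancel the same two points, so (a) holds; (iii) for $i\in S$ the disc $w_i^*$ is manifestly not isotopic to $w_i$ in the standard model, so the switch index is exactly $k$ and (b) holds; and (iv) $\mW^*_S=\{w_i^* : i\in S\}$ is finger germed as constructed and meets the already-Whitney-germed $\mW$ only along $R\cap G$, with $\mW$ itself left untouched, which is precisely the $\mW$-framed form of (c). Then $\kappa^{-1}(\mW^*)$ is a $\mW$-framed $k$-switching of the original $\mW$.

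The only genuine work is point (i) together with the routing in (iv): one must choose the switch discs $w_i^*$ --- which by design wander behind the fingers of $R$ and along arcs of $G$ --- coherently enough that they remain embedded, pairwise disjoint, disjoint from every unswitched $w_j$, and meet $\mW$ only in $R\cap G$, simultaneously for all $i\in S$ and for every choice of $S$. In the standard model I would arrange this by routing each $w_i^*$ inside a thin regular neighborhood of $w_i$ together with an arc of $G$ and the fingers that arc passes, keeping these neighborhoods disjoint for distinct indices; Figure~\ref{fig:switch} makes the bookkeeping transparent. Once this is arranged, conditions (a), (b) and the reduction via Lemma~\ref{trivial w} are formal, so this is where the content of the corollary lies.
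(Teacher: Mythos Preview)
Your overall strategy---invoke Lemma~\ref{trivial w} to pass to framed finger form and then read the switch discs off the standard picture---is exactly the paper's. The gap is in your description of the switch discs themselves. You claim $w_i^*$ ``cancels the same pair $\{a_{2i-1},a_{2i}\}$'' and is routed ``the long way''; but Definition~\ref{switching}(a) only demands $a_{2i-1}\in w_i^*$, and in the paper's standard $k$-switch the switch disc $w_i^*$ (for $i\le k$) is simply the standard finger germed disc $f_i$ of Definition~\ref{framed finger}, with corners $\{a_{2i-2},a_{2i-1}\}$. In the framed finger model these $f_i$'s are visibly finger germed, pairwise disjoint, disjoint from the unswitched $w_j$'s, and meet each $w_j$ only in $R\cap G$, so conditions (a)--(c) and the $\mW$-framed requirement are all immediate; non-isotopy to $w_i$ is automatic since the corners differ. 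This choice is also what makes the later arguments coherent: a $G$-finger twist of the $j$'th solid finger modifies both $w_j^*$ and $w_{j+1}^*$ in Lemma~\ref{twist invariance}, and $\langle w_i^1,R_j\rangle=\delta_{ij}$ in Lemma~\ref{homology4}---neither of which would hold for switch discs sharing both corners with $w_i$.

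Your ``same endpoints, long way around'' discs are therefore not what Figure~\ref{fig:switch} depicts, and for them the disjointness and framing checks you postpone to the final paragraph become genuinely delicate (e.g.\ arranging several nested ``long'' boundary arcs on $G$ and $R$ simultaneously, and then filling in framed discs that miss all the solid fingers and each other), whereas for $w_i^*=f_i$ there is nothing to check.
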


   \begin{figure}[!htbp]
    \centering
    \includegraphics[width=1.0\linewidth]{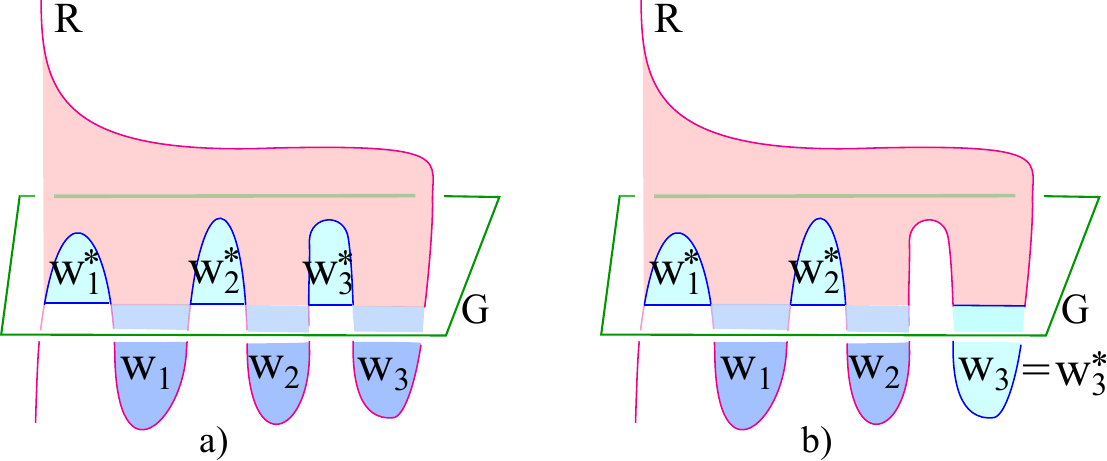}
    \caption{$\qquad\quad$  a)  3-Switch  $\qquad\quad$                b) 2-Switch}
    \label{fig:switch}
\end{figure}

\begin{definition}  \label{switch definition} If $R$ and $G$ are in framed finger form with $\mW$ the standard ordered Whitney discs, we call the naturally defined $\mW^*$ the \emph{standard k-switch}.  By \emph{natural} we mean the switch discs of a $k$-switch are the standard finger discs $f_1, \cdots, f_k$.  
  The cases of n=3 and k=2,3 are shown in Figures \ref{fig:switch} a) and b).\end{definition}

 \begin{remarks}  i) As we shall see, there is a multitude of $k$-switchings for an ordered $\mW$, however Figure \ref{fig:switch} indicates that if $R\cup G$ is in framed finger form with the Whitney discs ordered and standard, then there is a canonical one.
 
 ii)  Given $(\mF, \mW)$ the proof of the next lemma addresses how to choose the switch out discs of $\mW$.   \end{remarks}

\begin{lemma} \label{ff to ia}  Given $(\mF, \mW)$ with $\mW$ ordered and untwisted, then the switch out discs are canonically chosen  and there is a corresponding reordering $w_{i_1}<w_{i_2}< \cdots <w_{i_n}$ of the elements of $\mW$, called the \emph{$\mF$-reordering} with the property that all the switch out discs appear first followed by the non switch discs and the ordering among the switch out discs and the ordering among the non switch discs is induced from that of $\mW$.  \end{lemma}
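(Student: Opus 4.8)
## Proof proposal for Lemma \ref{ff to ia}

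The plan is to leverage Lemma \ref{trivial w} and its corollary to reduce to a combinatorial model, and then extract the canonical choice of switch-out discs directly from how $\mF\cap R$ meets $\mW\cap R$. First I would apply a standardizing map $\kappa$ from Lemma \ref{trivial w} so that, after renaming, $G = \Gs$, $R$ is in framed finger form, and $\mW$ becomes the standard ordered set $w_1,\dots,w_n$ of untwisted Whitney-germed discs, with $R\cap G = \{a_0,a_1,\dots,a_{2n}\}$ and $w_i\cap R\cap G = a_{2i-1}\cup a_{2i}$ as in Definition \ref{framed finger}. The finger discs $\mF = \{f_1,\dots,f_m\}$ are carried along; what matters about them is the trace of $(\mF\cup\mW)\cap R$, an embedded $1$-complex in $R$. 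The key observation is that, in framed finger form, $(\mF\cup\mW)\cap R$ fails the immersed arc condition precisely because of the way the $w_i\cap R$ arcs are linearly laid out: the arc $\mF\cap R$ together with the $w_i\cap R$ need not assemble into a single immersed arc, and the ``defect'' is measured by which $w_i$'s sit in the ``wrong'' cyclic position relative to the order in which $\mF\cap R$ visits the $a_j$'s.

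Next I would make precise the canonical choice: reading along the components of $\mF\cap R$, one records the sequence of points $a_j$ encountered; a disc $w_i$ is declared a \emph{switch-out disc} exactly when its pair $\{a_{2i-1},a_{2i}\}$ is ``out of order'' with respect to this visiting sequence — equivalently, when the standard $k$-switch of Definition \ref{switch definition}, applied to precisely the set of such discs, is the minimal switching making $(\mF,\mW^*)$ IA. Here ``minimal'' is in the sense of Definition \ref{switching}: minimizing the switch index $k$. The content is that this set is forced: any switching achieving the IA condition must switch at least these discs (a parity/linking count obstruction shows each such $w_i$ genuinely obstructs the immersed arc unless switched), and switching exactly these suffices, by the explicit picture in Figure \ref{fig:switch} generalized from $n=3$ to general $n$. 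I would verify sufficiency by checking that after the standard $k$-switch on this set, the arcs $(\mF_i\cup\mW_i)\cap R$ and $(\mF_i\cup\mW_i)\cap G$ become immersed arcs — the switch-out discs, now finger-germed and meeting the remaining $\mW$ only in $R\cap G$ by condition (c) of Definition \ref{switching}, thread onto the end of the arc in the correct succession $f,w,f,\dots,w$.

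Finally, the reordering $w_{i_1}<\dots<w_{i_n}$: put all switch-out discs first, in the order inherited from $w_1 < \dots < w_n$, followed by all non-switch discs, again in the inherited order. I would check this is well defined (it is, since it is just the restriction of a fixed linear order to two complementary subsets) and that it is the order under which the switched system naturally carries an IA-ordering compatible with Definition \ref{invariant} — which is what later sections need. The main obstacle I anticipate is the \emph{canonicity/minimality} claim: showing that the switch-out set is not merely \emph{a} valid choice but is forced. This requires an argument that a switching of smaller index cannot achieve the IA condition, which I expect to run via a discrete invariant of the planar picture of $(\mF\cup\mW)\cap R$ (a count of ``inversions'' among the pairs $\{a_{2i-1},a_{2i}\}$ relative to the $\mF\cap R$ visiting order) that decreases by at most one per disc switched and must reach zero; pinning down that this invariant exactly equals the size of the canonical set, independent of how one draws the picture, is the delicate point. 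Everything else is bookkeeping with framed finger form and the already-established factorization and normalization lemmas.
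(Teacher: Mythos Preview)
Your approach misreads both the combinatorics and what ``canonically chosen'' is asserting. The paper's proof is a one-liner: the $1$-complex $(\mF\cup\mW)\cap G$ decomposes into $k$ immersed cycles and possibly one immersed arc; the canonical rule is simply to take, from each cycle, the $w_i$ that is \emph{minimal in the given ordering on $\mW$}. That gives $k$ switch-out discs, and Corollary \ref{switch exists} supplies the actual $k$-switching. The point is that switching one $w$ from a cycle cuts that cycle open into an arc, and the standard $k$-switch threads all the opened cycles onto the existing arc; minimality of $k$ is then immediate since each switch affects at most one cycle.

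Your criterion --- declaring $w_i$ a switch-out disc when $\{a_{2i-1},a_{2i}\}$ is ``out of order'' with respect to a visiting sequence along $\mF\cap R$ --- is not the right invariant, and more importantly you have misinterpreted ``canonically chosen'' as ``forced''. It is \emph{not} forced: any $w$ from each cycle would do, and indeed different orderings on $\mW$ give different switch-out sets (this flexibility is exactly what \S\ref{ordering} must later contend with). So the ``delicate point'' you anticipate --- proving no smaller set works and that this particular set is the unique minimal one --- is a phantom obstacle: minimality of the \emph{number} $k$ is trivial (one per cycle), and uniqueness of the \emph{set} is false and not claimed. Drop the inversion count and just read off cycles in $(\mF\cup\mW)\cap G$.
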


\begin{definition} \label{order induced} A minimal switching arising from an ordering as above is called an \emph{order induced switching}.\end{definition}

\begin{remark} The reader should glean from the proof below that given $\mfw$, a minimal switching $\mW^*$ is obtained by choosing as switch out discs exactly one $w\in \mW$ in each immersed cycle.  On the other hand this is an order induced switching if $\mW$ is ordered, each such $w$ is minimal among $w'\in \mW$ in its immersed cycle and these switch out discs get switched out according to their order.\end{remark}

\begin{proof}  Once the switch out discs are chosen  the ordering on $\mW$ induces the new ordering.  We now show how to choose the switch out discs.  If $(\mF, \mW)\in EA$, then there is nothing to do.  Otherwise our $(\mF\cup \mW)\cap G$ forms a union of $k\ge 1$ immersed cycles and an immersed arc which is possibly a point.  Let $w_{i_1}$ be the minimal element of $\mW$ lying in some cycle.  Having inductively constructed $w_{i_1}, \cdots, w_{i_p}, p<k$, then define $w_{i_{p+1}}$ to be the least element lying in a cycle that does not contain any of $w_{i_1}, \cdots, w_{i_p}$.  These $w_{i_1}, \cdots, w_{i_k}$ are our proposed  switch out discs and with respect to the new ordering on $\mW$, Corollary \ref{switch exists} shows that there is a $k$-switching with these as the switch out discs.\end{proof}   

\begin{convention}  \label{minimal switch}  Given $\mfw$ with $\mW$ ordered, unless said otherwise, all switchings of  $\mW$ will be order induced. In particular they will be minimal.  In \S10 we will consider switchings that may not be minimal.  Going forward we denote the elements of $\mW$ by $\{w_1, \cdots, w_n\}$ whose indicated order coincides with its  canonical reordering.  \end{convention}

 Certain symmetric finger and Whitney germed discs can be represented by arcs in $G$ with endpoints in $R\cap G$.  
 
 \begin{definition}  \label{1d} (1D representations of Whitney discs)  Let $R\cup G$ be in framed finger form, $\alpha\subset G$  an embedded path with $\alpha\cap R=\partial \alpha$, one point being positive and one negative, both ends of which approache $R$ from either the positive or negative side.   Then as in Construction \ref{1d construction}, $\alpha$ naturally extends to a framed Whitney disc $D_\alpha$ called the \emph{framed extension} well defined up to isotopy. See Remark \ref{1d remarks} i).     Figure \ref{fig:1d rep} a) shows  two examples of arcs and Figure \ref{fig:1d rep} b) shows their finger and Whitney germed framed extensions. \end{definition}

  \begin{figure}[!htbp]
    \centering
    \includegraphics[width=1.0\linewidth]{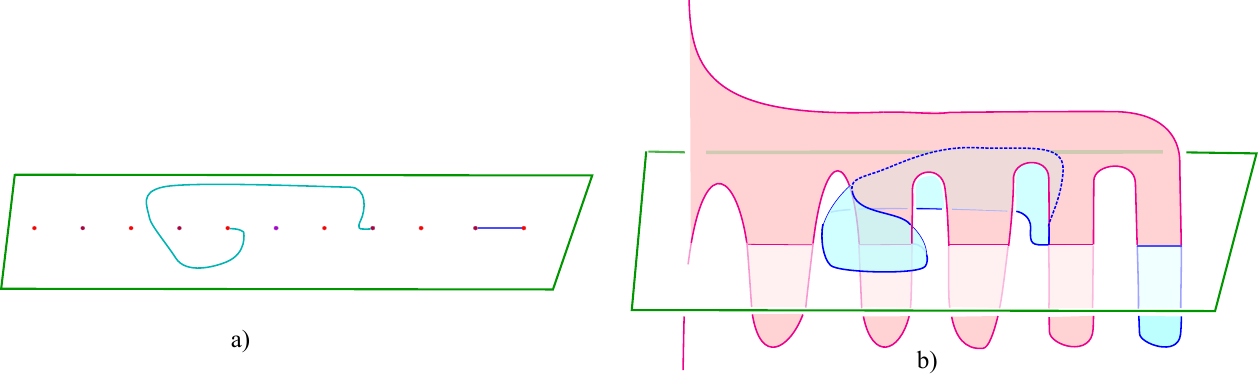}
    \caption{1D representations of Whitney discs}
    \label{fig:1d rep}
    \end{figure}

 \begin{construction} \label{1d construction} After a small isotopy of $R$, there exists a subdisc $F_R\subset R$ which 1-1 vertically projects to an embedded disc $F_G\subset G$ containing $(\mW\cup\mW^*)\cap G$, where $\mW^*$ is a full switch of $\mW$.  Therefore, any arc $\alpha$ as in Definition \ref{1d} can be isotoped to lie in $F_G$.  A framed extension $D_\alpha$ can be constructed so that the vertical projection to $G$ equals $\alpha$.  In general while Whitney framed this $D_\alpha$ is usually not finger or Whitney germed.  \end{construction}
 
\begin{remarks} \label{1d remarks}  i)  There are two ways of isotoping $R$ to have the desired projection property.  In the local model $R$ lies in the $y=0$ plane.  It can be isotoped so that the points with $t\ge 0$ are pushed to $y\ge 0$ and those with  $t\le 0$ pushed to $y\le 0$ or vice versa.  Once the convention is fixed our 1d representation is unique up to isotopy.  Switching the convention changes the representation by reflection in the $x,y,z$ plane.  

ii) The $\alpha$'s used in this paper are all sufficiently simple so that one can readily isotope $D_\alpha$ to either be Whitney germed or finger germed as needed.   \end{remarks}

The following result is needed for \S 8, where we prove that $\mD(\mF, \mW)$ is independent of the ordering on  $\mW$.

\begin{lemma}  \label{unknotted finger} Given $(\mF, \mW)$ there exists a complete set $\hat\mW$ of Whitney discs that similarly matches $\mF$ with the following property.  Given any ordering on $\mW$ there exists an order induced switching $\mW_1$ that is also an order induced full-switching of $\hat\mW$ for some ordering on $\hat \mW$.\end{lemma}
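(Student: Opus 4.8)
The plan is to take $\hat\mW$ to be a fixed small model Whitney system cancelling the same pairs of points of $R\cap G$ as $\mF$, and then, for each ordering of $\mW$, to standardize the whole picture relative to $\hat\mW$ so that the order induced switching becomes, literally, the standard $n$-switch of $\hat\mW$. The structural point that makes a single $\hat\mW$ plausible for all orderings at once is that $\hat\mW$ will depend only on the unordered pairing of $R\cap G$ cancelled by $\mF$, which is part of the data of $(\mF,\mW)$ and does not involve any ordering of $\mW$; moreover the decomposition of $(\mF\cup\mW)\cap G$ into cycles and an arc, and in particular the number of cycles, is likewise an invariant of $(\mF,\mW)$ and independent of the ordering, exactly as used in the proof of Lemma \ref{ff to ia}.

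Concretely, I would first fix $\hat\mW$: choose any complete system of pairwise disjoint untwisted Whitney germed discs for $R$ and $G$ cancelling the same pairs of points of $R\cap G$ as $\mF$, taken in a small tubular neighbourhood of a system of embedded arcs realizing that pairing; then $\hat\mW$ similarly matches $\mF$ by construction and depends only on $\mF$. Now fix an arbitrary ordering on $\mW$. By Lemma \ref{ff to ia} it canonically determines the $k$ switch-out discs of $\mW$, one in each of the $k$ cycles of $(\mF\cup\mW)\cap G$, together with an $\mF$-reordering $w_{i_1}<\cdots<w_{i_k}<w_{i_{k+1}}<\cdots<w_{i_n}$ in which the switch-out discs come first; by Corollary \ref{switch exists} and Definition \ref{switch definition} we may take the corresponding order induced $k$-switching $\mW_1$ in standard switched position with respect to this reordering. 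Since $(\mF,\mW_1)\in\IA$, the arc $(\mF\cup\mW_1)\cap R$ carries an IA-ordering; relabel the points of $R\cap G$ by $a_0,\dots,a_{2n}$ in succession along it, so that $\mF$ cancels the pairs $\{a_0,a_1\},\{a_2,a_3\},\dots$ and $\mW_1$ cancels the pairs $\{a_1,a_2\},\{a_3,a_4\},\dots$. Applying Lemma \ref{trivial w} to $\hat\mW$, equipped with the ordering on its discs induced by this labelling, we may assume after an ambient isotopy fixing $G$ setwise that $R\cup G$ is in framed finger form adapted to $\hat\mW$, with $\hat\mW$ the standard ordered untwisted Whitney discs. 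The standard $n$-switch $\hat\mW^*$ of $\hat\mW$ then cancels exactly the pairs cancelled by $\mW_1$, and conditions (a), (b), (c) of Definition \ref{switching} hold for the pair $(\hat\mW,\hat\mW^*)$ by inspection of the standard model; so $\hat\mW^*$ is an $n$-switch of $\hat\mW$, and it remains to identify $\hat\mW^*$ with $\mW_1$.

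For that identification I would track the ambient isotopy relating the framed finger form adapted to $\hat\mW$ to the framed finger form adapted to $\mW$. I expect to show that it carries $\hat\mW^*$ to a system whose $n$ discs split as $k$ nontrivially nested discs sitting over the $k$ switch-out positions of the $\mF$-reordering, together with $n-k$ discs isotopic to the small discs of $\mW$ over the remaining positions. The $k$ versus $n-k$ split should be forced by the equality of the switch index with the number of cycles of $(\mF\cup\mW)\cap G$: the standard $n$-switch of $\hat\mW$, once read in $\mW$'s frame, can wind nontrivially only over those positions that the cycle structure forces it to. Comparing the resulting system with the construction of $\mW_1$ in Lemma \ref{ff to ia} and Corollary \ref{switch exists} then identifies it, up to isotopy, with the order induced $k$-switching $\mW_1$.

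The step I expect to be the main obstacle is precisely this last identification: reconciling the two framed finger form standardizations and showing that the winding of $\hat\mW^*$, measured in $\mW$'s frame, is carried by exactly the $k$ switch-out discs and is trivial elsewhere. This needs the careful bookkeeping of the normal framing of $N(R\cup G)$ through the isotopies, as in the proof of Lemma \ref{trivial w}, and it is where the ordering-independence of the cycle structure of $(\mF\cup\mW)\cap G$ is really used, since that is what lets one and the same $\hat\mW$ — and the same count $k$ — serve every ordering. One also has to verify condition (c) of Definition \ref{switching}, that $\mW_1$ meets $\hat\mW$ only along $R\cap G$; this should follow by taking $\hat\mW$ small and in general position, using that $\mW_1$ is finger germed while $\hat\mW$ is Whitney germed.
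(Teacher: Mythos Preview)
Your approach inverts the paper's and, as you yourself flag, the ``identification'' step is where it fails. The paper standardizes $\mW$ (not $\hat\mW$): viewing $\mW$ as the standard Whitney discs in framed finger form, it then \emph{constructs} $\hat\mW$ explicitly in that frame as a specific nested family matching $\mF$ and disjoint from $\mW$ outside $R\cap G$. For each ordering on $\mW$ it then \emph{constructs} the order induced $k$-switching $\mW^*$ by hand (there is freedom in choosing the switch discs, and the paper uses it), drawing the switch discs so that they avoid the already-placed $\hat\mW$; the disjointness $\hat\mW\cap\mW^*\subset R\cap G$ is then read off the picture, and since $(\hat\mW,\mW^*)\in\EA$ this immediately makes $\mW^*$ an $n$-switch of $\hat\mW$.

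Your plan instead fixes some $\hat\mW$, standardizes \emph{it}, and then tries to show that the standard $n$-switch $\hat\mW^*$ of $\hat\mW$ is already equal to a specific order induced $k$-switching $\mW_1$ of $\mW$. The difficulty is not just bookkeeping: after you standardize $\hat\mW$, you have lost all control over $\mW$, which now sits in some uncontrolled position in the framed finger form. For $\hat\mW^*$ to be a $k$-switching of $\mW$ you would need $n-k$ of its discs to literally coincide with discs of $\mW$ and the remaining $k$ to meet $\mW$ only in $R\cap G$; nothing in your setup forces either condition. The cycle count $k$ is indeed ordering-independent, but that combinatorial invariant does not pin down where $\mW$ lies relative to $\hat\mW^*$. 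Your ``winding carried by exactly the $k$ switch-out discs'' hope is essentially the content of the lemma, not a step toward it. The fix is to exploit the freedom the lemma gives you: it only asks for \emph{some} order induced $k$-switching, so build it to avoid $\hat\mW$ rather than trying to identify a pre-chosen one with a pre-chosen $n$-switch.
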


\begin{proof} We will find a $\hat\mW$ which similarly matches $\mF$ with the property that given any ordering on $\mW$ there exists an order induced switching $\mW^*$ such that  $\hat\mW\cap\mW^*\subset R\cap G$.  Since $(\hat\mW,\mW^*)\in EA$ it follows that $\mW^*$ is an $n$-switch of $\hat\mW$.  

View $\mW$ as the standard Whitney discs.  After an initial isotopy of $R\cup G$ which fixes $\mW$ setwise, takes framed finger form to framed finger form and giving the discs of $\mW$ the induced renaming and also renaming $\mF$ we can assume that $w_1, f_1, w_2, f_2, \cdots, w_{i_1}, f_{i_0}$ forms an immersed arc, which may be a point in which case $i_0=0$.  Also, $w_{i_0+1}, f_{i_0+1}, \cdots, w_{i_1}, f_{i_1};\\ \cdots; w_{i_k+1}, f_{i_k+1}, \cdots, w_{n}, f_{n}$ form $k$-immersed cycles.  Now choose $\hat\mW$ as  Whitney  discs which similarly match $\mF$ and such that $(\hat\mW\cap\mW)\subset G\cap R$ where a typical $(\hat\mW\cap\mW)\cap G$ appears as in Figure \ref{fig:nswitch1}  with the same pattern appearing in $(\hat\mW\cap\mW)\cap R$.  Here $\hat\mW$ is shown in black and Whitney germed and $\mW$ is blue and finger germed. 

   \begin{figure}[!htbp]
    \centering
    \includegraphics[width=.6\linewidth]{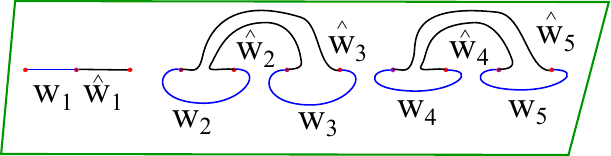}
    \caption{Constructing $\hat W$}
    \label{fig:nswitch1}
\end{figure}

Given $\hat\mW$ and an  ordering on $\mW$, the relevant data for determining an order induced $k$-switch is first, for each $i$ picking out a single $w_j$ in the i'th cycle and second, ordering these $w_j$'s.   For a given order on $\mW$ one readily constructs  the desired order induced $k$-switch $\mW^*$ as in Figures \ref{fig:nswitch2} a), b).  Those figures show the construction, restricted to $G$ with the same pattern appearing in $R$, of the 2-switches $\mW_1$ and $\mW_2$ of $\mW$, where $w_2, w_5$ are the picked out discs and $\mW_1$ arises from $w_2<w_5$ and $\mW_2$ arises from $w_5<w_2$.   \end{proof}

   \begin{figure}[!htbp]
    \centering
    \includegraphics[width=1.0\linewidth]{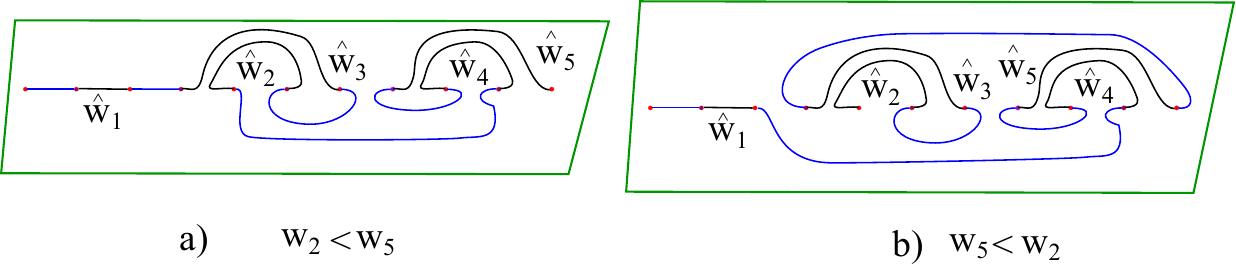}
    \caption{a) Constructing $\mW_1\qquad\qquad$ b) Constructing $\mW_2$}
    \label{fig:nswitch2}
\end{figure}

Given $(\mF,\mW_1)\in$ IA  where $\mW_1$ is an order induced switching of the ordered  $\mW$, then the ordering on $\mW_1$ induced from $\mW$ will usually be different from the induced IA-ordering on $\mW_1$ from Definition \ref{invariant}.  The next lemma explains how to compute the latter.

\begin{lemma} \label{ia order} Given $(\mF,\mW) $ with $\mW$ ordered  such  that $\mW_1$ is an order induced switching of $\mW$, then the IA-ordering on $\mF\cup\mW_1$ restricted to $\mW_1$ is as follows.  Suppose that $(\mF\cup \mW)\cap G$ is a union of an arc $\sigma_0$, which is possibly a point, and cycles $\sigma_1, \cdots, \sigma_k$.  Then the IA ordering on $\mF\cup\mW_1$ is determined by the following two rules.  

i) The IA ordering on $((\mF\cup\mW^1)\cap (\sigma_i\setminus w_i))\cup w^1_i$ is induced from the linear order on that immersed segment having $w^1_i$ as the maximal element. 

ii) If $r<s$, then all elements in $\sigma_s\cup w^1_s$ appear before elements in $\sigma_r\cup w^1_r$.  See Figure \ref{fig:new order}.\qed\end{lemma}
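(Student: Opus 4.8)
The plan is to read off the IA-ordering directly from its definition: one orients the immersed arc $\rho:=(\mF\cup\mW_1)\cap G$ from its finger end and records the discs of $\mF\cup\mW_1$ in the alternating order $f,w,f,w,\dots$ in which their $G$-arcs are traversed. Thus the whole statement reduces to an explicit description of $\rho$ as a concatenation of subarcs, together with the direction of traversal. First I would pass to a convenient model. By Lemma \ref{trivial w} there is an ambient isotopy fixing $G$ setwise carrying $R$ to framed finger form and $\mW$ to the standard ordered untwisted Whitney--germed system; since the IA-ordering is manifestly unchanged by ambient isotopy of the pair $(R,G)$, we may compute it after this move. By the $\mF$-reordering of Lemma \ref{ff to ia} the labels are arranged so that $w_1,\dots,w_k$ are the switch-out discs with $w_i\subset\sigma_i$, while $w_{k+1},\dots,w_n$ are the non-switch discs; and $\mW_1$ replaces each $w_i$ ($i\le k$) by the switch disc $w^1_i$, which by Definition \ref{switching} is finger-germed, meets $w_i$ exactly at the corner $a_{2i-1}$, and meets $\mW$ only in $R\cap G$.

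Next I would describe $\rho$. Because $\mF$ is untouched and $\mW_1$ differs from $\mW$ only by exchanging each $w_i$ ($i\le k$) for $w^1_i$, the set $(\mF\cup\mW_1)\cap G$ is obtained from $(\mF\cup\mW)\cap G=\sigma_0\sqcup\sigma_1\sqcup\cdots\sqcup\sigma_k$ by, for each such $i$, deleting the open arc $w_i\cap G$ from the cycle $\sigma_i$ --- which opens $\sigma_i$ into the immersed arc $\sigma_i\setminus w_i$ whose endpoints are the two corners $a_{2i-1},a_{2i}$ of $w_i$ --- and then inserting the arc $w^1_i\cap G$, attached at $a_{2i-1}$ and running, according to the standard $k$-switch model of Corollary \ref{switch exists} and Figure \ref{fig:switch}, onto the next switch-out corner in the $\mF$-reordering, thereby splicing $\sigma_k,\sigma_{k-1},\dots,\sigma_1$ in turn and finally onto $\sigma_0$. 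The upshot is that $\rho$ is a single immersed arc which, oriented from its finger end, successively traverses $\sigma_k\setminus w_k$, then $w^1_k$, then $\sigma_{k-1}\setminus w_{k-1}$, then $w^1_{k-1}$, $\dots$, then $\sigma_1\setminus w_1$, then $w^1_1$, and finally all of $\sigma_0$; moreover within the block for $\sigma_i$ the arc enters $\sigma_i\setminus w_i$ at the corner of $w_i$ not shared with $w^1_i$ and leaves that block via $w^1_i$, so that $w^1_i$ is its last disc.

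Reading the ordering off this description yields precisely the two asserted rules: rule (ii) is the statement that the blocks occur in the order $\sigma_k,\dots,\sigma_1,\sigma_0$, i.e.\ elements of $\sigma_s\cup w^1_s$ precede those of $\sigma_r\cup w^1_r$ whenever $r<s$; and rule (i) is the statement that the block for $\sigma_i$ runs along the immersed segment $\sigma_i\setminus w_i$ with $w^1_i$ appended as its final, hence maximal, element. Comparison with Figure \ref{fig:new order} confirms the orientation conventions.

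I expect the one genuinely delicate point to be the middle step: verifying that the switch discs re-splice the $k+1$ components of $(\mF\cup\mW)\cap G$ into a single arc, in the cyclic order dictated by the $\mF$-reordering, and that the resulting traversal enters and leaves each $\sigma_i$-block on the correct side of $w_i$. This is exactly the content carried by the definition of a $k$-switching together with the standard-switch picture, and it is why the authors treat the statement as a matter of inspection (hence the \qed); the remaining two steps are immediate from the definition of the IA-ordering.
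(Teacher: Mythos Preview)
Your proposal is correct and matches the paper's approach: the paper gives no proof beyond the \qed\ and a reference to Figure~\ref{fig:new order}, treating the statement as evident from the definition of the IA-ordering together with the standard $k$-switch picture. Your write-up simply makes explicit the inspection that the authors leave to the reader, and you correctly identify that the only substantive point is verifying how the switch discs re-splice the cycles and arc into a single immersed arc in the order dictated by the $\mF$-reordering.
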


   \begin{figure}[!htbp]
    \centering
    \includegraphics[width=1.0\linewidth]{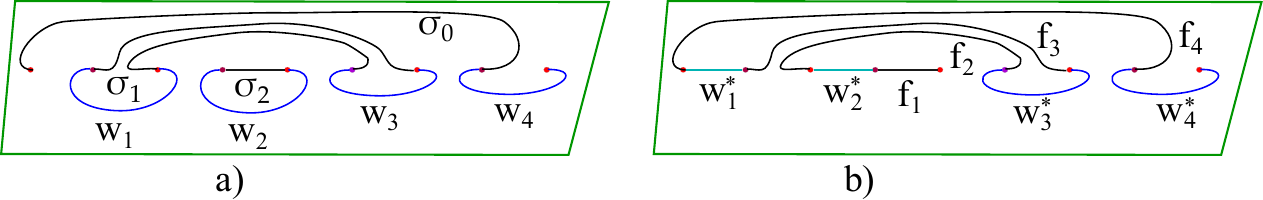}
    \caption{a)  $\mW$ has the standard ordering\\            b) The IA Ordering: $f_1<w_2^*<f_2<w_3^*<f_3<w_1^*<f_4<w_4^*$}
    \label{fig:new order}
\end{figure}

\begin{definition} \label{restandardization}  Given $R$ and $G$ in framed finger form, then a \emph{restandardization map} is the time $1$ map $\psi$ of an ambient isotopy that fixes $G\cup R$ setwise, fixes the standard Whitney discs $\mW$ pointwise and such that $\psi(R\cup G)$ is in framed finger form.

If now $\mW$ is any complete system of untwisted Whitney discs for a general $R$ and $\Gs$ with standardizing map $\kappa$ and   $\mW_1$ is a $\mW$-framed $k$-switch of $\mW$, then $\mW_2=\kappa^{-1}\circ\psi\circ\kappa (\mW_1)$ is also a $\mW$-framed $k$-switch of $\mW$ called a \emph{restandardization} of $\mW_1$.

If $\mW$ is ordered, then define $\I(\mF, \mW)=\I(\mF, \mW^*)$ where $\mW^*$ is an order induced switching, not necessarily $\mW$-framed.
\end{definition}

The following is the main result of this section.  It will follow from the next three lemmas and Corollary \ref{homological independence}.

\begin{proposition} \label{switch invariance}  If the finger/Whitney system $(\mF, \mW)$ has $\mW$ is ordered, then $\I(\mF,\mW)$ is well defined.\end{proposition}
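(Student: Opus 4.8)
The plan is to fix the given ordering on $\mW$ and show that $I(\mF,\mW^*)$ is independent of the chosen minimal-index switching $\mW^*$ with $(\mF,\mW^*)\in\IA$; then $I(\mF,\mW):=I(\mF,\mW^*)$ is well defined. By Lemma~\ref{ff to ia} the set of switch-out discs and the induced $\mF$-reordering of $\mW$ are already determined by $(\mF,\mW)$ together with its ordering, so the remaining freedom in a $k$-switch is of three kinds: the choice of the switch discs $\mW^*_S$ themselves among all families meeting conditions (a)--(c) of Definition~\ref{switching}; the way these discs are twisted relative to $R$ and $G$; and the choice of standardizing map used to put $R\cup G$ into framed finger form. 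I would treat these in turn --- this is the role of the three lemmas cited by the statement --- invoking Corollary~\ref{homological independence} to absorb homological indeterminacy at each stage.

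\emph{Twisting.} Two switchings that agree up to isotopy except for how their switch discs spiral around $R$ or $G$ have Clifford-equivalent, hence $H_2$-equivalent, systems of discs, since a $G$- or $R$-twist of a disc changes its $\BZ_2$-class in $H_2(E,\partial;\BZ_2)$ only by Clifford tori (Remarks~\ref{renormalizing}, Lemma~\ref{twist hsf}). As both systems may be taken untwisted and both pairs are in immersed arc position, Corollary~\ref{homological independence} gives equality of $I$. The same argument handles different standardizing maps: two of them lead to $\mW$-framed $k$-switches related by a restandardization (Definition~\ref{restandardization}), and since a restandardizing map fixes $\mW$ pointwise and preserves $R\cup G$ setwise it moves the switch-disc system only within its $H_2$-class, so $I$ is unchanged.

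\emph{Choice of switch discs.} After a standardizing map (Lemma~\ref{trivial w}) I would assume $R\cup G$ is in framed finger form with $\mW$ the standard ordered Whitney discs, so that the $\mF$-reordering yields the canonical \emph{standard $k$-switch} $\mW^*$ of Definition~\ref{switch definition}. By Lemma~\ref{unknotted finger} one may further choose a complete system $\hat\mW$ similarly matching $\mF$ with $(\hat\mW,\mW^*)\in\EA$, and Lemma~\ref{ia order} then writes out the IA-ordering on $\mF\cup\mW^*$, exhibiting $I(\mF,\mW^*)$ as the explicit mod-$2$ sum $\sum_{i\le j}|\inte(f_i)\cap\inte(w^*_j)|$ taken with respect to that ordering. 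For any other admissible family of switch discs, giving a switch $\mW^{*\prime}$, conditions (a)--(c) force its discs to be finger germed and to meet $\mW$ only along $R\cap G$; I would then compare $\mW^{*\prime}$ to $\mW^*$ through isotopy together with a sequence of compressions and $\partial_+$-compressions (Lemma~\ref{boundary compression}), arranged so that $(\mF,\mW^*)$ and $(\mF,\mW^{*\prime})$ satisfy hypotheses ii) and iii) of the Parity Lemma~\ref{parity}, which then gives $I(\mF,\mW^*)=I(\mF,\mW^{*\prime})$. Combining the three parts proves the Proposition.

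\emph{Main obstacle.} The hard part is the last step: controlling the effect on the mod-$2$ interior intersection count of replacing one admissible family of switch discs by another. Since such a family is pinned down only up to isotopy and only through its germs and its intersections with $R\cup G$, two valid choices can differ substantially in their interiors, and the work is to show every such difference decomposes into compressions, $\partial_\pm$-compressions, twistings and $H_2$-moves to which the Parity Lemma and Corollary~\ref{homological independence} are insensitive --- in particular, that a change of switch disc cannot create a parity-changing interior intersection with $\mF$ that is not already forced by the intersection pattern on $R$ and $G$. Keeping the systems untwisted and normalized, and using that the Parity Lemma reads only the intersections of $\mF,\mW$ with $R$ and with $G$ rather than their mutual interior intersections, is what should make this step manageable.
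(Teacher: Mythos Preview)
Your overall architecture matches the paper's: reduce to the standard $k$-switch in framed finger form, then control the indeterminacy via restandardization plus $H_2$-equivalence plus twisting, invoking Corollary~\ref{homological independence} and the Parity Lemma at the end. But there is a real gap in your second paragraph, and it is exactly where the bulk of the paper's work lies.

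You assert that a restandardization map, because it fixes $\mW$ pointwise and preserves $R\cup G$ setwise, ``moves the switch-disc system only within its $H_2$-class''. This is false. A restandardization $\psi$ is only isotopic to the identity through maps that need not preserve $R\cup G$, so $\psi(w^1_i)$ and $w^1_i$ can lie in different classes of $H_2(E,\partial w^1_i)$. The paper's Lemma~\ref{ij spinning homology} makes this explicit: an $i,j$-spinning sends $[w^1_i]$ to $[w^1_i]+z_j$ where $z_j$ is the class of a sphere linking the $j$'th solid finger. So Corollary~\ref{homological independence} alone cannot close out restandardization, and your reduction collapses at this point.

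What the paper actually does is isolate five \emph{basic} restandardization maps --- $G$-finger twisting, $G$-braiding, $R$-braiding, $i,j$-spinning, and SO(3)-twisting --- and prove two things: (i) every restandardization is, up to $H_2$-equivalence, a composition of these (this is Lemma~\ref{standardizing}, and itself requires an inductive homological argument using the generators of $H_2(E')$ from Lemma~\ref{e' homology}); and (ii) each of the five individually preserves $I(\mF,\mW_1)$ (Lemmas~\ref{twist invariance}, \ref{gbraiding invariance}, \ref{rbraiding invariance}, \ref{spin invariance}, \ref{so3 twist invariance}). Step (ii) is where the Parity Lemma is actually applied, but not generically: each proof requires a careful case analysis of how the relevant map interacts with the solid fingers, using the classification of components of $f\cap\sigma$ from Lemma~\ref{f in solid} and the even-winding constraint of Lemma~\ref{even winding}. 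Several of these proofs (notably $G$-finger twisting and $G$-braiding in their Case~2) also require auxiliary disc slides to restore hypothesis (iii) of the Parity Lemma before it can be invoked.

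Your ``Choice of switch discs'' paragraph inherits the same problem: you propose to compare an arbitrary admissible $\mW^{*\prime}$ to the standard $\mW^*$ via compressions and $\partial_+$-compressions, but you give no mechanism for producing such a decomposition, and in fact the paper never attempts this. Instead, Lemma~\ref{standardizing} absorbs this step into the restandardization framework: any $\mW$-framed $k$-switch is a restandardization of $\kappa^{-1}(\mW_0)$ up to $H_2$-equivalence, so once restandardization invariance is established there is nothing further to do here. Your ``main obstacle'' paragraph correctly senses that controlling interior intersections under a change of switch discs is the hard part, but the resolution is not a direct surgery argument on the discs --- it is the five-move decomposition and the five separate invariance lemmas.
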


\begin{lemma}\label{standardizing} If $\mW$ is a  complete set of ordered untwisted Whitney discs with standardization map $\kappa$, then any $\mW$-framed order induced switching  $\mW_1$ of $\mW$ is a restandardization of $\kappa^{-1}(\mW_0)$, where $\mW_0$ is the standard minimal switching of $\kappa(\mW)$.\end{lemma}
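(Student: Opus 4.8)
The plan is to reduce the statement to a comparison of two $k$-switches that are both "standard" relative to two different framed-finger-form configurations, and then exploit the fact that any two framed-finger forms are related by a restandardization map. First I would apply the standardizing map $\kappa$ of Lemma \ref{trivial w}, so that $\kappa(R) \cup G$ is in framed finger form and $\kappa(\mW)$ is the standard ordered set of untwisted Whitney discs. Under $\kappa$, the given $\mW$-framed $k$-switch $\mW_1$ becomes a $\kappa(\mW)$-framed $k$-switch $\kappa(\mW_1)$ of $\kappa(\mW)$, and since restandardization is conjugation-equivariant (by construction of $\mW_2 = \kappa^{-1}\circ\psi\circ\kappa(\mW_1)$), it suffices to show that $\kappa(\mW_1)$ is, up to $H_2$-equivalence, a restandardization $\psi(\mW_0)$ of the standard $k$-switch $\mW_0$ of $\kappa(\mW)$, where $\psi$ is some restandardization map fixing $G\cup\kappa(R)$ setwise and fixing $\kappa(\mW)$ pointwise.

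The core of the argument is then a local analysis near $R\cap G$. By Definition \ref{switch definition}, both $\kappa(\mW_1)$ and $\mW_0$ are $k$-switches of the standard ordered Whitney discs, so they have the same switch index $k$; I would first use Corollary \ref{switch exists} together with the description in Lemma \ref{ff to ia} to arrange that they use the same switch-out discs (if not, one reorders, which only changes things by an ambient isotopy fixing $\mW$ setwise, i.e. by a restandardization map). Condition (c) of Definition \ref{switching} says that $(\kappa(\mW_1))_S$ and $\kappa(\mW)$ can be made finger germed and Whitney germed respectively with intersections only along $R\cap G$, and the same holds for $(\mW_0)_S$; since the boundary germs of a $k$-switch are determined near $R\cap G$ by the requirement $a_{2i-1}\in w_i\cap w_i^*$ (condition (a)) together with the untwistedness and germ conditions, the switch discs of $\kappa(\mW_1)$ and $\mW_0$ have isotopic boundary germs. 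Away from a neighborhood of $R\cup G$, each switch disc of either family is an embedded disc in $E$ with prescribed boundary germ; their difference is therefore a class in $H_2(E,\partial w_i^*)$, so after an ambient isotopy fixing $G\cup R$ setwise (and fixing the non-switch discs, which are literally $\kappa(\mW)$, pointwise) — that is, after applying a restandardization map $\psi$ — we may bring $\mW_0$'s switch discs to have the same boundary germs as those of $\kappa(\mW_1)$, and then they represent possibly-different $H_2(E,\partial E;\partial w_i^*)$ classes, i.e. $\kappa(\mW_1)$ and $\psi(\mW_0)$ are $H_2$-equivalent. Pulling back by $\kappa^{-1}$ gives the claim.

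The main obstacle I anticipate is verifying carefully that the ambient isotopy produced in the last step can be taken to fix $\kappa(\mW)$ pointwise (as required by Definition \ref{restandardization} of restandardization map) rather than merely setwise — one has to check that the support of the isotopy correcting the switch discs can be pushed off the non-switch discs, using condition (c) which forces all relevant intersections into $R\cap G$, and using that near $R\cap G$ the germs already agree so no further isotopy is needed there. A secondary subtlety is bookkeeping the framings: one must confirm that "$\mW$-framed" is preserved, i.e. that the isotopy does not introduce twisting relative to $R$ or $G$, which follows because framed finger form fixes the positive $I$-bundles (Definition \ref{framed finger}) and the restandardization map preserves framed finger form by definition. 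Everything else is a direct application of Lemma \ref{trivial w}, Corollary \ref{switch exists}, Lemma \ref{ff to ia}, and the definitions, together with the observation that $H_2$-equivalence is exactly the flexibility we are allowed to discard.
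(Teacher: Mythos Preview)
Your reduction via $\kappa$ to framed finger form is correct and matches the paper's first step. But the core of your argument has two genuine gaps.

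First, your claim that conditions (a), (c), and untwistedness force the switch discs of $\kappa(\mW_1)$ and $\mW_0$ to have isotopic boundary germs is false. Condition (a) pins down only one $R\cap G$-endpoint of each switch disc; the full arcs $w_i^*\cap G$ and $w_i^*\cap R$ joining the two endpoints are not determined and can differ by any element of the mapping class groups $\Diff(G;N(R))$ and $\Diff(R;N(G))$. The paper fixes this discrepancy by explicit restandardization maps: $G$-braidings and $G$-finger twists to straighten $\mW_1\cap G$, then $R$-braidings to straighten $\mW_1\cap R$ up to Dehn twists about disc sections of solid fingers, then SO(3)-twists to reduce to at most a single twist per finger, and finally Lemma~\ref{even winding} (the mod-2 winding obstruction) to rule out those last single twists inductively. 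None of this is automatic; your ``ambient isotopy fixing $G\cup R$ setwise and the non-switch discs pointwise'' is exactly what must be \emph{constructed}, and you have not done so. Note also that a restandardization map must fix all of $\mW$ pointwise, including the switch-out discs, not just the non-switch discs.

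Second, your last step mis-states $H_2$-equivalence. You write that after matching boundary germs the discs ``represent possibly-different $H_2(E,\partial E;\partial w_i^*)$ classes, i.e.\ $\kappa(\mW_1)$ and $\psi(\mW_0)$ are $H_2$-equivalent.'' But $H_2$-equivalence means the classes \emph{agree} in $H_2(E,\partial w_i;\BZ_2)$, not merely that they are both defined there. The paper closes this gap with a further sequence of $i,j$-spinnings (the fifth restandardization map), using Lemma~\ref{e' homology} to write the homological difference of each switch disc as a $\BZ$-combination of linking spheres $S_j$, and then eliminating these coefficients one by one via spinnings together with the constraints that the switch discs are Whitney-framed (killing the diagonal $S_i$ term) and pairwise disjoint (killing the off-diagonal terms inductively). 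Without this step you have not achieved $H_2$-equivalence at all.
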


\begin{lemma}  \label{ff independence}  Given $(\mF, \mW)$ with $\mW$ ordered and $\mW_1, \mW_2$  order induced switchings such that $\mW_2$ is a restandardization of $\mW_1$, then $\I(\mF, \mW_1)=\I(\mF,\mW_2)$.\end{lemma}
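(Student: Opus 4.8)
The plan is to reduce the statement, via Lemma \ref{standardizing}, to a purely local comparison of two standard $k$-switches differing by a restandardization map, and then to apply the Parity Lemma (Lemma \ref{parity}) together with Corollary \ref{homological independence}. First I would recall the setup: we are given the finger-first pair $(\mF,\mW)$ with two FF-to-IA switchings $\mW_1$ and $\mW_2$, where $\mW_2 = \kappa^{-1}\circ\psi\circ\kappa(\mW_1)$ for a standardizing map $\kappa$ and a restandardization map $\psi$ fixing the standard Whitney discs $\kappa(\mW)$ pointwise. Since both $\mW_1$ and $\mW_2$ are FF-to-IA switchings of the same ordered $\mW$, each has the same switch index $k$ (minimality of $k$ in Definition \ref{switching}), the same switch-out discs (canonical by Lemma \ref{ff to ia}), and hence the same induced reordering; so $\mF$, and the non-switch discs of $\mW_1$ and $\mW_2$, match, and we only need to compare the two families of switch discs.

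The key point is that the restandardization map $\psi$ fixes $G\cup R$ setwise and fixes $\kappa(\mW)$ pointwise, so $\kappa^{-1}\circ\psi\circ\kappa$ fixes $\mW$ pointwise and fixes $G\cup R$ setwise; it therefore carries $\mW_1$ to $\mW_2$ by an ambient isotopy that does not move $\mW$, only rearranging how the switch discs wind around inside $G$ and $R$. After normalizing both $(\mF,\mW_1)$ and $(\mF,\mW_2)$ and isotoping so that $\mW_i$ is Whitney germed while $\mF$ stays finger germed, I would check that $(\mF,\mW_1)$ and $(\mF,\mW_2)$ satisfy the three hypotheses of Lemma \ref{parity}: (i) similar matching and germ conditions, which hold because $\psi$ preserves the matching of intersection points and the framing data; (ii) $\hat I(\mF,\mW_1) = \hat I(\mF,\mW_2)$, which I would verify by tracking the change in $|\inte(f_i)\cap\inte(w_j)|$ under $\psi$ — since $\psi$ is ambient and fixes $\mF$ up to controlled isotopy, the interior intersection counts change only by even amounts coming from full $2\pi$ rotations, as in Remarks \ref{renormalizing} and Lemma \ref{no new ints}; and (iii) the boundary intersection numbers $|f_i\cap w_j\cap R|$ and $|f_i\cap w_j\cap G|$ agree mod 2, which follows because $\psi$ fixes the germs along $\kappa(\mW)$ and only alters the switch discs' boundary behavior by isotopies that preserve these mod-2 counts (the boundary germ of a switch disc near $G$ or $R$ is determined by the switching path and the endpoints, as used in the proof of Lemma \ref{clifford commutes}). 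Where the germs differ by genuine twisting I would instead invoke Corollary \ref{homological independence}, since twisting a disc near $G$ or $R$ changes its class only by Clifford tori / normal tori, leaving it $H_2$-equivalent.

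The main obstacle I anticipate is hypothesis (ii) of the Parity Lemma — controlling the parity of the interior intersection count $\hat I$ under the restandardization. The subtlety is that $\psi$, while fixing $\mW$ pointwise, can drag the switch discs of $\mW_1$ through a sequence of finger and Whitney moves' worth of topology inside $G$ and $R$, and a priori this could create odd numbers of new intersections with the finger discs $\mF$. The resolution should be exactly the mechanism already isolated in Lemma \ref{no new ints} and Remarks \ref{new ints}: an untwisted $G$- or $R$-disc slide, or an $R$/$G$-twist supported near the respective surface, changes each $|\inte(f_i)\cap\inte(w_j)|$ only by an even number, because the relevant bands lie in the positive or negative $I$-bundle disjoint from the germs of the other discs. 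So I would decompose $\psi$ (after the reduction via Lemma \ref{standardizing} to the standard $k$-switch picture) into such elementary moves and apply Lemma \ref{no new ints} repeatedly, concluding $\hat I(\mF,\mW_1) = \hat I(\mF,\mW_2)$ and hence, by Lemma \ref{parity}, $I(\mF,\mW_1) = I(\mF,\mW_2)$.
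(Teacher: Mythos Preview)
Your overall strategy --- reduce via Lemma \ref{standardizing} and verify the hypotheses of the Parity Lemma --- is the paper's strategy too, but your execution has a genuine gap at exactly the point you flag as the main obstacle.

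The problem is the proposed decomposition of $\psi$. A restandardization is an ambient isotopy fixing $\mW$ pointwise and $R\cup G$ setwise; it is \emph{not} a disc-slide sequence on $\mW_1$, and Lemma \ref{no new ints} does not apply to it. What the paper actually does is identify five specific basic restandardization maps (G-finger twisting, G-braiding, R-braiding, $i,j$-spinning, and the SO(3)-twist), prove in Lemma \ref{standardizing} that these generate all restandardizations up to $H_2$-equivalence, and then prove a separate invariance lemma for each one (Lemmas \ref{twist invariance}, \ref{gbraiding invariance}, \ref{rbraiding invariance}, \ref{spin invariance}, \ref{so3 twist invariance}). Each of those proofs is a nontrivial case analysis using the classification of components of $f\cap\sigma$ from Lemma \ref{f in solid} and the even-winding Lemma \ref{even winding}; none of them reduces to repeated application of Lemma \ref{no new ints}.

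Your claim about hypothesis (iii) is also incorrect as stated. The paper explicitly notes (proof of Lemma \ref{twist invariance}, Case 2) that after a G-finger twist $\mW_1$ and $\psi(\mW_1)$ need \emph{not} satisfy (iii) of the Parity Lemma; one must first modify $\psi(\mW_1)$ by a carefully chosen sequence of disc slides to a $\mW_3$ for which (iii) holds, and then separately argue that $\hat I(\mF,\mW_1)=\hat I(\mF,\mW_3)$ by a parity count over immersed cycles. Similar corrections are needed for G-braiding. So the verification of both (ii) and (iii) is genuinely delicate and move-specific, not a consequence of general properties of ambient isotopies fixing $\mW$.
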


\begin{lemma} \label{wframe independence}  Given $\mF$ and  ordered isotopic sets of untwisted Whitney discs $\mW_i, i=1,2$ with $\mW_i$-framed order induced switchings  $\mW_i^*$, then $\I(\mF, \mW_1^*)=\I(\mF,\mW_2^*)$.\end{lemma}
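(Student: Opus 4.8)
The plan is to avoid transporting the switching data by the isotopy directly, and instead to show that $\mW_1^*$ and $\mW_2^*$ are $H_2$-equivalent as Whitney systems for the fixed pair $(R,G)$. Since both $\mW_1^*$ and $\mW_2^*$ are untwisted and, being finger first to immersed arc switchings, lie in immersed arc position with respect to the fixed $\mF$, Corollary \ref{homological independence} then immediately yields $I(\mF,\mW_1^*)=I(\mF,\mW_2^*)$. The reason for routing the argument through $H_2$-equivalence is that this notion refers only to $(R,G)$ and not to $\mF$, so the isotopy realizing $\mW_1\simeq\mW_2$ — which cannot be assumed to fix $\mF$ — may be used freely.

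First I would note that the minimal switch index $k$ is the same for $\mW_1$ and $\mW_2$, with corresponding switch-out discs: by Lemma \ref{ff to ia} the switch index and switch-out discs are read off from the cycle structure of $(\mF\cup\mW_i)\cap G$, which is governed by the combinatorial pairing of the points of $R\cap G$ by the boundary arcs of $\mF$ and $\mW_i$ on $G$, and this pairing data is preserved by the isotopy $\mW_1\simeq\mW_2$ (corresponding discs cancel the same pairs of points). Next, fix standardizing maps $\kappa_i$ for $\mW_i$ by Lemma \ref{trivial w}, and let $\mW_0^{(i)}$ denote the standard $k$-switch of $\kappa_i(\mW_i)$. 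By Lemma \ref{standardizing}, $\mW_i^*$ is $H_2$-equivalent to a restandardization of $\kappa_i^{-1}(\mW_0^{(i)})$; since restandardizations (Definition \ref{restandardization}) are realized by ambient isotopies fixing $R\cup G$ setwise, they preserve $H_2$-equivalence class, so $\mW_i^*$ is $H_2$-equivalent to $\kappa_i^{-1}(\mW_0^{(i)})$ itself. (The invariance of $I$ under restandardization provided by Lemma \ref{ff independence} is the analogous fact at the level of the invariant and is what makes this reduction legitimate.)

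It remains to identify $\kappa_1^{-1}(\mW_0^{(1)})$ and $\kappa_2^{-1}(\mW_0^{(2)})$ up to $H_2$-equivalence, and here the isotopy enters. Let $\phi$ be the time-$1$ map of an ambient isotopy, starting at the identity and fixing $R\cup G$ setwise throughout, with $\phi(\mW_1)=\mW_2$ (isotopy extension). Then $\kappa_2\circ\phi$ is again a standardizing map for $\mW_1$, and $(\kappa_2\circ\phi)(\mW_1)=\kappa_2(\mW_2)$, so its standard $k$-switch is $\mW_0^{(2)}$ up to an ambient isotopy of the framed finger form, which by Lemma \ref{standardizing} does not change the $H_2$-equivalence class. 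Applying Lemma \ref{standardizing} to the two standardizing maps $\kappa_1$ and $\kappa_2\circ\phi$ of $\mW_1$, the systems $\kappa_1^{-1}(\mW_0^{(1)})$ and $(\kappa_2\circ\phi)^{-1}(\mW_0^{(2)})=\phi^{-1}\bigl(\kappa_2^{-1}(\mW_0^{(2)})\bigr)$ are $H_2$-equivalent; and since $\phi$ is isotopic to the identity through diffeomorphisms fixing $R\cup G$ setwise, $\phi^{-1}\bigl(\kappa_2^{-1}(\mW_0^{(2)})\bigr)$ is isotopic through Whitney systems for $(R,G)$ to $\kappa_2^{-1}(\mW_0^{(2)})$, hence $H_2$-equivalent to it. Chaining these equivalences, $\mW_1^*$ is $H_2$-equivalent to $\kappa_1^{-1}(\mW_0^{(1)})$, which is $H_2$-equivalent to $\kappa_2^{-1}(\mW_0^{(2)})$, which is $H_2$-equivalent to $\mW_2^*$, and Corollary \ref{homological independence} finishes the proof.

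The main obstacle is precisely that the ambient isotopy realizing $\mW_1\simeq\mW_2$ need not fix $\mF$: a naive transport of the switching data by $\phi$ would leave $\phi(\mW_1^*)$ in immersed arc position relative to $\phi(\mF)$ rather than $\mF$, so one cannot conclude by bare naturality of $I$ under diffeomorphisms. The device above circumvents this by confining every appearance of $\phi$ to $H_2$-equivalence statements about Whitney systems for the fixed $(R,G)$ — which do not see $\mF$ — and by invoking the $\mF$-sensitive Corollary \ref{homological independence} only at the end, applied to $\mW_1^*$ and $\mW_2^*$ themselves, which are in immersed arc position relative to $\mF$ by hypothesis. The remaining points needing care are bookkeeping internal to Lemma \ref{standardizing}: that restandardizations and the passage between standardizing maps of a fixed system move Whitney systems only within their $H_2$-equivalence class, and that the standard $k$-switch associated to $\mW_i$ is well-defined up to such a move.
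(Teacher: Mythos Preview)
The gap is your claim that restandardizations preserve the $H_2$-equivalence class of a switching. This is false. For instance, $i,j$-spinning (Definition~\ref{ij spinning}) changes the class of the switch disc $w^1_i$ by the nonzero linking-sphere class $z_j$ (Lemma~\ref{ij spinning homology}), and $G$-braiding adds ``caps'' that alter intersection numbers with finger discs (proof of Lemma~\ref{gbraiding invariance}). Your justification --- that a restandardization map $\psi$ is realized by an ambient isotopy fixing $R\cup G$ setwise throughout --- misreads Definition~\ref{restandardization}: it is only the time-$1$ map $\psi$ that fixes $R\cup G$ setwise. Indeed no such isotopy can exist for $G$-braiding, since $\psi|_G=\eta_{i,j}$ is a nontrivial element of the mapping class group of $(G,\,R\cap G)$, so there is no path from $\id_G$ to $\eta_{i,j}$ fixing $R\cap G$. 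Consequently, once one isotopes $\psi(\partial w^1_i)$ back to $\partial w^1_i$, the arc must cross points of $R\cap G$ and the relative $H_2$-class genuinely changes. The entire content of Lemma~\ref{ff independence} is that restandardizations preserve $I$ \emph{despite} not preserving $H_2$-equivalence; its proof proceeds map-by-map via the Parity Lemma precisely because the homological shortcut you invoke is unavailable. Since your chain of $H_2$-equivalences passes through a restandardization step, it breaks.

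The paper's proof is shorter and avoids this altogether. The key observation --- which you do not use --- is that two isotopic \emph{untwisted} Whitney systems differ only by $G$- and $R$-twisting (untwisted discs are determined near $R\cap G$, and different untwisted representatives are related by such twists; see Remark~\ref{normalized choice}). Given this, Lemma~\ref{standardizing} shows $\mW_1^*$ and $\mW_2^*$ differ, up to restandardization and $H_2$-equivalence, by $G$- and $R$-twisting. Each of these three operations preserves $I$ separately: restandardization by Lemma~\ref{ff independence}, $H_2$-equivalence by Corollary~\ref{homological independence}, and twisting by Proposition~\ref{hsf independence} (twists being a special case of a disc-slide sequence). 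There is no need to transport anything by $\phi$ or to compare two standardizing maps.
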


\noindent\emph{Proof of Proposition \ref{switch invariance} assuming Lemmas \ref{standardizing} - \ref{wframe independence}}:  After  initial individual isotopies of $\mF$ and $\mW$ supported near $\partial \mF$ and $\partial \mW$ we can assume that $\mF$ and $\mW$ are untwisted.  Up to ambient isotopy the ambiguity of this process is changing $\mW$ by R and G-twisting.

	We define $\I(\mF, \mW)=\I(\mF, \mW^*)$ where $\mW^*$ is an order induced $\mW$ framed switching.  Lemma  \ref{standardizing} implies that any two choices are equal up to strong $H_2$-equivalence and restandardization.  Lemma \ref{homological independence} implies that $\I(\mF, \mW)$ is well defined up to $H_2$-equivalence and Lemma \ref{ff independence} implies that $\I(\mF, \mW)$ is well defined up to restandardization.  Finally Lemma \ref{wframe independence} implies that $\I(\mF, \mW)$ is well defined up to R and G-twisting of $\mW$.\qed
	
\begin{corollary} \label{i-invariance boundary twisting} If $\mW$ is ordered and $(\mF', \mW')$ is obtained from $\mfw$ by boundary twisting then $\I(\mF', \mW')=\I\mfw$.\end{corollary}

\begin{proof} $\I$-invariance under boundary twisting $\mW$ follows from Lemma \ref{wframe independence}.  If $\mfw\in$ IA, then the result follows from Proposition \ref{hsf independence}.  Otherwise, to compute $\I\mfw$ we first apply order induced switching to $\mW$ to obtain $(\mF, \mW^*)\in$ IA.  Thus, if $\mF'$ is obtained by boundary twisting $\mF$ we conclude $\I(\mF', \mW)=\I(\mF', \mW^*)=\I(\mF, \mW^*)=\I\mfw$.\end{proof}

\vskip 8pt
\noindent\emph{Idea of the Proof of Lemmas \ref{standardizing} and \ref {ff independence}}:  To prove Lemma \ref{standardizing} we will describe six \emph{basic} restandardization maps and show that if $\mW$ is the standard ordered Whitney system for $R$ and $G$ in  framed finger form with $\mW_0$  the standard k-switch, then up to $H_2$-equivalence, any $\mW$-framed $k$-switch $\mW_1$ is a restandardization of $\mW_0$ via a composition of these basic maps. 
\vskip 8pt
  Given $R$ and $G$ in framed finger form we first describe how, after a suitable isotopy, an untwisted Whitney germed $\mF$ appears near the \emph{solid fingers}.  One by one we then describe the six basic restandardization maps and for each such map $\psi$, show that if $R$ and $G$ are in framed finger form with $(\mF,\mW_1)\in$ IA where $\mW_1$  is a  $\mW$-framed k-switch of the standard ordered $\mW$, then  $\I(\mF, \psi(\mW_1))=\I(\mF, \mW_1)$.  The main observation being that except for tubes disjoint from $\mF$, $\psi$ is supported near $R\cup G \cup \textrm{fingers}$, so with this explicit understanding we can compare the invariant using Lemma \ref{parity}.  

\vskip 10pt
\noindent\emph{Proof of Lemma \ref{wframe independence}:} Since $\mW_1, \mW_2$ differ by G and R-twisting it follows  by Lemma \ref{standardizing} that we can assume that $\mW_1^*$ and $\mW_2^*$ differ up to restandardization and $H_2$-equivalence by G and R-twisting.  Since by Lemma \ref{ff independence} restandardization does not change the invariant and sequences of twistings are special cases of disc sliding sequences the result follows by Lemma \ref{hsf independence}.\qed

\vskip 10pt

We now prove the first two lemmas simultaneously.  For the remainder of the section any switch of the standard ordered $\mW$ will be $\mW$-framed.

\begin{definition} \label{solid finger} Viewing the framed finger form as in Figure \ref{fig:framed finger} b) we define the \emph{solid finger} $\sigma_F$ as the convex hull of the finger $F$ with $w$ denoting its standard Whitney disc.  Two local views of a solid finger are shown in Figure \ref{fig:solidfinger1}.  Figure \ref{fig:solidfinger1} a) shows  the $z=0$ 3D slice and Figure \ref{fig:solidfinger1} b) shows the 3D slice $y=0$.  Note that $F$ is contained in the $y=0$ plane, $\sigma_F$ is a 3-ball with corners where $\partial \sigma_F=F\cup \partial_e\sigma_F$, where $\partial_e$ is called the \emph{exterior boundary}.  Let $g_{\sigma_F}$ denote the arc $\sigma_F\cap G$ and $r_{\sigma_F}:=\sigma_F\cap R$.  We will drop the subscript $F$ when clear from context.\end{definition}

   \begin{figure}[!htbp]
    \centering
    \includegraphics[width=.6\linewidth]{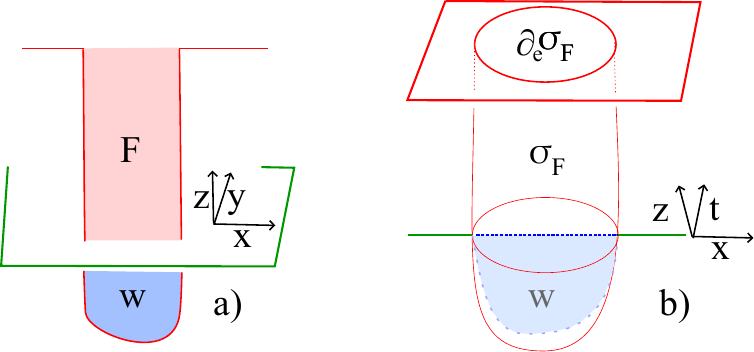}
    \caption{Two Views of the Solid Finger $\sigma_F$}
    \label{fig:solidfinger1}
    \end{figure}

\begin{lemma} \label{f in solid}  Let $R$ and $G$ be in framed finger form, $\sigma:=\sigma_F$  a solid finger with associated standard Whitney disc $w$ and $f$  another Whitney germed Whitney disc with possibly $f\cap w\neq \emptyset$.  Then $f $ can be isotoped, as a Whitney germed Whitney disc, so that each component $C$ of  $f\cap\sigma$ appears as in Figure \ref{fig:solidfinger2} with a neighborhood as described below.  There are four cases.
\vskip 8pt
a) $C\cap F=\emptyset$:  Here $|C\cap w|=1$ and $C\cap \sigma$.  The $t=0, y=0, x=0$ 3D slices of $N(C)$ are shown in \ref{fig:solidfinger2} a). 
\vskip 8pt
b) $C\cap G\neq\emptyset$ and $C\cap R=\emptyset$:  Here $C\cap \sigma$ is an arc with one endpoint on $G$ and $N(C)\cap G$ is an arc with one endpoint on $\partial_e\sigma$.  See Figure \ref{fig:solidfinger2} b) which shows $x=0$ and $y=0$ 3D slices.  
\vskip 8pt
c)  $C\cap R\neq\emptyset$ and $C\cap G=\emptyset$:  Here $C\cap \sigma$ is a disc  where $\partial C$ is the union one arc on  $C\cap R$ one  arc $\delta C$ and two arcs on $\partial_e\sigma$.  Further $\delta C$ extends in the $y\ge 0$ direction.  See Figure \ref{fig:solidfinger2} c) which shows $x=0$ and $y=0$ 3D slices.  While $C$ is smooth it is drawn with a corner.
\vskip 8pt
d) $C\cap R\cap G\neq\emptyset$:  First $|C\cap R\cap G|=1$ and there are two possibilities for that.  Here $C\cap \sigma$ is a disc with $\partial C$ consisting of an arc $\subset R$ and an arc $\subset G$ and an arc $\subset\partial_e\sigma$ and the arc $\delta C$ which extends in the $t\ge 0$ direction.   See Figure \ref{fig:solidfinger2} d) which shows $y=0$ and $x=0$ 3D slices.  While $C$ is smooth it is drawn with a corner. \end{lemma}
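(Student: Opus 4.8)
The plan is to put $f$ in general position with respect to the natural stratification of the solid finger $\sigma=\sigma_F$, delete the inessential components of $f\cap\sigma$ by innermost-disc and outermost-arc arguments, and then straighten each surviving component into one of the four models, keeping $f$ a Whitney germed Whitney disc throughout. Recall from Definition \ref{solid finger} that $\sigma$ is a $3$-ball with corners, $\partial\sigma=F\cup\partial_e\sigma$, carrying the arc $g_\sigma=\sigma\cap G$, the locus $r_\sigma=\sigma\cap R$, the point(s) of $g_\sigma\cap r_\sigma$, and the disc $\sigma\cap w$. First I would isotope $f$ --- through Whitney germed discs, i.e.\ fixing $\partial f$ and the boundary germ along $R\cup G$ and keeping $f\cap N(R)$, $f\cap N(G)$ in the negative $I$-bundles --- so that $f$ is transverse to each of these strata; this is possible because being Whitney germed only constrains $f$ inside $N(R\cup G)$, where the framed finger form already makes everything transverse. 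After this, $f\cap\sigma$ is a compact properly embedded surface-with-corners in $\sigma$, with boundary in $F\cup\partial_e\sigma$ and with $(\partial f)\cap\sigma\subset G\cup R$, decorated by the $1$-manifold $(f\cap G)\cup(f\cap R)$ lying over $g_\sigma\cup r_\sigma$ and by the finite set $f\cap(\sigma\cap w)$.

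Next I would reduce to the case that every component of $f\cap\sigma$ is a disc. Since $\sigma$ is a $3$-ball: a closed component of $f\cap\sigma$ bounds a ball $B\subset\sigma$; after surgering $f$ along innermost circles of $(f\cap G)\cup(f\cap R)$ inside $B$ --- surgeries supported near $R$, respectively $G$, hence not disturbing the germ --- one may take $B$ disjoint from $R\cup G$ and isotope $f$ across $B$ to delete the component. A component whose boundary circles all lie in $\partial_e\sigma$ and which misses $F\cup G\cup R$ either misses $w$, in which case it cobounds a trivial region with a sub-disc of $\partial_e\sigma$ and is pushed out of $\sigma$, or it meets $w$, in which case an outermost-arc argument on its intersection with the unknotted disc $\sigma\cap w$ reduces it to model (a), a small disc meeting $w$ once. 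Analogous innermost/outermost moves on $f\cap g_\sigma$ and $f\cap r_\sigma$ leave each component $C$ of $f\cap\sigma$ a disc carrying at most one arc of $f\cap G$ and at most one arc of $f\cap R$.

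With $f\cap\sigma$ a disjoint union of discs, the rest is a finite case analysis organized by which of $F$, $g_\sigma$, $r_\sigma$ the disc $C$ meets, giving exactly cases (b), (c), (d). In each case $\partial C$ is a polygon with sides distributed among $\partial_e\sigma$, $F$, $G$ and $R$, and using the product structure of $\sigma$ near $F$, near $g_\sigma$ and near $g_\sigma\cap r_\sigma$ --- all standard because $R\cup G$ is in framed finger form (Definition \ref{framed finger}) --- one isotopes $C$, rel its sides on $F\cup G\cup R$, to the model in Figure \ref{fig:solidfinger2}, realized by an isotopy of $f$ that is the identity off a neighborhood of $\sigma$ and preserves the Whitney germed condition; any intersections with $w$ not shown in the model are removed along the way and the essential ones recorded as in the figure. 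The one point that genuinely uses the hypothesis that $f$ is \emph{Whitney germed} is the direction in which the free edge $\delta C$ bulges into $\inte\sigma$ in cases (c) and (d): since $f$ meets $N(R)$, respectively $N(R\cup G)$, in the negative $I$-bundle, $C$ must leave $r_\sigma$, respectively the point of $g_\sigma\cap r_\sigma$, on the prescribed side, forcing ``$\delta C$ extends in the $y\ge 0$ direction'' in (c) and ``$\delta C$ extends in the $t\ge 0$ direction'' in (d); the two possibilities in (d) are the two sheets of $R\cup G$ along which a Whitney germed disc can reach that point.

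I expect the main obstacle to be exactly this last bookkeeping: checking that every simplification and straightening isotopy can be carried out without ever violating the Whitney germed condition, so that the output is a legitimate $f$, and pinning down precisely which side each free edge $\delta C$ must go to. The innermost/outermost steps are routine $3$-dimensional topology inside the ball $\sigma$; the content is that they, and the final straightening, are compatible with the $4$-dimensional framing data carried along $R\cup G$.
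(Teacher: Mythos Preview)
Your general-position and innermost/outermost strategy is a valid route to the four-type classification, but the paper's argument is much shorter and your final paragraph contains a genuine error.

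The paper's proof is essentially two observations: $\sigma_F$ deformation retracts to $F\cup w$, and the Whitney germ determines $f$ near $f\cap(G\cup R)$. The retraction replaces all of your cleanup steps at once --- a component of $f\cap\sigma$ is determined up to isotopy in $\sigma$ by how it meets $F\cup w$, which immediately yields types (a)--(d) according to whether it meets only $w$, meets $G$ but not $R$, meets $R$ but not $G$, or meets $R\cap G$. Your approach recovers this by hand, which is fine but longer.

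The error is your claim that the Whitney-germed condition \emph{forces} the direction of the free edge $\delta C$ in cases (c) and (d). It does not. The germ condition constrains $f$ only inside $N(R\cup G)$, i.e.\ only along the $R$- and $G$-edges of $C$; the free edge $\delta C$ lies in $\inte(f)$ and can a priori continue to either side. The paper's mechanism is different: in each of (b), (c), (d) one \emph{arranges} the prescribed direction at the cost of introducing an extra type (a) component (an extra transverse intersection with $w$). So your instinct that the direction bookkeeping is the ``main obstacle'' is exactly right, but the resolution is a trade-off, not a constraint imposed by the germ, and without supplying that step your isotopy to the listed models is incomplete.
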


\begin{proof} This follows from the following two observations.  First, $\sigma_F$ deformation retracts to $F\cup w$.  Second, since $f$ is Whitney germed it's neighborhood near $f\cap (G\cup R)$ is determined.  In Case b) we can insist that $N(C)$ has $t\ge 0$ at the cost of creating a component of type a).  Similarly in Case c) we can insist that $\delta C$ continues to $y\ge 0$ at the cost of creating an extra type a) component.  Similarly in Case d) we can insist $C\cap R$ has $y\ge 0$ and $N(C)$ has $t\ge 0$ at the cost of extra components of the other types.   \end{proof} 

  \begin{figure}[!htbp]
    \centering
    \includegraphics[width=.9\linewidth]{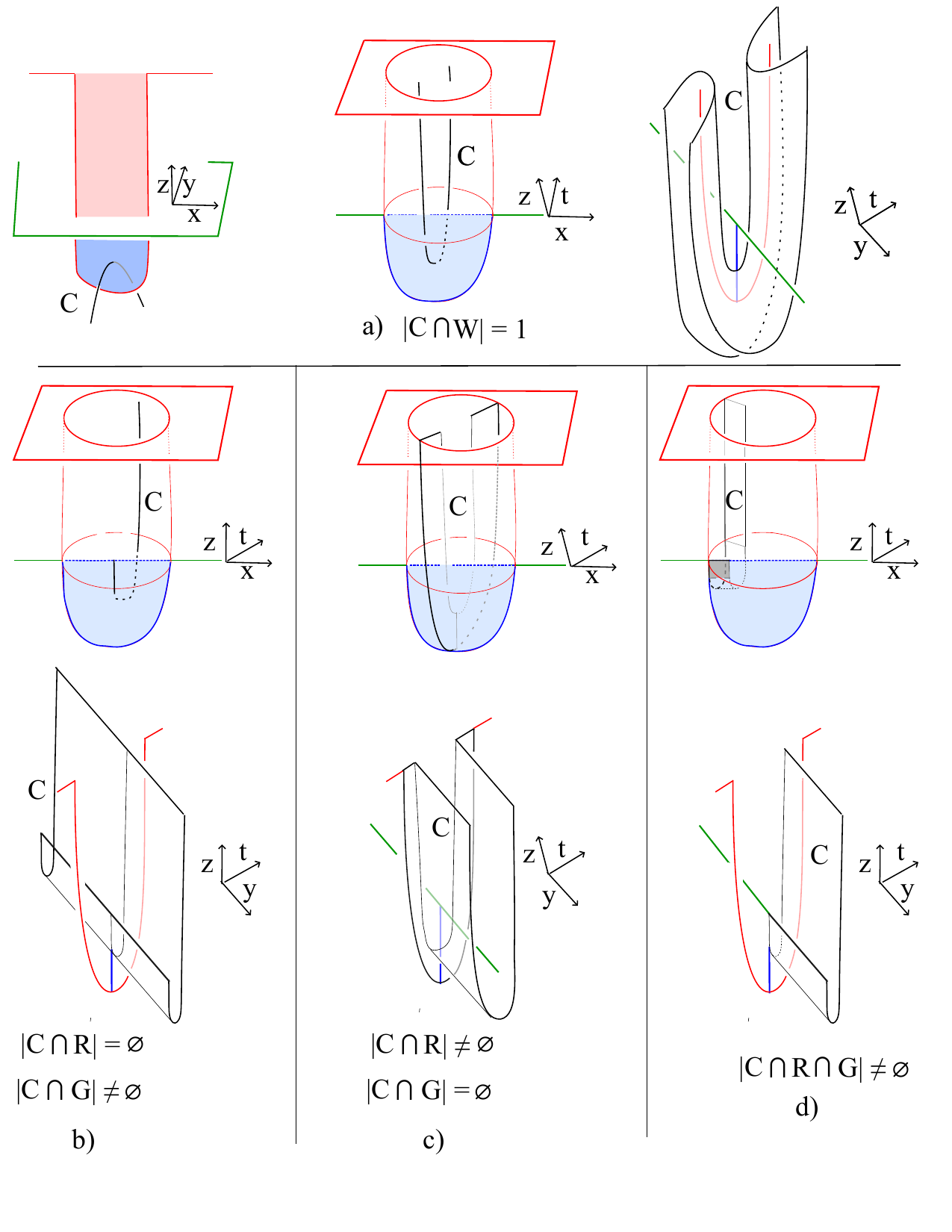}
    \caption{Types of Components of a Whitney germed $f$ in a Solid Finger}
    \label{fig:solidfinger2}
    \end{figure}

We now define the six basic restandardization maps and for each one show that they do not change our proposed invariant.  We will then show that these six maps suffice.  We will assume that $R$ and $G$ are in framed finger form.  We need a preliminary lemma.

\begin{definition} \label{winding} Let  $w$ an untwisted Whitney disc either Whitney or finger germed.  Let $w'$ another untwisted Whitney disc, either Whitney or finger germed, with $w'\cap R\cap G=w\cap R\cap G$.  Here $G\setminus \inte N(w\cap R\cap G)$ is an annulus and $w\cap G$ is a fiber of an $I\times S^1$ product structure.  Define the \emph{$G$-winding} of $w'$ \emph{with respect to $w$} to be the mod-2 number of times $w'\cap G$ (resp. $w'\cap R$) winds about this $S^1$ factor with $R$-winding defined similarly. Define the \emph{winding} of $w'$ with respect to $w$ to be the sum of the $G$ and $R$-windings. \end{definition}

\begin{lemma}\label{even winding} The winding  of $w'$ with respect to $w$ is 0 mod 2.\end{lemma}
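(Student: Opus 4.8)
The strategy is to reduce the claim to a computation in $H_2(E;\BZ_2)$, where $E = \stwostwo \setminus \inte(N(R\cup G))$ as in Definition \ref{clifford}. The key observation is that since $w$ and $w'$ have the same two intersection points with $R\cap G$ (namely $w\cap R\cap G = w'\cap R\cap G$) and both are untwisted Whitney discs, the difference of boundary germs is encoded entirely by the $G$-winding and $R$-winding. Specifically, after isotoping $w$ and $w'$ to have the same germ near one of the two common intersection points, the germ of $w'$ near the other point differs from that of $w$ by a rotation whose parity (in the $G$-direction and $R$-direction separately) is exactly the $G$-winding and $R$-winding of $w'$ with respect to $w$. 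First I would make this precise: choosing the $I\times S^1$ product structures on $G\setminus \inte N(w\cap R\cap G)$ and $R\setminus \inte N(w\cap R\cap G)$, one sees that $w'\cup \bar w$ (the union of $w'$ with a reversed copy of $w$, glued along matching germs at one point) is a closed surface in $E$ up to the germ discrepancy at the second point, and that discrepancy is measured by $(\text{$G$-winding}) + (\text{$R$-winding}) \bmod 2$.

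Next I would invoke the framed finger form: by hypothesis $R$ and $G$ are in framed finger form (Figure \ref{fig:framed finger}), so there is a \emph{standard} untwisted Whitney disc $w_{\std}$ cancelling the same pair of points $w\cap R\cap G$. Since $w_{\std}$ is the standard disc, its $G$-winding and $R$-winding with respect to the product structures are both zero by construction, and one computes directly that $[w_{\std}]$ — suitably capped off — is a \emph{null-homologous} class in $H_2(E;\BZ_2)$ (it bounds in a standard ball neighborhood of the corresponding finger). By the triangle of germ comparisons, the winding of $w'$ with respect to $w$ equals the winding of $w'$ with respect to $w_{\std}$ plus the winding of $w$ with respect to $w_{\std}$, so it suffices to show that the winding of any untwisted Whitney disc with respect to $w_{\std}$ is even. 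For this one uses that $w' \cup \bar w_{\std}$, glued along a common germ at one intersection point, represents a class in $H_2(E, \mathrm{(arc)})$, and the relative self-intersection / framing obstruction of this class — which is what detects the total winding mod $2$ — vanishes because $E$ is a simply-connected $4$-manifold with boundary in which these relative classes have even self-intersection (the normal framing of an embedded surface with prescribed boundary germ in a spin-type situation forces even winding); more concretely, the obstruction to the two germs matching up along \emph{both} endpoints simultaneously lies in $\pi_1(SO(2)) = \BZ$, and its mod-$2$ reduction is a homotopy invariant that is computed to be zero for embedded discs with the untwisted germ convention.

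The main obstacle I anticipate is pinning down the precise bookkeeping that identifies "$G$-winding plus $R$-winding mod $2$" with a genuine homotopy/homology obstruction rather than an artifact of the chosen product structures — in particular, verifying that the two product structures on the annuli $G\setminus \inte N(w\cap R\cap G)$ and $R\setminus \inte N(w\cap R\cap G)$ can be chosen compatibly so that the standard disc has zero winding in \emph{both}, and that changing these choices changes $G$-winding and $R$-winding by the same parity (so their sum is well-defined mod $2$). Once that framing compatibility is set up carefully, the evenness follows from the observation that an embedded disc must close up to an orientable surface: the obstruction to matching germs at both endpoints is the relative Euler number of the normal bundle, which for an embedded disc in $E$ with the untwisted boundary germ is forced to be even by the structure of $N(R\cup G)$ (the positive and negative $I$-bundles of Definition in \S\ref{preliminaries}). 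I would present this last step via an explicit model computation in the solid finger $\sigma_F$ of Definition \ref{solid finger}, using Lemma \ref{f in solid} to put $w'$ in standard position with respect to $\sigma_F$ and then counting the winding contributions of each component type (a)--(d), each of which contributes an even amount.
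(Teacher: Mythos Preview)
Your plan has the right general shape—identifying the total winding with a framing/Euler-number obstruction—but it never isolates the one topological input that actually forces evenness. The paper's proof is a single line: invoke $w_2(S^2\times S^2)=0$ and cite \cite[Lemma~5.3]{Ga3}. The mechanism (following \cite{Ga3}) is that after matching the germs of $w$ and $w'$ at one endpoint, the discrepancy at the other endpoint is the relative normal framing; closing $w\cup \bar{w}'$ up to a closed surface in $S^2\times S^2$, this discrepancy becomes the self-intersection of the resulting class mod $2$, which is even because the intersection form of $S^2\times S^2$ is even (equivalently, $w_2=0$).

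Your proposal buries this in the parenthetical ``spin-type situation'' and then tries to extract evenness from other sources: first from ``the structure of $N(R\cup G)$'', and then from the solid-finger component types (a)--(d) of Lemma~\ref{f in solid}. Neither works. The local structure of $N(R\cup G)$ is identical in any oriented $4$-manifold and cannot by itself force a parity constraint—the analogous statement would fail in a $4$-manifold with odd intersection form. And the solid-finger decomposition only \emph{computes} the winding as $b+c$ (the number of type-(b) plus type-(c) components of $w'\cap\sigma$); it gives no intrinsic reason this sum is even. Indeed, in the paper Lemma~\ref{even winding} is invoked \emph{as an input} in the proof of Lemma~\ref{twist invariance} precisely to conclude that $b+c$ is even, so reversing that dependence would be circular.

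The fix is short: drop the solid-finger paragraph, and replace the vague appeal to ``the structure of $N(R\cup G)$'' with the global fact that $S^2\times S^2$ is spin.
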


\begin{proof} Using $w_2(S^2\times S^2)=0$, the proof follows essentially that of Lemma 5.3 \cite{Ga3}.\end{proof}

\begin{definition} \label{twist} We define the \emph{G-finger twisting} restandardization map $\psi$.  Given $i$, this map is obtained by first twisting the finger near $w_i$ by $2\pi$ preserving the framed $G$ setwise and extending to the rest of the solid finger.  Figure \ref{fig:solidfinger3} a) shows the switch disc $w_i^*$ and Figure \ref{fig:solidfinger3} b) shows $\psi(w_i^*)$.  
The effect of  G-finger twisting on $G$  is a Dehn twist on $\partial N(w_i)\cap G$.   We can arrange that the effect on $R$ is a Dehn twist on $\sigma_RF$ is also a Dehn twist which in particular fixes the normal framing of $R$ setwise.   Note that $\psi$ is isotopic to $\id$ via an isotopy supported near $\sigma_F$ and fixing the framed $G$ setwise.\end{definition}

 \begin{figure}[!htbp]
    \centering
    \includegraphics[width=.7\linewidth]{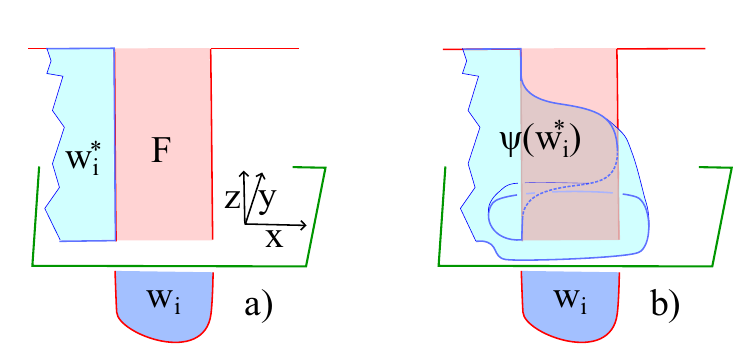}
    \caption{$G$-Finger Twisting a Switch Disc}
    \label{fig:solidfinger3}
    \end{figure}

\begin{lemma} \label{twist invariance} Given $\mF$, let $\mW_1=\{w_1^1, \cdots, w_n^1\}$ be a minimal switching of $\mW$.  Let $\psi$ be a G-finger twist map with $\mW_2=\psi(\mW_1)$.  Then $\I(\mF, \mW_1)=\I(\mF, \mW_2)$.\end{lemma}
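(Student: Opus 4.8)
The plan is to verify the three hypotheses of the Parity Lemma (Lemma~\ref{parity}) for the pairs $(\mF,\mW_1)$ and $(\mF,\mW_2)$, exploiting the fact that $\psi$ is supported near a single solid finger and using Lemma~\ref{f in solid} to control how $\mF$ sits there. Throughout, let $\sigma:=\sigma_F$ be the solid finger associated to the disc $w^*$ of $\mW_1$ that meets $\sigma$, so that $\mW_1$ and $\mW_2=\psi(\mW_1)$ agree outside $\sigma$ and differ only in this one disc.

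First I would record the soft hypotheses from the structure of $\psi$ (Definition~\ref{twist}): $\psi$ is isotopic to the identity through an ambient isotopy supported near $\sigma$ (together with tubes disjoint from $\mF$), it fixes $G\cup R$ setwise, and it restricts on $G$ (resp.\ $R$) to a Dehn twist supported in an annulus about $w^*\cap G$ (resp.\ in $\sigma_R F$), preserving the framings setwise. Hence $\psi$ fixes each point of $R\cap G$, so $\mW_2$ pairs exactly the points paired by $\mW_1$ and $\mW_1,\mW_2$ similarly match (and both similarly match $\mF$); being framing preserving, $\psi$ leaves $\mW_2$ untwisted with each disc of the same germ type as the corresponding disc of $\mW_1$; and since $(\mF\cup\mW_2)\cap G$ and $(\mF\cup\mW_2)\cap R$ are the images under Dehn twists of the immersed arcs $(\mF\cup\mW_1)\cap G$ and $(\mF\cup\mW_1)\cap R$, they are again immersed arcs, so $(\mF,\mW_2)\in\IA$. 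To meet hypothesis (i) of Lemma~\ref{parity} literally I would first reuntwist the finger-germed switch discs of $\mW_1$ and of $\mW_2$ to a common germ type; each such reuntwisting is a $G$- or $R$-twist, hence an admissible step of a disc slide sequence, so by Proposition~\ref{hsf independence} the corresponding $I$-values are unchanged and we may assume $\mF,\mW_1,\mW_2$ are uniformly germed.

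The main step is hypotheses (ii) and (iii). Since $\psi$ is supported near $\sigma$, the only Whitney disc affected is $w^*$, and the only finger discs whose intersections with $w^*$ can change are those $f_a\in\mF$ that meet $\sigma$; by Lemma~\ref{f in solid} each component of $f_a\cap\sigma$ is of one of the four listed types. I would then compute, type by type, the effect of the $2\pi$ finger twist on $|\inte(f_a)\cap\inte(w^*)|$, on $|f_a\cap w^*\cap G|$, and on $|f_a\cap w^*\cap R|$. The twist drags each such component once around the normal circle of $G$ and once around that of $R$ near $w^*$; using Lemma~\ref{even winding} (the winding of the switch disc with respect to the corresponding standard disc is even), one sees that the boundary intersections created on $G$ and on $R$ with any fixed $f_a$ appear in cancelling pairs, so that $|f_a\cap w^*\cap G|$ and $|f_a\cap w^*\cap R|$ are each unchanged mod $2$ --- this is hypothesis (iii) --- while the interior count $\sum_{a\le b}|\inte(f_a)\cap\inte(w^*_b)|$ changes by an even amount, giving $\hat I(\mF,\mW_1)=\hat I(\mF,\mW_2)$, which is hypothesis (ii). In cases where this bookkeeping is cleanest after a $\partial_+$- or $\partial_-$-compression of $w^*$ I would invoke Lemma~\ref{boundary compression} and the remark following it to reduce to that case. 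The Parity Lemma then yields $I(\mF,\mW_1)=I(\mF,\mW_2)$.

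The main obstacle will be exactly this local computation. One must check that the $2\pi$ rotation of the finger induces \emph{correlated} Dehn twists on $\partial N(w^*)\cap G$ and on $\sigma_R F$ rather than independent ones, and then verify, against each of the four component types of $f_a\cap\sigma$ from Lemma~\ref{f in solid}, that the new intersection points produced on $G$ and on $R$ really do come in matched pairs --- so that hypothesis (iii) holds and $\hat I$ is preserved. The even-winding lemma is the crucial input that makes these parities work out, and keeping careful track of the tubes (which must be confirmed disjoint from $\mF$) and of the germ types under the preliminary reuntwistings is where the care is needed.
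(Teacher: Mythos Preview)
Your overall strategy --- verify the hypotheses of the Parity Lemma for $(\mF,\mW_1)$ and $(\mF,\mW_2)$ --- is the right opening, and in fact is exactly what the paper does in its Case~1. But there are two genuine gaps.

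First, a minor one: the $G$-finger twist about the $j$'th solid finger does not modify only a single switch disc. Since adjacent switch discs $w^1_j$ and $w^1_{j+1}$ both pass through $\sigma$ (compare Figures~\ref{fig:switch} and~\ref{fig:solidfinger3}), $\psi$ alters both of them when $j<k$. This matters for the cancellation bookkeeping you sketch.

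Second, and more seriously, hypothesis~(iii) of the Parity Lemma does \emph{not} always hold for $(\mF,\mW_1)$ and $(\mF,\mW_2)$. The Dehn twist circle $c\subset G$ about $w_j\cap G$ meets a given $f_i\cap G$ evenly only if both endpoints of $f_i\cap G$ lie on the same side of $c$. When some $f\in\mF$ matches $w_j$ this is automatic for every $f_i$, and your argument goes through (this is the paper's Case~1, where the even-winding lemma enters just as you describe). But when no $f$ matches $w_j$, the two discs of $\mF$ meeting $w_j\cap R\cap G$ each have exactly one endpoint inside $c$, so $|f_i\cap w^2_r\cap G|$ changes parity and hypothesis~(iii) fails outright. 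The paper handles this Case~2 by first modifying $\mW_2$ to $\mW_3$ via a specific sequence of $G$- and $R$-disc slides of $w^2_j$ and $w^2_{j+1}$ over the other standard discs in the cycle containing $w_j$; this restores hypothesis~(iii), and then one must prove $\hat I(\mF,\mW_1)=\hat I(\mF,\mW_3)$, which requires a separate mod-$2$ cycle-intersection argument (not the even-winding lemma). Your proposal does not anticipate this obstruction or this repair.
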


\begin{proof} Here $G\cup R$ are in framed finger form, $\mW$ are the standard Whitney discs with $\mW_1$  a  $k$-switch.  However, for $i=1,2$ we view $\mW_i$ as finger germed and $\mF$ as Whitney germed.
It suffices to consider the case of a $j$-twist where $j\le k$, since when $j>k$, $G$-finger twisting acts trivially on nonswitch discs.  Also when $ j\le k$, the $j$-twist only modifies $w^1_j$ and $w^1_{j+1}$, unless $j=k$, in which case only $w^1_k$ is modified.     
\vskip 8pt
\noindent\emph{Case 1}: There exists $f\in \mF$ that matches $w_j$.
\vskip 8pt
\noindent\emph{Proof of Case 1}:   Let $f_i\in \mF$.  To start with for $r=j$ or $ j+1$ if $j<k$, $\psi(w_r^1) \cap G$ is obtained by doing a Dehn twist along a  circle $c$ about $w_j \cap G$ and for all $i, c\cap f_i $ is even.  Thus $|w_r^1\cap f_i\cap G|=|\psi(w_r^1)\cap f_i\cap G|$ mod 2.  A similar argument shows that $|w_r^1\cap f_i\cap R|=|\psi(w_r^1)\cap f_i\cap R|$  mod 2.  By Lemma \ref{parity} it suffices to show that $\hat \I(\mF,\mW_1)=\hat \I(\mF, \mW_2)$.

Let $\sigma$ be the solid finger that contains $\partial w_j$.  If $f_i\neq f$, then $ f_i$ intersects the solid finger $\sigma$ only in components of type a), b) and c).  For $r=j,j+1$, each component of type a) contributes 2 to $|\inte(w^1_r)\cap f_i|$ and each component of type b) or c) contributes 1.  If in the IA ordering $w^1_{j+1}<w^1_{j}<f_i$, then none of the $\psi(w_j), \psi(w_{j+1})$ intersections contribute to $\hat \I(\mF, \mW_2)$.  If $f_i<w^1_{j+1}<w^1_{j}$, then each new $\psi(w_j)$ intersection is paired with a new $\psi( w_{j+1})$ intersection so again $\hat \I(\mF, \mW_2)$ is unchanged.  In the special case $j=k$ we never have $f_i<w_j^1$.  Indeed, in that case since $f$ matches $w_j$ we have $k=n$.  This implies that $w^1_j$ is the minimal element in $\mW^1$ in the IA ordering and hence $f_i<w^1_j$ implies $f_i=f$.  

We now consider when $w^1_{j+1}<f_i<w^1_{j}$ which is equivalent to $f_i=f$, unless $k=j$ in which case we  just have $f<w^1_j$.  Here only the new intersections of $\inte(\psi(w^1_j))\cap f$  count towards $\hat \I(\mF, \mW_2)-\hat \I(\mF, \mW_1)$.  Again, components of $f\cap \sigma$ of type a) contribute 0 mod 2 and 
components of type b) and c) each contribute 1.  The two type d) components each contribute 1. Recall that we use the special form of type d) components.  The G-winding of $f$ with respect to $w_j$  is the number of type $b)$ components $:= b$ and the R winding
  is the number of c) components $:= c$.    We conclude that the total winding is $b+c$.   Since by Lemma \ref{even winding} the mod-2 winding equals 0, the proof of Case 1 follows. \qed

\vskip 8pt
\noindent\emph{Case 2}: There exists no $f\in \mF$ that matches $w_j$.
\vskip 8pt

\noindent\emph{Proof of Case 2}:  Being in Case 2, we have $k<n$ and case $w_j\cap G$ is part of an immersed cycle $\omega$ of $2r$ arcs, $r>1$,  of the form $w_{p_1}\cap G, f_ {q_1}\cap G, \cdots, w_{p_m}\cap G, f_{q_m}\cap G$, where  $w_j=w_{p_1}$.   See Figure \ref{fig:gtwist1} a) which shows an example with $j=1$. As noted above we can assume that $j\le k$.   Unlike in Case 1, $\mW_1$ and  $\mW_2:=\psi(\mW_1)$ will  not satisfy hypothesis iii) of Lemma \ref{parity}. Indeed it will fail along the discs of $\mF$ with endpoints on $w_j\cap R\cap G$.  Compare Figure \ref{fig:gtwist1} b) with Figure \ref{fig:gtwist1} c).  However, we can modify $\mW_2$ to $\mW_3$ by a sequence of disc slides  so that $\mW_1$ and $\mW_3$ satisfy the conclusions of Lemma \ref{parity}.  See Figure \ref{fig:gtwist1} d).  For each element $\hat w\neq w_j$ of $\mW$ in the cycle containing $w_j$, do a G-disc slide to $w_j^2$ and if necessary $ w_{j+1}^2$ (i.e if  $j<k$) over $\hat w$ and similarly do an R-disc slide  of $w_j^2$ and if necessary $w_{j+1}^2$ over $\hat w$.  The resulting $\mW_1$ and $\mW_3$ now satisfy condition iii) of Lemma \ref{parity}, independent of the choice of disc slides.  If we can show $\hat \I(\mF, \mW_1)=\hat \I(\mF, \mW_3)$ it then follows by Lemma \ref{parity} that $\I(\mF, \mW_1)=\I(\mF, \mW_3)$ and hence $\I(\mF, \mW_2)=\I(\mF,\mW_3)=\I(\mF, \mW_1)$, since by Proposition \ref{hsf independence} the invariant is independent of choice of disc slide sequence.

 \begin{figure}[!htbp]
    \centering
    \includegraphics[width=1.0\linewidth]{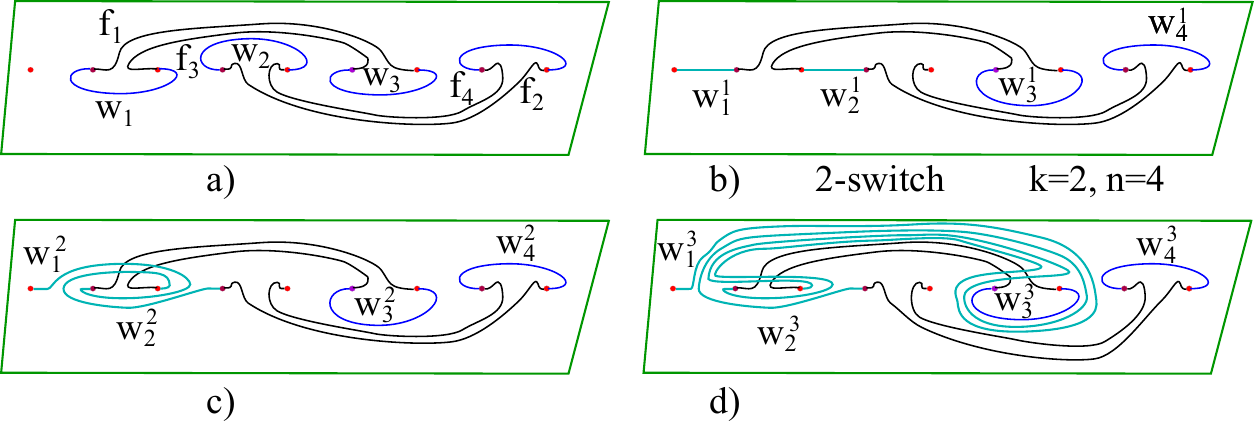}
    \caption{Invariance under $G$-Finger Twisting}
    \label{fig:gtwist1}
    \end{figure}

We prove $\hat \I(\mF, \mW_1)=\hat \I(\mF, \mW_3)$.  First consider the case $j<k$.  By Lemma \ref{ia order} the IA ordering on $(\mF,\mW_3)$ contains the following linear sequence
$w_{j+1}^3<\phi_1<\tau_1<\phi_2<\tau_2<\cdots<\phi_m<w_j^3$ where $\{\phi_1, \cdots, \phi_m\} = \mF\cap \omega$ and $\{\tau_1, \cdots\tau_{m-1}\}=(\mW\setminus w_j)\cap \omega$.  
We now catalogue the difference between $\mF\cap \inte(\mW_1)$ and $\mF\cap\inte(\mW_3)$.  First, the only changes involve $w_{j+1}^3$ and $w_j^3$.   Since we obtain $w_j^2$ from $w_j^1$ by modifying it explicitly near the finger $\sigma$, the former keeps the $w_j^1$ intersections and possibly picks up new intersections near $\sigma$ as in Case 1). Since $w_j^3$ is obtained from $w_j^2$ by a G-disc slide and an R-disc slide it keeps the $w_j^2$ intersections.  If $w_j^2$ has a G-disc slide over say $\hat w$ it also has a R-disc slide over $\hat w$.  From these disc slides our $w_j^3$ picks up four intersections for each point of $\mF\cap \hat w$, two from the G-disc slide and two from the R-disc slide.  It also picks up one intersection for each point of $\inte(\hat w\cap R)\cap \mF$ from the G-disc slide and one for each point of $\inte(\hat w\cap G)\cap \mF$ from the R-disc slide.  Figure \ref{fig:disc slide} b) shows the new intersections coming from the G-disc slide.  Similar new intersections happen when comparing $w_{j+1}^1$ with $w_{j+1}^3$.

If in the IA-ordering $f<w_{j+1}^3<w_j^3$, each of $\inte(w_{j+1}^3), \inte(w_j^3)$ pick cancelling new intersections with $f$.    If $w_{j+1}^3<w_j^3<f$, then none of these intersections count towards $\hat \I(\mF, \mW_3)$ or $\hat \I(\mF, \mW_1)$.  It remains to consider those $w_{j+1}^3<f<w_j^3$ which are exactly those $f\in \omega\cap \mF$.  Therefore it suffices to show that $$\sum_{i=1}^m |\inte(\phi_i\cap R)\cap((w_j\cup \tau_1\cup\cdots\cup\tau_{m-1})\cap R)|+\sum_{i=1}^m |\inte(\phi_i\cap G)\cap((w_j\cup \tau_1\cup\cdots\cup\tau_{m-1})\cap G)|=0\ \textrm{mod}\ 2.$$ 

Note that the terms in this sum of the form $|\inte(\phi_i)\cap G\cap w_j|$ and  $|\inte(\phi_i)\cap R)\cap w_j|$ respectively correspond to the new intersections with $\inte(w_j^2)$ coming from type b) and type c) components  of $\phi_i\cap\sigma$.  Also there are two type d) components, one each from $\phi_1$ and $\phi_n$, but these contribute one each when comparing $\hat \I(\mF, \mW_1)$ and $\hat \I(\mF, \mW_3)$ and hence cancel.

We give the proof of the above equality for $m=3$, the general case being similar.   Let  $u_1, u_2, u_3$ be untwisted finger germed discs which similarly match $\phi_1, \phi_2, \phi_3$ such that $(u_1\cup \tau_1\cup u_2\cup \tau_2\cup u_3\cup w_j)\cap G$ and  $(u_1\cup \tau_1\cup u_2\cup \tau_2\cup u_3\cup w_j)\cap R$ form  embedded cycles $\alpha_G, \alpha_R$ respectively in $G$ and $R$, where we ignore the intervals where they coincide. 
We show that equation is true because the $\phi_i$'s are pairwise disjoint and have 0-winding with respect to the $u_i$'s.  First, the ends of each $\phi_i\cap G$ and $\phi_i \cap R$ occur on the same sides of $\alpha_G$ and $\alpha_R$ so for all $i,\  |\phi_i\cap \alpha_G|=|\phi_i\cap \alpha_R|=0$ mod 2.  Second, 0-winding implies that for every $i,\  |\phi_i\cap u_i\cap G|+|\phi_i\cap u_i\cap R|=0$ mod 2.  Third, each $\phi_p\cup u_p\cap G$ is an immersed cycle $\beta_p^G$ and if $p\neq q$, then $u_p\cap u_q=\emptyset, \phi_p\cap \phi_q=\emptyset$ and  $|\beta_p^G\cap \beta_q^G|=0$ mod 2  which imply $|\phi_p\cap u_q\cap G|+|\phi_q\cap u_p\cap G|=0$ mod 2 with similar results holding for $R$.  Therefore in total the $\phi_p$'s intersect the $u_q$'s an even number of times and hence they intersect the $\tau_i$'s $\cup w_j$ an even number of times.\end{proof}

\begin{definition} \label{g braiding}  We define \emph{G-braiding}.  Informally it's the time one map of an ambient point pushing map on $G$ which sends $w_i$ about $w_j$ and fixes $N(R)$ pointwise.  Formally, let $\mW$ be the standard Whitney discs with $R\cup G$ in framed finger form.  For $p=i,j$, let $\alpha_p\subset G$ be a circle about $w_p\cap G$.  If $i<j$, then let $\beta$ be an embedded arc from $\alpha_i$ to $\alpha_j$ disjoint from $\mW$ and let  $\alpha_{ij}$ be the component of $\partial N(\alpha\cup \alpha_i\cup \alpha_j)$ not isotopic to $\alpha_i$ or $\alpha_j$.  Define $\eta_{i,j}:G\to G$ to be the composition of  L Dehn twists about $\alpha_i$ and $\alpha_j $ with a R Dehn twist about $\alpha_{ij}$.  See Figure \ref{fig:gbraiding1} a).   Since $G$ has a positive side, right and left make sense.  Note that $\eta_{i,j}$ is isotopic to  the time one map of an ambient isotopy $\zeta^t_{i,j} $ of $G$ which  sends $w_i$  to $w_j$ along $\beta$, then swings it counterclockwise about $w_j$ then returns it along $\beta$ to its original position.   See Figure \ref{fig:gbraiding1} b).  Now extend $\zeta^t_{i,j}$ to an ambient isotopy of $\stwostwo$ that fixes each $F_r, r\neq i$   pointwise such that $F_i$ goes into the future as its 3D projection crosses $F_j$.     The time one map $\psi$ is required to fix $N(R)$ and the solid fingers pointwise.   The map $\psi$ is  a \emph{$G$-braiding} of $F_i$ about $F_j$.\end{definition}

 \begin{figure}[!htbp]
    \centering
    \includegraphics[width=.7\linewidth]{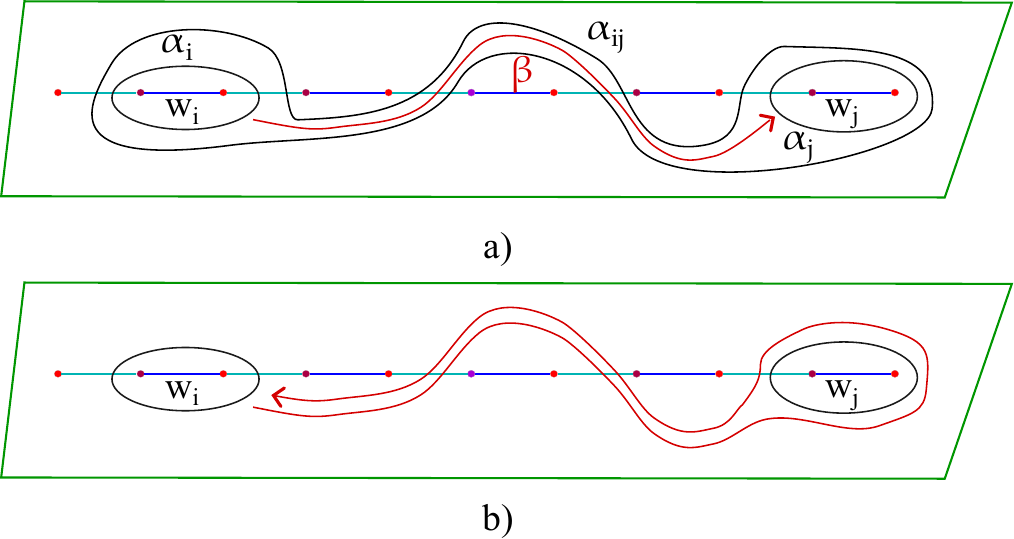}
    \caption{$G$-Braiding}
    \label{fig:gbraiding1}
    \end{figure}

\begin{lemma}\label{gbraiding invariance} Given $\mF$ and let $R$ and $G$ be in framed finger form with $\mW$ the standard Whitney discs.  Suppose $\mW_1$ is a minimal switching of $\mW$.  If $\mW_2:=\psi(\mW_1)$ where $\psi$ is a $G$ braiding of $F_i$ about $F_j$, then $\I(\mF, \mW_1)=\I(\mF, \mW_2)$.\end{lemma}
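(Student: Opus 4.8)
The approach is to adapt, essentially verbatim, the proof of Lemma \ref{twist invariance}. A $G$-braiding $\psi$ of $F_i$ about $F_j$, like a $G$-finger twist, fixes $N(R)$ and all the solid fingers pointwise and is supported near $G$ together with the region where the projection of $F_i$ crosses $F_j$; hence it leaves all boundary germs unchanged, fixes every non-switch disc, and only modifies the switch discs whose $G$-boundary runs near $F_i$ or $F_j$. Accordingly I would compare $I(\mF,\mW_1)$ and $I(\mF,\mW_2)$ through the Parity Lemma \ref{parity}, viewing $\mW_1,\mW_2$ as finger germed and $\mF$ as Whitney germed. First, since $\psi$ fixes $N(R)$ pointwise, $|w\cap f\cap R|=|\psi(w)\cap f\cap R|$ for all $w\in\mW_1$, $f\in\mF$. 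On $G$, $\psi$ restricts to $\eta_{i,j}$, a product of two left Dehn twists along the small circles $\alpha_i,\alpha_j$ and one right Dehn twist along $\alpha_{ij}$, and each of these circles meets $f\cap G$ evenly except when $f$ is a finger disc with an endpoint on $w_i\cap R\cap G$ or $w_j\cap R\cap G$. So, exactly as in Lemma \ref{twist invariance}, when no such matched finger disc intervenes hypothesis iii) of Lemma \ref{parity} holds for the pair $(\mF,\mW_1),(\mF,\mW_2)$ and it remains only to check $\hat I(\mF,\mW_1)=\hat I(\mF,\mW_2)$; otherwise I would first replace $\mW_2$ by a disc-slide-equivalent $\mW_3$ (by $G$- and $R$-disc-sliding the affected switch discs over the other Whitney discs in the relevant cycle, as in Case 2 of Lemma \ref{twist invariance}) so that hypothesis iii) is restored, using Proposition \ref{hsf independence} to conclude $I(\mF,\mW_2)=I(\mF,\mW_3)$.

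The core is then the invariance of $\hat I$. Using Lemma \ref{ia order} to read off the IA-ordering of $(\mF,\mW_1)$, I would catalogue the new interior intersections of $\mF$ with the affected switch discs. Those coming from the $\alpha_i$- and $\alpha_j$-twists are local near the respective solid fingers and behave as in Case 1 of Lemma \ref{twist invariance}: they either occur in cancelling pairs on $w^1_*$ and $w^1_{*+1}$, or lie entirely above or below the relevant finger disc in the IA-ordering and do not contribute, the sole exception being a finger disc $f$ matched with $w_i$ or $w_j$, for which the number of new intersections equals the number of type b) plus type c) components of $f\cap\sigma$ from Lemma \ref{f in solid}, i.e.\ the total winding of $f$ with respect to the standard disc, which is even by Lemma \ref{even winding} (the two type d) components contributing one each and cancelling). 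The genuinely new ingredient, absent in Lemma \ref{twist invariance}, is the right twist along $\alpha_{ij}$ together with the ``future-pushing'' of $F_i$ across $F_j$; for this I would imitate the cycle computation at the end of Lemma \ref{twist invariance}, organising the finger discs meeting the relevant immersed cycle into embedded model cycles in $G$ and $R$ built from winding-zero reference discs $u_p$, and showing that the total new-intersection count decomposes into diagonal terms $|\phi_p\cap u_p\cap G|+|\phi_p\cap u_p\cap R|$ (even by Lemma \ref{even winding}) and cross terms $|\phi_p\cap u_q\cap G|+|\phi_q\cap u_p\cap G|$ together with the analogous $R$ terms (even since the $\phi_p$ are pairwise disjoint and the model cycles meet evenly). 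Combining this with Step 1 and Lemma \ref{parity} yields $I(\mF,\mW_1)=I(\mF,\mW_3)=I(\mF,\mW_2)$.

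I expect the main obstacle to be precisely the bookkeeping around $\alpha_{ij}$ and the future-pushing: one must pin down the local picture of the dragged switch disc as $F_i$ sweeps around $F_j$, using the arrangement in Definition \ref{g braiding} that $F_i$ goes into the future so that no new points of $R\cap G$ and no new boundary intersections are created, and then verify that each newly created interior intersection with a finger disc is either paired with a second one on the same disc or is irrelevant in the IA-ordering — and, in the cycle case, that the disc slides used to restore hypothesis iii) do not perturb these parities beyond what Proposition \ref{hsf independence} and Lemma \ref{parity} already absorb. Once those local pictures are fixed, the remainder is a parity count formally identical to Lemmas \ref{twist invariance} and \ref{parity}.
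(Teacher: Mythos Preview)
Your overall framework (Parity Lemma, case analysis, disc-slide correction to restore hypothesis iii)) matches the paper, but the specific local computation you import from Lemma \ref{twist invariance} does not carry over, and the key geometric ingredient is missing.

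The paper does not analyse $\psi$ by decomposing $\eta_{i,j}$ into three Dehn twists on $G$.  Instead it uses Lemma \ref{boundary compression}: up to $\partial_+$-compression, $w_i^2$ equals $w_i^1$ together with a \emph{future cap} about the $j$'th solid finger (and symmetrically a past cap about $\sigma_i$ appears on $w_j^2$, etc.), plus pairs of parallel past caps coming from middle arcs which cancel.  This cap description is what makes the interior-intersection count tractable; your decomposition into ``local Dehn-twist effects'' plus ``future-pushing'' does not separate cleanly at the level of interior intersections, which is exactly the obstacle you flag but do not resolve.

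With the cap description, the contribution of each component of $f\cap\sigma$ to cap-intersections is \emph{not} the one from Lemma \ref{twist invariance}.  Type a) gives two, type b) gives zero or two, type c) gives one, and type d) gives one.  So the residual non-cancelling contribution comes from type c) components alone (the $R$-winding), not from type b)$+$c).  Consequently the mod-2 cancellation in the ``matched'' case is not the single-disc winding argument via Lemma \ref{even winding}; it is a cross-finger argument: the number of type c) $f_p$-components in $\sigma_j$ equals mod-2 the number of type c) $f_q$-components in $\sigma_i$, because the immersed cycles $(f_p\cup w_i)\cap R$ and $(f_q\cup w_j)\cap R$ meet evenly and $w_i\cap w_j=f_p\cap f_q=\emptyset$.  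In the cycle case, the paper uses only $G$-disc slides (not the $R$-slides you propose), and the parity is then read off from intersections of two immersed cycles in $R$, not from the $G$-cycle machinery at the end of Lemma \ref{twist invariance}.  Finally there is a genuinely new Case 3 ($j>k$) where one $G$-disc-slides the cap over $w_j$ itself; this has no analogue in Lemma \ref{twist invariance} and is absent from your outline.
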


\begin{proof}  We will prove this with $\mW_1, \mW_2$ finger germed and $\mF$ Whitney germed.  Note that $\alpha_{ij}$ is obtained by banding $ \alpha_i$ to $ \alpha_j$ along the thin band $b_{ij}\subset G$ with core  $\beta$.    We  assume that $b_{ij}$ is transverse to $\mW_1\cap G$ and call a component of $b_{ij}\cap \mW_1$ a \emph{middle arc}.

Up to $\partial_+$-compressing, if $i\le k$, $w_i^2=w_i^{1}$ union a future cap about $ w_j$ and if $i\le k-1$, $w_{i+1}^2=w_{i+1}^{1}$  union a future cap about $ w_j$.   See Figure \ref{fig:braiding3} a).  By \emph{future cap} we mean a 2-disc whose 3D projection is as in Figure \ref{fig:braiding3} b)  and which avoids the finger by going into the future.  A \emph{past cap} is one with the same 3D projection but goes into the past.  Also if $j\le k$, $w_j^2=w_j^{1}$ union a past cap about $ w_i$ and if $j\le k-1$, $w_{j+1}^2=w_{j+1}^{1}$  union a past cap about $ w_i$.  If $j=i+1\le k$, then up to boundary compressing $w^2_j$ has both a past cap about $w_i$ and a future cap about $w_j$.    Up to $\partial_+$-compressing, each middle arc in a $w_p^1$ adds two past caps about $w_i$.  To prove $(\mF, \mW_1), (\mF,\mW_2)$ satisfy the hypotheses of Lemma \ref{parity} we can ignore middle arcs since the resulting caps come in parallel pairs.  There are three cases.
  
   \begin{figure}[!htbp]
    \centering
    \includegraphics[width=1.0\linewidth]{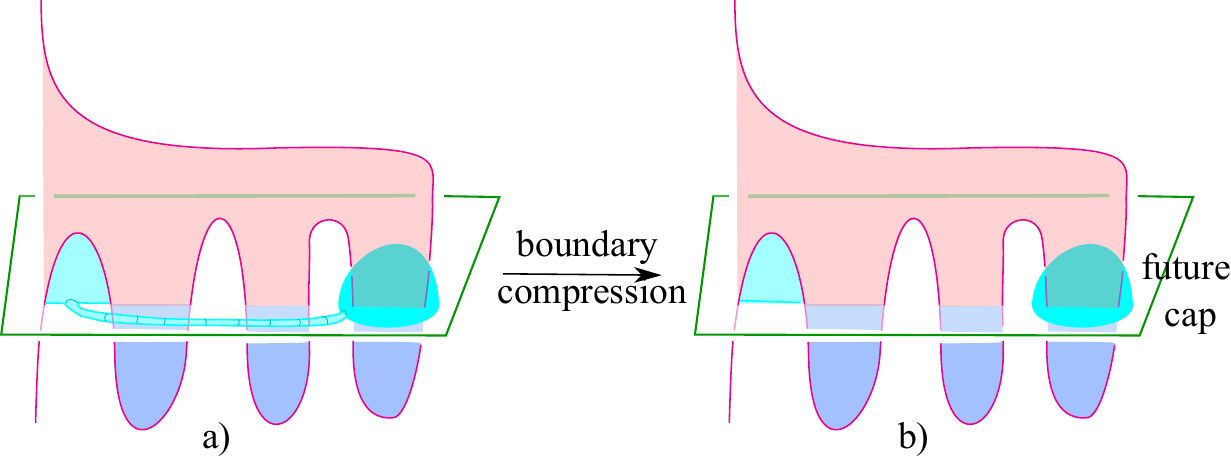}
    \caption{$\qquad\qquad$  a)  $\psi(w_1^1)$       $\qquad\qquad$        b) After the boundary compression}
    \label{fig:braiding3}
\end{figure}

\vskip 8pt 
\noindent\emph{Case 1}: There are $f_p, f_q\in \mF$ that respectively match $w_i,w_j\in \mW$ and $j\le k$.   \vskip 8pt

\noindent\emph{Proof of Case 1}:  Since the boundary of each cap around $w_i$ and $w_j$ intersects each component of $\mF$ an even number of times hypothesis iii) is satisfied for $G$.  Since $\mW_1\cap R=\mW_2\cap R$ it is also satisfied for $R$.    It remains to show $\hat \I(\mF, \mW_1)=\hat \I(\mF, \mW_2)$.  We now consider how a cap over a finger intersects  neighborhoods of  components of an $f_i$ with its solid finger.  Each component of type a) gives rise to two intersections with the cap.  Components of type b) gives rise to either 0 or two intersections.   Components type c) with gives rise to one intersection.   A component of type d) gives rise to one intersection.  In all cases caps can be future or past.  By Lemma \ref{ia order} 
 we have in the IA order $w^2_{j+1}<f_q<w^2_j\le w^2_{i+1}<f_p<w^2_i$ and the first three and last three are consecutive in the linear order, though if $j=k=n$ then we don't have the $w^2_{j+1}$ term.   Using Lemma \ref{boundary compression}, $\hat \I(\mF, \mW_2)$ is modified by the number of type c) $f_p$ components in $\sigma_j$  plus the number of c) $f_q$ components in $\sigma_i$, where $\sigma_r$ is the solid finger  corresponding to $w_r$.  But these are equal mod 2 since $(f_p\cap R)\cup (w_i\cap R)$ is an immersed cycle in R as is $(f_q\cap R)\cup (w_j\cap R)$ and these cycles intersect each other an even number of times and  $w_i\cap w_j=\emptyset$, $f_p\cap f_q=\emptyset$.  \qed

\vskip 8pt
\noindent\emph{Case 2}: Case 1 does not hold and $j\le k$.
\vskip 8pt
\noindent\emph{Proof of Case 2}:  The proof is similar to that of Case 2  of  Lemma \ref{twist invariance}.  Since at least $w^1_i, w^1_{i+1}, w^1_j$ are switch discs as is $w^1_{j+1}$ if $j<k$, $w_j\cap G$ is part of an immersed cycle of $2r$ arcs, $r\ge 1$,  of the form $w_{p_1}\cap G, f_ {p_1}\cap G, \cdots, w_{p_r}\cap G, f_{p_r}\cap G$, where  $w_{p_1}:=w_j$ and  $w_i\cap G$ is part of an immersed cycle of $2s$ arcs, $s\ge 1$,  of the form $w_{q_1}\cap G, f_ {q_1}\cap G, \cdots, w_{q_s}\cap G, f_{q_s}\cap G$, where  $w_{q_1}:=w_i$ and $r s\ge 2$.   We  modify $\mW_2$ to $\mW_3$ by a sequence of G-disc slides.   From each of $w^2_i$ and $w^2_{i+1}$ we do a G-disc slide to each $\hat w\in w_{p_2}, \cdots w_{p_r}$.  From each of $w^2_j$ and $w^2_{j+1}$, if it exists, we do a G-disc slide to each $\hat w\in w_{q_2}, \cdots w_{q_s}$.   The resulting $(\mF, \mW_1), (\mF,\mW_3)$ satisfies hypothesis iii). To complete the proof it suffices to show that $\hat \I(\mF, \mW_1)=\hat \I(\mF, \mW_3)$.  After the boundary compressions there is a natural inclusion from $\inte(\mW_1)\cap\mF$ to $\inte(\mW_3)\cap\mF$.  The new points of $\inte(\mW_3)\cap\mF$ come from intersections of caps with $\mF$ near solid fingers  plus those discs arising from the disc slides.   As before if there is a disc slide  of some $w^2_t$ over a $\hat w$, then the resulting $w^3_t$ sees pairs of intersections coming from $\inte(\hat w) \cap \mF$ plus single intersections coming from $\inte(\partial\hat w\cap R)\cap\mF$.  Most of these new  $\inte(\mW_3)\cap\mF$ intersections come in cancelling pairs.  What follows organizes those that do not. 

\vskip 8pt
i) Intersections between $w_j\cap R$ with a $f_{q_t}\cap R$.  Note that $w^3_{i+1}<f_{q_t}<w^3_i$.  These  intersections involve the $w^3_i$ cap over $w_j$ with a neighborhood of a type c) $f_{q_t}$ component in $\sigma_j$.

i') Intersections that arise from a disc slide of $w^2_i$ over the finger germed $w_{p_u}$.  This corresponds to an intersection between a $f_{q_t}\cap R$ with $w_{p_u}\cap R$ and $u\ge 2$.  Again note that $w^3_{i+1}<f_{q_t}<w^3_i$. 

\vskip 8pt
ii) Intersections between $w_i\cap R$ with a $f_{p_t}\cap R$.

ii')  Intersections between a $f_{p_t}\cap R$ with $w_{q_u}\cap R$ and $u\ge 2$.
\vskip 8pt

Now  the immersed cycle $w_{p_1}\cap R, f_ {p_1}\cap R, \cdots, w_{p_r}\cap R, f_{p_r}\cap R$ intersects the immersed cycle $w_{q_1}\cap R, f_ {q_1}\cap R, \cdots, w_{q_s}\cap R, f_{q_s}\cap R$ in an even number of points.  Since the $w_u$'s are pairwise disjoint as are the $f_v$'s there is a bijection between the intersections of the cycles and  the points corresponding to  i), i'),  ii) or ii'), proving Case 2.
\vskip 8pt
\noindent\emph{Case 3}: $j>k$.
\vskip 8pt
\noindent\emph{Proof of Case 3}:  After boundary compressing and ignoring the pairs of parallel discs arising from the middle arcs, the result of the G-braiding is that $w_i$ (resp. $w_{i+1}$) has a positive cap about the $w_j$ solid finger if $i\le k$ (resp. $i\le k-1)$.  To the extent they exist, G-disc slide each of these caps over $w_j$.  If $\mW_3$ is the result of these disc slides it follows that $(\mF,\mW_1), (\mF, \mW_3)$ satisfy conditions i), iii) of Lemma \ref{parity}, thus it remains to show that $\hat \I(\mF, \mW_1)=\hat \I(\mF,\mW_3)$.   Since $w^r_{i+1}<f_i<w^r_i$ for $r=1,2,3$ all $\hat \I(\mF,\mW_3)-\hat \I(\mF, \mW_1)$ intersections cancel except possibly of  those of $\inte(w^3_i)$ with $f_i$.  The non cancelling intersections with the cap are from the type c) components and they are in 1-1 correspondence with $f_i\cap \inte(w_j)\cap R$.  That set is also in 1-1 correspondence with the non cancelling intersections with the slid disc.
\end{proof}

\begin{definition}\label{rbraiding}  We say that $G\cup R$ is in  \emph{R-framed finger form} if  $R$ is isotoped to be $\Rs$ and $G$ has fingers poking into $R$ analogous to regular framed finger form.  Similarly we can define $F^G_1, \cdots, F^G_n$ the solid $G$ fingers.  There is a natural isotopy taking framed finger form to R-framed finger form.  Let $\rho$ denote the time one map of this isotopy.

We now define \emph{pre-$R$-braiding} viewed from R-framed finger form.  Let $\mW$ be the standard Whitney discs.  For $p=i,j$, let $\alpha_p\subset R$ be a circle about $w_p\cap R$.  If $i<j$, then let $\beta$ be an embedded arc from $\alpha_i$ to $\alpha_j$ disjoint from $\mW$ and let  $\alpha_{ij}$ be the component of $\partial N(\alpha\cup \alpha_i\cup \alpha_j)\cap R$ not isotopic to $\alpha_i$ or $\alpha_j$.  Define $\eta_{i,j}:R\to R$ to be the composition of  L Dehn twists about $\alpha_i$ and $\alpha_j $ with a R Dehn twist about $\alpha_{ij}$.  Now  $\eta_{i,j}$ is isotopic to  the time one map of an ambient isotopy $\zeta^t_{i,j} $ of $R$ which  sends $w_i$  to $w_j$ along $\beta$, then swings it  about $w_j$ then returns it along $\beta$ to its original position.   Now extend $\zeta^t_{i,j}$ to an ambient isotopy of $\stwostwo$ that fixes each $F^G_r, r\neq i$   pointwise such that $F^G_i$ goes into the future as its 3D projection crosses $F^G_j$.  The time one map $\phi$ is required to fix $N(G)$ and the solid fingers pointwise.   The map $\phi$ is  a \emph{pre-$R$-braiding} of $F^G_i$ about $F^G_j$.  Define \emph{$R$-braiding} by $\psi=\rho^{-1}\phi\rho$.\end{definition}

\begin{lemma}\label{rbraiding invariance} Given $\mF$, let $R$ and $G$ be in framed finger form with $\mW$ the standard Whitney discs.  Suppose $\mW_1$ is a minimal switching of $\mW$.  If $\mW_2:=\psi(\mW_1)$ where $\psi$ is the $i,j$ $R$ braiding, then $\I(\mF, \mW_1)=\I(\mF, \mW_2)$.\qed\end{lemma}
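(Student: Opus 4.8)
The plan is to reduce this to Lemma~\ref{gbraiding invariance} via the conjugation that defines $R$-braiding, together with the symmetry of the entire setup under interchanging $R$ and $G$. Recall that $\psi=\rho^{-1}\phi\rho$, where $\rho$ is the time-one map of an ambient isotopy of $\stwostwo$ carrying framed finger form to $R$-centric framed finger form and $\phi$ is a pre-$R$-braiding of $F^G_i$ about $F^G_j$. In $R$-centric framed finger form the flat sphere $\rho(R)=\Rs$, the fingered sphere $\rho(G)$, the solid $G$-fingers, and the standard discs $\rho(\mW)$ play exactly the roles of $\Gs$, $R$, the solid fingers, and $\mW$ in ordinary framed finger form with $R$ and $G$ interchanged, and $\phi$ becomes literally a ``$G$-braiding'' for this interchanged configuration acting on the $k$-switch $\rho(\mW_1)$.

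First I would note that $\rho$, being an ambient isotopy, carries disc slide sequences to disc slide sequences, carries $\partial_+$- and $\partial_-$-compressions to compressions of the same type, and preserves the untwistedness and finger/Whitney germing conditions as well as every $\BZ_2$ count $|\inte(f)\cap\inte(w)|$, $|f\cap w\cap R|$, $|f\cap w\cap G|$ appearing in Definitions~\ref{invariant} and \ref{hat i}. Hence $I(\mF,\mW_1)=I(\rho(\mF),\rho(\mW_1))$ and $I(\mF,\mW_2)=I(\rho(\mF),\phi(\rho(\mW_1)))$, where on the right the invariant is computed by the very same procedure (slide to $\EA$, impose the IA-ordering, sum over $p\le q$) for the $R$-centric configuration; this makes sense because $I$ and all its well-definedness inputs --- Lemma~\ref{parity}, Lemma~\ref{ia to ea}, Lemma~\ref{ia order}, Lemma~\ref{even winding}, Proposition~\ref{hsf independence}, and Corollary~\ref{homological independence} --- are symmetric under interchanging $R$ and $G$.

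Next I would reapply the proof of Lemma~\ref{gbraiding invariance} verbatim, with $R$ and $G$ interchanged, to $\phi$ acting on $\rho(\mW_1)$. The same trichotomy (both of $w_i,w_j$ matched by discs of $\rho(\mF)$ with $j\le k$; that fails with $j\le k$; or $j>k$), the same description of $\phi$ near the solid $G$-fingers up to $\partial_+$- and $\partial_-$-compression, the same disc slides to restore hypothesis~iii) of the Parity Lemma, and the same appeal to Lemma~\ref{even winding} and to the evenness of the intersection number of the relevant immersed cycles on $\Rs$ all go through unchanged and give $I(\rho(\mF),\rho(\mW_1))=I(\rho(\mF),\phi(\rho(\mW_1)))$. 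Combined with the first step this is $I(\mF,\mW_1)=I(\mF,\mW_2)$.

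The one genuinely substantive point --- which I expect to be the main obstacle --- is confirming that every ingredient of the proof of Lemma~\ref{gbraiding invariance} has a faithful $R$-centric mirror: that framed finger form, solid fingers, the classification of the components of a Whitney-germed disc in a solid finger (Lemma~\ref{f in solid}), the IA-ordering rules (Lemma~\ref{ia order}), and the future/past cap bookkeeping are all symmetric under $R\leftrightarrow G$, and that $\rho$ contributes no twisting that would change a $\BZ_2$ count. Given the machinery of Section~\ref{finger first} each of these is a routine check, and no idea beyond those used for $G$-braiding is required.
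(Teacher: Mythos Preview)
Your proposal is correct and matches the paper's approach: the lemma is stated with a \qed and no further argument, the intended proof being precisely the $R\leftrightarrow G$ symmetry via the conjugation $\psi=\rho^{-1}\phi\rho$ that you spell out. The paper later confirms this reading in the multi-eye comments on Lemma~\ref{rbraiding invariance}, noting that $R$-braiding is ``essentially isomorphic to $G$-braiding'' and that the proof follows as in Lemma~\ref{gbraiding invariance}.
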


Our next restandardization map is based on the operation of \emph{spinning} one arc about another in a four manifold, an operation sometimes called \emph{double curve resolution}.  See \S 3 \cite{Ga2} for details on spinning.    First, we have the elementary:

\begin{lemma} \label{e' homology} Let $E'=S^2\times S^2\setminus \inte(N(G\cup R\cup\mW))$, where $\mW$ is the standard set of Whitney discs.  Let $S_i$ denote a 2-sphere which links the $i$'th solid finger.  Then $H_2(E')$ is freely generated by the classes $[S_i]$ and with suitable orientation on the $S_i$'s, $\langle S_i, f_j \rangle=\delta_{ij}$, where $f_j$ is the $j$'th standard finger.\qed \end{lemma}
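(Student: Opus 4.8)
\noindent\emph{Proof proposal.} The plan is to first compute $H_*(E')$ by Lefschetz duality and then pin down an explicit basis using the standard finger discs. Write $X:=G\cup R\cup\mW\subset\stwostwo$, a compact $2$--complex whose regular neighborhood $N(X)$ deformation retracts onto it. Since $E'=\stwostwo\setminus\inte N(X)$, excision gives $H_k(\stwostwo,E')\cong H_k(N(X),\partial N(X))$, and Lefschetz duality for the compact oriented $4$--manifold $N(X)$ identifies this with $H^{4-k}(X)$, so the first task is to compute $H^*(X)$. With $R$ in framed finger form, $G\cup R$ is a union of two $2$--spheres meeting transversely in the $2n+1$ points $a_0,\dots,a_{2n}$; Mayer--Vietoris with $A=N(G)$, $B=N(R)$ (so $A\cap B$ is $2n+1$ points) gives $H_0(G\cup R)=\BZ$, $H_2(G\cup R)=\BZ^2$ generated by $[G]$ and $[R]$, and an isomorphism $H_1(G\cup R)\cong\tilde H_0(A\cap B)=\BZ^{2n}$ via the connecting map. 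Since $w_i\cap R\cap G=\{a_{2i-1},a_{2i}\}$, the loop $\partial w_i$ meets $G\cup R$ only at $a_{2i-1},a_{2i}$, so under this isomorphism the classes $[\partial w_1],\dots,[\partial w_n]$ correspond to $[a_1]-[a_2],[a_3]-[a_4],\dots,[a_{2n-1}]-[a_{2n}]$, which extend to a basis of $\BZ^{2n}$. Attaching the discs $\mW$ along these loops therefore gives $H_0(X)=\BZ$, $H_1(X)=\BZ^n$, $H_2(X)=\BZ^2$, all free, with $H_2(X)\to H_2(\stwostwo)$ sending the generators to $[\Gs]$ and $[\Rs]$; by universal coefficients $H^0(X)=\BZ$, $H^1(X)=\BZ^n$, $H^2(X)=\BZ^2$ and $H^{\geq 3}(X)=0$.

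Next I would run the long exact sequence of $(\stwostwo,E')$ using $H_k(\stwostwo,E')\cong H^{4-k}(X)$. As $E'$ has nonempty boundary, $H_4(E')=0$, and the map $H_4(\stwostwo)\to H_4(\stwostwo,E')\cong H^0(X)=\BZ$ is an isomorphism (it sends the fundamental class to that of $N(X)$), forcing $H_3(E')=0$ and leaving the exact sequence
\[
0\longrightarrow\BZ^n\longrightarrow H_2(E')\longrightarrow H_2(\stwostwo)\xrightarrow{\ \phi\ }H^2(X)\longrightarrow H_1(E')\longrightarrow 0.
\]
Here $\phi$ sends $\beta$ to the functional $\gamma\mapsto\beta\cdot\iota_*\gamma$ on $H_2(X)$, where $\iota\colon X\hookrightarrow\stwostwo$; since $\iota_*$ carries a basis of $H_2(X)$ to $[\Gs],[\Rs]$, in these bases $\phi$ is the intersection matrix $\left(\begin{smallmatrix}0&1\\1&0\end{smallmatrix}\right)$ of $\stwostwo$, which is an isomorphism. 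Hence $H_1(E')=0$ and $H_2(E')\cong\BZ^n$, free of rank $n$.

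It then remains to identify the generators. The sphere $S_i$ linking the $i$'th solid finger bounds a $4$--ball in $\stwostwo$ meeting $R$ in a single disc near the $i$'th finger and disjoint from $G$ and from $\mW$; hence $[S_i]\in H_2(E')$ is defined and maps to $0$ in $H_2(\stwostwo)$, i.e.\ lies in the rank-$n$ subgroup above. The standard finger disc $f_j$ is supported near the $j$'th finger, with interior disjoint from $R\cup G$ and from $\mW$, so it represents a class in $H_2(E',\partial E')$; inspecting the framed finger form (Figure \ref{fig:framed finger}) shows $f_j$ meets $S_i$ transversely in exactly $\delta_{ij}$ points, so with a suitable orientation of each $S_i$ one gets $\langle S_i,f_j\rangle=\delta_{ij}$. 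Since $H_2(E',\partial E')\cong H^2(E')\cong\mathrm{Hom}(H_2(E'),\BZ)$ is also free of rank $n$ and the Lefschetz intersection pairing $H_2(E')\times H_2(E',\partial E')\to\BZ$ is unimodular, the relation $(\langle S_i,f_j\rangle)=I_n$ forces $\{[S_1],\dots,[S_n]\}$ to be a basis of $H_2(E')$, which would complete the proof.

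The main obstacle is the two ``by inspection'' steps: that $[\partial w_1],\dots,[\partial w_n]$ extend to a basis of $H_1(G\cup R)$ (handled above by pushing the question through the Mayer--Vietoris connecting map, where it becomes the transparent statement about the points $a_k$), and, more delicately, the local computation $\langle S_i,f_j\rangle=\delta_{ij}$, which requires fixing once and for all a concrete model of the framed finger form together with the standard discs $f_j$ and $w_j$ near each solid finger so that intersections can be tracked; choosing the orientations of the $S_i$ compatibly then upgrades ``$(\langle S_i,f_j\rangle)$ invertible'' to ``equal to $I_n$''.
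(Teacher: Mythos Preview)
Your proof is correct and complete. The paper actually treats this lemma as elementary and gives no proof in the single-eye case; the argument it has in mind (made explicit only in the multi-eye version) is purely geometric: one observes that $E'$ is diffeomorphic to a $4$--ball with open tubular neighborhoods of $n$ disjoint arcs removed, one arc per finger, so that $H_2(E')\cong\BZ^n$ is immediate with the linking spheres as tautological generators, and $\langle S_i,f_j\rangle=\delta_{ij}$ is read off from the local model.

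Your route is genuinely different: rather than recognizing the diffeomorphism type of $E'$, you compute $H_*(E')$ algebraically via Alexander--Lefschetz duality for the pair $(\stwostwo,E')$, using Mayer--Vietoris on $G\cup R$ to get $H_*(X)$ and then the long exact sequence to extract $H_2(E')\cong\BZ^n$; finally you promote the $[S_i]$ to a basis by pairing against the $f_j$ and invoking unimodularity of the Lefschetz pairing (which is justified since you have already shown $H_1(E')=0$, so both sides are free of rank $n$). The paper's approach is quicker once one sees the diffeomorphism, but yours is more systematic and does not require that geometric insight; it would also adapt more readily to situations where the complement is harder to identify by hand. One small slip: $S_i$ bounds a $3$--ball (not a $4$--ball) meeting the $i$'th finger in a disc, though this does not affect the argument.
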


\begin{definition}\label{ij spinning}  Let $R$ and $G$ be in framed finger form, then the \emph{$i,j$-spinning} is the time one map $\psi$ corresponding to spinning the $i$'th solid finger about the $j$'th solid finger where $i<j$.  Our $\psi$ is  supported in a small neighborhood of 2-disc sections of the solid fingers and an arc $\alpha$ from one disc to the other whose interior is disjoint from $R\cup G\cup \mW_1\cup$ solid fingers, where $\mW_1$ is a k-switch of $\mW$.  Strictly speaking we need to establish orientation conventions to differentiate positive and negative spinning, however we will be vague on this point since we are working  $\BZ_2$-intersection numbers.\end{definition}

\begin{lemma} \label{ij spinning homology} Let  $\mW_1$ be a $k$-switch of $\mW$ and $\psi$ an $i,j$ spinning along the arc $\alpha$.  Let $\mW_2=\psi(\mW_1)$ and $z_q\in H_2(E)$ denote the class of a 2-sphere $S_q$ that links the $q$'th solid finger.  For all $p, w^2_p, w^1_p$ have the same boundary germ.  If $k\ge i$ (resp. $k\ge j$), then with suitable orientations on $S_q$'s, $[w^2_i]= [w^1_i]+z_j\in H_2(E,\partial w^2_i$), (resp. $[w^2_j]= [w^1_j]-z_i\in H_2(E,\partial w^2_j)$), while if  $ k>i$ (resp. $k>j$) then $[w^2_{i+1}]= [w^1_{i+1}]-z_j$, $[w^2_{j+1}]= [w^1_{j+1}]+z_i$, otherwise $[w^2_r]=[w^1_r]$.  \qed\end{lemma}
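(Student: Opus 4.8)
The plan is to reduce Lemma~\ref{ij spinning homology} to a purely local homological computation near the two solid fingers $\sigma_i,\sigma_j$ and then read the answer off the standard switch picture of Figure~\ref{fig:switch}. First I would settle the boundary-germ claim. By construction the $i,j$-spinning $\psi$ is supported in a small neighbourhood $N$ of $D_i\cup\alpha\cup D_j$, where $D_i\subset\sigma_i$, $D_j\subset\sigma_j$ are the chosen $2$-disc sections and the interior of $\alpha$ is disjoint from $R\cup G\cup\mW_1\cup(\text{solid fingers})$; in particular $N$ is disjoint from $N(R\cup G)$. Hence $\psi$ fixes $\partial w^1_p$ and its germ for every $p$, which is the first assertion, and it is then legitimate to compare $[w^2_p]$ and $[w^1_p]$ inside $H_2(E,\partial w^1_p)$. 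Since $\psi$ is the identity off $N$, the surfaces $w^2_p$ and $w^1_p$ agree outside $N$, so $(w^1_p\cap N)\cup(w^2_p\cap N)$ is a closed surface in $N$ whose class in $H_2(E)$ equals $[w^2_p]-[w^1_p]$ — here one uses that $H_2(E)\to H_2(E,\partial w^1_p)$ is injective, $\partial w^1_p$ being a circle. Thus the whole problem is local in $N$ and is controlled by the algebraic intersection numbers $w^1_p\cdot D_i$ and $w^1_p\cdot D_j$.

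Next I would extract the local homological effect of spinning from \cite[\S3]{Ga2}. The $i,j$-spinning carries the part of $\sigma_i$ beyond $D_i$ once around the part of $\sigma_j$ beyond $D_j$ along $\alpha$, and symmetrically drags the part of $\sigma_j$ beyond $D_j$ around $\sigma_i$ in the opposite sense. A sheet of a disc $D$ crossing $D_i$ transversally in a single point of sign $\epsilon$ is dragged once around $\sigma_j$; the moved sheet, the original sheet and the swept cylinder bound a $2$-sphere isotopic to $\epsilon S_j$, so $[D]$ changes by $\epsilon\,z_j$, while a point of $D\cap D_j$ of sign $\delta$ contributes $-\delta\,z_i$. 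Summing over all intersection points, with the orientations of the $S_q$ fixed accordingly,
\[ [\psi(D)]-[D] \;=\; (D\cdot D_i)\,z_j \;-\; (D\cdot D_j)\,z_i \;\in\; H_2(E,\partial D). \]

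It then remains to compute the relevant intersection numbers for $D=w^1_p$ from Figure~\ref{fig:switch}. For $p>k$ the disc $w^1_p$ is the $p$-th standard Whitney disc, routed in a neighbourhood of $R$ that misses the interiors of all solid fingers, so $w^1_p\cdot D_q=0$ for all $q$. Each switch disc $w^1_p$ ($p\le k$) is routed once through $\sigma_p$ and, nesting past its predecessor, once through $\sigma_{p-1}$ in the opposite sense, and meets no other section; normalizing orientations so that $w^1_p\cdot D_p=+1$ gives $w^1_p\cdot D_{p-1}=-1$ (for $p\ge 2$). Hence $D_i$ is met positively once by $w^1_i$ (when $i\le k$) and negatively once by $w^1_{i+1}$ (when $i+1\le k$), and likewise $D_j$ by $w^1_j,w^1_{j+1}$. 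Plugging these into the displayed identity gives $[w^2_i]=[w^1_i]+z_j$ when $k\ge i$, $[w^2_{i+1}]=[w^1_{i+1}]-z_j$ when $k>i$, $[w^2_j]=[w^1_j]-z_i$ when $k\ge j$, $[w^2_{j+1}]=[w^1_{j+1}]+z_i$ when $k>j$, and $[w^2_r]=[w^1_r]$ otherwise; in the overlap case $j=i+1\le k$ the two contributions to $w_{i+1}=w_j$ simply add.

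I expect the main obstacle to be this middle step: making the local model of spinning from \cite[\S3]{Ga2} precise enough in our solid-finger framing to pin down the \emph{signs} of the $z_q$ (not merely their parities), and checking the normalization $w^1_i\cdot D_i=+1$, $w^1_{i+1}\cdot D_i=-1$ against the standard switch of Figure~\ref{fig:switch} with the orientations of the $S_q$ coming from Lemma~\ref{e' homology}. The rest is bookkeeping, and since only the $\BZ_2$-reductions of these classes enter the later invariance arguments one may, if preferred, run the entire computation mod~$2$ and dispense with sign-chasing.
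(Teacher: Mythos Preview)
Your proposal is correct and mirrors the paper's own treatment: the lemma is stated with a bare \qed, and the subsequent remark gives exactly the tubing-to-linking-spheres picture that your intersection-number formula $[\psi(D)]-[D]=(D\cdot D_i)\,z_j-(D\cdot D_j)\,z_i$ makes precise; your identification of which switch discs cross which section (namely $w^1_p$ meets $D_p$ and $D_{p-1}$ once each with opposite signs) is exactly what the remark asserts when it says $w^1_i,w^1_{i+1}$ are the discs that get tubed to $S_j$. One small correction to your phrasing: for $p>k$ the standard Whitney disc $w_p$ is not ``routed \ldots\ [to] miss the interiors of all solid fingers'' --- it literally sits inside $\sigma_p$ --- but your conclusion $w_p\cdot D_q=0$ is still correct, since for $q\neq p$ they are disjoint and for $q=p$ both $w_p$ and $D_p$ lie in the $3$-ball $\sigma_p$ and hence can be pushed apart in the fourth coordinate.
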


\begin{remark}   Represent $z_p, p=i,j$ by a 2-sphere $S_p$ linking $\sigma_p$.   Here $w^2_i$ (resp. $w^2_j$) when $i\le k$ (resp. $j\le k$) is obtained by tubing $w^1_i$ (resp. $w^1_j$)  to $S_j$ (resp. $S_i$) along a tube that parallels $\alpha$.  Similar statements hold for $w^2_{i+1}$ and $w^2_{j+1}$.  The tubes from $w^1_j, w^1_{j+1}$ to $S_i$ are linked with those from $w^1_i, w^1_{i+1}$ to $S_j$.  All but the $S_p$'s can be ignored, since by Lemma \ref{boundary compression} for the purposes of calculating $\hat \I(\mF, \mW_2)$ it suffices to compress these tubes.      \end{remark}

\begin{lemma}  \label{spin invariance}  Given $(\mF, \mW)$ with $\mW_1$  a minimal switching of $\mW$ and $\psi$ an $i,j$-spinning with $\mW_2:=\psi(\mW_1)$, then $\I(\mF, \mW_1)=\I(\mF, \mW_2)$.\end{lemma}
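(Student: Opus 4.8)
The plan is to apply the Parity Lemma, Lemma~\ref{parity}, to the pairs $(\mF,\mW_1)$ and $(\mF,\mW_2)$. After preliminary isotopies supported near $\partial\mF$ and $\partial\mW_1$ we may take $\mF$ Whitney germed and $\mW_1$ finger germed; since an $i,j$--spinning is supported near $2$--disc sections of the solid fingers and an arc $\alpha$ disjoint from $R\cup G$, the set $\mW_2=\psi(\mW_1)$ is again finger germed, $(\mF,\mW_2)\in\IA$, and its $R$-- and $G$--traces agree with those of $(\mF,\mW_1)$, so both pairs carry the same IA ordering. By Lemma~\ref{ij spinning homology} and the Remark after it, $\mW_2$ is obtained from $\mW_1$ by tubing $w^1_i,w^1_{i+1}$ to parallel copies of the linking sphere $S_j$ and $w^1_j,w^1_{j+1}$ to parallel copies of $S_i$ along tubes paralleling $\alpha$ (only the members of $\{w^1_i,w^1_{i+1},w^1_j,w^1_{j+1}\}$ whose standard index is $\le k$ are actually altered), with all boundary germs preserved. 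By Lemma~\ref{boundary compression} we may compress these tubes without changing $\hat I$ or disturbing hypotheses ii),~iii) of Lemma~\ref{parity}, so it suffices to treat $\mW_2$ as $\mW_1$ with disjoint parallel copies of $S_i,S_j$ attached to the indicated discs.

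Hypothesis i) of Lemma~\ref{parity} is immediate, and hypothesis iii) holds since the boundary germs agree and $S_i,S_j$ lie in the interior of $E$ away from $R\cup G$, so no triple points on $R$ or $G$ are created. Thus everything comes down to hypothesis ii): $\hat I(\mF,\mW_1)=\hat I(\mF,\mW_2)$. By Lemma~\ref{e' homology} the classes $[S_1],\dots,[S_n]$ freely generate $H_2(E')$, are mutually orthogonal, and pair with the standard fingers by $\langle S_q,f^{\mathrm{std}}_a\rangle=\delta_{qa}$; since each finger disc of $\mF$ cancels the same pair of points as its standard counterpart, $\langle S_q,f_a\rangle\equiv\delta_{qa}\pmod 2$. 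Hence, after compression, a copy of $S_q$ glued to the altered disc $w_b$ changes $\hat I=\sum_{a\le b}|\inte(f_a)\cap\inte(w_b)|$ by exactly $1$ when the finger disc of the $q$-th pair precedes $w_b$ in the IA ordering, and by $0$ otherwise.

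It remains to sum these contributions over the altered discs and show the total is even. Here one must pass between the \emph{standard} labeling of $\mW$ (in which the altered discs are $w_i,w_{i+1},w_j,w_{j+1}$) and the \emph{IA} labeling used by $\hat I$; the dictionary is Lemma~\ref{ia order}. When both indices of a pair are $\le k$, the two copies of the same sphere contribute an indicator of whether the relevant finger disc lies strictly between the two members of the pair in the IA order; in the unbalanced cases $k=i$ or $k=j$ a single copy survives. In all cases one finishes exactly as in Case~2 of Lemmas~\ref{twist invariance} and~\ref{gbraiding invariance}: Lemma~\ref{ia order} identifies the immersed cycle of $(\mF\cup\mW)\cap G$ carrying each altered disc, and the leftover terms are matched against the \emph{even} mutual intersection of the immersed cycles built from the $w$'s and $f$'s in $R$ and in $G$, using that those $w$'s and $f$'s are individually pairwise disjoint. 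Then Lemma~\ref{parity} yields $I(\mF,\mW_1)=I(\mF,\mW_2)$. The main obstacle is precisely this last bookkeeping step --- carrying out the IA reordering of Lemma~\ref{ia order} and matching the residual intersection counts against the even self-intersection of the relevant immersed cycles, in parallel with the case analysis of Lemmas~\ref{twist invariance} and~\ref{gbraiding invariance}.
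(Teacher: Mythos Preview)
Your overall plan is right: invoke the Parity Lemma, note that boundary germs are unchanged so hypotheses i) and iii) are automatic, compress the tubes via Lemma~\ref{boundary compression}, and reduce to checking $\hat I(\mF,\mW_1)=\hat I(\mF,\mW_2)$. That matches the paper.

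The error is in your intersection formula. You assert $\langle S_q,f_a\rangle\equiv\delta_{qa}\pmod 2$ ``since each finger disc of $\mF$ cancels the same pair of points as its standard counterpart.'' But nothing in the hypotheses says $\mF$ similarly matches the standard fingers; the whole setup of \S\ref{finger first} is that $(\mF,\mW)$ need not be IA, which is why the $k$-switch $\mW_1$ is introduced. A generic $f\in\mF$ may cancel any pair of oppositely-signed points of $R\cap G$, so there need be no ``$f_q$'' at all. The correct computation of $\langle S_j,f\rangle\pmod 2$ uses the type a)--d) decomposition of $f\cap\sigma_j$ from Lemma~\ref{f in solid}: types a), b), c) contribute $0$ and each type d) component contributes $1$. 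The type d) components in $\sigma_j$ are carried exactly by the \emph{two} finger discs $f_p,f_{p'}\in\mF$ whose corners lie at the endpoints of $w_j$; these are the $f$'s adjacent to $w_j$ in the immersed cycle of $(\mF\cup\mW)\cap G$ containing $w_j$.

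With that in hand the paper's finish is shorter than the cycle-intersection bookkeeping you sketch. Since $f_p,f_{p'}$ lie in the same cycle as $w_j$, Lemma~\ref{ia order} places them in a single contiguous block of the IA order. Each altered switch disc $w^2_i$ (or $w^2_{i+1},w^2_j,w^2_{j+1}$) sits at the top of its own block, so $f_p,f_{p'}$ are either both $\le w^2_i$ or both $>w^2_i$. Hence the two new type d) contributions to $\hat I$ cancel mod $2$ \emph{for each altered disc separately}; no pairing of $w^2_i$ with $w^2_{i+1}$ and no appeal to even mutual intersection of cycles (as in Case~2 of Lemmas~\ref{twist invariance},~\ref{gbraiding invariance}) is needed here.
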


\begin{proof} Since $\mW_1$ has the same boundary germ as $\mW_2$ by Lemma \ref{parity} it remains to show that $\hat \I(\mF, \mW_1)=\hat \I(\mF, \mW_2)$.  Since $\hat \I(\mF, \mW_2)$ is a $\BZ_2$ intersection invariant it suffices to consider the homological effect of modifying a $w^1_p$ to  $w^2_p$.  When $p=i\le k$, the latter is represented by $w^1_i \cup [S_j]$.  Similar statements hold for $ i+1, j, j+1$.  Next consider the various types of intersections of $\mF$ with $\sigma_j$ as in Figure \ref{fig:solidfinger2}.  Ones of type a), b) or c) each give 0 intersections with $S_j$ mod-2, while ones of type d) each give one  intersection.  Now $w_j$ is part of an immersed cycle of the form $w_{q_1}\cap G, f_ {q_1}\cap G, \cdots, w_{q_s}\cap G, f_{q_s}\cap G$, where  $w_{q_1}:=w_j$ and $s\ge 1$. Further, both of the type d) intersections in $\sigma_j$ come from  $f_p, f_p'$ in this cycle, where $p=p'$ if $s=1$.  In  the IA ordering either $f_p, f_{p'}<w^2_i$ or $w^2_i<f_p, f_{p'}$ and hence the new type d) intersections with $w^2_i$ cancel.  Similarly, the new type d) intersections with $w^2_{i+1}, w^2_j, w^2_{j+1}$ cancel.  \end{proof}

We now define the fifth  restandardization map.

\begin{definition}  \label{so3 twist} An \emph{SO(3)-twist} $\psi$ is the time one map of an ambient isotopy supported in a small neighborhood of a $D^2$ section of a solid finger $\sigma_i$ that fixes $\sigma_i$ setwise and twists it by $4\pi$.  See Figure \ref{fig:so31} which shows the ambient isotopy in a very small neighborhood of the $\sigma_i$ where the fourth dimension is used to enable the crossing.   \end{definition}

 \begin{figure}[!htbp]
    \centering
    \includegraphics[width=0.65\linewidth]{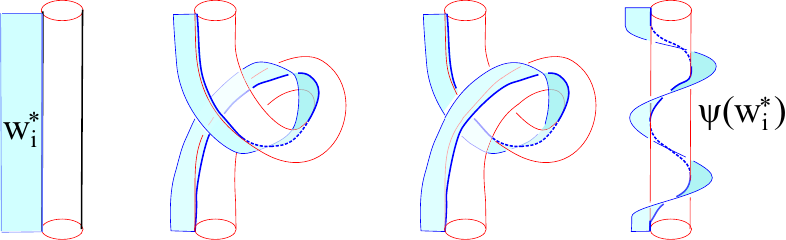}
    \caption{Very Local View of the SO(3) Twist}
    \label{fig:so31}
\end{figure}

\begin{lemma} \label{psid} If $\psi$ is the result of a SO(3)-twist about $\sigma_i$ and $\mW_1$ is the result of a k-switch to  $\mW$ and $\mW_2=\psi(\mW_1)$, then $w_j^2=\psi(w^1_j)$ unless $w^1_j$ is a switch disc and $j=i$ or $j=i+1$.  Here we assume $\mW_1, \mW_2$ are finger germed.  If $w^2_j\neq w^1_j,$ then up to $H_2$-equivalence, it is constructed as follows.   Starting with $w^1_j$ as in Figure \ref{fig:so32} a) first remove a rectangle as in Figure \ref{fig:so32} b).  Second, modify as in Figure \ref{fig:so32} c).  Third, add a product of the resulting boundary component into $y\times [0, \epsilon]$, fourth cap off with a disc in $y\times \epsilon$ and fifth do cut and paste with an appropriately oriented linking 2-sphere to $\sigma_j$.  If $j<k$, then $w^1_{j+1}$ is defined similarly.\end{lemma}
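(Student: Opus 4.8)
The plan has two parts: a localisation step showing that $\psi$ can change no disc of $\mW_1$ other than the two switch discs $w^1_i$ and $w^1_{i+1}$ named in the statement, and then an explicit picture-chase, in the local model of Figures \ref{fig:so31} and \ref{fig:so32}, identifying those discs up to $H_2$-equivalence. For the localisation, recall that $\psi$ is the time-one map of an ambient isotopy supported in a small neighbourhood $U$ of a single $D^2$-section $D$ of $\sigma_i$, and that it fixes $\sigma_i$ setwise and fixes $N(R)$, $N(G)$ and every other solid finger pointwise. The only points of $R\cap G$ in $\sigma_i$ are $a_{2i-1}$ and $a_{2i}$, and by Definition \ref{switching}(a) together with the description of the standard $k$-switch (Figure \ref{fig:switch}), among the switch discs only $w^1_i$ has a foot at $a_{2i-1}$ and only $w^1_{i+1}$ (when $i+1\le k$) has a foot at $a_{2i}$. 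Every other disc of $\mW_1$ is either contained in a solid finger disjoint from $\sigma_i$, or can be isotoped rel its boundary germ off the section $D$ and hence off $U$; and the non-switch disc $w_i$ (when $i>k$) runs along the $[-1,1]$-fibre direction of $\sigma_i=D^2\times[-1,1]$, on which the $4\pi$-twist acts trivially up to isotopy, preserving finger framing. In all these cases $\psi$ leaves the disc finger framed, so $w^2_j=\psi(w^1_j)$, which is the first assertion.

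For the second part, assume $i\le k$, so $w^1_i$ is a switch disc crossing $D$. Work in the local model of Figure \ref{fig:so32}: a neighbourhood of $D$ is $D^2\times[-1,1]$ inside a $4$-ball, whose fourth coordinate — the $t$-direction of Figure \ref{fig:so31} — is used to effect the crossings occurring during the $4\pi$-twist, with $G$, $R$, the finger $F_i$ and the foot $a_{2i-1}$ in the indicated standard position. Presenting $w^1_i$ as in Figure \ref{fig:so32}(a), $\psi$ alters only the portion of $w^1_i$ lying in $U$; excising the rectangle of Figure \ref{fig:so32}(b) isolates this portion, Figure \ref{fig:so32}(c) records how $\psi$ re-routes its free boundary arc inside $U$, and re-closing it by a collar product into $y\times[0,\epsilon]$ and a cap in $y\times\epsilon$ restores an embedded disc with the original boundary germ (after re-untwisting to keep it finger framed). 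Because a $4\pi$-rotation is the trivial element of $\pi_1(SO(3))$, this cap can be swept off $U$ at the cost of cutting and pasting with a $2$-sphere linking $\sigma_i=\sigma_j$, which is the only homological change. As with the spinning map (compare Lemma \ref{ij spinning homology} and the remark after it), the intermediate tubes and parallel sheets are irrelevant up to $H_2$-equivalence since they boundary-compress away (Lemma \ref{boundary compression}), so the five-step recipe computes $[w^2_i]\in H_2(E,\partial w^2_i;\BZ_2)$. When $i+1\le k$ the disc $w^2_{i+1}$ is treated identically with $a_{2i}$ in place of $a_{2i-1}$, the two feet of $\sigma_i$ being exchanged by the reflection of the local model.

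The delicate point is this last picture-chase: one must verify in the explicit model of Figures \ref{fig:so31}--\ref{fig:so32} that applying the $4\pi$-twist to the model switch disc and re-normalising yields exactly the disc of the five-step recipe, and in particular that the homological defect is a single linking $2$-sphere and not, say, a Clifford or normal torus. This is precisely where a $4\pi$- rather than a $2\pi$-rotation is needed, and where the $t$-coordinate crossings of Figure \ref{fig:so31} must be tracked carefully; the remainder of the argument is bookkeeping.
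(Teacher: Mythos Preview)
Your localisation argument for the first assertion is reasonable and more explicit than the paper, which treats that part as evident. For the second assertion your approach diverges substantially from the paper's, and the step you flag as ``delicate'' is precisely what the paper's argument circumvents.

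The paper argues indirectly. Write $v^2_j$ for the disc produced by the five-step recipe. One checks that $v^2_j$ and $w^2_j=\psi(w^1_j)$ have the same boundary germ, so their difference is represented by a closed surface supported in a small neighbourhood of the finger. Since $\inte(B^4)\setminus\alpha\cong S^2\times\inte(D^2)$ for a properly embedded arc $\alpha$, the second homology of that neighbourhood is generated by the linking sphere $S_j$, whence $[v^2_j]-[w^2_j]=m[S_j]$ for some integer $m$. Now step five of the recipe is there precisely to correct the framing of the four-step disc, and $w^2_j$ is correctly framed because $\psi$ is an ambient isotopy; a nonzero $m$ would force one of them to be misframed. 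Hence $m=0$ and the two discs are $H_2$-equivalent.

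This bypasses the explicit tracking of the $t$-coordinate crossings in Figure~\ref{fig:so31}, the verification that the defect is a single linking sphere rather than a Clifford or normal torus, and any appeal to $\pi_1(SO(3))$ beyond what is already built into the definition of $\psi$. Your direct picture-chase could presumably be completed, but you have not carried it out; you concede as much in your final paragraph, and that unexecuted verification is the whole content of the lemma, not residual bookkeeping. The paper's boundary-germ-plus-framing argument is both shorter and avoids the need for any such verification.
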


 \begin{figure}[!htbp]
    \centering
    \includegraphics[width=0.65\linewidth]{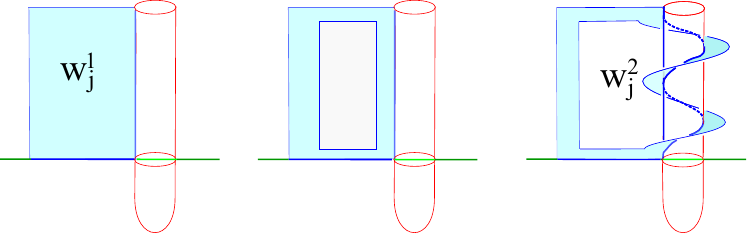}
    \caption{Partial Construction of $w_j^2$}
    \label{fig:so32}
\end{figure}

\begin{proof} The 2-sphere is needed so that the resulting disc is Whitney germed.  If $v^2_j  $ is constructed as above, then  $v^2_j$ and $w^2_j$ have the same boundary germ and hence their difference is homologous to a surface which lives in a small neighborhood of a finger.  Remembering that $\inte(B^4)\setminus \alpha=S^2\times \inte (D^2)$, where $\alpha$ is a properly embedded arc, this surface is homologous to a multiple of the linking sphere $S_j$ to $\sigma_j$. A non zero multiple would imply that one of $w^2_j, v^2_j$ is misframed.\end{proof}

\begin{lemma}\label{so3 twist invariance}  Given $(\mF, \mW)$ with minimal switching $\mW_1$ of $\mW$ and $\mW_2:=\psi(\mW_1)$, where $\psi$ is a SO(3)-twist, then $\I(\mF, \mW_1)=\I(\mF, \mW_2)$.\end{lemma}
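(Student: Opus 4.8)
The plan is to reprise the three-step argument of Lemmas \ref{twist invariance}, \ref{gbraiding invariance} and, most closely, \ref{spin invariance}. Throughout I would put $R\cup G$ in framed finger form with $\mW$ the standard ordered Whitney discs, view $\mW_1,\mW_2$ as finger germed and $\mF$ as Whitney germed, and choose the $D^2$-section of $\sigma_i$ defining $\psi$ disjoint from $\mF$, so that $\psi$ fixes $\mF$ pointwise and $(\mF,\mW_2)$ again lies in IA.

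The first step is to reduce everything to a homological count. By Lemma \ref{psid}, $w^2_j=\psi(w^1_j)=w^1_j$ for every $j$ except possibly $j=i$ or $j=i+1$ when $w^1_j$ is a switch disc, and for those discs, up to $H_2$-equivalence, $w^2_j$ has the same boundary germ as $w^1_j$ and $[w^2_j]=[w^1_j]+\varepsilon_j z_j$ in $H_2(E,\partial w^1_j;\BZ_2)$, where $z_j$ is the class of a $2$-sphere $S_j$ linking a solid finger and $\varepsilon_j\in\{0,1\}$ (the construction of $w^2_j$ in Lemma \ref{psid} is supported near that finger and, by the framing argument in its proof, differs from $w^1_j$ only by a possible copy of the linking class). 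Replacing $\mW_2$ by this explicit $H_2$-equivalent model via Corollary \ref{homological independence}, the boundary germs of $\mW_1$ and $\mW_2$ coincide, so Lemma \ref{parity} applies once its hypotheses (ii) and (iii) are checked. Hypothesis (iii) will hold because, mod $2$, the only change is the addition of linking spheres disjoint from $R\cup G$; if the modification near the finger happens to disturb $w^1_j\cap R$ or $w^1_j\cap G$, I would absorb the discrepancy by a short sequence of boundary compressions (Lemma \ref{boundary compression}) and disc slides (Proposition \ref{hsf independence}), exactly as in Lemma \ref{gbraiding invariance}. It then remains to prove $\hat I(\mF,\mW_1)=\hat I(\mF,\mW_2)$, and since the connecting tubes may be disregarded by Lemma \ref{boundary compression} (as in Lemma \ref{spin invariance}), this difference equals $\sum_{j\in\{i,i+1\}}\varepsilon_j|\inte(S_j)\cap\mF|$, with each intersection counted only when the corresponding term contributes to the ordered sum of Definition \ref{hat i}.

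The last step is the count already carried out in Lemma \ref{spin invariance}. By Lemma \ref{f in solid}, a linking sphere $S$ of a solid finger $\sigma$ meets each component of $\mF\cap\sigma$ of type a), b) or c) an even number of times and each component of type d) exactly once, and the two type d) components come from the two $\mF$-discs that neighbor the relevant switch-out disc along the immersed cycle through it (these two discs coinciding if that cycle has length $2$). By the rules for the IA ordering in Lemma \ref{ia order}, those two discs lie on the same side of $w^2_j$ in the linear order — both preceding $w^2_j$ if $\sigma$ is the solid finger of the switch-out disc matched to $w^2_j$, and otherwise both lying in a different cycle and hence on the same side by rule (ii). Either way the two new type d) intersections with $\inte(w^2_j)$ both count toward $\hat I$ or neither does, so they contribute $0$ mod $2$, while the type a), b), c) contributions are individually even. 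Hence $\hat I(\mF,\mW_1)=\hat I(\mF,\mW_2)$, and Lemma \ref{parity}, together with Proposition \ref{hsf independence} if disc slides were used, gives $I(\mF,\mW_1)=I(\mF,\mW_2)$. I expect the main obstacle to be the first reduction: pinning down exactly how the construction of Lemma \ref{psid} alters $w^1_j$ near the finger, so as to be sure that, beyond the linking-sphere term, it produces no uncancelled intersections with $\mF$ in the interior count or along $R$ and $G$, and hence that the Parity Lemma genuinely applies.
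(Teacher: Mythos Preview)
Your overall strategy and the type d) cancellation argument for the linking sphere match the paper's proof. The gap you flag at the end is real, and it is precisely where the paper does work you have not yet done.

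Two points. First, the claim that ``the boundary germs of $\mW_1$ and $\mW_2$ coincide'' is not correct: by the $4\pi$-twist, $\partial w^2_j$ may differ from $\partial w^1_j$ on $R$ by a \emph{double} Dehn twist along a section of $\sigma_i$. The paper uses this directly for hypothesis (iii) of Lemma \ref{parity}: a double Dehn twist changes each $|f_p\cap w^1_j\cap R|$ by an even number, so (iii) holds immediately, with no appeal to $H_2$-equivalence or Corollary \ref{homological independence}.

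Second, and this is the substantive gap, the difference between $w^2_j$ and $w^1_j$ in Lemma \ref{psid} is not just a linking sphere. Steps one through four of that construction replace a rectangle of $w^1_j$ near $\sigma_i$ by a twisted disc $F$ (Figure \ref{fig:so32}), and only step five adds the linking sphere $S$. Since $\partial w^2_j\neq\partial w^1_j$ in general, your formula $[w^2_j]=[w^1_j]+\varepsilon_j z_j$ does not literally make sense in $H_2(E,\partial w^1_j)$. The paper instead checks $F$ directly against the local models of Lemma \ref{f in solid}: each component of $f_p\cap\sigma_i$ of type a), b), c), \emph{and} d) contributes an even number of intersections with $F$, essentially because the $4\pi$-twist makes $F$ wrap twice. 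The sphere $S$ then contributes one per type d) component, and your IA-ordering argument finishes the count. Your proposed route via $\partial_+$-compressions and disc slides in the style of Lemma \ref{gbraiding invariance} is not the right tool here: $F$ is a local replacement inside $\sigma_i$, not a cap over another finger that can be traded away, and its contribution has to be tallied directly.
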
  

\begin{proof}  Assume that the twist is about $\sigma_i$.  Since either $\partial w^2_i=\partial w^1_i$ (resp. $\partial w^2_{i+1}=\partial w^1_{i+1}$) or they differ by a double Dehn twist, it follows that hypothesis iii) of Lemma \ref{parity} holds.

 We now show that hypothesis ii) also holds.  Suppose $w^2_i\neq w^1_i$.  By Lemma \ref{psid} we can assume that  the difference consists of a 2-disc $F$ and a 2-sphere $S$.  By considering the various components of a $f_p\cap \sigma_j$ we see that each one of types a)-d) contribute an even number of intersections with  $F$.  However, $S$ has a single intersection with each neighborhood of a type d) component.  There are now two cases.  
 
 If some $f\in \mF$ matches $w_i$, then each type d) component belongs to $f$ and hence both intersections contribute 0 mod-2 to $\hat \I(\mF, \mW_2)$.  If no $f\in \mF$ matches $w_i$, then $w_i$ is part of an immersed cycle of the form $w_{q_1}\cap G, f_ {q_1}\cap G, \cdots, w_{q_s}\cap G, f_{q_s}\cap G$, where  $w_{q_1}:=w_i$. Further, both of the type d) intersections come from distinct $f_p, f_p'$ in this cycle and in the IA order either $f_p, f_p'<w^2_i$ or  $f_p, f_p'>w^2_i$. In both cases these contributions to $\hat \I(\mF, \mW_2)$ cancel.  A similar argument proves the case $w^2_{i+1}\neq w^1_{i+1}.$\end{proof}

 \begin{remark} \label{half disc} (Half disc)  The space of half discs which coincide near the round boundary of a fixed embedding is contractible, a fact dating back to Cerf in the 1960's.  This motivates our sixth and final restandardization map. \end{remark}
 
   \begin{figure}[!htbp]
    \centering
    \includegraphics[width=.7\linewidth]{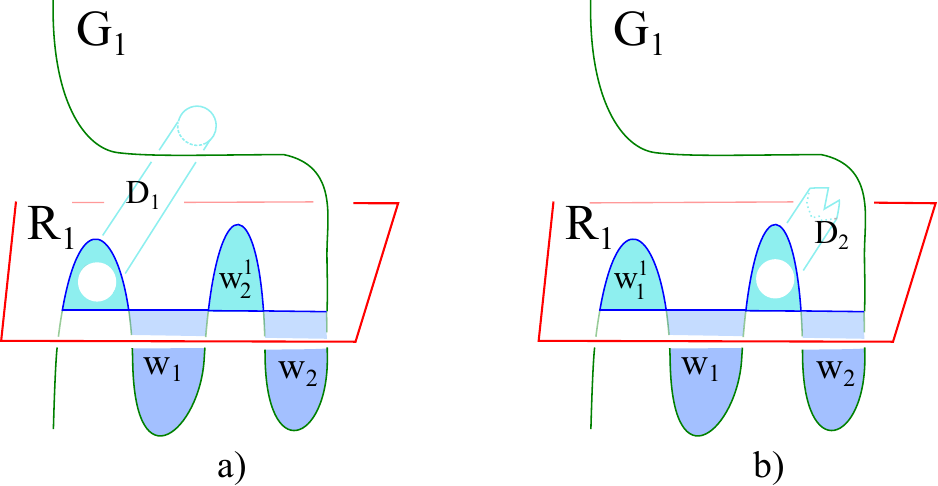}
    \caption{The half disc map}
    \label{fig:half disc}
\end{figure}

\begin{definition} \label{half disc map} (Half disc map)  Let $R\cup G$ be in framed finger form, $D_i$ a disc strongly $H_2$-equivalent to $w^1_i$, the $i$'th standard switch disc and $D_i\cap \mW\subset R\cap G$, where $\mW$ is the standard Whitney system.  Let $\sigma$ be the solid finger containing $w_i$, the $i$'th standard Whitney disc.  A \emph{half disc map} $\psi$ is the time one map of the ambient isotopy that fixes $N(\mG\cup \mW)$ pointwise, is supported near $\sigma\cup D_i\cup w^1_i$ and takes $D_i$ to $w^1_i$  such that $\psi|N(R\cup \sigma)=\id$.  See Figure \ref{fig:half disc} which suggests how the half disc map standardizes $w^1_i$ at the cost of making the standard $w^1_{i+1}$ non standard, where $i=1$.\end{definition}

\begin{lemma}  \label{half disc 1} \emph{(Half disc map invariance)}  If $\mW$ is the standard Whitney system  for $R\cup G$ in framed finger form and $\psi$ is the half disc map taking $D_i$ to $w^1_i$, then for any $\mF$,  $\I(\mF,\mW)=\I(\psi(\mF),\mW)$.\end{lemma}

\begin{proof}  Since $\psi$ fixes $N(R\cup G\cup \mW)$ pointwise, it suffices to show that $\hat I(\psi(\mF),\mW)=\hat I (\mF,\mW)$.  The only points of $\mF$ moved by the ambient isotopy are those that intersect a section of the solid finger and those that intersect $D_i\cup w^1_i$.  Since the ambient isotopy taking $D_i$ to $w^1_i$ essentially corresponds to spinning the $i$'th solid finger $\sigma$ around the immersed sphere $D_i\cup w^1_i$ and $[D_i\cup w^1_i]=0\in H_2(E)$ it follows that the discs of $\mF$ are replaced by strongly $H_2$-equivalent ones.\end{proof}

\begin{lemma} \label{half disc 2} Let $R\cup G$ be in framed finger form with $\mW$ the standard Whitney system and $\mW^1$ the standard $\kappa$-switch.  If $\mW^2$ is a $\kappa$-switch  such that the switch discs $\mW^2_S$ are strongly $H_2$-equivalent to the switch discs $\mW^1_S$, then a composition of half disc maps takes $\mW^2$ to $\mW^1$.\end{lemma}

\begin{proof} Suppose for $i=1,2$, $\mW^i_S=\{w_1^i, \cdots, w_\kappa^i\}$, where $w_i^2=w_i^1$ for $i<j$ for some $j\in \BN$.  Let $\psi$ be the half disc map taking $w_j^2$ to $w_j^1$.  Since $\psi|w_i^2=w_i^1$ for $i\le j$, $\psi(w_i^2)$ is strongly $H_2$-equivalent to $w_i^2$ for $i>j$ and $\psi$ fixes the non switch discs pointwise, the result follows by induction.\end{proof}

\noindent\emph{Proof of Lemma \ref{standardizing}}:  It suffices to consider the case that $R$ and $G$ are in framed finger form and $\mW$ is the standard ordered set of Whitney discs.  Let $\mW_0$ denote the standard $\mW$-framed $k$-switch and $\mW_1$ any $\mW$-framed $k$-switch.  Recall  this means that the switch discs of  $\mW_0$ and $\mW_1$ as finger germed and the non switch discs as Whitney germed.   We will find a sequence $\mW_1, \mW_2, \cdots, \mW_p$ such that $\mW_{i+1}$ is obtained from $\mW_i$ by a basic restandardization map and $\mW_p$ is strongly $H_2$-equivalent to $\mW_0$.  The result then follows by Lemmas \ref{half disc 1} and \ref{half disc 2}.  

To start with, by a sequence of $G$-braidings and twists we can transform $\mW_1$ to $\mW_2$ where $\mW_2\cap G=\mW_0\cap G$.  After a sequence of $R$-braidings applied to $\mW_2$ we  obtain $\mW_3$ such that $\mW_3\cap G=\mW_0\cap G$ and up to Dehn twisting along 2-disc sections of solid fingers, $\mW_3\cap R=\mW_0\cap R$. Using SO(3)-twisting we can assume that if there is Dehn twisting about a given solid finger, then it is a single right hand twist.  There can be no single Dehn twist along $\sigma_1$, else $w^3_1$ would contradict Lemma \ref{even winding} when comparing it with the standard finger disc $f_1$.  By induction, if there is no twisting along $\sigma_1,\cdots, \sigma_p$, then Lemma \ref{even winding} implies that there is no twisting along $\sigma_{p+1}$.  Since $\mW_0, \mW_3$ are untwisted and $\mW$ framed we conclude that  the boundary germs of $\mW_3$ are equal those of $\mW_0$.  

To complete the proof we modify $\mW_3$ by a sequence of spinnings to obtain $\mW_4, \cdots,  \mW_p$ where the latter  is strongly $H_2$-equivalent to $\mW_0$.  We can assume that $k>0$ otherwise the proof is trivial. Recall that   $E'=S^2\times S^2\setminus \inte(N(G\cup R\cup\mW))$.  Since by Lemma \ref{e' homology} $H_2(E')$ is freely generated by the linking spheres $S_1, \cdots S_n$ it follows that $[w^3_1]=[w^0_1]+ \sum_{j=1}^n m_{j}[S_j]\in H^2(E';\partial w^0_1)$.  By applying a suitable sequence of 1,j spinings where $j\ge 2$, we obtain $\mW_4$ which has the same boundary germs as $\mW_0$ and $[w^4_1] =[w^0_1]+m_1[S_1]\in H_2(E;\partial w^0_1)$.  Since $w^0_1$ and $w^4_1$ are  Whitney germed it follows that $m_1=0$.  If $k=1$, the proof is complete.  

Otherwise, as before  $[w^4_2]=[w^0_2]+\sum_{j=1}^n m_j[S_j]$.  By doing $2,j$ spinnings with $j>3$ we obtain $\mW_5$ with the same boundary germs as $\mW_0$ such that $[w^5_1]=[w^0_1]$ and $[w^5_2]=[w^0_2]+m_1[S_1]+m_{2}[S_2]$ and hence $m_1=\langle w^0_1, w^0_2+m_{1}[S_1]+m_{2}[S_2]\rangle=\langle w^5_1, w^5_2\rangle =0$.  Since $w^5_2$ is Whitney germed it now follows that $m_{2}=0$.  This proves the lemma for $k=2$.  The proof now follows by induction on $k$.\qed
 \vskip 10 pt
 
 \noindent\emph{Proof of Lemma \ref{ff independence}}:  Given $(\mF, \mW)$ with $\mW$ ordered and and order induced switchings $\mW_1, \mW_2$ of $\mW$, Lemma \ref{standardizing} shows that $\mW_2$ is obtained from $\mW_1$ by a sequence of basic restandardization maps.  Since $\I(\mF, \mW_1)$ is invariant under these maps the result follows.\qed
 \vskip 8pt

\section{Loops of multi-spheres}

\subsection{Introduction}  The content of \S 2 - \S 3 is that given  a finger/Whitney system $\mfw$ with  $\mW$ ordered, arising an $\alpha \in \Omega(\Emb (S^2,\stwostwo), \Rs)$  we can transform it to $(\mF, \mW')\in \EA$ and then define $\I\mfw=\I(\mF, \mW')$ whose value is independent of all choices made.    In this section, we consider finger/Whitney systems  $\mfw$ with $\mW$ ordered, arising from an $\alpha\in \Omega(\Emb(\sqcup_k S^2, \#_k (S^2 \times S^2)), \mRs)$.  Extending the methods of \S2-\S3 to this setting, we define $\bfI\mfw=(I_1\mfw, \cdots, I_k\mfw)$ and $\I\mfw=\sum_{i=1}^k I_i\mfw$, independent of choices made, by transforming it to an $(\mF, \mW') \in \EA$, where Definition \ref{invariant} applies.  Because $\mfw$ corresponds to a Cerf graphic with $k\ge 1$-eyes, we call this the \emph{multi-eye} case.    

\begin{definition} Given the representative $\alpha:\sqcup_{i=1}^kS^2\times I\to \#_kS^2\times S^2$ with the finger moves occuring before t=1/2 and the Whitney moves after t=1/2, let   $\mR=\alpha(1/2)(S^2_1, \cdots S^2_k)$ with $R_i$ denoting $\alpha(1/2)(S^2_i)$.  Unless said otherwise $\mG$ will denote $\Gs$ with the various components being $G_1, \cdots, G_k$.   View $\#_k \stwostwo$ as $k\ \stwostwo$'s summed to $S^4$ with $(\stwostwo)_i=\stwostwo\setminus \inte(B^4)$ as the $i$'th summand which contains $R_i\cup G_i$.

Finger (resp. Whitney) discs will be denoted $\mF=\{f_1,\cdots, f_n\}$ (resp. $\mW=\{w_1, \cdots, w_n\}$) and we will usually  supress the superscripts used in \S0.  We say that a finger or Whitney disc is a \emph{cross disc} if it cancels points of $R_i\cap G_j$ where $i\neq j$, otherwise call it a \emph{uncross disc}. Let $\mF_c, \mW_c$ (resp. $\mF_u, \mW_u$) denote the cross (resp. uncross) discs of $\mF, \mW$ and  let $\mF_{ij}=\{f\in \mF|\partial f\subset R_i\cap G_j\}$ and $\mW_{ij}=\{w\in \mW|\partial w\subset R_i\cap G_j\}$.  Elements of $\mF_{ij}$ (resp. $\mW_{ij})$ will often be denoted by $f_{ij}$ (resp. $w_{ij}$).  We will abuse notation by sometimes letting $\mF_{ij}, \mW_{ij}, \mF$, or $\mW$ sometimes denote the union of its elements, the distinction being clear by context.   Elements of $\mF_{ii}$ or $\mW_{ii}$ are often called \emph{i-discs}.  In a similar manner we have the notions of \emph{cross} and {uncross solid fingers}.

 By an \emph{ordering} on $\mW$ we mean a partial order that  totally orders each $\mW_{ii}$ and if  $w, w'\in \mW$ are related, then both are $i$-discs for some $i$.  In particular if $(\mF, \mW) \in IA$, then there is an induced IA-ordering on both $\mW$ and  $\mfii\cup\mwii$ for all $i$.

\end{definition}

\subsection{Immersed arc position}  In this subsection we indicate how to extend the material of \S \ref{immersed arc} to deal with multiple eyes.  As a warmup we have:  

\begin{lemma} \label{boundary clearing}  If $(\mF, \mW)\in \EA$, then by appropriately disc sliding cross discs over uncross discs we obtain $(\mF', \mW')\in \EA$ such that 
$(\partial \mF'_c\cup\partial \mW'_c) \cap (\partial \mF'_u\cup\partial\mW'_u)=\emptyset$ and $\bfI(\mF,\mW)=\bfI(\mF', \mW')$.\end{lemma}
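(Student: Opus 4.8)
The statement is a ``warmup'' cleanup lemma: starting from $(\mF,\mW)\in\EA$ we want to separate the boundary germs of the cross discs from those of the uncross discs, at no cost to the invariant. The plan is to run an induction over the cross discs, one at a time, pushing each cross disc's boundary off all uncross disc boundaries by a sequence of disc slides of cross discs over uncross discs, staying inside $\EA$ throughout, and then invoke the invariance results of \S\ref{immersed arc} (specifically Lemma \ref{handle slide is homotopic}, Lemma \ref{hsf independence}, and Lemma \ref{no new ints}) to see that $\bfI$ is unchanged.

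First I would record the local picture. Since $(\mF,\mW)\in\EA$, for each $i$ the arcs $(\mF^i\cup\mW^i)\cap R_i$ and $(\mF^i\cup\mW^i)\cap G_i$ are embedded, and the only intersections of $\mF\cup\mW$ with the surfaces $\mR_{1/2}\cup\mGs$ occur at points of $\mR_{1/2}\cap\mGs$. A cross disc $d$ has boundary germs on some $R_i\cup G_j$ with $i\neq j$; an uncross disc $d'$ has boundary germs on some $R_\ell\cup G_\ell$. The relevant potential intersections of $\partial d$ with $\partial d'$ therefore occur only inside the common surface pieces, i.e. along $G_j$ when $\ell=j$ (for a $G$-side collision) or along $R_i$ when $\ell=i$ (for an $R$-side collision); on $G_j$ these are finitely many points of $(\partial d)\cap(\partial d')$, each of which can be removed by a $G$-disc slide of $d$ over $d'$ along a short arc in $G_j$, using the swinging move of Figure \ref{fig:iaea} exactly as in the proof of Lemma \ref{ia to ea} — this trades one boundary intersection point for boundary bigons that are then absorbed by reuntwisting, and creates no new intersections with $R$. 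Symmetrically, $R$-side collisions are cleared by $R$-disc slides. I would order the operations the same way Lemma \ref{ia to ea} orders them: first clear all $G$-side collisions of $\partial\mF_c\cup\partial\mW_c$ with $\partial\mF_u\cup\partial\mW_u$ by $G$-disc slides of cross discs over uncross discs (never the reverse, so uncross disc boundaries stay put), then clear all $R$-side collisions by $R$-disc slides of cross discs over uncross discs. Because a cross disc is slid only over uncross discs, and the uncross disc boundaries are never moved, the process terminates; after it, $(\partial\mF_c'\cup\partial\mW_c')\cap(\partial\mF_u'\cup\partial\mW_u')=\emptyset$, and since the disc slides are supported near $\mR_{1/2}\cup\mGs$ away from the other surface, the result is still in $\EA$.

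For the invariance $\bfI(\mF,\mW)=\bfI(\mF',\mW')$: each component $\bfI_i$ of $\bfI$ counts only $|\inte(f^i_p)\cap\inte(w^i_q)|$ among $i$-discs with $p\le q$ in the IA-ordering. A single $G$-disc slide (or $R$-disc slide) of a cross disc $d$ over an uncross disc $d'$ changes only the disc $d$ and, as in Lemma \ref{no new ints} / Remark \ref{new ints}, introduces new interior intersections only with discs met by the slid disc $d'$ — and $d'$ being an uncross disc, none of those are $i$-discs for the index attached to $d$. More uniformly, since $(\mF,\mW)\in\EA$, Lemma \ref{no new ints} applies to each G-disc slide and R-twist in the sequence and shows all the $|\inte(f^i_p)\cap\inte(w^i_q)|$ are preserved mod $2$; hence each $\bfI_i$ is preserved, so $\bfI(\mF,\mW)=\bfI(\mF',\mW')$. (That $(\mF',\mW')$ is still a valid finger/Whitney system representing the same homotopy class follows from Lemma \ref{handle slide is homotopic}, though only the invariance of $\bfI$ is asserted here.)

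\textbf{Main obstacle.} The genuinely delicate point is bookkeeping of the boundary bigons and reuntwistings created when swinging a cross disc's $G$-arc over the $2$-sphere $G_j$ past an uncross disc (cf. Figure \ref{fig:iaea} b,c): one must check these can always be absorbed without re-creating a cross/uncross boundary collision elsewhere, and without disturbing the embedded-arc condition on the $i$-side arcs for the uncross discs — i.e. that the induction really makes monotone progress. This is exactly parallel to the corresponding point in Lemma \ref{ia to ea}, so I expect it to go through verbatim after noting that the ``swing'' of a cross disc's arc can be taken in a collar of $G_j$ disjoint from $\mR_{1/2}$ and from all $R$-side data, keeping the whole configuration in $\EA$ at every stage; but it is the step that needs care to state cleanly.
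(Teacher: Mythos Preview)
Your approach is essentially the same as the paper's --- slide cross discs over uncross discs until their boundaries are disjoint --- and it is correct. Two comments on efficiency:

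First, your invariance argument is over-engineered. You invoke Lemma~\ref{no new ints} to control $|\inte(f^i_p)\cap\inte(w^i_q)|$ mod $2$, but none of that is needed: only cross discs are being slid, so the uncross discs satisfy $\mF'_u=\mF_u$ and $\mW'_u=\mW_u$ \emph{literally}, not just mod $2$. Since $\bfI$ is computed entirely from the uncross discs, $\bfI(\mF,\mW)=\bfI(\mF',\mW')$ is immediate. (Your sentence ``$d'$ being an uncross disc, none of those are $i$-discs for the index attached to $d$'' is confused --- a cross disc $d$ has no index attached to it --- but the conclusion you want follows from this simpler observation anyway.)

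Second, the paper organizes the slides more cleanly by exploiting the EA order on the uncross $i$-discs $f_1,w_1,\ldots,f_p,w_p$ along $G_i$: a cross finger disc meeting $w_k\cap G_i$ is cleared by sliding it over $f_1,\ldots,f_k$, and a cross Whitney disc meeting $f_k\cap G_i$ by sliding it over $w_p,\ldots,w_k$. This avoids the ``swinging'' mechanism of Figure~\ref{fig:iaea} (which in Lemma~\ref{ia to ea} moves a disc over \emph{all} other discs in its system) and sidesteps the boundary-bigon/reuntwisting bookkeeping you flag as the main obstacle --- that obstacle does not actually arise with this more direct scheme.
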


\begin{proof}  Suppose $f_1, w_1,  f_2, w_2, \cdots,  f_p, w_p$  comprise the  1-discs such that their union intersects $G_1$ in an embedded arc with the indicated ordering.  Then any $f\in\mF_c$ with $f\cap w_1\cap G_1\neq\emptyset$ can be removed by sliding over $f_1$.  Then all $f\in \mF_c$ such that $f\cap w_2\cap G_1\neq\emptyset$ can be removed by sliding over both $f_1$ and $f_2$. Here we  abuse notation by continuing to call $\mF$ the  image of $\mF$ after sliding.  Inductively all the finger cross discs are disc slid off of $w_1\cap G_1,  \cdots, w_p\cap G_1$.  In a similar manner the Whitney cross discs are disc slid off of $f_p\cap G_1, f_{p-1}\cap G_1, \cdots, f_1\cap G_1$.    Inductively, for all $p$, slide the cross discs off of $(\mF_{pp}\cup\mW_{pp})\cap R_p$ and $(\mF_{pp}\cup\mW_{pp})\cap G_p$.  Since $\mF_u=\mF'_u$ and $\mW_u=\mW'_u$ we have   $\bfI(\mF,\mW)=\bfI(\mF', \mW')$.   \end{proof}

\noindent\emph{Comments on Lemma \ref{ia to ea}}:  The proof is essentially the same in the multi-eye case.  For example, consider   the immersed arc in $G_1$ coming from the 1-discs $f_1, w_1, \cdots, f_p, w_p$.  If $\inte(f_1\cap G_1)$ intersects $\mW$, then remove the first point of intersection as before.  I.e. if this point lies in $w$, then suitably slide $w$ over \emph{all} the other $w'\in \mW$ which intersect $G_1$ and then do an $R$-twist to eliminate boundary bigons.  Note that the  final $(\mF',\mW')\in \EA $ and $(\partial\mF'_c\cup\partial\mW'_c)\cap (\mF'_u\cup\mW'_u)=\emptyset$.  

\vskip 8pt

We restate Lemma \ref{no new ints} in the multi-eye form.  \vskip 8pt
\noindent\textbf{Multi-Eye Lemma \ref{no new ints}.}  \emph{Let $\mF,\mW$ be untwisted sets  of discs  such that for all i, $(\mfii\cup\mwii)\cap R_i$ is the embedded union of circles and possibly an arc.  Let $(\mF',\mW')$ be obtained from $(\mF,\mW)$ by either an untwisted  G-disc slide or  an R-twist supported near $\mR$, then for all $i$, $p,q$ if $f_p$ and $w_q$ are $i$-discs then,\ $|\inte(f_p)\cap\inte(w_q)|=|\inte(f_p')\cap\inte(w_q')|$ mod-2.  A similar statement holds with $R$ and $G$ reversed.}  
\vskip 8pt

\begin{proof} If the G-disc slide  is of an uncross disc $D$ over an uncross disc $E$, then the argument is as before.  If $D$ is an $i$-disc but $E$ is a cross disc that intersects $G_i$ and $R_j$, then after the slide $\inte(D)$ will pick up new intersections from these boundary intersections, but none of those are with $i$-discs, for suppose $F\cap\partial (E\cap R_j)\neq\emptyset$, then $F$ is either a cross disc or a $j$-disc, where $j\neq i$. The proof is immediate for cross discs sliding over cross discs and for R-twisting.  \end{proof}

\noindent\emph{Comments on Proposition \ref{hsf independence}}:  Lemmas  \ref{clifford homology}, \ref{clifford persists}, \ref{twist hsf}, \ref{clifford commutes} and \ref{clifford commutes two} are proved as before.  With the multi-eye Lemma \ref{no new ints} in place, the proof is as before.
\vskip 8pt
We restate Definition \ref{hat i} for multi-eyes.  Multi-eye versions of Lemmas \ref{reverse} and \ref{hati half ea} hold and are essentially proved as before.
\vskip 8pt
\noindent\textbf{Multi-Eye Definition \ref{hat i}}:  We say that the untwisted $(\mF,\mW)\in$ \emph{R-EA} (resp. $\in$ \emph{G-EA}) if for all $i$, $(\mfii\cup\mwii)\cap R_i$ (resp.  $(\mfii\cup\mwii)\cap G_i))$ is an embedded arc.  If either hold, then we say that $(\mF,\mW)\in$ \emph{Half-EA}.  Let $(\mF,\mW)\in IA$ be untwisted but not necessarily normalized such that $\mF$ is finger germed  and $\mW$ is Whitney germed or vice versa, then with the IA ordering we define  $$\hat I_p(\mF,\mW)=\sum_{i\le j} |\inte(f_i)\cap \inte(w_j)|\ \textrm{mod}\ 2 \in \BZ_2,\ \textrm{where}\ f_i\in \mF_{pp}\ \textrm{and}\ w_j\in \mW_{pp}.$$

\subsection{Getting to immersed arc position}  The main result of \S3 is Proposition \ref{switch invariance} which we now extend to the multi-eye setting.  The crucial observation is that passing from one eye to many is a mild form of stabilization allowing for little extra restandardization flexibility.  An intuition is that the affect on the closed $\mR\cup\mG$ complement of adding a trivial eye, i.e. $R_{k+1}=\Rs_{k+1}$ is to remove an open 4-ball.  In contrast, changing the ambient manifold by summing with an $S^2\times S^2$ without having a new $R_i$ and $G_i$ allows for a fundamentally new restandardization move, which can change the invariant.  
\vskip 8pt

\noindent\textbf{Multi-eye Parity Lemma \ref{parity}:}  Let $(\mF_1, \mW_1)$, $(\mF_2, \mW_2)$ be $k$-eyed finger/Whitney systems.   For $r=1,2$ let $\mF_{r_{ii}}=\{f^i_{r,1}, \cdots, f^i_{r,{n_i}}\}, \mW_{r_{ii}}=\{w^i_{r,1}, \cdots, w^i_{r,{n_i}}\}$.
\vskip 2pt  Assume that 
\vskip 8pt
i) For $i=1, \cdots, k$, $\mF_{1_{ii}}, \mF_{2_{ii}}$ are similarly matched as are $\mW_{1_{ii}}, \mW_{2_{ii}}$ with $f^i_{1,j}$ (resp. $w^i_{1,j}$) matching $f^i_{2,j}$ (resp. $w^i_{2,j}$).  Furthermore, $\mF_1, \mF_2, \mW_1, \mW_2$ are untwisted, though not necessarily normalized, such that both $\mF_1, \mF_2$ are finger germed and $\mW_1, \mW_2$ are Whitney germed or vice versa.  
\vskip 8pt
ii) $(\mF_1, \mW_1), (\mF_2, \mW_2)\in IA$, 
\vskip 8pt
iii) For each $1\le i\le k$, $p, q,\  |f^i_{1,p}\cap w^i_{1,q}\cap R_i|-|f^i_{2,p}\cap w^i_{2,q}\cap R_i|=|f^i_{1,p}\cap w^i_{1,q}\cap G_i|-|f^i_{2,p}\cap w^i_{2,q}\cap G_i|=0 \ \textrm{mod}\ 2$, 
\vskip 8pt
then $I_i(\mF_1, \mW_1)=I_i(\mF_2, \mW_2)$ if and only if $\hat I_i(\mF_1, \mW_1)=\hat I_i(\mF_2, \mW_2).$ 

\begin{remark} The existence of cross discs in $(\mF_1, \mW_1)$ and or $(\mF_2,\mW_2)$ do not affect  the hypotheses or conclusion.\end{remark}

\begin{proof} As shown in the proof of the Multi-eye Lemma \ref{no new ints}, disc sliding an i-disc over a cross disc of $(\mF_r, \mW_r)$ does not change  $\hat \bfI(\mF_1, \mW_1)$, nor does sliding a cross disc over an $i$-disc or cross disc.  The proof essentially now follows as before.  \end{proof}

\begin{definition} \label{multi switching} (Multi-Eye Switching)  Let $\mW=\{w_1, \cdots, w_n\}$ be a complete untwisted system of Whitney discs with $\mW_{pp}=\{w^p_1,\cdots, w^p_{n_p}\}$.  We say that the complete system $\mW^*=\{w_1^*, \cdots, w_n^*\}$ with $\mW^{*}_{pp}=\{w^{p*}_1,\cdots, w^{p*}_{n_p}\}$ is a \emph{\textbf{k}-switching}, with $\textbf{k}=\{k_1, \cdots, k_k\}$ if

\vskip 8pt

a) $a^p_{2i-1}\in w^p_i\cap w_i^{p*}$, where $R_p\cap G_p=\{a^p_0, a^p_1, \cdots, a^p_{2m_p}\}$ such that $\langle R_p,G_p\rangle=+1$ at $a^p_0, a^p_2, \cdots, a^p_{2n}$ and $w^p_i\cap R_p\cap G_p=a^p_{2i-1}\cup a^p_{2i}$.
\vskip 8pt
b) Up to isotopy $w_i^{p*}=w^p_i$ except for exactly $k_p\ w^p_i$'s.  The $w_j^{q*}$'s with $w^q_j\neq w_j^{q*}$, $1\le q\le k$ are called the \emph{switch discs} and their union is denoted $\mW^*_S$.  The others, $\mW^*\setminus \mW^*_S$ are called the \emph{non switch discs}.
\vskip 8pt
c) $\mW^*_S$, $\mW$ can be respectively isotoped to be finger germed and Whitney germed such that $\mW^*_S\cap \mW\subset \mR\cap \mG$ and $(\mW^*_S\cup \mW)\cap \mG$ contains no cycles.
\vskip 8pt
d) The cross discs of $\mW^*$ are equal to those of $\mW$.
\vskip 8pt
Call $\textbf{k}$  the \emph{switch index}.  We say that  $\mW^*$ is \emph{$\mW$-framed} if $\mW$ is Whitney germed and $\mW^*_S$ satisfies c) without the need to isotope $\mW$.  Given $(\mF,\mW)$, then $\mW$ is a \emph{minimal switching} if $(\mF, \mW^*)\in$ IA and $k_i $ equals the number of immersed cycles in $(\mF_{ii}\cup\mW_{ii})\cap G_i$.  A switching $\mW^*$ is a \emph{full switching} if $\mW^*\cap\mW=\mW_c$ and a \emph{full set of switch discs} are the switch discs of a full switching.  Given $(\mF, \mW)$, by viewing $\mF$ as a complete set of ordered Whitney discs we define in the similar way a $\textbf{k}$-\emph{switching} $\mF^*$ of $\mF$.  \end{definition}

\vskip 8pt

\noindent\emph{Comments on Lemma \ref{ff to ia} and Definition \ref{order induced}}:  Given $\mfw$ with an ordering on $\mW$, each $\mW_{ii}$ can be reordered as in Lemma \ref{ff to ia} and a switching constructed as in its proof.  Such a switching is called an \emph{order induced switching}.
\vskip 8pt

\noindent\emph{Comments on Convention \ref{minimal switch}}: The analogous Convention holds in the multi-eye setting.  In particular, unless otherwise said, given $(\mF, \mW)$ with $\mW$ ordered all switchings will be order induced.

\begin{definition} \label{multi framed finger}  (Multi-eye framed finger form) Figure \ref{fig:multi framed} shows the multi-eye analogue of  Figure \ref{fig:framed finger} b), which we   call \emph{$\mG$-framed finger form}.  There is the corresponding analogue of  the Figure \ref{fig:framed finger} a) version.  When $\mW_{ji}\neq\emptyset$, $R_j$ sends a single finger into the $i$'th $\stwostwo$ factor and from this finger emanates $|\mW_{ji}|$ fingers poking into $G_i$.  Figure \ref{fig:multi framed} shows the \emph{standard finger $i$-discs} and \emph{standard Whitney $i$-discs} and the \emph{standard Whitney cross-discs} which intersect the $i$'th $\stwostwo$ summand.  Framed finger form induces the \emph{standard ordering} on the Whitney discs.  The analogous version where $\mR=\mR^{std}$  and the fingers start from $\mG^{std}$ is called \emph{$\mR$-framed finger form}.\end{definition}

\begin{remark}  The unframed version was introduced in \cite{Ga3} and called \emph{arm hand finger form}.  The \emph{arm} is the finger from say $R_j$ into the $i$'th $\stwostwo$ factor.  The hand is the end of that finger together with the fingers poking into $G_i$.\end{remark}  

We have the multi-eye analogue of Lemma \ref{trivial w} with essentially the same proof.

\begin{lemma} \label{multi trivial w} Let $\mW$ be a complete ordered system of Whitney discs, where $\mG=\mGs$.  Then there exists an ambient isotopy fixing $\mG$ setwise whose time 1 map $\kappa$ has the property that $\mG$ and $\kappa(\mR)$ are in $\mG$-framed finger form with $\kappa(\mW)$ the standard ordered Whitney discs.  Furthermore, the ordering on $\mW$ equals the induced ordering on  $\kappa(\mW)$.
 
 Similarly if $\mF$ is a complete ordered system, then there exists an ambient isotopy fixing $\mG$ setwise whose time 1 map $\kappa$ has the property that $\mG$ and $\kappa(\mR)$ are in $\mG$-framed finger form with $\kappa(\mF)$ the standard finger discs.  Here, the ordering on $\mF$ equals the induced ordering on $\kappa(\mF)$.  
 
 Corresponding results hold for $\mR$-framed finger form.\qed\end{lemma}

   \begin{figure}[!htbp]
    \centering
    \includegraphics[width=0.5\linewidth]{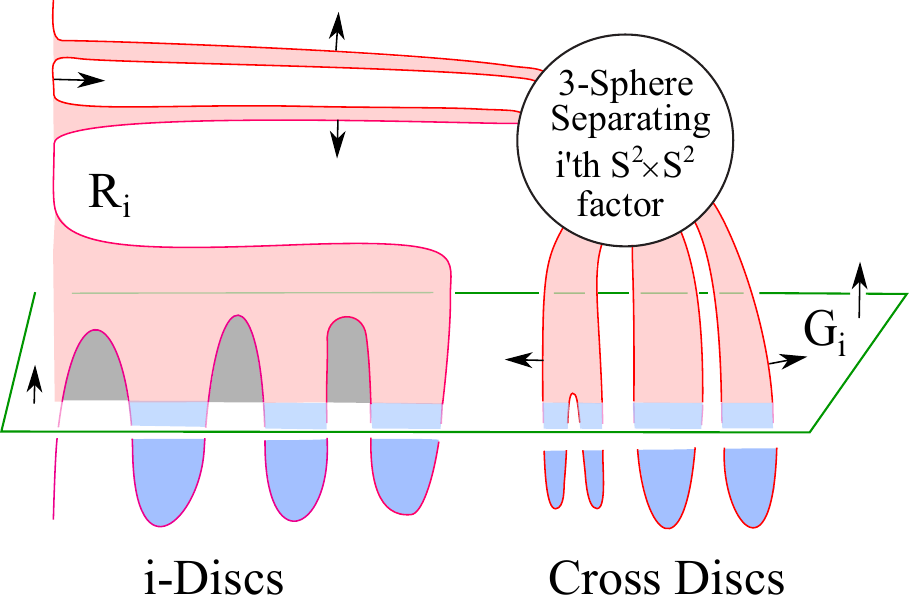}
    \caption{Multi-Eye Framed Finger Form}
    \label{fig:multi framed}
\end{figure}

Allowing for multi-components and multi-indices the material from Lemma \ref{trivial w} through Definition \ref{restandardization} extends to the multi-eye setting.
\vskip 8pt

\noindent\emph{Comments on Proposition \ref{switch invariance}}:  The structure of the proof is as before, as the six restandardization maps suitably extended suffice.  For G-finger twisting we have to account for twisting cross fingers, for G-braiding we have to account for braiding of and around cross fingers and the same for R-braiding.  We need only additionally deal with cross fingers for spinning.  The SO(3) map is as before.  Crucially, Definition \ref{solid finger} and Lemma \ref{f in solid} naturally extend to the multi-eye setting and since $w_2(\#_k\stwostwo)=0$, Lemma \ref{even winding} holds.

\begin{corollary}\label{i boundary twisting} (Multi-eye Corollary \ref{i-invariance boundary twisting}) If $\mW$ is ordered and $(\mF', \mW')$ is obtained from $\mfw$ by boundary twisting then $\I(\mF', \mW')=\I\mfw$.\qed\end{corollary}

  We now comment on the individual restandardization moves.

\vskip 8pt

\noindent\emph{Comments on Lemma \ref{twist invariance}}:  Let $\mW_1$ denote a \textbf{k}-switch of $\mW$.  If $w\in \mW_c$, then a $G$-finger twisting near $w$ acts trivially on $\mW_1$.  If $w\in \mW_{ii}$, then only the $i$-discs of $\mW_1$ are affected and the proof that $\bfI(\mF,\mW_1)=\bfI(\mF, \psi(\mW_1))$ is as before.  Note that Whitney cross discs do not get in the way in the proof of Case 2).  Indeed, since $G_i\setminus \mW_1$ and $R_i\setminus \mW_1$ are connected there exist paths enabling the disc slides of Case 2) and the choice of paths are immaterial for establishing iii) of Lemma \ref{parity}.  Further, using Lemma \ref{boundary compression}  the choices are immaterial for showing $\bfI(\mF, \mW_1)=\bfI(\mF, \mW_3)$.  

\vskip 8pt

\noindent\emph{Comments on Lemma \ref{gbraiding invariance}}:  Here a G-braiding can occur on any $G_i\in \mG$.  Let $w_p$, $w_q$, be two Whitney discs that intersect $G_i$ and $\psi$ denote the time one map sending the first around the second.  If both $w_p$ and $w_q$ are $i$-discs, then the argument follows as before.  If say $w_p$ is an $i$-disc and $w_q$ is a cross disc, or neither is an $i$-disc, then the argument of Case 3 applies. 

\vskip 8pt
\noindent\emph{Comments on Lemma \ref{rbraiding invariance}}:  R-braiding is supported in a small neighborhood of R and essentially isomorphic to G-braiding.   Therefore the proof follows as in Lemma \ref{gbraiding invariance}, modified according to the above comments.
\vskip 8pt

\noindent\textbf{Multi-Eye Lemma \ref{e' homology}}:    \emph{Let $\mG\cup \mR$ be in framed finger form where $k=|\mG|=|\mR|$.  Let $E'=\#_k S^2\times S^2\setminus \inte(N(\mG\cup \mR\cup\mW))$, where $\mW$ is the standard set of Whitney discs.  Let $S_{p,q}$ denote a 2-sphere which links the $q$'th solid p-finger and let $U_1, \cdots, U_m$ denote 2-spheres that link the solid cross fingers.  Then $H_2(E')$ is generated by the  $[S_{p,q}]$'s and $[U_i]$'s.  There is a subset $\mU$ of the $U_i$'s such that the $[S_{p,q}]$ and $\{[U]\in \mU\}$ freely generate}. 

\begin{proof}  This follows from the fact that $E'$ is diffeomorphic to the connected sum of $k$ 4-balls with neighborhoods of finitely many arcs removed and there is a natural 1-1 correspondence between the arcs and the fingers.\end{proof}

\noindent\emph{Comments on Definition \ref{ij spinning} and Lemma \ref{ij spinning homology}}:  Here any solid finger $f$ can spin about another solid finger $f'$.  If $w^1$ is a switch disc with boundary on $f$ and $w^2$ is the result of the spinning, then $w^2$ has the same boundary germ as $w^1$ and $[w^2]=[w^1] \pm [S]$ where $[S]$ is a 2-sphere linking the solid finger $f'$.

\vskip 8pt
\noindent\emph{Comments on Lemma \ref{spin invariance}}:  With notation as in the previous comment, if both $f$ and $f'$ are i-fingers, then the proof is  as before.  In all cases it suffices to see how adding $S$ to $w^1$ changes $\hat I$.  If $f$ is an $i$-solid finger and $f' $ is a $j$-solid finger, $i\neq j$, then both the type d) intersections arise from $j$-finger discs hence do not contribute to $\hat I$, since $w^2$ is an $i$-Whitney disc.  If $f'$ is a cross finger, then the associated type d) intersections come from cross finger discs and again do not contribute.  If $w^1$ has boundary on the j-finger $f'$, $f$ is an i-finger $i\neq j$ and $w^2$ is the result of the spinning, then $w^2$ has the same boundary germ as $w^1$ and $[w^2]=[w^1] \pm [S]$ where $[S]$ is a 2-sphere linking the solid finger $f$ and the argument follows as before.   
\vskip 8pt

\noindent\emph{Comments on Lemma \ref{so3 twist invariance}};  If the SO(3) twist is about an uncrossed finger, then the proof is as before.  If it is about a crossed finger with Whitney disc $w$, then $w$ is not a switch disc and hence $\psi$ fixes $\mW_1$ pointwise and hence $(\mF, \mW_1)=(\mF,\mW_2)$, so $\bfI(\mF, \mW_1)=\bfI(\mF,\mW_2)$.
\vskip 8pt 

\noindent\emph{Comments on the proof of Lemma \ref{standardizing}}:  With notation as in the first paragraph of that proof and following the same strategy, we obtain $\mW_3$ whose  boundary germs agree with the standard $\textbf{n}$-switch $\mW_0$, $I(\mF, \mW_3)=I(\mF, \mW_0)$ and  the unswitch discs are unchanged.  In particular, $\mW_{3c}=\mW_{0c}:=\mW_c$.  By doing spinnings, which neither changes I nor the unswitch discs,  we  obtain $\mW_4$ such that the elements of $\mW_{4_S}$ are strongly  $H_2$-equivalent to those of $\mW_{0_S}$.  The proof is then completed by the Multi-Eye versions of Lemmas \ref{half disc 1} and \ref{half disc 2}. 
\vskip 8pt

For later reference we record this as:

\begin{proposition}  \label{multi switch invariance} If $(\mF, \mW)$ is a finger/Whitney system and $\mW$ is ordered, then $I(\mF,\mW)$ is well defined.\qed\end{proposition}

\section{The Cross Term Invariant} \label{cross term section}

In this section we introduce the cross term invariant $C\mfw=c\mfw+CU\mfw$ where unlike the setting of the introduction, $\mfw$ may not be in full embedded arc position, hence the need for extra terms to account for the difference.  We also introduce the special sum square move, a specialization of Quinn's sum square move \cite{Qu} and show invariance of $D\mfw=\I\mfw+C\mfw$ under cross special sum square moves.

\begin{lemma}\label{dual sphere}\emph{(Dual sphere)}  Given the $k$-eyed finger/Whitney system $(\mF, \mW)$ and full switches $\mF^*$ of $\mF$ and $\mW^*$ of $\mW$ there exist pairwise disjoint oriented embedded spheres $R_1^*, \cdots, R_k^*$, $G_1^*, \cdots, G_k^*$ with  $|R_i^*\cap R_j|=\delta_{ij}$, $|G_i^*\cap G_j|=\delta_{ij}$ and for all $i,j,\ R_i^*\cap G_j=\emptyset, G_i^*\cap R_j=\emptyset$   such that $\cup_{p=1}^k R_p^*\cap (\mF\cup\mF^*_S)=\emptyset$ and $\cup_{p=1}^k G_p^*\cap (\mW\cup\mW^*_S)=\emptyset$.  Here $\mW^*_S$ (resp. $\mF^*_S$) are the finger germed (resp. Whitney germed) switch discs. \end{lemma}

\begin{proof}  The construction of the $G_p^*$'s is immediate if $\mR\cup\mG$ is in $\mG$-framed finger form with $\mW\cup\mW^*_S$ standard.  Since an ambient isotopy transforms $\mW \cup \mW^*_S$ to such a position the result follows.  A similar argument shows the existence of the $R_p^*$'s.  \end{proof}

\begin{definition}\label{dual system}  Call $\mR^*:=\{R_i^*\}$ (resp. $\mG^*:=\{G_j^*\})$ a $\mR_{\mF}$-dual (resp. $\mG_{\mW}$-dual) system if it satisfies the conclusions of Lemma \ref{dual sphere} for some full switching of $\mF$ (resp. $\mW$).  If $\mW$ is ordered and we require that the switch system $\mW^*$ be order induced, then we call this an \emph{order induced  $\mG_{\mW}$-dual system}.  If $\mW^*$ is specified, then we call this an \emph{ $\mG_{\mW, \mW^*}$-dual system}.   \end{definition}

\begin{remark} \label{dual remark} The lemma was stated in a form to minimize future notation.  The same argument shows that shows that there are systems $\mG_{\mW}, \mR_{\mW}$, where $\mG_{\mW}$ is as above and $\mR_{\mW}:=\cup R_{i,\mW}$ posseses similar properties except that it is a dual system for $\mR$ and  $\mR_{\mW}\cap (\mW\cup\mW^*_S)=\emptyset$.  Similarly, there is a dual system $\mG_{\mR}$ for $\mR$.  \end{remark}

\begin{remarks}  \label{homology dual spheres} i) These $R_p^*$'s  depend on how $\mF$  was put into framed finger form and the representative dual spheres for a given framed finger form.  However, since  $(R_1, G_1), \cdots, (R_k, G_k)$ are algebraically dual spheres that generate $H_2(\#_k\stwostwo)$ it follows that for each $p$, $[R_p^*]=[G_p]\in H_2(\#_k\stwostwo)$, $[G_p^*]=[R_p]\in H_2(\#_k\stwostwo)$ and hence $\langle R_i^*, G^*_j\rangle=\delta_{ij}$.

ii) The lemma does not assume an ordering on $\mW$ or $\mF$, however that is immaterial by the following.  \end{remarks}

\begin{lemma} \label{common switch dual} i)  Let $\mW$ be a complete set of Whitney discs with orderings $\mW_1$ and $\mW_2$.  Then there exists a common dual system $\mG^*$ for $\mW_1$ and $\mW_2$ with respect to some full switchings $\mW_1', \mW_2'$ of $\mW_1$ and $\mW_2$, i.e. $\mG^*\cap (\mW_1\cup\mW_1')=\mG^*\cap (\mW_2\cup \mW_2')=\emptyset$.

ii) Let $\mW$ be a complete set of ordered Whitney discs.  Let $\mW' $ be a switching of $\mW$, then there exists a common dual system $\mG^*$ for both $\mW$ and $\mW'$. In particular, this is true if $\mW'$ is a full switching.  \end{lemma}

\begin{proof}  i) Put $\mR\cup \mG$ in framed finger form respecting the ordering of the uncross discs of  $\mW_1$  and let $\mW_1'$ the standard full switch with $\mG^*$ the standard dual spheres.  There is an ambient isotopy taking $\mW_2$ to $\mW_1$ supported away from $\mG^*$ and the cross discs.  If $\tau_1$ is the time one map, then $\mW_2':=\tau_1(\mW_1')$ is a full switch of $\mW_2$ and $\mG^*\cap (\mW_2\cup \mW_2')=\emptyset$.

ii) If $\mW'$ is a full switch of $\mW$ and $\mG^*$ is the corresponding $\mG_{\mW}$-dual system, then since $\mW$ is a full switch of $\mW'$, $\mG^*$ is also a $\mG_{\mW'}$-dual system.  In general, if $\mW'_S$ are the switch discs of $\mW'$, then there is a full switch system $\mW''$ for $\mW$ such that $\mW'_S\subset \mW''_S$.  \end{proof}

\begin{definition} Given $(\mF, \mW)$ let  $\mF$ be finger germed,  $\mW$ Whitney germed and $\partial\mF$ being transverse to $\partial \mW$. 
Let $\infty_j\in G_j\setminus (\mF\cup\mW)$ (resp. $\infty_i\in R_i\setminus (\mF\cup\mW)$) denote $G^*_j\cap G_j$ (resp. $R^*_i\cap R_i$).  For $i\neq j$ 2-color $G_j$ with respect to the immersed curves $(\mF_{ij}\cup\mW_{ij})\cap G_j$ letting $B^G_{ij}\cap G_j$ denote the compact region whose mod-2 winding number about $\infty_j$ equals 0.   Let $W_{ij}^G$ denote the closed complementary region.  Here $B^G_{ij}$ and $ W^G_{ij}$ will be respectively called \emph{black} and \emph{white} regions.  
  In a similar manner 2-color $R_i$ with respect to $(\mF_{ij}\cup\mW_{ij})\cap R_i$ by $B_{ij}^R, W_{ij}^R$.
  
\vskip 8pt
Define $$\hat \mF_{ij}=\mF_{ij}\cup \bigcup_{p\neq i} |B_{ij}^G\cap R_p| R_p^* \quad \textrm{and}\quad \hat \mW_{ij}=\mW_{ij}\cup \bigcup_{p\neq j} |B_{ij}^R\cap G_p| G_p^*,$$

$$A_{ij}=|B^G_{ij}\cap B^R_{ji}|,\quad c_{ij}(\mF, \mW)=(|\hat\mF_{ij}\cap \hat\mW_{ji}|+|A_{ij}|)\mod 2$$

$$\ \textrm{and}\quad
c(\mF, \mW)=\sum_{j<i} c_{ij}(\mF, \mW)\ \textrm{mod}\ 2.$$

Call $\hat \mF_{ij}\setminus \mF_{ij}$ (resp. $\hat\mW_{ji}\setminus \mW_{ji}$)  \emph{extra spheres}.\end{definition}

\begin{remark} $A_{ij}$ is the number of points of $R_j\cap G_j$ common to both $B^G_{ij}$ and $B^R_{ji}$.  We may use notations such as $B_{ij}^G\mfw$ or $ |B_{ij}^G\cap R_p|\mfw$ when it needs to be clear what finger/Whitney system is being used.   \end{remark}

\begin{lemma} \label{c boundary twisting}  $c\mfw$ is invariant under boundary twisting.\end{lemma}

\begin{proof}  Boundary twisting does not effect the dual sphere, the values $|B_{ij}^G\cap R_p|$, $|B_{ij}^R\cap G_p|$ and hence $|A_{ij}|$.  If $f\in \mF_{ij}$ and $w\in \mW_{ji}$, then $\partial f\cap\partial w=\emptyset$ and hence boundary twisting does not effect $|\hat\mF_{ij}\cap \hat\mW_{ji}|$ and hence any $c_{ij}\mfw$.\end{proof}

\begin{lemma} (\emph{Independence of dual spheres and $\infty$'s}) \label{dual independence}  For all $i\neq j$, $c_{ij}(\mF,\mW)$ is independent of the choice of the dual spheres.\end{lemma}

\begin{remark} The proof will have two parts.  We first show invariance of dual spheres with common $\infty$'s, then show invariance with different $\infty$'s.  Note that there is ambiguity for the choice of $\infty\in G_i$ since $G_i\setminus(\mF\cup\mW)$ is never connected.\end{remark}

\begin{proof}  First choose $\infty$'s in each $R_p$ and $G_q$.  The $B_{ij}$'s hence $A_{ij}$'s are independent of the dual spheres, thus we ned to show that for $i\neq j,\ |\hat\mF_{ij}\cap\hat \mW_{ji}|$ is well defined mod 2.  We consider the representative case $i=2$ and $j=1$.  The intersections among the extra spheres are determined homologically.   Thus we need to show for $i\neq 2$, $|R_i^*\cap \mW_{12}|$ and for $j\neq 1$, $|G_j^*\cap\mF_{21}|$ are well defined.      Let $V=\mF_{12}\cup B^R_{12}\cup B^G_{12}\cup\mW_{12}$ and $R_i'$ another dual sphere to $R_i$.  Then $|\mW_{12}\cap R_i^*|=|V\cap R_i^*|=|V\cap R_i'|=|\mW_{12}\cap R_i'|$ mod 2 since $V$ is a closed surface with $(R_i^*\cup R_i')\cap V\subset \mW_{12}$ and $[R_i^*]=[R_i']$ by Remark \ref{homology dual spheres}.  A similar argument shows $|G_j^*\cap\mF_{21}|$ is well defined.

To show independence of $\infty$'s, it suffices to consider the case where a single $\infty$ is changed and is moved from some component of $G_j\setminus (\bigcup_{i\neq j} \mF_{ij}\cup \bigcup_{i\neq j} \mW_{ij})$ or $R_j\setminus (\bigcup_{i\neq j}\mF_{ji}\cup \bigcup_{i\neq j}\mW_{ji})$ to another whose closure shares a common edge $e$.  Without loss we can assume that $j=1$ and $e\subset\mF_{21}\cap G_1$.  Further, by the first assertion, we can assume that this is realized by an isotopy of $G_1^*$ to $G_1'$ by an isotopy supported near $e$.  Since this change only modifies $\hat\mF_{21}, \hat \mW_{12}, B_{21}^G$ and $W_{21}^G$ it follows that only $c_{21}$ can possibly change.  

We now show $c_{21}$ is unchanged.  In what follows, integral values are mod 2.  First we enumerate differences in its defining data.
\vskip 6pt

a) $B_{21}^G$ becomes $W_{21}^G$ and vice versa.  Denote the new ones by $B_{21}'$ and $W_{21}'$. 
\vskip 6pt
b) Since $G_1\cap R_1=1$,  $|B_{21}^G\cap R_1|=|B_{21}'\cap R_1|+1 $,  and hence the number of $R_1^*$ components added to $\mF_{21}$ changes by  $1$.  
\vskip 6pt
c) $|G_1'\cap \mF_{21}|=|G_1^*\cap \mF_{21}|+1$.
\vskip 6pt
d) If $a=|B^R_{12}\cap G_1|, b=|B^R_{12}\cap B_{21}^G|$ and $c=|B^R_{12}\cap B_{21}'|$, then $a=b+c$.  Therefore, if $A'_{21}$ denotes the changed $A_{21}$, then $A'_{21}-A_{21}=b-c$. 
\vskip 6pt
To complete the proof we need to show that if $\mF'_{21}$ (resp. $\mW'_{12}$) denotes the changed $\hat\mF_{21}$ (resp. $\hat\mW_{12}$), then $|\hat\mF_{21}\cap \hat \mW_{12}|-|\mF'_{21}\cap \mW'_{12}|=b-c$.  To do this we will show that $|R_1^*\cap \mW'_{12}|=|R_1^*\cap \hat\mW_{12}|=0$  and hence $|\hat\mF_{21}\cap \hat \mW_{12}|-|\mF'_{21}\cap \mW'_{12}|=|\mF_{21}\cap \hat \mW_{12}|-|\mF_{21}\cap \mW'_{12}|=a=b+c=b-c$.

Finally, we show $|R_1^*\cap \hat\mW_{12}|=0$, the argument showing $|R_1^*\cap W'_{12}|=0$ being similar.  Let $V_{12}=\mF_{12}\cup\hat\mW_{12}\cup B^R_{12}\cup B^G_{12}$.  Then $V_{12}$ is a closed surface whose $[R_1]$ coefficient in $H_2(\#_k S^2\times S^2;\BZ_2)$ equals 0.  Indeed, this coefficient is the number of times $V_{12}$ intersects $G_1$, but that is $|B_{12}^R\cap G_1|$ plus the number of components of $G_1^*$ added to $\mW_{12}$.  By definition these are equal.  This implies that $|R_1^*\cap V_{12}|=0$.  Since $R_1^*$ is disjoint from $B_{12}^R\cup B_{12}^G\cup \mF$ it follows that $|R_1^*\cap \hat W_{12}|=0$.  \end{proof}

\begin{remark}\label{weak dual} This argument only used that the $\mF$ (resp. $\mW$) dual spheres are disjoint from $\mF$ (resp. $\mW$) rather than also disjoint from a full switch.\end{remark}

\begin{lemma}\emph{(Disc slide invariance)} \label{slide ct invariance} Each $c_{ij}$ is invariant under $\mF$ $G$-disc slides and $\mW$ $R$-disc slides.

Each $c_{ij}$ is anti-invariant under $\mF$ $R$-disc slides and $\mW$ $G$-disc slides.  This means that if $\mF'$ (resp. $\mW'$) is the result of a single $R$-disc (resp. $G$-disc) slide, then 
$$ c_{ij}(\mF', \mW)-c_{ij}(\mF, \mW)=c_{ji}(\mF', \mW)-c_{ji}(\mF, \mW)=|\mF_{ij}\cap \mW_{ji}|-|\mF'_{ij}\cap \mW_{ji}|\mod 2.$$ 
$$  (\textrm{resp.}\quad c_{ij}(\mF, \mW')-c_{ij}(\mF, \mW)=c_{ji}(\mF, \mW')-c_{ji}(\mF, \mW)=|\mF_{ij}\cap \mW_{ji}|-|\mF_{ij}\cap \mW'_{ji}|\mod 2.)$$\end{lemma}

\begin{proof}  A dual sphere $R_i^*$ may change under $\mF$ disc sliding, thus $c_{ij}(\mF,\mW)$ is calculated using the original dual spheres while $c_{ij}(\mF', \mW)$ is calculated using the changed dual spheres which we denote by $\{R_i'\}$.  We will show that for every $i$, both $R^*_i, R'_i $ can be chosen disjoint from both $\mF'$ and $\mF$ and they have a common $\infty$.  Lemma \ref{dual independence} then implies that each of $c_{ij}(\mF, \mW), c_{ij}(\mF', \mW)$ can be equally calculated using either the $R^*_i$'s or the $R'_i$'s.  The similar argument will show that with suitable $G^*_i$'s, $G'_i$'s we can equally calculate $c_{ij}(\mF, \mW)$ and $c_{ij}(\mF, \mW')$. 

First consider the case that $\mF'$ is obtained by $R_p$-disc sliding $f_{pq}$ over some $f_{ps}$.  Recall that $f_{pq}'$ is obtained by banding $f_{pq}$ to a Whitney cap of $f_{ps}$ where the band and the cap boundary $\subset R_p$.  Since we can assume that each $R_i^*$ is disjoint from both the band and the half 3-disc bounded by the cap it follows that for all $i$, $R_i^*\cap \mF'=\emptyset$.  Since $\mF$ is obtained from $\mF'$ by $R_p$-disc sliding $f'_{pq}$ over $f_{ps} $ a similar argument shows that we can assume for all $i, \mF\cap R_i'=\emptyset$.  Since $(f_{pq}\cup f_{ps}\cup \textrm{band})\cap R_p$ and $(f_{pq}'\cup f_{ps}\cup\textrm{band})\cap R_p$ have isotopic compact neighborhoods $\subset R_p$,  each $i, R^*_i, R'_i$ can be chosen to also have the same $\infty$.  Here we are abusing notation by viewing the bands as solid rectangles in $R_p$.  The proof for $\mF'$ being obtained from $\mF$ by $G$-disc sliding is essentially the same.\qed

\vskip 8pt
\noindent\emph{Proof of Invariance.}  It suffices to consider the case that $f_{11}\in \mF_{11}$ or $f_{21}\in \mF_{21}$ $G$-disc slides over $f_{q1}\in \mF_{q1}$ as the other cases are similar.  In the former case, each $c_{ij}$ is unchanged since the defining data is unchanged.  We now consider the latter case. Let $f'_{21}, \mF'_{21}$ be the resulting disc and let $B'_{21}$ be the changed $B_{21}^G$.  Note that if $j\neq 2$, then $|B'_{21}\cap R_j|=|B^G_{21}\cap R_j| \mod 2$.  Indeed if $j\neq q$, then $|B'_{21}\cap R_j|$ is unchanged while if $j=q$, then $|B'_{21}\cap R_q|$ may increase by 2, decrease by 2 or stay the same.  The latter happens if and only if $|f_{q1}\cap G_1\cap \mW_{21}|=1 \mod 2$.  Thus $\hat\mF'_{21}$ is obtained from $\mF'_{21}$ by adding the same mod 2 number of $R^*_1$ components that are added to $\mF_{21}$ to obtain $\hat \mF_{21}$.  Since both $\mW$ and $\mF\setminus \mF_{21}$ are unchanged, it follows that only $c_{21}$ can possibly change.  
\vskip 6pt
We have $|\hat\mF_{21}\cap\hat \mW_{12}|-|\hat\mF'_{21}\cap\hat \mW_{12}|=|f_{21}\cap \hat \mW_{12}|-|f'_{21}\cap \hat\mW_{12}|=|f_{q1}\cap R_q\cap \mW_{12}|\mod 2$, which equals 0 unless $q=1$.  Now assume $q=1$ and let $e=f_{q1}\cap R_1$.  If $|e\cap \hat\mW_{12}|$ is even, then $|\hat \mF_{21}\cap \hat \mW_{12}|=|\mF'_{21}\cap \hat\mW_{12}| \mod 2$ and either $\partial e \subset B_{12}^R$ or $\partial e\subset W_{12}^R$.  Under the disc slide the color of each point of $\partial e\cap G_1$ is reversed and so $A_{21}$ and hence $c_{21}$ are unchanged.  If $|e\cap \mW_{12}|$ is odd, then$ |\hat \mF_{21}\cap \hat \mW_{12}|=|\mF'_{21}\cap \hat\mW_{12}|+1 \mod2$ and $\partial e=a\cup b$ where $a\in B_{12}^R$ and $b\in W_{12}^R$.  Since the color of $a\in G_1$ changes under the disc slide it follows that $A_{21}$ also changes by 1.\qed
\vskip 8pt

\noindent\emph{Proof of Anti-Invariance.}  It suffices to consider the case $f_{21}\in \mF_{21}$ $R$-disc slides over $f_{2q}\in \mF_{2q}$.  Let $f'_{21}, \hat\mF'_{21}, \hat \mW'_{21}$ be the resulting disc and sets and $B'_{21}$ the changed $B_{21}^R$.  As before if $j\neq 1$, then $|B'_{21}\cap G_j|=|B_{21}^R\cap G_j| \mod 2$.  Thus for all $j$, $\hat\mW'_{21}$ is obtained from $\mW_{21}$ by adding the same mod 2 number of $G_j^*$ components that are added to $\mW_{21}$ to obtain $\hat \mW_{21}$.  Note that $\mW$ itself is unchanged.  Also, except for $f_{21}$ being replaced by $f'_{21}, \hat\mF$ is unchanged.  It follows that only $c_{21}$ or $c_{12}$ can possibly change and change happens only if $q=2$.  Further $A_{21}$ is unchanged as is $|\hat\mF_{12}\cap \hat\mW_{21}| \mod 2$.  

\vskip 6pt

Now assume $q=2$ and let $e=f_{2q}\cap G_2$.  An argument similar to that in the invariance proof shows that if $|e\cap \mW_{12}|=0 \mod 2$, then $c_{21}$ and $c_{12}$ are unchanged and if $|e\cap \mW_{12}|=1 \mod2$, then both $c_{21}$ and $c_{12}$ change by $1$.  Indeed, $c_{21}$ changes because $|\hat\mF_{21}\cap \hat\mW_{12}|-|\hat\mF'_{21}\cap \hat\mW_{12}|=|e\cap \mW_{12}|=1\mod 2$ and both $B_{21}^G$ and $B_{12}^R$ are unchanged. Also $c_{12}$ changes because $A_{12}$ changes by $1$ as $\partial e\cap G_2=a\cup b$ with $a\in B^G_{12}$ and $b\in W_{12}^G$ and the color of $a\in R_2$ changes under the disc slide.  \end{proof}

\begin{remark}\label{disc slide remark} With notation as in the proof  $c_{ij}$ changes if and only if  $c_{ji}$ changes and both change if and only if the disc slide is anti-invariant with $|e\cap R_i\cap \mW_{ji}|=1 \mod 2$ (resp. $|e\cap G_j\cap \mF_{ji}|=1\mod 2$) for a $f_{ij}\in \mF_{ij}$ $\mR$-disc-slide (resp. $w_{ij}\in \mW_{ij}$ $\mG$-disc slide).\end{remark}

\begin{definition}  \label{cu} (IA cross term correction)  Suppose $(\mF, \mW)\in$ IA and has $k$-eyes.  For $1\le u\le k$ and $i\neq j$ define $CU_{ij}^u(\mF,\mW):=CU_{ji}^u(\mF, \mW)$ as follows.  Let $f_1, w_1, \cdots, f_n, w_n$ denote the IA order on the $\mF_{uu}\cup\mW_{uu}$ discs.  Let $a_r:=|(\mW_{ij}\cup\mW_{ji})\cap f_r\cap G_u|$ and $b_s=|(\mF_{ij}\cup\mF_{ji})\cap w_s\cap R_u|$. Then mod 2: 
$$CU_{ij}^u(\mF,\mW):=\sum_{r\le s} a_r b_s,\ \  CU_{ij}(\mF,\mW):=\sum_{u=1}^k CU_{ij}^u(\mF,\mW),\ \  \textrm{and}\ \  CU(\mF,\mW):=\sum_{i<j} CU_{ij}(\mF,\mW).$$ \end{definition}

\begin{remark} \label{cu remark}  If $u\notin \{i,j\}$, then $CU^u_{ij}=0$.\end{remark}

\begin{definition}\label{cross term final}  Define the \emph{cross term} invariants 
$$C_{ij}(\mF, \mW)=CU_{ij}(\mF,\mW)+c_{ij}(\mF,\mW)\quad and\quad C(\mF, \mW)=\sum_{i<j} C_{ij}(\mF,\mW) \mod 2$$
and the $\mD$-invariant $$\mathcal{D}(\mF,\mW)=I(\mF,\mW)+C(\mF,\mW)\mod 2$$
\end{definition}

\begin{lemma} \label{d boundary twisting} If $\mW$ is ordered, then $\mD\mfw$ is invariant under boundary twisting.\end{lemma}
\begin{proof}  Since $((\mF_u\cup\mW_u)\cap\mR\cap\mG))\cap(\mF_c\cup\mW_c)=\emptyset$ it follows that CU is invariant under boundary twisting and hence by Lemma \ref{c boundary twisting} so is C.  The result now follows from Lemma \ref{i boundary twisting}.  \end{proof}

The main result of this paper is that $\mD$ is a nontrivial  invariant of its relative homotopy class as a loop of embedded multi-spheres.

\begin{remark} \label{asymmetry} For the purposes of this remark let $\mD_{\mF\mG}$ denote the $\mD$ as defined above.  That notation recognizes the asymmetry in the definition of $\mD$.  Indeed, there is the corresponding definition $\mD_{\mF\mR}$ where the roles of $\mF$ and $\mW$, or equivalently $\mG$ and $\mR$ are reversed, i.e.  $\mD_{\mF\mR}\mfw=\mD_{\mF\mG}(\mW,\mF)$. E.g. we could have defined $\mR_{\mW}, \mG_{\mF}$ to be dual spheres respectively disjoint from $\mW\cup\mW^*_S$ and $\mF\cup\mF^*_S$ and accordingly modified the definitions of $c_{ij}(\mF,\mW)$ and $CU^u_{ij}(\mF,\mW)$.  For example, $\hat \mF_{ij}=\mF_{ij}\cup\bigcup_{p\neq j} |B_{ij}^R\cap G_p| G^*_p$, where $G^*_p\in\mG_{\mF}$.   \end{remark}

\begin{lemma}  \label{cu vs cij} \emph{(Cross term IA disc slide invariance)}  Let $(\mF', \mW')$ be obtained from $(\mF,\mW)\in$ IA by a disc slide sequence.  Then for all $i\neq j$ 
$$ CU_{ij}(\mF', \mW')+c_{ij}(\mF', \mW')=CU_{ij}(\mF,\mW)+c_{ij}(\mF,\mW).$$  The similar statement holds with $c_{ji}$ in place of $c_{ij}$.  Finally, $C\mfw=C(\mF', \mW')$.  \end{lemma}

\begin{proof} If an uncross disc slides over another disc, then neither $c_{ij}$ nor $CU_{ij}$ changes since the defining data is unchanged mod 2.   Next we  consider the representative case that $f_{21}\in \mF_{21}$ disc slides over $f_{pq}\in \mF_{pq}$.  By  Definition \ref{cu} and Remark \ref{cu remark}  the only terms of $CU(\mF, \mW)$ that can change are $CU^v_{21}(\mF,\mW), v=1,2$ and this can only happen when $(p,q)=(1,1)$ or $(2,2)$.  By Lemma \ref{slide ct invariance} only $c_{21}$ and $c_{12}$ can change.  Thus, it suffices to consider the effect on these terms of sliding $f_{21}$ over a $f_{11}$ or $f_{22}$ disc.  
\vskip 6pt
\noindent\emph{Subcase 1.} $f_{21}$ $G$-disc slides over $f_{11}$:  By Lemma \ref{slide ct invariance} the $c_{ij}$'s are unchanged.  The $CU^v_{21}$'s are also unchanged since their $a_r, b_s$ values are unchanged.  \qed
\vskip 6pt
\noindent\emph{Subcase 2.} $f_{21}$ $R$-disc slides over $f_{22}$:  Here $CU^1_{21}$ is unchanged since its $a_r, b_s$ values are unchanged.  Let $e=f_{22}\cap G_2$.  If $|e\cap \mW_{12}|=0 \mod 2$, then by Lemma \ref{slide ct invariance} $c_{21}$ and $c_{12}$ are unchanged and there are no intersections of $e$ with $\mW_{21}$.   If $f_{22} = f_r$ in the IA order, then with primes denoting the effect of the disc slide, $b'_r=b_r+1$ and $b'_{r-1}= b_{r-1}+1$, unless $r=1$ in which case $b_{r-1}$ does not exist.    Since $|e\cap \mW_{12}|=0 \mod$ 2 It follows that $CU^1_{21}$ is unchanged.  

If $|e\cap\mW_{12}|=1\mod 2$, then both $c_{21}$ and $c_{12}$ are changed by Lemma \ref{slide ct invariance}.  Since $a'_r=1\mod 2$ it follows that $CU^1_{21}$ is also changed.  The conclusion $C\mfw=C(\mF', \mW')$ now follows by Definition \ref{cross term final}.\end{proof}

Combining Lemmas \ref{cu vs cij} and \ref{d boundary twisting} and multi-eye Proposition \ref{hsf independence} we obtain:

\begin{corollary}  \label{d disc slide} \emph{($\mD$-invariance under  IA disc sliding)}  Let $(\mF', \mW')$ be obtained from $(\mF,\mW)\in$ IA by a disc slide sequence, then $\mD\mfw=\mD(\mF',\mW')$.  \qed\end{corollary}

\begin{definition}  Given $(\mF,\mW)$ with $\mW$ ordered, define $CU_{ij}^u(\mF, \mW)=CU_{ij}^u(\mF,\mW')$ where $(\mF, \mW')$ is obtained by switching $(\mF, \mW)$.  \end{definition}

\begin{lemma} \label{cu switching} $CU_{ij}^u(\mF,\mW)$ is well defined, i.e. independent of the choice of switching.\end{lemma}

\begin{proof}  Let $\mW'$ be a switching of $\mW$.   It suffices to show that if $\psi$ is a basic restandardization map and $ \mW''=\psi(\mW')$, then $CU_{ij}^u(\mF, \mW')=CU^u_{ij}(\mF, \mW'')$.  Here and in  what follows primes and double primes refer to data associated to  the switches.  Recall that $\psi$ fixes $\mR\cup\mG$ setwise and the non switch discs point wise. In particular, $\psi$ fixes point wise all the cross discs.  It suffices to consider the representative case $u=1$ and $ij=21$.  Let $\mF_{11}\cup \mW'_{11}=f_1,w_1', \cdots, f_n, w_n'$ in the IA order.  Since $\mF$ and $\mW_{21}$ are unchanged  we have for each $i, f_i\cap \mW_{21}'\cap G_1=f_i\cap\mW_{21}''\cap G_1$.  Therefore, to complete the proof it suffices to show that for each $i, |w_i''\cap \mF_{12}\cap R_1|=|w_i'\cap \mF_{12}\cap R_1|\mod 2$.  We now consider the basic restandardization maps.
\vskip 6pt
i) $\psi$ is a $G$-finger twist:  If the twist is about $w\in \mW$, then $\psi|R_1$ is a Dehn twist on $\partial N(w\cap R_1)$.  If  $w\in \mW_{11}$, then  the intersections of $ \mF_{12}$ with $\partial N(w\cap R_1)$ arise in pairs, one pair for each point of $\mF_{12}\cap w\cap R_1$, so $CU^1_{21}$ is unchanged.  If $ w\notin\mW_{11}$, then $\mW_{11}'\cup \mW_{11}''$ are disjoint from the support of $\psi$.\qed
\vskip 6pt
ii) $\psi $ is a G-braiding:  Here $\psi$ fixes $R_1$ pointwise.\qed
\vskip 6pt
iii) $\psi$ is an $R$-braiding:  Here $\psi$ is a composition of Dehn twists about the simple closed curves $\alpha_i, \alpha_j, \alpha_{ij}\subset R_1$.  See Figure 15 which shows the $G$-braiding analogue.  Here $\alpha_i=\partial N(w_i\cap R_1), \alpha_j=\partial N(w_j\cap R_1)$ and $\alpha_{ij}$ is the result of banding $\alpha_i$ to $\alpha_j$.  We are abusing notation since $w_i$ need not be in $\mW_{11}$ as in that figure.  Since each of $\alpha_i, \alpha_j, \alpha_{ij}$ intersect $\mF_{12}$ in an even number of points, the result follows.  Indeed, if $w_i\in \mW_{12}$, then $\alpha_i$ has one intersection with $\mF_{12}$ for each point of $w_i\cap R_1\cap G_2$ and two intersections for each point of $\mF_{12}\cap \inte(w_i\cap R_1)$.  If $w_i\notin \mW_{12}$ it only picks up pairs of points of the latter type.  Since we can assume that the core of the band is transverse to $\mF_{12}$ the result for $\alpha_{ij} $ also follows.\qed
\vskip 6pt
iv), v) $\psi$ is a spinning or a half-disc map:  Here $\psi$ fixes $R_1$ pointwise.\qed
\vskip 6pt
vi) $\psi$ is an SO(3) twist:  Here $\psi $ acts on $\mR$ by a square of a Dehn twist.  This completes the proof of the lemma.\end{proof}  

\begin{proposition}  \label{cross switching} \emph{(Cross term   invariance under switching)} Let $(\mF, \mW)$ have $\mW$ ordered.  If $(\mF, \mW')$ is obtained from $(\mF, \mW)$ by switching, then for all $i\neq j$, $c_{ij}(\mF, \mW')=c_{ij}(\mF, \mW)$.  Furthermore, $C_{ij}(\mF, \mW')=C_{ij}(\mF,\mW)$ and $C(\mF, \mW')=C(\mF, \mW)$.    \end{proposition}  

\begin{proof}  We will prove the first assertion.  That  plus Lemma \ref{cu switching} implies the second and the third then follows by definition.  If $j\neq i$, then $\mW_{ji}'=\mW_{ji}$ since switching fixes cross discs point wise.  Since $\mR\cap \mG$ is fixed under switching we have for $i\neq j$ that  $B_{ij}^G$ and $B_{ji}^R$ are unchanged and hence so is each $\hat\mF_{ij}$ and $A_{ij}$.  Since there also exists a common dual system $\mG^*$ to $\mW'$ and $\mW$ by Lemmas \ref{common switch dual} and \ref{dual independence}, it follows that $\hat\mW_{ji}'=\hat\mW_{ji}$ from which we conclude $c_{ij}(\mF, \mW')=c_{ij}(\mF, \mW)$.   \end{proof}

\begin{corollary} \label{d switching} \emph{($\mD$-invariance under switching)} If $\mW$ is ordered, then $\mD\mfw$ is well defined, i.e. it's value is independent of the choice of switching to $(\mF, \mW')\in$ IA.\end{corollary} 

\begin{proof}  The previous result implies that $C\mfw$ is independent of the choice of switching and Proposition \ref{multi switch invariance} implies that I$\mfw$ is independent of the choice of switching.\end{proof}  

\begin{lemma}  \label{d boundary twisting2} \emph{($\mD$ invariance under boundary twisting)}  Let $\mF'$ (resp. $\mW'$) be obtained from $\mF$ (resp. $\mW$) by an $\mR$ or $\mG$-twist, then \textbf{c}$(\mF', \mW)=\textbf{c}\mfw,  \bC(\mF', \mW)=\bC\mfw$ and hence $\mD(\mF', \mW)=\mD\mfw$ with similar results holding for $(\mF, \mW')$.\qed\end{lemma}

\begin{definition}  Two sets $W_1, W_2$ of Whitney discs are said to have \emph{common boundary} if $\partial W_1$ can be isotoped to $\partial W_2$ via an isotopy fixing $\mR\cap\mG$ point wise.\end{definition}

\begin{remarks} \label{common disjoint} i) After an isotopy of either $\mW_1$ or $\mW_2$ fixing $\mR\cap\mG$ point wise we can assume that whenever  $w\in W_1$ and $w'\in W_2$ have common boundary, then $\partial w\cap (\mW_2\setminus w')=\emptyset$ and $\partial w'\cap(\mW_1\setminus w)=\emptyset$.  We will always assume that these slightly stronger properties hold.

ii) This condition is the same as saying that $\partial W_1$ can be homotoped to $\partial W_2$ via an isotopy fixing $\mR\cap\mG$ point wise and the same as saying that $W_1$ can be isotoped so that the boundaries coincide, however the germs might not coincide.\end{remarks}

\begin{definition}  We say $\mfw$ is in \emph{full embedded arc position} if it is in embedded arc position and $\mF_c$, $\mW_c$ have common boundary.  \end{definition}

\begin{proposition}  \label{full symmetry} \emph{(Full embedded implies $\mD$ is symmetric)}  i) If $\mfw$ is in full embedded arc position, then for $i\neq j, c_{ij}(\mF, \mW)=|\mF_{ij}\cap \mW_{ji}|=|\mF_{ji}\cap \mW_{ij}|=c_{ji}$ mod 2. 

ii) For all $i\neq j, c_{ij}(\mF,\mW)=c_{ij}(\mW, \mF)=c_{ji}(\mF, \mW)=c_{ji}(\mW,\mF)$, $C(\mF, \mW)=C(\mW, \mF)$ and $\mD(\mF, \mW)=\mD(\mW, \mF)$.\end{proposition}

\begin{proof}  If conclusion i) holds, then it immediately implies the first set of equalities of ii).  The other two equalities follow after noting that $CU\mfw=CU(\mW,\mF)=0$ and $I\mfw=I(\mW,\mF)$.  

We now prove i).  Choose the $\infty$ points to be disjoint from the various disjoint discs that contain each $(\mW_{ij}\cup\mF_{ij})\cap \mG_j$ and the various disjoint discs that contain each $(\mW_{ij}\cup\mF_{ij})\cap R_i$.  Then for all $i\neq j$,  $A_{ij}=0$, $\hat\mF_{ij}=\mF_{ij}$, $\hat\mW_{ij}=\mW_{ij}$ and hence $c_{ij}\mfw=|\mF_{ij}\cap\mW_{ji}|$ mod 2.   For $i\neq j$, let $V_{ij}=\mF_{ij}\cup\mW_{ij}\cup B_{ij}^G\cup B_{ij}^R$.  Using Lemma \ref{even winding} we see $[V_{ij}]=a_{ij}[R_j]+b_{ij}[G_i]$ where $a_{ij}+b_{ij}=0$ mod 2.  Since $V_{ij}\cap V_{ji}\subset (\inte(\mW_{ij}\cup\mW_{ji}))\cap (\inte(\mF_{ij}\cup\mF_{ji}))$, $|\mF_{ij}\cap\mW_{ji}|+|\mF_{ji}\cap\mW_{ij}|=|V_{ij}\cap V_{ji}|=a_{ij}b_{ji}+a_{ji}b_{ij}=0$ mod 2 and hence $|\mF_{ij}\cap\mW_{ji}|=|\mF_{ji}\cap \mW_{ij}|$ mod 2.  \end{proof}

\begin{lemma} \label{common fc} Given $\mfw$ with $\mW$ ordered and $\mF_c, \mW_c$ having common boundary, then each $C^u_{ij}=0$ and for $i\neq j, c_{ij}\mfw=|\mF_{ij}\cap\mW_{ji}|=|\mF_{ji}\cap\mW_{ij}|=c_{ji}\mfw$.\end{lemma}

\begin{proof}  If $\mW'$ is a switching of $\mW$, then $\partial \mW'_u$ can be isotoped to be disjoint from $\mF_c\cup\mW'_c$ and hence each $C^u_{ij}=0$.  Since  $\mW'_c=\mW_c$ the proof follows as in that of  Lemma \ref{full symmetry}.\end{proof}

The sum square move introduced by Frank Quinn \cite{Qu} p. 355 replaces two Whitney discs by two others using a \emph{square} disc.  We now introduce a special version which replaces two Whitney discs by two others using a third Whitney disc.  The new Whitney discs comprise the third Whitney disc plus a disc which is approximately the union of the original three.  We then show $\mD$-invariance under the move when $w_1$ and $w_2$  are cross discs.

\begin{definition} \label{sum square} (Special sum square move)  Given $\mfw$ let $w_1, w_2\in \mW$ be Whitney germed and $D$ a finger germed Whitney disc such that $\mW\cap D\subset \mR\cap\mG$, $w_1\cap D\neq\emptyset$ and  $w_2\cap D\neq \emptyset$.  A  $(w_1, w_2, D)$ \emph{special sum square move} transforms $\mW$ to $(\mW \setminus \{w_1,w_2\}) \cup \{w,D\}$ where $w$ is constructed as follows.  Start with $w_1\cup w_2\cup D$ and resolve the $w_1\cap D$ and $w_2\cap D$ intersections as in Figure \ref{fig:sss5}.  First, view these intersections as crossings of arcs in 3-space and resolve them with half twisted bands as in Figures \ref{fig:sss5} b) or c) to obtain $w'$.  Then eliminate the arcs $\alpha,\beta\subset w'\cap G_j$ by pushing the interiors of both into either the future or the past to obtain $w$.  Any of these four possibilities for $w$ is called a $(w_1, w_2, D)$ \emph{special sum square move}.  Here $D$ is called the \emph{sum square disc}.  If $w_1, w_2$ are uncross (resp, cross) discs, then we call this an \emph{uncross (resp. cross) special sum square move}.

In a similar manner, we can define the special sum square move for a pair of finger germed Whitney discs  where $D$ is now Whitney germed.  Similarly we can define this move for finger or Whitney germed finger discs.\end{definition}

\begin{figure}[!htbp]
    \centering
    \includegraphics[width=.9\linewidth]{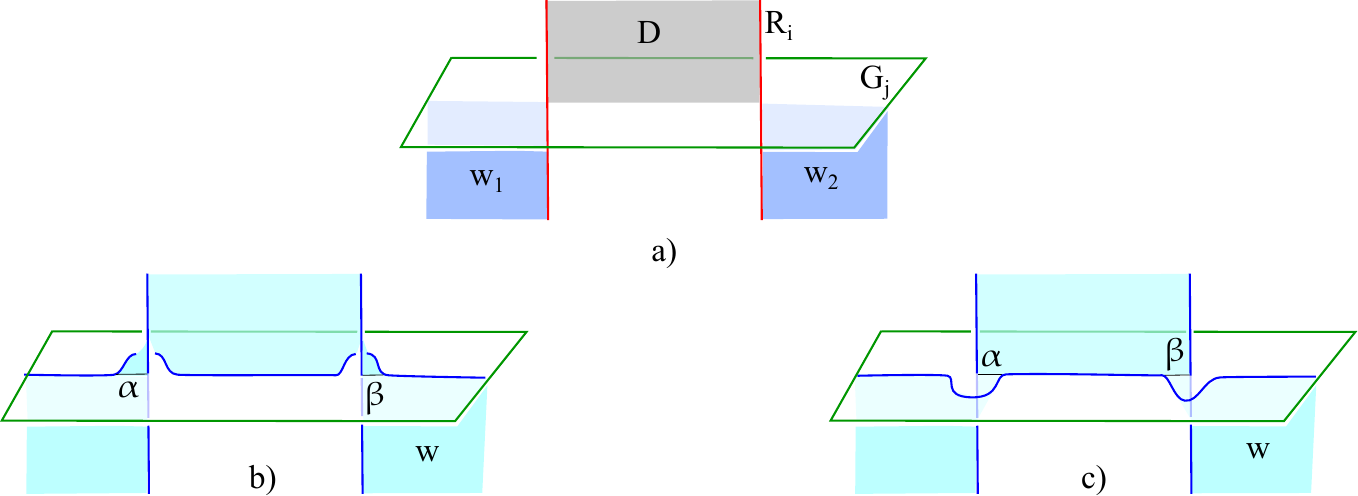}
    \caption{The special sum square move}
    \label{fig:sss5}
    \end{figure}

\begin{remarks}\label{quinn square}  i)  There are 4 ways to resolve each $w_i\cap D$ intersection.  First, we can use either a right or left half twist to \emph{resolve the crossing}.  Second the $\alpha$ or $\beta$ intersection can be eliminated by pushing its interior into either the past or future.  Thus there are a priori 16 $(w_1, w_2, D)$ special sum square moves.  However, only the four ways given in Definition \ref{sum square} will produce a w that is both correctly framed and can be isotoped to be disjoint from $D$.

ii)  If $\mR\cup\mG$ are in $\mG$-framed finger form with $\mW$ the standard Whitney discs with $w_1, w_2$ adjacent and $D$ the in between standard finger disc, then the special sum square move corresponds to a transformation shown in Figures \ref{fig:sss4} a) to b) or a) to c), exhibited as 1D representations.  The reason that figure shows only two transformations instead of four is that the 1D representation relies on a preliminary isotopy of $R_i$ to project to much of $G_j$ and there  are two ways to do this.  The next lemma asserts that every special sum square move can be conjugated by an ambient isotopy to be of this form.  

iii) The Whitney discs of  Figure \ref{fig:sss4} b) (resp. c)) are the result of doing the special sum square move using the sum square $D$ shown in Figure \ref{fig:sss4} e) (resp. d)).  Equivalently, they are obtained by using the Quinn sum square disc $S$ shown in those figures.  

The next two items show how the four ways are related and will not be used in the paper.

iv) Call the four possible resulting $w$'s as $w_a,w_b,w_c,w_d$. Then there is a $B^4$ containing $w_1,w_2,D,w_a,w_b,w_c,w_d$ and self-diffeomorphism of $B^4$, possibly non orientation preserving, taking $(w_1,w_2,D,w_a)$ to $(w_1,w_2,D,w_b)$ to $(w_1,w_2,D,w_c)$ to $(w_1,w_2,D,w_d)$. Hence, there is only one local model for the special sum square move.

v) All of the four $w$'s  are related by disc slides over $D$.  
\end{remarks}

\begin{lemma} \label{sum criterian} \emph{(The special sum square move is a Quinn sum square move)}  Given $\mfw$ and a sum square triple $(w_1, w_2, D)$ there exists an ambient isotopy conjugating the corresponding special sum square move to a sum square move depicted in Figure \ref{fig:sss4}.\end{lemma}

\begin{proof} First consider the case that $w$ and $w'$ are uncross discs, where it suffices to consider the case that $w_1\cup w_2 \in \mW_{11}$.  Choose an ordering on $\mW$ so that $w_1$ is a minimal element followed by $w_2$.  Next put $\mR$ into $\mG$-framed finger form with $\mW$ the standard Whitney discs.   After a sequence of the basic restandardization maps, $D$ becomes the standard $f_2$.  If $w_1$ and $w_2$ are cross discs, then put $\mR$ into $\mG$-framed finger form with $\mW$ standard such that $w_1$ and $w_2$ are adjacent Whitney discs.  Again a restandardization map takes $D$ to the standard finger disc between them.  Remarks \ref{quinn square} explains how a special sum square move on this standard triple corresponds to a sum square move of Figure \ref{fig:sss4}.  \end{proof}

 \begin{figure}[!htbp]
    \centering
    \includegraphics[width=.9\linewidth]{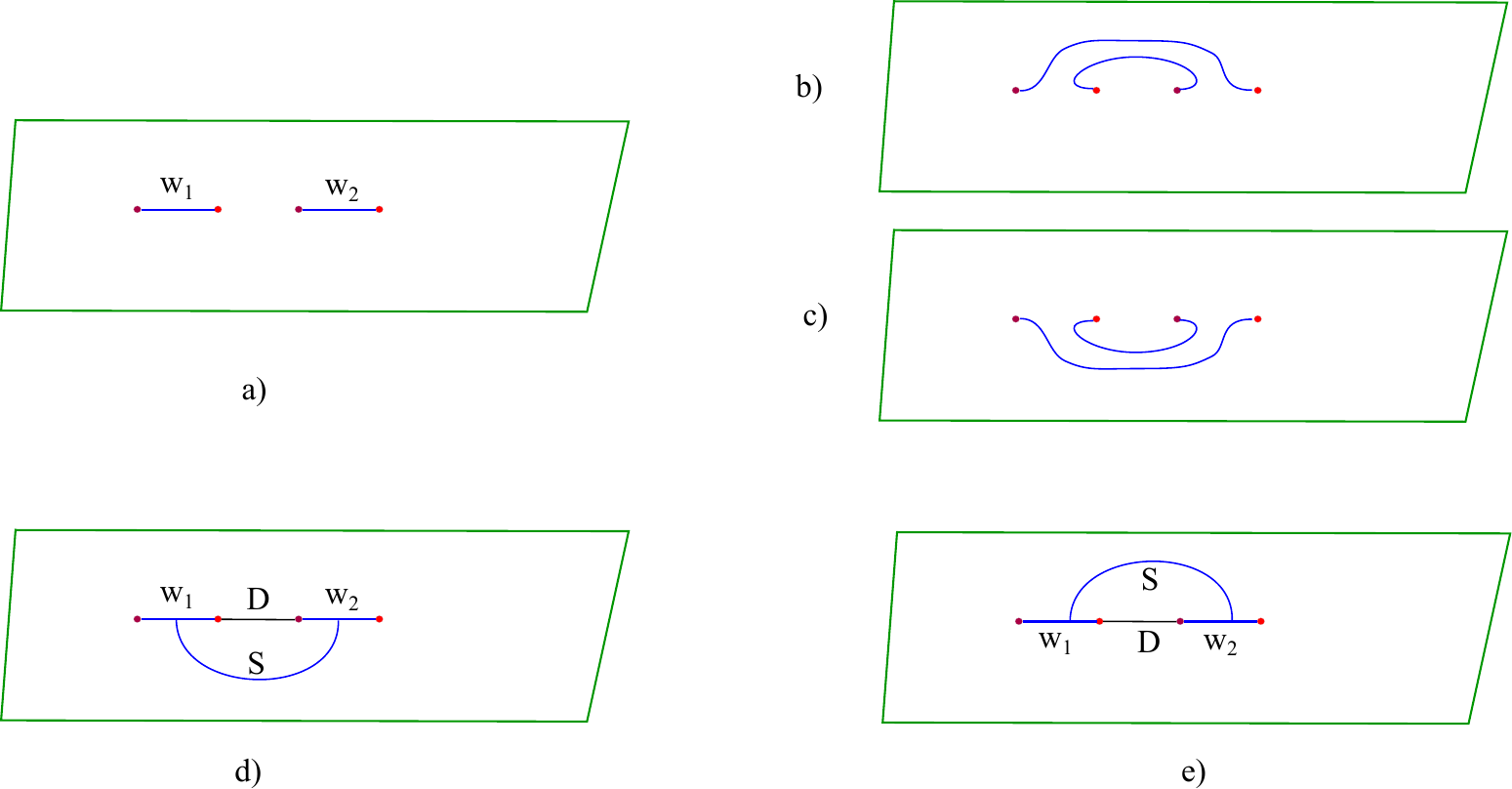}
    \caption{The special sum square move as a Quinn sum square move}
    \label{fig:sss4}
    \end{figure} 

\begin{remark} The use of the ordering is to inform us that there is an ambient isotopy taking $(w_1, w_2, D)$ to a standard position.\end{remark}

\begin{lemma} \label{common d} \emph{(Cross sum square moves preserve $\mD$)} If $(\mF, \mW')$ (resp. $(\mF', \mW)$) is obtained from $\mfw$ by a cross special sum square move on $\mW$ (resp. $\mF$), then with any ordering on $\mW$ we have $  \mD(\mF, \mW')=\mD\mfw$ (resp. $  \mD(\mF', \mW)=\mD\mfw$).\end{lemma}

\begin{proof} Suppose that $\mW'$ is obtained from $\mW$ by a  $(w_1, w_2, D)$ cross special sum square move.  As in the proof of Lemma \ref{sum criterian}, given any ordering on $\mW$,  there is a $\mG$-framed finger form in which $\mW$ is standard as are $w_1, w_2$ and $D$.  Since $w_1, w_2$ are cross discs and $D$ is standard we can use the same switch discs for switchings of $\mW$ and $\mW'$.  Since their uncross discs coincide,  it follows by the multi-eye parity lemma that $I(\mF, \mW')=I\mfw$.  We also have $C(\mF, \mW')=C\mfw$, since we have the same $|B_{ij}^G\cap R_p|, |B_{ij}^R\cap G_p|$ and $A_{ij}$ values and any new intersections arising in the computation of $CU$ come in pairs.  If $\mF'$ is obtained from $\mF$ by a special sum square move, then again we have by the parity lemma, $I(\mF', \mW)=I\mfw$ and the same cancellation argument gives $C(\mF', \mW)=C\mfw$.   \end{proof}

\begin{corollary} \label{cross matched} \emph{(Cross discs with common boundary)} Given $\mfw$ there exists $\mF'$ obtained from $\mF$ by cross special sum square moves such that $ \mF'_c$ and $ \mW_c$ are similarly matched.  Furthermore, for any such $\mF'$ and any ordering on $\mW$ we have $\mD(\mF', \mW)=\mD\mfw$.\end{corollary}

\begin{proof}  The proof of the first assertion is by upward induction on the number of immersed cycles of $(\partial \mF_c\cup\partial \mW_c)\cap \mG$.  Suppose $(\partial f\cup \partial w\cup \partial f')\cap \mG$ is part of an immersed cycle where $f\neq f'\in \mF_{ij}$ and $i\neq j$.  Put $\mR\cup\mG $ into $\mG$-framed finger form with $\mF$ standard and $f, f'$ adjacent.  Take $D$ as the standard Whitney germed Whitney disc between them and do a $(f, f', D)$-special sum square move to increase the number of immersed cycles by one.  The second assertion follows from Lemma \ref{common d}.\end{proof}

\section{More on Disc Sliding and Switching}

This section comprises technical lemmas needed for the next two sections, which prove $\mD\mfw$ is independent of the ordering on $\mW$.  Each lemma has two versions, one where $\mF_u, \mW_u$ are similarly matched and one where in addition $\mF_c, \mW_c$ have common boundary.  In the former case we obtain asymmetric conclusions, e.g. invariance of $\mD$ under only $\mW$ $\mR$-disc slides, in part reflecting the asymmetry in the definitions of c and CU.  For the latter we are freed of this asymmetry, e.g. can conclude invariance of $\mD$ under $\mW$ disc slides of both $\mR$ and $\mG$ type.  Intuitively, once $\mF_c, \mW_c$ have common boundary we can work as if there are no cross discs at all.

\begin{remark} To prove independence of ordering on $\mfw$ we will find a sequence $(\mF_1, \mW_1)=\mfw, (\mF_2, \mW_2), \cdots, (\mF_n, \mW_n)$ such that each $\mW_i$ has two orderings $\mW_i', \mW_i''$ such that if $\mD(\mF_1, \mW'_1)\neq \mD(\mF_1, \mW''_1)$, then for all $i$, $\mD(\mF_i, \mW'_i)\neq \mD(\mF_i, \mW''_i)$ and then obtain a contradiction by showing $\mD(\mF_n, \mW'_n)= \mD(\mF_n, \mW''_n)$. This motivates the following:\end{remark}

\begin{definition}Suppose that $\mW_1$ (resp.$ \mW_2$) is endowed with two  orderings $\mW_1', \mW_1''$ (resp. $\mW_2', \mW_2''$).  We say that $(\mF_1, \mW_1)$, $ (\mF_2, \mW_2)$ are \emph{$\mD$-equivalent} with respect to these orderings  if $\mD(\mF_1, \mW_1')=\mD(\mF_2, \mW_2')$ and  $\mD(\mF_1, \mW_1'')=\mD(\mF_2, \mW_2'')$. 

If $\mW_u, \mW'_u$  are  complete similarly matched sets of uncross Whitney discs, then an ordering on $\mW$ \emph{induces} one on $\mW'$.

 A \emph{transposition} on the ordered $\mW$ is a  reordering such that one differs from the other by changing the order of two  adjacent elements in a single $\mwii$. \end{definition}

\begin{lemma}  \label{multieye slide invariance} i) Let $\mF_u, \mW_u$ be similarly matched with $\mW$ ordered.  If $\mW'$ (resp. $\mF'$) is obtained from  $\mW$ (resp. $\mF$)  by an $\mR$ (resp. $\mG$) disc slide sequence, then with the induced ordering on $\mW'$, $\mD(\mF,\mW)=\mD(\mF,\mW')$ (resp. $\mD(\mF,\mW)=\mD(\mF',\mW)$). 

ii) If in addition $\mF_c, \mW_c$ have common boundary and $\mW'$ is obtained from $\mW$ by a $\mG$-disc slide sequence only involving uncross discs sliding over either uncross or cross discs, then with the induced ordering on $\mW'$, $\mD(\mF,\mW)=\mD(\mF,\mW')$. \end{lemma}

Unlike this result, Lemmas \ref{hsf independence} and \ref{cu vs cij} require  $(\mF, \mW)\in IA$.

\begin{proof} i) We first consider the case where $w_i\in \mW$, $R_p$-disc slides over $w_j$.  A disc slide of $w_i$ over $w_j$ is determined by an arc $\alpha$ from $w_i$ to  $w_j$, where $i\neq j$.  For now assume that $\alpha$ is efficient.  This means that if we are in  $\mR$-framed finger form with the $\mW$ discs standard, then $\inte(\alpha)\cap (\mW\cup \mW^1)=\emptyset$ where $\mW^1$ is the standard full switch. 

If $w_i$ is slid over $w_j$ and both are uncross discs, then it is evident that $\bC$ is unchanged. We will show that $\mW^1$ is $R_p$-disc slide equivalent to a $\mW^2$ which is an \textbf{n}-switch to $\mW'$. It then  follows that $I(\mF,\mW)=I(\mF, \mW^1)=I(\mF, \mW^2)=I(\mF,\mW')$, the second equality following from Lemma \ref{hsf independence}.  Let $\mW'_{pp}=\{w_1', \cdots, w_n'\}$ and $\mW^1_{pp}=\{w_1^1, \cdots, w_n^1\}$.  To construct $\mW^2$ need to remove the intersections $\mW'\cap \mW^1\setminus \mR\cap \mG$ and these occur between $\partial w_i'\cap R_p$ and $\partial w_j^1\cap R_p $ and $\partial w_{j+1}^1\cap R_p$ or just $w_j^1$, if $j=n$.  If $i<j$, then we obtain $\mW^2=\{w_1^2, \cdots, w_n^2\}$ by letting $w_q^2=w_q^1$ if $q\neq j$ or $j+1$ and respectively obtaining $w_{j+1}^2$  and $ w_j^2$ from $w_{j+1}^1$ and $w_j^1$ by $R_p$-disc slides over $w_i^1, w_{i-1}^1,  \cdots, w_1^1$.  In a similar manner if $j<i$, then we remove the non $\mR\cap\mG$  intersections of $\mW^1$ with $w'_i$ at the cost of creating  intersections with $w'_n$, but these are removed as in the proof of Lemma \ref{ia to ea}.  

If $w_i$ is an uncross disc and $w_j$ is a cross disc or both are cross discs, then $\mW^1$ serves as a full switch for both $\mW$ and $\mW'$ and hence $\mD(\mF, \mW)=\mD(\mF, \mW^1)=\mD(\mF, \mW')$ since these equalities hold at the $I$ and $C$ levels.   

It remains to consider the case of cross disc sliding over an uncross disc.   We can assume that $p=1$.  By Lemma \ref{slide ct invariance}, $\textbf{c}(\mF, \mW')=\textbf{c}\mfw$, hence we need  to show that $CU(\mF,\mW)$ changes if and only if $\I\mfw$ changes.     Now assume that the cross disc is $w_{12}\in \mW_{12}$ and the uncross disc is the $j$'th disc in the ordered $\mW_{11}=\{w_1, \cdots, w_n\}$.  Let $\mW^1=\{w_1^1, \cdots, w^1_n\}$ and let  $\mW^2$ denote the full-switch of $\mW'$ constructed as follows.  First, $w'_{12}$ intersects each of $w_j^1, w_{j+1}^1$ in single points and these lie in $R_1$, although only $w_j^1$ is intersected if $j=n$.  These points can be removed and $w_j^2, w_{j+1}^2$ can be constructed by respectively \emph{banding} them to \emph{caps} about a point of $w_{12}\cap G_2$ with the resulting intersections with $G_2$ \emph{tubed off} with copies of dual the sphere $G^*_2$.  Figure \ref{fig:cross uncross slide} shows an example where $j=2$.  Figure \ref{fig:cross uncross slide} a) shows $\mW_{11}, \mW_{11}^1$ and $w_{12}$.  Figure \ref{fig:cross uncross slide} b) shows the disc slid $w_{12}'$ obtained by disc siding $w_{12}$ over $w_2$ and Figure \ref{fig:cross uncross slide} c) shows the construction of $w^2_2, w^2_3$, though only parts of these discs are shown.  Note that $\mW_{11}$ is Whitney germed and $\mW^1_{11} $ is finger germed.

 \begin{figure}[!htbp]
    \centering
    \includegraphics[width=.6\linewidth]{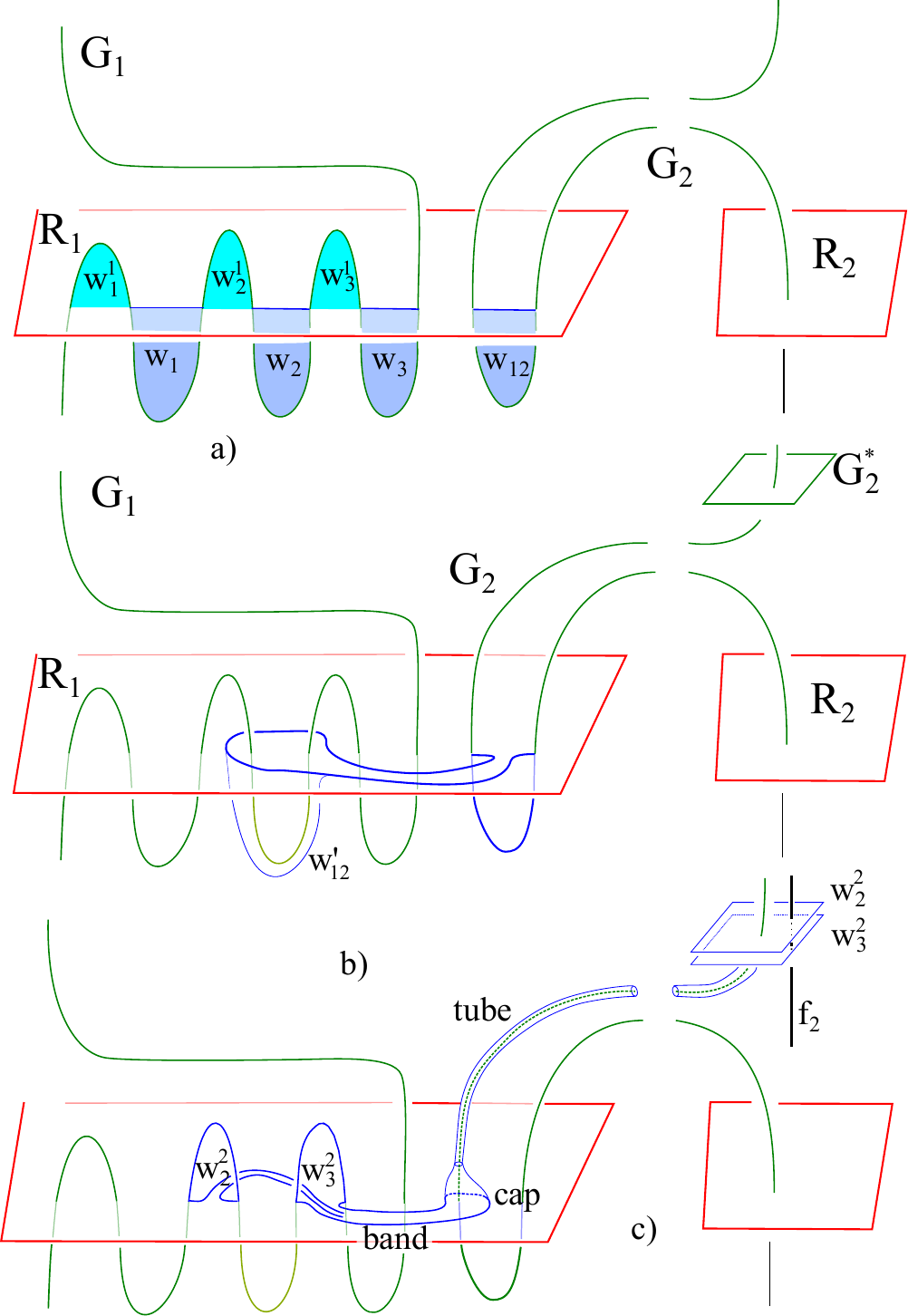}
    \caption{Constructing a switching for $\mW'$}
    \label{fig:cross uncross slide}
    \end{figure}

Now $(\mF, \mW^1), (\mF, \mW^2)\in$ IA with the IA order on $\mF_{11}\cup \mW_{11}^1$ being $f_n, w_n^1, f_{n-1}, \cdots, f_1, w_1^1$ where each $f_i$ matches $w_i$.  Similarly $f_n, w_n^2, f_{n-1}, \cdots, f_1, w_1^2$ gives the IA order on $\mF_{11}\cup\mW_{11}^2$.  To complete the proof it suffices to establish the following claims.  
\vskip 6pt
\noindent\emph{Claim 1}:  $\I\mfw=\I(\mF, \mW')$ if and only if $\hat \I\mfw=\hat \I(\mF,\mW')$ if and only if $|f_j\cap G_2^*|=0$ mod 2.
\vskip 6pt
\noindent\emph{Proof}.  The first if and only if follows from the multi-eye parity lemma.  Viewing $\mW^1,\mW^2$ as finger germed and $\mF$ as Whitney germed we see no intersections of $\mF$ with the bands and since the tubes follow arcs in $G_2$, they also have no intersection with $\mF_{11}$.  The two parallel $G_2^*$'s may intersect $\mF_{11}$, however except for intersections with $f_j$ all the others come in cancelling pairs for the calculation of $\hat I(\mF, \mW^2)$ compared with that of $\hat I(\mF, \mW^1)$.\qed

\vskip 6pt
\noindent\emph{Claim 2}:  $CU(\mF, \mW^1)=CU(\mF, \mW^2) $ if and only if $|f_j\cap G_2^*|=0 $ mod 2.
\vskip 6pt
\noindent\emph{Proof}.  It is immediate that $CU^u_{pq}(\mF, \mW^1)=CU^u_{pq}(\mF, \mW^2)$ unless $u=1$ and $p,q=2,1$.  We now calculate $CU^1_{21}(\mF, \mW^2)-CU^1_{21} (\mF, \mW^1)$ where numbers below are mod 2.  Ignoring cancelling intersections along the bands the points $\mW_{11}^2\cap \mF_{12}\cap R_1\setminus (\mW_{11}^1\cap\mF_{12}\cap R_1)$ come from the caps. Each of $w^2_j\cap R_1$ and $w^2_{j+1}$ picks up a single new intersection with $\mF_{12}$ and that's it.  From the definition of $C^1_{21}$ it follows that $C^1_{21}(\mF, \mW^2)-C^1_{21}(\mF, \mW^1)=|\mW_{21}\cap f_j\cap G_1|$.  

Let $V$ be the closed surface which is the union of $f_j, w_j$ and the checker board surfaces $G^j$ and $R^j$ which are respectively in $G_1$ and $R_1$ and are bounded by $(f_j\cup w_j)\cap G_1$ and $(f_j\cup w_j)\cap R_1$.      Then $[V]=\sum_{i=1}^k a_i[G_i] + b_i[R_i]\in H_2(\#_k\stwostwo,\BZ_2)$, where for $i\ge 2.\ a_i=|G^j\cap R_i|$ and $b_i=|R^j\cap G_i|$.  Since $(\inte(f_j)\cup\inte(w_j))\cap (\mR\cup\mG)=\emptyset, G_2^*\cap \mW=\emptyset$ and $G_2^*\cap (R_1\cup G_1)=\emptyset$ it follows that $|f_j\cap G_2^*|=|V\cap G_2^*|=a_2$.  Since $R_2\cap G_1=\partial \mW_{21}\cap G_1$ and the elements of $\mW$ are pairwise disjoint it follows from a $\BZ_2$-intersection argument that $|\mW_{21}\cap f_j\cap G_1|=a_2$. We conclude $|\mW_{21}\cap f_j\cap G_1|= |f_j\cap G_2^*|$. 
\vskip 6pt
It remains to consider the case that $\alpha=f(\alpha_1)$, where $\alpha_1$ is an efficient path from $w_i $ to $w_j$ and $f\in \Diff(R_1, N(\mG))$.   Let $\mW''$ denote $\mW$ modified by disc sliding $w_i$ over $w_j$ using the path $\alpha_1$.  There exists a diffeomorphism $\tau$ of $\#_k\stwostwo$ which fixes $\mW$ pointwise and $\mR\cup\mG$ set wise such that  $\tau|R_1=f$ and hence $\mD(\tau^{-1}(\mF),\mW)=\mD(\tau^{-1}(\mF), \tau^{-1}(\mW))=\mD(\mF,\mW)$.  Note that $\tau(\mW')=\mW''$ since  $\mW'$ and $\mW''$ are determined by the paths $\alpha$ and $\alpha_1$ and $\tau(\alpha_1)=\alpha$. It follows that $\mD(\mF,\mW'')=\mD(\mF, \tau(\mW'))=\mD(\tau^{-1}(\mF),\mW')=\mD(\tau^{-1}(\mF),\mW)=\mD(\mF, \mW)$, where the third equality follows from the efficient case.
\vskip 6pt

By Lemma \ref{d boundary twisting} $\mD$ is invariant under boundary twisting.
\vskip 6pt
Now suppose that $\mF'$ is obtained from $\mF$ by a $\mG$-disc slide sequence.  We have $\mD(\mF', \mW)=\mD(\mF', \mW^1)=\mD(\mF, \mW^1)=\mD\mfw$ where $\mW^1$ is a switching of $\mW$ where the middle equality follows from Lemma \ref{d disc slide} and the first and last equalities follow from Lemma \ref{d switching}.
\vskip 8pt
ii) Since $\mF_c,\mW_c$ have common boundary after isotopy the interior of $\alpha$ is disjoint from the cross discs, so $C\mfw$ is unchanged.  The proof that $\I\mfw$ is unchanged follows as in i).   \end{proof}

\begin{lemma}  \label{multieye multislide invariance} i) Let $\mF, \mW, \mW'$  be complete systems of  Whitney germed Whitney discs with $\mF_u, \mW_u, \mW'_u$ similarly matched and $\mW_c=\mW'_c$.   Suppose that the ordered $\mW$ has a full Whitney framed switching $\mW^*$ such that $\mW'\cap\mW^*\subset \mR$, then with the induced ordering on $\mW'$ we have $\mD(\mF,\mW)=\mD(\mF,\mW')$.
 
 ii)  In addition assume that  $\mF_c, \mW_c$ have common boundary.   If the ordered $\mW$ has a full Whitney framed    switching $\mW^*$ such that $\mW'\cap\mW^*\subset \mG$, then with the induced ordering on $\mW'$ we have $\mD(\mF,\mW)=\mD(\mF,\mW')$.\end{lemma}
 
 \begin{proof} i) We will show there exist $\mR$-disc slide sequences $\mW'=\mW'_1, \cdots, \mW_p'$ and $\mW^*=\mW^*_1, \cdots, \mW^*_q$ such that $\mW_q^*$ is a switching of $\mW_p'$.  It then follows that $\mD(\mF, \mW')=\mD(\mF, \mW_p')=\mD(\mF, \mW_q^*)=\mD(\mF, \mW^*)=\mD(\mF, \mW)$ where the first   equality follows from Lemma \ref{multieye slide invariance} i).  The third equality also follows from that lemma though proved earlier since $(\mF, \mW^*)\in$ IA.

The points of $\mW^*\cap \mW'\setminus \mR\cup\mG$ are among the uncross discs, since $\mW_c= \mW'_c$ and $\mW^*$ being a switching of $\mW$ has new uncross discs but the same cross discs.  	The proof of multi-eye Lemma \ref{ia to ea} shows that when $(\mW', \mW^*)\in IA$ and $\mW'\cap\mW^*\subset \mR$, a series of $\mR$-disc slides and $G$-twisting to both,  reduce them to $(\mW'_p, \mW^*_q)\in$ EA such that $\inte(\mW'_p)\cap\inte(\mW^*_q)=\emptyset$.  	Since the discs being slid are ones that have excess intersections, these discs are uncross discs, thus $\mW'_p$ and $\mW^*_q$ have the same cross discs.  It follows that $\mW'_p$ is a switching of $\mW^*_q$. 	
\vskip 8pt
ii) The proof follows as in i) except that the disc slides are $\mG$-disc slides and that all the disc slides from $\mW'$ to $\mW'_p$ and $\mW^*$ to $\mW^*_q$ can be taken to be of uncross discs over other discs, hence Lemma \ref{multieye slide invariance} ii) applies. \end{proof} 

\begin{section} {Clasping}\end{section}

We now define \emph{clasping} which replaces two discs of either $\mW$ or $\mF$ by another pair of discs.  Like disc sliding the resulting say $\mW'$ is similarly matched with $\mW$.  Between the two operations we have considerable freedom in modifying $\mW\cap\mR$ and $\mW\cap \mG$  as well as $\mF\cap\mR$ and $\mF\cap \mG$.

\begin{definition}  \label{clasp} Given   a complete ordered system $\mW$ of Whitney germed Whitney discs with $\mW_1$ a full \textbf{n}-switch, we define the notion of a \emph{$\mW\ \mG$-clasping}. A $\mW\ \mR$-clasping is defined similarly with the roles of $\mR$ and $\mG$  reversed.  In the analogous way we define clasping for finger germed Whitney discs.  Roughly, if $\mW'$ is obtained from $\mW$ by clasping $w_i$ and $w_j$, then $w_i'$ is obtained from $w_i$ by first attaching a band beginning at $\partial w_i\cap \mG$ and ending at a disc which links a point $v\in w_j\cap\mR\cap \mG$, then the resulting intersection with $\mR$ is tubed off with a dual sphere.  Further, $w_j'$ is obtained in a like manner so that $w_i'\cap w_j'=\emptyset$.  See Figure \ref{fig:gclasp}.

More precisely construct  $\mW'$ as follows.  To start with, define $w_p'=w_p$ unless $p\in i,j$.  Here $w_i, w_j$ intersect the same component $G$ of $\mG$ and respectively intersect components $R_1$ and $R_2$ of $\mR$ with possibly $R_1=R_2$.   The data for $w_i' $ is  a \emph{clasp arc}, a \emph{clasp cap}, a \emph{clasp tube} and a \emph{dual sphere}.  See Figure \ref{fig:gclasp}.  The clasp arc $\alpha \subset G$ is an embedded path from $w_i$ to $v\in w_j\cap\mR\cap \mG$, with $\inte(\alpha)\cap \mW=\emptyset$.  A clasp cap for $w'_j$  is a 2-disc $D_v$ obtained from a small disc $\subset G$ about $v$ whose interior is pushed slightly down, i.e. to the -side of $G$.  We then  band $w_i$ to $D_v$  by a band that follows $\alpha$ and then the segment $\subset w_j$ with endpoint $v$ to obtain a disc $w_i^a$.  We perturb $w_i^a$ slightly to obtain an embedded disc, which is almost a Whitney disc except that it intersects $R_2$ in a point in its interior.  We abuse notation by continuing to call this disc $w_i^a$.  A clasp cap for $w'_i$  is a 2-disc $D_u$ obtained from a small disc $\subset G$ about a point $u\in w_i\cap R_1 \cap G$ whose interior is pushed slightly down.  Our $w^a_j$ is obtained by banding $w_j$ to $D_u$ by band that starts at $w^a_i\cap w_j$, then follows the segment of $w'_i\setminus w_j$  with endpoint $u$.  Again, $w_j^a$ is perturbed slightly to be a Whitney disc except for its intersection with $R_1$.   Note that the local 3D projections of $w^a_i, w^a_j$  intersect in an embedded arc $\beta$ and their intersections in $G$ interleave.    By pushing part of $\inte(w_i^a)$ (resp. $\inte(w_j^a)$) slightly into the past (resp. future) near $\beta$  we obtain disjoint discs $w_i^b, w_j^b$ each of which intersects $\mR$ at one point in its interior.  

 We eliminate these intersection points and obtain $w_i'$  (resp. $w_j'$) by tubing it to a copy of an  $\mG_{\mW, \mW_1}$ dual sphere $R_2^*$ (resp. $R_1^*$) to  $R_2$ (resp. $R_1$) along a path $\gamma_i\subset R_2$, (resp. $\gamma_j\subset R_1$) disjoint from $\mW\cup\mW_1\cup (\mG\cap\mR)$. Also, $(R_1^*\cup R_2^*)\cap (\mW\cup\mW_1)=\emptyset$.      We call $\gamma_i,\gamma_j$ \emph{tube guide paths}.  We say that $\mW'$ is obtained by \emph{clasping $w_i$ to $w_j$}.  If  instead we pushed $w_i^a$ (resp. $w_j^a$) slightly into the future (resp. past), then we call this a \emph{- clasping}.  Note that this is a clasping of $w_j$ to $w_i$.  Without specifying the clasp tube and dual spheres this clasping is called an \emph{$(\alpha, u,v)$-clasping}.
 
If the clasp arc is between two cross discs (resp. cross and uncross discs, resp. two uncross discs), then we call this a \emph{cross/cross} (resp. \emph{cross/uncross}, resp. \emph{uncross/uncross}) clasping.
 \end{definition}

 \begin{figure}[!htbp]
    \centering
    \includegraphics[width=0.7\linewidth]{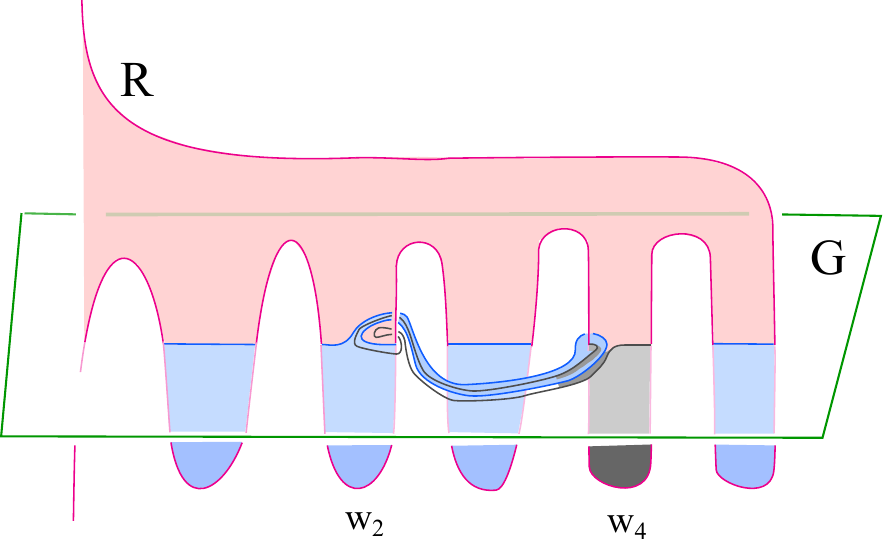}
    \caption{Partial construction of a $\mW$ $\mG$-clasping}
    \label{fig:gclasp}
\end{figure}

\begin{remarks} \label{clasp rea}  i) By construction $\mW$ has a full n-switch $\mW_1$ such that $(\mW', \mW_1)\in R$-EA and $\mW'\cap\mW_1\subset \mG$.   

ii) If $\mW$ is finger germed, then the construction is similar except that $w^a_i$, $w^a_j$ are pushed to the $+$side of $G$.  \end{remarks}

The following definition is given in the generality needed for this paper.

\begin{definition} \label{free point} (Cross free point clasping) Define $\mW\ R_i$-\emph{cross free point clasping} as follows where $\mW$ is Whitney germed.  Let $\alpha\subset R_i$ be an embedded path from $w\in \mW_c$ to $x\in R_i\cap G_i$ where $\inte(\alpha)\cap \mW=\emptyset$ and $x\notin\mW$.  We obtain $\mW':=(\mW\setminus w)\cup w'$ were $w'$ is built from $w$ and a \emph{clasp arc}, \emph{clasp cap}, \emph{clasp tube} and a dual sphere.  The clasp cap is a compact disc neighborhood $\subset G_i$ of $x$ pushed slightly down.  Construct $w^a$ by banding $w$ to the cap following $\alpha$.  The resulting $w^a\cap G_i$ intersection is then tubed off to a dual sphere $G_i^*$ via a tube disjoint from $\mW$.  This paper will only consider cross free point claspings for $\mW$  when $\mfw\in IA$.    \end{definition}

\begin{lemma} \label{cij clasp} \emph{($c_{ij}$ Clasp invariance)} Each $c_{ij}$ is invariant under $\mF$ $G$-clasping and $\mW$ $R$-clasping.\end{lemma}

\begin{proof}  It suffices to consider the representative case of $i,j=2,1$.  The only claspings that effect $\hat \mF_{21}, \hat \mW_{12}, B^G_{21}\cap (\mR\setminus R_2)$ or $B^R_{12}\cap (\mG\setminus G_1)$ are $G_1$-claspings of an $f_{21}$ with an $f_{p1}$ or $R_1$-claspings of a $w_{12}$ with a $w_{1q}$.  
\vskip 6pt
\noindent\emph{Case 1}.  An $f_{21}$ $G_1$-clasps an $f_{p1}$, possibly itself.
\vskip 6pt
\noindent\emph{Proof of Case 1}.  To start with $\hat W_{21}$ is unchanged as is $B^R_{12}$.  The only change to $\mF_{21}$ is on $f_{21}$ and $f_{p1}$, if $p=2$.  Recall that the change is as follows.  A small disc is removed from $f_{21}$ near $G_1$ and replaced by a feeler which intersects $R_p$ near $x\in R_p\cap G_1$.  The intersection with $R_p$ is then removed by tubing to a copy of $R^*_p$.  Since the feeler lies close to the $+$-side of $G_1$ it is disjoint from $\mW$.  The tube may intersect $\mW$, however it can only intersect $\mW_{12}$ if $p=1$.  In that case the number of intersections $= 1$ mod 2 if and only if $x\in B^R_{12}$.  This will be addressed in the next paragraph.  Finally, the added copy of $R_p^*$ may intersect $\hat\mW_{12}$.  If $p\neq 2$, then under the clasp $B_{21}^G$ either gains or loses $x\in R_p$, hence $\hat \mF_{21}\setminus \mF_{21}$ gains or loses a copy of $R^*_p$.  Hence mod 2, the intersections of $\hat\mW_{12}$ with the $R^*_p$ added to $f_{21}$ are counter balanced by those from the change in $\hat \mF_{21}\setminus \mF_{21}$.  If $p=2$ and distinct $f_{21}$'s clasp, then each picks up a copy of $R_2^*$ while if $f_{21}$ self-clasps, then it picks up two copies of $R_2^*$.
\vskip 6pt
Now suppose $p=1$ and $x\in B_{12}^R$.  If $B_{21}'$ (resp. $A_{21}'$) denotes the $B_{21}^G$ (resp. $A_{21}$) following the clasp, then either $x\in B_{21}^G$ and $x\notin B_{21}'$ or vice versa.  In either case $A'_{21}-A_{21}=1 \mod$ 2.  Thus, the intersections of the tube with $\hat W_{12}$ is balanced by the change in $A_{21}$.  If $x\notin B_{12}^R$, then $A_{21}$ is unchanged.\qed
\vskip 6pt
\noindent\emph{Case 2}.  A $w_{12}$ $R_1$-clasps a $w_{1q}$, possibly itself.
\vskip 6pt
\noindent\emph{Proof of Case 2}.  Here $\hat \mF_{21}$ and $B_{21}^G$ are unchanged.  As in Case 1, adding the feeler to $\mW_{12}$ does not create intersections with $\hat\mF_{21}$.  The tube will create intersections with $\hat\mF_{21}$ only if $q=1$.  These intersections will be exactly  balanced by changes in $A_{21}$.  The $G_q^*$ added to $w_{12}$ will be balanced by the $G_q^*$ added or deleted from $\hat \mW_{12}\setminus \mW_{12}$ if $q\neq 2$.  If $q=2$, then two $G_2^*$'s will be added to $\mW_{12}$.\end{proof}

\begin{definition}  (Clasp arc sliding) Let $(\mF', \mW)$ be obtained from $(\mF, \mW)$ by $\mF\ G_i$-clasping from $f_p$ to $f_q$ using clasp arc $\alpha$.  We say that $(\mF'', \mW)$ is obtained from $(\mF', \mW)$ by \emph{clasp arc sliding} if it arises by $G_i$-clasping $f_p$ to $f_q$ using $\alpha_1$ which is constructed by sliding $\alpha$ over $f\cap G_i $ where $f\in \mF\setminus f_p, f_q$ as in Figure \ref{fig:clasp arc}  Here the same cap, tube and dual sphere are used in both claspings. In a similar manner define obtaining $(\mF, \mW'')$ from $(\mF, \mW')$  by clasp arc sliding, where $(\mF, \mW')$ is obtained from $(\mF, \mW)$ by $\mW\ R_j$-clasping.  \end{definition}

  \begin{figure}[!htbp]
    \centering
    \includegraphics[width=.8\linewidth]{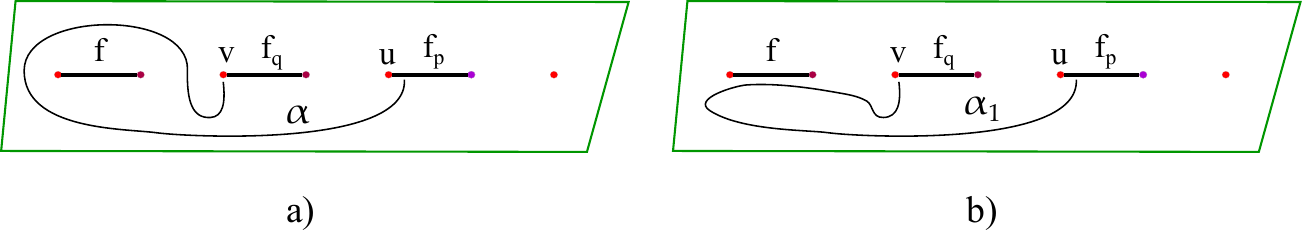}
    \caption{Clasp arc sliding}
    \label{fig:clasp arc}
\end{figure}
Since clasp arc slidings are the result of two disc slides we obtain:

\begin{lemma}  \label{clasp arc} If $(\mF'',\mW)$ is obtained from $(\mF', \mW)$ by $G$-clasp arc sliding, then $\bI(\mF'', \mW)=\bI(\mF', \mW)$ and $\bC(\mF'', \mW)=\bC(\mF', \mW)$.  The analogous statement holds when $(\mF,\mW'')$ is obtained from $(\mF, \mW')$ by $R$-clasp arc sliding. \qed\end{lemma}

\begin{proposition} \label {d invariance}  \emph{($\mD$ invariance under clasping)}   i)  Suppose $(\mF,\mW)$  has either $\mF, \mW$ similarly matched and $\mW$ ordered or $(\mF,\mW)\in$ IA.  If $(\mF', \mW')$ is obtained from $(\mF, \mW)$ by either an $\mF\ G$-clasp or a $\mW\ R$-clasp of uncross/cross or cross/cross type then, $\mD(\mF, \mW)=\mD(\mF', \mW')$.   The clasping may be a cross free point clasp.  When $\mF$ and $\mW$ are similarly matched, $\mW'$ has the ordering induced from $\mW$.

ii a) If $\mF_u$ and $\mW_u$ are similarly matched, $\mW$ ordered and $\mW'$  obtained from $\mW$ by an uncross/uncross $\mR$-clasping, then with the ordering induced from $\mW,\ \mD\mfw=\mD(\mF, \mW')$.

ii b)  If  $\mF_c$ and $\mW_c$ also have common boundary and   $\mW'$  is obtained from $\mW$ by an uncross/uncross $\mG$-clasping, then with the ordering induced from $\mW,\ \mD\mfw=\mD(\mF, \mW')$.\end{proposition}

\noindent\emph{Proof of ii)} a)  By definition  $\mW$ has a Whitney framed   switching $\mW^1$ such that $\mW'\cap \mW\subset \mR$.  The conclusion now follows from Lemma \ref{multieye multislide invariance} i).  For ii b) apply Lemma \ref{multieye multislide invariance} ii). \qed
\vskip 8pt

The following result reduces the rest of proof to the case of $(\mF, \mW)\in$ IA.

\begin{lemma} \label{clasp commutation 2}\emph{(Clasp - switching commutation)} Suppose that $\mF$ and $\mW$ are similarly matched, $\mW$ is ordered and $\mfw$ has $k$-eyes.  

a)  If  $(\mF', \mW)$ is obtained from $(\mF, \mW) $ by  a $\mF\ G$-cross/cross (resp. uncross/cross) clasping, then there is a  switching $\mW_1$ of $\mW$ such that $(\mF', \mW_1)$ is obtained from $(\mF, \mW_1)$ by an $\mF\ G$ cross/cross (resp. uncross/cross) clasping.

b) If $(\mF, \mW')$ is obtained from $(\mF, \mW) $ by  a $\mW\ R$-cross/cross (resp. uncross/cross) clasping, then there exists $(\mF, \mW'')$ obtained by clasp arc sliding and switchings $(\mF, \mW_1), (\mF, \mW''_1)$ respectively of $(\mF, \mW), (\mF, \mW'')$ such that $(\mF, \mW''_1 )$ is obtained from $(\mF, \mW_1)$ by a $\mW\ R$-cross/cross (resp. uncross/cross, resp. uncross/cross or cross free point) clasping.  \end{lemma}  

  \begin{figure}[!htbp]
    \centering
    \includegraphics[width=.9\linewidth]{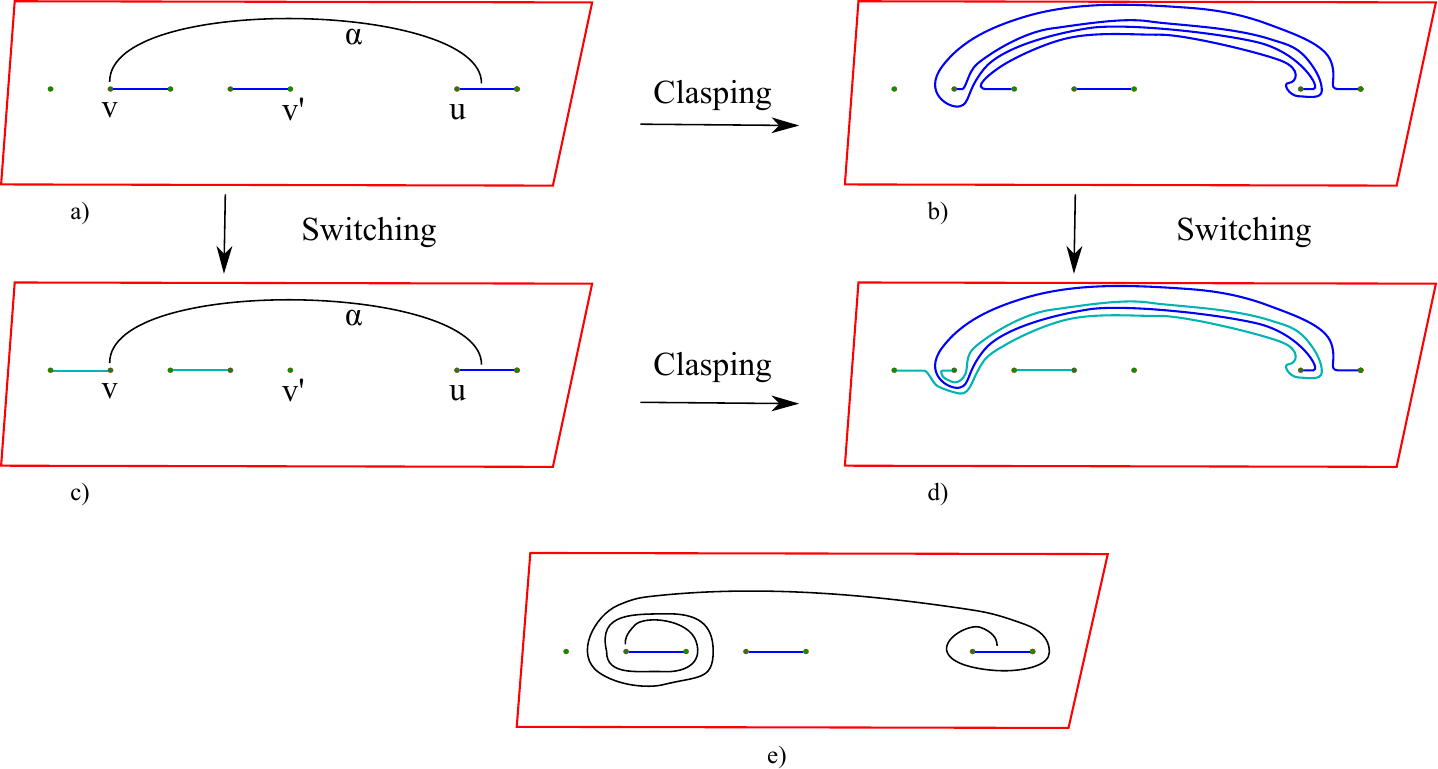}
    \caption{Clasp - switching commutation}
    \label{fig:clasp commutation}
\end{figure}

\begin{proof}  Part a) follows since switching commutes with $\mF$  $\mG$-clasping.  For b) we first consider the uncross/cross case.  Put $\mR\cup\mG$ in framed finger form with the $\mW$ discs in standard position.   Let $\mW^*$ denote the standard full switching with $\mW^*_S$ the switch discs.  We consider the representative case where $w_{11}\in \mW_{11}$, $w_{12}\in \mW_{12}$ and they $(\alpha, u,v)$ clasp with $v\in w_{11}\cap R_1\cap G_1$.  If $\inte(\alpha)\cap \mW^*_S=\emptyset$, then the conclusion holds without clasp arc sliding.  See Figures \ref{fig:clasp commutation} a) - d).  If $v\notin \mW^*_S$, e.g. $v=v'$ in Figure \ref{fig:clasp commutation} a),  then this is a cross free point clasping.  

If $\alpha\cap \mW^*_S\neq\emptyset$, then by a sequence of clasp arc slides we obtain a clasp arc $\alpha''$ with $\inte(\alpha'')\cap \mW_S^*=\emptyset$ up to winding about $w_{11}$.  See Figure \ref{fig:clasp commutation} e).  Let $\mW''$ denote the result of this $(\alpha'', u,v)$ clasping.  By suitably finger twisting $\mW^*$ we obtain the switching $\mW_1$ of $\mW$ such that there is a $\mW_1''$ which is both a clasping of $\mW_1$ and a switching of $\mW''$.  Again, this is a cross free point clasping if $v\notin \mW_{1_S}$.

When the clasping is of the cross/cross type, then after sliding the clasp arc off of $\mW^*_S$ the switching commutes with the clasping.\end{proof}

\begin{lemma} \emph{(IA Cross free point clasp $\mD$-invariance)} \label{free point invariance} Let $(\mF, \mW)\in$ IA with $k$ eyes and $(\mF, \mW')$ obtained by a  cross free point  $\mW\ R$-clasp.  Then for all $i, I_i(\mF, \mW)=I_i(\mF, \mW')$ and for all $i\neq j, c_{ij}(\mF, \mW)=c_{ij}(\mF, \mW')$ and $C_{ij}(\mF, \mW)=C_{ij}(\mF, \mW')$ and hence $\mD\mfw=\mD(\mF,\mW')$.  The similar conclusion holds for cross free point  $\mF$ $G$-clasping.  \end{lemma}

\begin{proof}  Let $x$ denote the free point.  It suffices to consider the representative case that $w_{12}\in \mW_{12}$ is clasping $x\in R_1\cap G_1$.  For all $i\neq j$ and $u$, each $CU^u_{ij}$ is unchanged since its defining data is unchanged.  Since only $w_{12}$ is changed, only  $c_{21}$ can possibly change.  Let $w_{12}'$ denote the clasped $w_{12}$.  It is obtained from $w_{12}$ by  attaching a band parallel to the clasp arc and then a cap about $x$ and then removing the new intersection with $G_1$  by tubing with a copy of $G_1^*$.  Since $B_{12}^R\cap G_1$ differs from $B_{12}'\cap G_1$ only at $x$ it follows that the copy of $G_1^*$ added to $w_{12}$ is canceled by a copy of $G_1^*$ added or deleted from $\hat W_{12}$.  Thus $|\hat\mF_{21}\cap \hat\mW_{12}'|-|\hat\mF_{21}\cap \hat\mW_{12}|$ is equal mod 2 to the number of times the tube crosses $\mF_{21}\cap G_1$.  That number equals 1 if and only if $x\in B_{21}^G$.  However, $x\in B_{21}^G$ if and only if $A_{21}'-A_{21}=1\mod 2$ and hence $c_{21}$ is unchanged.  Since only a cross disc is changed, the multi-eye parameter lemma implies that for each $i, I_i(\mF,\mW)=I_i(\mF, \mW')$.  \end{proof}

\noindent\emph{Proof of Proposition \ref{d invariance} continued.}  By Lemmas \ref{clasp commutation 2}, \ref{d switching} and \ref{clasp arc} we can assume that $(\mF, \mW)\in$ IA.  To complete the proof we will consider the case of an $\mF\ G$-clasp as the proof for the $\mW\ R$-clasp is similar.  We now show that the proposition holds for cross/cross claspings.   By the multi-eye parity lemma $I(\mF,\mW)$ is unchanged.   Also, each $CU^u_{ij}$ is unchanged since new intersections of $\partial \mF$ with $\partial \mW_u$ come in pairs from the band.  Finally by Lemma \ref{cij clasp}  each $c_{ij}$ is unchanged.  It remains to consider the uncross/cross clasping case.   By reindexing we can  assume that $f_{11}\in \mF_{11}$  and $f_{21}\in \mF_{21}$ are clasping.    Let $\mF_{11}\cup\mW_{11} = f_1, w_1, \cdots, f_n, w_n$ in the IA order and hence $f_{11}= f_p$ some $p$.  
\vskip 8pt
To complete the proof of the proposition it suffices to prove:
\vskip 6pt
a)  $C(\mF',\mW)=C(\mF, \mW)+\sum_{j=p}^n s_j\quad$ and 
\vskip 6pt
b) $I(\mF', \mW)=I(\mF, \mW)+\sum_{j=p}^n s_j$.
\vskip 6pt
\noindent\emph{Proof a)}.  By Lemma \ref{cij clasp} only CU can change and by our parametrization $CU_{ij}^u$ can only  change when $u=1$ and $i,j=2,1$.   Here $r_{p}'=r_p+1$ with the other $r_i$'s and $s_j$'s unchanged, the new $f'_p \cap w_{21}\cap G_1$ intersection coming from the boundary of the $f'_p$ clasp cap.  Therefore, $C'(\mF', \mW) -C(\mF, \mW)=\sum_{j=p}^n s_j$.\qed

\vskip 8pt
\noindent\emph{Proof of b)} This follows from the following three assertions, computed mod 2.
\vskip 6pt
i)  $\I(\mF',\mW)-\I\mfw=\hat \I(\mF', \mW)-\hat \I\mfw$,
\vskip 6pt
ii) $\hat I(\mF', \mW)=\hat I(\mF, \mW)+\sum_{j=p}^n |R_2^*\cap w_j|$  and 
\vskip 6pt
iii) for each $j\ge p, |R_2^*\cap w_j|=|w_j\cap\mF_{12}\cap R_1|=s_j$.
\vskip 6pt

Assertion i) follows from the parity lemma, since all new intersections of an $f\in \mF_u$ with a $w\in \mW_u$ come in pairs and arise from intersections with the band following the clasp arc.  

For ii)  note that  $f_p' $ is obtained from $f_p$ by attaching a band and clasp cap and then tubing off the resulting intersection with $R_2$ by with a copy of $R_2^*$.  The band and clasp cap have interiors disjoint from $\mW_{11}$ as does the small disc initially removed from $f_p$, thus the change in $\hat I$  comes from intersections of the $w_j$'s with $R_2^*$, but only those with $ j\ge p$ contribute to the count.

We now show iii).   Ambiently isotope $\mR$ to be in framed finger form such that $f_1, \cdots, f_n$ are the standard linearly ordered $\mF_{11}$ discs.  Let $w_j^{\textrm{std}}$ denote the standard Whitney disc for $\mR\cup\mG$ that matches $w_j$.  Now $(w_j^{\textrm{std}}\cup w_j)\cap G_1$ bounds a checker board surface $S_G$ and $(w_j^{\textrm{std}}\cup w_j)\cap R_1$ bounds a checkerboard surface $S_R$.  Define $V=w_j^{\textrm{std}}\cup w_j\cup S_R\cup S_G$.  After a small perturbation, $V$ is a closed surface such that $[V]=\sum_{i=1}^k a_i[R_i]+b_i[G_i]\in H_2(\#_k\stwostwo, \BZ_2)$ where $a_i=|V\cap G_i|$ and $b_i=|V\cap R_i|$.  
First, $|S_R\cap G_2|=|V\cap G_2|=|w_j\cap R_2^*|$.  The first equality follows since $G_2\cap(G_1\cup w_j\cup w_j^{\textrm{std}})=\emptyset$.  The second by homology and the third by construction, i.e. $R_2^*\cap (w_j^{\textrm{std}}\cup G_1\cup R_1)=\emptyset$.  Now $R_1\cap G_2$ is the boundary of the disjoint union of arcs $\mF_{12}\cap R_1$ and these arcs are disjoint from $w_j^{\textrm{std}}$.  A mod 2 intersection argument gives $|S_R\cap G_2|=|f_{12}\cap w_j\cap R_1|$.  This completes the proof of the proposition.  \qed
\vskip 8pt

\begin{lemma}  \label{cross clasping} \emph{(More cross cross clasping)} Let $\mfw$ be in embedded arc position with  $\partial \mF_c\cap \partial\mW_u=\partial \mW_c\cap\partial \mF_u=\emptyset$ and $\mF_c, \mW_c$ similarly matched. If $(\mF', \mW')$ is obtained from $\mfw$ by a cross cross clasping with clasp arc disjoint from $\partial \mF_u\cup\partial \mW_u$, then $\mD\mfw=\mD(\mF', \mW')$ and in particular $\I\mfw=\I(\mF',\mW'), c\mfw=c(\mF', \mW')$ and $CU\mfw=CU(\mF',\mW')=0$.\end{lemma}

\begin{remark}Unlike Proposition \ref{d invariance} this clasping can be either a $\mW$ $\mG$-clasping or an $\mF$ $\mR$-clasping.\end{remark}

\begin{proof} Since both $\mfw$ and $(\mF', \mW')$ are in embedded arc position, $CU\mfw=CU(\mF',\mW')=0$.  Since $\mF_u=\mF'_u$ and $\mW_u=\mW'_u$ it follows that $I(\mF,\mW)=I(\mF',\mW')$.  Thus it remains to prove $c(\mF,\mW)=c(\mF',\mW')$.  If $(\mF',\mW')$ is obtained by either an $\mW$ $\mR$-clasping or $\mF$ $\mG$-clasping, then the result follows from Proposition \ref{d invariance}.  As the proof for $\mF$ $\mR$-clasping is similar, we only give the proof for $\mW$ $\mG$-clasping. For concreteness assume that $w\in \mW_{pr}$ and $v\in \mW_{qr}$ clasp where $r=1, p=2$ and $q=2$ or $3$.

\begin{notation}  In what follows a ' will denote data associated to $\mW$, e.g. $w', v'$ denote the clasped $w,v$ and $B_{ij}^R$ (resp. $B_{ij}^{R'}$) denotes the $B_{ij}^R$ associated to $\mfw$ (resp. $(\mF', \mW')$). Also,  $G^*_{i, \mW}$ or $G^*_{i, \mW'}$ will denote the order induced dual spheres to $G_i$ with respect to $\mW$ or $\mW'$.  That notation  suppresses the full switchings which are in the background.  See Definition \ref{dual system} and Remark \ref{dual remark}.  \end{notation}

Construct $G^*_{i, \mW}$ and $R^*_{i,\mW}$ by giving $\mW$ the IA order, then  putting $\mW$ in $\mG$-framed finger form with $\mW$ the standard Whitney discs and $\mW^*_S$ the standard full switch discs and finally using the natural dual spheres.  Note that $R^*_{q,\mW}$ and $R^*_{p,\mW}$ are respectively  the dual spheres used to construct $w'$ and $v'$.  For all $i$ we can take $R^*_{i, \mW'}=R^*_{i,\mW}$ and for $i\neq 2,3$ we can take $G^*_{i, \mW'}=G^*_{i,\mW}$; however, for $i=2,3$ the dual spheres $G^*_{i, \mW}$ are problematic  because $v'\cap G^*_{2,\mW}\neq\emptyset$ and $G^*_{3,\mW}\cap w'\neq \emptyset$.

We now construct the  dual spheres $G^*_{p,\mW'}$ and $G^*_{q,\mW'}$.   Suppose first that  $p=2$ and $q=3$.    Construct $G^*_{2,\mW'}$ by starting with $G^*_{2,\mW}$ and tubing off $v'\cap G^*_{2,\mW}$ with a dual sphere $S_{v'}$ of $v'$.    In a similar manner construct $G_{3,\mW'}^*$ by starting with $G^*_{3,\mW}$ and tubing off the intersection with $w'$ with a dual sphere $S_{w'}$ of $w'$. As in \cite{Qu},  $S_{w'}$ is obtained by taking two parallel copies $\Rs_a, \Rs_b$ of $G^*_{1,\mW'}=G^*_{1,\mW}$ and tubing off their intersections with $G_1$ by a tube that follows an arc $\gamma_w\subset G_1$ such that $|\gamma_w\cap w'\cap G_1|=1$ and $\gamma_w\cap (\mW'\setminus w')=\emptyset$ with $\partial \gamma_w=(\Rs_a\cup \Rs_b)\cap G_1$.  The construction of $S_{v'}$ is similar and we can construct   $S_{w'}$ and $S_{v'}$ to be disjoint. If $p=q=2$, then set $G^*_{i,\mW'}=G^*_{i, \mW}$ for $i\neq 2$ and construct $G^*_{2,\mW'}$ by starting with $G^*_{2,\mW}$ and tubing off the two   intersections with $v'\cup w'$  with $S_{w'}$ and $S_{v'}$.  Note that since $\mfw$ is in embedded arc position with the cross discs disjoint from the uncross discs this can be done so $\gamma_w\cup\gamma_v$ are disjoint from the boundaries of the uncross discs.

\vskip 6pt
We now show $c\mfw=c(\mF',\mW')$.  Since $\mfw$ and $(\mF',\mW')$ are in embedded arc position, we have for $i\neq j, A_{ij}=A'_{ij}=0$.  In what follows all intersection counts are mod 2.

\vskip 8pt
\noindent\emph{Case 1}: $p=q=2$
\vskip 6pt
\noindent\emph{Proof of Case 1}.  First observe that $\mF=\mF' $ and for all $i\neq j$ and $p\neq i$, $|B_{ij}^G\cap R_p|=|B_{ij}^{G'}\cap R_p|$  and hence $\hat\mF_{ij}=\hat\mF'_{ij}$.  

We now show for all $r\neq s$ and $(r,s)\neq (1,2)$ that $c_{rs}\mfw=c_{rs}(\mF',\mW')$.  We do this by first showing for all $i$ and $r\neq s$,  $|\hat\mF'_{rs}\cap G^*_{i,\mW'}|=|\hat\mF'_{rs}\cap G^*_{i,\mW}|$. Assuming that,  since $|B_{ij}^R\cap G_p|=|B_{ij}^{R'}\cap G_p|$ and $\mW_{pq}=\mW'_{pq}$ for  $(p,q) \neq (2,1)$,  it then follows that $c_{rs}(\mF',\mW')=|\hat\mF'_{rs}\cap\hat\mW'_{sr}|=|\hat\mF'_{rs}\cap(\hat\mW'_{sr}\setminus\mW'_{sr})|+|
\hat \mF'_{rs}\cap \mW'_{sr}|=|\hat\mF_{rs}\cap(\hat\mW_{sr}\setminus\mW_{sr})|+|\hat\mF_{rs}\cap\mW_{sr}| =|\hat\mF_{rs}\cap\hat\mW_{sr} |=c_{rs}\mfw$.

We have $|\hat\mF'_{rs}\cap G^*_{i, \mW'}|=|\hat\mF'_{rs}\cap G^*_{i,\mW}|$ unless $i=2$.  Next $|\hat\mF'_{rs}\cap G^*_{2, \mW'}|=|\hat\mF'_{rs}\cap G^*_{2, \mW}|+|\mF'_{rs}\cap(S_{v'}\cup S_{w'})|+|(\hat\mF'_{rs}\setminus\mF'_{rs})\cap (S_{v'}\cup S_{w'})|$, since the tubes from $G^*_{2,\mW}$ to $S_{v'}$ and $S_{w'}$ can be chosen disjoint from $\hat\mF'_{rs}$.  Since $\mF_c$ and $\mW_c$ are similarly matched the $f\in \mF'_c$ which matches $w' $  intersects $S_{w'}$ once but all the other $f\in \mF'_c$   intersect $S_{w'}$ 0 times.  Similarly, $S_{v'}$ intersects the matching disc to $v'$ in $\mF'_c$ once  and the other discs in $\mF'_c$ 0 times, thus for all $r\neq w$, $|\mF'_{rs}\cap (S_{v'}\cup S_{w'})|=0$.  Finally $S_{v'}$ and $S_{w'}$ being homologically trivial implies $|\hat\mF'_{rs}\setminus \mF'_{rs})\cap(S_{v'}\cup S_{w'})|=0$.

Since $c=\sum_{j<i} c_{ij}$ we need to also consider the case $r>p$.  Given that for $r=1$ and $p=2$ we have already shown for $(i,j)\neq (1,2), c_{ij}\mfw=c_{ij}(\mF', \mW')$ establishing Case 1 is equivalent to additionally showing that $c_{12}\mfw=c_{12}(\mF', \mW')$.  To do this we need to also account for the change $w, v\in \mW_{21}$ to $w', v'$  when comparing $|\hat\mF_{12}\cap\hat\mW_{21}|$ with $|\hat\mF'_{12}\cap\hat\mW'_{21}|$.  First recall that $\hat\mF'_{12}=\hat\mF_{12}$.  Next, neither the clasp bands nor clasp caps pick up intersections with $\hat\mF_{12}$, since $\mW$ is Whitney germed and $\mF$ is finger germed.  Since $p=q=2$, the clasp tubes have no intersections with $\hat \mF_{12}$.  Since the dual spheres for the claspings are parallel, intersections with $\mF_{12}$ come in cancelling pairs.  It follows that $|w\cap\hat\mF_{12}|=|w'\cap\hat\mF_{12}|$ and $|v\cap\hat\mF_{12}|=|v'\cap\hat\mF_{12}|$.\qed
\vskip 8pt
\noindent\emph{Case 2}: $p=2$ and $q=3$
\vskip 6pt
\noindent\emph{Proof of Case 2}.  The result follows from the following four claims, the first of which lists some basic facts.  
\vskip 8pt
\noindent\emph{Claim 1}:  i) $|B_{21}^G\cap R_3|-|B_{21}^{G'}\cap R_3|=1$ and if $i\neq 3$, then $|B_{21}^G\cap R_i|=|B_{21}^{G'}\cap R_i|$
\vskip 6pt

ii) $|B_{31}^G\cap R_2|-|B_{31}^{G'}\cap R_2|=1$ and if $i\neq 2$, then $|B_{31}^G\cap R_i|=|B_{31}^{G'}\cap R_i|$
\vskip 6pt
iii) If $(i,j)\neq (2,1)$ or $(3,1)$, then $B_{ij}^G=B_{ij}^{G'}$
\vskip 6pt
iv) $B_{ij}^R=B_{ij}^{R'}$ all $i,j$
\vskip 6pt
v) a) $\hat\mF_{ij}'=\hat\mF_{ij}$, unless $(i,j)=(2,1)$ or $(3,1)$
\vskip 6pt
v) b) $\hat\mF_{21}'=\hat\mF_{21}\cup R^*_{3,\mF}$
\vskip 6pt
v) c) $\hat\mF'_{31}=\hat\mF_{31}\cup R^*_{2,\mF}$
\vskip 6pt
vi) a) $\mW'_{ij}=\mW_{ij}$, unless $(i,j) = (2,1)$ or $(3,1)$
\vskip 6pt
vi) b) $\mW'_{21}=w' \cup \mW_{21}\setminus w$
\vskip 6pt
vi) c) $\mW'_{31}=v' \cup \mW_{31}\setminus v$
\vskip 6pt
vii) $|\hat\mF'_{rs}\cap G^*_{i,\mW'}|=|\hat\mF_{rs}\cap G^*_{i, \mW}|$

\vskip 8pt
\noindent\emph{Proof of Claim 1}.  Items i)-iv) follow by definition and  $\BZ_2$-intersection calculations.  Item v) follows by i), ii) and definition.  Item vi) is by definition.  Item vii) is by v) and definition unless $i=2$ and $(r,s)=(3,1)$ or $i=3$ and $(r,s) =(2,1)$.  For the former we have $|\hat \mF_{31}'\cap G^*_{2,\mW'}|=|(\hat\mF_{31}\cup R^*_{2,\mF})\cap(G^*_{2,\mW}\cup S_{v'})|=|\hat\mF_{31}\cap G^*_{2,\mW}|+|R^*_{2,\mF}\cap G^*_{2,\mW}|+|\hat\mF_{31}\cap S_{v'}|+|R^*_{2,\mF}\cap S_{v'}|$.  The second term equals 1.  As explained in the proof of Case 1  the third term equals 1 and the fourth equals 0.\end{proof}

\noindent\emph{Claim 2}:  $c_{ij}(\mF,\mW)=c_{ij}(\mF',\mW')$ unless $(i,j) = (3,1)$, $(1,3)$, $(2,1)$ or $(1,2)$.

\vskip 8pt
\noindent\emph{Proof of Claim 2}.  If $(i,j)$ is not one of the four  values, then $c'_{ij}:=|\hat\mF_{ij}'\cap\hat\mW_{ji}'|=|\hat\mF_{ij}\cap\hat\mW'_{ji}|=|\hat\mF_{ij}\cap\mW'_{ji}|+\sum_{s\neq i} |B_{ji}^{R'}\cap G_s||\hat\mF_{ij}\cap G^*_{s,\mW'}|=|\hat\mF_{ij}\cap\mW_{ji}|+\sum_{s\neq i} |B_{ji}^R\cap G_s| |\hat\mF_{ij}\cap G^*_{s,\mW}|=c_{ij}$. The second equality follows by v) and  the fourth by iv),  vi) and vii).   \qed
\vskip 8pt

\noindent\emph{Claim 3}:  i) $c_{21}-c_{21}'=|G_3\cap B_{12}^R|$
\vskip 6pt
ii) $c_{31}-c_{31}'=|G_2\cap B_{13}^R|$
\vskip 6pt
iii) $c_{12}-c_{12}'=|G_3\cap B_{12}^R|$
\vskip 6pt
iv) $c_{13}-c_{13}'=|G_2\cap B_{13}^R|$
\vskip 8pt
\noindent\emph{Proof of Claim 3}.  i)  $c'_{21}=c_{21}(\mF',\mW')=|\hat\mF'_{21}\cap\hat\mW'_{12}|+|\hat\mF'_{21}\cap (\bigcup_{i\neq 2}|B_{12}^R\cap G_i|\ G^*_{i, \mW'})|=|\hat\mF_{21}\cap\mW_{12}|+|R^*_{3,\mF}\cap \mW_{12}|+|\hat\mF_{21}\cap(\bigcup_{i\neq 2}|B_{12}^R\cap G_i|\ G^*_{i, \mW})|=c_{21}+|R^*_{3,\mF}\cap \mW_{12}|=c_{21}+|G_3\cap B_{12}^R|$.  The second equality is by Claim 1 iv), third v) b), vi) a) and vii) and  the fourth is by definition.  
\vskip 6pt
We now prove the last equality.   Define $V=w_{12}\cup \mF_{12}\cup B_{12}^R\cup B_{12}^G$.  Then $[V]=a_1[R_1]+\cdots+a_k[R_k]+b_1[G_1]+\cdots+b_k[G_k]\in H_2(\#_k \stwostwo,\BZ_2)$, where for $i\neq2$, $a_i=|B_{12}^R\cap G_i|$ and for $j\neq 1, b_i=|B_{12}^G\cap R_i|$.   Since $[R_{3,\mF}^*]=[G_3]$, $|B^R_{12}\cap G_3|=\langle V, R^*_{3,\mF}\rangle=|R^*_{3,\mF}\cap \mW_{12}|+| R^*_{3,\mF}\cap (R_1\cup G_2\cup \mF_{12})|= |R^*_{3,\mF}\cap \mW_{12}|$.\qed
\vskip 6pt
ii) The proof is essentially that if i).\qed 
\vskip 6pt
iii) We have $c_{12}'=|\hat\mF_{12}'\cap \hat\mW'_{21}|=|\hat\mF_{12}\cap \hat\mW'_{21}|$.  Now $\hat\mW'_{21}=w'\cup(\mW_{21}\setminus w)\bigcup_{i\neq 1}|B^R_{21}\cap G_i|\ G^*_{i,\mW'}$ and $|\hat\mF_{12}\cap w'|=|\hat\mF_{12}\cap w|+|\hat\mF_{12}\cap R^*_{3,\mW}|=|\hat\mF_{12}\cap w|+|\mF_{12}\cap R^*_{3,\mW}|=|\hat\mF_{12}\cap w|+|G_3\cap B_{12}^R|$.  The second equality follows since homologically $[R^*_{3,\mW}]=[G_3]$ and $[\hat\mF_{12}\setminus \mF_{12}]=\sum a_i[G_i]$.  The third equality follows by considering $V$ as in the proof of i) and arguing as in that proof, except  here we have $R^*_{3, \mW}\cap \mW_{12}=\emptyset$.     Therefore, $c'_{12}=|\hat \mF_{12}\cap w|+|G_3\cap B_{12}^R|+|\hat\mF_{12}\cap (\mW_{21}\setminus w)|+ |\hat\mF_{12}\cap (\bigcup_{i\neq 1}|B^R_{21}\cap G_i|\ G^*_{i,\mW'})|=|\hat\mF_{12}\cap\mW_{21}+|G_3\cap B_{12}^R|+ |\hat\mF_{12}\cap (\bigcup_{i\neq 1}|B^R_{21}\cap G_i|\ G^*_{i,\mW})|=c_{12}+|G_3\cap B^R_{12}|$, where the third equality follows from vii) of Claim 1, after again noting $\hat\mF_{12}=\hat\mF_{12}'$.\qed

\vskip 6pt
iv)  The proof is essentially that of iii).\qed
\vskip 8pt
\noindent \emph{Claim 4}:  $|G_3\cap B_{12}^R|=|G_2\cap B_{13}^R|$.
\vskip 6pt
\noindent\emph{Proof of Claim 4}.  See Figure \ref{fig:g trade} which shows the intersection of $R_1$ with $G_2, G_3, w_{12}, w_{13}, f_{12}$ and $f_{13}$.\qed

  \begin{figure}[!htbp]
    \centering
    \includegraphics[width=.4\linewidth]{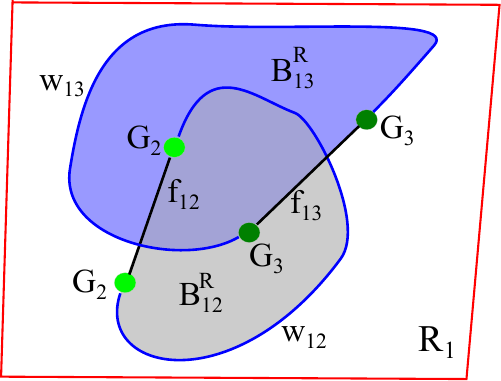}
    \caption{Equating certain $G_2$ and $G_3$ intersections with $R_1$}
    \label{fig:g trade}
\end{figure}
 
\vskip 8pt
\noindent\emph{Proof of Lemma \ref{cross clasping}}.  $c(\mF',\mW')-c\mfw=c_{21}-c_{21}'+c_{31}-c_{31}'=|G_3\cap B_{12}^R|+|G_2\cap B_{13}^R|=0$.  The first, second and third equalities follow respectively  from Claims 2, 3 and 4.

For concreteness we took $r=1$, $p=2$ and $q=3$, however there are six orderings of 3 distinct integers and since $c\mfw=\sum_{j<i}c_{ij}\mfw$, we need to consider all six.  Two correspond to equating i), ii) of Claim 3, two require equating iii) and iv), one needs equating ii) and iii) and the last  i) and iv).  \qed

\section {Independence of ordering} \label{ordering}

In this section we  show that for multi-eye finger/Whitney systems  $\mD$ is independent of the ordering on $\mW$.   

\begin{proposition}\label{order} \emph{(Independence of order)} If $(\mF,\mW)$ is a finger/Whitney system, then  $\mD(\mF, \mW)$ is independent of the ordering on $\mW$.\end{proposition}

\begin{lemma} It suffices to consider the case that $(\mF,\mW)$  has two orderings  $\mW_1$ and $\mW_2$ on $\mW$ which differ by the transposition on $\mW_{11}=\{w_1, \cdots, w_n\}$ that switches $w_r, w_{r+1}$ for some $r\le n-1$.  \qed\end{lemma}

\noindent\emph{Idea of Proof of Proposition \ref{order}}.   We first reduce to the case that $\mF$ and $\mW$ are similarly matched.  By a sequence of $\mW$, $\mR$-disc slides and $\mR$-uncross/cross claspings; $\mF$, $\mG$-disc slides and $\mG$-uncross/cross claspings; and cross/cross claspings we reduce to the case that $\mF_c$ and $\mW_c$ have common boundary thereby enabling use of the more powerful versions of the results of the previous section.  Then after $\mW$-disc slides and $\mW$-uncross/uncross claspings we reduce to the case that $\mF$ and $\mW$ have common boundary.   A linear algebra calculation completes the proof, first in the case that the boundary germs of $\mF_u$ equal those of $\mW_u$ and then when one is twisted relative to the other.

\begin{lemma} \label{multi full switch}  It suffices to consider the case that $\mF$ and $\mW$ are similarly matched. \end{lemma}

\begin{proof} Given $(\mF,\mW)$ let $\hat\mW$ be as in  Multi-Eye Lemma \ref{unknotted finger}, which has $\mF_u,\hat\mW_u$  similarly matched and for $i=1,2$  an ordering $ \hat\mW_i$ such that with that ordering $\mW_i$ and $\hat \mW_i$ have a common order induced switching with respect to $\mF$.  Since both $C\mfw$ and $\I\mfw$ are independent of their switchings it follows that $\mD(\mF,\mW_i)=\mD(\mF,\hat\mW_i)$.   I.e. with these orderings $\mfw$ and $(\mF, \hat\mW)$ are $\mD$-equivalent.  Finally, apply Corollary \ref{cross matched}  to find $\mF'$ that similarly matches $\hat\mW$  and conclude $(\mF, \hat\mW)$ and $(\mF', \hat\mW)$  and hence $(\mF, \mW)$ and $(\mF', \hat\mW)$ are $\mD$-equivalent.  Thus to show $\mD(\mF,\mW_1) = \mD(\mF,\mW_2)$ it suffices to show $\mD(\mF',\hat\mW_1) = \mD(\mF',\hat\mW_2)$.  \end{proof}

\begin{lemma} \label{common clasp} \emph{(Different orderings same clasping)}  Given $\mfw$ with $\mF_u, \mW_u$ similarly matched, two orderings $\mW_1, \mW_2$ on $\mW$ which differ by a transposition on $\mW_{11}$, $\alpha\subset \mR$ an embedded arc from $w\in \mW$ to $w'\in \mW, w\neq w'$, with $\inte(\alpha)\cap \mW=\emptyset, u\in w\cap\mR\cap \mG, v\in w'\cap\mR\cap\mG$, then with appropriate choice of sign $\sigma$ there exists identical $\sigma(\alpha,u,v)$ $\mW_1$ and $\mW_2$ $\mR$-claspings.  The corresponding result holds for $\mF$ $\mG$-claspings.  \end{lemma}

\begin{remark}  A clasping is a geometric operation which takes an unordered $\mW$ to a $\mW'$.  It is a clasping for an ordered $\mW$ if it satisfies extra conditions.  The lemma asserts that there exists common $\sigma(\alpha,u,v)$ claspings for both $\mW_1$ and $\mW_2$.  The issue here is that a clasping for $\mW_1$ (resp. $\mW_2$) requires clasp tubes disjoint from a switching $\mW_1^*$ (resp. $\mW_2^*$) of $\mW_1$ (resp. $\mW_2$). The proof involves suitably choosing $\mW_1^*, \mW_2^*$, the sign and the clasp tubes.\end{remark} 

\begin{proof} Put $\mfw$ into $\mR$-framed finger form with $\mW_1$ the standard Whitney discs.  Let $\mW^1$ denote the standard full switch for $\mW_1$.  Construct the switching $\mW^2$ of $\mW_2$ so that $\mW^2\setminus \mW^2_{11}=\mW^1\setminus \mW^1_{11}$.  Construct $\mW^2_{11}$ as in Figure \ref{fig:common clasp} which shows the representative case of a 2,3 transposition, i.e. Figure \ref{fig:common clasp} a) (resp. \ref{fig:common clasp} b)) shows the intersections of $\mW^2$ (resp. $\mW^1$) and $\mW_2$ ($\mW_1$) with $G_1$.  Here $\mW^1, \mW^2$ are finger germed.  

A $w_i$, $ w_j$ $\mR$-clasping with $w_i, w_j\in \mW_{11}$ will have clasp tubes emanating from opposite sides of $w_j\cap G_1$ and $w_i\cap G_1, i\neq j$, with the side being a function of the sign of the clasping.  With appropriate sign, clasp tubes can be constructed disjoint from $\mW^1$ and $\mW^2$.  The sign is chosen so the that if the clasping involves $w_2$ (resp. $w_3$) with the $\mW_1$ ordering, then the tube emanates from the $y>0$ (resp. $y<0$) side.  

When one or both of $w, w'\notin \mW_{11}$, then we readily construct clasp tubes for a $w,w'$ clasping since $\mW^2\setminus \mW^2_{11}=\mW^1\setminus \mW^1_{11}$.\end{proof}

 \begin{figure}[!htbp]
    \centering
    \includegraphics[width=1.0\linewidth]{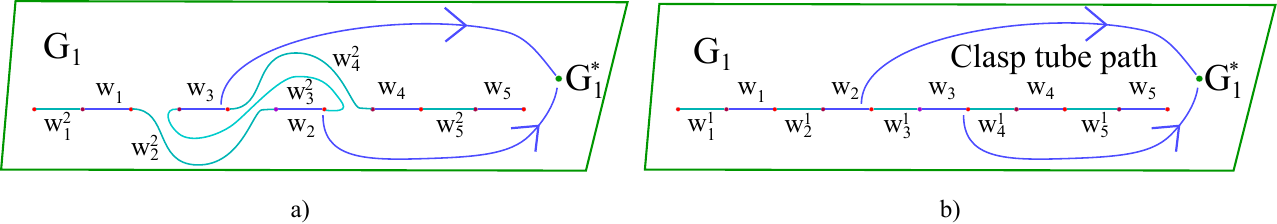}
    \caption{A common clasping for $\mW_2$ and $\mW_1$}
    \label{fig:common clasp}
    \end{figure}

\begin{lemma} \label{common boundary} \emph{(Common boundary)} Let $\mF$ and $\mW$ be both Whitney germed with $\mF$ and $\mW$ similarly matched.  Given two orderings of $\mW$ that differ by a $\mW_{11}$ transposition there exists an $(\mF', \mW')$ such that $\mF',\mW'$ have common boundary, $\mW$ and $\mW'$ are similarly matched  and with the induced orderings $(\mF', \mW')$ and $\mfw$ are $\mD$-equivalent.  \end{lemma}

\begin{proof} Assume that $\mfw$ has $k$-eyes.  

\vskip 6pt
\noindent\emph{Step 1}:  $\mfw$ is $\mD$-equivalent to $(\mF',\mW')$ such that $\mW'_c$ and $\mF'_c$ have common boundary.

\vskip 6pt
\noindent\emph{Proof of Step 1}.  We will find a sequence $\mW^1=\mW, \mW^2, \cdots, \mW^m=\mW'$ such that for each $1\le i\le m-1, \mW^{i+1}$ is obtained from $\mW^i$ by either a $\mW^i$ $\mR$-clasping of  uncross/cross or cross/cross type or a $\mW^i$ $\mR$-disc slide, such that $\mW'_c\cap \mR$ is isotopic to $\mF_c\cap \mR$ rel $\mR\cap \mG$.  Note that these operations do not change $\mW\cap \mG$ and by Lemma \ref{multieye slide invariance} and Proposition \ref{d invariance}, with respect to the orderings $\mW_1$ and $\mW_2$ induced on $\mW'$, $(\mF,\mW')$ is $\mD$-equivalent to $\mfw$.  The same argument gives a modification of $\mF$ to $\mF'$ using $\mF$ $\mG$-disc slides  and $\mF$ $\mG$-claspings of uncross/cross or cross/cross type to obtain the conclusion of the lemma.

Let $\mF_c=(f_1, \cdots, f_n)$ and $\mW_c=(w_1, \cdots, w_n)$ where $w_i$ matches $f_i$.  Suppose that for $j<i, w_j\cap \mR$ is isotopic to $f_j\cap \mR$ rel $\mR\cap \mG$.  We will show how to modify $\mW$ to $\mW^1$ by a sequence of $\mR$-claspings of uncross/cross or cross/cross type and $\mR$-disc slides such that each $w_j, j<i$ remains unchanged  and if $w_i^1\subset \mW^1$ is the disc matching $w_i$, then $w_i^1\cap \mR$ is isotopic to $f_i\cap \mR$ rel $\mR\cap\mG$.  The result then follows by induction on $i$.  Suppose $w_i\in \mW_{qp}$.

For this argument assume that $\mF$ is finger germed and $\mW$ is Whitney germed. 
Our proof is by downward induction on $(a,b)\in \BZ_{\ge 0}\oplus \BZ_{\ge 0} $ lexicographically ordered where $a=|(\mW\setminus w_i)\cap f_i\cap R_q|$ and $b=|w_i\cap f_i\cap R_q|$.  Let $\{v,v'\}=f_i\cap R_q\cap G_p$.  We describe two operations for modifying $\mW$ to $\mW^1$.  

If $\alpha \subset f_i$ is a segment with $v\in \alpha, \inte(\alpha)\cap \mW=\emptyset$ and $(\partial \alpha\setminus v)\in w\neq w_i$, then clasp $w$ to $w_i$ along $\alpha$ to remove this intersection, thereby reducing $a$ by 1, at the cost of possibly increasing $b$.  See Figures \ref{fig:common boundary} a) and b). This is either an uncross/cross or cross/cross clasping, the type depending on whether or not $w\in \mW_u$.  By Lemma \ref{common clasp} the same clasping can be used whether $\mW$ is ordered by $\mW_1$ or $ \mW_2$.   Now $w_i$ may wind about $v$ near a subarc $\beta\subset w_i$.  A variant of this operation is shown in Figures \ref{fig:common boundary} c) and d).  Here  $\alpha$ is as above except that $\inte(\alpha)\cap \mW \subset \beta$.  Again, $a$ is reduced by 1 and $b$ may increase.  

   \begin{figure}[!htbp]
    \centering
    \includegraphics[width=.8\linewidth]{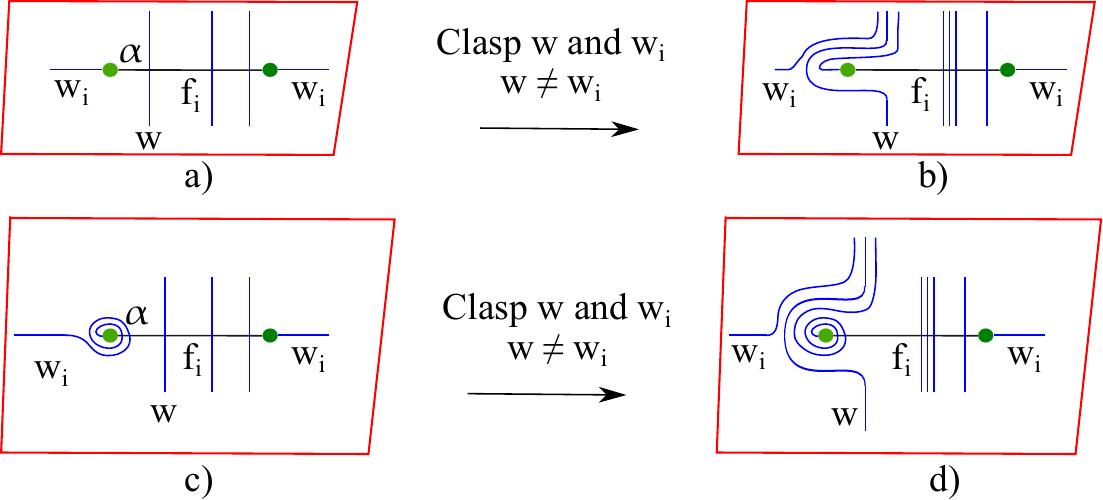}
    \caption{Reducing to a common boundary}
    \label{fig:common boundary}
\end{figure}

Our second operation occurs when there exists a subarc $\alpha\subset f_i$ with $v\in (\alpha\cap \mW)\subset \partial \alpha\subset w_i$.  Let $u=\partial \alpha \setminus v$ and $\gamma \subset w_i$ the segment with endpoints $v\cup u$.  Then $\alpha\cup\gamma$ is an embedded circle that bounds a disc $D \subset R_q$ disjoint from $v'$.  Since the non $\gamma$ components of $D\cap \mW$ are disjoint from $\partial D$, these arcs lie in $\mW\setminus w_i$.  After a sequence of $w_i$ $\mR$-disc slides, $\gamma$ can be isotoped to lie close to $v$.  This operation  neither changes $a$ nor increases $b$, though may decrease $b$ if $\inte(\gamma)\cap f_i\neq\emptyset$.  In any case the isotoped $\gamma$ gets incorporated into the winding of $w_i$ about $v$.    Again we require the variant of this operation when $w_i$ winds about $v$.   Here either $b$ is reduced at least by 2 or $\gamma$ is incorporated into the winding.  Eventually $\alpha=f_i$, in which case $w_i\cap R_q$ is isotopic to $f_i\cap R_q$ rel $R_q\cap\mG$.  \qed

\vskip 8pt
\noindent\emph{Step 2}:  If $\mF_c$ and $\mW_c$ have common boundary, then with the orderings induced by $\mW_1$, $\mW_2$ on $\mW'$, $\mfw$ is $\mD$-equivalent to $(\mF,\mW'')$ such that $\mF$ and $\mW''$ have common boundary.

\vskip 6pt
\noindent\emph{Proof of Step 2}.  Using the method of Step 1 we first find a sequence $\mW^1=\mW, \mW^2, \cdots, \mW^m=\mW'$ such that for each $1\le i\le m-1, \mW^{i+1}$ is obtained from $\mW^i$ by either a $\mW^i$ $\mR$-disc slide or a $\mW^i$ $\mR$-clasping, such that $\mW'\cap \mR$ is isotopic to $\mF\cap \mR$ rel $\mR\cap \mG$.  Since $\mF_c$ and $\mW_c$ have common boundary, we can take the claspings  to be of uncross/uncross type and the disc slides  to be of uncross discs over other discs.  By Lemma \ref{common clasp} we can assume that the claspings are the same for both $\mW_1$ and $\mW_2$.  Again these operations change neither $\mW\cap \mG$ nor $\mW_c$ and by Lemma \ref{multieye slide invariance} i) and Proposition  \ref{d invariance} ii a),  with respect to the orderings $\mW_1$ and $\mW_2$ induced on $\mW'$, $(\mF,\mW')$ is $\mD$-equivalent to $\mfw$.

The same process gives a modification of $\mW'$ to $\mW''$ where $\mW''\cap \mG$ is isotopic to $\mF\cap \mG$ rel $\mR\cap\mG$ and hence $\mW''$ and $\mF $ have common boundary.  Here we use $\mG$-disc slides of uncross discs over other discs and $\mG$-claspings of uncross/uncross type.    Therefore,  we can apply Lemma \ref{multieye slide invariance} ii) and Proposition \ref{d invariance} ii b) to conclude that with respect to the orderings $\mW_1$ and $\mW_2$ induced on $\mW'$, $(\mF,\mW'')$ is $\mD$-equivalent to $(\mF,\mW')$ and hence $\mfw$.\end{proof}

\begin{remark} \label{no free point}   Since $\mF$ and $\mW$ are similarly matched none of the claspings in the previous proof are free point claspings.  Free point clasping arise when we invoke Lemma \ref{clasp commutation 2} and these are of the $\mW$ $\mR$-cross free point type. \end{remark}

 The following result using the method of proof of the previous result will be used in the proof of Proposition \ref{symmetry1}

\begin{lemma}\label{ea common boundary}  Let $\mfw$ be in embedded arc position, $\partial \mF_c\cap \partial\mW_u=\partial \mW_c\cap\partial \mF_u=\emptyset$ and $\mF_c$ and $\mW_c$ be similarly matched, then by a sequence of cross/cross claspings and disc slides of cross/cross or cross/uncross type, where the claspings are only done when $\partial \mF_c\cap \partial\mW_u=\partial \mW_c\cap\partial \mF_u=\emptyset$,  we can obtain $(\mF', \mW')$  in full embedded arc position.  Furthermore, $\I\mfw=\I(\mF', \mW')$, $c\mfw=c(\mF', \mW')$ and $CU\mfw=CU(\mF', \mW')=0$.  Finally, all the disc slides and claspings can be chosen to involve only $\mF$ discs or only $\mW$ discs.\end{lemma}

\begin{proof}  The proof of Step 1 shows how to reduce $\mfw$ to $(\mF', \mW')$ in full embedded arc position.  Since $\mfw\in$ EA  all the claspings are of cross/cross type.  Disc slides are only done during the second operation, while bringing $D$ close to $v$, thus these are of cross/cross or cross/uncross type and all the claspings are done while the finger/Whitney system satisfies the initial hypothesis.  Since the uncross discs are unchanged in this process we have $\I\mfw=\I(\mF', \mW')$ and since we start and end in full embedded arc position we have $CU\mfw=CU(\mF', \mW')=0$.  The conclusion $c\mfw=c(\mF', \mW')$ follows from Lemma \ref{cross clasping}.  Since the process of transforming $\mfw$ to $(\mF', \mW')$ can be done using moves  involving only finger discs or only Whitney discs, the result follows.\end{proof}

\begin{remark} Lemma \ref{cross clasping} has hypotheses not satisfied by Lemma \ref{common boundary}.  Thus in the proof of that lemma we were constrained to using only $\mF$ $\mG$-claspings and $\mG$-disc slides and $\mW$ $\mR$-claspings and $\mR$-disc slides.\end{remark}

\begin{lemma} \label{homology1}   \emph{($H_2$-generators)} Let $\mR$ and $\mG$ be in $\mG$-framed finger form and $E:=\#_k\stwostwo\setminus\inte(N(\mR\cup \mG))$.  Then $H_2(E)$ is  generated by the spheres $R^i_1,\cdots, R^i_{n_i}, S^i_1,\cdots, S^i_{n_i}$ that $\delta_{ij}$ the standard finger  and Whitney $i$-discs, $1\le i\le k$, plus for each cross solid finger a linking sphere and a Clifford torus, where the Clifford torus is associated to $w\cap\mR\cap\mG$ where w is the Whitney cross disc $w$ within the solid finger.  Let $C_1,\cdots, C_m$ denote the Clifford tori and $U'_1, \cdots, U'_m$ denote the linking spheres to the solid cross fingers.    By restricting the $U'_i$'s to a subset denoted $U_1, \cdots, U_r$ we obtain a free generating set.  See Figure \ref{fig:homology1} and compare with the Multi-eye Lemma \ref{e' homology}.\end{lemma}

\begin{proof} Let $W=N(\mR\cup \mG\cup\mW\cup\mF_u)$ where $\mW$ are the standard Whitney discs and $\mF_u$ are the standard uncross finger discs.  Then $F:=\#_k(\stwostwo)\setminus \inte(W)$ is the result of starting with $S^4$ removing $k$ open 4-balls and then removing neighborhoods of properly embedded arcs, one for each cross $(i,j)$-finger, where this arc  goes from the $i$'th 4-ball to the $j$'th one.  Then $\mW\cup \mF_u$ serve as cohomology classes distinguishing the $R_p$'s, $S_q$'s and $C_r$'s.  On the other hand if $J$ is a 2-cycle in $E$, then by adding suitable copies of $R_p$'s, $S_q$'s and $C_r$'s it becomes homologous to a class in $H_2(F)$ which is freely generated by a subset of $U'_i$'s.\end{proof}

 \begin{figure}[!htbp]
    \centering
    \includegraphics[width=0.7\linewidth]{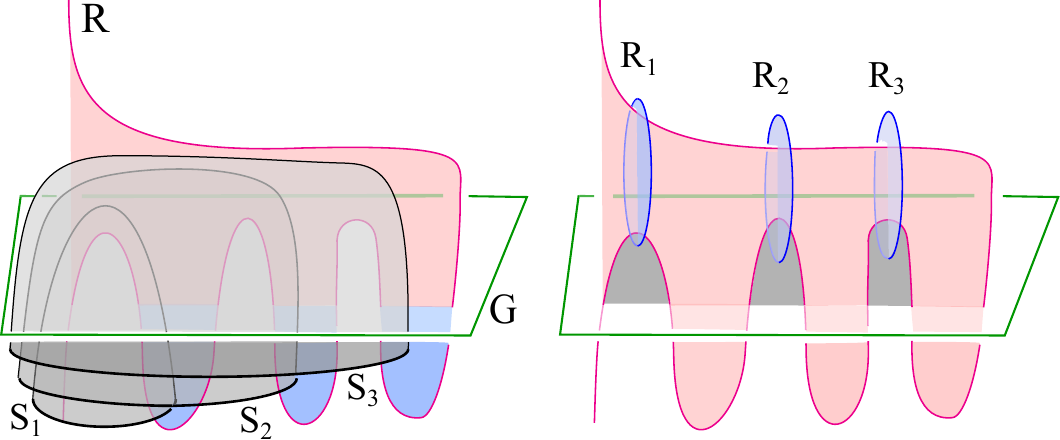}
    \caption{Free Generators for $H_2(\stwostwo\setminus(N(G\cup R)))$}
    \label{fig:homology1}
    \end{figure}

\begin{lemma}\label{homology2}  \emph{($H_2$-relative generators)} If $D$ is a properly embedded disc in $E$, then $H_2(E, \partial D)$ is freely generated by $D$, the $R_p$'s, $S_q$'s, $C_r$'s and $U_s$'s.\qed\end{lemma}

\begin{lemma}\label{homology3} \emph{(Whitney $H_2$-constraints)} i) If $D^i_u$ is a Whitney disc whose $\partial$-germ coincides with that of the standard $w^i_u$, then for appropriate values of $b^u_{j,q}$ we have $[D^i_u]=[w^i_u]+\sum_{j,q\neq i,u} b^u_{j,q} S^j_q\ +$ terms involving $R_p$'s, $C_r$'s, $U_s$'s.

ii) If $D^i_u$ and $D^i_v, u\neq v$ are disjoint Whitney discs whose $\partial$-germs coincide respectively with the standard Whitney discs $w^i_u$ and $w^i_v$, then $b^u_{i,v}=-b^v_{i,u}$.\end{lemma}

\begin{proof} Conclusion i) follows from Lemma \ref{homology2} after noting that a non zero $S^i_u$ factor would not give $D^i_u$ the Whitney framing.  If conclusion ii) failed, then $\langle D^i_u,D^i_v\rangle\neq 0$. \end{proof}

The following lemma will complete the proof of Proposition \ref{order}.  

\begin{lemma} \emph{(Linear Algebra)} \label{linear algebra}  If $\mF$ and $ \mW$ are complete sets of Whitney germed  Whitney discs such that   $\mF$ and $\mW$ have common boundary, then  $\mD(\mF, \mW)$ is independent of the ordering on $\mW$.
\end{lemma}

\noindent{Proof}: It suffices to consider the case that $\mW_1, \mW_2$ are orders on $\mW$ that differ by a transposition on $\mW_{1,1}=\{w_1, \cdots, w_n\}$ where $\mW_1$ has the standard ordering and $\mW_2$ transposes $w_r, w_{r+1}$.  Let $\{f_1,\cdots, f_n\}$ denote the similarly matched elements of $\mF_{11}$.  For concreteness, to minimize notation and since the proof of the general case is essentially the same we will only consider the case $n=5$ and $\mW_2$ has $w_2, w_3$ transposed, hence as ordered sets $\mW_{1_{1,1}}=\{w_1, w_2, w_3, w_4, w_5\}$ and $\mW_{2_{1,1}}=\{w_1, w_3, w_2, w_4, w_5\}$.  Put $\mR\cup \mG$ in $\mG$-framed finger form so that $\mW_1$ comprises the standard Whitney discs.   

We now describe switchings $\mW^*_1, \mW^*_2$ for $\mW_1, \mW_2$.  Let $\mW^*_1$ denote the standard switch system arising from the framed finger form with $\hat\mW^*_1 =\{w_1^1, \cdots, w_5^1\}\subset \mW^*_1$ denoting the standard 5-switch for $\mW_{1_{1,1}}$.  We now describe $\mW_2^*$.  Except for the switch discs associated to $\mW_{2_{1,1}}$, use the same discs as $\mW_1^*$, so the cross discs of $\mW_2^*$ are exactly those of $\mW_1^*$ and the switch discs associated to $\mW_{2_{ii}}$ are exactly those of $\mW_{1_{ii}}$ for $2\le i\le k$.   Let $\hat \mW_2^*=\{w_1^2, \cdots, w_5^2\}$ denote the switch discs associated to $\mW_{2_{1,1}}$ as shown in Figure \ref{fig:5switch}.  Figures \ref{fig:5switch} a), b) respectively show $(\mW_2\cup\hat\mW_2^*)\cap G_1$ and $(\mW_2\cup\hat\mW_2^*)\cap R_1$ and Figure \ref{fig:5switch} c) shows the projection of $w_1^2, w_2^2, w_3^2, w_5^2$ to a 3D slice  and Figure \ref{fig:5switch} d) shows the projection of $w_4^2$ to that  slice.  We use the notation that for each $i$, $w_i$ and $w_i^2$ share a common negative $R_1\cap G_1$ point and the labels in Figure \ref{fig:5switch} a), b) are near the common point.  The labels in Figures \ref{fig:5switch} c), d) are near the switch  discs themselves.  Again, we use the convention that if a point on $R_1$ has y-coordinate $< 0$ (resp. $> 0$), then it has $t$-coordinate $< 0$ (resp. $> 0$).  In Figure \ref{fig:5switch} c), d), the points of $\mW_2^*\cap R_1$ with $y$-coordinate $>0$ are shown dashed.  

Since $(\partial \mF_u\cap\partial \mW^*_c)\cup(\partial \mW^*_u\cap\partial \mF_c)=\emptyset$ it follows that $CU(\mF, \mW_i)=0$ for $i=1,2$.  Since each $c_{pq}(\mF,\mW)$ depends only on $\mF_c$ and $\mW_c$, it is independent of order and switching.  It follows that $C(\mF, \mW_1)=C(\mF, \mW_2)$.  Since $\mW^*_{1,ii}=\mW^*_{2,ii}$ for $2\le i\le k$ it follows that  $I_i(\mF, \mW_1)=I_i(\mF, \mW_2)$ for $2\le i\le k$.  \vskip 6pt

   \begin{figure}[!htbp]
    \centering
    \includegraphics[width=1.0\linewidth]{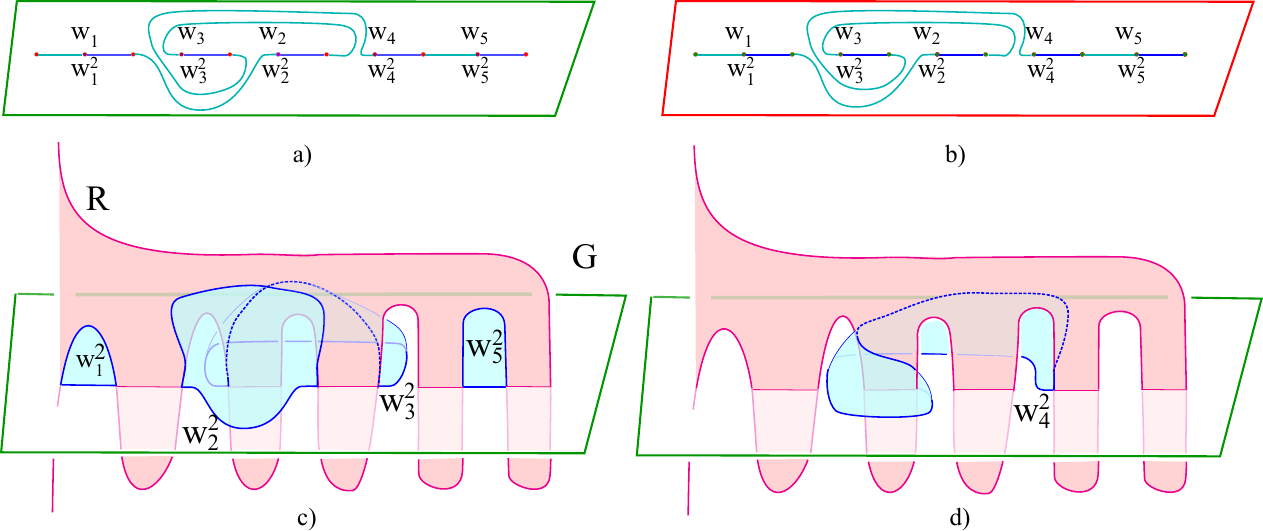}
    \caption{Constructing the 5-Switch $\mW_2^*$ for $\mW_2$}
    \label{fig:5switch}
\end{figure}

To complete the proof we show $I_1(\mF, \mW_1)=I_1(\mF, \mW_2)$. 

\vskip 8pt
\noindent \emph{Case 1}:  For each $i$, the boundary germ of $f_i$ coincides with that of $w_i$.  
\vskip 8pt

By Lemma \ref{homology3}, for $1\le i,j\le n$ and appropriate $a_{ij}, b_{ij} \in \BZ$, we have $[f_i]=[w_i]+\sum_{j=1}^n a_{ij} [R^1_j]+\sum_{j\neq i} b_{ij} [S^1_j]$ where $b_{ij}=-b_{ji}$ + terms involving $[R^j_p]$'s, $[S^j_q]$'s for $j\ge 2$ + terms involving $C_r$'s and $U_s$'s. Since for $i=1,2$ and $1\le j\le 5$, $w^i_j\cap (\cup_{j,p} R^j_p\cup_{j,p}S^j_p\cup_r C_r\cup_s U_s)=\emptyset$ and the calculation of $I_1$ is a homological intersection count involving discs in a 4-manifold with boundary, where the boundary of the discs are disjoint, these terms play no role in the calculation of either $I_1(\mF, \mW_1)$ or $I_1(\mF, \mW_2)$, thus we will assume that each $[f_i]$ has no such terms. 

We calculate the mod 2 intersection of each $w_i^j$ with each $R^1_p$ and $S^1_q$.  

\begin{lemma} \label{homology4}  1) $\langle w^1_i, R^1_j\rangle=\delta_{ij}$ and $\langle w^1_i, S^1_j\rangle=0$ all $j$.
\vskip 10pt
\noindent 2) If $i=1$ or $5$, then  $\langle w^2_i, R^1_j\rangle=\delta_{ij}$ and $\langle w^2_i, S^1_j\rangle=0$ all $ j$.
\vskip 8pt
\noindent $\langle w^2_2, R^1_j\rangle=1$ if $j=2,3$ and $0$ otherwise and $\langle w^2_2, S^1_j\rangle=1$ if $j=2$ and $0$ otherwise
\vskip 8pt
\noindent $\langle w^2_3, R^1_j\rangle=1$ if $j=3$ and $0$ otherwise and $\langle w^2_3, S^1_j\rangle=1$ if $j=2,3$ and $0$ otherwise
\vskip 8pt

\noindent $\langle w^2_4, R^1_j\rangle=1$ if $j=3,4$ and $0$ otherwise and $\langle w^2_4, S^1_j\rangle=1$ if $j=3$ and $0$ otherwise. \qed \end{lemma}

To compute $I_1(\mF, \mW_i^*)$ we need to determine their IA orderings on $\mF_{11}\cup\hat\mW^*_i$ for $i=1,2$.  

\begin{lemma}  \label{ia orderings} Figures \ref{fig:iaorder} a), b)  record the IA orderings on $\mF_{11}\cup\hat\mW^*_1$ and $\mF_{11}\cup\hat\mW^*_2$.\qed\end{lemma}

\begin{notation} Recall that $f_i$ denotes the finger disc that matches $w_i\in \mW_{1_{11}}$, i.e., the $i$'th standard Whitney disc.  Recall that $w_j^1$ (resp. $w_j^2$) denotes the $j$'th switch disc from the $\mW_1$ (resp. $\mW_2$) ordering.     We let $f_{i,p}$ (resp $w_{i,q}^*$) denote the $p$'th finger (resp. $q$'th Whitney) disc in the IA order on $\mF_{11}\cup\hat\mW^*_i$ for $i=1,2$.  The figures show $\mF_{11}\cup\hat\mW^*_i\cap G_1$ with both the old and new names of the discs.  \end{notation}

  \begin{figure}[!htbp]
    \centering
    \includegraphics[width=.8\linewidth]{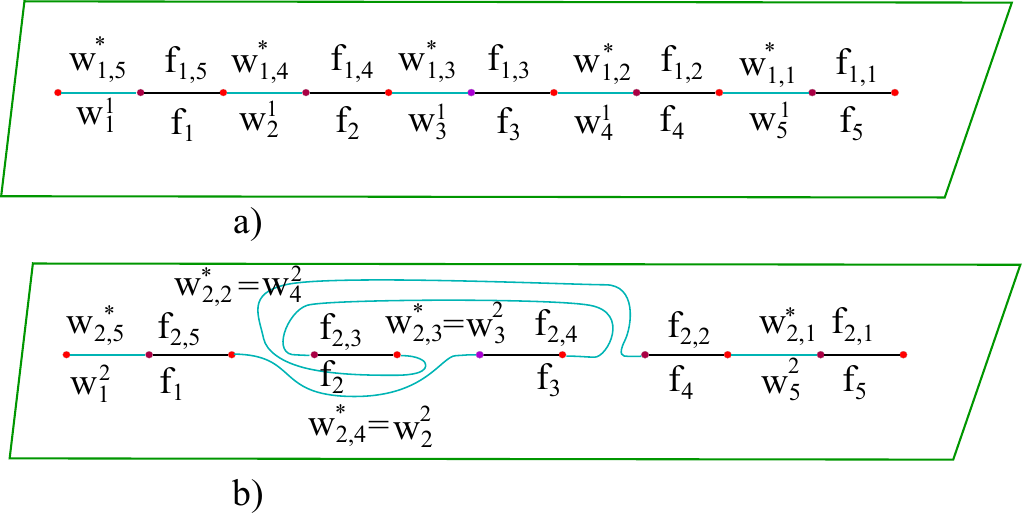}
    \caption{The IA Ordering on $\qquad$ a) $(\mF,\mW_1^*)\qquad$ and$\qquad$ b) $(\mF, \mW_2^*)$}
    \label{fig:iaorder}
\end{figure}

\begin{table}[h]
\centering
\begin{tabular}{ c|c|c|c|c|c| }
 
 $I(\mF,\mW_1^*)$ & $w^*_{1,1}=w^1_5$ & $w^*_{1,2}=w^1_4$ & $w^*_{1,3}=w^1_3$ & $w^*_{1,4}=w^1_2$ & $w^*_{1,5}=w^1_1$ \\
 \hline
  $ f_{1,1}=f_5$& $a_{5,5}$ & $a_{5,4}$ & $a_{5,3}$ & $a_{5,2}$ & $a_{5,1}$ \\

 \hline
 $f_{1,2}=f_4$ &  & $a_{4,4}$ & $a_{4,3}$ & $a_{4,2}$ & $a_{4,1}$ \\
 \hline
 $ f_{1,3}=f_3$ &  &  & $a_{3,3}$ & $a_{3,2}$ & $a_{3,1}$ \\
 \hline
 $ f_{1,4}=f_2$ &  &  &  & $a_{2,2}$ & $a_{2,1}$ \\
 \hline
 $f_{1,5}=f_1$ &  &  &  &  & $a_{1,1}$ \\
 \hline
 
\end{tabular}
\vspace{3mm}
\caption{Calculating $I(\mF, \mW_1)$}
\label{table1}
\end{table}

\begin{table}[h]
\centering
\begin{tabular}{ c|c|c|c|c|c| }
 
 $I(\mF,\mW_2^*)$ & $w^*_{2,1}=w^2_5$ & $w^*_{2,2}=w^2_4$ & $w^*_{2,3}=w^2_3$ & $w^*_{2,4}=w^2_2$ & $w^*_{2,5}=w^2_1$ \\
 \hline
  $f_{2,1}=f_5$& $a_{5,5}$ & $a_{5,4}+a_{5,3}+b_{5,3}$ & $a_{5,3}+b_{5,3}+b_{5,2}$ & $a_{5,3}+a_{5,2}+b_{5,2}$ & $a_{5,1}$ \\
 \hline
 $f_{2,2}=f_4$ &  & $a_{4,4}+a_{4,3}+b_{4,3}$ & $a_{4,3}+b_{4,3}+b_{4,2}$ & $a_{4,3}+a_{4,2}+b_{4,2}$ & $a_{4,1}$ \\
 \hline
 $f_{2,3}=f_2$ &  &  & $a_{2,3}+b_{2,3}+b_{2,2}$ & $a_{2,3}+a_{2,2}+b_{2,2}$ & $a_{2,1}$ \\
 \hline
 $f_{2,4}=f_3$ &  &  &  & $a_{3,3}+a_{3,2}+b_{3,2}$ & $a_{3,1}$ \\
 \hline
 $f_{2,5}=f_1$ &  &  &  &  & $a_{1,1}$ \\
 \hline
\end{tabular}
\vspace{3mm}
\caption{Calculating $I(\mF, \mW_2)$}
\label{table2}
\end{table}

\begin{lemma} \label{reorder calc}  For $i=1,2$ the matrices of Tables \ref{table1} and \ref{table2} respectively record the values of $\langle f_{i,p}, w_{i,q}^*\rangle$ for $p\le q$ viewed \emph{mod-2},  from which we conclude  $I_1(\mF, \mW_1)=I_1(\mF, \mW_2)$.\end{lemma}

\begin{proof}  By Definition \ref{invariant}, $I(\mF,\mW_1^*)$ (resp. $I(\mF,\mW_2^*)$) is the sum of the upper diagonal entries of Table \ref{table1} (resp. Table \ref{table2}), where these values are obtained by applying  Lemma \ref{homology4}.   Since $b_{ij}=b_{ji}$ mod-2 it is immediate that $I(\mF,\mW_1^*) = I(\mF, \mW_2^*)$.   Since $I_1(\mF, \mW_1)=I(\mF,\mW_1^*)$ and $I_1(\mF, \mW_2)=I(\mF,\mW_2^*)$ the result follows. \end{proof}

\noindent\emph{Case 2}:  The boundary germs do not all coincide.  
\vskip 8pt
\noindent\emph{Proof of Case 2}:  With respect to the conventions of Definition 5.1 \cite{Ga3}, we have  for each $i$, the boundary germ of $f_i$ twists $(p_i,q_i)\in \BZ\oplus\BZ$ relative to that of $w_i$.  By Lemma 5.3 \cite{Ga3} each $p_i+q_i$ is even.  
\vskip 8pt
\noindent\emph{Claim 1}:  Given $((p_1,q_1), \cdots, (p_n,q_n))$ such that for each $i,\ p_i+q_i=0$ mod 2,  there exists an $\mF_3=\{f^3_1, \cdots, f^3_n\}$ with this twisting relative to $\mW$ such that for each $i$, $\partial f^3_i=\partial f_i$.
\vskip 8pt
\noindent\emph{Proof of Claim 1}: The existence of such a $\mF_3$ follows as in the proof of Lemma 5.4 \cite{Ga3} and Figure 5 of that paper.  Figure \ref{fig:whitney1} shows the first step  to construct $\mF_3$ in our setting, rather it shows the construction of $f_2^3$, with the other $f_i^3$'s constructed similarly.  We start out with the standard Whitney discs and remove $|p_2|$ small open discs near $R$ and $
|q_2|$ small open discs near $G$.  Then for each of the $|p_2|
$ (resp. $|q_2|$) discs a small strip of the complement is spun once about $R$ (resp. $G$) the sign being the sign of $p_2$ or $q_2$.   A detail of this near $G$ is shown in the figure and slightly differently in Figure 5 \cite{Ga3}.  Thus, the boundary germ of the partially constructed $f_2^3$ spins $p_2$ times about $R$ and $q_2$ times about $G$.  In the figure $p_2=3$ and $|q_2|= 1$ the sign depending on the orientation of  $D$ in the detail.  Near $G$ we see $|q_2|$ tubes starting out near $f_2^3\cap G$.  The second step is connect these tubes to $|q_2|$ copies of $G'$ which is a dual sphere to $G$ to obtain a disc with $|p_2|$ open discs removed.  The tube paths are shown in Figure \ref{fig:whitney3} a).  To enable the construction of all the $f_i^3$'s, in the 3D slice, $R'$ rotates $p_i$ times about $R$ near $f_i^3\cap R$ and $p=-\sum_{i=1}^5 p_i$ times near $R$ at the bottom of the picture. 
   \begin{figure}[!htbp]
    \centering
    \includegraphics[width=.85\linewidth]{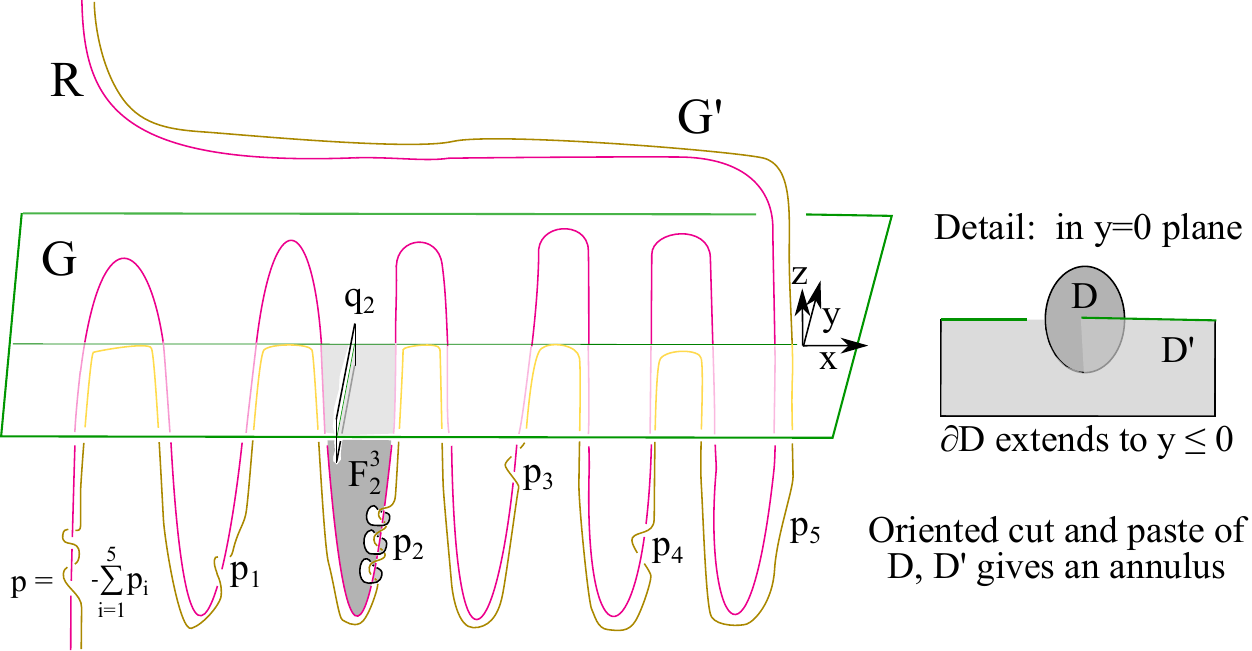}
    \caption{Constructing $f_2^3$ near $S^2\times S^1\times 0$}
    \label{fig:whitney1}
\end{figure}

Third extend these $|p_2|$ circles vertically in $S^2\times S^1\times t$ for $t\in [-\epsilon,0]$ where they are tubed to $|p_2|$ copies of the dual sphere $R' $ to $R$ as in Figure \ref{fig:whitney2}.  These tubes follow paths in $R$ as shown in Figure \ref{fig:whitney3} b).  
  \begin{figure}[!htbp]
    \centering
    \includegraphics[width=.85\linewidth]{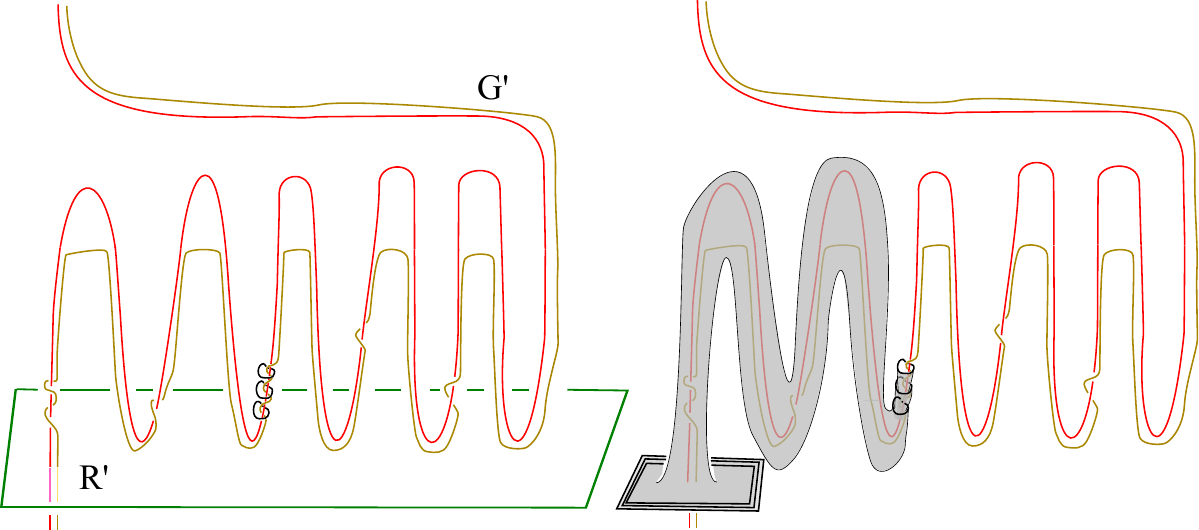}
    \caption{Constructing $f_2^3$ near $S^2\times S^1\times -\epsilon$}
    \label{fig:whitney2}
    \end{figure} 

Fourth, moving further into $S^2\times S^1\times t$, $t<-\epsilon$ we see that the parallel copies of $G'\cap S^2\times S^1\times t$ are parallel $R\cap S^2\times S^1\times t$ and just like $R$ they are capped off with discs within $S^2\times S^1\times [=\infty,t]$.  Similarly moving into $t>0$, these parallel copies are capped off with discs.  

The resulting disc may be misframed as a Whitney disc, so fifth and finally, do cut and paste with a suitable number of $S_2$'s to obtain the desired $f^3_2$, where $S_2$ is as in Figure \ref{fig:homology1}.   \qed
\vskip 8pt

\begin{figure}[!htbp]
    \centering
    \includegraphics[width=.85\linewidth]{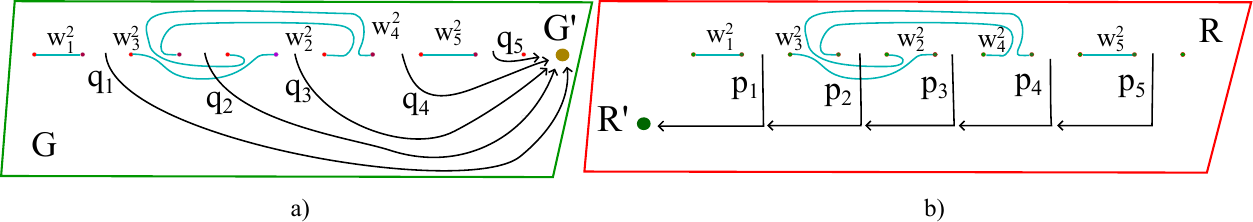}
    \caption{The tube paths, with multiplicity, in $R$ and $G$}
    \label{fig:whitney3}
    \end{figure}

\noindent\emph{Claim 2}:  The $\mF_3$ constructed in Claim 1 satisfies $I(\mF_3,\mW_1)=I(\mF_3, \mW_2)=0$.  
\vskip 8pt
\noindent\emph{Proof of Claim 2}:  If $\mW^*_1$ is the standard switching, then $(\mF_1, \mW^*_1)\in$ EA.  Since $\mF_3\cap\mW_1^*\subset G\cap R$ it follows that $I(\mF_3, \mW_1)=I(\mF_3, \mW_1^*)=0$.  

Similarly  $\mW^*_2=\{w^2_1, \cdots, w^2_n\} $ constructed above satisfies $(\mF_3, \mW_2^*)\in$ EA.  As before, Whitney and finger discs are denoted both with their old notations and their notations with the EA order.  In the EA order the finger discs are denoted $f_{3,1}, f_{3,2}, \cdots, f_{3,5}$.  Now each $f^3_i$ comes with its twisting $(p_i, q_i) $ and an $a_i$ where $a_i S_i$ corrects the framing.  There are three types of intersections that contribute to $I(\mF_3, \mW_2^*)$.  First there are the paths of tubes that intersect $\partial \mW_2^*$ as shown in  Figure \ref{fig:whitney3}.  With our upper diagonal count only  $w_{2,4}^*\cap f_{3,3}$ contributes to $I(\mF_3, \mW_2)$ and that  contribution is $p_2+q_2$.  Since $p_2+q_2=0$ mod-2  this contributes 0.  The second correspond to intersections of the correcting $a_i S_i$'s with $\mW_2^*$. By Definition \ref{invariant}, given $\mW_2^*$, only  $f_{3,3}$ contributes to the count and by Lemma \ref{homology4} the contributions cancel.  Third and finally, there are contributions from $\mW_2^*$ intersecting the dual sphere $G'$ which may cause intersections because the $f^3_i$'s may contain parallel copies.  For example $f_{3,3}$ has $q_2$ parallel copies of $G'$ and $|w_{2,3}^*\cap G'|=p_2+p_3$ mod-2, hence contributes $(p_2+p_3) q_2$ to the count.  Table \ref{table3} records the mod-2 count, denoted $I_{\textrm{third}}(\mF,\mW_2^*)$, of these intersections.  To conclude $I(\mF_3, \mW_2)=0$ it suffices to show that the sum of the upper diagonal entries   equals 0 mod-2.  Since this sum equals $p_2 q_3 + p_3 q_2$  and $p_2+q_2=p_3+q_3=0$ mod-2 the result follows.  \qed

\begin{table}[h]
\centering
\begin{tabular}{ c|c|c|c|c|c| }
 
 $I_{\textrm{third}}(\mF,\mW_2^*)$ & $w^*_{2,1}=w^2_5$ & $w^*_{2,2}=w^2_4$ & $w^*_{2,3}=w^2_3$ & $w^*_{2,4}=w^2_2$ & $w^*_{2,5}=w^2_1$ \\
 \hline
  $f_{3,1}=f^3_5$& $0$ & $(2p_2+p_3)q_5$ & $(p_2+p_3)q_5$ & $p_2 q_5$ & $0$ \\
 \hline
 $f_{3,2}=f^3_4$ &  & $(2p_2+p_3)q_4$ & $(p_2+p_3)q_4$ & $p_2q_4$ & $0$ \\
 \hline
 $f_{3,3}=f^3_2$ &  &  & $(p_2+p_3)q_2$ & $p_2q_2$ & $0$ \\
 \hline
 $f_{3,4}=f^3_3$ &  &  &  & $p_2q_3$ & $0$ \\
 \hline
 $f_{3,5}=f^3_1$ &  &  &  &  & $0$ \\
 \hline
\end{tabular}
\vspace{3mm}
\caption{Calculating   $I(\mF_3, \mW_2)$}
\label{table3}
\end{table}

\noindent\emph{Claim 3}: $I(\mF,\mW_1)=I(\mF, \mW_2)$, thereby completing the proof of Case 2.
\vskip 8pt
\noindent\emph{Proof of Claim 3}: Since the $R_j$'s are disjoint from the  sphere $R'$ and intersect $G'$ algebraically 0 times, it follows that for all $i,j,\ \langle f_i^3, R_j\rangle=0$.  By construction $\langle f_i^3, S_j\rangle=\delta_{ij}$.  Hence  the proof of  Lemma \ref{homology3} shows  that for each $i,\ [f_i]=[f^3_i]+\sum_{j=1}^n a_{ij} R_j+\sum_{j\neq i} b_{ij} S_j$ where for all $i,j,\ b_{ij}=-b_{ji}$.  The calculation of Lemma \ref{reorder calc} shows that $I(\mF,\mW_1)=I(\mF,\mW_2)$.  \qed

\section{$\mD$ is symmetric in $\mF$ and $\mW$}

Our $\mD\mfw$ is defined asymmetrically in two ways.  First $\I\mfw$ depends on an ordering of $\mW$, rather than one on $\mF$.  Second, $C\mfw$ is defined asymmetrically as detailed in Remark \ref{asymmetry}, however as noted there that asymmetry is equivalent to reversing the roles of $\mF$ and $\mW$.

\begin{proposition}\label{symmetry1} \emph{($\mD$ Symmetry)} If $(\mF,\mW)$ is a finger/Whitney system, then $\mD(\mF, \mW)=\mD(\mW, \mF)$. \end{proposition}

\begin{remark} The reader should note that the proof uses Lemma~\ref{L: inv under uncross bd} that asserts: If $(\mF_1, \mW_1)$ is obtained from $(\mF,\mW)$ by adding a new uncrossed finger with identical finger and Whitney discs, then $\mD(\mF, \mW)=\mD(\mF_1, \mW_1)$.  For organizational reasons its proof is given in Section \ref{homotopy invariance}. \end{remark}

\noindent\emph{Proof of Proposition \ref{symmetry1}}:    \emph{Step 1}:  Suppose that $\mfw$ has k eyes.  We can assume that for $1\le i\le k$,  each $\mF_{ii}\cup \mW_{ii}$ has an even number of immersed cycles.
\vskip8pt
\noindent\emph{Proof of Step 1}:  Put $\mfw$ in framed finger form.   Whenever $\mF_{ii}\cup\mW_{ii}$ has  an  odd number of cycles,   replace $R_i$ by $R'_i$  by introducing a new $i$-finger and replace $\mF_{ii}, \mW_{ii}$ with $\mF'_{ii}, \mW'_{ii}$ by introducing common canceling finger and Whitney discs.  If  $(\mF', \mW')$ denotes our new finger - Whitney system, then by Lemma \ref{L: inv under uncross bd} we have $I(\mF', \mW')=I(\mF, \mW)$ and $I(\mW', \mF')=I(\mW, \mF)$.\qed
\vskip 8pt
\noindent\emph{Step 2}:  $\mW$ has a switching $\mW^*$ and $\mF$ has a switching $\mF^*$ such that $(\mF, \mW^*), (\mF^*, \mW)$ and $(\mF^*, \mW^*)\in$ IA.  Furthermore, $\mW^*$ and $\mF^*$ can be chosen so that there exists $\mK$ which similarly matches $\mF^*$ so that with respective orderings $\mK_1, \mK_2$; $\mW$ is a switching of $\mK_1$ and $\mW^*$ is a switching of $\mK_2$.    Similarly, there exists $\mL$ which similarly matches $\mW^*$ with orderings $\mL_1, \mL_2$ which respectively are switchings of $\mF$ and $\mF^*$.  
\vskip 6pt
\noindent\emph{Proof of Step 2}:  Let $\gamma^i_1,\cdots, \gamma^i_{n_i}$ denote the immersed cycles of $(\mF_{ii}\cup\mW_{ii})\cap G_i$. Since each $\mF_{ii}, \mW_{ii}$ is handled similarly and the  modifications below can be done independently by working in terms of appropriate framed finger forms, we will suppress the $i$ superscripts and subscripts.   If the  $u\in \mW\cup\mF$ and $u\cap G\subset\gamma_j$, then we abuse notation by saying $u\in\gamma_j$.  Rename $\mF_u$ and define an order on it such that  $f_1<f_2<\cdots< f_n$ where $f_i\in \gamma_i$ and these are the first n elements in the ordering.  Let $\mF^*$ denote an order induced switching for $(\mW, \mF)$.  Next, rename and order $\mW_u$ so that the first $n$ elements are $w_n< w_{n-1} <\cdots<w_1$ where $w_j$ is the unique element $\in\gamma_j$ sharing with $f_j$  a +1 element of $R\cap G$.   Let $\mW^*$ be the order induced switching of $\mW$ with respect to $\mfw$, These switchings satisfy the first conclusion of Step 2.

For the second conclusion, see Figure \ref{fig:fw symmetry} which shows the basic construction.  Figure \ref{fig:fw symmetry} a) shows a 1D representation of $\mW$ and $\hat\mF$, where $\hat\mF$ is chosen so that its uncross discs similarly match those of $\mF$, and the switching $\hat\mF^*$ when finger germed is the standard set of finger discs.  Figure \ref{fig:fw symmetry} b) shows $\mW^*$ and $\hat\mF^*$.  By our rules for constructing order induced switching  $\hat\mF^*_u$ similarly matches $\mF^*_u$.  Define $\mK=\hat\mF^*$.  Since $\mK_u$ similarly matches $\mF^*_u$ any order induced switching of $(\mF^*, K)$ is a full switching.  By construction when $\mK$ is finger germed and $\mW^*$ and $\mW$ are Whitney germed, $\mK\cap\mW^*$ and $\mK\cap \mW\subset \mR\cap\mG$, thus $\mW$ and $\mW^*$ are switchings of $\mK$.  With suitable orderings $\mK_1, \mK_2$; $\mK$ full switches to $\mW$ and $\mW^*$.  \qed

\begin{figure}[!htbp]
    \centering
    \includegraphics[width=.85\linewidth]{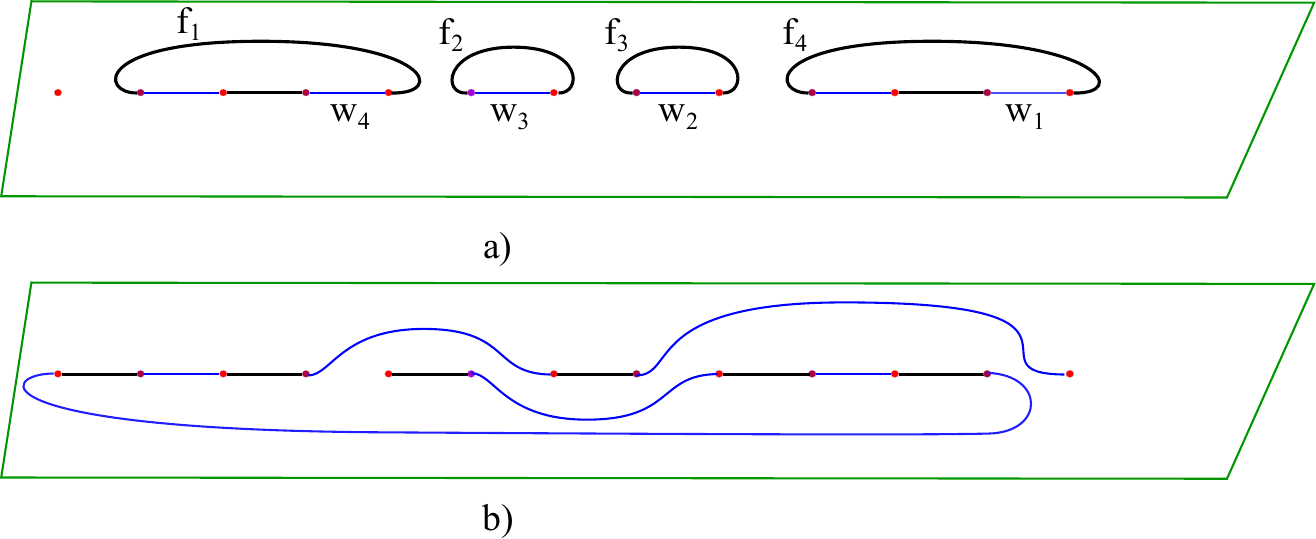}
    \caption{The construction of $\mW^*$ and $\mK$}
    \label{fig:fw symmetry}
    \end{figure}

\vskip 8pt
\noindent\emph{Step 3}:  It suffices to show $C\mfw=C(\mW,\mF)$ when $\mfw\in$ IA.

\vskip 6pt
\noindent\emph{Proof of Step 3}. 
We have $\mD(\mW, \mF)=\mD(\mW, \mF^*)=\mD(\mF^*, \mW)= \mD(\mF^*, \mK_1)=\mD(\mF^*, \mK_2) = \mD(\mF^*, \mW^*)$.  The first, third and fifth equalities follow from Proposition \ref{cross switching} and the fourth equality follows from Proposition \ref{order}.   To obtain the second equality observe that $I(\mW, \mF^*)=I(\mF^*, \mW)$ by multi-eye Lemma \ref{reverse}, since $(\mF, \mW^*)\in$ IA and by the hypothesis of Step 3 we also have $C(\mW, \mF^*)=C(\mF^*, \mW)$.   In a similar manner we have $\mD(\mF, \mW)=\mD(\mW^*, \mF^*)$.  Finally we have $\mD(\mW^*, \mF^*)=\mD(\mF^*, \mW^*)$ since $(\mF^*, \mW^*)\in$ IA.  \qed

\vskip 8pt
\noindent\emph{Step 4}:  It suffices to show $c\mfw=c(\mW,\mF)$ when $\mfw$ is in embedded arc position and $\partial \mF_c\cap \partial\mW_u=\partial \mW_c\cap\partial \mF_u=\emptyset$.

\vskip 6pt
\noindent\emph{Proof of Step 4}. By Step 3 we can assume $\mfw\in$ IA. Since $\mfw$ can be transformed to $(\mF', \mW')$ in embedded arc position by a disc slide sequence and by Lemma \ref{boundary clearing} we can further assume  $\partial \mF_c\cap \partial\mW_u=\partial \mW_c\cap\partial \mF_u=\emptyset$, it follows by Lemma \ref{cu vs cij} that it  suffices to prove $C(\mF',\mW')=C(\mW',\mF')$.  The proof follows after noting $CU(\mF', \mW')=CU(\mW', \mF')=0$.\qed

\vskip 8pt
\noindent\emph{Step 5}: Proof of Proposition \ref{symmetry1}.

\vskip 6pt
\noindent\emph{Proof of Step 5}.  By a sequence of cross special sum square moves maintaining  the embedded arc condition with $\partial \mF_c\cap \partial\mW_u=\partial \mW_c\cap\partial \mF_u=\emptyset$, we can assume that $\mF_c$ and $\mW_c$ are similarly matched.  By Lemma \ref{ea common boundary} we can reduce to the case that $(\mF,\mW)$ and $(\mW, \mF)$ are in full embedded arc position without changing either $\mD\mfw$ or $\mD(\mW, \mF)$.  Indeed, we can use the same set of claspings and discs slides whether we start with $(\mF,\mW)$ or $(\mW, \mF)$.   Disc slides preserve $\mD$ when in IA and since Lemma \ref{cross clasping} is agnostic to whether or not the clasping is of Whitney or finger discs, $\mD$ is also preserved.  
 
 To complete the proof apply Proposition \ref{full symmetry}. \qed

\begin{corollary} \label{disc slide invariance} If $(\mF', \mW')$ is obtained from $\mfw$ by disc sliding, then $\mD(\mF',\mW')=\mD\mfw$.\end{corollary}

\begin{proof}  Suppose that $(\mF,\mW')$ is obtained from $\mfw$ by a single $\mW$ disc slide.  If it is an $\mR$ disc slide, then apply Lemma \ref{multieye slide invariance}.  If it is a $\mG$-disc slide, then apply Lemma \ref{multieye slide invariance} using $\mD(\mW,\mF)$, where in that formulation the disc slide becomes a  $G$-disc slide of a finger disc.  Since $\mD\mfw=\mD(\mW,\mF)$ the result also holds for $\mF$-disc slides.\end{proof}

\begin{corollary} \label{full reduction} \emph{(Reduction to FEA)}  Any finger/Whitney system $(\mF,\mW)$ can be reduced to $(\mF',\mW')$ in  full embedded arc position by a sequence of untwisted isotopy, disc slides, boundary twisting, cross special sum square moves, order induced switching and clasping.  Each of those moves preserve $\mD$. \qed \end{corollary}

To summarize \S 1 - \S 9:  We built from scratch a calculus for computing $\mD$ which at a given stage produces a value independent of previously allowable choices.  By developing a sufficiently robust theory we showed that these operations preserve $\mD$ independent of the order performed.  Further, these six operations are sufficient to transform $(\mF,\mW)$ to $(\mF',\mW')\in$ FEA.  Thus, if  $(\mF'', \mW'')\in$ FEA is obtained by another such sequence of operations, then $\mD\mfw=\mD(\mF', \mW')=\mD(\mF'', \mW'')$.  

The next two sections will introduce new operations on finger/Whitney systems which are also $\mD$-invariant.  These will be needed to prove that $\mD\mfw$ is an invariant of the relative homotopy class of the loop of multi-spheres arising from $\mfw$.

\section {$\mD$ is invariant under general switching and the special sum square move} 

In this section we prove that $\mD\mfw$ is invariant under switching as stated in Definition \ref{switching} and more generally in  Definition \ref{multi switching}.  This contrasts with our until now requirement of minimal switching as stated in Convention \ref{minimal switch}.    

In \S 5 we proved that $\mD$ is invariant under cross special sum square moves.  Here we prove invariance of $\mD$ under uncross special sum square moves.

\begin{proposition} \label{sum invariance} i) Given $\mfw$, if $\mW'$ is obtained from $\mW$ by an \textbf{n}-switching, then $\mD(\mF,\mW)=\mD(\mF, \mW')$.

ii)  If $\mW'$ is obtained from $\mW$ by a special sum square move, then $\mD(\mF,\mW)=\mD(\mF, \mW')$.\end{proposition}

\begin{proof}  i)   We can assume that $\mR\cup\mG$ is in $\mG$-framed finger form with $\mW$ the standard Whitney discs.  Since any \textbf{n}-switching is a composition of switchings with a single switch disc, it suffices to consider the case that   $\mW'$ is obtained from $\mW$ by replacing $w_1\in \mW_{11}$ by $w'$ as in Definition \ref{switching}.  We will  find a switching $\mW^*$ of $\mW$ which is a minimal switching for both $\mfw$ and $(\mF, \mW')$ for suitable orderings on $\mW$ and $\mW'$ respectively.  Since by Proposition \ref{order}, we are free to choose the ordering and by Proposition \ref{multi switch invariance}, $\mD$ is invariant under minimal switching the result follows.  

View $w'$ for the moment as finger germed.  After an ambient isotopy, using a composition of the six restandardization maps, whose time one map fixes $\mW$ pointwise and $\mR\cup\mG$ setwise we can assume that $w'$ is the standard finger germed switch disc $\hat w_1$.  
\vskip 8pt
\noindent\emph{Case 1}:  $w_1$ lies in the immersed arc component of $\mW_{11}\cup\mF_{11}$.
\vskip 6pt
\noindent\emph{Proof of Case 1}:  A 1D-representation of $w_1'\cup \mW_{11}$ appears as in Figure \ref{fig:sss 1}  a), the black arcs indicating discs that similarly match of $\mF_{11}$ and $w_1'$ being finger germed.  In general $\mF_{11}\cup\mW_{11}$ may have multiple immersed cycles and  may intersect cross discs.  Define $\mW^*$ as follows.  First, $\mW^*_c=\mW_c=\mW'_c$.  For $i\ge 2$, set $\mW^*_{ii}$ to be the standard minimal switch system for $\mW_{ii}=\mW_{ii}'$.  Finally, Figure \ref{fig:sss 1} b) shows how to construct $\mW_{11}^*$.   Here $w_1'$ is drawn Whitney germed.   Note that it is a minimal switching of  $\mW$ with respect $(\mF, \mW)$ and a minimal switching of $\mW'$ with respect to $(\mF, \mW')$.  

 The passage from $\mW_{11}$ to $\mW_{11}^*$ has $n$ switch discs, where $n$ is the number of immersed cycles of $\mF_{11}\cup \mW_{11}$, while the passage from $\mW'_{11}$ to $\mW^*_{11}$ has $n+1$ switch discs, where $w_1$ is the extra switch disc.   Note that by Proposition \ref{order} we are free to choose the orderings which determine these switchings.  In Figure \ref{fig:sss 1} to go from $\mW$ to $\mW^*$, $w_2<w_3<w_1$ and $w_2, w_3$ are the switch out discs, while to go from $\mW'$ to $\mW^*$, $w_1'<w_2<w_3$ and these are the switch out discs.  \qed

\begin{figure}[!htbp]
    \centering
    \includegraphics[width=.85\linewidth]{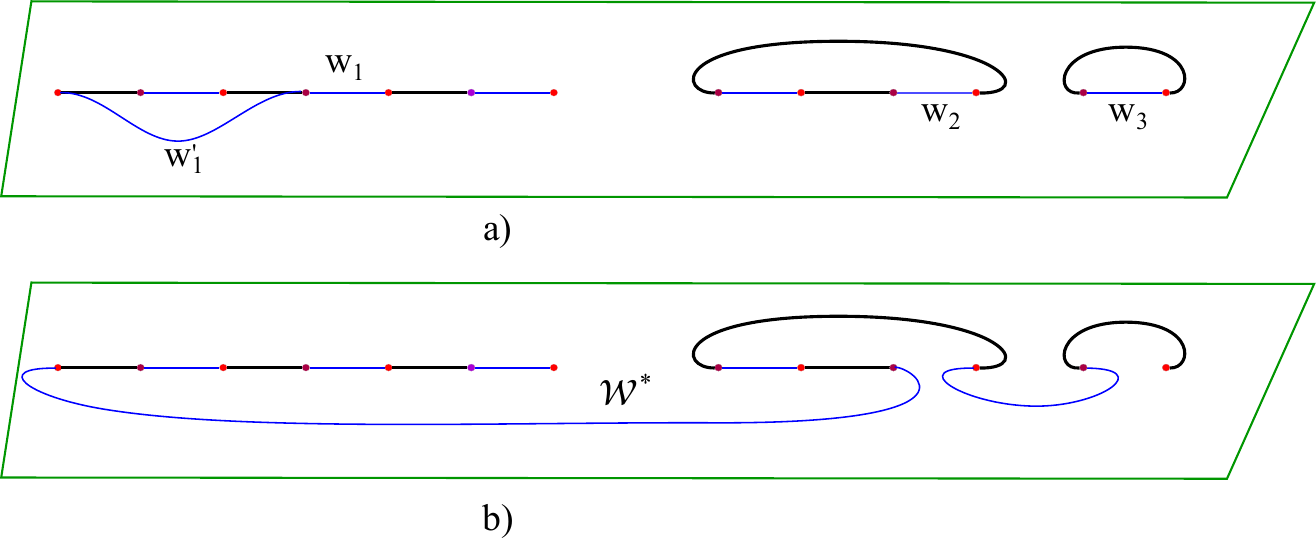}
    \caption{A 1-switch with switch out disc $w_1$ in the arc}
    \label{fig:sss 1}
    \end{figure} 
    
    \begin{figure}[!htbp]
    \centering
    \includegraphics[width=.85\linewidth]{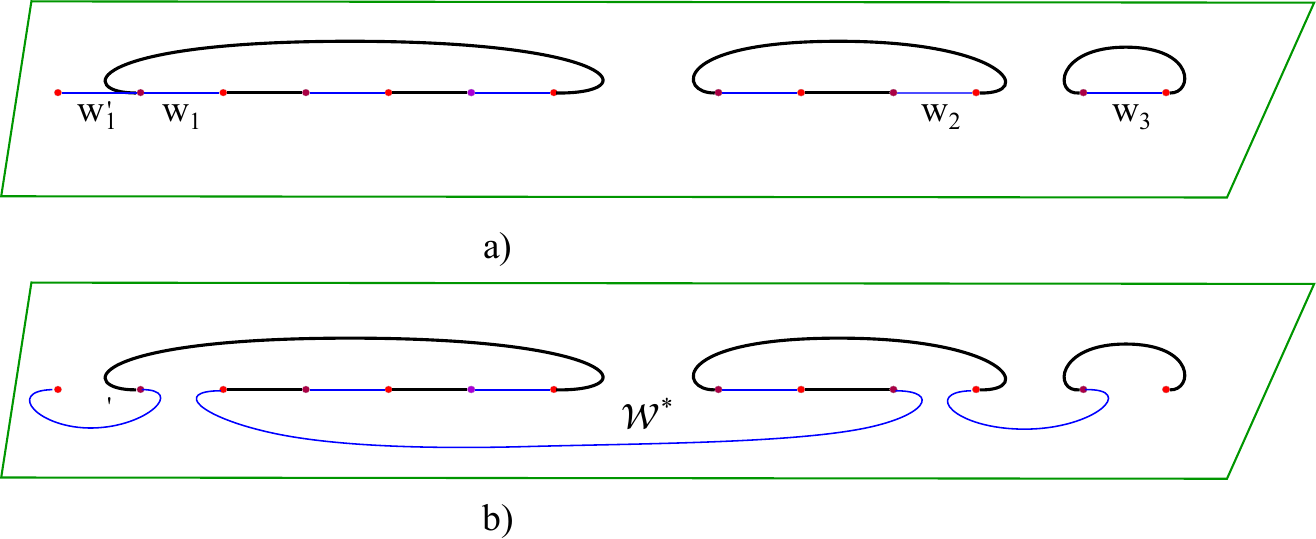}
    \caption{A 1-switch with switch out disc $w_1$ in a cycle}
    \label{fig:sss 2}
    \end{figure}

\noindent\emph{Case 2}:  $w_1$ lies in an immersed circle component of $\mF_{11}\cup\mW_{11}$.
\vskip 6pt
\noindent\emph{Proof}.  Figure \ref{fig:sss 2} a) shows a representative case with Figure \ref{fig:sss 2} b) showing a common minimal switching $\mW^*$ for both $\mW$ and $\mW'$ when restricted to $\mW^*_{11}$.  Here $\mW_{11}$ has $n$ switch out discs while $\mW_{11}'$ has n-1 switch out discs. \qed
\vskip 8pt
\noindent\emph{Proof of ii)}:  By Lemma \ref{common d} we can assume that $\mW'$ is obtained by an uncross special sum square move.   Figure \ref{fig:sss 3} then shows how to obtain $\mW'$ from $\mW$ by  three 1-switches, hence the result follows from i).\end{proof}

 \begin{figure}[!htbp]
    \centering
    \includegraphics[width=.85\linewidth]{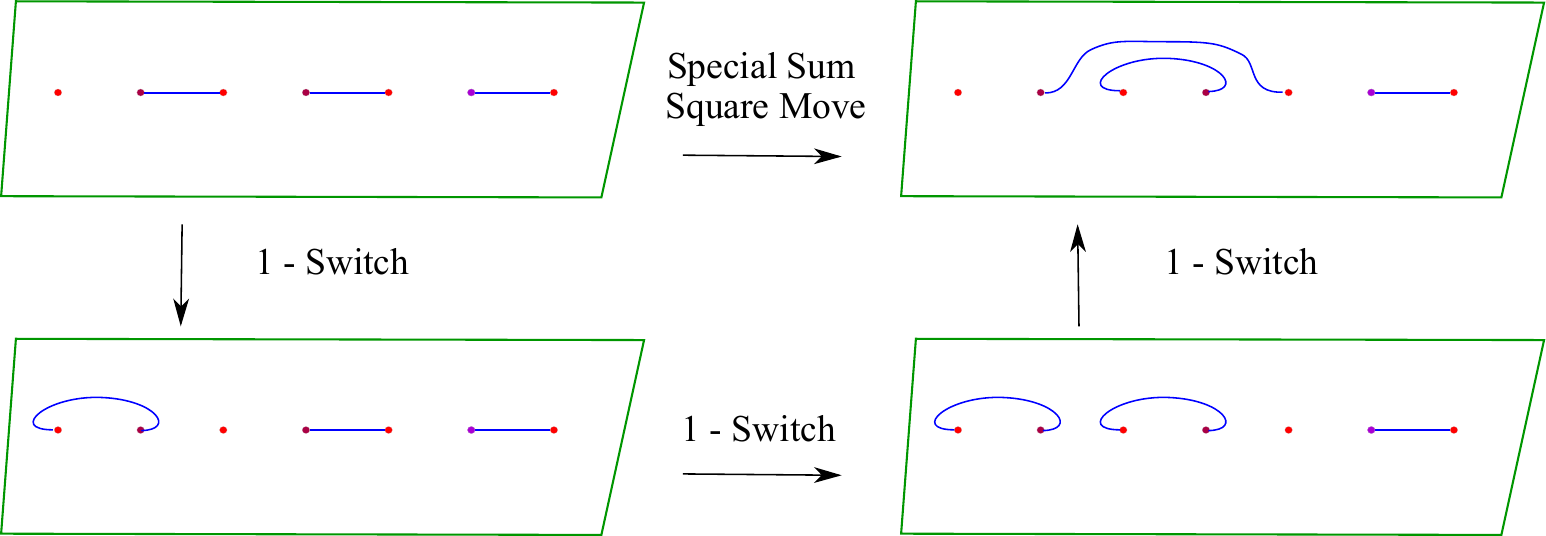}
    \caption{The special sum square move as three 1-switches}
    \label{fig:sss 3}
    \end{figure}

\section{2-parameter families of embeddings} \label{2-parameter families}

The previous sections showed how to produce an element of $\BZ_2$ given a representative $$\alpha: ([0,1],\{0,1\})\rightarrow (\Emb(\sqcup_k S^2, \#_k S^2\times S^2), \mR^{std})$$ in finger first position. In order to show that $\mD$ is well defined on homotopy classes and does not depend on the chosen representative, we will need to analyze generic homotopies between representatives. We take up this analysis in this section, studying homotopies between paths of embeddings in general.  The proof that $\mD$ depends only on the homotopy class $[\alpha]\in \pi_1(\Emb(\sqcup_{i=1} ^k S^2, \#_kS^2\times S^2), \mR^{std})$ as well as being a group homomorphism will be given in \S \ref{homotopy invariance}.

Throughout this section, $\mR$ will be a smooth closed orientable surface, $X^4$ a smooth closed connected orientable 4-manifold and $\mG$ a fixed smoothly embedded closed orientable surface with trivial normal bundle in $X$. We will let $\Emb(\mR, X)$ denote the space of smooth embeddings of $\mR$ in $X$. We will at times denote by $\alpha:[0,1]\rightarrow \Emb(\mR, X)$ a path of embeddings and other $H_s(t)$ where $H:[0,1]^2\rightarrow \Emb(\mR, X)$ is a 2-parameter family of embeddings and $H_s(t)$ is the restriction to the family at a fixed $s\in [0,1]$. 

There are two main theorems in this section, the 2-parameter ordering theorem, Theorem \ref{2-parameter families}, and the 2-parameter finger/Whitney system theorem, Theorem \ref{T: fw system moves}. Both are of independent interest; however, together they explain exactly how two finger/Whitney systems for the same relative homotopy class of embeddings are related. Up to isotopy, any two-finger/Whitney systems for the same homotopy class of embeddings differ by a finite set of five moves:
\begin{enumerate}
    \item disc slides,
    \item sphere slides (Definition \ref{D: sphereslide}),
    \item birth/death moves (Definition \ref{D: b/d move}),
    \item $x^3$-moves (Definition \ref{D: x^3}) and
    \item saddle moves (Definition \ref{D: saddle}).
\end{enumerate}

We will call these operations \emph{$\mF\mW$-moves}.

\subsection{Generic families of embeddings.}

In this part of \S6, we detail the interaction between a generic family of closed oriented smooth embeddings of the surface $\mR$ and the fixed smoothly embedded surface $\mG$ with trivial normal bundle in a closed smooth 4-manifold $X^4$. 

\begin{lemma}\label{L: Transverality Lemma} 
    Let $X^4$, $\mR$ and $\mG$ be smooth closed manifolds with $\mG\subset X^4$. A generic map $H: B^k\rightarrow \Emb(\mR, X^4)$ will be transverse to $\mG\subset X^4$. Moreover, if $\partial H$ is already transverse, then by a small deformation fixing $\partial H$, $H$ can be made transverse to $\mG$.
\end{lemma}
Note that $H$ being transverse to $\mG$ is the same as $\tr{H}$ being transverse to $B^k\times \mG$.
    \begin{proof}
        First, we consider the associated map $\tilde{H}:B^k\times \mR \rightarrow X^4$. As transversality is a generic open condition, if $\tilde{H}$ is not transverse, then it can be deformed to be transverse by an arbitrarily small deformation in $C^{\infty}(B^k\times \mR, X^4)$. Furthermore, since $H|_x: \mR \rightarrow X^4$ is an embedding for all $x\in B^k$ and the set of embeddings is an open subset of $C^{\infty}(\mR, X^4)$, we can choose our deformation of $\tilde{H}$ small enough so that $H|_x$ remains an embedding for all $x\in B^k$.
    \end{proof}

\subsection{Local analysis of paths of embeddings}
As a warmup, we will analyze the consequences of Lemma \ref{L: Transverality Lemma} for $k =1$. Let $\alpha:[0,1]\rightarrow \Emb(\mR, X^4)$ be a path of embeddings and denote by $\mR_t = \alpha(\mR, t)$ the image of $\alpha$ in $X$. Denote by $L = \alpha^{-1}(\mG)$ the preimage of $\mG$ in $[0,1]\times \mR$.

Dimension counting tells us that $L$ is an embedded 1-manifold with boundary. Next, after an additional perturbation, we may assume that the projection map $\pi_{[0,1]}:\mR \times [0,1]\rightarrow [0,1]$ restricts to a Morse function on $L$. Let $h:L\rightarrow [0,1]$ denote this function. Now, the regular values of $h$ correspond to times $t$ such that $\mR_t$ transversely intersects $\mG$, since at regular values, the vector $\partial_t$ decomposes as $v_L + v_\mR$ with $v_L\in T_pL$ and $v_{\mR}\in T_p\mR$. Since $\alpha_*(v_L)\in T_p \mG$ ($\alpha(L)\subset \mG$), $\alpha_*(\partial_t)\in T_p\mG + \alpha_*(T_p\mR_t)$. Therefore, the only way for $\alpha$ to be transverse to $\mG$ at time $t$ is if $\mR_t$ is transverse to $\mG$. The critical points of $h$ designate the moments when a finger move (Whitney move) happens. The index of the critical point determines the difference between the two moves; index 0 critical points correspond to finger moves, and index 1 critical points correspond to Whitney moves.

\subsubsection{Local finger-Whitney data}\label{S: 1par localFW data}

We now recall how transversality at the critical points gives rise to the local data for the finger/Whitney moves. 

\begin{definition}
Let $t_i$ be a critical value for $h$ and $p_i\in L\subset\mR_{t_i}$ a critical point. At a critical point, $T_{(p_i,t_i)} L\subset T_{(p_i,t_i)} \{t_i\}\times \mR $. Hence $\mR_{t_i}$ has a 1-dimensional tangency to $\mG$ at $\alpha_{t_i}(p_i)$. We call such a point $p_i$ a \emph{finger point (Whitney point)}, depending on the index.
\end{definition}
\begin{remark}
    We will often use both the terms critical point and finger/Whitney point to refer to the critical points of $h$ and their images in $X$.
\end{remark}

In order for $\alpha$ to be transverse  to $\mG$ at $(t_i, p_i)$, $\alpha_*(\partial_t)$ must have a non-zero component in the normal bundle of $\mG$. Let $v_{p_i}\in N_{p_i}\mG$ denote the non-zero component of $\alpha_*(\partial_t)$. Let $U_{p_i}\subset \mG$ be a neighborhood of $p_i$ that is disjoint from all other intersections of $\mR_{t_i}$ and $\mG$. As $\mG$ has a trivial normal bundle, we can fix a trivialization and fix an identification of the normal bundle with a tubular neighborhood. Using this identification of the normal bundle with a tubular neighborhood, we can identify a local 3-ball neighborhood of $\alpha(p_i)\subset X^4$ $U_{p_i}\times \{\lambda v_{p_i}\} $ for $-\delta \leq \lambda\leq \delta$ for some $\delta>0$. We will let $U = U_{p_i}$. As $\alpha$ is transverse to $\mG$, there exists an $\epsilon>0$ such that for $-\epsilon < \lambda < \epsilon$, $\mR_{t_i}$ is transverse to $U\times \{ \lambda v_{p_i} \}$. The stability of the transverse intersection then implies that there is a $\delta >0$ such that for $t_i - \delta< t< t_i+\delta$, $\mR_t$ is transverse to $U\times \{ \lambda v_{p_i} \}$ as well. 

We now describe how to obtain the local finger arc and finger disc when $p_i$ is an index 0 critical point of $L$.
\begin{construction}\label{Constrution of local fw data}
The local story for Whitney arc and disc for index 1 critical points is the same with the time parameter reversed. So we will assume that as $t$ moves from $t_i - \delta$ to $t_i + \delta$, the number of intersections between $\mR_t$ and $\mG$ increases by two. As $\mR_{t_i}$ is transverse to $U\times \{ \lambda v_{p_i} \}$, the intersection $A_{t_i} = \mR_{t_i}\cap \overline{U\times \{ \lambda v_{p_i} \}}$ will be a neatly embedded 1-mfld. By decreasing the range of $\lambda$ if necessary, we can assume that $A_{t_i}$ is a boundary parallel arc in $\overline{U\times \{ \lambda v_{p_i} \}}$ with both endpoints on $U\times -\epsilon$. Being boundary parallel implies that the complement of $A_{t_i}$ is diffeomorphic to $S^1\times B^2$. This allows us to choose an embedded disc $D_{t_i}$ in $\overline{U\times \{ \lambda v_{p_i} \}}$ disjoint from $U\times\{0\}$ with half its boundary on $A_{t_i}$ and the other half on $U\times \{-\epsilon\}$. Next, let $\gamma_{t_i}: B^1\rightarrow \overline{U\times \{ \lambda v_{p_i} \}}$ be a smooth unknotted embedding with the following boundary conditions: $\gamma_{t_i}(-1) = p_{i}\times \{0\}$, $\gamma(1) = p_i\times \{\epsilon\}$ and $(\gamma_{t_i})_*(\partial x) =v_{p_i}$ at $-1$ and $1$. Note that $\gamma_{t_i}(-1)\in \mR_{t_i}$. Now we extend $D_{t_i}$ to a family of embeddings $D_t$ for $t_i \leq t \leq t_i+\delta$ by isotopy extension of $\mR_t$ and similarly extend $\gamma_{t_i}$ to a family of embeddings $\gamma_t$ for $t_i-\delta \leq t\leq t_i$.

\end{construction}

\begin{definition}
For time $t_i-\delta \leq t < t_i$, the \emph{local finger arc} $f^{arc}_i$ for the embedding $\mR_t$ is the image of $\gamma_t$ restricted to $[-1,x(t)]$  where $x(t)$ is the smallest value of $x$ such that $\gamma_t(x(t))\in U\times \{0\}$. For time $t_i< t< t_i+\delta$, the \emph{local finger disc} $f_i$ for $\mR_t$ is the subset of $D_t$ restricted to $U\times \{ \lambda v_{p_i} \}$ with $0\leq \lambda \leq \epsilon$. 
\end{definition}

\begin{remark}\label{R: local FW data is contractible}
    We note that the 3-balls $\overline{U\times \{ \lambda v_{p_i} \}}$ depend smoothly on the vector $v_{p_i}$ in that when $v_{p_i}$ varies smoothly in $N_{p_i} \mG$, the local 3-ball also varies smoothly. The choice of disc $D_{t_i}$ and arc $\gamma_{t_i}$ also depends smoothly on the data. Moreover, the space of choices for $D_{t_i}$ and $\gamma$ are essentially the connected components of embedding spaces with fixed boundaries $\Emb(B^2, S^1\times B^2)$ and $\Emb(B^1, B^1\times B^2)$ containing the embeddings $pt\times B^2$ and $B^1\times pt $ respectively. By \cite{Ha1}, these components are contractible. Therefore, we can extend the local construction of the finger/Whitney data to any family of finger/Whitney moves.
\end{remark}

Let $C(M) = [0,1]\times M/(0\times M \sim *)$ denote the cone on the manifold $M$, and $*_M\in C(M)$ the cone point. Given a family of embeddings $H:B^k\rightarrow \Emb(\mR, X)$, let $\operatorname{tr}(H): B^k\times \mR\rightarrow B^k\times X$ denote the trace of the embedding. 

\begin{definition}\label{D: FW germs}
    Let $\mR$, $\mG$, and $X$ be as above and let $\alpha$ be a generic path of embeddings of $\mR$ into $X$. Let 
    $$C(\mF\mW) \colon = C(B^2)\sqcup C(B^1) / *_{B^2}\sim *_{B^1}.$$ 
    Working with the trace $\operatorname{tr}(\alpha)$ in $[0,1]\times X$,  a \emph{$\mF\mW$-germ} for a critical point $p_i$ with critical value $t_i$ is an embedding of $C(\mF\mW)$ into $[0,1]\times X$ satisfying the following (see Figure \ref{F: fw-germ} for an illustration):
    \begin{enumerate}
        \item $*_{B^2} = *_{B^1}$ maps to a finger/Whitney tangency between $\{t_i\}\times \mR_{t_i}$ and $\{t_i\}\times \mG$
        \item Each $B^2\times x(t) \subset C(B^2)$ smoothly embeds in $X\times \{t\}$ as a local finger disc for the finger/Whitney move at $t_i$
        \item Each $B^1\times x(t)$ smoothly embeds as the local finger arc for the finger/Whitney move at $t_i$.
    \end{enumerate}

\begin{figure}[!htbp]
    \begin{subfigure}{.45\textwidth}
    \centering
    \includegraphics[width=.9\linewidth]{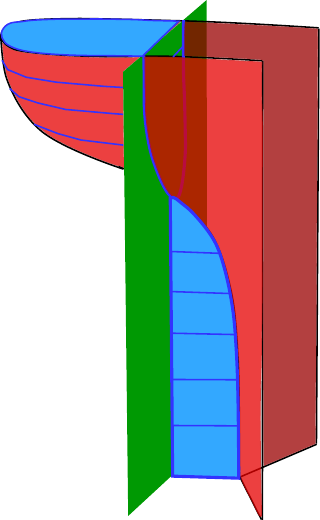}
    \end{subfigure}
    \begin{subfigure}{.45\textwidth}
        \includegraphics[width=.9\linewidth]{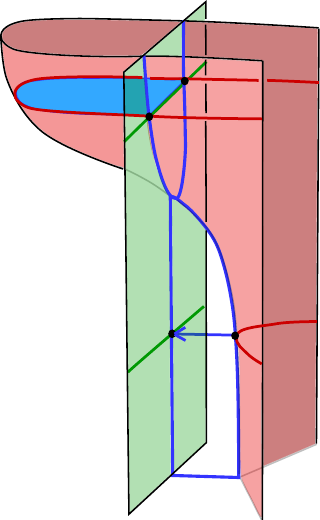}
    \end{subfigure}
    \caption{ The following image depicts the trace of the family $\mR_t$ near a finger/Whitney point $p$. On the left we have the image of the embedding $\mF\mW(p)$, where the $C(\mF\mW)$ is shown in blue. On the right is the same embedding, highlighting a time before the tangency with the local finger arc and a time after the tangency with the local finger disc. } \label{F: fw-germ}
\end{figure}

  Let $S_h$ denote the set of critical points of $h$. Then a $\mF\mW$ germ for $\alpha$ is an embedding
  \[
  \mF\mW(\alpha): S_h\times C(\mF\mW)\rightarrow [0,1]\times X
  \]  
  with $\mF\mW(\alpha)|_{p_i}$ is a $\mF\mW$ -germ for the critical point $p_i$.
  Technically this is a germ in the sense of being  an equivalence class of such embeddings with respect to restricting to arbitrarily small neighborhoods of the cone point, but we generally suppress this point.
\end{definition}

\begin{lemma}\label{L: 1-dim FW Germ}
    Given a 1-parameter family of embeddings $\alpha$, there exists a $\mF\mW$-germ for every finger/Whitney move.
\end{lemma}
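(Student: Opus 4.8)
\textbf{Proof proposal for Lemma \ref{L: 1-dim FW Germ}.}

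The plan is to assemble the global $\mF\mW$-germ $\mF\mW(\alpha)\colon S_h\times C(\mF\mW)\to [0,1]\times X$ by constructing a germ over each critical point $p_i$ separately and then noting that, since $S_h$ is a finite discrete set, there is no compatibility condition to satisfy between different critical points beyond keeping the images disjoint. So the whole statement reduces to: for each $p_i\in S_h$, with critical value $t_i$, produce an embedding of $C(\mF\mW)$ into $[0,1]\times X$ satisfying conditions (1)--(3) of Definition \ref{D: FW germs}, and do so with images that can be taken pairwise disjoint.

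The core of the argument is Construction \ref{Constrution of local fw data} together with Remark \ref{R: local FW data is contractible}. First I would fix a critical point $p_i$, say of index $0$ (the index $1$ case is identical with the time parameter reversed, as noted in the construction). Transversality of $\alpha$ (Lemma \ref{L: Transverality Lemma}, applied in the warm-up analysis preceding \S\ref{S: 1par localFW data}) guarantees the $1$-dimensional tangency at $p_i$ and the nonzero normal component $v_{p_i}\in N_{p_i}\mG$. Using a trivialization of the normal bundle of $\mG$, Construction \ref{Constrution of local fw data} then gives, for $t$ in a small interval $[t_i-\delta,t_i+\delta]$, the local finger disc $D_t$ (for $t\ge t_i$) and the local finger arc $\gamma_t$ (for $t\le t_i$), chosen to agree at $t=t_i$ at the tangency point $*$. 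Reparametrising the $t$-intervals $[t_i,t_i+\delta]$ and $[t_i-\delta,t_i]$ linearly onto $[0,1]$ (the cone coordinate), the families $\{D_t\}$ and $\{\gamma_t\}$ define an embedding of $C(B^2)\sqcup C(B^1)$; gluing the two cone points to the single tangency point $*$ produces the required embedding of $C(\mF\mW)$, which by construction satisfies (1), (2), (3).

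The only point requiring a bit of care — and the main obstacle, such as it is — is smoothness and embeddedness of the map out of the cone $C(\mF\mW)$ at the cone point $*_{B^2}=*_{B^1}$, i.e.\ that the family $D_t$ (resp.\ $\gamma_t$) limits correctly to the tangency as $t\to t_i^+$ (resp.\ $t\to t_i^-$) and that the union of the disc family and the arc family forms an honest embedded copy of $C(\mF\mW)$ rather than having an immersed or non-smooth crossing there. This is handled by the fact that in the model $3$-ball $\overline{U\times\langle\lambda v_{p_i}\rangle}$ the picture is the standard finger-move model: $A_{t_i}$ is a boundary-parallel arc, its complement is $S^1\times B^2$, and the spaces of choices for $D_{t_i}$ and $\gamma$ are $\Emb(B^2,S^1\times B^2)$ and $\Emb(B^1,B^1\times B^2)$ rel boundary, both contractible by \cite{Ha1} (Remark \ref{R: local FW data is contractible}); contractibility lets us choose the families $D_t,\gamma_t$ to depend smoothly on $t$ all the way to $t=t_i$ and to match the standard cone picture of Figure \ref{F: fw-germ}. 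Finally, since there are finitely many critical points and each local germ is supported in an arbitrarily small neighbourhood of its tangency point in $[0,1]\times X$, we may shrink the neighbourhoods so the images are pairwise disjoint, giving the global embedding $\mF\mW(\alpha)$ on $S_h\times C(\mF\mW)$. This completes the proof.
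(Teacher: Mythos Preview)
Your proposal is correct and follows the same approach as the paper: the paper's proof is simply the one-line observation that the lemma follows from Construction~\ref{Constrution of local fw data} and Remark~\ref{R: local FW data is contractible}, and you have unpacked exactly what that entails. Your additional remarks on smoothness at the cone point and on arranging disjointness between the germs at distinct critical points are reasonable elaborations but not a different argument.
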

\begin{proof}
    The lemma follows from Construction \ref{Constrution of local fw data} and Remark \ref{R: local FW data is contractible}.
\end{proof}

\begin{remark}
The $\mF\mW$-germs are heuristically the ``ascending/descending manifolds'' for the path of embeddings relative to $\mG$. We have defined them using cones; however, it can be shown that the cones are actually smoothly embedded $B^{m+1}$-half balls: half of the boundary on $[0,1]\times \mR\cup [0,1]\times \mG$ and the other half embedded in a level set $\{t\}\times X$ (see Figure \ref{F: fw-germ}). Note that other authors~\cite{perron1975pseudo, sharpe88, BPTlinkconcordance} have developed similar techniques to make a connection between finger and Whitney disks, more generally known as ``membranes'', and flow along gradient-like vector fields for certain relative Morse functions. The significantly new perspective in this work arises in the next section, in which we add an extra parameter.
\end{remark}

\subsection{Local analysis of 2-parameter families.}
Next, we turn our attention to the case where $k=2$ and perform the same analysis. We start with a generic relative homotopy $H: B^2 \rightarrow \Emb(\mR, X^4)$, which we rewrite as the map $H: B^2 \times \mR \to X$. We will let $(s,t)$ denote the coordinates of $B^2$, and $\mR_{s,t}$ the image of $H(\{(s,t)\} \times \mR)$ in $X$. Let us denote by $\Sigma$ the preimage $H^{-1}(\mG)$ in $B^2 \times \mR$ and by $h$ the restrictions of $\pi_{B^2}$ to $\Sigma$. From the transversality of $H$ and $\mG$ it follows that $\Sigma$ is a smooth properly embedded surface with boundary. 

\subsubsection{Folds and cusps.}

The assumption that $H$ is generic implies that the map $h:\Sigma \rightarrow B^2$ is also a generic map, and hence a stable map \cite{golubitsky}. It is a classical result that stable maps between surfaces will have singularities of ``fold'' or ``cusp'' type in the interior. Around a fold singularity, the function $h$ will locally behave like the map $(p_1,p_2)\mapsto (p_1, h_{p_1}(p_2))$ where $h_{p_1}$ is a 1-parameter family of Morse functions of constant index, while around a cusp singularity $h$ locally behaves like the map $(p_1,p_2)\mapsto (p_1, h_{p_1}(p_2))$ with $h_{p_1}$ a standard birth/death of a pair of canceling Morse critical points \cite{Whitneyclassification, golubitsky}. Along the boundary, the map will only have generic Morse-type singularities.

\begin{definition}
Let $H:B^2\rightarrow \Emb(\mR,X)$ be a generic smooth family of embeddings and $\mG$ a fixed embedding. We call $h:\Sigma \rightarrow B^2$ the singular map associated with $H$. The union of all the critical points of $h$ is called the \emph{singular set} and is denoted by $S_h\subset \Sigma$. This is a smooth 1-manifold with boundary, neatly embedded in $B^2\times \mR$. We call the image $h(S_h)$ of the singular set a \emph{Cerf graphic}. We will refer to both $p\in S_h$ and $(s(p),t(p)) \subset h(S_h)$ as a fold point when $h$ has a fold singularity at $p\in S_h$ and a cusp point if $h$ has a cusp singularity at $p$.
\end{definition}
\begin{remark}
A consequence of $h$ being a stable map is that $h(S_h)$ is an immersion with normal crossings away from the cusp points. 
\end{remark}
\begin{figure}
    \centering
    \includegraphics[width=0.8\linewidth]{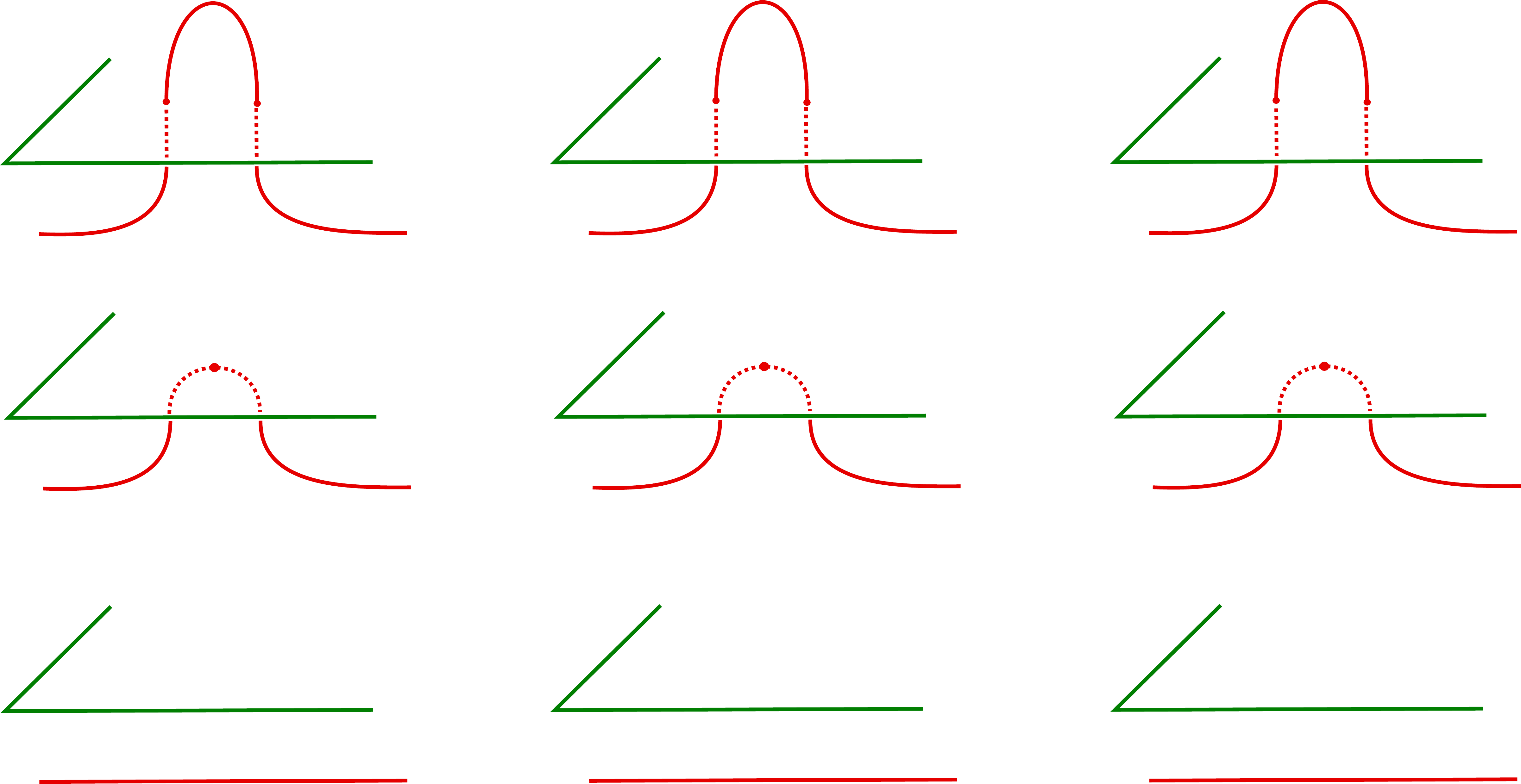}
    \caption{The local interaction between $\mR_{s,t}$ and $\mG$ near a fold singularity for $h:\Sigma\rightarrow B^2$. Each frame shows the configurations of $\mR_{s,t}$ and $\mG$ in a 3-ball slice of a 4-ball neighborhood around the point $\mR_{s_0,t_0}(p)$. Here, in terms of identifying the frames, the horizontal axis is $s$ and the vertical axis is $t$. The middle frame depicts the configuration corresponding to $\mR_{s_0,t_0}$ and $\mG$, and these configurations vary smoothly as one moves from frame to frame. Note that every frame along the middle row also corresponds to a fold point for $S_h$.}
    \label{F: Fold}
\end{figure}
\begin{figure}
    \centering
    \includegraphics[width=0.8\linewidth]{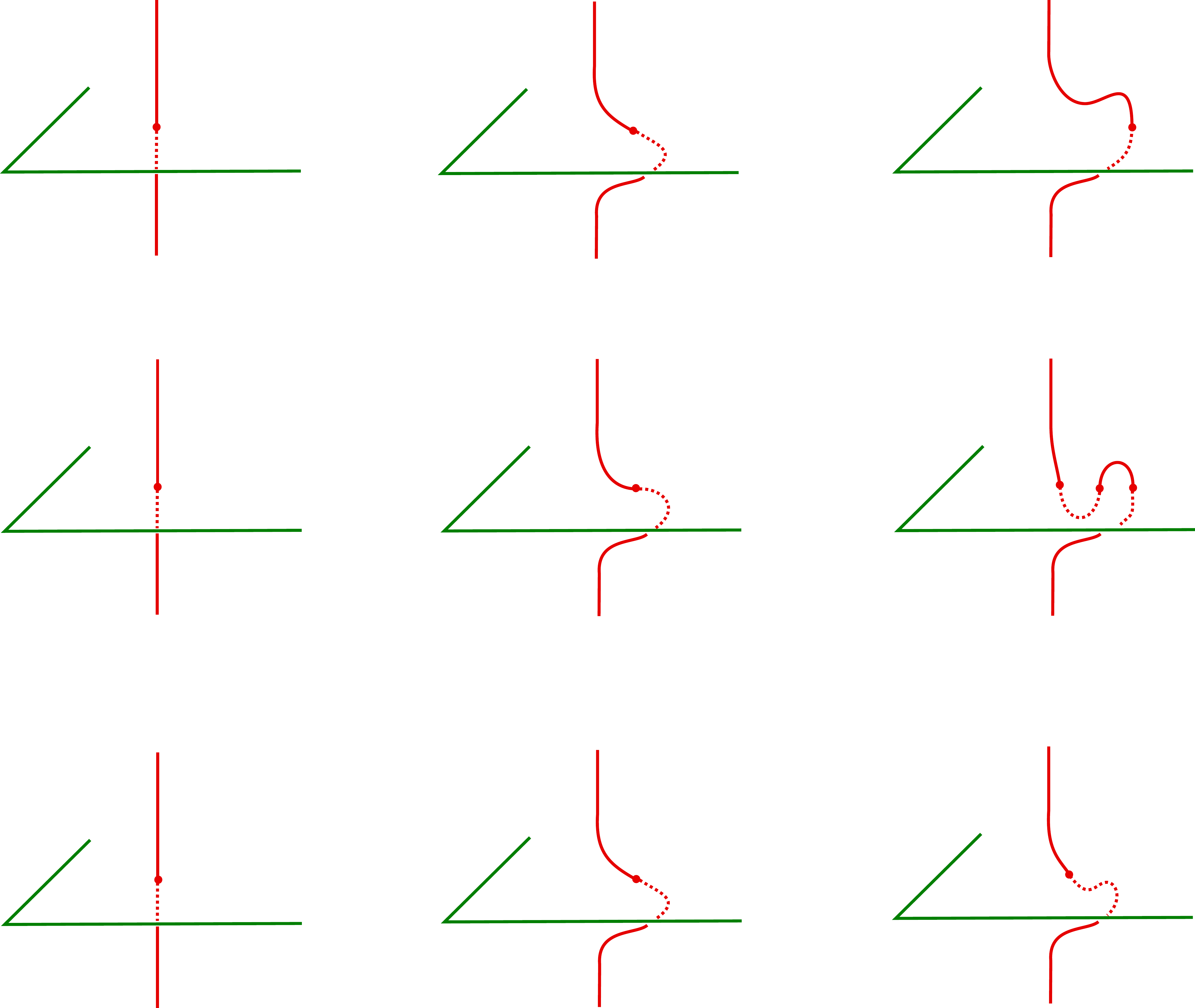}
    \caption{The local interaction between $\mR_{s,t}$ and $\mG$ near a cusp singularity for $h:\Sigma\rightarrow B^2$. Each frame shows the configurations of $\mR_{s,t}$ and $\mG$ in a 3-ball slice of a 4-ball neighborhood around the point $\mR_{s_0,t_0}(p)$. Again the horizontal axis is $s$ and the vertical axis is $t$. The middle frame depicts the configuration corresponding to $\mR_{s_0,t_0}$ and $\mG$, and these configurations vary smoothly as one moves from frame to frame.}
    \label{F: x cubed}
\end{figure}

The singular points of $\Sigma$ correspond to instances where $\mR_{s,t}$ has some increased tangency with $\mG$. At a fold or cusp point, $\mR_{s(p),t(p)}(p)$ will have a ``1-dimensional'' tangency with $\mG$; that is, $T_p\mR_{s(p),t(p)}\cap T_p \mG$ is 1-dimensional. The local models for the evolution of $\mR_{s,t}$ near a fold or a cusp are given in Figures \ref{F: Fold} and \ref{F: x cubed}. Each frame in the figures corresponds to a parameter value $(s,t)$ in a sufficiently small neighborhood of $(s(p),t(p))$. Shown is a $B^3$-slice of a 4-ball neighborhood of $H(p)$ that contains an open subset of $\mG$ and intersects $\mR_{s,t}$ in a 1-manifold. These neighborhoods result from the transversality of $H$ and are constructed in a similar manner as in the 1-dimensional case.

\begin{construction}\label{Construction 2-par local models}
The local models above can be derived from $h$ by a similar analysis as in Section \ref{S: 1par localFW data}. We will not repeat that, but remark on a few points. First, at a fold point $p\in S_h$ the role that $\partial_t$ plays in the 1-dimensional case is given by a unit vector $w(p)$ transverse to the component of $T_p S_h$ in $T_p B^2\oplus 0$. Then for some $\epsilon(p)>0$ we can restrict to the parameters $(s,t)$ of the form $(s(p),t(p))+ \delta w(p)$ for $-\epsilon(p) \leq \delta \leq \epsilon(p)$. Letting $\mR_{s,t}$ vary with $\delta$ along this line reduces to the 1-dimensional case. If we choose a smoothly varying transverse vector field $w(p)$ along the fold locus in $h(S_h)$, then $H_*(w(p))\in N_p(\mG)$ will vary smoothly. As such, the local neighborhood $U_p\subset \mG$ and the resulting 3-ball constructed using $H(w(p))$ will also vary smoothly. Furthermore, we can arrange the local 3-balls to be disjoint whenever there is a crossing by shrinking the open sets $U_p$ and $\epsilon(p)$ if necessary.

Let $p_0\in S_h$ be a cusp point and $(s_0,t_0)$ its image $B^2$ by $h$. Let $v_T(p)$ be a unit vector tangent to the fold locus $h(S_h)$ at $h(p)$ near $(s_0,t_0)$ and let $v_{p_0}$ be the limit of $v_T(p)$ as $p$ approaches $ p_0$. Note that since the graphic $h(S_h)$ near $(s_0,t_0)$ is a cusp, after translating $v_{p_0}$ to $(s(p),t(p))$, $\{v_T(p), v_{p_0}(p)\}$ forms a basis for $T_{s(p),t(p)}B^2$. Therefore the vector $v_{p_0}(p)$ must be transverse to the fold locus for $(s(p),t(p))$ near $(s_0,t_0)$. More importantly, we have that transversality of $H$ at $p_0$ is given by $$H_*(\langle v_{p_0}\rangle\oplus T_{p_0} (\{(s_0,t_0)\}\times \mR)) + T_{p_0} \mG = T_{p_0} X.$$
\emph{Therefore, $H_*(v_{p_0}(p))\in N_p(\mG)$ for all $p$ sufficiency close to and including $p_0$}. So for compatibility, we define $w(p)\in TB^2|_{S_h}$ as a smoothly varying vector field everywhere transverse to the fold locus that smoothly limits to $v_{p_0}(p)$ for $p$ sufficiently close to $p_0$. This will guarantee that the local 3-ball slices in $X$ used to describe the two different local models (Figure \ref{F: Fold} and Figure \ref{F: x cubed}) fit together compatibly.

\end{construction}

\begin{lemma}\label{L: 2-par fw germs}
    Let $\mR$, $\mG$, and $X$ be as above. Let $H:B^2\rightarrow \Emb(\mR, X)$ be a generic 2-parameter family and $h:\Sigma\rightarrow B^2$ the singular map associated with the family $H$. Let $\tilde{S_h}\subset B^2\times \mR$ denote the singular set of $h$ without the cusp points. We consider the trace $\operatorname{tr}(H)$ in $B^2\times X^4$. Then there exists an embedding 
    \[
    \widetilde{\mF\mW}: 
    \tilde{S_h} \times C(\mF\mW) \rightarrow B^2\times X \]
    such that
    \begin{enumerate}
        \item for $p\in \tilde{S}_h$, $\widetilde{\mF\mW}(p,*)$ maps to the finger/Whitney point $\mR(p)_{s(p),t(p)}$ and
        \item for a line segment $l_p$ through $(s(p),t(p))$ and transverse to $h_*(T_p S_h)$, $\widetilde{\mF\mW}$ embeds a $\mF\mW$-germ for the finger/Whitney point $H(p)$ for family $\mR_{l_p}$.  
    \end{enumerate}  
\end{lemma}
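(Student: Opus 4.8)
The plan is to build the embedding $\widetilde{\mF\mW}$ by a parametrized version of Construction \ref{Constrution of local fw data}, fibered over the (cusp-free) singular set $\tilde S_h$, and to control the gluing near cusps using the compatibility choice of transverse vector field $w(p)$ made in Construction \ref{Construction 2-par local models}. First I would fix, once and for all, a smoothly varying unit vector field $w(p)$ on $S_h$ transverse to $h_*(T_pS_h)$, with the property emphasized in Construction \ref{Construction 2-par local models} that $w(p)$ smoothly limits to the cusp-transverse vector $v_0(p)$ as $p$ approaches any cusp point; this guarantees $\operatorname{tr}(H)_*(w(p))\in N_p\mG$ along all of $S_h$, cusps included. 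For each $p\in\tilde S_h$ this produces, exactly as in the $k=1$ analysis of \S\ref{S: 1par localFW data}, a 3-ball slice $\overline{U_p\times\langle\lambda \operatorname{tr}(H)_*(w(p))\rangle}$ in $X\times\{t(p)\}$ in which $\mR_{s(p),t(p)}$ meets $\mG$ in a boundary-parallel arc, and then a local finger/Whitney disc $D_p$ and arc $\gamma_p$ inside it. The line segment $l_p$ through $(s(p),t(p))$ in the $w(p)$-direction plays the role of the time parameter, and restricting $H$ to $l_p$ reduces us to the one-parameter situation, so that Construction \ref{Constrution of local fw data} and Lemma \ref{L: 1-dim FW Germ} apply verbatim along each fiber.

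The key point is that all of these choices can be made to vary smoothly in $p$. This is where Remark \ref{R: local FW data is contractible} does the work: the space of admissible discs $D_{t_i}$ is $\Emb(B^2,S^1\times B^2)$ based at $\pt\times B^2$ and the space of admissible arcs $\gamma$ is $\Emb(B^1,B^1\times B^2)$ based at $B^1\times\pt$, both contractible by \cite{Ha1}. Therefore the bundle over $\tilde S_h$ whose fiber over $p$ is the space of $\mF\mW$-germs for the finger/Whitney point $H(p)$ (with respect to $l_p$) has contractible fibers, and $\tilde S_h$ is a 1-manifold, so a global section exists; shrinking the $U_p$ and the $\epsilon(p)$ as in Construction \ref{Construction 2-par local models} makes the 3-ball slices disjoint over crossings, so the resulting map on $\tilde S_h\times C(\mF\mW)$ is an embedding. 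By construction $\widetilde{\mF\mW}(p,*_{B^2})=\widetilde{\mF\mW}(p,*_{B^1})$ is the tangency point $\operatorname{tr}(H)(p)=\mR_{s(p),t(p)}(p)$ in $X\times\{t(p)\}$, giving (1), and the restriction to $l_p\times C(\mF\mW)$ is by construction a one-parameter $\mF\mW$-germ for $H(p)$, giving (2). Since $l_p$ was only required to be transverse to $h_*(T_pS_h)$, and any two such segments differ by an isotopy staying transverse, the germ is independent of this choice up to the contractible ambiguity already quotiented out.

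The main obstacle is the behaviour near the cusp points, since $\tilde S_h$ is an open 1-manifold whose ends limit onto cusps of $S_h$, and we need the section to extend continuously (indeed smoothly) across those limits even though the cusp point itself is deleted from the domain. This is precisely why the vector field $w(p)$ was arranged to limit to $v_0(p)$: the transversality equation $\operatorname{tr}(H)_*(\langle v_0\rangle\oplus T_{p_0}(\{(s_0,t_0)\}\times\mR))+T_{p_0}\mG=T_{p_0}X$ of Construction \ref{Construction 2-par local models} shows $\operatorname{tr}(H)_*(w(p))$ stays in $N_p\mG$ as $p\to p_0$, so the local 3-balls, and with them the whole local finger/Whitney data, vary smoothly right up to the cusp. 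Thus the two local models of Figures \ref{F: Fold} and \ref{F: x cubed} fit together, the section on each component of $\tilde S_h$ extends over the closure, and no monodromy or jump can occur. Everything else — transversality of $H$ and $\mG$, that $\Sigma$ is a properly embedded surface, that $h$ is a stable map with only fold and cusp singularities in the interior and Morse singularities on the boundary — has already been established (Lemma \ref{L: Transverality Lemma} and the discussion of folds and cusps), so the argument is a matter of assembling these pieces equivariantly over $\tilde S_h$.
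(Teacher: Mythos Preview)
Your proof is correct and follows essentially the same approach as the paper: fix the transverse vector field $w(p)$ from Construction~\ref{Construction 2-par local models}, reduce to the 1-parameter construction along each $l_{w(p)}$, invoke the contractibility of the choices (Remark~\ref{R: local FW data is contractible}) to globalize the section over the 1-manifold $\tilde S_h$, and shrink $U_p$ and $\epsilon(p)$ near crossings to ensure embeddedness. Your discussion of the cusp limits is more detailed than what the lemma strictly requires (the paper defers that to the separate cusp-extension Construction~\ref{constr: FW germs near cusps}), and your verification of property~(2) for an \emph{arbitrary} transverse segment $l_p$ is slightly less explicit than the paper's---which observes that such an $l_p$ meets the constructed neighborhood $\bigcup_p l_{w(p)}$ and reads the germ off that intersection---but these are minor differences in emphasis rather than substance.
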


\begin{proof}
     First, we note that for any $p\in \tilde{S_h}$ the vector field $w(p)$ from Construction \ref{Construction 2-par local models} determines a 1-parameter family of embeddings $\mR_{l}$ where $l$ is a small line segment in $B^2$, the direction for which is determined by $w(p)$. Since $w(p)$ is transverse to the fold locus, the family $\mR_{l_{w(p)}}$ has a single finger/Whitney point. Construction \ref{Constrution of local fw data} then gives an embedding $\{p\}\times C(\mF\mW)$ in $l_{w(p)}\times X$. Now, since this vector field varies smoothly along $\tilde{S}_h$, the local data used to construct $\{p\}\times C(\mF\mW)$ will vary smoothly. Therefore, we get a family of finger/Whitney points $H(\tilde{S}_h)$ and smoothly varying local data. By Remark \ref{R: local FW data is contractible}, we know that the space of choices made to define $\{p\}\times C(\mF\mW)$ is contractible, allowing us to extend $\{p\}\times C(\mF\mW)$ locally along $\tilde{S}_h$. By choosing the length of $l_{w(p)}$ so that no two segments intersect in $B^2$ except near the normal crossings of $h(S_h)$, we can ensure the finger/Whitney cones are embedded. Near a crossing, the local neighborhoods containing the finger/Whitney points can be taken to be disjoint. This ensures that for every pair $p,p' \in \tilde{S}_h$, $\{p\}\times C(\mF\mW)$ and $\{p'\}\times C(\mF\mW)$ will remain disjoint. Together, we obtain the existence of an embedding $\widetilde{\mF\mW}$. 

     Property (1) is trivially satisfied since the construction above implies that $\widetilde{\mF\mW}$ maps every $p\in S_h$ to the corresponding finger/Whitney point $\{(s(p),t(p))\}\times \mR_{s(p),t(p)}(p)$. To verify property (2), we note that the line segment $l_p$ intersects the neighborhood $\cup_{p\in S_h}l_{w(p)}$. For $(s,t)$ along this intersection, $\tilde{\mF\mW}$ smoothly embeds the local discs and arcs in $\{(s,t)\}\times X$. If we restrict to the connected interval containing $(s(p),t(p)$, then moving along $l_p$ over these parameters, the embeddings will vary smoothly until one such embedding collapses to the finger/Whitney point $H(p)$. If it was a disc that collapsed, then passing through the finger/Whitney point expands to a locally arc and vise versa. Therefore, along the line $l_p$ $\widetilde{\mF\mW}$ does indeed embed a $\mF\mW$-germ for the family $\mR_{l_p}$ near the finger/Whitney point $H(p)$.
\end{proof}

\begin{construction}[Extension to cusps]\label{constr: FW germs near cusps}  Near a cusp, the vector field $w(p)$ is given by $v_0$, where $v_0$ is the limit of unit tangent vectors of the fold locus. The 1-dimensional $\mF\mW$-germs from Lemma \ref{L: 2-par fw germs} are embedded over small line segments determined by $v_0$. Let $l_0$ denote the local line segment through $(s_0,t_0)$ parallel to $v_0$. Now note that at a cusp, there are two folds coming together. If we look at $\{p\}\times C(\mF\mW)$ as $p$ approaches the cusp point $p_0$ along either fold, we see $C(B^1)$ shrinks smaller and smaller, collapsing to the cusp point $H(p_0)$  in the limit. For the local discs, the cones $C(B^2)$ do not collapse. Instead they both embed in $l_0\times X$ such that in the corresponding $\{(s,t)\}\times X$, the $B^2\times \{x_i\}$ slices are smoothly embedded in the local 3-ball determined by $H_*(v_0)$. If we denote by $f_{s,t}:= \widetilde{\mF\mW}(B^2\times \{x_1\})$ and $w_{s,t}:=\widetilde{\mF\mW}(B^2\times \{x_2\})$, then for each $(s,t)\in l_0$ where appropriate, $f_{s,t}\cap w_{s,t} \subset \mR\cap \mG$, $\partial f_{s,t}\cup \partial w_{s,t}\cap \mG$, $\partial f_{s,t}\cup \partial w_{s,t}\cap \mR$ are both embedded arcs. As $(s,t)$ approaches the cusp point $(s_0,t_0)$ in $l_0$, both $f_{s,t}$ and $w_{s,t}$ collapse to the cusp point $H(p_0)$. We call this configuration at a cusp a \emph{cusp extension}. 
\end{construction}

Putting Lemma \ref{L: 2-par fw germs} and Construction \ref{constr: FW germs near cusps} together, we have the following definition of the $\mF\mW$-germ for 2-parameter families. 

\begin{definition}
    Let $\mR$, $\mG$, $X$ be as above and let $H:B^2\rightarrow \Emb(\mR, X)$ be a generic embedding and $h:\Sigma\rightarrow B^2$ the corresponding stable map associated with the family. The $\mF\mW$-germ for the family is the embedding 
    \[\mF\mW(H):S_h \times C(\mF\mW)\rightarrow B^2\times X\]
    such that $\mF\mW(H)$ is an extension of an embedding $\widetilde{\mF\mW}$ to the cusp points given in Construction \ref{constr: FW germs near cusps}. 

\end{definition}

\subsection{Paths of paths.}
We now focus on the case where $H:[0,1]^2\rightarrow \Emb(\mR, X)$ is a generic relative homotopy between two paths $\alpha_0$ and $\alpha_1$, i.e., that $\alpha_i=H(i,t)$ and that $H(s,i) = \alpha_0(i)$ for all $s\in [0,1]$ and $i= 0,1$. Again, we denote the coordinates on $[0,1]^2$ by $(s,t)$ with $t$ the \emph{path parameter} and $s$ the \emph{homotopy} parameter. We will denote by $H_s$ the path of embeddings given by restricting $H$ to the line $\{s\}\times [0,1]$. Let $L_s$ denote $\Sigma\cap \{s\}\times [0,1]\times \mR$ and $h_s$ the function $h|_{L_s}$. Finally, let us denote by $L_{s,t}$ the intersection $\Sigma \cap \{(s,t)\}\times \mR$. We have that $\Sigma$ is a properly embedded surface with boundary, for which the boundary decomposes into four parts: $L_0$, $L_1$, $L_{0,0}\times [0,1]$, and $L_{0,1}\times [0,1]$. Finally, we find that since $H$ is a relative homotopy, the boundary of the singular set $\partial S_h$ is confined to the subset $L_0\cup L_1 \subset \partial \Sigma$. We will let $\mF\mW(H)$ be a fixed $\mF\mW$-germ for $H$.

 Now we examine how the intersection locus between $H_s(t)$ and $\mG$ changes during a generic homotopy. We will utilize the Cerf graphic for $h$ to organize and track the changes. By the genericity of $h$ we have that for all but finitely values of $s\in [0,1]$, the line $\{s\}\times [0,1]$ will cross all folds transversely. For such a fixed $s$, the path $\mR_{s,t}$ is a generic path where finitely many finger and Whitney moves occur between $\mR_{s,t}$ and $\mG$. We say \emph{a fold has index 0} if crossing a fold as $t$ increases corresponds to a finger move, and \emph{a fold has index 1} if crossing in the same manner corresponds to a Whitney move. As we vary $s$, two changes are possible. First, we see that the times $t$ when a given finger/Whitney move occurs can change. Second, the corresponding finger/Whitney point $p\in \mR_{s,t}\cap \mG$ can vary in both $\mR_{s,t}$ and $\mG$. However, these changes occur smoothly and, outside of our finite parameters $\{s_0,...,s_k\}$, do not change the number of finger/Whitney moves. The finite values $s_i$ correspond to parameters in which one of two possible phenomena appears in the graphic: Either the line $\{s_*\}\times [0,1]$ is tangent to a fold or the line contains a cusp point. These phenomena occur generically at different values of $s\in [0,1]$, so we can and will treat them individually.

 \subsubsection{Cusps.}\label{S: cusps}
The simpler of the two phenomena to describe at this point is when the line $\{s_*\}\times [0,1]$ contains a cusp singularity. The simplicity comes from the local model (Figure \ref{F: x cubed}) that completely describes the transition to passing through the singularity. In the graphic, the cusp will open either from left to right or from right to left. In the first case, the local model indicates that a pair of canceling finger/Whitney moves will be created, while the second case is just the opposite; a canceling pair of finger/Whitney moves are removed. The former case results in increasing the number of finger/Whitney moves by 2, and the latter decreases the number by two. Furthermore, the local finger/Whitney discs for the $\mF\mW$-germ has the following two properties: 
\begin{enumerate}
    \item[a.] The boundary arcs form a connected embedded arc on $\mR_{s,t}$ and $\mG$, and
    \item[b.] the interiors are disjoint.
\end{enumerate}

\begin{figure}
    \centering
    \labellist                             
            \hair 10pt
            \pinlabel $s$ at 300 -20
            \pinlabel $s$ at 1150 -20
            \pinlabel $t$ at -30 270
            \pinlabel $t$ at 800 270
        \endlabellist
    \includegraphics[width=0.7\linewidth]{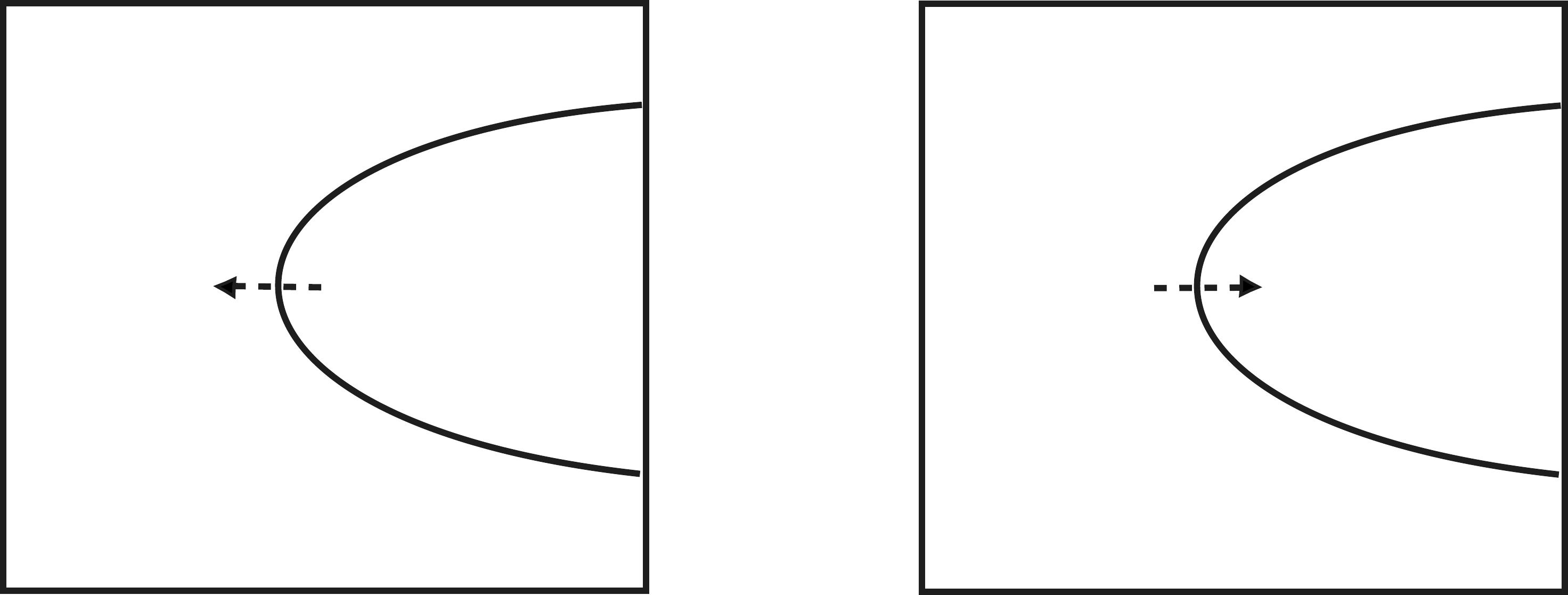}
    \caption{Local model for critical points of $\Sigma$. The arrows indicate the coorientation of the fold in the direction of decreasing intersection points between $\mR_{s,t}$ and $\mG$}
    \label{homotopy-Cerf-graphic}
\end{figure}

 \subsubsection{Birth, death, and saddles.}\label{S: b/d and saddle}
Next, we examine what happens when the line $\{s_*\}\times [0,1]$ is tangent to a fold. If we consider the function $s\circ h:\Sigma \rightarrow [0,1]$, then these vertical tangencies correspond to critical points $s\circ h$ (again, by the generic assumption on $H$, $s\circ h$ will have only nondegenerate critical points). In $[0,1]^2$, the fold tangency will appear locally as shown in Figure \ref{homotopy-Cerf-graphic} and we will restrict ourselves to the parameter values $(s,t)$ that are contained within the graphics of Figure \ref{homotopy-Cerf-graphic}. Let $(s_0,t_0)\in [0,1]^2$ denote the point where the line $\{s_0\}\times [0,1]$ is tangent to the fold. Then we have four cases to consider: 

\begin{enumerate}
    \item\label{Birth} Birth: The fold opens to the right and crossing the fold along the line $[0,1]\times \{t_0\}$ corresponds to a finger move of $\mR_{s,t}$ through $\mG$. 
    \item\label{Death} Death: The fold opens to the left and crossing the fold along the line $[0,1]\times \{t_0\}$ corresponds to a Whitney move of $\mR_{s,t}$ through $\mG$.
    \item\label{Saddle 1} Saddle: The fold opens to the right and crossing the fold along the line $[0,1]\times \{t_0\}$ corresponds to a Whitney move of $\mR_{s,t}$ through $\mG$.
    \item\label{Saddle 2} Saddle: The fold opens to the left and crossing the fold along the line $[0,1]\times \{t_0\}$ corresponds to a finger move of $\mR_{s,t}$ through $\mG$.
\end{enumerate}

 Case \ref{Birth} corresponds to the encounter of a local minimum of $\Sigma$ with respect to the function $s\circ h$. Let $\epsilon>0$ be small enough so that $s_0-\epsilon$ and $s_0+\epsilon$ are contained within the alotted parameters. For $s_0<s\leq s_0+\epsilon$, the line $\{s\}\times [0,1]$ will transversely cross the fold graphic, while for $s_0-\epsilon \leq s < s_0$, the line will be disjoint from the fold graphic. We know from the local model that crossing a fold corresponds to performing a finger/Whitney move. So we see that as $H_s(t)$ evolves from $H_{s_0-\epsilon}(t)$ to $H_{s_0+\epsilon}(t)$, we see that $\mR_{s,t}$ deforms from not intersecting $\mG$ locally to performing a finger move, then undoing the finger move. Hence, just after crossing $s_0$, the family $H_s(t)$ has a canceling pair finger/Whitney moves. Near $s_0$, the $\mF\mW$-germ gives an isotopy between the local finger disc and the local Whitney disc for $H_s$, but as $H_s$ progresses further, they may stop agreeing completely. However, the boundary arcs for the created finger and Whitney discs will always pair off the same two intersections between $\mR_{s,t}$ and $\mG$ and these arcs will be isotopic on the two spheres rel endpoints, and after arranging that the arcs to agree, the interiors of the two discs will be isotopic rel boundary in the complement of $\mR_{s,t}$ and $\mG$. Case \ref{Death} is the reverse of Case \ref{Birth} and corresponds to local maxima of $\Sigma$. The local finger/Whitney discs that exist for $s<s_0$ end up coinciding as $s$ approaches $s_0$, then cancel each other.

 The last two cases, Case \ref{Saddle 1} and Case \ref{Saddle 2}, occur when passing a saddle of $\Sigma$. This is the most interesting of the changes so far and can be thought of as a \emph{local factorization} (see Lemma 3.15 of \cite{Ga2}). We will now describe Case \ref{Saddle 1} with Case \ref{Saddle 2} being the reverse. Again, in our subset of $[0,1]^2$ we have $(s_0, t_0)$ the point in which $\{s_0\}\times [0,1]$ is tangent to the fold. In this case, crossing the fold left to right along the line $[0,1]\times \{t_0\}$ encodes a Whitney move that is performed between $\mR_{s,t_0}$ and $\mG$. Therefore, as $H_s$ deforms from $s-\epsilon$ to $s+\epsilon$, Whitney moves begin to form in the family $\mR_{s,t}$ for fixed values of $t$ near $t_0$. When $s$ equals $s_0$, the Whitney move occurs for the family $\mR_{s,t_0}$, and as $s$ passes $s_0$, the Whitney moves propagate along the fold, happening for the families of embeddings $\mR_{s, t_1}$ and $\mR_{s,t_2}$, where $t_1(s)<t_0<t_2(s)$. Focusing on the path $H_s$, for a fixed $s>s_0$ and increasing values of $t$ we find that $\mR_{s,t}$ first performs a Whitney move when $t = t_1$. As $t$ approaches $t_2$, $\mR_{s,t}$ performs a finger move as we now cross the fold \emph{in the opposite direction}. Thus, after crossing $s_0$, the path $H_s$ adds a Whitney move, followed by a finger move. Note that the local finger arcs given by the $\mF\mW$-germ will coincide for parameter values near $(s_0,t_0)$.

\subsubsection{Reordering}
We have discussed the local deformations that occur in the family $H_s$ as we encounter cusps and fold tangencies in the Cerf graphic. The last feature that appears in the Cerf graphic is normal crossings of the fold graphic. Crossings of the fold graphic occur when the order of a finger/Whitney move changes with another move. For example, a random path of embeddings $\alpha(t)$ may not be in finger-first position. However, it can be deformed to be in finger-first position and the 2-parameter family $\alpha_s(t)$ that realizes this deformation will have a Cerf graphic that contains only crossings of the fold graphic. At a crossing point $(s_0,t_0)$, there will be two finger/Whitney points between $\mR_{s_0,t_0}$ and $\mG$, and the local data given by $\mF\mW$ for these two points will be disjoint for all parameters near the crossing time.

\subsection{$\mF\mW$-vector fields} We want to now extend the local $\mF\mW$-germs to more globally defined data. To do this, we will construct a nonvanishing vector field on $[0,1]^2\times X\setminus \nu_\epsilon (S_h)$ contained in the subbundle $\langle \partial_t \rangle\oplus TX$ that projects to $\partial_t$, allowing us to translate the data along the $t$-direction. This will guarantee our extensions remain level-preserving. We also want to make sure that the boundary data on $[0,1]^2\times \mG$ and $\tr(H)$ remain on these submanifolds. Therefore, we require the vector field to be tangent along both submanifolds and agree on $\Sigma\setminus S_h$. The main technical point of this discussion is knowing that a vector field on $\Sigma\setminus S_h$ can be extended to a neighborhood $\nu_\Sigma : =\nu(\Sigma\setminus S_h)$ so that on both $\tr(H)\cap \nu_\Sigma$ and $[0,1]^2\times \mG\cap \nu_\Sigma$, it remains tangent to these submanifolds.

Throughout this section, we will denote $\Sigma_\epsilon:= \Sigma \setminus \nu_\epsilon(S_h)$ where $\nu_\epsilon(S_h)$ is an arbitrarily small open tubular neighborhood of $S_h$ in $\Sigma$. Furthermore, we assume that this neighborhood is the intersection of an open tubular neighborhood of $S_h$ in $[0,1]^2\times X$.

\begin{remark}\label{L: assumption on G}
    For the next lemma, we assume that $\mG \simeq \sqcup_i S^2$ for convenience.
\end{remark}

\begin{lemma}\label{L: sigma nbh model}
     There exists a parameterization $\phi: \Sigma_\epsilon\times B^2\times B^2\rightarrow \nu(\Sigma_\epsilon)$ of a tubular neighborhood of $\Sigma_\epsilon$ such that $\phi(\Sigma_\epsilon\times B^2\times 0)\subset [0,1]^2\times \mG$  and $\phi(\Sigma_\epsilon \times 0\times B^2)\subset \tr{H}$.
\end{lemma}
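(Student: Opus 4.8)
\textbf{Proof proposal for Lemma \ref{L: sigma nbh model}.}

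The plan is to build the parameterization $\phi$ in stages, first organizing the normal directions to $\Sigma_\epsilon$ inside $[0,1]^2\times X$ and then splitting off the two preferred sub-distributions coming from $[0,1]^2\times\mG$ and $\tr(H)$. Recall that $\Sigma = H^{-1}(\mG)\subset [0,1]^2\times\mR$, and that $\Sigma_\epsilon$ is a compact surface with boundary (obtained by deleting a neighborhood of the singular set $S_h$), so its normal bundle in the $6$-manifold $[0,1]^2\times X$ is a rank-$4$ bundle. First I would observe that along $\Sigma_\epsilon$ we have two embedded submanifolds both containing it, namely $M_1 := [0,1]^2\times\mG$ (of dimension $4$) and $M_2 := \tr(H) = \{(s,t,H(s,t)(x)) : (s,t)\in[0,1]^2,\ x\in\mR\}$ (also of dimension $4$), and that $M_1\cap M_2 = \operatorname{tr}(H)|_\Sigma$ pushed into $[0,1]^2\times\mG$, which is exactly $\phi$'s image of $\Sigma_\epsilon\times 0\times 0$; moreover $M_1$ and $M_2$ meet \emph{transversely} along $\Sigma_\epsilon$ because $H$ is transverse to $\mG$ (Lemma \ref{L: Transverality Lemma}) and $\Sigma_\epsilon$ avoids the singular set $S_h$, so at each point of $\Sigma_\epsilon$ the tangent spaces $T M_1$ and $T M_2$ span $T([0,1]^2\times X)$ with intersection exactly $T\Sigma_\epsilon$. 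This transversality is the geometric heart of the construction: it lets us write $N_{\Sigma_\epsilon}([0,1]^2\times X)$ as an internal direct sum $\nu_1\oplus\nu_2$, where $\nu_1 := N_{\Sigma_\epsilon}(M_1)\subset N_{\Sigma_\epsilon}([0,1]^2\times X)$ (a rank-$2$ bundle, the normal directions that move off $M_1$ but, one checks, stay tangent to $M_2$) and $\nu_2 := N_{\Sigma_\epsilon}(M_2)$ (rank $2$, tangent to $M_1$).

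Next I would trivialize these two rank-$2$ bundles. The normal bundle of $\mG$ in $X$ is trivial by hypothesis, so $N_{M_1}([0,1]^2\times X) = [0,1]^2\times N_\mG(X)$ is trivial, and its restriction $\nu_2$ to $\Sigma_\epsilon$ is therefore trivial; fix a trivialization $\nu_2\cong\Sigma_\epsilon\times B^2$. For $\nu_1 = N_{\Sigma_\epsilon}(M_1)$: since $M_1 = [0,1]^2\times\mG$ and, under the assumption $\mG\simeq\sqcup_i S^2$ of Remark \ref{L: assumption on G}, $\Sigma_\epsilon$ is a compact surface with nonempty boundary sitting inside $[0,1]^2\times\mG$, the bundle $\nu_1$ is a rank-$2$ bundle over a surface with boundary, hence trivial; fix a trivialization $\nu_1\cong\Sigma_\epsilon\times B^2$. (If one wants to avoid the homotopy-type statement for $\Sigma_\epsilon$, one can instead argue directly: $\nu_1$ is the pullback under $\Sigma_\epsilon\hookrightarrow[0,1]^2\times\mG\to\mG$ of $T\mG$ minus a line field — or use that any vector bundle over a noncompact surface, or a compact surface with boundary, is trivial.) Now invoke the tubular neighborhood theorem, using a Riemannian metric on $[0,1]^2\times X$ adapted to $M_1$ and $M_2$, to get an exponential-type diffeomorphism from the total space of $\nu_1\oplus\nu_2$ onto a tubular neighborhood $\nu(\Sigma_\epsilon)$ in $[0,1]^2\times X$; composing with the two trivializations gives the desired $\phi:\Sigma_\epsilon\times B^2\times B^2\to\nu(\Sigma_\epsilon)$, and by construction $\phi(\Sigma_\epsilon\times B^2\times 0)$ is swept out by $\nu_1$-directions off of $\Sigma_\epsilon$, hence lands in $M_1 = [0,1]^2\times\mG$, while $\phi(\Sigma_\epsilon\times 0\times B^2)$ is swept by $\nu_2$-directions, hence lands in $M_2 = \tr(H)$.

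The main obstacle, and the step needing the most care, is verifying that the splitting $\nu_1\oplus\nu_2$ can be realized \emph{geometrically by a single tubular neighborhood} so that the two coordinate slabs actually lie \emph{inside} $M_1$ and $M_2$ and not merely tangent to them to first order. The clean way to handle this is to choose the metric so that $M_1$ and $M_2$ are totally geodesic near $\Sigma_\epsilon$ — or, more economically, to first take a tubular neighborhood of $\Sigma_\epsilon$ \emph{inside $M_1$}, identifying it with $\Sigma_\epsilon\times B^2$ (this uses triviality of $\nu_1$), and then extend to a tubular neighborhood of $M_1$ \emph{inside $[0,1]^2\times X$} using the trivialization of $N_\mG(X)$; one must then check $\tr(H)$ is a product $\Sigma_\epsilon\times 0\times B^2$ in these coordinates, which follows by a further application of the tubular neighborhood theorem relative to $\Sigma_\epsilon\subset\tr(H)$ and the transversality of $M_1,M_2$, after possibly shrinking neighborhoods. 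One also has to make the construction compatible with the boundary behavior of $\Sigma_\epsilon$ near $S_h$ — but since we only need $\phi$ over $\Sigma_\epsilon$ (the singular set having been excised) and over a collar of $\partial\Sigma_\epsilon$ the submanifolds $M_1$, $M_2$ still meet transversely, no new difficulty arises there; it is purely a matter of carrying the relative tubular neighborhood theorem along a manifold with boundary, which is standard.
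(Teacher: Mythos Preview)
Your proposal is correct and takes a cleaner, more conceptual route than the paper. The paper builds the parameterization by hand in stages: it first constructs a tubular neighborhood of $\Sigma_\epsilon$ inside $[0,1]^2\times\mG$ with fibers tangent to the slices $\{(s,t)\}\times\mG$ (using simple connectivity of $\mG$ and the claim that each component of $\Sigma_\epsilon$ has the homotopy type of a point or circle), then extends outward using the trivial normal bundle of $\mG$, and finally straightens $\tr(H)$ by linearizing near $\Sigma_\epsilon$ and invoking $\pi_1 G^\pm(2,4)=1$ to homotope the resulting family of $2$--planes to a constant one. Your approach replaces all of this with the single observation that $M_1$ and $M_2$ meet transversely along $\Sigma_\epsilon$, giving an intrinsic splitting of the rank-$4$ normal bundle, after which both summands are trivial by general bundle theory over a surface with boundary. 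This is more robust (it does not depend on the specific homotopy type of the components of $\Sigma_\epsilon$) and makes the role of transversality transparent; the paper's Grassmannian step is what your staged ``geometric realization'' paragraph is standing in for.

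Two small points. First, your parenthetical description of $\nu_1 = N_{\Sigma_\epsilon}(M_1)$ as directions that ``move off $M_1$'' is backwards: these are directions \emph{tangent to} $M_1$ and normal to $\Sigma_\epsilon$ inside $M_1$, which is exactly why exponentiating them keeps you in $M_1$ (your later use is consistent with this correct reading). Second, the triviality of $\nu_1$ as ``rank~$2$ over a surface with boundary'' needs orientability, which you should mention holds since both $\Sigma_\epsilon$ and $M_1$ are orientable. For the geometric realization, your staged construction can be finished more directly than you suggest: after taking a tubular neighborhood of $\Sigma_\epsilon$ in $M_1$ and then extending by $N_\mG(X)$, the preimage of $\tr(H)$ in $\Sigma_\epsilon\times B^2\times B^2$ is fiberwise a graph $\{(g_p(v),v):v\in B^2\}$ with $g_p(0)=0$, and the shear $(p,u,v)\mapsto(p,u-g_p(v),v)$ straightens it to $\Sigma_\epsilon\times 0\times B^2$ while fixing $\Sigma_\epsilon\times B^2\times 0$ pointwise.
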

\begin{proof}
     Since $\Sigma_\epsilon \subset [0,1]^2\times \mG$ is transverse to $\{(s,t)\}\times \mG$ where they intersect, we can choose for each $p\in \Sigma_\epsilon$ a small open neighborhood $U(p)\subset \mG$. Since each component of $\Sigma_\epsilon$ is homotopy equivalent to either a point or a circle and by Remark \ref{L: assumption on G}, $\pi_1 \mG$ is simply connected, we can choose these open sets compatibly to yield an embedding $\widetilde{\phi}$ :$\Sigma_\epsilon \times \BR^2\rightarrow [0,1]^2\times \mG$. This gives a tubular neighborhood of $\Sigma_\epsilon$ in $[0,1]\times \mG$ where the normal fibers are tangent to $\{(s,t)\}\times \mG$. Now in $X$, $\mG$ has a trivial normal bundle. Thus, we can extend our embedding $\widetilde{\phi}$ to an embedding $\widetilde{\phi}:\Sigma_\epsilon \times \BR^2\times \BR^2\rightarrow [0,1]^2\times X$.

     Now, the points $p\in \Sigma_\epsilon$ are precisely the points such that $\mR_{s,t}$ is transverse to $\mG$ at $p$. Pulling $\tr(H)$ back by $\widetilde{\phi}$, we have a smoothly varying family of embeddings of $\BR^2$ intersecting $\BR^2\times 0$ transversely at $0$. Starting with $\pi: \Sigma_\epsilon \times B^2\times B^2\rightarrow B^2\times B^2$, we have a family of maps of $B^2\times B^2$ to itself. We linearize the family so that near $0\in \BR^2\times \BR^2$, the image of $\tr{H}$ looks like a family of linear subspaces. Afterwards, $\widetilde{\phi}$ gives a map $\Sigma_\epsilon \rightarrow G^{\pm}(2,4)$ where $G^\pm(2,4)$ is the space of oriented 2-planes in $\BR^4$. Again, each connected component of $\Sigma_\epsilon$ is homotopy equivalent to either a point or a circle. Since $\pi_1 G(2,4)=1$ for both components, we can isotope the family of planes to be constant over $\Sigma_\epsilon$. We choose the planes intersecting $0\times \BR^{\pm}$ depending on the intersection number of the component of $\Sigma_\epsilon$. Thus, we get a map $\psi:\Sigma_\epsilon \times B^2\times B^2\rightarrow \Sigma_\epsilon \times B^2\times B^2$ taking the image of $\tr{H}$ to $\Sigma_\epsilon \times 0\times B^2$. Therefore, $\widetilde{\phi}\circ\psi^{-1}$ gives the desired map.
\end{proof}

\begin{lemma}\label{L: v.f. extension lemma}
    Any section of $T \Sigma_\epsilon$ extends to $[0,1]^2\times X\setminus \nu_\epsilon(S_h)$ of $\Sigma_\epsilon$ preserving the tangency with both $\tr(H)$ and $[0,1]^2\times \mG$.
\end{lemma}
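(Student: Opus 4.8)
The plan is to use the local product model for a tubular neighborhood of $\Sigma_\epsilon$ provided by Lemma \ref{L: sigma nbh model} and extend the given vector field by a partition-of-unity argument that respects the two prescribed submanifolds. First I would fix the parameterization $\phi: \Sigma_\epsilon \times B^2 \times B^2 \to \nu(\Sigma_\epsilon)$ from Lemma \ref{L: sigma nbh model}, so that $\phi(\Sigma_\epsilon \times B^2 \times 0) \subset [0,1]^2 \times \mG$ and $\phi(\Sigma_\epsilon \times 0 \times B^2) \subset \tr(H)$. Given a section $v$ of $T\Sigma_\epsilon$, I would first push it forward via $d\phi$ along $\Sigma_\epsilon \times 0 \times 0$, and then extend it over $\Sigma_\epsilon \times B^2 \times B^2$ by declaring it to be $v(p)$ (the pushforward, viewed as a vector tangent to the $\Sigma_\epsilon$-factor under the product splitting $T(\Sigma_\epsilon \times B^2 \times B^2) = T\Sigma_\epsilon \oplus TB^2 \oplus TB^2$) at every point $(p, a, b)$, with no $B^2$-components. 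Because this extension has vanishing $B^2 \times B^2$ components and $v(p)$ is tangent to $\Sigma_\epsilon$, it is automatically tangent to $\phi(\Sigma_\epsilon \times B^2 \times 0)$ and to $\phi(\Sigma_\epsilon \times 0 \times B^2)$, hence (transporting back through $\phi$) tangent to $[0,1]^2 \times \mG \cap \nu(\Sigma_\epsilon)$ and to $\tr(H) \cap \nu(\Sigma_\epsilon)$.

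Next I would extend this locally-defined vector field over all of $[0,1]^2 \times X \setminus \nu_\epsilon(S_h)$. Here I would choose a smooth bump function $\beta$ on $[0,1]^2 \times X$ that is identically $1$ on a slightly smaller tubular neighborhood $\nu'(\Sigma_\epsilon) \subset \nu(\Sigma_\epsilon)$ and supported inside $\nu(\Sigma_\epsilon)$, and set the global extension to be $\beta$ times the field constructed above (extended by $0$ outside $\nu(\Sigma_\epsilon)$). On $\Sigma_\epsilon$ itself $\beta \equiv 1$, so the extension restricts to $v$ there; away from $\nu(\Sigma_\epsilon)$ it is identically zero, which is trivially tangent to $\tr(H)$ and $[0,1]^2 \times \mG$; and on the overlap region the tangency conditions hold pointwise because the unmodified field already satisfies them and scaling by a scalar preserves tangency to a submanifold. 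One point requiring a small argument: since $\tr(H)$ and $[0,1]^2 \times \mG$ may exit $\nu(\Sigma_\epsilon)$, I should verify that near $\partial \nu(\Sigma_\epsilon)$ the cutoff $\beta$ can be taken to depend only on the normal $B^2 \times B^2$-coordinates (i.e.\ $\beta = \beta(|a|^2 + |b|^2)$ in the product coordinates), so that the product of $\beta$ with a tangent field is still tangent; this is immediate from the product structure.

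The main obstacle, and the only genuinely nontrivial input, is precisely the compatibility of the local product charts that allows Lemma \ref{L: sigma nbh model} to be globalized — that is, ensuring the normal $B^2 \times B^2$-coordinate directions can be chosen consistently so that one global chart $\phi$ exists with both submanifolds appearing as coordinate slices. But that has already been established in Lemma \ref{L: sigma nbh model} (using that each component of $\Sigma_\epsilon$ is homotopy equivalent to a point or a circle, $\pi_1 \mG = 1$ under Remark \ref{L: assumption on G}, and $\pi_1 G^\pm(2,4) = 1$), so here I may invoke it directly. Given that chart, the extension argument above is routine: the construction is manifestly linear in $v$, smooth, and the tangency conditions are checked chart-locally. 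I would close by remarking that the same argument produces extensions of vector fields supported away from $S_h$ that agree with any prescribed behavior near $\partial([0,1]^2) \times X$, which is what will be needed to keep the boundary paths $\alpha_0, \alpha_1$ fixed when this $\mF\mW$-vector field is used to propagate the finger/Whitney data.
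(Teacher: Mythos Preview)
Your proposal is correct and follows essentially the same approach as the paper: trivially extend the section over the product chart $\Sigma_\epsilon\times B^2\times B^2$ from Lemma~\ref{L: sigma nbh model} with zero normal components (so tangency to both coordinate slices is automatic), then multiply by a bump function depending only on the normal $B^2\times B^2$ coordinates and extend by zero. Your additional remarks about why the cutoff preserves tangency and about boundary behavior are accurate elaborations but not needed for the statement as written.
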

\begin{proof}
    Let $V: \Sigma_\epsilon\rightarrow T\Sigma_\epsilon$ be a section. We can trivially extend $V$ to a section $V':\Sigma_\epsilon\times \BR^2\times \BR^2\rightarrow T\Sigma_\epsilon \oplus T\BR^2\oplus T\BR^2$. Using $\phi:=\widetilde{\phi}\circ \psi^{-1}$ from the previous lemma, we can push forward $V'$ to the neighborhood $\nu(\Sigma_\epsilon)$. The tangency conditions are automatically fulfilled by noting that $\phi(\Sigma_\epsilon\times \BR^2\times 0)\subset [0,1]^2\times \mG$, $\phi(\Sigma_\epsilon\times 0\times \BR^2)\subset \tr(H)$. To obtain the extension to $[0,1]^2\times X \setminus \nu_\epsilon(S_h)$, we simply take a smooth bump function $\rho: \BR^2\times \BR^2\rightarrow \BR$ that is equal to $1$ on the unit 4-ball centered at $0$ and vanishes outside the 4-ball of radius 2 centered at $0$. Then $\phi_*(\rho V')$ can be extended by $0$ to the rest of $[0,1]^2\times X\setminus \nu_\epsilon(S_h)$. 
\end{proof}

Now we shall construct on $[0,1]\times X\setminus \nu_\epsilon(S_h)$ a nowhere vanishing vector field $V$ such that $(\pi_{[0,1]^2})_*(V) = \partial_t$. We do this by constructing $3$ independent vector fields on $[0,1]^2\times X \setminus \nu_\epsilon(S_h)$. Then, using appropriate smooth bump functions, piece them together.
\begin{construction}\label{construction of extension v.f.}
~

\begin{figure}
    \centering
    \labellist                             
            \hair 10pt
            \pinlabel \tiny{$supp(V_1)$} at 240 210
            \pinlabel {\color{orange}\tiny{$supp(V_2)$}} at 70 280
            \pinlabel {\color{blue}\tiny{$supp(V_3)$}} at 100 100
    \endlabellist
    
    \includegraphics[width=0.5\linewidth]{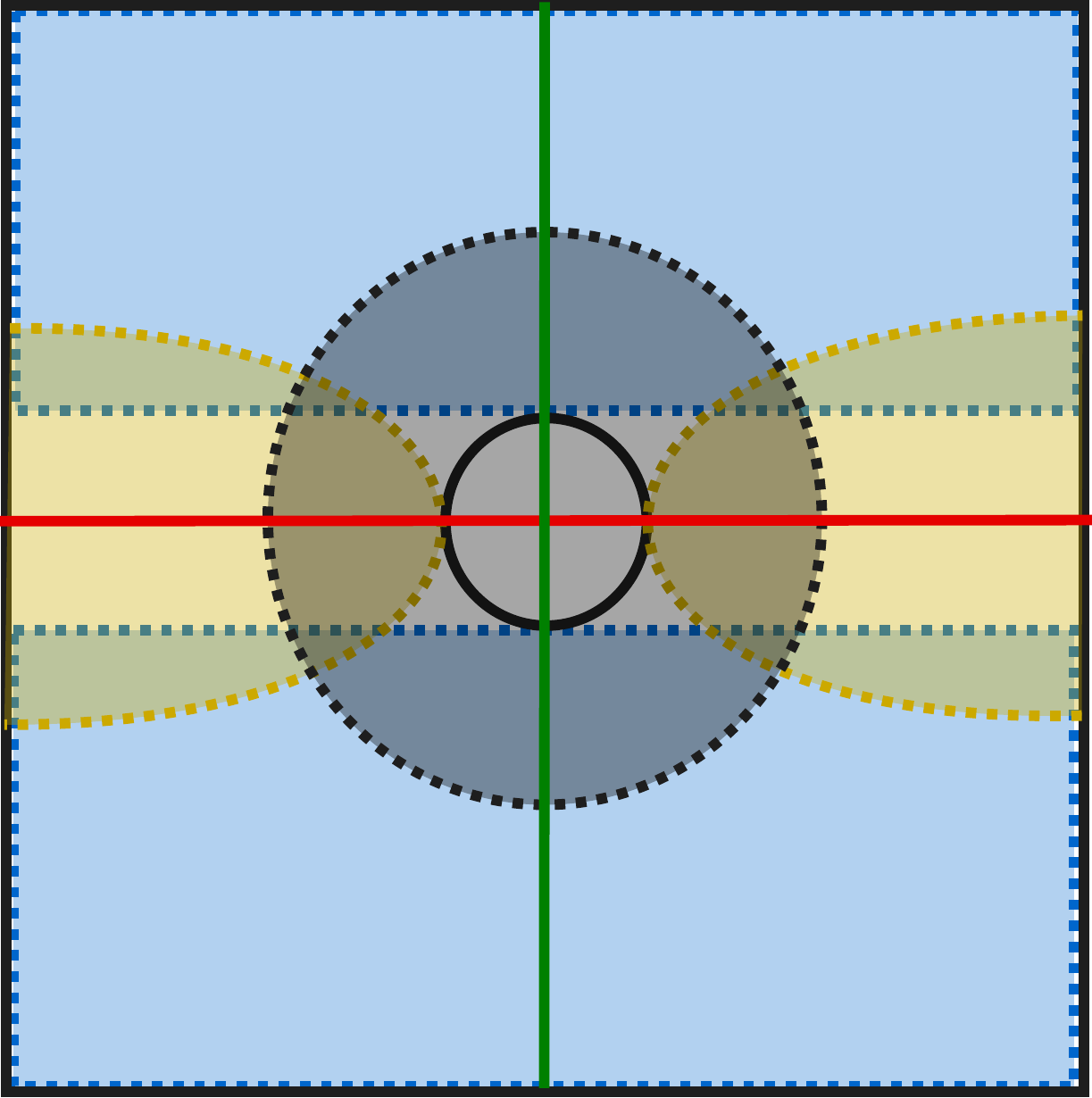}
    \caption{This figure illustrates the overlap of the supports for the vector fields constructed on $[0,1]^2\times X\setminus \nu(S_h)$ in Construction \ref{construction of extension v.f.} near the intersection of $\tr(H)$ (in red) and $[0,1]^2\times \mG$ (in green); in black we shown have the support of $V_1$, in orange the support of $V_2$ and in blue the support of $V_3$.}
    \label{F: v.f. neighborhoods}
\end{figure}
\begin{enumerate}
    \item On $\Sigma_\epsilon$, $h$ is by definition a submersion since the only place it fails has been removed. From this fact, we build a section $V_1$ of $T\Sigma_\epsilon$ such that $h_*(V_1)= \lambda(p) \partial_t$ where $\lambda(p)>0$ is strictly positive. This can be done locally in charts. Now Lemma \ref{L: v.f. extension lemma} allows us to extend this to a neighborhood in $[0,1]^2\times X$ and that when restricted to either $\tr(H)$ or $[0,1]\times \mG$, remains tangent to $\{s\}\times[0,1]\times \mG$ and $\tr(H)\cap \{s\}\times I\times X$ on the appropriate subsets where defined. This is because $h$ is the restriction of the projection. We denote the extended v.f. by $V_1$.

    \item It is a classical fact that any vector field on a submanifold (with or without boundary) can be extended to a neighborhood using the tubular neighborhood theorem so that the support is contained within the neighborhood. So we first consider the intersection of $\tr(H)\cap \phi$, where $\phi$ is the parameterization from Lemma \ref{L: sigma nbh model}. We choose a smooth bump function $\rho_2$ on $\tr (H)$ with the support of $\rho_2$ equal to $\tr(H)\setminus \phi(\Sigma \times 0\times B_1^2)$. We then let $V_2 = \rho_2 \tr(H)_* \partial_t$ be the smooth function on $\tr(H)$ that we wish to extend to a tubular neighborhood. We make sure the tubular neighborhood of $\tr(H)$ intersects $[0,1]^2\times \mG$ only in the image of $\phi$, and that $V_2$, when extended to the neighborhood is equal to $0$ on $\phi(\Sigma_\epsilon\times \BR^2\times B^2_1)$.

    \item For the third v.f., we start with the globally defined $\partial_t$ on $X$. We choose a smooth bump function $\rho_3$ on $[0,1]^2\times X$ with support equal to the complement of a closed neighborhood of $\tr$ contained within $\nu(\tr)$. $V_3 = \rho_3 \partial_t$.
\end{enumerate}

Figure \ref{F: v.f. neighborhoods} summarizes the overlap between the supports for $V_1$, $V_2$ and $V_3$. We can consider the sum $\widetilde{V} = V_1+V_2+V_3$ over $[0,1]\times X\setminus \nu_\epsilon (S_h)$. By construction, each vector field is a section of $\langle \partial_t \rangle \oplus TX|_{[0,1]^2\times X\setminus \nu(S_h)}$ and is nowhere vanishing. If we restrict to $\tr(H)$, we have that $\widetilde{V} = V_1+V_2$ and since both are tangent to $\tr(H)\cap \{s\}\times [0,1]\times X$ by construction, so is the sum. When we restrict to $[0,1]^2\times \mG$, we have that $\widetilde{V} = V_1+V_3$ and again this sum is tangent to $\{s\}\times [0,1]\times \mG$ since both are also. 

Finally, we note that by the construction, $\widetilde{V}$ takes the form $\lambda\partial_t + \xi$ where $\lambda>0$ is a smooth, strictly positive function. We use $\lambda$ to normalize $\widetilde{V}$ and define $V = \lambda^{-1} \widetilde{V}$.
\end{construction}

\begin{definition}
    We call the vector field $V$ resulting from Construction \ref{construction of extension v.f.} an $\mF\mW$ vector field if the domain of $V$ contains $\mF\mW(S_h\times \partial C(\mF\mW))$. 
\end{definition}

\begin{remark}
Construction \ref{construction of extension v.f.} gives a way to construct a nowhere vanishing vector field in the complement of the singular set $\nu_\epsilon(S_h)$ and this vector field will be used to flow the local $\mF\mW$-germ data to different $t$-levels. As such, we will not be able to flow all the data to every $t$-level. In particular, if we flow a finger disc into a $t$-level containing a Whitney disc and these two discs generically intersect, then the finger disc will not extend past the Whitney point for the corresponding Whitney disc. In what follows, we will implicitly use an $\mF\mW$-vector field to extend the local data without always saying so. Moreover, the existence of a $\mF\mW$-vector field for a single path of embeddings follows by taking a trivial product and then restricting the v.f. to the path. 
\end{remark}

\subsection{Deformation of families of embeddings}\label{S: deformations}
In this section, we describe the process of extending a $\mF\mW$-germ for 1- and 2-parameter families. Using the extensions, we can then deform a given family of embeddings to rearrange when the finger/Whitney move occurs. From this we recover Quinn's result that every path of embeddings has a finger first representative \cite{Qu}. The main theorem of this section is our 2-parameter ordering theorem.

\begin{definition}\label{D: ff ordering}
    We say that a path $H:[0,1]\rightarrow \Emb(\mR, X)$ has a \emph{finger-first ordering} if there exists a time $t_*\in [0,1]$ such that all finger moves occur in $[0,t_*)$ and all Whitney moves happen in $(t_*, 1]$. Note that this is subtly different than the previously defined ``finger-first position", in which all finger moves happen at the same time and all Whitney moves happen at the same time, with fingers before Whitneys.
\end{definition}

\begin{theorem}\label{T: 2-par ordering}
    Let $\mG$, $\mR$, $X$ be as above and let $H:[0,1]^2\rightarrow \Emb(\mR, X)$ be a generic relative homotopy between two paths $\alpha_0$ and $\alpha_1$ that have a finger-first ordering. Then $H$ is homotopic relative to $\partial [0,1]^2$ to a generic homotopy $\bar{H}$ that satisfies the following: \begin{enumerate}
        \item There exist finite disjoint sets $\{s_1,...,s_k\}$, $\{[a_1,b_1],...,[a_l,b_l]\}$ of the homotopy parameter such that in the complement of both sets, $\bar{H}_s$ has finger-first ordering.
        \item The parameters $\{s_1,...,s_k\}$ correspond to a birth/death or cusp point (see Section \ref{S: cusps} and Cases \ref{Birth} and \ref{Death} of Section \ref{S: b/d and saddle}).
        \item For $s\in (a_i,b_i)$, $H_s$ is in finger first position except for a single pair of finger/Whitney moves. For this pair, the Whitney move occurs before the finger move, and both moves happen after all after all other finger moves and before all other Whitney moves. When $s= a_i$ or $b_i$, $H_s$ contains a saddle point or a crossing (but not both). 
    \end{enumerate}
\end{theorem}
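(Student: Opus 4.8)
The plan is to reduce Theorem \ref{T: 2-par ordering} to a Cerf-theoretic normal form argument on the singular set $S_h \subset \Sigma$, exactly parallel to the one-parameter statement that every path of embeddings is homotopic to one in finger-first position (which itself follows from the $\mF\mW$-germ machinery of \S6 and is Quinn's result). First I would recall that a generic $H:[0,1]^2 \to \Emb(\mR,X)$ produces a stable map $h:\Sigma\to B^2$ whose Cerf graphic $h(S_h)$ consists of fold arcs (separated into index-$0$ ``finger folds'' and index-$1$ ``Whitney folds''), cusps, and normal crossings, and that on $\partial\Sigma$ the map is Morse (this is the content of the ``folds and cusps'' subsection). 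The key invariant to control is, for a generic horizontal slice $\{s\}\times[0,1]$, the cyclic/linear order in which that slice meets finger folds versus Whitney folds; ``finger-first ordering'' for $H_s$ means every finger fold is met at a smaller $t$ than every Whitney fold. The hypothesis that $\alpha_0=H_0$ and $\alpha_1=H_1$ are finger-first means the graphic meets $\partial[0,1]^2$ correctly on the left and right edges; the goal is a homotopy of the graphic rel boundary into a normal form in which all ``bad'' features are isolated.

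The main step is an Eliashberg--Mishachev / Hatcher--Wagoner style manipulation of the Cerf graphic. I would argue in two moves. First, using the $\mF\mW$-vector field $V$ of Construction \ref{construction of extension v.f.} and the local factorization picture of the ``Birth, death, and saddles'' subsection, I can push all finger folds down (to small $t$) and all Whitney folds up (to large $t$) \emph{wherever this is unobstructed}; the obstruction is precisely a flow line of $V$ from a finger disc into a transversally-intersecting Whitney disc, which in the graphic manifests as a crossing of a finger fold with a Whitney fold that cannot be removed by an isotopy of the graphic. Second, I arrange by a generic perturbation of $H$ rel $\partial$ that all the remaining bad features — the finger/Whitney fold crossings that obstruct reordering, the cusps, and the vertical tangencies of folds with respect to the $s$-coordinate — occur at distinct values of $s$, and moreover that each such $s$ is surrounded by an interval of $s$ on which the slice $H_s$ is finger-first except for exactly one offending finger/Whitney pair in the wrong order (this is the interval $[a_i,b_i]$ in the statement). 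Cusps opening left-to-right or right-to-left, and births/deaths of fold arcs (local extrema of $s\circ h$), contribute the isolated parameters $\{s_1,\dots,s_k\}$ of item (2); the endpoints $s=a_i,b_i$ of the bad intervals are exactly where a saddle of $\Sigma$ (a vertical tangency of a fold) or a fold crossing occurs, giving item (3).

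In more detail, the technical heart is keeping track of the ``wrong-order Whitney-then-finger pair'' across a bad interval $[a_i,b_i]$. At $s=a_i$ a saddle of $\Sigma$ with respect to $s\circ h$ appears as in Case \ref{Saddle 1}: crossing the fold left-to-right along a horizontal line is a Whitney move, and just after $a_i$ the fold develops a right-opening cusp-free arc so that the slice $H_s$ for $s$ slightly larger than $a_i$ picks up, reading in increasing $t$, a Whitney move at $t_1(s)$ followed by a finger move at $t_2(s)$, with $t_1(s)<t_0<t_2(s)$; I must check — using that the $\mF\mW$-germ's local finger arcs coincide near $(a_i,t_0)$ — that this pair is ``adjacent'' in the sense that no other finger or Whitney move interleaves with it in the $t$-order, and that it sits after all genuine finger moves and before all genuine Whitney moves. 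This last containment is where I would use the $V$-flow: the only way the pair could fail to be pushed to the ``middle'' is another obstructing crossing, which I have arranged to be at a different $s$; so on $(a_i,b_i)$, after a further homotopy supported near the bad arc, the pair is in the claimed position, and at $s=b_i$ the fold either closes up in a reverse saddle or the offending arc slides past another fold in a crossing, returning $H_{b_i}$ to honest finger-first form. The step I expect to be the main obstacle is precisely this bookkeeping: proving that the obstructing fold crossings can be made $s$-disjoint from each other, from cusps, and from saddles, \emph{and} that the reordering isotopy near one bad feature can be performed without creating new bad features elsewhere — i.e. a careful ``general position in the $s$-direction'' argument for the stratified object $h(S_h)$, combined with the observation that the $\mF\mW$-germ data deforms continuously so that no uncontrolled new tangencies are introduced. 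Everything else (the local models at births, deaths, cusps, saddles, and crossings) is already supplied by the preceding subsections and only needs to be assembled.
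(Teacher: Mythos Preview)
Your overall architecture — push index-$0$ folds down and index-$1$ folds up in the Cerf graphic, then isolate the residual bad features at distinct $s$-values — matches the paper's strategy, and your third paragraph on the saddle interval is essentially correct. But you misidentify the obstruction, and this is a real gap. You claim the obstruction to lowering a finger fold past a Whitney fold is ``a flow line of $V$ from a finger disc into a transversally-intersecting Whitney disc'', manifesting as an unremovable crossing. Both parts of this are false. Lowering a family of finger moves past a family of Whitney moves only requires extending the finger \emph{arcs} (the $C(B^1)$ part of the $\mF\mW$-germ) downward, and the relevant general-position statement is Lemma~\ref{L: GP for arcs}: two level-preserving maps $[0,1]\times B^1\to [0,1]\times X^4$ are generically disjoint, since they are $2$-dimensional in a $5$-manifold. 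Consequently \emph{every} finger/Whitney crossing in the graphic can be removed — this is exactly Deformations~1a--1b of Lemma~\ref{L: local deformations}, and those deformations (together with 2--4) are the entire technical content of the paper's proof. Disc data enters only later, in the passage from finger-first \emph{ordering} to finger-first \emph{position} (Lemma~\ref{L: GP for arcs and disc}), where arc-against-disc intersections produce the disc slides and sphere slides of Theorem~\ref{T: fw system moves}; you have imported that obstruction one theorem too early.

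The genuine obstructions to finger-first ordering are intrinsic to the fold locus $h(S_h)$: cusps and vertical tangencies with respect to $s$. At a saddle as in Case~\ref{Saddle 1}, the Whitney move at $t_1(s)$ and the finger move at $t_2(s)$ lie on the \emph{same} fold arc, and near $(s_0,t_0)$ their local arcs coincide, so no arc-extension can separate them there — this, not any disc intersection, is why the bad interval $[a_i,b_i]$ is forced. The paper's proof is then essentially one sentence: apply the local deformations of Lemma~\ref{L: local deformations} to push every index-$0$ fold below and every index-$1$ fold above all other features of the graphic; what survives are isolated cusp and birth/death parameters and small saddle intervals, each bounded by a saddle on one side and (via Deformation~4) a crossing on the other. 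Your sketch would have you chasing phantom disc obstructions at crossings that in fact dissolve for free, while the clean arc-versus-arc lemma that actually does the work goes unmentioned.
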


We will call a 2-parameter family satisfying 1-3 of Theorem \ref{T: 2-par ordering} an \emph{ordered homotopy}.

\subsubsection{Ordering of paths and finger-first position.}\label{S: paths ff position} We recount Quinn's proof that every path of embeddings can be deformed to be in finger first position using the language we have developed thus far.

Let $\alpha$ be a path of embeddings and let $\mF\mW(\alpha)$ be an $\mF\mW$-germ for $\alpha$ embedded in $[0,1]\times X$. Recall that the embedding $\mF\mW(\alpha)$ explicitly codifies the observation that just before a finger move, there is an embedded arc that locally describes the finger move, and just after the finger move, there are guiding discs that reverse it. Finally, we recall that each finger point corresponds to a index 0 critical point and a Whitney point to an index 1 critical point of the function $h:L\rightarrow [0,1]$ where $L=\alpha^{-1}(\mG)$ and $h$ is the restriction to $L$ of the projection onto the t-coordinate.

\begin{construction}[Extension of local arcs.]\label{Constr. arc-extensions}
 For a given index 0 critical point $p_i\in S_h$, let $t_i(p)$ be the critical value. Our goal is to extend the family of arcs $\mF\mW(p\times B^1\times (1/2,1]$ down so that $\mF\mW(p\times B^1\times \{1\})$ is embedded in the level set below $t= 1/4$. Let us denote the image of $\mF\mW(p\times B^1\times \{1\})$ by $f_i^{arc}$. Using a $\mF\mW$-vector field, we flow the arc downward along $-V$. The flow down can continue, as long as no flow line hits any other $\mF\mW$ data. Generically in a 4-manifold, an arc will always be disjoint from a disc or another arc, Thus there is no obstruction to flowing $f_i^{arc}$ down to below $t=1/4$. The same argument shows that we can flow all the Whitney arcs up past $t=3/4$ along the vector field $V$.

 Now we get an extended germ by taking $\mF\mW(p_i\times C(B^1))$ and adding the union of the flow lines for $-V$ (or $V$) starting in $f_i^{arc}$ ($w_j^{arc}$ for the Whitney arcs). Since $-V$ (or $V$) is tangent to both $\tr(H)$ and $[0,1]\times \mG$, the boundary points for $f_i^{arc}$ remain on $\{t\}\times \mR_t$ and $\{t\}\times\mG$. Since the flow is level-preserving, the image of $f_i^{arc}$ under the flow is still embedded in $t$-level sets. Hence, the extension is a level-preserving embedding of $C(B^1)$ into $[0,1]\times X$. Again, this holds for all other finger arcs and Whitney arcs as well.
\end{construction}

By extending the local finger arcs as above, we get an extension of the embeddings of $\{p_i\}\times C(B^1)$ for each finger move. Let $t_*\in [0,1]$ be the lowest critical value that contains an index 1 critical point for $h$. Then for each $p\in S_h$ with $t(p)\geq t_*$, we use the segment of $p\times C(B^1)$ contained between the level sets $\{t_*-\epsilon\}\times X$ and $\{t(p)\}\times *$ and deform the trace $\operatorname{tr}(\alpha)$ by performing a family of finger moves along the extended arcs that make up $C(B^1)$ in these levels. As each arc is embedded in a level set, this gives a level-preserving deformation of $\operatorname{tr}(\alpha)$, and hence creates a homotopy between paths of embeddings. After performing this deformation, all the index 0 critical points of the new $h$ will be below the index 1. Furthermore, since the arcs can be extended past the finger discs for other finger moves, we can arrange so that the finger moves all happen at the same time. Finally, the same argument shows that we could have raised the Whitney arcs up past any finger/Whitney move, and hence arrange that all the Whitney moves occur simultaneously. Hence, the new path is in finger-first position.

\begin{remark}\label{R: f order to f position}
    All that was needed to build the homotopy from $\alpha$ to a path of embeddings $\tilde{\alpha}$ that has finger first ordering was that the finger arcs could be extended past any Whitney arc. To go from a finger-first ordering to being in finger-first position, we need that the finger arcs could be extended past any finger disc that was below it.
\end{remark}

\subsubsection{Two parameter ordering.}
Our goal now is to perform the same analysis and build a deformation of a given homotopy $H:[0,1]^2 \rightarrow \Emb(\mR, X)$ between two paths $H(0,t)$ and $H(1,t)$ in the first position of the finger. Recall that a $\mF\mW$-germ for a 2-parameter family $H$ is an embedding $\mF\mW(H):S_h\times C(\mF\mW)\rightarrow [0,1]^2\times X$ such that for a fold point $p\in S_h\subset B^k\times \mR$, $\mF\mW(p\times C(\mF\mW)$ embeds into the subset $l_p\times X$ where $l_p = (s(p),t(p))+\lambda w_{s(p),t(p)}$, $-\epsilon(p)\leq \lambda \leq \epsilon(p)$ is the small line segment of parameters along the vector $w_{s(p),t(p)}$, and this data is compatibly extended near cusp points (see Construction \ref{Construction 2-par local models} and Construction \ref{constr: FW germs near cusps}). 

\begin{lemma}\label{L: GP for arcs}
    Let $\Sigma^2$ be an immersed surface in a 4-manifold $X^4$. Let $f,g:[0,1]\times B^1\rightarrow [0,1]\times X^4$ be two level-preserving embeddings such that $\partial f$ and $\partial g$ are contained in $\Sigma^2 \times [0,1]$ with interiors in the complement and boundaries disjoint from the double points. Then, generically, $f$ and $g$ are disjoint.
    \qed
\end{lemma}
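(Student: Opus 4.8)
The plan is to treat this as a routine transversality (general position) statement and dispose of it with a parametrized dimension count. We have two level-preserving embeddings $f,g:[0,1]\times B^1\to [0,1]\times X^4$; "level-preserving" means $f$ and $g$ commute with projection to the first $[0,1]$ factor, so for each $\tau\in[0,1]$ we get arcs $f_\tau, g_\tau:B^1\to \{\tau\}\times X^4$, i.e. two arcs in a fixed $4$-manifold. An intersection point of the images of $f$ and $g$ must occur in a common level $\{\tau\}\times X^4$, so it is an intersection $f_\tau(x)=g_\tau(y)$ for some $\tau,x,y$. Thus I would set up the "difference" or "incidence" picture: the images meet iff the map $F:[0,1]\times B^1\times B^1\to X^4\times X^4$, $(\tau,x,y)\mapsto (f_\tau(x),g_\tau(y))$, hits the diagonal $\Delta_{X}$. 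Here $\dim([0,1]\times B^1\times B^1)=3$ and $\operatorname{codim}\Delta_X=4$, so generically the preimage of $\Delta_X$ is empty; that is exactly the conclusion "$f$ and $g$ are disjoint".

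First I would make precise what "generically" quantifies over: we perturb the interiors of $f$ and $g$ through level-preserving embeddings, rel the boundaries $\partial f,\partial g$ (which lie on $\Sigma^2\times[0,1]$, with interiors in the complement of $\Sigma$, and with the endpoints disjoint from the double-point set of $\Sigma$). The level-preserving condition means the relevant space of perturbations is, for each $\tau$, perturbations of the arcs $f_\tau,g_\tau$ in $X^4$, varying smoothly in $\tau$; the space of such is an open subset of $C^\infty([0,1]\times B^1, X^4)$ (open because embeddings form an open set and the endpoints avoid the $\Sigma$-double-points, an open condition near $\partial$). Then I would invoke the parametric transversality theorem to say that for a generic such pair, the map $F$ above is transverse to $\Delta_X$; since the domain has dimension $3<4=\operatorname{codim}\Delta_X$, transversality forces $F^{-1}(\Delta_X)=\emptyset$, i.e. $f([0,1]\times B^1)\cap g([0,1]\times B^1)=\emptyset$. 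I should also note that the boundary hypotheses guarantee no forced intersections come from the boundary: $\partial f$ and $\partial g$ sit on $\Sigma\times[0,1]$ but with disjoint boundaries (in particular disjoint from the double points), and their interiors are off $\Sigma$, so a level-slice picture shows $f_\tau$ and $g_\tau$ already have disjoint boundaries in $X^4$, leaving only the interior intersections to be removed by the generic perturbation.

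The one genuine subtlety — and the step I expect to be the main obstacle — is the interaction of transversality with the level-preserving and boundary constraints simultaneously. One must check that perturbing within the constrained class (level-preserving, rel $\partial$, staying embeddings) is still "big enough" to achieve transversality of $F$ to $\Delta_X$: i.e., that at each $(\tau,x,y)$ with $f_\tau(x)=g_\tau(y)$ one can move $f_\tau$ near $x$ (or $g_\tau$ near $y$) in an arbitrary direction in $T_{f_\tau(x)}X^4$ while keeping the family smooth in $\tau$ and fixing endpoints — which is clear since $x,y$ are interior points and a bump-function-supported perturbation localized in both $\tau$ and the arc parameter does the job. Once that is in hand the dimension count is immediate. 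So my plan is: (i) reduce to the slicewise picture and the incidence map $F$; (ii) identify the constrained perturbation space and verify it surjects onto the needed normal directions; (iii) apply parametric transversality and the codimension count to conclude emptiness; (iv) remark that the boundary hypotheses kill boundary intersections. This is short enough that in the paper it would likely just be the sentence "this is a standard general position argument: after a small level-preserving perturbation rel $\partial$, the incidence map $[0,1]\times B^1\times B^1\to X\times X$ is transverse to the diagonal, and $3<4$," which is presumably why the authors state it with \qed and no proof.
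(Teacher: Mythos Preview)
Your proposal is correct and matches the paper, which gives no proof beyond the sentence ``The proof of the lemma is a standard general position argument.'' One small correction: you perturb rel $\partial$ and claim the boundaries of $f_\tau$ and $g_\tau$ are already disjoint, but the hypothesis only says $\partial f,\partial g$ avoid the double points of $\Sigma$, not each other; you should also allow perturbation of the endpoints within $\Sigma$, where the analogous dimension count (the level-preserving incidence map $[0,1]\to\Sigma\times\Sigma$ has $1$-dimensional domain versus a codimension-$2$ diagonal) makes boundary coincidences generically empty as well.
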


The proof of the lemma is a standard general position argument. Using Lemma \ref{L: GP for arcs} repeatedly, we prove the following.

\begin{lemma}\label{L: local deformations}
    The deformations of the Cerf graphics shown in Figure~\ref{F: Deformation of graphics}) are performed by level-preserving deformations of the trace for 2-parameter families of embeddings with the given Cerf graphic.
    \begin{figure}
        \centering
        \labellist                             
            \hair 10pt
            \pinlabel \tiny$1a.$ at 825 1085
            \pinlabel \tiny$1b.$ at 2925 1085
            \pinlabel \tiny$2.$ at 825 630
            \pinlabel \tiny$3.$ at 2925 640
            \pinlabel \tiny$4.$ at 825 245
            \pinlabel \tiny$5.$ at 2925 210
        \endlabellist
        \includegraphics[width=0.8\linewidth]{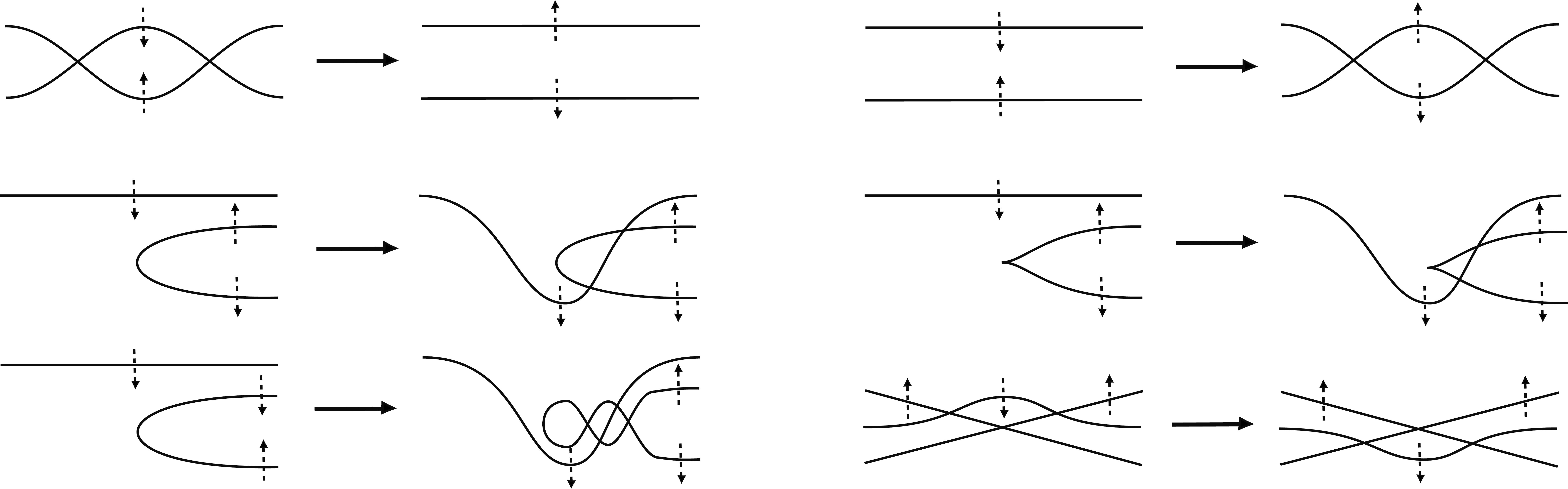}
        \caption{ The following shows the needed local deformations of the Cerf graphic for a 2-parameter family of embeddings. The dotted arrows indicate the coorientation of the folds that points in the direction of decreasing intersection points between $\mR$ and $\mG$.} 
        \label{F: Deformation of graphics}
    \end{figure}
\end{lemma}
\begin{proof}~
The following argument proves the lemma for deformations $1a.$, $1b.$ and $5$ Suppose we have a 2-parameter family $H$ with a Cerf graphic given by 1a. 1b. in Figure \ref{F: Deformation of graphics}. Let $S_0$ denote the index 0 fold and $S_1$ the index 1 fold. Then $\mF\mW(H)(S_0 \times C(\mF\mW))$ embeds a $C(B^1)$ in each $\{s\}\times [0,1]\times X$ with each $B^1\times \{x\}$ embedded in $\{(s,t)\}\times X$ and $t\leq t(p_0)$. Similarly $\mF\mW$ embeds a $C(B^1)$ in $\{s\} \times [0,1]\times X$ for the index 1 fold, and each $B^1\times \{x\}$ is embedded in $\{(s,t)\}\times X$ with $t> t(p_1)$. Let $f^{arc}_s$ be the image of $S_0\times (B^1\times \{1\})\subset S_0\times C(B^1)$ in $\{s\}\times [0,1]\times X$ and $w^{arc}_s$ the corresponding arc for $S_1\times B^1\times \{1\}$ in $\{s\}\times [0,1] \times X$.

For each $s \in [0,1]$, we drop $f^{arc}_s$ down just as we did in the 1-dimensional case to the level set $\{(s,t)\}\times X$ that contains $w_s^{arc}$. The $t$-parameter for this level set set will smoothly vary with $s$. We denote by $\tilde{f}_{s}^{arc}$ the image of $f_{s}^{arc}$ in $\{(s,t(s))\}\times X$. Working in the 5-manifold $\bigcup_{s\in [0,1]}\{(s,t(s))\times X$, we have the two level preserving embeddings $\cup_{s}\tilde{f}^{arc}_s$ and $\cup_s w^{arc}_s$. Lemma \ref{L: GP for arcs} implies that the family $\cup_{s}\tilde{f}^{arc}_s$ is disjoint from $\cup_s w^{arc}_s$. Therefore, the entire family can be lowered past the family of Whitney moves. Hence we can extending the embedding of $S_0\times C(B^1)$ past the embedding of $S_1\times C(\mF\mW)$ in $[0,1]^2\times X$. Finally, we can use the extended $S_0\times C(B^1)$ embedding to deform $\operatorname{tr}(H)$ by doing finger moves along the appropriate subset of finger arcs making up the extended family of arcs $\mF\mW(S_0\times C(B^1))$.

Deformation 2 again relies on Lemma \ref{L: GP for arcs} and noting that the family of finger arcs generically miss the Whitney arcs that lie above the vertical tangency at $s_0$. Furthermore, as the upper index 0 finger arcs can be extended below the index 1 fold, they will miss the corresponding local Whitney discs. As these discs agree with the Finger discs near the vertical tangency, the extended finger arcs can be lowered past the lower index 0 fold \emph{for $s$ values near $s_0$}.

A similar story holds for Deformation 3. In order to move the family of finger moves past the cusp, we need to make sure that when we extend the finger arcs below the index 1 fold, they miss the corresponding finger disc for the index. The local model (Figure \ref{F: x cubed}) for the cusp implies that this is the case for $(s,t)$ near the cusp point. 

Deformation 4 is achieved in two steps. First, we note that the finger arc contained in $\{s_0\}\times [0,1] \times X$ will generically be disjoint from the Finger disc for the index 0 fold and that this will be the case for some subset $[0, s_1]$ with $s_1>s_0$. Let $s' = \frac{s_1+s_0}{2}$. Now we can apply the Deformation 1a to the family over $[s',1]$. Over $[0,s']$, the top most finger arcs can be extended below the entire graphic, allowing one to deform the family until the top arc of finger moves below the saddle over $[\epsilon,s'+\epsilon]$. 
\end{proof}

\begin{remark}
    Note that turning each graphic upside down changes the $t$-direction. This swaps the indices, turning all index 0's into index 1 and vise versa. This does not change the extensions, only the direction of extending (up for down). Therefore, we find that the corresponding argument for raising a family of Whitney moves up as well.
\end{remark}

\begin{remark}
    One should note that a Reidemeister III type move for passing an index 0 fold below a crossing of index 1 folds is needed as well. The proof of this is the same general position argument as the Move 1a.
\end{remark}

\begin{proof}[Proof of Theorem \ref{T: 2-par ordering}.]
Let $H$ be a homotopy between two finger first paths of embeddings. Then using the local deformations in Lemma \ref{L: local deformations}, we can systematically lower all the index 0 folds below the index 1 folds, cusps, birth/death, and saddles and raise the index 1 folds above the index 0 folds, cusps, birth/death, and saddles. Once complete, the new $\tilde{H}$ will be have a finger first ordering except at the parameter values $s$ when $\{s\}\times [0,1]$ contains a birth/death, cusp, or small intervals containing a saddle point. Just after the birth/death and cusp points or the saddle intervals, $H_s$ will again have a finger-first ordering.
\end{proof}
 
\subsection{Moves between finger/Whitney systems.}
As a result of the preceding subsections, we can now show that paths of embeddings can be described by finger-Whitney systems, and we have shown explicitly where the finger and Whitney discs come from. In this final subsection, we state this precisely and then we give a complete set of moves for how finger/Whitney systems evolve under ordered homotopies. 

\begin{theorem}\label{T: fw system moves}
    Given two representatives $\alpha_0$ and $\alpha_1$ of $[\alpha]\in \pi_1(\Emb(\mR, X), \mR_0)$ in finger first position with $(\mF_i, \mW_i)$ finger/Whitney system for $\alpha_i$ Then $(\mF_0, \mW_0)$ differs from $(\mF_1, \mW_1)$ by isotopies of the data and a finite sequence of $\mR$- and $\mG$-disc slides, finger/Whitney sphere slides, birth/death moves, $x^3$-moves, and saddle moves, i.e., by a finite set of $\mF\mW$-moves.
\end{theorem}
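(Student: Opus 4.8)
The plan is to combine the two-parameter ordering theorem (Theorem \ref{T: 2-par ordering}) with the local analysis of the Cerf graphic carried out in this section, reading off one $\mF\mW$-move for each generic event in the graphic. First I would fix a generic relative homotopy $H\colon [0,1]^2\to\Emb(\mR,X)$ between $\alpha_0$ and $\alpha_1$, together with an $\mF\mW$-germ $\mF\mW(H)$ and an $\mF\mW$-vector field as constructed above. By Theorem \ref{T: 2-par ordering} I may replace $H$ by an ordered homotopy $\bar H$: there is a finite set of homotopy parameters $s_1<\dots<s_k$ (birth/death and cusp points) and a finite set of disjoint intervals $[a_i,b_i]$ (inside which $H_s$ has a single out-of-order Whitney--finger pair, with saddles or crossings at the endpoints), and on the complement of these in $[0,1]$ the path $\bar H_s$ has finger-first ordering. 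By Remark \ref{R: f order to f position}, on each such interval of ``good'' $s$ one can further extend the finger arcs past the finger discs below them to put $\bar H_s$ honestly in finger-first position, varying smoothly in $s$; this yields a continuously varying finger/Whitney system $(\mF_s,\mW_s)$ defined for $s$ in each complementary interval.

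The core of the argument is then a case analysis of what $(\mF_s,\mW_s)$ does as $s$ passes through each exceptional value or interval, matching each to exactly one of the five $\mF\mW$-moves. (i) As $s$ moves within a good interval, folds cross each other (normal crossings of $h(S_h)$): this reorders two finger moves or two Whitney moves, which by the factorization picture (Lemma 3.15 of \cite{Ga2}, already invoked for saddles) changes $(\mF,\mW)$ only by an isotopy of the data together with a disc slide — this is where disc slides enter. (ii) Passing a cusp point (\S\ref{S: cusps}, Figure \ref{F: x cubed}): a canceling finger/Whitney pair with interiors disjoint and boundary arcs forming embedded arcs on $\mR$ and $\mG$ is created or destroyed; this is the $x^3$-move, Definition \ref{D: x^3}. (iii) Passing a birth or a death (Cases \ref{Birth}, \ref{Death} of \S\ref{S: b/d and saddle}): a canceling pair of a finger disc and a Whitney disc, isotopic rel boundary in the complement of $\mR\cup\mG$, is inserted or removed — the birth/death move, Definition \ref{D: b/d move}. (iv) Passing a saddle (Cases \ref{Saddle 1}, \ref{Saddle 2}): here an out-of-order Whitney--finger pair is created/absorbed as in the local factorization discussion, which after re-ordering to finger-first position using the $\mF\mW$-vector field contributes a saddle move, Definition \ref{D: saddle}. (v) Finally, when the $\mF\mW$-germ data (a finger or Whitney disc) is flowed past another disc by the $\mF\mW$-vector field and must be rerouted around it, or when the framing/homology class of a disc is corrected by tubing with a linking sphere of $\mG$ or $\mR$, the change is a finger/Whitney sphere slide, Definition \ref{D: sphereslide}. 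The remaining continuous variation of $(\mF_s,\mW_s)$ within a good interval — smooth motion of the finger/Whitney points in $\mR$ and $\mG$ and smooth isotopy of the discs — is by construction an isotopy of the data. Composing over all finitely many exceptional events and good intervals gives that $(\mF_0,\mW_0)$ and $(\mF_1,\mW_1)$ differ by a finite sequence of $\mF\mW$-moves and isotopies.

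I expect the main obstacle to be bookkeeping around the two-parameter $\mF\mW$-germ near the saddle and crossing endpoints $s=a_i,b_i$ of the bad intervals: there one must simultaneously re-order the single out-of-order pair back to finger-first position (using the flow of the $\mF\mW$-vector field, which may force the disc being flowed to pass \emph{around} other discs rather than through them) and verify that the resulting ambiguity is precisely a composition of disc slides and sphere slides, with no uncontrolled intersection changes. The key technical input that makes this manageable is Lemma \ref{L: GP for arcs} (level-preserving arcs are generically disjoint), which guarantees that the finger-arc extensions can always be carried out, so the only genuine choices are where a disc — which is codimension $2$ and hence does meet other discs generically — gets rerouted; tracking those reroutings is exactly what the sphere-slide move is designed to absorb. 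A secondary subtlety is confirming that the smooth variation of the finger/Whitney system across a good interval never silently hides one of the five moves, i.e. that $S_h$ really has no events other than fold crossings there; this is immediate from the ordered-homotopy conclusion of Theorem \ref{T: 2-par ordering} but should be stated explicitly.
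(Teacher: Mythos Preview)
Your overall architecture is correct and matches the paper: invoke the two-parameter ordering theorem to get an ordered homotopy, partition $[0,1]$ into intervals separated by finitely many exceptional events, and read off one $\mF\mW$-move per event. But your attribution of disc slides and sphere slides to events is wrong, and this is a genuine gap.

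In (i) you claim disc slides arise from normal crossings of the fold graphic. They do not. The paper's ``Reordering'' subsection says explicitly that at a crossing the two local $\mF\mW$-germs are disjoint, so crossings contribute only an isotopy of the data. Disc slides and sphere slides instead arise from a separate phenomenon that you have not identified: once $H_s$ has finger-first \emph{ordering}, one must still put it into finger-first \emph{position}, which requires extending the upper finger arcs down past the lower finger discs (and dually for Whitney arcs). By Lemma \ref{L: GP for arcs and disc}, in a two-parameter family the extended family of arcs will generically hit the family of discs at finitely many isolated $(s,t)$ --- either on the boundary of a disc or in its interior. The paper's analysis in the ``Boundary intersection'' and ``Interior intersections'' subsections shows that a $\partial$-crossing on $\mG$ or $\mR$ changes the system by a $\mG$- or $\mR$-disc slide, while an interior crossing changes it by a sphere slide (tubing to the finger/Whitney sphere of the other disc, not to a ``linking sphere of $\mG$ or $\mR$'' as you wrote in (v)). Your item (v) conflates this with flowing \emph{discs} past other discs and with framing corrections, neither of which is the mechanism. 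The key technical input is therefore Lemma \ref{L: GP for arcs and disc} (arcs versus discs in one-parameter families), not Lemma \ref{L: GP for arcs} (arcs versus arcs) as you suggest; the latter is what makes the ordering theorem work, while the former is what produces the slide moves when upgrading from ordering to position.
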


To prove Theorem \ref{T: fw system moves}, we start with an ordered homotopy $H(s,t)$ between two paths $H_0(t)$ and $H_1(t)$ and try to put $H_s(t)$ into finger-first position for all $s$. This will not be possible for all $s\in [0,1]$ as $H_s(t)$ does not necessarily have a finger-first ordering for all $s$ (see (3) of Theorem \ref{T: 2-par ordering}). Instead, we will show that crossing the parameters $s'$ for which $H_{s'}(t)$ cannot be put into finger-first position changes the finger/Whitney system by one of the $\mF\mW$-moves listed in Theorem \ref{T: fw system moves}.  

\begin{construction}[Construction of a finger/Whitney system]\label{construction: fw systems from a path}
 Let $\alpha_t$ be a path of embeddings having a finger-first ordering with all finger moves happening below $t = 1/2$ and all Whitney moves above. Let $V$ be a $\mF\mW$-vector field for $\alpha_t$. We will assume that for every finger point $p_i\in S_h$, the flow lines starting on $f_{i} = \mF\mW(p_i\times B^2\times 1)$ extends past all other finger moves, that is, the flow lines are disjoint from the local finger arcs embedded in the level sets above. Similarly, we assume that for every Whitney point $p\in S_h$, the flow lines for $-V$ that begin on $w_{i} = \mF\mW(p_i\times B^2\times 1)$ are disjoint from the Whitney arcs that are embedded in level sets below the corresponding point. Now extend all the local finger discs up along $V$ to the level $\{1/2\}\times X$ and denote the image by $\mF = \{f_1,..., f_n\}$. Similarly, extend all Whitney discs down along $-V$ to $\{1/2\}\times X$ and denote their image by $\mW = \{w_1,...,w_n\}$. The tuple $(\{1/2\}\times \mR_{1/2}, \{1/2\}\times\mG,  \mF, \mW)$ is the finger/Whitney system determined by $\alpha$ and $V$.
\end{construction}

\begin{remark}\label{R: fw systems from a path}
    When a path $\alpha_t$ is in finger first position, the assumption that the flow lines extend past other finger/Whitney data is trivially satisfied. Moreover, the choice of a $\mF\mW$-vector field will change the extensions by an isotopy. Therefore, when a path is in finger first postion, the associated finger/Whitney system $(\mR_{1/2},\mG, \mF,\mW)$ is well defined up to isotopies of the disc systems $\mF$ and $\mG$.
\end{remark}

As discussed in Remark \ref{R: f order to f position}, to go from a finger-first ordering to finger-first position, we must be able to extend the local finger arcs that are above other finger moves, below the local finger discs and this is possible when the arcs have no flow lines to the local finger discs. For a path of embeddings, this amounts to knowing two things: that points and curves on a surface are generically disjoint, and in a 4-manifold, arcs and discs are generically disjoint. For the 2-parameter family, we need to be able to extend a path of arcs past a path of discs. The general position statement for the data is the following: 
\begin{lemma}\label{L: GP for arcs and disc}
    Let $\Sigma^2$ be an immersed surface in a 4-manifold $X^4$ and let $f:[0,1]\times B^1\rightarrow [0,1]\times X^4$ and $g:[0,1]\times B^2\rightarrow [0,1]\times X^4$ be level-preserving embeddings such that for both embeddings, $f\cap [0,1]\times \Sigma^2 = \partial f$ and $g\cap [0,1]\times \Sigma^2 = \partial g$, and $\partial f$ is disjoint from the double points of $\Sigma^2$. Then, generically, both $\partial f\cap \partial g$ and $f\setminus \partial f\cap g\setminus \partial g$ will consist of finitely many points.
    \qed
\end{lemma}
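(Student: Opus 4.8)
\noindent\emph{Proof strategy.} The plan is to prove the lemma by a parametrized general‑position argument, treating the boundary intersections and the interior intersections as two independent problems, exactly as in the proof of Lemma \ref{L: GP for arcs} with the disc case added. First note that the hypotheses $f\cap([0,1]\times\Sigma^2)=\partial f$ and $g\cap([0,1]\times\Sigma^2)=\partial g$ force $f\setminus\partial f$ to be disjoint from $\partial g\subset[0,1]\times\Sigma^2$ and $g\setminus\partial g$ to be disjoint from $\partial f$; hence the only intersections that can occur are $\partial f\cap\partial g$, which lies in $[0,1]\times\Sigma^2$, and $(f\setminus\partial f)\cap(g\setminus\partial g)$, which lies in the complement of $[0,1]\times\Sigma^2$ in $[0,1]\times X^4$. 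Since $[0,1]\times B^1$ and $[0,1]\times B^2$ are compact, it suffices to produce a generic level‑preserving perturbation of $(f,g)$ after which each of these two sets is a $0$‑manifold; finiteness is then automatic. In the intended application the $[0,1]$‑factor is the homotopy parameter and transversality already holds at its ends, so we will perform all perturbations rel a neighbourhood of $\{0,1\}\times B^i$, which also guarantees that no intersection point escapes to the ends of the parameter.

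\noindent\emph{Boundary intersections.} Work inside the $3$‑manifold $[0,1]\times\Sigma^2$; this is legitimate near $\partial f\cup\partial g$ because the hypothesis that $\partial f$ avoids the double points of $\Sigma^2$ means the target is an honestly embedded $3$‑manifold there. View $\partial f$ as a level‑preserving embedding of $[0,1]\times\partial B^1=[0,1]\times\{p_+,p_-\}$ and $\partial g$ as a level‑preserving embedding of $[0,1]\times\partial B^2=[0,1]\times S^1$. At each level $t$ the image of $\partial g_t$ is a circle in the surface $\Sigma^2$, and each of the two points of $\partial f_t$ lies on that circle only under a codimension‑$1$ condition on $\Sigma^2$; so as $t$ ranges over the compact interval a generic choice makes the incidence locus $0$‑dimensional. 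Formally, apply the Thom parametric transversality theorem to the map $[0,1]\times\partial B^1\times\partial B^2\to\Sigma^2\times\Sigma^2$, $(t,x,y)\mapsto(\partial f_t(x),\partial g_t(y))$, and the diagonal $\Delta_\Sigma$: the domain has dimension $1+0+1=2$ and $\Delta_\Sigma$ has codimension $2$, so transversality gives a $0$‑dimensional preimage. Level‑preserving perturbations of $\partial f$ and $\partial g$, supported in small coordinate balls of $\Sigma^2$ and varying smoothly in $t$ (and vanishing near $t=0,1$), span the relevant normal directions and therefore form a representative family of perturbations, so a residual set of choices achieves transversality.

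\noindent\emph{Interior intersections.} The argument is identical one dimension up, carried out in $[0,1]\times X^4$. Consider the level‑preserving product map
\[
\Phi\colon[0,1]\times\bigl(B^1\setminus\partial B^1\bigr)\times\bigl(B^2\setminus\partial B^2\bigr)\longrightarrow X^4\times X^4,\qquad \Phi(t,x,y)=\bigl(f_t(x),\,g_t(y)\bigr),
\]
together with the diagonal $\Delta_X\subset X^4\times X^4$. The domain has dimension $1+1+2=4$ and $\Delta_X$ has codimension $4$, so if $\Phi$ is transverse to $\Delta_X$ then $\Phi^{-1}(\Delta_X)$ is a $0$‑manifold; since $f_t,g_t$ are embeddings, $\Phi^{-1}(\Delta_X)$ is in bijection with $(f\setminus\partial f)\cap(g\setminus\partial g)$. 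Transversality of $\Phi$ to $\Delta_X$ is achievable through level‑preserving perturbations because at each fixed $t$ one may independently perturb $f_t$ and $g_t$, as embeddings of $B^1,B^2$ into $X^4$, by compactly supported ambient isotopies, and these can be chosen to depend smoothly on $t$ and to vanish near $t=0,1$; the parametric transversality theorem again supplies a residual set of such perturbations for which $\Phi$ is transverse to $\Delta_X$. Compactness of $[0,1]\times B^1$ and $[0,1]\times B^2$, together with the boundary analysis, then forces $\Phi^{-1}(\Delta_X)$, and hence the interior intersection, to be finite.

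\noindent\emph{Main obstacle.} The only step requiring genuine care is verifying that level‑preserving perturbations constitute a large enough family to invoke parametric transversality: for each potential bad point one must exhibit a finite‑dimensional family of level‑preserving deformations whose derivatives span the normal direction to the diagonal, and then glue these over $[0,1]$ without destroying either the embedding property or the level‑preserving property and while keeping the deformation fixed near the ends. This is routine but is the one place where the rigidity of the ``level‑preserving'' constraint actually enters; everything else is a dimension count. A secondary point to keep in mind is that the boundary and interior perturbation problems must not interfere, which is automatic: $f\setminus\partial f$ and $g\setminus\partial g$ are already disjoint from $[0,1]\times\Sigma^2$ by hypothesis, so the interior perturbation may be taken to fix a neighbourhood of $[0,1]\times\Sigma^2$, and the boundary perturbation is supported there.
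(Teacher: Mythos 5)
The paper supplies no proof for this lemma (it is stated with \qed, treated as routine general position), so yours is a genuine reconstruction. Your approach is the standard one and the two dimension counts are correct: the boundary map $[0,1]\times\partial B^1\times\partial B^2\to\Sigma^2\times\Sigma^2$ has a $2$-dimensional domain against a codimension-$2$ diagonal, and the interior map $[0,1]\times\operatorname{int}(B^1)\times\operatorname{int}(B^2)\to X^4\times X^4$ has a $4$-dimensional domain against a codimension-$4$ diagonal, each giving a $0$-dimensional preimage. Your identification of the level-preserving perturbation family (a $t$-dependent ambient isotopy of $X^4$, respectively of $\Sigma^2$, supported near the offending point and vanishing near $t=0,1$) as the one technical ingredient is also right, and parametric transversality is the correct tool.

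The one place where the final claim is not yet fully closed is the passage from ``$0$-manifold'' to ``finite'' for the interior intersection. The set $\Phi^{-1}(\Delta_X)$ restricted to $[0,1]\times\operatorname{int}(B^1)\times\operatorname{int}(B^2)$ is a discrete subset of an \emph{open} manifold, so a priori it could accumulate at the corner stratum $[0,1]\times\partial B^1\times\partial B^2$. Your appeal to compactness together with the boundary analysis does not yet forbid this: the boundary analysis makes the corner intersection a discrete subset of the $2$-dimensional corner stratum, but discreteness within that stratum is compatible with interior intersection points limiting onto a corner intersection point from inside. The routine fix is either to run the whole argument in the manifold-with-corners category so that $\Phi^{-1}(\Delta_X)$ is a neat $0$-dimensional submanifold-with-corners of the compact cube (hence finite), or to add the observation that at a generic corner intersection the normal directions in $N_p\Sigma$ along which $f_t$ and $g_t$ leave $\Sigma^2$ are distinct, so $f$ and $g$ are disjoint in a punctured neighborhood of the corner point. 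Either is a one-line additional transversality condition, but it is precisely the sentence needed to justify the word ``finite'' in the interior case; everything else in your write-up is sound.
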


There are two types of generic intersections that can occur between finger arcs and finger discs when extending the $\mF\mW$-germs; boundary intersections and interior intersections. Moreover, these intersections will correspond to distinct changes in the finger/Whitney data. They are the analog of a handle slide that occurs in generic 1-parameter families of gradient like vector fields. To see this, we extend the local finger discs for the lower family up and the local finger arcs for the upper family down to a common set level $[0,1]\times \{t_*\}\times X$ between them. Here we apply Lemma \ref{L: GP for arcs and disc}. Then the family of finger arcs will intersect the family of finger discs at isolated points at isolated times. We have indicated this in the graphic with the purple arrow indicating a boundary intersection and a blue arrow an interior intersection.
\subsubsection{Boundary intersection.} 
\begin{figure}
    \centering
    \includegraphics[width=0.8\linewidth]{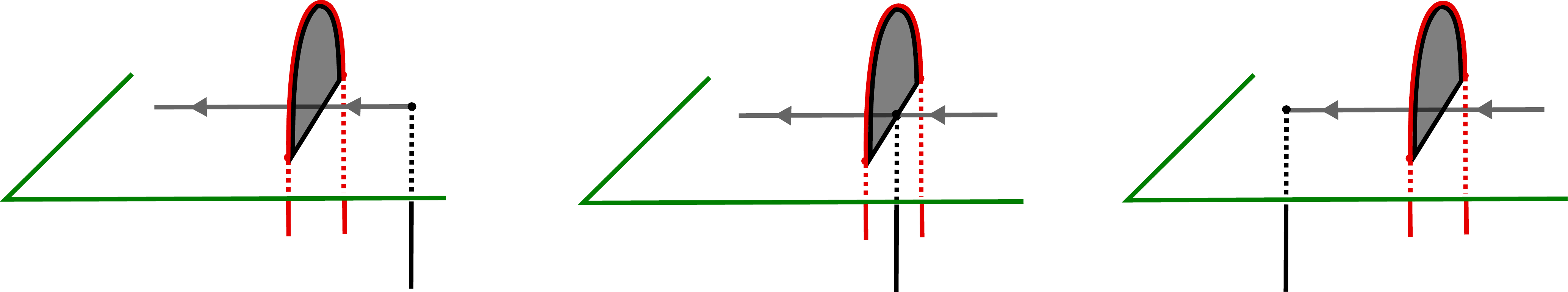}
    \caption{Shown is the neighborhood for the family of local finger discs and the family extended finger arc for a $\mG$ boundary intersection point happening at $s=s_0$.}
    \label{F: bdy_intersections}
\end{figure}
\begin{figure}
    \centering
    \includegraphics[width=0.7\linewidth]{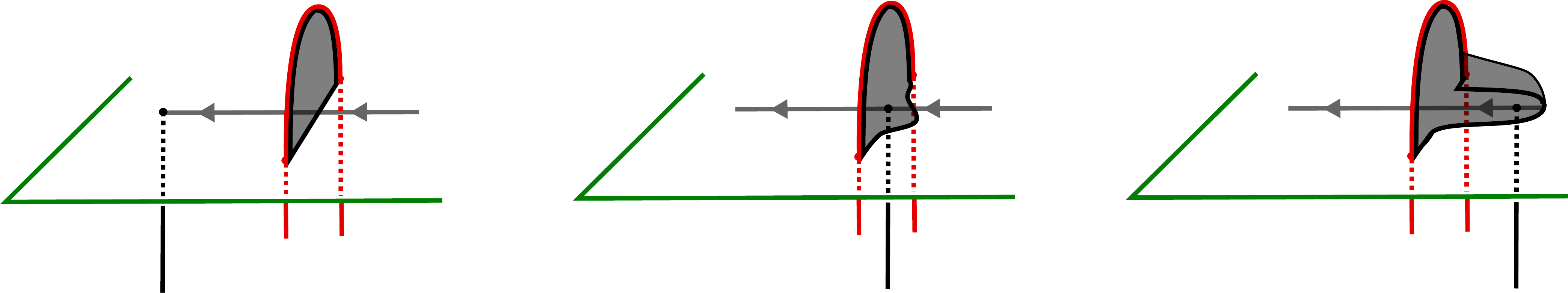}
    \caption{Shown here is the isotopy in the neighborhood of the local finger disc given by dragging the extended finger arc at $s>s_0$ back to $s<s_0$.}
    \label{F: Bdyint_arc_isotopy}
\end{figure}
\begin{figure}
    \centering
    \includegraphics[width=0.7\linewidth]{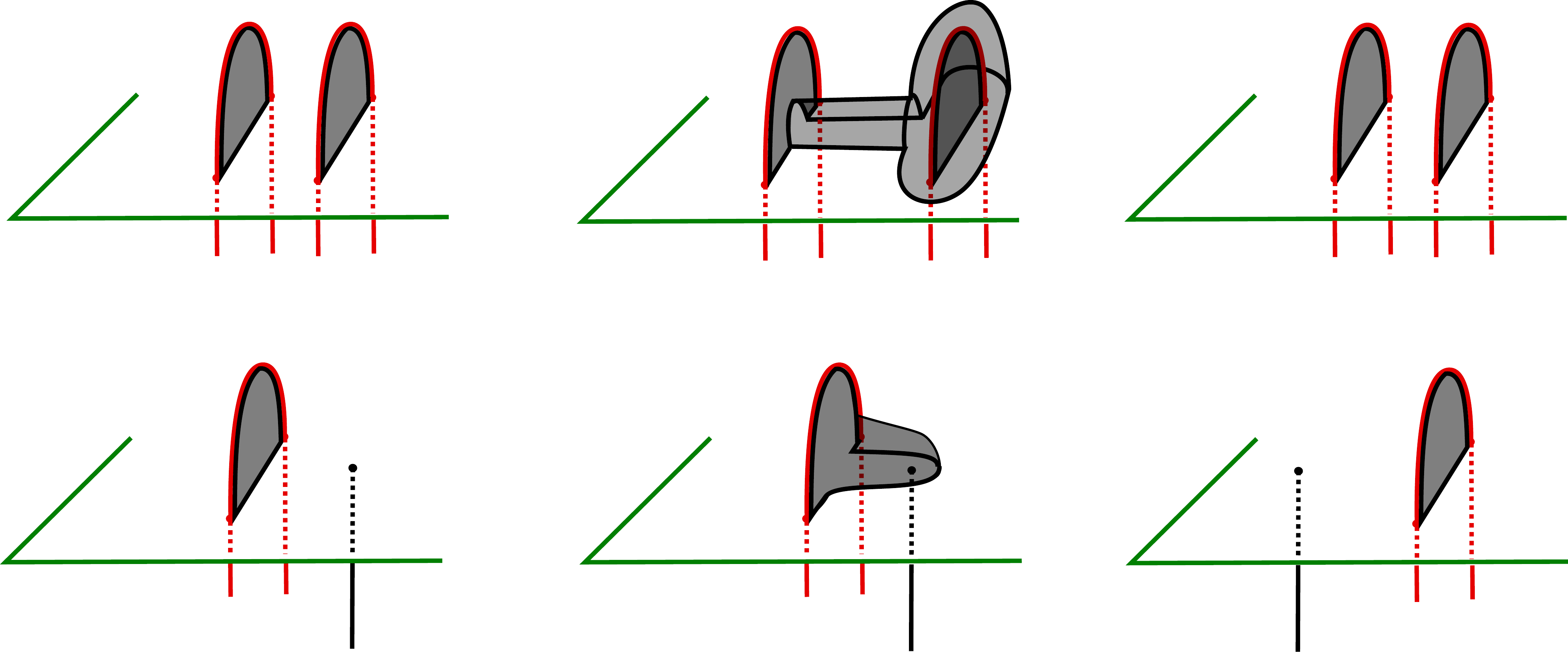}
    \caption{Shown here is the change in the configuration of the finger disc after passing a boundary intersection point. The top row of pictures shows the changes to the pair of discs $f_1$ and $f_2$ in the level set above both critical points while the lower row shows the changes in a level set in between the two critical points.}
    \label{F: Bdyint_implies_discslides}
\end{figure}
Let us analyze when a family of finger arcs intersects a family of finger discs along the boundary. Let $s_0$ denote the parameter when the intersection occurs. Note that the local finger discs have half of their boundary on $\mR$ and the other half on $\mG$. 
\begin{definition}
    If the boundary of the arc crosses the boundary of the disc on $\mG$, we call it a \emph{$\mG~ \partial$-crossing} and a \emph{$\mR ~\partial$-crossing} if the arc crosses the boundary of the disc on $\mR$.
\end{definition}  
Figure \ref{F: bdy_intersections} shows the local model for a $\mG~ \partial$ crossing. Without loss of generality, we can assume that the boundary of the arc moves right to left as $s$ increases and that we have arranged the tangent vector of the arc at the boundary to be constant and tangent to the disc at $s_0$. To compare the data before with the data after we take the finger arc for $s> s_0$ and isotope it back to $s'< s_0$, moving the data of the finger disc with the isotopy as shown in Figure \ref{F: Bdyint_arc_isotopy}. The effect on the finger discs is to perform a $\mG$-disc slide, as shown in Figure \ref{F: Bdyint_implies_discslides}. Thus, we have the following:

\begin{lemma}
    Let $H_s$ be a homotopy between paths of embeddings with all the Whitney moves occuring at the same time $t = 3/4$ for all $s$ and all the finger moves occuring at the same time $t = 1/4$ except for $s\in [s_0 -\epsilon, s_0 + \epsilon]$. Over this interval, there is a single finger move that occurs after all others and the extension of the finger arcs down has a single $\mR~\partial$-crossing ($\mG~\partial$-crossing) with a local finger disc $f$. Label the finger discs at $s_0-\epsilon$ $\{f_1,f_2,...,f_k\}$ such that $f_1$ is the finger disc for the upper finger move, and $f_2$ is the disc that $f_1$'s finger arc intersects at $s=s_0$. Then $(\mF, \mW)_1$ is obtained from $(\mF,\mW)_0$ by performing an $\mR$- ($\mG-$) disc slide of $f_2$ over $f_1$.\qed
\end{lemma}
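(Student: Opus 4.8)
The plan is to analyze the local model of a boundary crossing directly and read off the change in the finger/Whitney system. Fix the homotopy $H_s$ as in the statement, so that all Whitney moves stay at $t=3/4$, all finger moves stay at $t=1/4$ except for one finger move in the $s$-interval $[s_0-\epsilon,s_0+\epsilon]$, and over that interval there is a single $\mR$- (or $\mG$-) $\partial$-crossing between the extended finger arc for the ``high'' finger move and the local finger disc $f$. First I would set up coordinates near the crossing parameter $s_0$ using the general position afforded by Lemma \ref{L: GP for arcs and disc}: extend the local finger discs of the lower family of finger moves up along an $\mF\mW$-vector field $V$ to the level $\{t_*\}\times X$ for some $t_*$ slightly above $t=1/4$, and extend the local finger arc of the high finger move down along $-V$ to that same level. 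Because everything is level-preserving and $V$ is tangent to $[0,1]^2\times\mG$ and to $\tr(H)$, the boundary of the extended arc traces a path in $\mG$ (resp. $\mR$), and at $s_0$ it crosses $\partial f\cap\mG$ (resp. $\partial f\cap\mR$) transversely and exactly once; away from $s_0$ the arc and disc are disjoint.

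The heart of the argument is the comparison of the systems at $s_0-\epsilon$ and $s_0+\epsilon$. For this I would use the standard trick already invoked implicitly in Construction \ref{construction: fw systems from a path}: take the finger arc for a value $s>s_0$, isotope it (and drag the local finger disc $f$ along with it) back to a value $s'<s_0$, and ask what the resulting local finger disc at $s'$ looks like relative to the one that was already there. Since the only topological event between $s'$ and $s$ was the boundary arc of the high finger move passing once over $\partial f$ on $\mG$ (resp. $\mR$), the disc $f$ acquires a band summand running along the image of the arc and over a parallel copy of the high finger disc $f_1$; unwinding the definitions, this is exactly a $\mG$-disc slide (resp. $\mR$-disc slide) of $f_2$ over $f_1$ in the sense of Definition \ref{hsf} (here $f_2$ is the disc whose boundary the arc crossed, $f_1$ the disc for the high finger move). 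The band lies in the correct $I$-bundle of $\mG$ (resp. $\mR$) precisely because the arc is a finger arc, i.e.\ it sits on the positive side of $\mG\cup\mR$, matching the germ requirement in the definition of a finger disc slide. I would record the picture in Figures \ref{F: bdy_intersections} and \ref{F: Bdyint_implies_discslides}, which show the local model and the induced disc slide.

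Finally I would note that all other finger discs, and all Whitney discs, are unchanged by this isotopy: the extension vector field $V$ is supported away from the other $\mF\mW$-data during the crossing, and the Whitney discs have been flowed down from $t=3/4$ independently, so after the extension down to $t=1/2$ the only component of the system that differs between $s_0-\epsilon$ and $s_0+\epsilon$ is $f_2$, replaced by its slide over $f_1$. Appealing to Remark \ref{R: fw systems from a path}, the systems at $s_0\pm\epsilon$ are well defined up to isotopy of the disc systems, so the change is exactly a single disc slide up to isotopy, which is the claim. The main obstacle I anticipate is bookkeeping rather than conceptual: one must make sure the sign/side conventions on the band (future vs.\ past for $\mG$-slides, and the $I$-bundle placement) genuinely come out as a \emph{finger} disc slide and not something that would disturb the germ hypotheses of the Parity Lemma; this is handled by arranging, as in Figure \ref{F: bdy_intersections}, that the tangent vector of the arc at its boundary is constant and tangent to $f$ at $s_0$, so that the band is embedded and germ-preserving. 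A secondary point to be careful about is the distinction between $\mR~\partial$-crossings and $\mG~\partial$-crossings giving $\mR$- versus $\mG$-disc slides respectively; both cases are formally identical after exchanging the roles of $R$ and $G$, so I would prove the $\mG$ case and remark that the $\mR$ case is verbatim the same.
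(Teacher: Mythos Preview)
Your proposal is correct and follows essentially the same approach as the paper: the paper's argument (given in the discussion just before the lemma, together with Figures \ref{F: bdy_intersections} and \ref{F: Bdyint_implies_discslides}) is precisely to work in the local model, isotope the arc from $s>s_0$ back to $s'<s_0$ while dragging the local finger disc, and read off that the induced change is a $\mG$-disc slide. Your extra remarks on the $I$-bundle placement of the band and on the other discs being unchanged are sound elaborations that the paper leaves implicit.
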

\begin{remark}
    The same is true of the Whitney discs for the Whitney system. If when extending the local arcs up, they intersect the local Whitney discs on the boundary, then the data Whitney system on one side of the intersection differs from the data on the other side by an $\mR$- or $\mG$-disc slide.
\end{remark}
\subsubsection{Interior intersections.}
\begin{figure}
    \centering
    \includegraphics[width=0.8\linewidth]{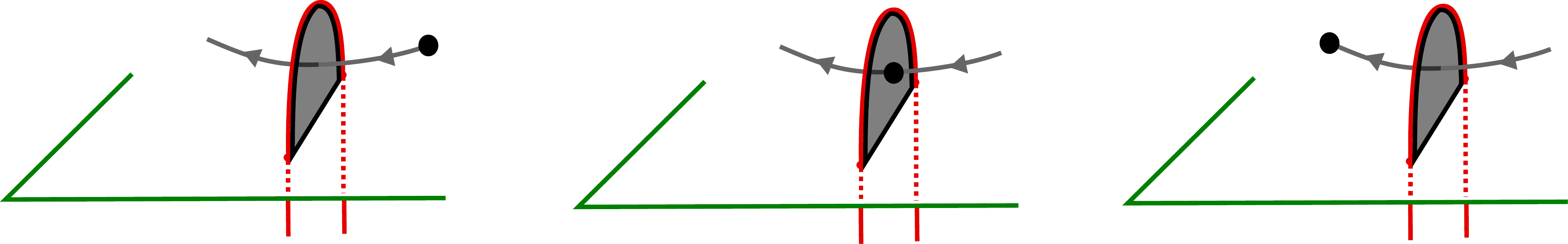}
    \caption{Shown is the neighborhood for the local finger disc (in blue) and the intersections of this neighborhood with the extended family of finger arcs (in black)}
    \label{F: interior intersections}
\end{figure}
\begin{figure}
    \centering
    \includegraphics[width=0.7\linewidth]{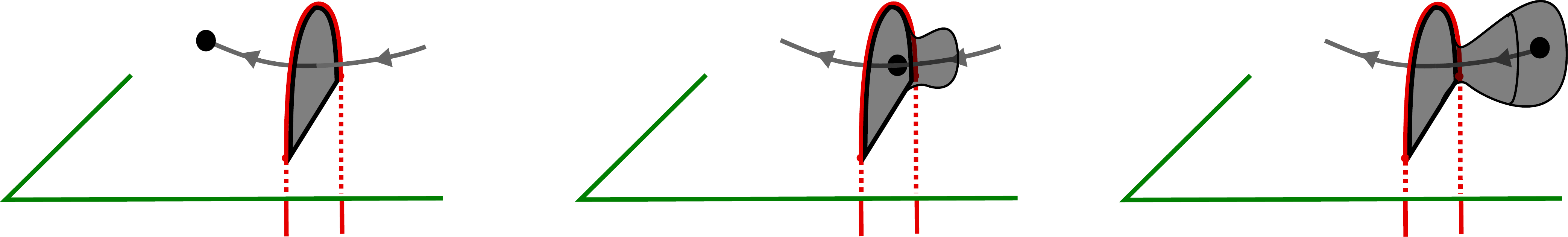}
    \caption{Here we depict the affect of isotopying the extended family of arcs at $s>s_0$ back to its position at $s<s_0$.}
    \label{F: II isotopy}
\end{figure}
\begin{figure}
    \centering
    \includegraphics[width=0.7\linewidth]{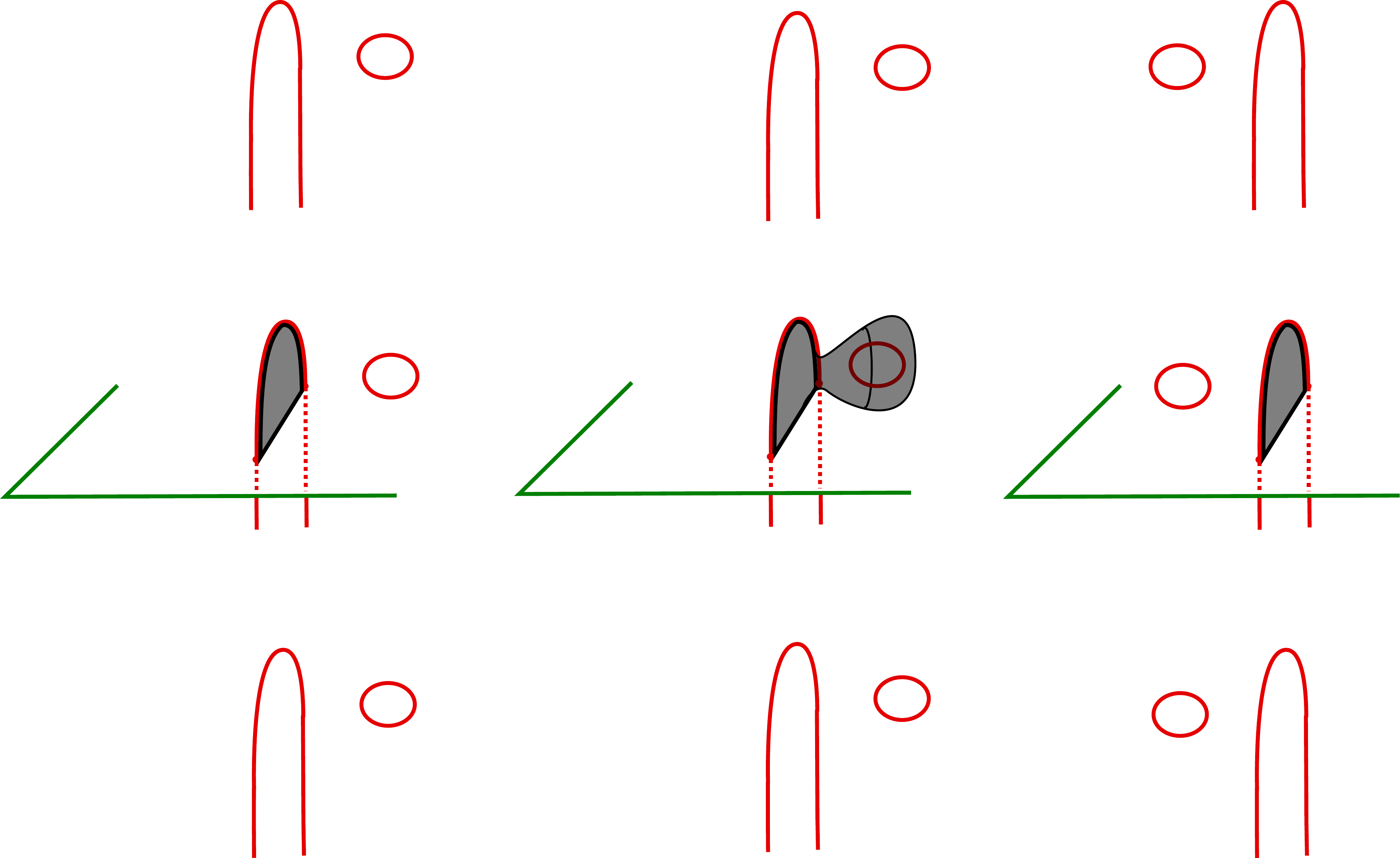}
    \caption{Each column of figures represents the 4-dimensional neighborhood of the lower finger disc in the level set above both critical points. The left column of figures is shows the local configuration of $\mR_{s,t_*}$ and the finger disc $f_2$ for $s<s_0$, while the right column shows the local configuration for $s>s_0$. The middle column and right column differ by a local isotopy. To get the middle column from the left column, one tubes $f_2$ to the 2-sphere separating the red circle in the middle figure of the middle column. This sphere is the finger sphere for $f_1$. }
    \label{F: II implies Sslide}
\end{figure}
Our goal now is to analyze the effect that interior intersections have on our finger/Whitney systems.
\begin{remark}
    Associated with a finger/Whitney move is a sphere called the Whitney sphere (see \cite{GGHKP}[Section 3.1]). To emphasis the difference between the data associated with a finger/Whitney move, we will call the Whitney sphere for a finger move a \emph{finger sphere} and reserving the name \emph{Whitney sphere} for the sphere corresponding to a Whitney move. 
\end{remark}
\begin{definition}\label{D: sphereslide}    
    Let $f_1$ and $f_2$ be two framed finger (Whitney) discs pairing off intersections between  $\mR$ and $\mG$. A \emph{sphere slide} of $f_1$ over $f_2$ is the operation of tubing $f_2$ to the finger sphere (Whitney sphere) of $f_1$. 
\end{definition}
\begin{remark}
    Tubing together surfaces in a 4-manifold is well defined up to the choice of tubing arc and homotopic arcs produce isotopic surfaces since homotopy implies isotopy for 1-manifolds in 4-manifolds. 
\end{remark}
Suppose that we have a family of finger arcs that has a single interior intersection with a family of finger disc at $s= s_0$. We will work in the local model for the lower finger disc as before. Generally, an arc will intersect a 3-manifold in finitely many points. Without loss of generality, we can ignore all other points of intersection of the finger arc with the local 3-ball containing the finger disc except for the point of intersection. For $s$ sufficiently close to $s_0$ we see in the local model as $s$ increases from $s<s_0$ to $s>s_0$, the family of finger arcs as a point that crashes through the disc as in Figure \ref{F: interior intersections}. To compare the data after with the data before, we isotope the arc at $s>s_0$ back, dragging the finger disc backwards as shown in Figure \ref{F: Bdyint_arc_isotopy}. The change in the finger disc is then to tube the original finger disc to the normal sphere for the arc as shown in Figure \ref{F: II implies Sslide}. Therefore, we have proved the following:

\begin{lemma}
    Let $H_s$ be a generic homotopy between paths of embeddings with all the Whitney moves occuring at the same time $t = 3/4$ for all $s$ and all the finger moves occuring at the same time $t = 1/4$ except for $s\in [s_0 -\epsilon, s_0 + \epsilon]$. Over this interval, there is a single finger move that occurs after all others and the extension of the finger arcs down has a single interior intersection with a local finger disc $f$.
    Label the finger discs at $s_0-\epsilon$ $\{f_1,f_2,...,f_k\}$ such that $f_1$ is the finger disc for the upper finger move, and $f_2$ is the disc that $f_1$'s finger arc intersects at $s=s_0$. Then $(\mF, \mW)_1$ is obtained from $(\mF,\mW)_0$ by performing a sphere slide of $f_1$ over $f_2$.\qed
\end{lemma}

\subsubsection{The birth/death move.}
As discussed previously, when the upper finger arcs (Whitney arcs) can be extended below (above) a lower (higher) finger move (Whitney move),
we are able to get a finger/Whitney system, and so long as there are no birth/death, cusps, or saddles pts, this is possible except at finitely many values of $s\in [0,1]$. Birth/death points also occur at finitely many values of $s$ and for generic extensions of the finger/Whitney arcs, the birth/death parameters will be distinct from the boundary and interior intersection parameters. Hence we can assume that just before a birth/death or cusp parameter and just after, we have finger/Whitney systems.

\begin{remark}\label{R: distinct cusp parameters.}
    The same general position argument above also shows that cusp points occur at distinct parameter values. Hence we can assume that just before and just after, we have finger/Whitney systems.
\end{remark}

We shall assume that the birth/death point happens when $s=s_0$ and when $t=t_0$. We consider the finger/Whitney system $(\mR_{t_0}, \mG, \mF,\mW)_s$ for $s<s_0$ and for $s>s_0$. Without loss of generality, we shall assume that $(s_0, t_0)$ is a birth point. Now if we fix $t=t_0$ and let $s$ vary from $s<s_0$ to $s>s_0$, we see a finger move occur between $\mR_{s,t_0}$ and $\mG$. In fact, $\mF\mW(H)$ embeds the local finger arc for this finger move in level sets $\{(s,t_0)\}\times X$. Generically this arc in $\{(s,t_0)\}\times X$ is completely disjoint from all the finger/Whitney discs for the finger/Whitney system $(\mR_{t_0}, \mG, \mF,\mW)_s$ $s<s_0$. Therefore, this data can be extended past the birth/death point compatibly. Now for a fixed $s<s_0$ and $t$ sufficiently close to $t_0$, the path $H_s(t)$ performs an isotopy that pushes $\mR_{s,t}$ along this arc part of the way until $t=t_0$, then retracts backwards. As $s$ increases, $\mR_{s,t}$ pushes further along the arc and then back until at $s=s_0$, $\mR_{s_0,t}$ pushes along until at $t=t_0$, $\mR_{s_0,t_0}$ touches $\mG$, then immediately retracts back. \emph{For any $s>s_0$, $\mR_{s,t}$ pushes along this arc, performs a finger move at $t<t_0$, then reverses and undoes the finger move at some $t>t_0$, creating a finger Whitney move pair where the local finger/Whitney discs coincide. As the previous data was disjoint from the local finger arc in the level set $\{(s,t_0)\}\times X$, the data extended past the birth/death point will be completely disjoint from the newly created finger disc and Whitney disc for $s>s_0$}. We record this as the following: 

\begin{lemma} \label{L: b/d move}
    Let $H_s$ be a homotopy between paths of embeddings that contains a single birth/death point at $(s_0,t_0)$. Furthermore, assume that the finger/Whitney moves other than the pair created at $s=s_0$ occur at time $t=1/4$ and $t=3/4$ respectively and that the local finger/Whitney arcs for the new pair have been extended down/up to $t=1/4$ and $3/4$ for all $s>s_0$ ($s<s_0$ for a death point). Let $\mF=\{f_1,...,f_k\}$ and $\mW=\{w_1,...,w_k\}$ denote the set of finger/Whitney discs for the finger/Whitney system at $s=0$. Then $(\mR,\mG,\mF, \mW)_0$ differ from $(\mR,\mG,\mF,\mW)_1$ by adding (removing) a pair of intersections between $\mR$ and $\mG$ and adding (removing) a pair of discs $f_{k+1}$ and $w_{k+1}$ that pair off the added (removed) intersections, are disjoint from all discs in $\mF$ and $\mW$ and agree as framed discs, modulo an isotopy of the data. 
    \qed
\end{lemma}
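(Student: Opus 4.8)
The plan is to consolidate the discussion in Section \ref{S: b/d and saddle} (specifically Case \ref{Birth}) together with the paragraph immediately preceding the lemma into a single clean argument, treating the birth case; the death case is obtained by reversing the homotopy parameter $s$. I would begin by fixing an $\mF\mW$-germ $\mF\mW(H)$ for the ordered homotopy and an $\mF\mW$-vector field $V$ as in Construction \ref{construction of extension v.f.}, and noting by genericity that the birth parameter $s_0$ is distinct from the finitely many parameter values at which boundary or interior intersections of the extended finger/Whitney arcs with the finger/Whitney discs occur (cf. Remark \ref{R: distinct cusp parameters.}). This guarantees that $(\mR_{t_0},\mG,\mF,\mW)_s$ is a genuine finger/Whitney system for $s$ in a punctured neighborhood of $s_0$, so that it makes sense to compare the systems on the two sides.

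The core of the proof is the observation, read off the local birth model (Figure \ref{F: x cubed}, traversed in the $s$-direction), that $\mF\mW(H)$ embeds the local finger arc of the created finger move into the level sets $\{(s,t_0)\}\times X$ as $s$ crosses $s_0$. This arc is a level-preserving embedded $B^1$ with boundary on $\tr(H)$ and $[0,1]^2\times\mG$, while $f_1,\dots,f_k,w_1,\dots,w_k$ and the flow lines of $\pm V$ used to build them are level-preserving embedded discs with boundary on the same submanifolds. By the dimension count in Lemma \ref{L: GP for arcs and disc} ($1+2<4$), after a small deformation this arc is disjoint from all of $f_1,\dots,f_k,w_1,\dots,w_k$ in every level set it meets, uniformly for $s$ near $s_0$. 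Hence the pre-existing data extends across $s=s_0$ without interacting with the new arc; and for $s>s_0$ the path $H_s$ performs a finger move along this arc (at some $t<t_0$) and reverses it (at some $t>t_0$), producing a canceling pair whose discs, extended along $\pm V$ to the middle level as in Construction \ref{construction: fw systems from a path}, give $f_{k+1}$ and $w_{k+1}$ disjoint from $\mF\cup\mW$. That $f_{k+1}$ and $w_{k+1}$ agree as framed discs is exactly the content of Case \ref{Birth} of Section \ref{S: b/d and saddle}: near the birth point $\mF\mW(H)$ furnishes an isotopy between the local finger and Whitney discs, so the two discs pair off the same new intersection points, their boundary arcs are isotopic rel endpoints on $\mR$ and on $\mG$, and after making those arcs coincide the interiors are isotopic rel boundary in the complement of $\mR\cup\mG$. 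Combining, $(\mR,\mG,\mF,\mW)_1$ is obtained from $(\mR,\mG,\mF,\mW)_0$ by adjoining the new pair of $\mR\cap\mG$ points and the disjoint, framed-equal pair $f_{k+1},w_{k+1}$, up to isotopy of the data.

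The only real technical point — the main obstacle — is the general position step: one must verify carefully that the newly created finger arc can be taken disjoint from the \emph{entire} pre-existing system in the level sets it traverses, and that this can be arranged simultaneously for all $s$ in a neighborhood of $s_0$ within the level-preserving category, so that the two families of finger/Whitney data genuinely do not interfere across the birth parameter. This is precisely what Lemma \ref{L: GP for arcs and disc} and the $\mF\mW$-vector field framework are built to supply; once it is in place, everything else is bookkeeping plus a direct appeal to the already-established local model and to the isotopy of Case \ref{Birth}.
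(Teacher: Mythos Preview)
Your proposal is correct and follows essentially the same approach as the paper, whose proof is the discussion immediately preceding the lemma together with Case~\ref{Birth} of Section~\ref{S: b/d and saddle}. One small correction: the local birth model is the fold picture (Figure~\ref{F: Fold}) read in the $s$-direction at fixed $t=t_0$, not Figure~\ref{F: x cubed}, which depicts the cusp singularity; and the general position step you invoke is in fact more elementary than Lemma~\ref{L: GP for arcs and disc}, since at the single level $\{(s,t_0)\}\times X$ you only need that an arc and a finite collection of discs in a $4$-manifold are generically disjoint (the paper simply says ``generically'').
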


\begin{figure}
    \centering
    \includegraphics[width=0.8\linewidth]{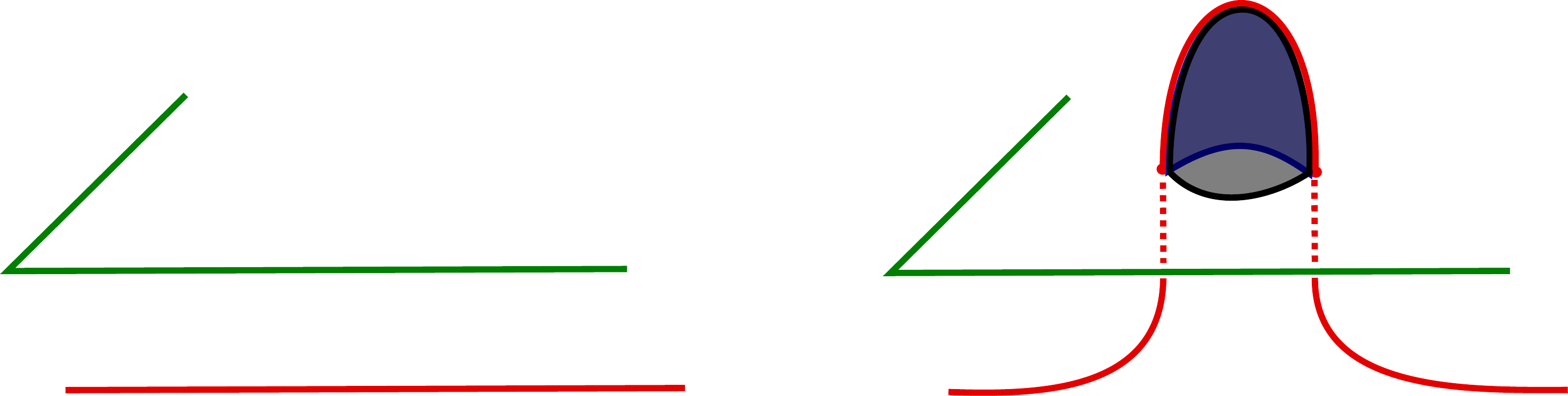}
    \caption{The shows the local modification of the Birth move (Death move). On the left we have no intersections between $\mR$ and $\mG$ in the local neighborhood. On the right, two extra intersections are created. To the $\mF\mW$ system, a finger disc and a Whitney disc are added that remove the two newly created intersections.}
    \label{F: bd move}
\end{figure}

\begin{figure}
    \centering
    \includegraphics[width=0.5\linewidth]{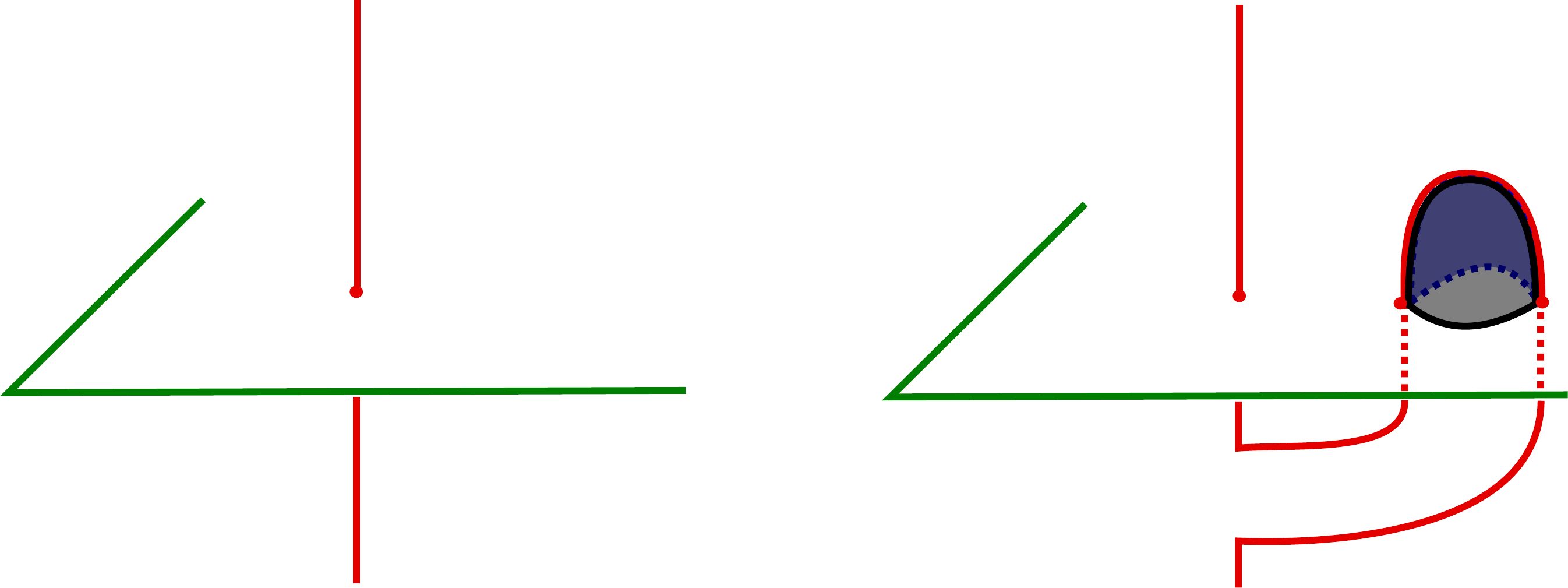}
    \caption{ Shown here is an example of applying the Birth move to the trivial $\mF\mW$ system. On the left we have $\mF = \varnothing = \mW$. After the move, we have $\mF' = \{f\}$ and $\mW' = \{w\}$.} 
    \label{F: bd_move_example}
\end{figure}

This motivates the following definition (see Figure \ref{F: bd move} and Figure \ref{F: bd_move_example} an example).
\begin{definition}\label{D: b/d move}
    A change in a finger/Whitney system as described in the previous lemma is called a \emph{birth move} or a \emph{death move} depending on whether a pair of intersections is created and discs added or if a pair is removed.
\end{definition}

\subsubsection{The $x^3$ move.}

As discussed in Remark \ref{R: distinct cusp parameters.}, cusp points generically occur at parameter values $s$ distinct from the phenomena discussed above. 
Let $(s_0,t_0)$ denote the parameters that contain the cusp point. Then for some small interval containing $s_0$, the only change in $H_s(t)$ that occurs is passing through a cusp. We will analyze the case where the cusp opens up to the right and work in a local 4-ball neighborhood $U_{p(s_0,t_0)}= B^3\times B^1$ of the cusp point $p(s_0,t_0)$ so that for the parameter $(s,t)$ sufficiently close to $(s_0,t_0)$, the frames given in Figure \ref{F: x cubed} describe the configuration of $\{(s,t)\}\times \mR_{s,t}$ and $\{(s,t)\}\times \mG$ in the 3-ball slice $\{(s,t)\}\times B^3\times \{0\}$ of $\{(s,t)\}\times U_{p(s_0,t_0)}$. We will implicitly take this perspective and not refer to the coordinates in $[0,1]^2\times X$ to simplify the notation.

\begin{figure}[!htbp]
\begin{subfigure}{.5\textwidth}
  \centering
  \includegraphics[width=.9\linewidth]{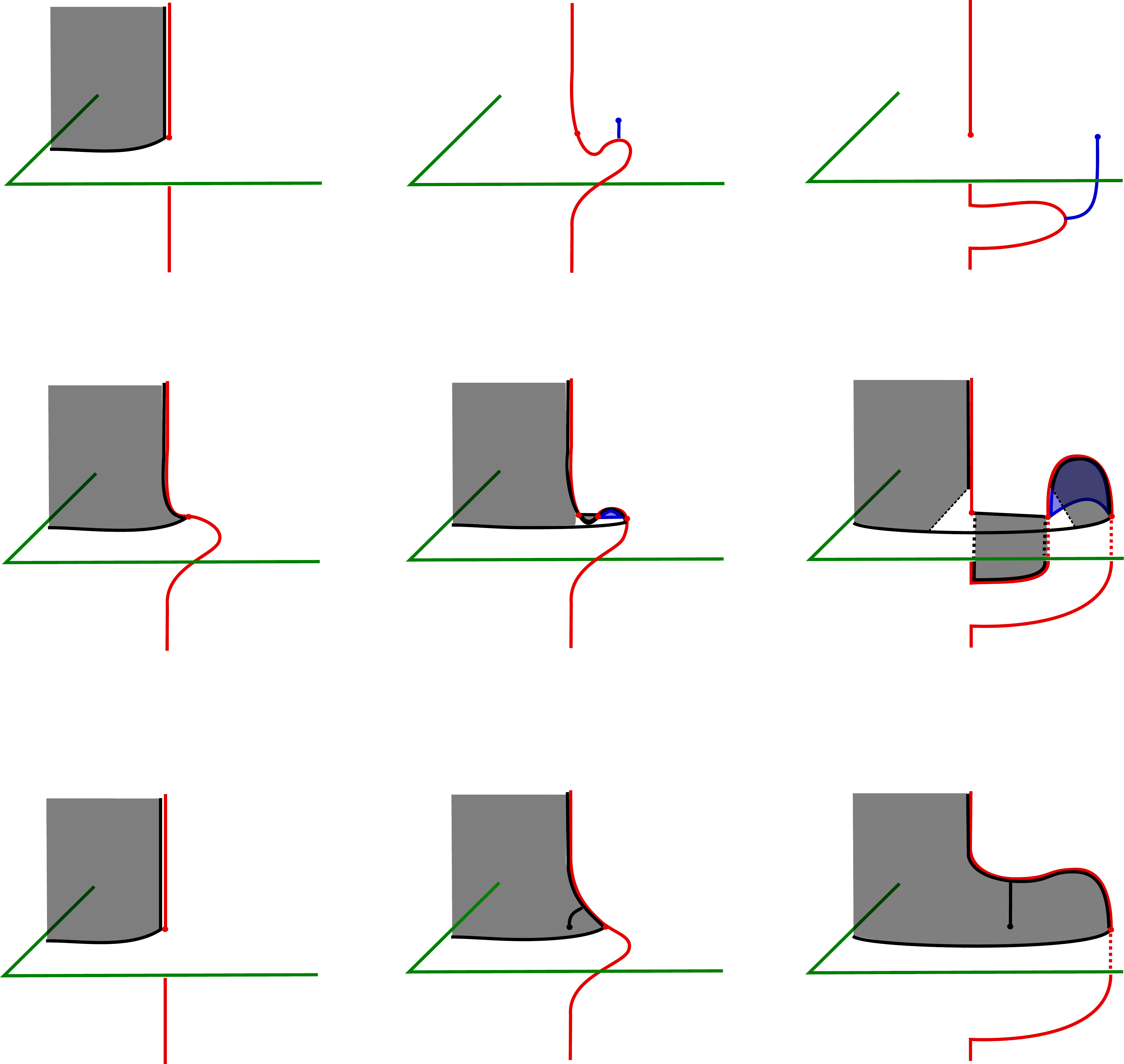}
  \caption{}
  \label{F: cuspextension finger corner whitney germed}
\end{subfigure}
\begin{subfigure}{.5\textwidth}
  \centering
  \includegraphics[width=.9\linewidth]{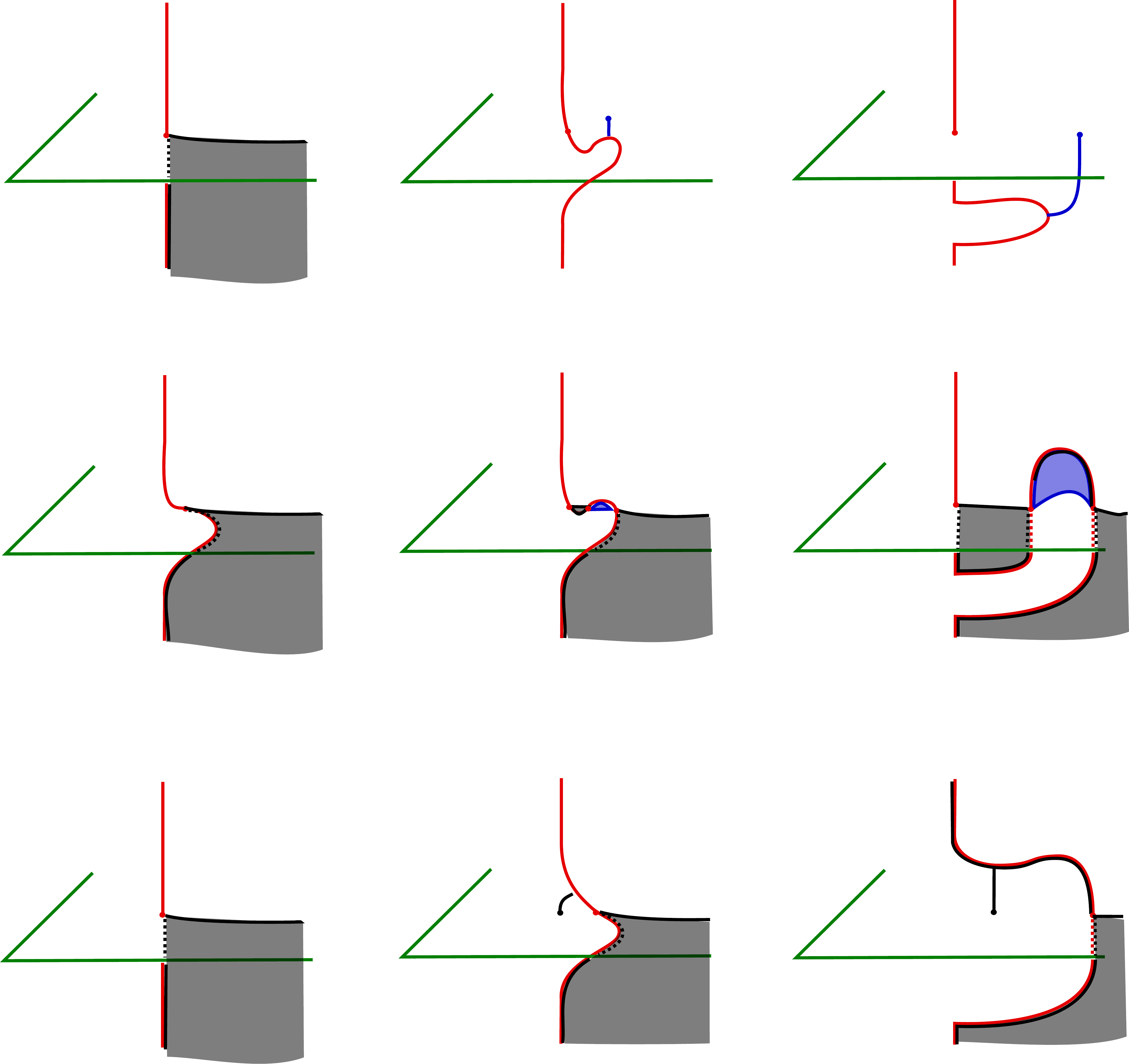}
  \caption{}
  \label{F: cuspextension finger corner finger germed}
\end{subfigure}
\caption{Depicted here is the local model for a cusp point (Figure \ref{F: x cubed}) in the presence of a finger disc. Shown in Figure \ref{F: cuspextension finger corner finger germed} is the corner for a finger disc that is finger germed while in Figure \ref{F: cuspextension finger corner whitney germed} is the corner that is Whitney germed. The two figures show how to extend the $\mF$ through the cusp singularity.}
\label{F: cuspextensionF}
\end{figure}

\begin{figure}[!htbp]
\begin{subfigure}{.5\textwidth}
  \centering
  \includegraphics[width=.9\linewidth]{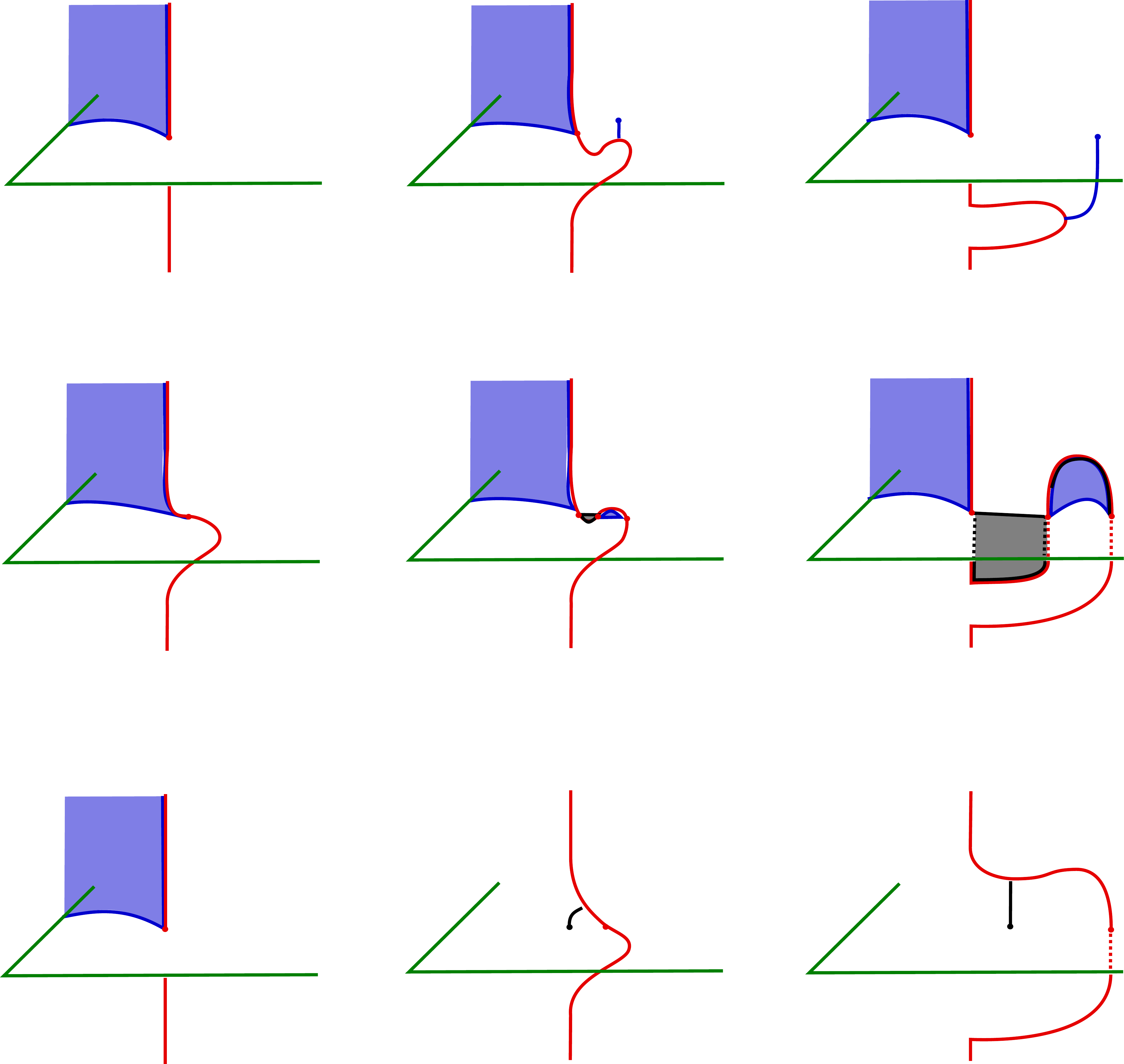}
  \caption{}
  \label{F: cuspextension whitney corner whitney germed}
\end{subfigure}
\begin{subfigure}{.5\textwidth}
  \centering
  \includegraphics[width=.9\linewidth]{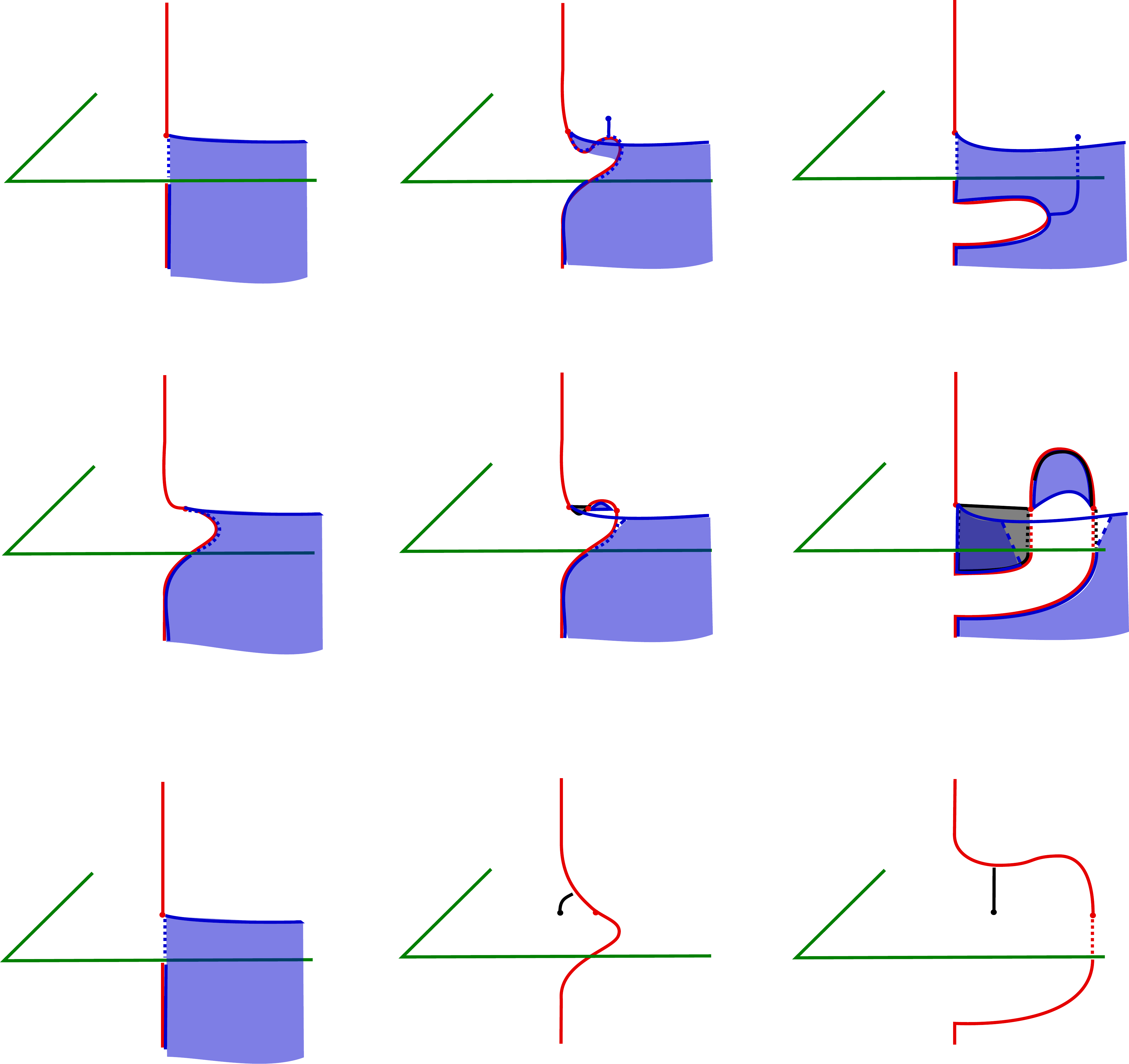}
  \caption{}
  \label{F: cuspextension whitney corner finger germed}
\end{subfigure}
\caption{Depicted here is the local model for a cusp point (Figure \ref{F: x cubed}) in the presence of a Whitney disc. Shown in Figure \ref{F: cuspextension whitney corner finger germed} is the corner for a Whitney disc that is finger germed while in Figure \ref{F: cuspextension whitney corner whitney germed} is the corner that is Whitney germed. The two figures show how to extend the $\mW$ through the cusp singularity.}
\label{F: cuspextensionW}
\end{figure}

\begin{figure}
    \centering
    \includegraphics[width=0.5\linewidth]{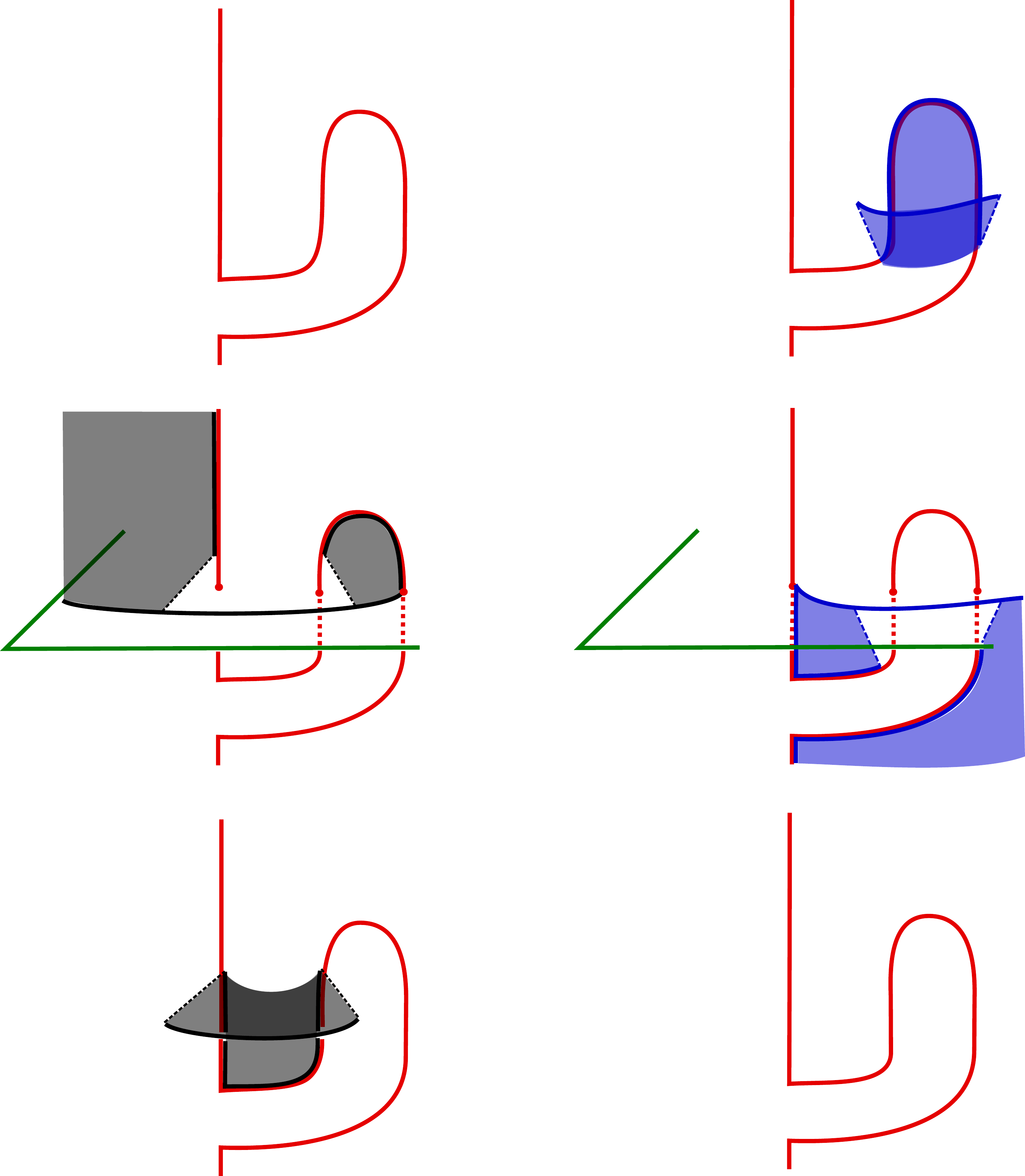}
    \caption{Shown here are the local neighborhoods of the finger/Whitney discs for the right-middle frame of Figures \ref{F: cuspextension finger corner finger germed} and \ref{F: cuspextension whitney corner whitney germed}. Here, each column of figures represents the 4-dimensional neighborhood.}
    \label{F: Dotted line notation}
\end{figure}

\begin{figure}
    \centering
    \includegraphics[width=\linewidth]{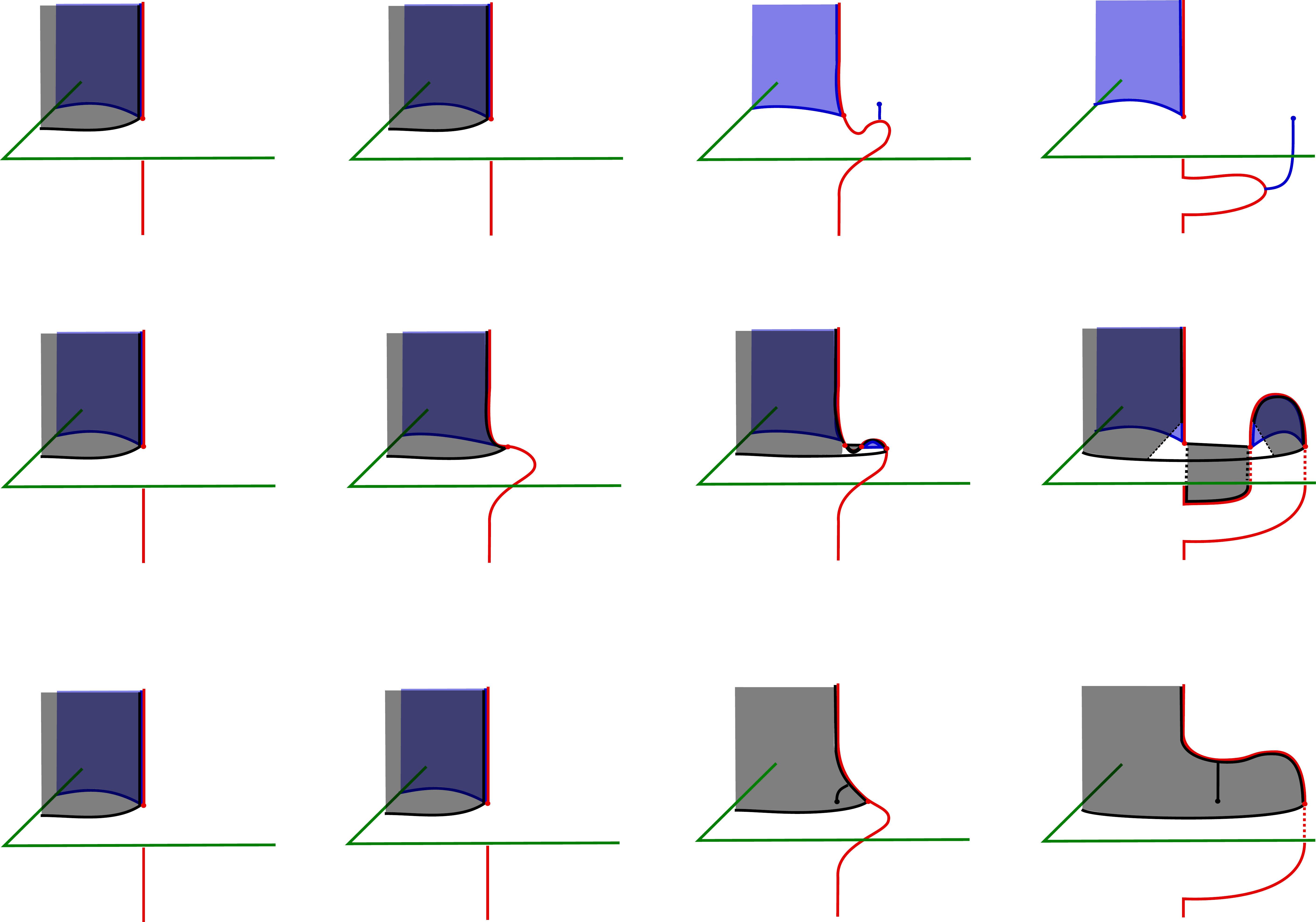}
    \caption{Shown here is an example of passing through an cubic singularity.}
    \label{F: x-cubedexample}
\end{figure}

Just as we did for the birth/death move, we want to extend the previous data across the cusp point, and then see how it interacts with the newly created data. In the local cusp neighborhood, we can assume that the only discs of $\mF$ or $\mW$ that intersect this neighborhood are those that pair off the cusp point. After a local isotopy of these discs, we may assume that they are locally germed.
\begin{definition}\label{D: Locally germed}
  Let $U\subset X$ be a small neighborhood of an intersection point $p\in \mR\cap \mG$ and let $d$ be a Whitney disc that pairs off $p$ with some other point $p'\in \mR\cap \mG$. We will call $U\cap d$ a corner of $d$. If $U = B^2\times B^1\times (-\epsilon, \epsilon)$ with $\mG\cap U = B^2 \times \{0\}\times \{0\}$. We say that the corner is locally Whitney germed if $U\cap d\subset B^2\times[0,1]\times \{0\}$ and is locally finger germed if $U\cap d\subset B^2\times[-1,0]\times \{0\}$. 
\end{definition}
\begin{remark}
    For a transverse intersection, the notion of a corner being locally finger or Whitney germed is not well defined, as we could always apply a local reflection to swap between. For a cusp, there is a well defined notion: The corner is finger germed (Whitney germed) if it is contained in the same side as the created finger disc (Whitney disc).
\end{remark}

\begin{remark}
    In Figures \ref{F: cuspextension finger corner whitney germed} and \ref{F: cuspextension whitney corner finger germed}, the right-middle frame depicts only part of the extended corner. The part of the corner not shown has been pushed into the 4th dimension. This is shown explicitly in Figure \ref{F: Dotted line notation}. 
\end{remark}
Shown in Figures \ref{F: cuspextensionF}, \ref{F: cuspextensionW} are the four cases of extending the corners of $\mF$ and $\mW$, up to the choice of parameterizing the local neighborhood $U$. For passing a cusp point, there are three cases to consider depending on the previous finger/Whitney data.

\begin{itemize}
\item[Case 1:] The first case to consider is when the transverse point $p$ is free of all previous finger/Whitney data. Then the previous data extends unaffected and will be disjoint from the created finger/Whitney. The new discs will have disjoint interiors and their boundaries will create an embedded arc on both $\mR_{s,t_0}$ and $\mG$ for $s>s_0$.

\item[Case 2:] Here we will assume that there is a single disk from $\mF$ or $\mW$ that pairs off the transverse point $p$. After performing a local isotopy, we may assume that this disc is either locally finger germed or locally Whitney germed. Figures \ref{F: cuspextensionF} and \ref{F: cuspextensionW} show how the corner is then extended. \emph{What determines the extension is whether the initial disc was a finger disc or a Whitney disc for the family $H_s(t)$ $s<s_0$ and how it is locally germed with respect to the cusp. If the disc was a Whitney germed finger disc, then the extension is obtained using a parallel of the new finger disc (see Figure \ref{F: Dotted line notation}); if the disc was a finger germed Whitney disc, then one uses a parallel of the Whitney disc.} In either case, the extended disc will be completely disjoint from both of the newly created finger/Whitney discs.

\item[Case 3:] The final case considers when the transverse point is paired by both a finger disc and Whitney disc. Just as in Case 2, the extension of the corner depends on the local germ with respect to the cusp and the extensions are given by Figures \ref{F: cuspextensionF} and \ref{F: cuspextensionW}. Figure \ref{F: x-cubedexample} shows an example When both the finger disc and Whitney disc are locally Whitney germed. \emph{In all cases, the newly created finger/Whitney pair is always disjoint from the extended data.}
\end{itemize}

\begin{definition}\label{D: x^3}
    Let $H_s(t)$ be an ordered homotopy between paths of embeddings that contains a single cusp that opens to the right. With the exception of the finger/Whitney pair created by the cusp, all other finger/Whitney moves happen at $t=1/4$ and $t=3/4$ respectively for all $s\in [0,1]$. We define the \emph{$x_{+}^3$ move} to be the modification of $(\mR,\mG, \mF,\mW)_0$ to $(\mR, \mG, \mF, \mW)_1$ by extending $\mF$ and $\mW$ appropriately (either Case 1, 2, or 3) and adding the local finger/Whitney discs created by the cusp. We define the \emph{$x_{-}^3$} to be the reverse.
\end{definition}

\begin{remark}
    Note that in all 3-cases, the extended data is disjoint from the finger/Whitney disc created by the cusp deformation. As such, we can follow the extended data back, letting $s$ vary from $s>s_0$ to $s<s_0$. The new finger/Whitney discs then cancel each other, and the extended discs return to their previous configuration. Moreover, we can compare the extended system at $s>s_0$ with a given system at $s>s_0$. Because all finger/Whitney moves other than the ones created by the cusps occur simultaneously by assumption and that all finger/Whitney systems are derived from a given germ $\mF\mW(H)$, the extended system and any other system for $\mF\mW(H)$ differ only by disc slides over the new discs and isotopy of the data. 
\end{remark}

We have thus proved the following lemma.
\begin{lemma}
    Let $H_s (t)$ be a homotopy between paths of embeddings that contains a single cusp point at $(s_0,t_0)$. Suppose that the finger/Whitney moves other than the pair created at $s=s_0$ occur at time $t=1/4$ and $t=3/4$. Suppose that for a fixed $s'>s_0$ ($s'<s_0$ if cusp opens to the left) the local finger/Whitney arcs for the new pair have been extended down/up to $t=1/4$ and $3/4$ for all $s>s'$ ($s<s'$). Then $(\mR,\mG,\mF, \mW)_0$ differ from $(\mR,\mG,\mF,\mW)_1$ by an $x_{+}^3$ move (or $x_{-}^3$), disc slides, and isotopy of the data. 
    \qed
\end{lemma}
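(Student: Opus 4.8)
The plan is to prove this purely locally near the cusp and to transport the rest of the finger/Whitney data along the flow of a chosen $\mF\mW$-vector field. Concretely, I would work with a generic $\mF\mW$-germ and $\mF\mW$-vector field $V$ for $H$, arranging via the local cusp model of Figure \ref{F: x cubed} (and Lemma \ref{L: GP for arcs and disc}) that for $s$ near $s_0$ the only feature of the Cerf graphic encountered is the single cusp at $(s_0,t_0)$, and that every finger/Whitney move other than the pair born at the cusp stays at the fixed levels $t=1/4$ and $t=3/4$. For $s<s_0$ the path $H_s$ has no canceling pair in the middle, so by Construction \ref{construction: fw systems from a path} and Remark \ref{R: fw systems from a path} it has a finger/Whitney system $(\mR,\mG,\mF,\mW)_0$ well defined up to isotopy of the data; the local model shows that for $s>s_0$ the cusp has created exactly one canceling finger/Whitney pair, call it $f_\ast,w_\ast$, whose local finger/Whitney arcs coincide near $(s_0,t_0)$, and by hypothesis these arcs have been extended down/up to $t=1/4$ and $t=3/4$.

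The key step is to extend the data $(\mF,\mW)_0$ across the cusp parameter. Away from the cusp point everything simply rides along the homotopy, so the question is entirely about the pieces of discs of $(\mF,\mW)$ that pass through a neighborhood of the cusp point, i.e.\ about the incoming transverse intersection $p$ at the cusp. I would split into the three cases already isolated in the discussion above: $p$ lies on no disc of $(\mF,\mW)$ (Case 1); $p$ lies on a single disc (Case 2); $p$ lies on a finger disc and a Whitney disc simultaneously, possibly on opposite sides of $\mG$ (Case 3). In Case 1 the data extends unchanged and is automatically disjoint from $f_\ast,w_\ast$. In Cases 2 and 3, after a local isotopy arranging the corner(s) as in Figures \ref{F: single disc x^3} and \ref{F: double disc x^3}, I would extend a finger disc through the cusp using a parallel copy of $f_\ast$ and a Whitney disc using a parallel copy of $w_\ast$; the two natural choices of parallel differ by a disc slide over $f_\ast$ (resp.\ $w_\ast$), and the extended discs are disjoint from $f_\ast$ and $w_\ast$. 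The system for $s>s_0$ consisting of these extended old discs together with $f_\ast,w_\ast$ is, by Definition \ref{D: x^3}, exactly $(\mR,\mG,\mF,\mW)_0$ modified by the $x_+^3$-move (or, if the cusp opens to the left, by the $x_-^3$-move applied in reverse).

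Finally I would compare this ``extended system'' with the honest finger/Whitney system $(\mR,\mG,\mF,\mW)_1$ produced for $H_{s_1}$, $s_1>s_0$, by Construction \ref{construction: fw systems from a path}. Both are derived from the same germ $\mF\mW(H)$, so by Remark \ref{R: fw systems from a path} they agree up to isotopy of the data, the only additional freedom being that the remaining discs may be slid over the newly created $f_\ast$ or $w_\ast$. Hence the two systems differ by an $x^3$-move, disc slides, and isotopy, which is the assertion. The hard part will be Case 3: arranging the corners of a finger disc and a Whitney disc attached to the same intersection point so that, when they sit on opposite sides of $\mG$ and the cusp opens in the given direction, both can be extended past the cusp by parallels of $f_\ast$ and $w_\ast$ with all interiors kept disjoint; this is the only place where the orientation of the cusp and the relative position of the two discs must be used carefully, and it is also the step in which one must pin down precisely which choices of parallel extension are related to which by disc slides over the new discs.
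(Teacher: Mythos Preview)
Your proposal is correct and follows essentially the same approach as the paper. The paper's lemma carries a \qed\ immediately after the statement because its proof is the preceding discussion (the three cases for extending corners through the cusp using parallels of the newly created $f_\ast$ or $w_\ast$, Figures \ref{F: single disc x^3}--\ref{F: double disc x^3}, and the Remark after Definition \ref{D: x^3}); your write-up recapitulates that same argument, including the observation that the choices of parallel differ by disc slides over the new discs and that the extended system agrees with the honest system at $s>s_0$ up to isotopy and such slides.
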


\subsubsection{The saddle move.}

\begin{figure}[!htbp]
    \centering
    \labellist                             
            \hair 10pt
            \pinlabel $t_f$ at -20 80
            \pinlabel $t_0$ at -20 280
            \pinlabel $t_w$ at -20 480
            \pinlabel $s_0-\epsilon$ at 0 -20
            \pinlabel $s_0$ at 250 -20
            \pinlabel $s_1$ at 450 -20
            \pinlabel $s_0+\epsilon$ at 600 -20
        \endlabellist
    \includegraphics[width=0.5\linewidth]{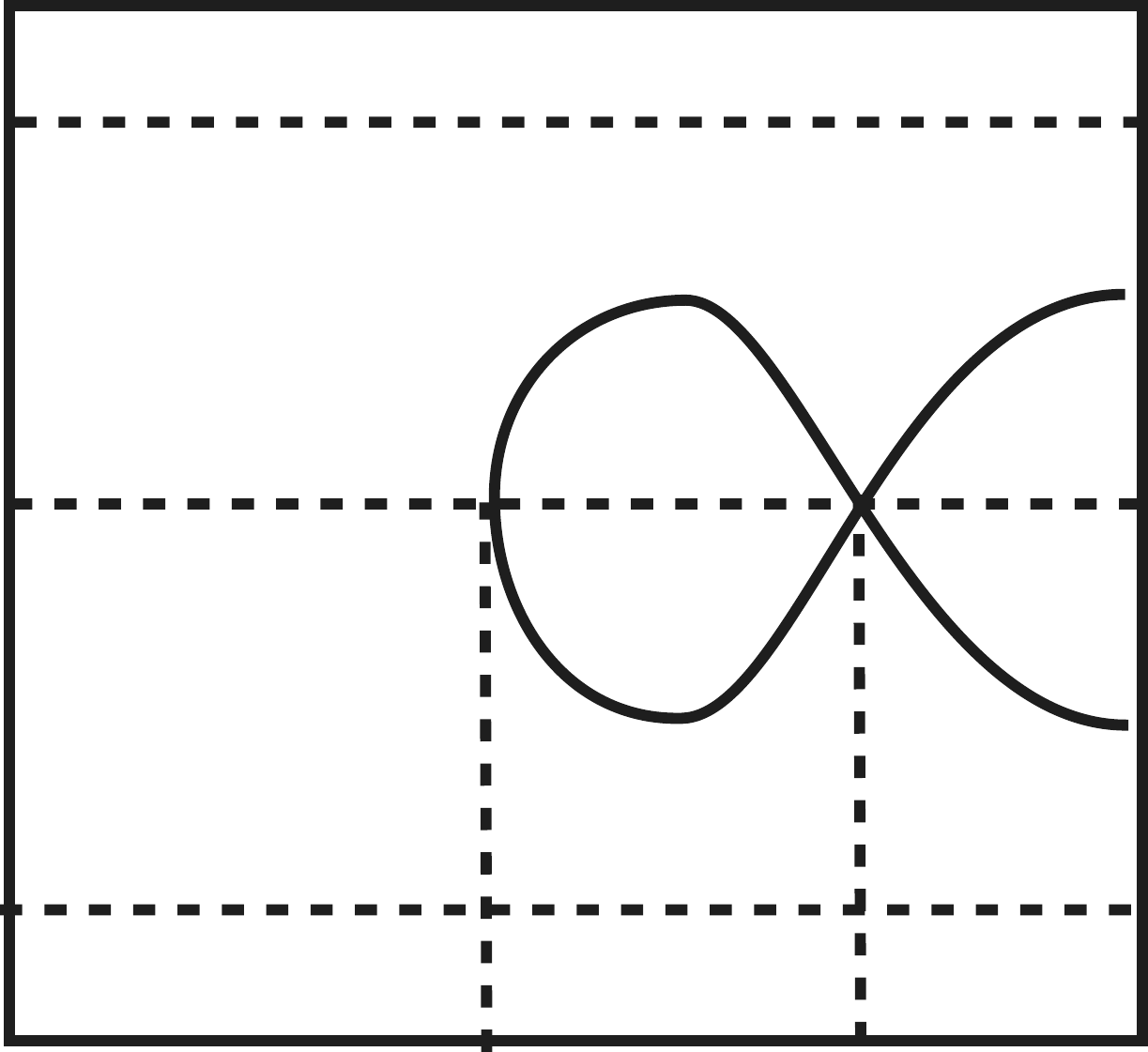}
    \vspace{.5cm}
    \caption{Shown is the local Cerf graphic for $H$ near a saddle point.}
    \label{F: saddlemove Cgraphic}
\end{figure}

The last case to consider are the saddle points. Let $(s_0,t_0)$ denote the parameters containing a saddle point. By Theorem \ref{T: 2-par ordering}, we have that over some interval $[s_0-\epsilon, s_0+\epsilon]$ the Cerf graphic will appear as in Figure \ref{F: saddlemove Cgraphic}. After some parameter value $s_1$ with $s_0< s_1<s_0+\epsilon$, the family $H_s(t)$ will again have a finger-first ordering. We may assume that over the interval $[s_0-\epsilon, s_0+\epsilon]$, none of the other previous phenomena occur between the other finger/Whitney data. We will work in a local model for the saddle point, assuming that as $s$ increases from $s<s_0$ to $s>s_1$, we increase the number of transverse intersection points between $\mR$ and $\mG$. If we track the embeddings $ \mR_{s, t_0}$ as $s$ increases past $s_0$, then we see a Whitney move performed across the local disc $D$ embedded by $\mF\mW(H)$. Just after $s_0$, we have the corresponding finger arc $\gamma_{D}$. As $s$ continues to increase, this arc duplicates and becomes two separate arcs: One for the newly created finger move, $\gamma^f_D$, the other for the Whitney move, $\gamma_D^w$. For the values of $s_0<s<s_1$, $H_s(t)$ does not have a finger-first ordering. As we continue, we see $\mR_{s,t_0}$ interchange when the finger move along $\gamma_D^f$ is performed, raising the Whitney move up and lowering the finger move until $s$ passes $s_1$. Then $H_s(t)$ has a finger-first ordering again. In the local model and for $s>s_1$, we see that $\mR_{s,t_0}$ now has two new local discs: one for the new finger move and one for the new Whitney move. We will denote these discs by $D_f$ and $D_w$ respectively.

\begin{figure}
    \centering
    \includegraphics[width=0.5\linewidth]{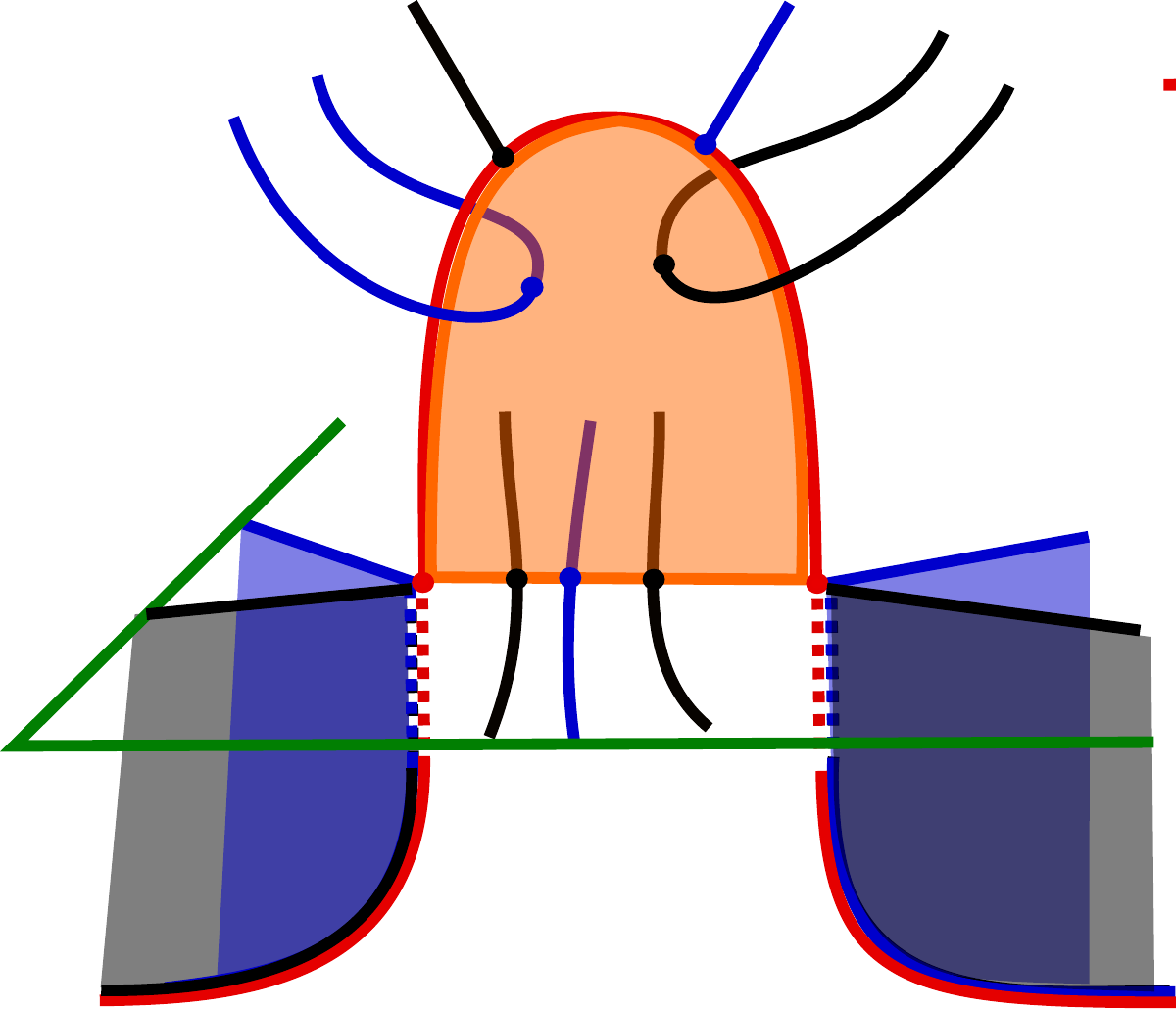}
    \caption{Here is an illustration of the possible configurations of the finger/Whitney data (in black and blue) in the local neighborhood of the saddle disc $D$(in orange).}
    \label{F: saddle disc possiblility}
\end{figure}
Let $t_f$ and $t_w$ be times such that over the whole interval $[s_0-\epsilon, s_0+\epsilon]$, all finger moves other than the one created by the saddle point happen before $t_f$ and all Whitney moves happen after $t_w$. In the Cerf graphic, the line segments $[s_0-\epsilon, s_0+\epsilon]\times \{t_f\}$ and $[s_0-\epsilon, s_0+\epsilon]\times \{t_w\}$ are disjoint from the fold graphic. As such, we can assume that all the lower local finger discs have been extended to the level set $[s_0-\epsilon, s_0+\epsilon]\times \{t_f\}\times X$ and all the upper Whitney discs extended into $[s_0-\epsilon, s_0+\epsilon]\times \{t_w\}\times X$. For $s< s_0$, we can extend the finger disc and Whitney discs into the common level set $\{(s,t_0)\}\times X$ and get a finger/Whitney system for $H_s(t)$. However, for $s\in (s_0,s_1)$, such an extension is not possible if the saddle disc $D$ intersects any other discs in $(\mF,\mW)_s$ $s<s_0$. Figure \ref{F: saddle disc possiblility} illustrates the general situation in the local model for the disc $D$.
\begin{figure}
    \centering
    \includegraphics[width=0.8\linewidth]{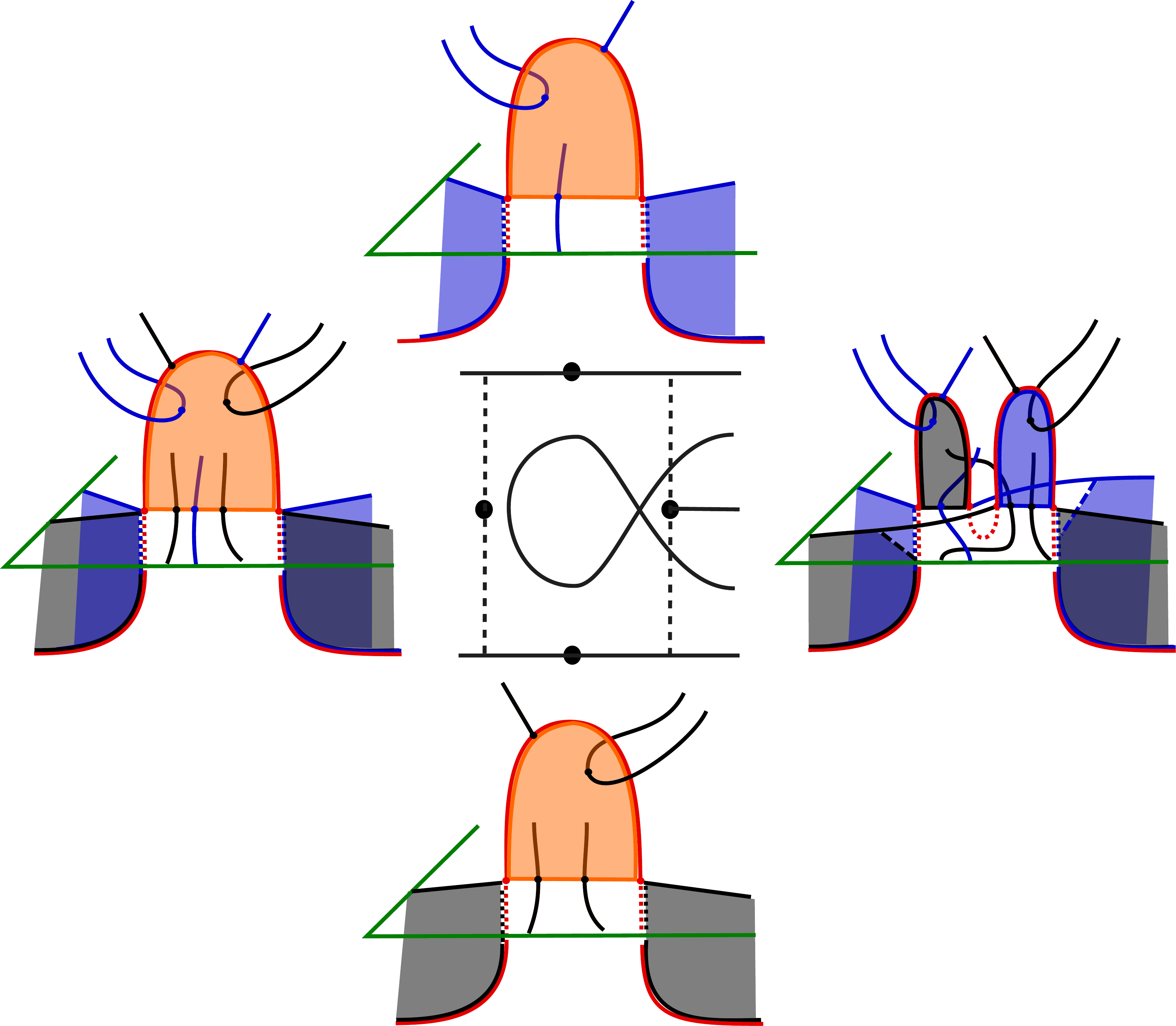}
    \caption{Shown here is an illustration of where the extended data lives. In particular, the Whitney discs for the moves happening at $t=3/4$ can be extended to the $t = t_w$ level sets, but not necessarly below for $s\in (s_0,s_1)$. A similar statement holds for the finger discs. For $s>s_1$, the data can be extended back into a common level and the configuration on the right depicts the extended data.}
    \label{F: saddle summary}
\end{figure}
We can however extend the saddle disc $D$ down to the level set $[s_0, s_0+\epsilon]\times \{t_f\}\times X$ and up to the set $[s_0, s_0+\epsilon]\times \{t_w\}\times X$ as the germ $\mF\mW(H)$ is defined all along the fold. The local picture in these sets differs only in that the set $[s_0-\epsilon, s_0+\epsilon]\times \{t_f\}\times X$ contains the finger discs for $\mF$ and the local disc $D$ while the set $[s_0-\epsilon, s_0+\epsilon]\times \{t_w\}\times X$ contains only the Whitney discs in $\mW$ and $D$. Note that the local disc $D$ becomes $D_w$ when $s>s_0$, $t=t_f$ and $D_f$ when $s>s_0$ and $t=t_w$. We've summarized this in Figure \ref{F: saddle summary}. For $s>s_1$, we can extend the data up from $t=t_f$ and down from $t=t_w$ to $t=t_0$ and get a new finger/Whitney system $(\mR, \mG,\mF, \mW)_s$. The data outside the local neighborhood does not change. Inside the neighborhood, we see two copies of the saddle disc, $D_f$ and $D_w$. Moreover the intersections (interior and boundary) with $D_f$ and the other Whitney discs $w_i\in \mW$ is preserved. In fact, interior and boundary intersection with $D$, while they can not be extended over the interval $[s_0,s_1]$, naturally extend into $\{(s,t_0)\}\times X$ for $s>s_1$ from the time slices $t = t_f$ and $t = t_w$. What will be important to understand is how the corners of the finger/Whitney discs change. \emph{This is because the corners determine how the new and old discs fit together around the intersection points between $\mR$ and $\mW$ once $H_s(t)$ has a finger-first ordering again}. By corners we mean 
open neighborhoods in the finger/Whitney discs of intersection points between $\mR$ and $\mG$. So in the following analysis, we will focus just on how the corners of the finger/Whitney data in the local model change passing across the saddle point and then back to finger-first ordering.

\begin{figure}
    \centering
    \labellist                             
        \hair 10pt
        \pinlabel $D$ at 420 350
    \endlabellist
    \includegraphics[width=0.5\linewidth]{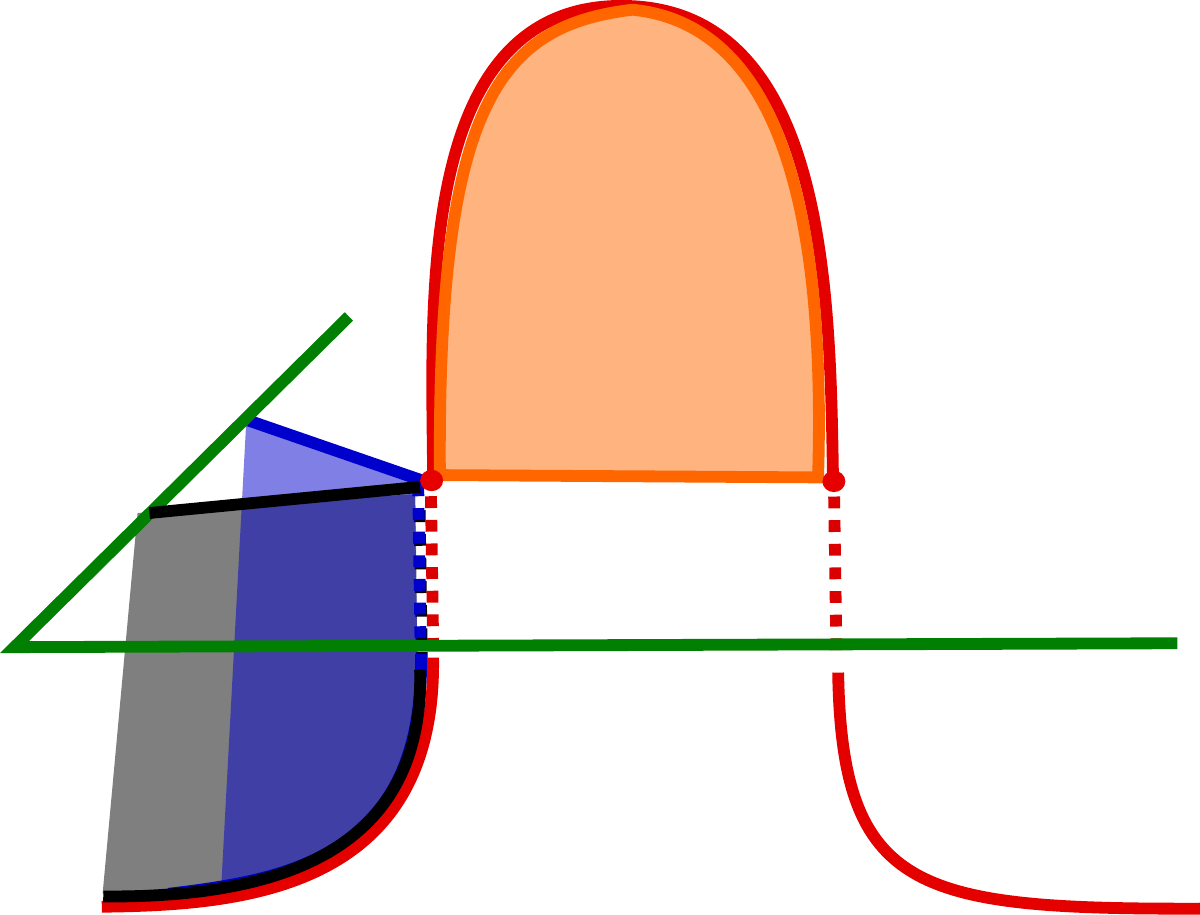}
    \caption{Shown is a local neighborhood for a saddle disc $D$ pairing of points in $\mR\cap \mG$ with one such point free of all other finger/Whitney data. In black and blue, we have corners for the finger disc and Whitney disc that pair off the other intersection point.}
    \label{F: saddlecase1corner}
\end{figure}
\begin{figure}
    \centering
    \labellist                             
        \hair 10pt
        \pinlabel $\gamma_D$ at 340 270
        \pinlabel $\gamma_{w_D}$ at 950 270
        \pinlabel $\gamma_{f_D}$ at 1100 270
        \pinlabel ${w_D}$ at 1650 310
        \pinlabel ${f_D}$ at 1800 310
    \endlabellist
    \includegraphics[width=.9\linewidth]{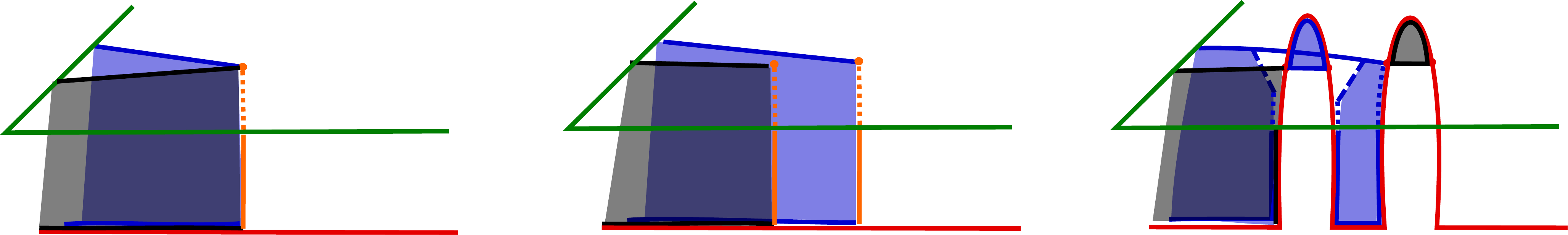}
    \caption{Depicted is the extension of the corners, and how the splitting of the finger arc $\gamma_D$ affects the corners in the local model. Shown on the right, part of the Whitney disc corner is not shown. Instead, the rest of the disc is outside of the 3-dimensional slice as depicted in Figure \ref{F: Dotted line notation}. }
    \label{F: saddlecase1fsplit}
\end{figure}

Just like the case of the $x^3$-move, we have a few cases to consider. We start with the local configurations for $s<s_0$ and describe how they extend to $s>s_1$. In fact, the corners can be extended through the local model in $\{(s,t_0)\}\times X$ for $s\in[s_0,s_1]$. We will make use of this.
\begin{itemize}
    \item[Case 1:] We will suppose that the disc $D$ pairs off a pair of transverse intersections with one of the intersection points free of all finger/Whitney discs. After a local isotopy, the finger/Whitney discs corners will appear as shown in Figure \ref{F: saddlecase1corner}. Let $f_c$ denote the corner for the finger disc, and $w_c$ the corner for the Whitney disc. Now just after performing the Whitney move across the disc $D$, part of both $\partial f_c$ and $\partial w_c$ will lie along the finger arc $\gamma$. When the arc $\gamma$ separates into $\gamma_f$ and $\gamma_w$, the part of $\partial f_c$ that was along $\gamma$ stays with $\gamma_w$ while the subset of $\partial w_c$ stays with $\gamma_f$. \emph{This is because two finger discs can not pair off the same intersection point.} We can arrange in the local model that when $\gamma$ separates, either $\gamma_w\subset w_c$ or $\gamma_f \subset f_c$ (see Figure \ref{F: saddlecase1fsplit}).  Then performing the finger moves along both $\gamma_f$ and $\gamma_w$, we get to the local model for $s>s_1$ with four possible configurations for the corners: If $\gamma_f\subset f_c$, then performing the finger move along $\gamma_f$ produces the new finger disc $D_f$ and the corner $f_c$ picks up a parallel copy $D_f$ as its boundary runs across to the correct intersection point created by doing the finger move along $\gamma_w$. Just like the $x^3$ move, there are two choices for the parallel. Hence with $\gamma_f \subset f_c$, there are two choices of extension. The same is true if $\gamma_w\subset w_c$ and together, these make the four possible extensions of the corner data. 

    \item[Case 2] Here we will assume that both intersection points between $\mR$ and $\mG$ have finger/Whitney disc data. Let $f_c^+$ and $f_c^-$ be the positive and negative corners for the finger disc respectively and $w_c^+$ and $w_c^-$ the corresponding Whitney data. We shall assume that, after a local isotopy, the corresponding positive and negative corner data agree, as shown in Figure \ref{F: saddlecase2corner}. We now extend the data past the saddle point, expanding the boundary of each corner to so that part of the boundary lies along the arc $\gamma_D$. Then when $\gamma_D$ separates into $\gamma_f$ and $\gamma_w$, the extended finger corners will lie along $\gamma_w$ with the Whitney corners lying along $\gamma_f$. We arrange locally so that $\gamma_f\subset f_c^+\cup f_c^-$ and $\gamma_w\subset w_c^+\cup w_c^-$. The union $f_c^+\cup f_c^-$ fit together to look like part of a single finger disc divided by the arc $\gamma_w$ (see Figure \ref{F: saddlecase2fsplit}). Furthermore, performing the Whitney move along $\gamma_w$ is akin to performing Quinn's ``splitting move'' \cite{Qu}. The same is true for the union of the Whitney corners $w_c^+\cup w_c^-$. Finally, since $\gamma_f\subset f_c^+\cup f_c^-$, after performing the finger move along $\gamma_f$, the corner containing $\gamma_f$ picks up a parallel of $D_f$. Similarly, doing the finger move along $\gamma_w$ will cause one of the Whitney corners to pick up a parallel copy of $D_w$.
\end{itemize}
\begin{figure}
    \centering
    \labellist                             
        \hair 10pt
        \pinlabel $D$ at 420 350
    \endlabellist
    \includegraphics[width=0.3\linewidth]{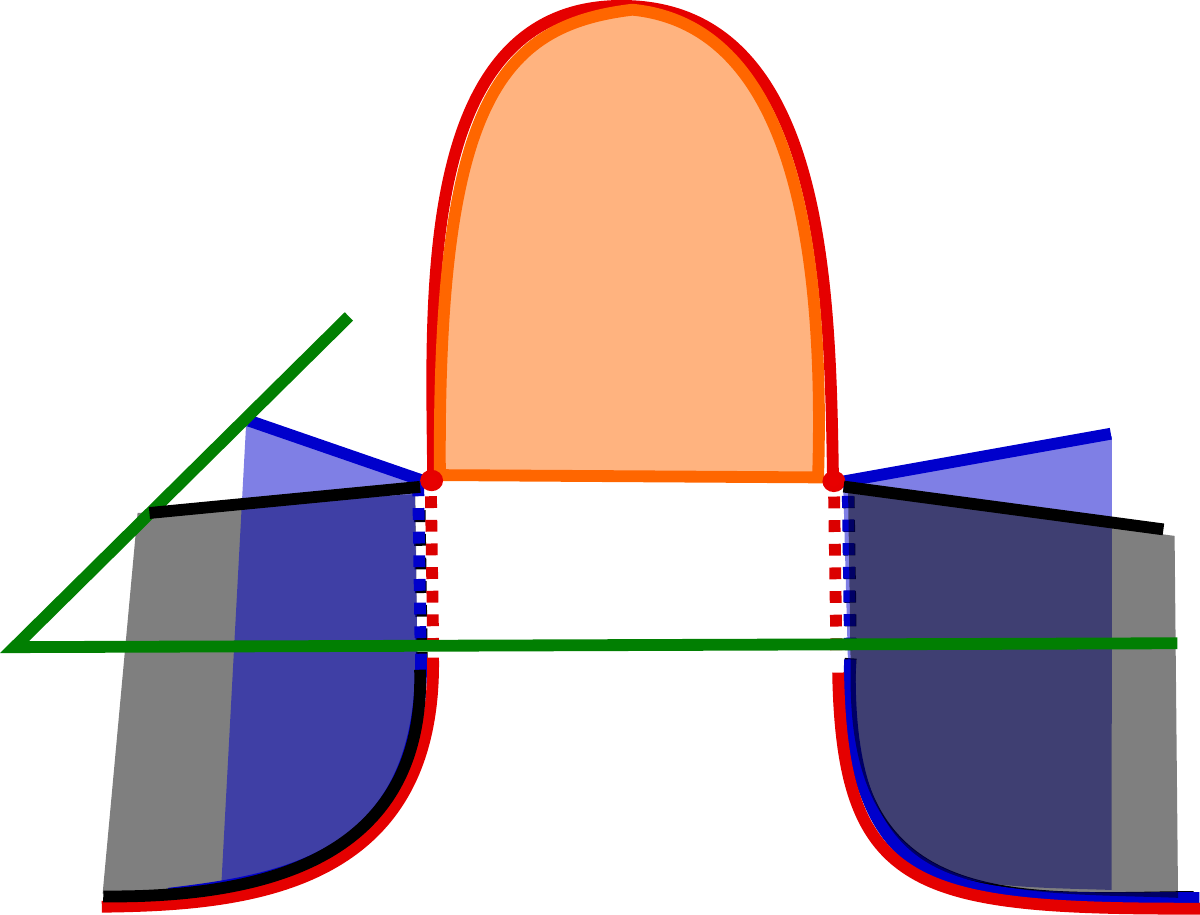}
    \caption{Shown is the configurations of the corners for the finger/Whitney data in the local neighborhood of a saddle disc in general.}
    \label{F: saddlecase2corner}
\end{figure}
\begin{figure}
    \centering
    \labellist                             
        \hair 10pt
        \pinlabel $\gamma_D$ at 340 270
        \pinlabel $\gamma_{w_D}$ at 950 270
        \pinlabel $\gamma_{f_D}$ at 1100 270
        \pinlabel ${w_D}$ at 1650 310
        \pinlabel ${f_D}$ at 1800 310
    \endlabellist
    \includegraphics[width=0.8\linewidth]{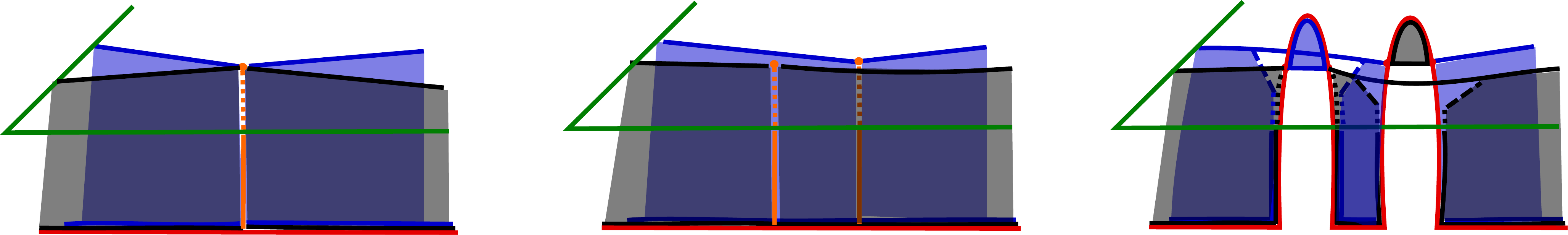}
    \caption{Depicted here is the sequence of extending the corners across the parameters $(s_0,s_1)$ and the effect the splitting of the finger arc $\gamma_D$ has on the data.}
    \label{F: saddlecase2fsplit}
\end{figure}
\begin{remark}[Key Observation]\label{R: saddle key observation}
    The single most important point of the analysis in both Case 1 and Case 2 is that \emph{the corners pick up parallel copies of either $D_f$ or $D_w$.} Therefore, when extend the $\mF$ information from up from $t=t_f$ and $\mW$-information down from $t_w$, any interior or boundary intersection data from a finger disc with $D_w$ or a Whitney disc with $D_f$ is then picked up by the parallel discs for the corresponding corner. The second critical observation is that in between the $D_f$ and $D_w$ in the local model exists a framed disc $D^*$ that is free of all $\mF$ and $\mW$ data that can be used as a switch disc for either $D_f$ or $D_w$.
\end{remark}

\begin{definition}\label{D: saddle}
    Let $H_s(t)$ be a homotopy between paths of embeddings that contains a single saddle point. We suppose that the Cerf graphic for $H_s$ appears as in Figure \ref{F: saddlemove Cgraphic}, so that $H_1(t)$ has one more additional finger/Whitney pair than $H_0(t)$ and that outside the local neighborhood containing the saddle point, $H_s(t)$ is constant for all $(s,t)$. We assume that the saddle point occurs at $(1/3, 1/2)$ and the finger/Whitney crossing at $(2/3, 1/2)$. Furthermore, with the exception of finger/Whitney pair created by the saddle, all other finger/Whitney moves happen at $t=1/4$ and $t=3/4$ respectively for all $s\in [0,1]$. We define the \emph{saddle move} to be the transformation of $(\mR, \mG, \mF, \mW)_{0}$ to $(\mR, \mG, \mF, \mW)_{1}$. 
\end{definition}

\begin{lemma}
    Let $H_s(t)$ be a homotopy between paths of embeddings that contains a single saddle point. We suppose that the Cerf graphic for $H_s$ appears as in Figure \ref{F: saddlemove Cgraphic}, so that $H_1(t)$ has one more additional finger/Whitney pair than $H_0(t)$. Furthermore, with the exception of finger/Whitney pair created by the saddle, all other finger/Whitney moves happen at $t=1/4$ and $t=3/4$ respectively for all $s\in [0,1]$. Then $(\mR, \mG, \mF, \mW)_0$ differs from $(\mR, \mG, \mF, \mW)_1$ by a saddle move, disc/sphere slides, and isotopes.
\end{lemma}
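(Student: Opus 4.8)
\noindent\emph{Proof proposal.}  The plan is to mimic the arguments already given for the birth/death and $x^3$ moves, but now carrying along the extra bookkeeping forced by the splitting of the saddle arc.  First I would fix an $\mF\mW$-germ $\mF\mW(H)$ for $H$ together with an $\mF\mW$-vector field $V$, as in Construction \ref{construction: fw systems from a path}.  By Theorem \ref{T: 2-par ordering} and the hypotheses of the lemma, outside small neighborhoods of the saddle point $(1/3,1/2)$ and the crossing $(2/3,1/2)$ the family $H_s(t)$ is in finger-first position with all finger (resp.\ Whitney) moves other than the saddle pair occurring at $t=1/4$ (resp.\ $t=3/4$).  At $s=0$, flowing the local finger discs up along $V$ and the local Whitney discs down along $-V$ into the common level $\{1/2\}\times X$ produces the finger/Whitney system $(\mR,\mG,\mF,\mW)_0$; the same procedure at $s=1$ gives $(\mR,\mG,\mF,\mW)_1$ (well defined up to isotopy of the disc systems by Remark \ref{R: fw systems from a path}).

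Next I would track the data across the saddle.  Since $\mF\mW(H)$ is defined all along the fold, the saddle disc $D$ extends down to the level $\{t_f\}\times X$ and up to $\{t_w\}\times X$ for every $s$ in the relevant interval, becoming $D_w$ at $t=t_f$ and $D_f$ at $t=t_w$ for $s>s_0$; between them lies the framed disc $D^*$ of Remark \ref{R: saddle key observation}, disjoint from all $\mF$- and $\mW$-data.  For the remaining discs I would invoke Lemma \ref{L: GP for arcs and disc}: the extended finger arcs meet the extended finger discs (and the saddle disc $D$) in finitely many isolated boundary and interior points, occurring at isolated homotopy parameters, and crossing such a parameter changes the system by an $\mR$- or $\mG$-disc slide (boundary crossing) or by a sphere slide (interior crossing), exactly as in the boundary/interior intersection lemmas preceding Definition \ref{D: saddle}.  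Over the window $(s_0,s_1)$ the family is \emph{not} finger-first, so the $\mF$-data can only be pushed to $\{t_f\}\times X$ and the $\mW$-data only to $\{t_w\}\times X$; the Key Observation (Remark \ref{R: saddle key observation}) then does the essential work, because the finger corners $f_c$ (resp.\ the Whitney corners $w_c$) pick up a parallel copy of $D_f$ (resp.\ $D_w$), so every intersection of a finger disc with $D_w$, or of a Whitney disc with $D_f$, that fails to persist across $(s_0,s_1)$ is absorbed into that parallel corner disc once $s>s_1$.  The choice ambiguity in each parallel (two choices, as in Cases 1 and 2 and the $x^3$ discussion) is once again a disc slide over $D_f$ or $D_w$.

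Finally I would conclude: by construction the system obtained at $s>s_1$ from this explicit extension is $(\mR,\mG,\mF,\mW)_0$ with the saddle-created intersection points, the new discs $D_f,D_w$, and the corner parallels adjoined, i.e.\ it is related to $(\mR,\mG,\mF,\mW)_0$ by a saddle move in the sense of Definition \ref{D: saddle} followed by finitely many disc and sphere slides; since any two finger/Whitney systems derived from the same germ at a fixed parameter differ only by disc slides and isotopies (Remark \ref{R: fw systems from a path}), the explicit system and the given $(\mR,\mG,\mF,\mW)_1$ also differ by disc slides and isotopies, and composing proves the lemma.  The hard part will be the bookkeeping over the non-finger-first window $(s_0,s_1)$: one must verify that the only obstruction to recombining the $t=t_f$ and $t=t_w$ data into a single level-$\tfrac12$ system after $s_1$ is precisely the pair of parallel corner discs, and that the intersections of $D$ with the rest of $(\mF,\mW)$ — which cannot be transported across the window — re-enter $\{(s,1/2)\}\times X$ for $s>s_1$ exactly as disc/sphere slides over $D_f$ and $D_w$.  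It is here that the identification of the relevant local move with Quinn's ``splitting move'' \cite{Qu} in Case 2 is needed to see that nothing beyond the listed $\mF\mW$-moves is produced.
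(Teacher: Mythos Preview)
Your argument is not wrong, but it takes a considerably more elaborate route than the paper and in doing so partially misses the point of the lemma.  The paper's proof is short and structural: since Definition \ref{D: saddle} already \emph{defines} the saddle move as the transformation produced by a very particular homotopy (saddle at $(1/3,1/2)$, crossing at $(2/3,1/2)$, $H_s(t)$ constant outside the local neighborhood), all that needs to be shown is that the given $H$ can be deformed to such a homotopy near the saddle.  The paper does this in one stroke by applying Deformation 4 of Lemma \ref{L: local deformations}, which only needs Lemma \ref{L: GP for arcs}; after that deformation, the passage from $(s_0-\epsilon)$ to $(s_0+\epsilon)$ \emph{is} a saddle move by definition.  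The remaining segments are then handled cheaply: from $s=0$ to $s_0-\epsilon$ everything else is already at $t=1/4,\,3/4$, so the systems differ only by isotopy; from $s_0+\epsilon$ to $s=1$ one must reorder the newly created pair, and it is this reordering step (lowering the new finger arc past the old finger discs, raising the new Whitney arc past the old Whitney discs) that produces the disc and sphere slides via Lemma \ref{L: GP for arcs and disc}.

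By contrast, you re-derive the entire saddle analysis (the splitting of $\gamma_D$, the corner parallels, the r\^{o}le of $D^*$) that already went into \emph{formulating} Definition \ref{D: saddle}, and then try to match your explicitly tracked system to the saddle move post hoc.  This works in spirit, but (i) you never localize $H$ to the form required by Definition \ref{D: saddle}, so your appeal to that definition is not on firm footing without something like Deformation 4; and (ii) your invocation of Remark \ref{R: fw systems from a path} is slightly off: that remark gives only isotopies, not disc slides --- the disc/sphere slides arise specifically from the reordering after the saddle, which you do not isolate as a separate step.  The paper's decomposition into ``localize $\Rightarrow$ saddle move by definition'' plus ``reorder $\Rightarrow$ disc/sphere slides'' is both shorter and cleaner, and keeps the hard analysis where it belongs, namely in the discussion preceding Definition \ref{D: saddle} rather than in this lemma.
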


\begin{proof}
    The proof of this lemma relies on being able to reduce everything down to the local picture. Near the saddle point, we have the local picture and for $(s,t)$ sufficiently near the saddle point, $\mR_{s,t}$ can be fixed so that it is constant outside of the local model. We can within this local model apply Deformation 4 so that the Cerf graphic changes as shown in Figure \ref{F: saddlemove Cgraphic}. This uses only Lemma \ref{L: GP for arcs}. Afterwards, $H_s(t)$ will satisfy the definition of a saddle move. Therefore $(\mR, \mG, \mF, \mW)_{s_0-\epsilon}$ and $(\mR, \mG, \mF, \mW)_{s_0+\epsilon}$ will differ by a saddle move. Finally to go from $(\mR, \mG, \mF, \mW)_{0}$ to $(\mR, \mG, \mF, \mW)_{s_0-\epsilon}$ is just by an isotopy of the data (all finger/Whitney moves occur at $t= 1/4$ and $t=3/4$ and hence only differences in extending the local data up/down manifest). To go from $(\mR, \mG, \mF, \mW)_{s_0+\epsilon}$ to $(\mR, \mG, \mF, \mW)_{1}$, we needed to reorder after the local deformation. This could result in either disc or sphere slides. Hence $(\mR, \mG, \mF, \mW)_{s_0+\epsilon}$ differs from $(\mR, \mG, \mF, \mW)_{1}$ by disc/sphere slides and by isotopies of the data.
\end{proof}

\begin{proof}[Proof of Theorem \ref{T: fw system moves}]
    Starting with an ordered homotopy $H_s(t)$, we break the homotopy parameter $[0,1]$ into smaller intervals along parameter values $s_i\in [0,1]$ such that between $s_i$ and $s_{i+1}$, $H_s(t)$ has a finger-first ordering, or contains a single birth/death point, cusp point, or a saddle interval and no other phenomena. Each of the previous lemmas explains how the finger/Whitney system changes from $(\mR,\mG,\mF, \mW)_{s_i}$ to $(\mR,\mG,\mF, \mW)_{s_{i+1}}$ and these are exactly the moves in the theorem.
\end{proof}
\subsubsection{EA paths}
We finish this section by returning to the setting of $\mR = \sqcup S^2$, $\mG = \sqcup S^2\times \{r\}$ and $X = \#^k S^2\times S^2$ and give a proof that every relative homotopy class $[\alpha]\in (\Emb(\mR, X), \LB)$ has an EA representative.

\begin{proposition}\label{P: EA paths}
    Let $\mR = \sqcup_{i=1}^k S^2_i$ and $X = \#^k S^2\times S^2$, and $\mG = \sqcup_{i=1}^k(S^2\times \{r_0\})_i$.  Every class $[\alpha]\in \pi_1(\Emb(\mR, X), \LB)$ has a representative in EA.
\end{proposition}
In \S3 and \S2, we show that a given finger-Whitney system $(\mF, \mW)$ can be modified to be in EA using switching and disc slides. Here we show that these two operations are realizable by homotopies of the path $\alpha$ in $(\Emb,\LB)$.
 
\begin{lemma}\label{L: Factoring homotopy}
    Let $(\mF, \mW)$ be a finger-Whitney system and let $\alpha$ be the associated path of embeddings with endpoints in $\LB$. Let $\mW^*$ be a set of switch discs and $\alpha'$ the path of embeddings associated with $(\mF, \mW^*)$. Then $[\alpha] = [\alpha']$ in $\pi_1(\Emb(\mR, X), \LB)$.
\end{lemma}
\begin{proof}
    Here we use the process of factoring, an operation introduced by the first author in \cite{Ga3}, to split the path $\alpha$ into two paths. Since $\mW^*$ is a complete set of Whitney discs, we can factor the path $\alpha$ into the concatenation of 2 paths $\alpha'*\alpha''$ where $\alpha'$ has the system $(\mF, \mW^*)$ while $\alpha''$ comes from the system $(\mW^*, \mW)$. It remains to show that the system $(\mW^*, \mW)$ yields a path in $\LB$.

    By definition, the discs in $\mW^*$ agree with the discs in $\mW$ except for a subset of uncrossed discs. These uncrossed discs altogether form a collection that satisfies the Quinn's embedded arc condition with disjoint interiors. We may use the argument of Quinn \cite[Corollary to 4.5]{Qu} and perform the Whitney move along the discs in $\mW$ earlier in the time parameter. This deforms $\alpha''$ to a path with a finger-Whitney system in which the finger disks are equal to the Whitney discs and can be canceled in the same way. Thus, $\alpha''$ can be deformed to a path in $LB$. The original path $\alpha$ is then homotopic in $\Emb(\mR, X)$ to the path $\alpha'*\alpha''$ where $\alpha''$ is a path entirely in $\LB$.
\end{proof}
\begin{proof}[Proof of Proposition \ref{P: EA paths}]
Given a representative $\alpha'\in [\alpha]$ and associated finger-Whitney system $(\mF, \mW)$ we get a system in EA by first performing a switching, then a sequence of disc slides as per Lemma~\ref{ia to ea}. By Lemma \ref{L: Factoring homotopy}, the switching can be realized by a homotopy. Disc slides are an $\mF\mW$-move and as such can be realized by a homotopy by performing an isotopy of the disc that is sliding over in the level set below the disc that is being slid over (see Figure \ref{F: Bdyint_implies_discslides}).
\end{proof}

\section{Proof of homotopy invariance}\label{homotopy invariance}

The goal of this section is to prove Theorem \ref{T:mainthm}, which we restate here for the reader's convenience:

\noindent\textbf{Theorem \ref{T:mainthm}.}  \emph{For each $k$ there exists a surjective homomorphism 
    \[ \mD : \pi_1(\Emb(\sqcup^k S^2, \#^k (S^2 \times S^2)), \mRs) \to \BZ_2 \]
    with $\pi_1(\LB,\mRs)$ contained in the kernel of $\mD$. Furthermore, there exists a $\beta \in \pi_1(\Emb( S^2, S^2 \times S^2)$ such that for all $k\geq 1$, after taking the natural inclusion into $\pi_1(\Emb(\sqcup^{k} S^2, \#^k (S^2 \times S^2))$, $\mD(\beta) = 1$.} 
    
In building toward Theorem \ref{T:mainthm}, the previous sections defined $\mD$ (Definition \ref{cross term final}) on a given finger/Whitney system and showed that it remains invariant under certain transformations of the systems: Disk slides, switching, clasping, special sum square moves (SSS-moves), boundary twisting and untwisted isotopy.  These transformations were used to show that $\mD$ remained invariant under the choices made when defining $\mD$. Then in \S11, we proved quite generally that for an oriented surface $\mR$ in a 4-manifold $X$ and a simply connected (but possibly not path connected) orientable surface $\mG$, any two finger first representatives $\alpha_1, \alpha_2$ for a given homotopy class $[\alpha]\in \pi_1\Emb(\mR, X^4)$, the associated finger/Whitney systems for these loops transformed by $\mF\mW$-moves: Isotopies of the finger/Whitney systems, Disc slides, Sphere slides, the Birth/Death move, the $x^3$-ed move, and the Saddle move. Applying the results of \S11 to $\mR = \sqcup_{i=1}^k S^2_i$, $X= \#^kS^2\times S^2$, and $\mG = \sqcup_{i=1}^k(S^2\times \{r\})_i$, what is left to show to finish proving Theorem \ref{T:mainthm} is two--fold: that $\mD$ is well defined on a given homotopy class, and that it is a group homomorphism. We will show that $\mD$ is well defined on the homotopy class of the loop first, then we will show it defines a group homomorphism.

\begin{notation}
We will denote by $(\mR, \mG, \mF, \mW)_s$ the 1-parameter family of finger/Whitney systems $(\mR_s, \mG, \mF_s, \mW_s)$ where restricting to any of $\mR_s$, $\mF_s$, $\mW_s$ gives a smooth isotopy of the object. When $\mR$ does not change, we will suppress both $\mR$ and $\mG$ from the notaion and simply write $(\mF, \mW)_s$. The same will be done for all invariants: $\mD(\mR, \mG, \mF, \mW)$ for a given finger/Whitney system or $\mD(\mF, \mW)$ when $\mR$ and $\mG$ are understood from context.
\end{notation}

The first case to consider is when a finger/Whitney system $(\mR, \mG, \mF, \mW)_s$ changes by an isotopy.

\begin{definition}
    Given two embeddings $\mR$ and $\mG$ of complementary dimension. Let $$\mE_{\pitchfork} = \{\mR \in \Emb_{\mR^{std}}(\sqcup^k_{i=1} S^2, \#_k S^2\times S^2): ~\mR\pitchfork \mG \}$$ denote the subspace of $\Emb(\mR, X)$ that consists of all the embeddings $\mR$ which are transverse to $\mG$. 
\end{definition}

\begin{lemma}\label{L: invariance by isotopy.}
    Let $\mR_s$ be a path of embeddings in $\mE_{\pitchfork}$ for $0\leq s\leq 1$ and let $(\mR, \mG, \mF, \mW)_s$ be a 1-parameter family of finger/Whitney systems for $\mR_s$, with both $(\mR_0,\mG, \mF_0, \mW_0)$ and $(\mR_1,\mG, \mF_1, \mW_1)$ untwisted. Then $\mD(\mR_0,\mG, \mF_0, \mW_0) = \mD(\mR_1,\mG, \mF_1, \mW_1)$.
\end{lemma}

Since the notion of ``untwisted'' depends on framings of the normal bundles of $\mR_s$ and $\mG$, the implication here is that these framings vary continuously with $s$.

Note that we use the parameter $s$ for this path because we are thinking of the $1$--parameter family of finger-Whitney systems as coming from a $1$--parameter family $\mR_{s,t}$ of paths of red embeddings, with the paths of red embeddings parameterized by the parameter $t$. From this perspective, our $\mR_s$ is really $\mR_{s,1/2}$, but from a narrower perspective this lemma is just about finger-Whitney systems and not intrinsically about paths of red embeddings so we suppress the $t$.

\begin{proof}

We will relate $(\mR_0,\mG, \mF_0, \mW_0)$ to $(\mR_1,\mG, \mF_1, \mW_1)$ through a sequence of intermediary {\em untwisted} finger-Whitney systems connected by $1$--parameter families as follows:
\begin{enumerate}
    \item First we will go from $(\mR_0,\mG, \mF_0, \mW_0)$ to $(\mR_1,\mG, \mF'_0, \mW'_0)$ via a $1$--parameter family $\phi_s(\mR_0,\mG, \mF_0, \mW_0)$, where $\phi_s: \sqcup^k_{i=1} S^2 \to \sqcup^k_{i=1} S^2$ is an ambient isotopy with $\phi_0 = \id$ and $\phi_1(\mR_0) = \mR_1$, fixing $\mG$ setwise. Thus $\mF'_0 = \phi_1(\mF_0)$ and $\mW'_0 = \phi_1(\mW_0)$. Since all the data involved in computing $\mD$ is moving by an ambient isotopy it is obvious that $\mD(\mR_0,\mG, \mF_0, \mW_0) = \mD(\mR_1,\mG, \mF'_0, \mW'_0)$.
    \item Next we will go from $(\mR_1,\mG, \mF'_0, \mW'_0)$ to $(\mR_1,\mG, \mF'_1 = \mF''_0, \mW'_0)$ via a $1$--parameter family $(\mR_1,\mG, \mF'_s, \mW'_0)$, so that only the finger disks $\mF'_s$ change with $s$. Furthermore, this isotopy of finger disks is supported in a neighborhood of $\mR_1 \cup \mG$ and in this neighborhood is precisely a sequence of boundary twists, i.e. a {\em reuntwisting}. Thus, by Corollary~9.4, we know that $\mD(\mR_1,\mG, \mF'_0, \mW'_0) = \mD(\mR_1,\mG, \mF''_0, \mW'_0)$.
    \item Next we will go from $(\mR_1,\mG, \mF''_0, \mW'_0)$ to $(\mR_1,\mG, \mF''_1 = \mF_1, \mW'_0)$ via a $1$--parameter family $(\mR_1,\mG, \mF''_s, \mW'_0)$, so that again only the finger disks $\mF''_s$ change with $s$. This time, $\mF''_s$ remains finger germed for all $s$, so that this is an {\em untwisted isotopy}. Again, as stated in Corollary~9.4, untwisted isotopies do not change $\mD$ so $\mD(\mR_1,\mG, \mF''_0, \mW'_0) = \mD(\mR_1,\mG, \mF_1, \mW'_0)$.
    \item Finally we repeat the last two steps with Whitney disks instead of finger disks: We go from $(\mR_1,\mG, \mF_1, \mW'_0)$ to $(\mR_1,\mG, \mF_1, \mW'_1=\mW''_0)$ via a family $(\mR_1,\mG, \mF_1, \mW'_s)$ via a Whitney disk reuntwisting, and then we go from $(\mR_1,\mG, \mF_1, \mW''_0)$ to $(\mR_1,\mG, \mF_1, \mW''_1 = \mW_1)$ via $(\mR_1,\mG, \mF_1, \mW''_s)$ via an untwisted isotopy of the Whitney disks, and again neither of these changes $\mD$.
\end{enumerate}
Once  we explain how to construct these intermediate $1$--parameter families we will thus have succeeded in showing that $\mD(\mR_0,\mG, \mF_0, \mW_0) = \mD(\mR_1,\mG, \mF_1, \mW_1)$.

To prepare for the first step, we need to produce the ambient isotopy $\phi_s$ such that $\phi_s(\mR_0) = \mR_s$ and $\phi_s(\mG) = \mG$. Standard isotopy extension gives us the first condition but not necessarily the second, but since the proof of the isotopy extension theorem involves flowing along a vector field, we just need a vector field on $[0,1] \times \sqcup^k_{i=1} S^2$ which tracks the motion of $\mR_s$ and stays tangent to $[0,1] \times \mG$. This is essentially the same as an $\mF\mW$--vector field, and can easily be produced using the same ideas as in the proof of existence of $\mF\mW$--vector fields.

Equipped with the ambient isotopy $\phi_s$, we now have the $1$--parameter family $\phi_s(\mR_0,\mG, \mF_0, \mW_0)$ and simply declare $\mF'_0$ be to $\phi_1(\mF_0)$ and $\mW'_0$ to be $\phi_1(\mW_0)$. This completes the first step.

To prepare for the remaining steps, note that the concatenation of the reverse of the above family, i.e. $\phi_{1-s}(\mR_0,\mG, \mF_0, \mW_0)$, with the original $1$--parameter family $(\mR_s,\mG, \mF_s, \mW_s)$ is a $1$--parameter family going from $(\mR_1,\mG, \mF'_0, \mW'_0)$ to $(\mR_1,\mG, \mF_1, \mW_1)$ in which the red embedding starts at $\mR_1$, goes to $\mR_0$ via $\phi_{1-s}(\mR_0)$ and then returns to $\mR_1$ via the original family $\mR_s$. But $\phi_{1-s}(\mR_0) = \mR_{1-s}$ so in fact the path that the red embedding takes from $\mR_1$ back to $\mR_1$ is homotopic to the constant path at $\mR_1$. Thus, by a suitable application of parameterized isotopy extension (again using flow along an $\mF\mW$--vector field to keep $\mG$ fixed setwise) we can homotope our concatenated $1$--parameter family to obtain a $1$--parameter family $(\mR_1,\mG,\tilde{\mF}_s,\tilde{\mW}_s)$ going from $(\mR_1,\mG, \mF'_0, \mW'_0)$ to $(\mR_1,\mG, \mF_1, \mW_1)$ without moving $\mR_1$ or $\mG$. Furthermore, since the finger and Whitney disk isotopies are completely independent, we can easily arrange that all of the finger disk isotopies happen first and then all of the Whitney disk isotopies happen. Thus to justify steps (2) and (3), we suppose we have a $1$--parameter family $(\mR_1,\mG,\tilde{\mF}_s,\mW'_0)$ where $\tilde{\mF}_s$ is an isotopy of the finger disks going from $\tilde{\mF}_0 = \mF'_0$ to $\tilde{\mF}_1 = \mF_1$. We want to homotope this family to one in which first we see boundary twists and then we see an untwisted isotopy. Once we have explained how to do this, the exact same argument applies to the Whitney disks to justify step (4).

To simplify notation now, we ignore the Whitney disks and drop unnecessary subscripts and decorations and consider the following situation: $\mR$ and $\mG$ are fixed and we have a $1$--parameter family $\mF_s$ of finger disks which are untwisted when $s=0$ and $s=1$. We can clearly deal with each finger disk independently, so consider just one finger disk $f_s$, with boundary arcs $\partial_R f_s = f_s \cap \mR$ and $\partial_G f_s = f_s \cap \mG$. With respect to the fixed, given trivialization of the normal bundle of $\mR$, the germ of $f_s$ along $\partial_R f_s$ gives an $s$--parameterized family of maps $\gamma_s: [0,1] \to S^1$ with $\gamma_0$ and $\gamma_1$ being the constant map sending $[0,1]$ to the basepoint $0 \in S^1 = \mathbb{R}/2\pi \mathbb{Z}$. This $s$--parameterized family is thus homotopic to one for which $\gamma_s$ is constant for each $s$, i.e. $\gamma_s([0,1]) = \theta(s) \in \mathbb{R}/2\pi \mathbb{Z}$, and furthermore we can assume $\theta(s)$ is a linear function $\theta(s) = N_R 2\pi s$ where for some red reuntwisting number $N_R$. Similarly we can record a green reuntwisting number $N_G$. Note that if $N_R=0$ and $N_G=0$ then the $1$--parameter family $f_s$ can be isotoped to a family which remains untwisted for all $s$, using a parameterized isotopy extension argument similar to that in the preceding paragraph. Thus if we form the concatenation of three $1$--parameter families, the first doing $N_R$ standard red twists followed by $N_G$ standard green twists, the second reversing the first, and the third our given family $f_s$, we see that the concatenation of the second and the third has $0$ reuntwisting numbers for red and green and can be isotoped to an untwisted isotopy. In the end we get a new family $f_s$ which first does standard red followed by standard green twists and then does an untwisted isotopy. Doing this for all the finger disks completes the proof.

\end{proof}

\subsection{Finger-first representatives.}
Suppose that we are given an arbitrary representative $\alpha$ of a class $[\alpha]\in \pi_1(\Emb(\sqcup^k_{i=1} S^2, \#_k S^2\times S^2), \mR^{std})$. In Section \ref{S: paths ff position} we gave a recount of Quinn's procedure for producing a homotopy from $\alpha$ to a path $\alpha'$ that is in finger-first position relative to $\alpha_0$ and $\alpha_1$. Now suppose that we have an arbitrary relative homotopy $H:[0,1]^2\rightarrow \Emb(\sqcup^k_{i=1} S^2, \#_k S^2\times S^2)$ with $H_0(t)= \alpha(t)$ and $H_1(t) = \alpha''(t)$ such that $\alpha''(t)$ is in finger-first position as well. By Theorem \ref{T: 2-par ordering}, we may assume that $\widetilde{H}$ is an ordered homotopy since both $\alpha' = \widetilde{H}_0$ and $\alpha'' = \widetilde{H}_1$ are finger-first paths. We let $(\mR,\mG,\mF,\mW)_i$ denote the finger/Whitney system for $\widetilde{H}_i(t)$ for $i = 0,1$. To prove that $\mD$ is well defined independent of the finger-first representative chosen, it suffices to show that $\mD((\mR,\mG,\mF,\mW)_i) = \mD((\mR,\mG,\mF,\mW)_i)$. By Theorem \ref{T: fw system moves}, we know that $(\mR,\mG,\mF,\mW)_0$ differs from $(\mR,\mG,\mF,\mW)_1$ by a finite sequence $\mF\mW$-moves. In between the $\mF\mW$ moves are finger-Whitney systems $(\mR,\mG,\mF,\mW)_s$ such that $\mR_{s,1/2}$ varies in $\mE_{\pitchfork}$. It follows from Lemma \ref{L: invariance by isotopy.} that we only need to show that the $\mF\mW$-moves do not change $\mD$. By Corollary~\ref{disc slide invariance}, it remains to show $\mD$ is invariant under Sphere slides, Birth/Death, $x^3$, and saddle moves.
\subsubsection{Invariance under the Birth/Death move.}

Recall the birth move transforms a given system $(\mR, \mG, \mF, \mW)$ to a system $(\mR', \mG, \mF', \mW)$ as follows: First, $\mR'$ is obtained from $\mR$ by performing a finger move along a local arc $\gamma$, creating two additional intersection points between $\mR_i$ and $\mG_i$, and with a Whitney disc $d$ that undoes this finger move. The disc systems $\mF$ and $\mW$ differ from $\mF'$ and $\mW'$ respectively by adding the same disc $d$ to both; there is a copy of $d$ in $\mF'$ and a copy in $\mW'$ (see Lemma~\ref{L: b/d move}). Let us refer to the added discs as $f_d$ and $w_d$.

\begin{definition}
    Let $(\mR', \mG, \mF', \mW')$ differ from $(\mR, \mG, \mF, \mW)$ by a birth move.  Given a set of switch discs $\mW^*$ for $(\mR, \mG, \mF, \mW)$, we say $(\mR', \mG, \mF', \mW')$ has a compatible set of switch discs ${\mW'}^*$ if there exists a set of switch discs ${\mW'}^*$ for $(\mR', \mG, \mF', \mW')$ such that, as framed embedded Whitney discs, $\mW^* \subset {\mW'}^*$.
\end{definition}

\begin{definition}\label{D: cross_uncross birthmove}
    Let $\mR_t$ denote a path of embeddings. A cross birth move is a birth move between $R_i$ and $G_j$ for some $i \neq j$. An uncrossed birth move is one that involves $R_i$ and $G_i$ for some $1\leq i\leq k$. A cross (uncrossed) death move is a cross (uncrossed) birth move in reverse.
\end{definition}

\begin{lemma}\label{L: compatibleswitch}
    Let $(\mR, \mG, \mF, \mW)$ be a system with $\mW^*$ a set of switch discs for $(\mR, \mG, \mF, \mW)$. Suppose that $(\mR', \mG, \mF', \mW')$ is obtained from $(\mR, \mG, \mF, \mW)$ by a birth move. Then there is a compatible set of switch discs ${\mW'}^*$ for $(\mR', \mG, \mF', \mW')$
\end{lemma}
\begin{proof}
    First, let us assume that $\mF$ is finger germed and $\mW$ is Whitney germed. We break the proof up into two cases; cross and uncrossed birth moves. 

    Suppose that $(\mR', \mG, \mF', \mW')$ is obtained by a cross birth move. Let $\gamma$ denote the guiding arc for the birth move. Since $\gamma$ is an arc, we can assume that it will be disjoint from $\mF\cup \mW\cup \mW^*$ after performing a small isotopy of the discs if needed. In fact, every disc in $\mF\cup \mW\cup \mW^*$ can be assumed to be contained in the complement of a local neighborhood containing the birth move. Therefore we may perform the birth move, keeping all these other discs unchanged. The discs $\mW^*$ almost form a switch system for $\mW'$, except we need to add $w_d$ to the system since this is a crossed birth. Thus, ${\mW'}^* = \mW^* \cup \{w_d\}$ is a compatible switch system.

    We now suppose that $(\mR', \mG, \mF', \mW')$ is obtained by an uncrossed birth move. Suppose that $\mR'$ is obtained from $\mR$ by performing a finger move between $R_i$ and $G_i$ along the local curve $\gamma$. Since $\mW^*$ is a set of switch discs, we know that after replacing $\mW$ with $\mW^*$, the point $p_0\in R_i\cap G_i$ will only be paired off by $f_{ii,1}\in \mF$. That is $f_{ii,1}$ is the first finger disc in the induced IA ordering. We choose a small neighborhood $U$ of $p_0$ so that the corner of $f_{ii,1}$ is the only disc in $U$. In this neighborhood, we choose an arc $\gamma'$ so that performing a birth move along $\gamma'$ results in a new system $(\mR'', \mG, \mF'', \mW'')$ where the local change to $\mR$ and the finger and Whitney discs added to $\mF$ and $\mW$ is shown in Figure \ref{F: localstdbirth}. Let us denote these discs created by the birth move along $\gamma'$ by  the discs $f_{loc}$ and $w_{loc}$ . After performing the local birth along $\gamma'$ there is an obvious disc $w_{loc}^*$ (shown in Figure \ref{F: local birth}) that when added to $\mW^*$, gives a compatible set of switch discs for $(\mR'', \mG, \mF'', \mW'')$.

    The two arcs $\gamma'$ and $\gamma$ are isotopic by an isotopy $\gamma_z$, $z\in [0,1]$, keeping the end points on $R_i\cup G_i$ and interior in $X\setminus \mR\cup \mG$. The isotopy $\gamma_z$ then induces an isotopy $\mR_z$ between $\mR_0 = \mR'$ and $\mR_1 = \mR''$ by performing the finger move along $\gamma_z$ for every $z$, then isotoping the solid finger just as $\gamma_z$. Moreover, since $R_i\setminus \mW$ and $G_i\setminus \mW$ are both connected and $X\setminus (\mR\cup \mG\cup\mW)$ is simply connected, we can choose this isotopy so that it remains disjoint from $\mW$. Furthermore, as $\mW^*\cup \mW$ is a switch system, the discs in $\mW^*$ are either the same as those in $\mW$ or only intersect $\mW$ in $\mR'\cap\mG$. For the discs in $\mW^*$ that differ from the discs in $\mW$, they have dual spheres disjoint from the discs in $\mW$. We can use these spheres to modify the isotopy of $\gamma_z$ if necessary to make it disjoint from the switch system $\mW^*$. Therefore, we can assume that the isotopy remains completely disjoint from $\mW^*$ as well. The isotopy $\gamma_z$ might not remain disjoint from the discs in $\mF$. Consequentially, we can extend the discs $\mW$ and $\mW^*$ to a constant 1-parameter family of Whitney discs for $\mR_z$, but not $\mF$.
    
    To extend $\mF$, we must include disc slides and sphere slides. These occur at moments during the isotopy of $\gamma_z$ when the arc intersects one of the disc in $\mF$. Pulling the slides back to $\mR'$, we find that the discs in $\mF$ will move by a isotopy until a slide occurs.  The slide is always of the type of the disc $f\in \mF$ slide over $f_{d}$. After all the slides occur, we are left with a new configuration $\mF_{\text{after slides}}$ for $\mR'$. This system of discs can be extended to a 1-parameter family, thus giving a 1-parameter family of finger/Whitney systems.

    Using the constructed 1-parameter family of finger/Whitney systems, we get a switch system for $(\mR', \mG, \mF_{\text{after slides}}, \mW')$ by pulling back $\mW^* \cup w_{loc}^*$ along the 1-parameter family. Let $w_{d}^*$ the image of $w_{loc}^*$ after pulling back. To get the compatible switch system for $(\mR', \mG, \mF', \mW')$ we just reverse all the slides and isotopy of $\mF_{\text{after slides}}$. Since this does not change the discs $\mW'$ or $\mW^* \cup \{w_d^*\}$, we get a compatible switch system. 
\end{proof}

\begin{figure}
    \centering
    \labellist                             
            \hair 10pt
            \pinlabel $f_{ii,1}$ at 50 400
            \pinlabel $f_{ii,1}$ at 770 400
            \pinlabel {$f_{loc} \cup w_{loc}$} at 1210 370
            \pinlabel {$w^*_{loc}$} at 1080 200
            \pinlabel {$\gamma'$} at 460 100
        \endlabellist
    \includegraphics[width=\linewidth]{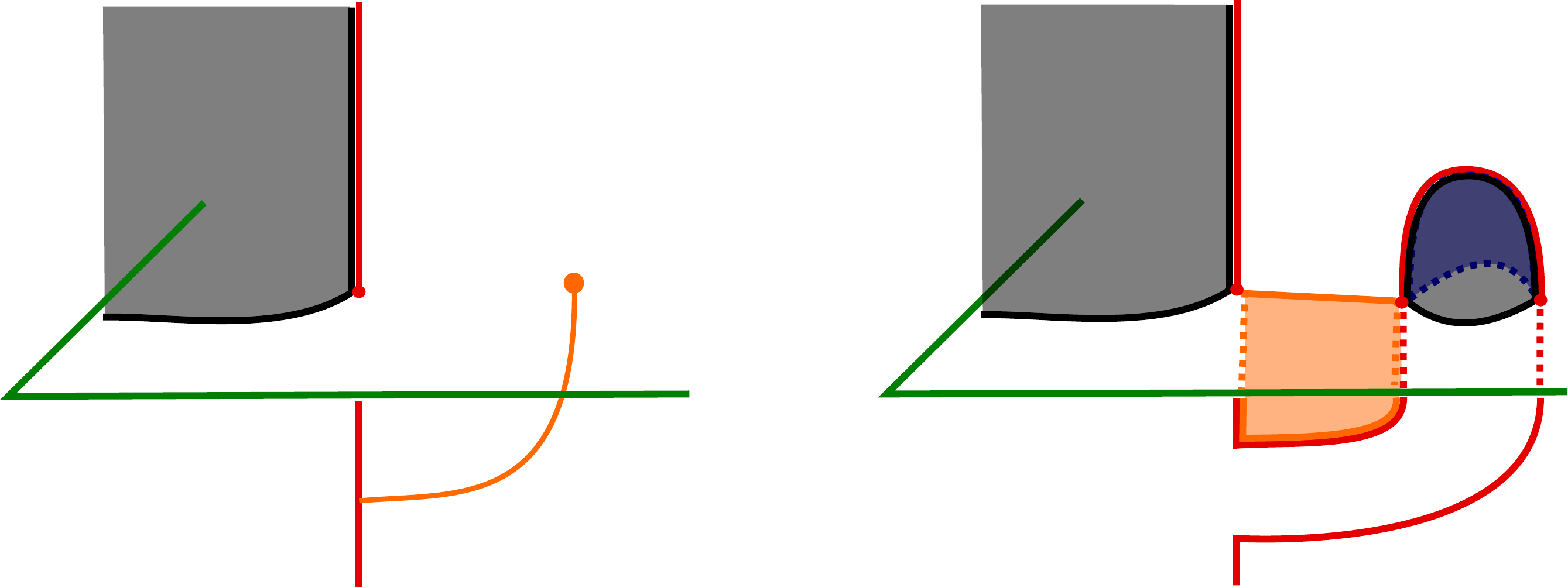}
    \caption{ Shown here is an uncrossed birth move localized to a corner of a Finger disc. On the left, we have the guiding arc $\gamma'$ which is used to perform a finger move of $R_i$ through $G_i$. On the left, we have the result, together with two copies of the the Whitney disc that reverses the finger move. These discs are labeled $f_{loc}$ and $w_{loc}$. Also shown on the right is a local ``switch disc'' $w^*_{loc}$.} \label{F: localstdbirth}
\end{figure}

\begin{remark}\label{R: localstdbirth}
    In constructing the compatible set of switch discs for the uncrossed birth, we used a local model (Figure \ref{F: localstdbirth}) for generating a compatible switch system $\mW^*\cup\{w_{loc}^*\}$, then using sphere and discs slides of the finger discs, together with an isotopy of the data to get the compatible system. Therefore, by reversing this process, we can assume that our birth occurs in the local model, at the expense of finger disc and sphere slides and an isotopy. 
\end{remark}

\begin{lemma}\label{L: inv under cross bd}
    Let $(\mR', \mG, \mF', \mW')$ be obtained from $(\mR, \mG, \mF, \mW)$ by a cross birth move. Then $\mD(\mR, \mG, \mF, \mW) = \mD(\mR', \mG, \mF', \mW')$
\end{lemma}
\begin{proof}
    First, we compute $c_{i,j}(\mF, \mW)$. In the definition of $c_{i,j}$ one must 2-color $\mR_i$ and $\mG_j$ depending on the boundary configurations of the cross discs. If the birth move creates $\mR_i$-$\mG_j$ intersections, then it may alter the two coloring. This can happen when the birth adds the two intersections in a white region. The birth move will add the cycles $(f_d\cup w_d)\cap G_j$ and $(f_d\cup w_d)\cap R_i$ to the region where the two points were added. If the region in $G_j$ or $R_i$ was a white region, then $B_{j,i}^\mG$ is modified to include the region bounded by the cycle $(f_d\cup w_d)\cap G_i$ and similarly $B_{i,j}^\mR$. If either region was a black region, then the black region does not change. This is because the cycles $(f_d\cup w_d)\cap G_j$ and $(f_d\cup w_d)\cap R_i$ always have a 0 winding number. In fact, the region bounded by both cycles never encloses $\mR\cap \mG$ intersections or contains the point at infinity because $f_d = w_d$. From this fact and the definition we deduce that the quantity $c_{i,j}$ remains unchanged by a birth move. 

    Next, we choose a compatible switch system $\mW^*$ for $(\mR, \mG, \mF, \mW)$ so that $(\mF, \mW^*)\in IA$. The previous argument then shows that for all $i \neq j$, $c_{i,j}$ is unchanged. Finally, since the discs $f_d$ and $w_d$ are completely disjoint from all other discs in $\mF'$ and $\mW'$, the quantities $CU_{i,j}$ remain unchanged with the addition of $f_d$ and $w_d$. Therefore, $\mC$ is invariant. Finally, we note that the $(i,i)$ discs for $(\mF, \mW)$ are exactly the same discs for $(\mF',\mW')$. It follows from the Multi eye parity lemma \ref{parity} that $\I$ remains invariant. This completes the proof of the claim.
\end{proof}

\begin{lemma}\label{L: inv under uncross bd}
    Let $(\mR, \mG, \mF, \mW)$ and $(\mR', \mG, \mF', \mW')$ be two systems that differ by an uncrossed birth move. Then $\mD(\mR, \mG, \mF, \mW) = \mD(\mR', \mG, \mF', \mW')$.
\end{lemma}
\begin{proof}
    Let $(\mR', \mG, \mF', \mW')$ be obtained from $(\mR, \mG, \mF, \mW)$ by an uncrossed birth. Suppose this birth happens between $R_u$ and $G_u$ for some $1\leq u\leq k$. Choose an ordering, a switch system $\mW^*$ and let ${\mW'}^*$ be the compatible switch system given by Lemma \ref{L: compatibleswitch}. By invariance under switching (Proposition~\ref{sum invariance}), we know that $\mD(\mF, \mW) = \mD(\mF, \mW^*)$ and $\mD(\mF', \mW') = \mD(\mF', {\mW'}^*)$. So it suffices to show $\mD(\mF, \mW^*) = \mD(\mF', {\mW'}^*)$.
    
    First, it follows from the definition that $c_{i,j}(\mF, \mW^*) = c_{i,j}(\mF',{\mW'}^*)$ for all $i,j$, since the birth added uncrossed discs and switching leaves the cross discs unchanged. Next, we note that by our choice of switch disc $w^*_{d}$, we have that $f_d = f_{uu,1}$ in the IA ordering for $\mF'_{uu}, {\mW'}^*_{uu})$. Since $f_{uu,1}$ is disjoint from all the discs in ${\mW'}^*$ we have that $a'_1= |(\mW'_{i,j}\cup \mW'_{j,i})\cap f_{uu,1}\cap G_u| = 0$. Furthermore, $\mF' = \mF$ and $\mW^* = {\mW'}^*$ for all discs other than $f_{uu,1}$ and $w_{uu,1}=w^*_{d}$. Therefore we have
    \begin{align*}
        CU^u_{i,j}(\mF', {\mW'}^*) &= \sum_{r\leq s}a'_r b'_s\\
        &=\sum_{r\leq s}a_r b_s\\
        &= CU^u_{i,j}(\mF, \mW^*).
    \end{align*}
    The above equality holds for all $u' \neq u$ trivially. Therefore $CU_{i,j}(\mF, \mW^*) = CU_{i,j}(\mF', {\mW'}^*)$ for all $i,j$. Thus $\mC(\mF, \mW^*) = \mC(\mF', {\mW'}^*)$.

    Now by Lemma~\ref{cu vs cij}, we have that $C$ does not change. Furthermore, as the finger sphere for $f_{uu,1}$ is null homotopic and no cross disc pairs off either intersection point paired by $f_{uu,1}$, $c_{i,j}$ is not changed by sphere slides over $f_{uu,1}$. Therefore we can perform all the finger disc slides and sphere slides used in the construction of ${\mW'}^*$ and the isotopy of discs to standardize the birth and this process does not change $C$. Thus we have $C(\mF, \mW^*) = C(\mF', {\mW'}^*) = C(\mF \cup \{f_{loc}\}, \mW^*\cup \{w^*_{loc}\})$.

    It is clear that $\hat{\I}(\mF', \mW^*) = \hat{\I}(\mF'\cup \{f_{loc}\}, \mW^*\cup \{w^*_{loc}\})$. Furthermore, this quantity is not changed by the isotopy. Since $f_{loc}$ has no boundary intersections (on either $\mR$ or $\mG$), disc slides over $f_{loc}$ does not change $\hat{\I}$. Finally, since $\hat{\I}$ only counts $|f_i\cap w_j|$ for $i\leq j$, $w^*_{loc} = w_{1}$ and $f_1 = f_{loc}$ in the IA ordering, the sphere slides over $f_{loc}$ do not change $\hat{\I}$ either. Therefore we have that $\hat{\I}(\mF', {\mW'}^*) = \hat{\I}(\mF\cup \{f_{loc}\}, \mW^* \cup \{w^*_{loc}\}) = \hat{\I}(\mF, \mW^*)$.

    Now we apply a sequence of disc slides as done in Lemma~\ref{ia to ea} to $(\mF, \mW^*)$ to get it into $EA$. We want to uses these same slides for $(\mF', {\mW'}^*)$, so to do this we first prepare the discs $(\mF', {\mW'}^*)$ by doing all the sphere slides first. By the Multi eye parity lemma \ref{parity}, $\I(\mF', {\mW'}^*) = \I(\mF'', {\mW'}^*)$ where $\mF''$ is the set of discs obtained by performing all the sphere slides first. So we can start with $(\mF'', {\mW'}^*)$ then apply the finger sphere slides which, after an  isotopy, gets us to $(\mF\cup \{f_{loc}\}, W^*\cup \{w^*_{loc}\}$. Since we can transport the disc slides along the isotopy, it follows that $\I(\mF'', {\mW'}^*)=\I(\mF\cup \{f_{loc}\}, W^*\cup \{w^*_{loc}\}$, so we will work instead with the latter set of discs. 
    
    Now we can apply the disc slide sequence to the discs in $(\mF, \mW^*)$ as Lemma~\ref{ia to ea}. Since $(\mF, \mW^*)\in \IA$ it has an IA ordering of the discs. Furthermore, $(\mF\cup \{f_{loc}\}, \mW^*\cup \{w^*_{loc}\}\in \IA$ as well with $f_{loc} = f_{uu,1}$ and $w_{loc} = w_{uu,1}$. We will use the indexing for $(\mF\cup \{f_{loc}\}, \mW^*\cup \{w^*_{loc}\}$ for $(\mF, \mW^*)$, i.e., instead of starting at 1, the discs will be index starting from 2 and ending at $n+1$ where $|\mF_{uu}| = n$.

    We recall the procedure outline in the proof of Lemma~\ref{ia to ea}. This process begins with a sequence of $\mW$ $\mG$-disc slides followed by swinging the boundary around $G_u$ remove the intersections with $f_{uu,2}$. The only way for the $\mW$ disc slides to create a boundary intersection with $f_{uu,1}$ is to slide over $w_{uu,1}$. But the above sequence has none of these slides. Thus, after these initial slides and boundary isotopy, we have sucessfully removed the $G_u \cap \mW_{uu}$ intersections with $f_{uu,2}$. Note that if any of these disc slides changed $\hat{\I}(\mF, \mW^*)$, they changed $\hat{\I}(\mF\cup f_{loc}, \mW^* \cup w_{loc})$ in exactly the same way. Therefore we have that after this first sequence of slides, $\hat{\I}(\mF, \mW^*)=\hat{\I}(\mF\cup f_{loc}, \mW^* \cup w_{loc})$.\footnote{To save on notation, we are not renaming the disc systems after each sequence of slides, even though the discs are different.}  
    
    Now for $(\mF, \mW^*)$, we have cleared $f_{uu,2}$ of all $\mW_{uu}$ intersections and for $(\mF\cup f_{loc}, \mW^* \cup w_{loc})$ we have $(f_{uu,1} \cup w_{uu,1}\cup f_{uu,2})\cap G_u$ forms an embedded arc. We now remove all the $\mF_{uu}$ intersections with $w_{uu,2}\cap G_u$ by sliding them all over $f_{uu,2}$. For  $(\mF, \mW^*)$, that is the last step but for $(\mF\cup f_{loc}, \mW^*\cup w_{loc})$, we must also slide them over $f_{uu,1}$. Since $f_{uu,1}\cap R_{u}$ is free of all boundary intersections with the discs in $\mW_{uu}$, these extra slides do not effect $\hat{I}$. Therefore the only slides that might change $\hat{\I}$ are the slides over $f_{uu,2}$. Therefore we have $\hat{\I}(\mF, \mW^*)=\hat{\I}(\mF\cup f_{loc}, \mW^* \cup w_{loc})$ after completing this set of slides. Continuing by induction, we get that after each step $\hat{\I}(\mF, \mW^*)=\hat{\I}(\mF\cup f_{loc}, \mW^* \cup w_{loc})$. It then follows that $$\I(\mF, \mW^*)=\I(\mF\cup f_{loc}, \mW^* \cup w_{loc}).$$

    We have thus shown that both $\mC$ and $\I$ remain unchanged. Therefore $\mD$ is unchanged by an uncrossed birth. 
\end{proof}

\begin{remark}
    Having proved Lemma~\ref{L: inv under uncross bd}, we have completed the proof of Proposition~\ref{symmetry1}. Therefore, we are free to use that $\mD(\mF,\mW) = \mD(\mW, \mF)$.
\end{remark}

\begin{lemma}
    Let $(\mF, \mW)$ and $(\mF, \mW')$ be two finger-Whitney systems that differ by a sphere slide. Then $\mD(\mF,\mW) = \mD(\mF', \mW')$
\end{lemma}
\begin{proof}
    We will assume that the two finger/Whitney systems differ by a finger sphere slide. The proof for Whitney discs is given by swapping the roles of fingers and Whitney's in the proof below. As we will switch the Whitneys to compute $\hat{I}$, this then relies on the fact that $\mD(\mF, \mW) = \mD(\mW, \mF)$. 

    First, recall some basic facts about finger spheres. First, a finger sphere $S_f$ for the finger move $f$ is embedded in the complement of $\mR\cup \mG$ and is null homologous in $\#^k S^2\times S^2$. Let $p_0, p_1\in \mR\cap \mG$ denote the intersection points that $f$ pairs off. If $p_0$ and $p_1$ are paired off by distinct Whitney discs $w_0$ and $w_1$, then $S_f \cdot w_i = 1 \mod 2$ for both $w_0$ and $w_1$ and for all other Whitney discs $w'$, $S_f \cdot w' = 0 \mod 2$. If $p_0$ and $p_1$ are paired off by a unique Whitney disc $w$, then $S_f\cdot w' =0\mod 2$ for all Whitney discs $w' \in \mW$.

    It follows from the basic facts of finger spheres and the definition of $c_{i,j}$ that for any $i\neq j$, $c_{i,j}(\mF, \mW) = c_{i,j}(\mF', \mW)$ when $\mF$ differs from $\mF'$ by a sphere slide. Since a sphere slide does not change the boundary configuration of any of the discs, $CU_{i,j}$ does not change under a sphere slide. Therefore $\mC(\mF, \mW) = \mC(\mF',\mW)$.

    It remains to show that $\I$ does not change under sphere slides. We choose an ordering on $\mW$ to switch. Note that switching changes the Whitney discs. Thus, if $\mW^*$ is a switch system for $\mW$ that takes $(\mF, \mW)$ to IA, then the same system takes $(\mF', \mW)$ to IA  as well. So we may assume that both systems are in IA. Now it follows that both systems now satisfy the hypothesis for the Multi-eye parity lemma \ref{parity}. Therefore $\I(\mF, \mW) = \I(\mF', \mW)$ if and only if $\hat{\I}(\mF, \mW)  = \hat{\I}(\mF', \mW)$. We now show that $\hat{\I}$ does not change under sphere slides.
    
    Now since both $(\mF, \mW)$ and $(\mF', \mW)$ are in IA, the finger discs in $\mF$ have an induced order. First, let us assume that $\tilde{f}$ is sliding over $f$. Let us suppose that  $f$ pairs off points in $\mR_i\cap \mG_j$ and $\tilde{f}$ pairs points in $\mR_k\cap \mG_l$, then $\hat{\I}(\mF, \mW) = \hat{\I}(\mF', \mW)$  if $(i,j) \neq (l,k)$ or if $i \neq j$ since, by definition, it only counts intersections between fingers and Whitneys that belong to the same immersed arc. Therefore, we may assume that $f$ and $\tilde{f}$ pair off intersections between $\mR_i$ and $\mG_i$. We will suppress the index of the spheres $\mR_i$ and $\mG_i$ and use subscripts to index the discs according to the IA condition. Let $f = f_i$ and $\tilde{f}=f_{j}$ for some $1\leq i,j \leq n$. Since $f_{j}$ is sliding over $f_i$, we have the discs in $\mF'$ are given by the formula
    \begin{align}
        &f'_k = f_k ~\text{ if }k\neq j\\
        &f'_j = f_j+S_{f_i}.
    \end{align}
    
    \begin{enumerate}
        \item[Case 1:] Let $j>i$. Then $\langle f'_j, w_l\rangle = \langle f_j, w_l\rangle + S_{f_i} \cdot w_l$. By the basic facts of Finger spheres we have $S_{f_i} \cdot w_l = 0$ for $l\geq j$. Therefore $\langle f'_j, w_l\rangle = \langle f_j, w_l\rangle \mod 2$ and $\hat{\I}$ remains unchanged.
        
        \item[Case 2:] Let $j< i$. Then $j\leq i-1$ and $i$. Therefore, $S_{f_i}\cdot w_{i-1} = 1 \mod 2$ and $S_{f_i}\cdot w_{i} = 1 \mod 2$. Consequentially we have $\langle f'_j, w_{i-1}\rangle = \langle f_j, w_{i-1}\rangle + 1\mod 2$ and $\langle f'_j, w_{i}\rangle = \langle f_j, w_{i}\rangle + 1\mod 2$. Hence $\hat{\I}((\mF, \mW))) =\hat{\I}((\mF', \mW))$.
    \end{enumerate}
    In all cases, we have shown that  $\hat{I}((\mF, \mW)) = \hat{I}((\mF',\mW))$. It then follows from the Multieye Parity Lemma \ref{parity} that $\I(\mF, \mW) = \I(\mF',\mW)$. We can now conclude that $\mD(\mF, \mW) = \mD(\mF', \mW)$.
\end{proof}

\begin{lemma}\label{L: x^3 equals bd plus SSS}
    If $(\mF', \mW')$ is obtained from $(\mF, \mW)$ by performing a $x^3_+$-move, then it can also be obtained from $(\mF, \mW)$ by performing a birth move followed by an SSS-move or a 1-switch.
\end{lemma}
\begin{proof}
    The proof is contained in Figures \ref{F: local_x3}, \ref{F: local birth}, and \ref{F: local_SSS}, with the finger discs are shown in black while the Whitney discs are shown in blue. Figure \ref{F: local_x3} shows the local change in $(\mF, \mW)$ after passing an $x^3_+$-move (Definition \ref{D: x^3}). Here, each column of figures depicts the 4-dimensional neighborhood where the transformation occurs, with each figure in a column being a 3-dimensional slice. In Figure \ref{F: local birth} we see the transformation of performing a local Birth move. Figure \ref{F: local_SSS} then shows how applying an SSS move to the finger disks after the birth move changes the $\mF\mW$-system locally. Notice that the left most column of Figure \ref{F: local birth} is in agreement with the left most column of Figure \ref{F: local_x3} and the right most column of Figure \ref{F: local_SSS} is in agreement with the right most column of Figure \ref{F: local_x3}. This shows that an $x^3_+$ move can be realized by a Birth move followed by an SSS move when the data of $(\mF, \mW)$ is in Case 3 of the $x^3$-move.

    We get the remaining cases for the $x^3$ move by removing the starting finger or Whitney data (or both) from Figures \ref{F: local_x3}, \ref{F: local birth}, and \ref{F: local_SSS}. Doing so, the orange disc in Figure \ref{F: local_SSS} can instead be used for a 1-switch instead of. 

\begin{figure}[!htbp]
    \centering
    \includegraphics[width=0.65\linewidth]{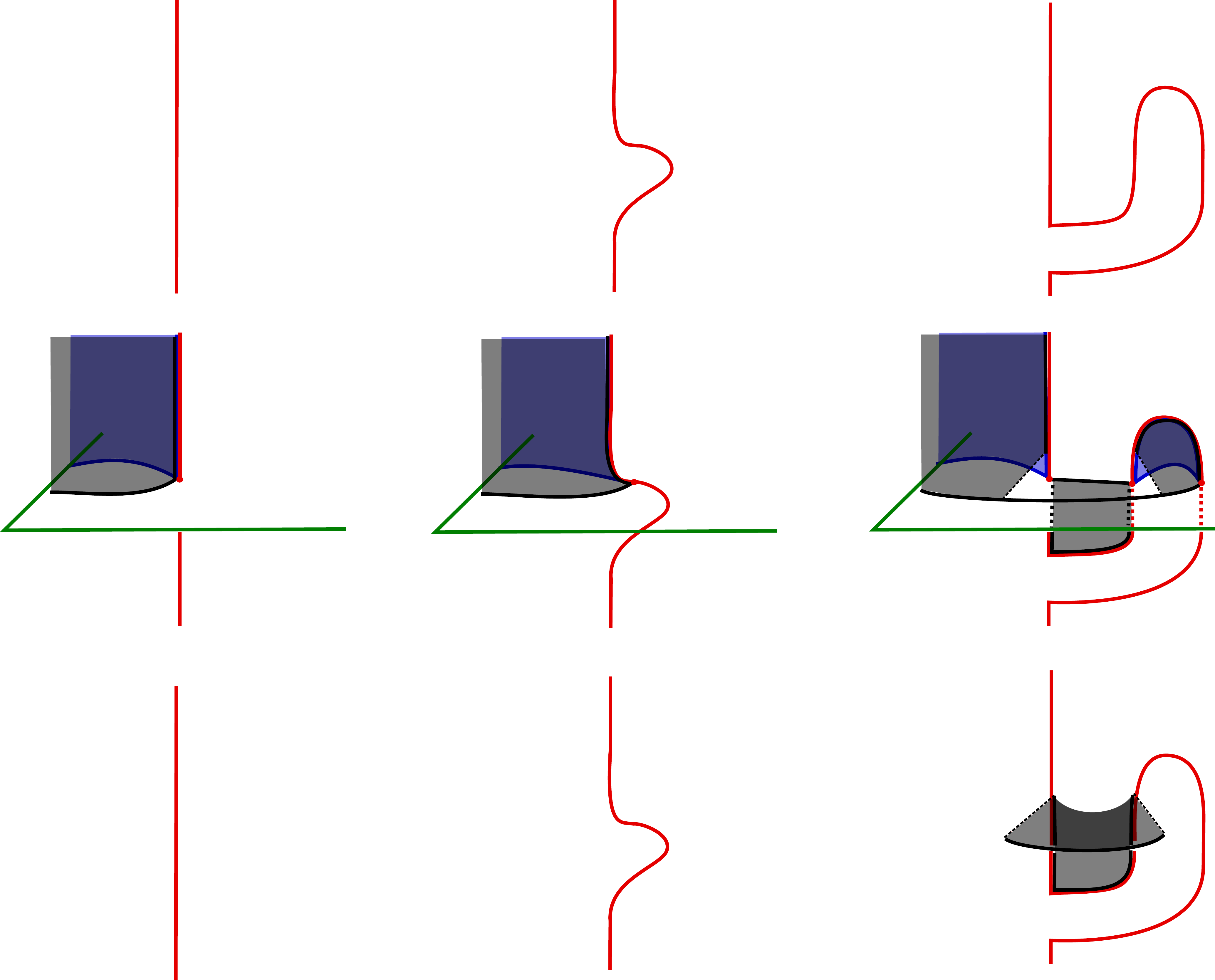}
    \caption{ Show here is the local neighborhood where the $x^3_+$ move takes place; each column of figures is the local 4-dimensional neighborhood.} \label{F: local_x3}
\end{figure}

    \begin{figure}[!htbp]
    \begin{subfigure}{.45\textwidth}
        \centering
        \includegraphics[width=.9\linewidth]{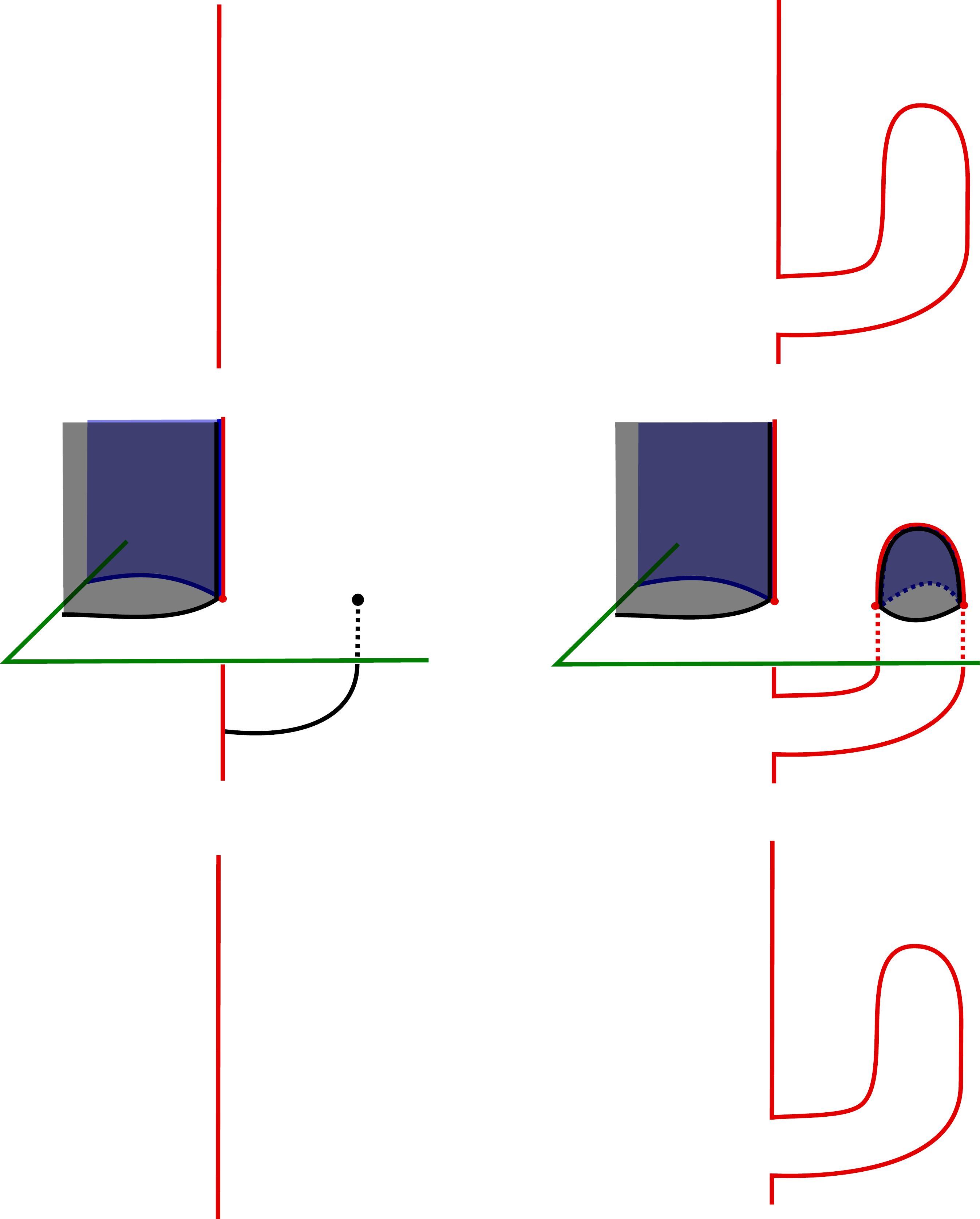}
        \caption{} \label{F: local birth}
    \end{subfigure}
    \begin{subfigure}{.45\textwidth}
        \centering
        \includegraphics[width=.9\linewidth]{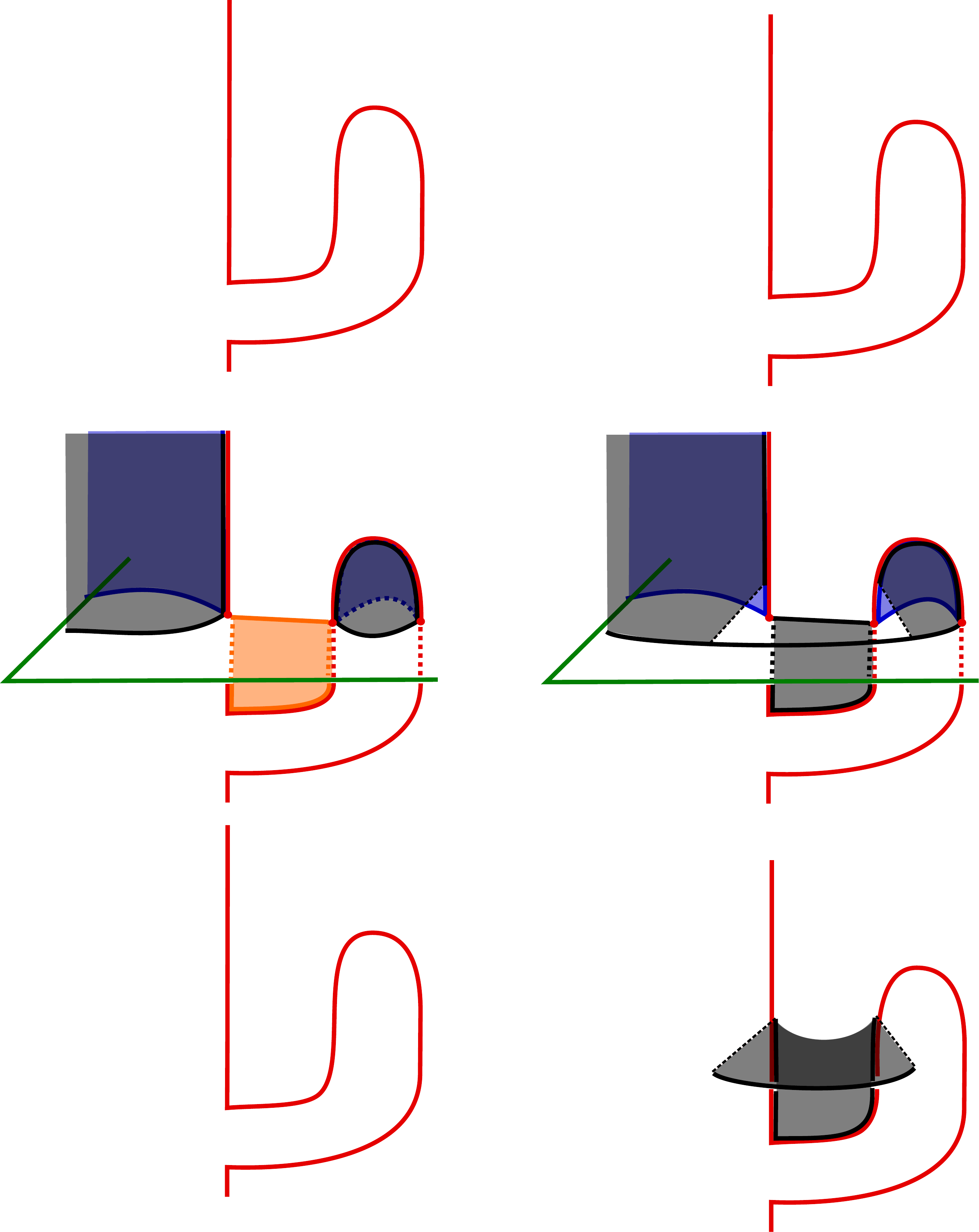}
        \caption{} \label{F: local_SSS}
    \end{subfigure}
    \caption{Shown is the sequence of a local birth move followed by a special sum square move. In both figures, each column of figures represents the local 4-dimensional neighborhood where the deformations occur. Figure (A) on the left shows the birth move creating a pair of finger with discs. In Figure (B) we see the affect of applying a SSS move to the finger discs using the Special Sum Square disc (Definition~\ref{sum square}).}
    \end{figure}
\end{proof}

\begin{lemma}
    If $(\mF', \mW')$ is obtained from $(\mF, \mW)$ by performing a $x^3_{+}$-move, then $\mD(\mF, \mW) = \mD(\mF', \mW')$
\end{lemma}
\begin{proof}
    By Proposition \ref{sum invariance} and Lemmas~\ref{L: inv under cross bd} and \ref{L: inv under uncross bd},  SSS, 1-switch and birth/death moves preserve $\mD$.
\end{proof}

\begin{lemma}\label{L: saddlemovesequence}
    If $(\mF', \mW')$ is obtained from $(\mF,\mW)$ by a saddle move, then it can be obtained from $(\mF,\mW)$ by first applying a local isotopy to the discs in $(\mF, \mW)$, then applying a Birth move, followed by two special sum square moves or a 1-switch and a SSS move or two  1-switches. In each case, one move is applied to the finger discs while the other is applied to the Whitney discs. 
\end{lemma}
\begin{proof}
    The proof of the first case is contained in Figures \ref{F: saddleinv1}, \ref{F: saddleinv2}, \ref{F: saddleinv3}. Every other case can be deduced from these figures by modifying the initial configurations of $(\mF, \mW)$ so that the saddle disc pairs off a free point or the end point of an arc.
    
     Figure \ref{F: saddleinv1} shows the local transformation of the saddle move (Definition \ref{D: saddle}) with the saddle disc shown in orange and the local configurations of the finger discs, shown in black, and the Whitney discs, shown in blue, before the saddle move (left column) and after the move (right column). Figures \ref{F: saddleinv2} and \ref{F: saddleinv3} will show a sequence of local transformations that start at $(\mF, \mW)$ (left column of Figure \ref{F: saddleinv2}) and end at $(\mF', \mW')$ (right column of Figure \ref{F: saddleinv3}). Starting with Figure \ref{F: saddleinv2} and moving from left to right, we see a local isotopy followed by a birth move. The right column of Figure \ref{F: saddleinv2} is the final result of these transformations and shows the first intermediate $\mF\mW$-system $(\mF_1,\mW_1)$. This system is then shown as the left most column in Figure \ref{F: saddleinv3}. Moving again from left to right, there is a sequence of two special sum square moves (Definition~\ref{sum square}). The two special sum square discs used are shown in orange and purple. The orange special sum square disc is used to transform the finger discs adjoint it, while the purple disc is used to transform the Whitney discs. The left column depicts the result of applying both moves, and clearly agrees with the left column of Figure \ref{F: saddleinv1}.
\end{proof}

\begin{figure}
    \centering
    \includegraphics[width=0.8\linewidth]{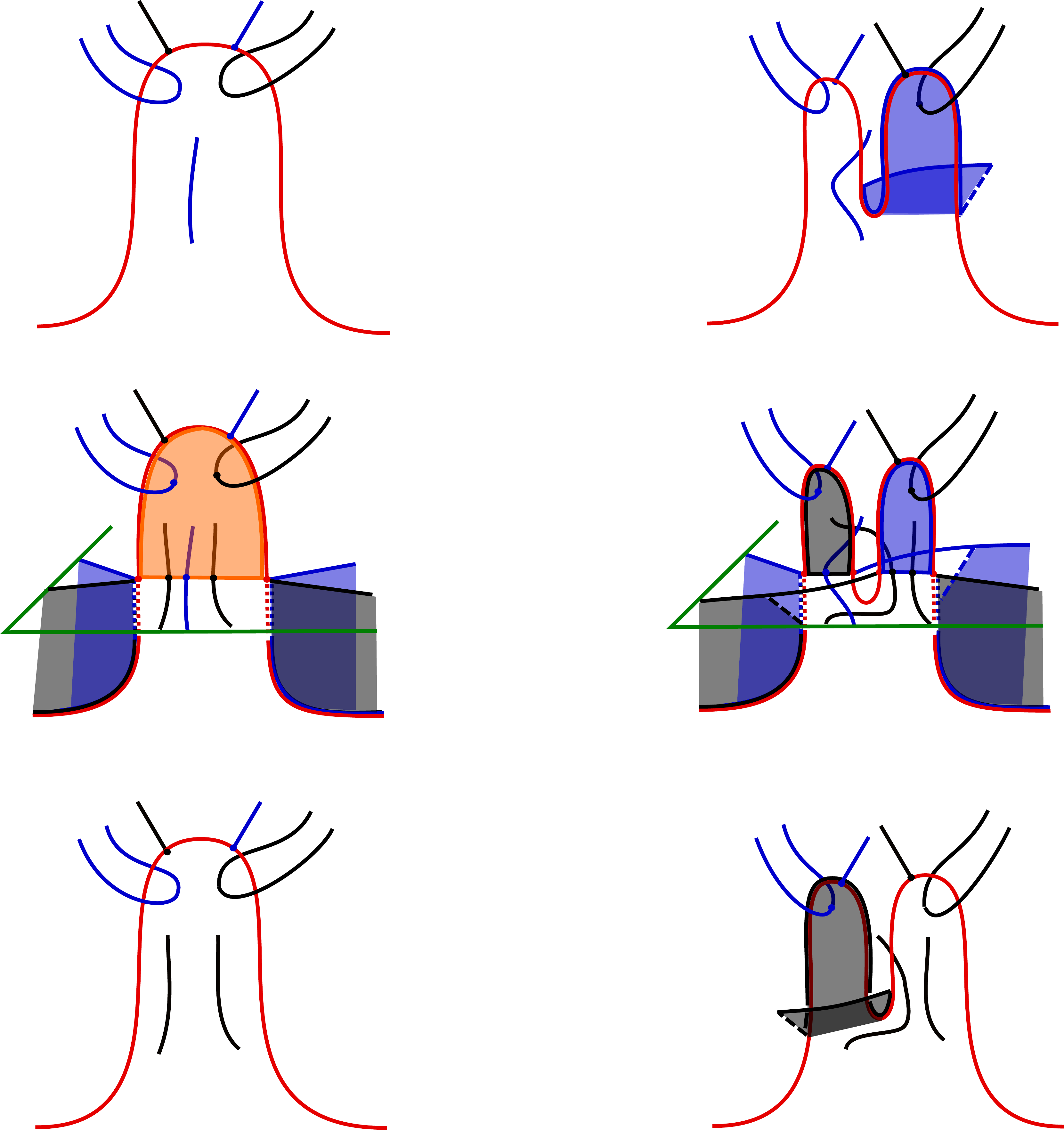}
    \caption{ Shown is the local change to $(\mF, \mW)$ under the saddle move. The left column is the local 4-dimensional neighborhood of the saddle disc $D$ (orange disc) before the saddle move occurs. In black are the local configurations of the discs in $\mF$ and in blue are the local configurations of the discs in $\mW$. The left column shows the new $\mF\mW$-system $(\mF', \mW')$ locally after passing the saddle. In both pictures, we have arranged so that the finger discs with boundaries only on $\mG$ are shown extending into the 4-dimension by moving downward in a column, while the Whitneys extend moving vertically upwards in a column.} \label{F: saddleinv1}
    
\end{figure}

\begin{figure}
    \centering
    
    \includegraphics[width=0.8\linewidth]{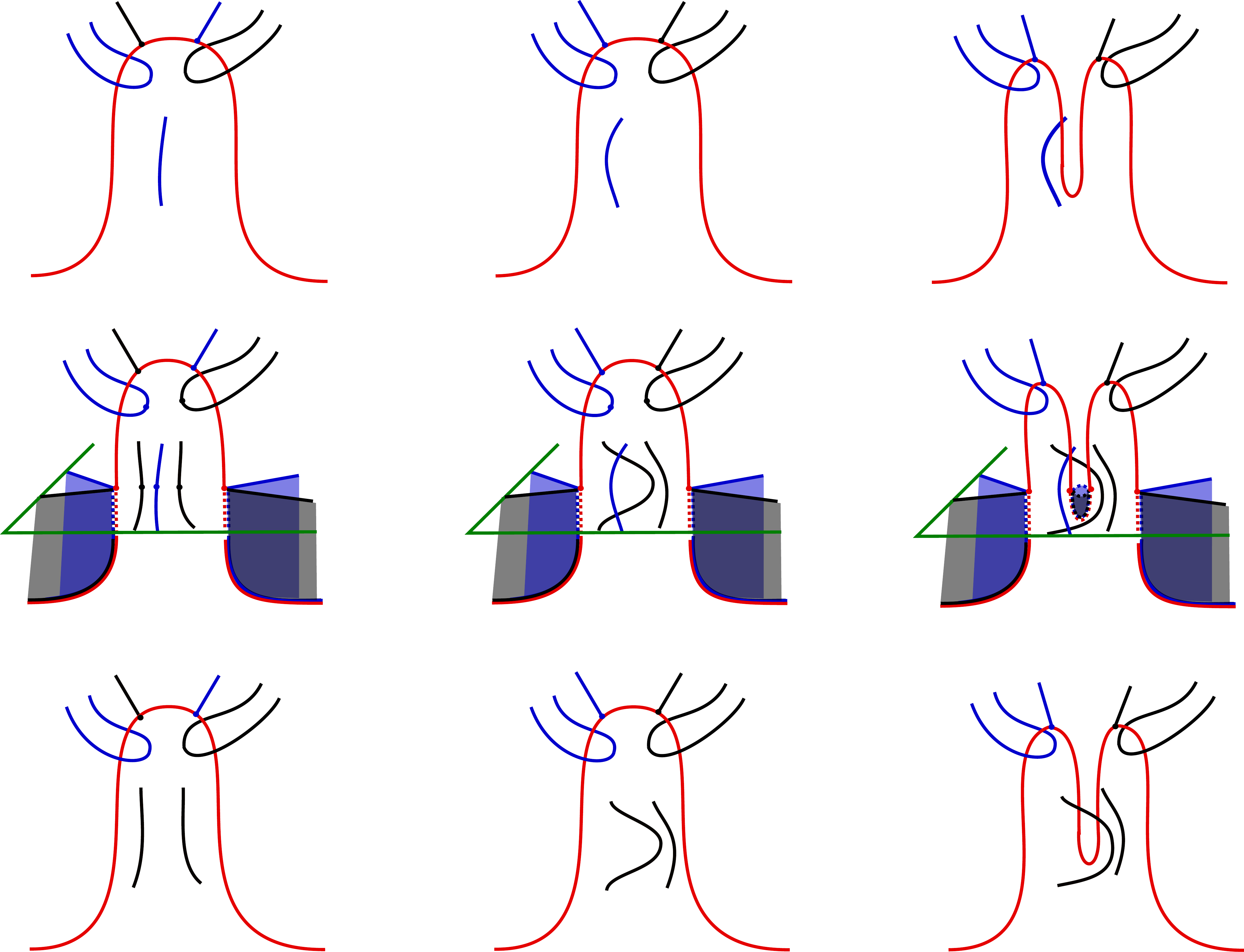}
    \caption{ Shown is a sequence of local changes to $(\mF, \mW)$ to an intermediate $\mF\mW$ system $(\mF_1,\mW_1)$. The system $(\mF_1, \mW_1)$ is obtained from $(\mF, \mW)$ by performing a local isotopy followed by a Birth move. Each column is the local 4-dimensional neighborhood of the saddle disc $D$ shown in Figure \ref{F: saddleinv1}. The left column shows the local configurations of the discs in $(\mF, \mW)$ with the finger discs shown in black and the Whitney discs in blue. The middle column shows the the result of applying the local isotopy to the discs of $(\mF,\mW)$ while the right most column shows $(\mF_1, \mW_1)$ that is obtained by applying a Birth move to the middle column configuration. In all pictures, we have arranged so that the finger discs with boundaries only on $\mG$ are shown extending into the 4-dimension by moving downward in a column, while the Whitneys extend moving vertically upwards in a column.} \label{F: saddleinv2}
    
\end{figure}

\begin{figure}
    \centering
    \includegraphics[width=0.8\linewidth]{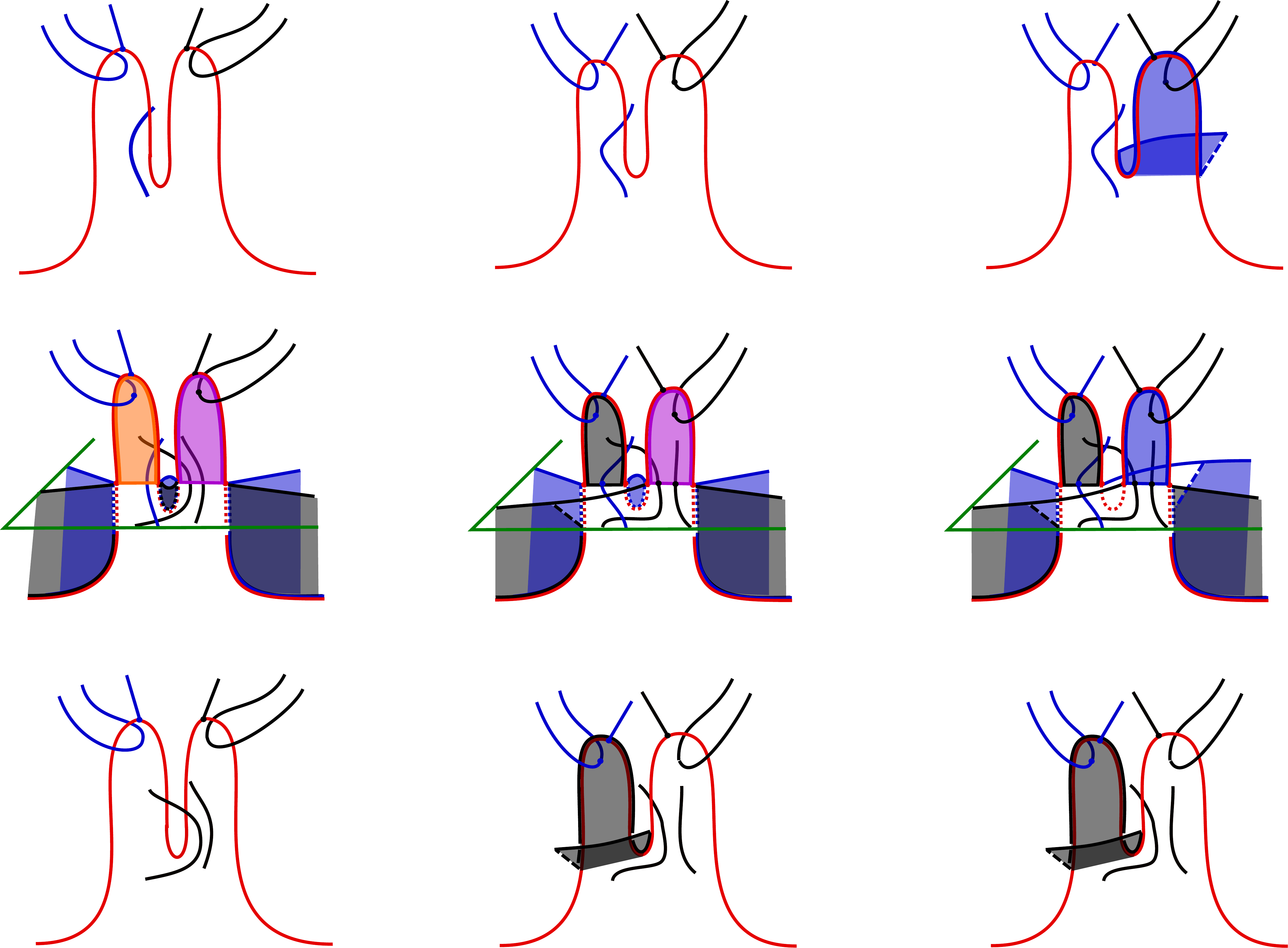}
    \caption{ Shown is a sequence of 2 special sum square moves applied to $(\mF_1, \mW_1)$. Each column is the local 4-dimensional neighborhood of the saddle disc $D$ shown in Figure \ref{F: saddleinv1}. The left column shows the local configuration of the discs in $(\mF_1,\mW_1)$ with two special sum square discs shown, one in orange and one in purple. The middle column is the local configuration of a $\mF\mW$ system $(\mF_2, \mW_2)$ obtained by applying the special sum square move to the two finger discs adjoining the orange SSS disc. The left most column is the system $(\mF', \mW')$ and is obtained from the middle by applying the special sum square move to the two Whitney discs adjoining the purple SSS disc.} \label{F: saddleinv3}
    
\end{figure}

\begin{lemma}\label{saddle invariance} 
    If $(\mF', \mW')$ is obtained from $(\mF, \mW)$ by a saddle move then $\mD(\mF, \mW)=\mD(\mF', \mW')$. 
\end{lemma}

\begin{proof} By Lemma \ref{L: saddlemovesequence}, $(\mF, \mW)$ is obtained from $(\mF,\mW)$ by a sequence of a Birth move and two SSS moves or a SSS and 1-switch or two 1-switches. Since by Proposition \ref{sum invariance}, and Lemmas~\ref{L: inv under cross bd} and~\ref{L: inv under uncross bd} these moves preserve $\mD$, the result follows.  
\end{proof}

\begin{remark} Note that the proof requires independence of order in that we had to choose an ordering on $\mW'$ to obtain $\mW_1$ and an ordering on $\mF_1$ to obtain $\mF_2$.  Also, it required symmetry of $\mD$, since we are doing operations to both the finger system and the Whitney system.  \end{remark}

\begin{proposition}\label{P: ff independence}
    $\mD$ is independent of the finger-first representative.
\end{proposition}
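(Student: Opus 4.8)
\textbf{Proof proposal for Proposition \ref{P: ff independence}.}

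The plan is to reduce the statement to the invariance of $\bfI$ under the $\mF\mW$-moves, which has been established in the preceding lemmas (invariance under disc slides and sphere slides, under birth/death moves, under $x^3_{\pm}$-moves, and under saddle moves, as well as under isotopies in $\mE_{const}$ via Lemma \ref{L: invariance by isotopy.}). Suppose $\alpha'$ and $\alpha''$ are two finger-first representatives of the same class $[\alpha]\in \pi_1(\Emb(\sqcup^k S^2, \#^k S^2\times S^2), \mR^{std})$, with associated finger/Whitney systems $\Delta_0$ and $\Delta_1$ respectively. By hypothesis there is a relative homotopy between them; perturb it to be generic. By Theorem \ref{T: 2-par ordering} this generic homotopy $\widetilde H$ can be taken to be an ordered homotopy, since its two boundary paths $\alpha'=\widetilde H_0$ and $\alpha''=\widetilde H_1$ are already in finger-first position (in particular, finger-first ordered). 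This is the step that lets us apply the structural results of \S6.

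Next I would invoke Theorem \ref{T: fw system moves}: for an ordered homotopy between finger-first representatives, the two finger/Whitney systems $\Delta_0$ and $\Delta_1$ differ by a finite sequence of $\mF\mW$-moves (disc slides, sphere slides, birth/death moves, $x^3$-moves, saddle moves), up to isotopy of the disc systems. Along the homotopy parameter, between consecutive $\mF\mW$-moves the associated family of systems $\Delta_s$ has $\mR_{s,1/2}$ varying within $\mE_{const}$ (the number of intersections $|R_i\pitchfork G_j|$ does not change), so by Lemma \ref{L: invariance by isotopy.} the value $\bfI(\Delta_s)$ is constant on each such subinterval. It therefore suffices to check that crossing each individual $\mF\mW$-move leaves $\bfI$ unchanged. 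The moves involving cross discs or cross terms are handled by the lemma on cross discs; the moves involving only $i$-discs are handled by reducing to the individual $I_i$'s (via the lemma that $\bfI$ is invariant whenever each $I_i$ is), and then by the lemmas establishing invariance of $I_i$ under disc/sphere slides, the birth move (Lemma \ref{L: inv under bd}), the $x^3_+$ move (Lemma \ref{L: inv under xcubed}), and the saddle move. (For death and $x^3_-$ moves one simply runs the corresponding lemma in reverse; the simplifying assumption that the number of intersections increases as $s$ increases costs no generality, since one may reverse the orientation of the homotopy parameter.) Concatenating these equalities along the finite chain of moves gives $\bfI(\Delta_0)=\bfI(\Delta_1)$.

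The main obstacle is not in this assembly but rather in the fact that it rests on two substantial prior results — Theorem \ref{T: 2-par ordering} and Theorem \ref{T: fw system moves} — together with the move-by-move invariance lemmas; once those are in hand, the proof of the proposition itself is a short bookkeeping argument. One subtlety worth flagging explicitly in the writeup: a priori the path determined by the EA-system $(\mR,\mG,\mF',\mW')$ produced from $\Delta_i$ need not be literally homotopic to $\alpha'$ or $\alpha''$, but this is irrelevant — we treat the reduction to EA (disc slides and switchings) as a formal procedure on finger/Whitney systems whose output $\bfI$-value has been shown in \S1--\S5 to be independent of all choices, and the present proposition concerns only the dependence on the finger-first representative, which is exactly what the $\mF\mW$-move invariance controls. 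Hence $\bfI$ descends to a well-defined function on finger-first representatives of a fixed homotopy class, completing the proof. \qed
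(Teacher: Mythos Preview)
Your proposal is correct and follows essentially the same approach as the paper: invoke Theorem~\ref{T: 2-par ordering} to obtain an ordered homotopy between the two finger-first representatives, apply Theorem~\ref{T: fw system moves} to express the difference between $\Delta_0$ and $\Delta_1$ as a finite sequence of isotopies and $\mF\mW$-moves, and then cite the preceding invariance lemmas to conclude $\bfI(\Delta_0)=\bfI(\Delta_1)$. Your writeup is in fact more detailed than the paper's three-line proof, but the logical structure is identical.
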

\begin{proof}
    Given two representatives in finger-first position, there exists an ordered homtopy $H_s(t)$ interpolating between the two (Theorem \ref{T: 2-par ordering}). Furthermore, we know that $\Delta_{0}$ differs from $\Delta_1$ by isotopies and $\mF\mW$-moves (Theorem \ref{T: fw system moves}). By the previous series of lemmas in this section, we now know that $\mD$ is preserved by isotopies and the $\mF\mW$-moves. Therefore, $\mD(\Delta_0)=\mD(\Delta_1)$.
\end{proof}
\begin{corollary}
    $\mD$ is well defined on homotopy classes $[\alpha_t]\in \pi_1(\Emb(\sqcup S^2, \#_k S^2\times S^2),\mR^{std})$. 
\end{corollary}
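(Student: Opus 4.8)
The plan is to deduce the Corollary from the two substantial facts already established above: Proposition \ref{P: ff independence} (that $\bfI$ is independent of the chosen finger-first representative) together with the homotopy-invariance machinery of \S6. First I would fix a class $[\alpha]\in \pi_1(\Emb(\sqcup^k S^2,\#_k S^2\times S^2),\mR^{std})$ and observe that it has \emph{some} finger-first representative: starting from any generic loop $\alpha$ representing $[\alpha]$, the discussion in \S\ref{S: paths ff position} (Quinn's argument, recast in Construction \ref{Constr. arc-extensions}) produces a homotopy rel endpoints from $\alpha$ to a loop $\alpha'$ in finger-first position. Thus the assignment ``$[\alpha]\mapsto \bfI(\Delta)$ where $\Delta$ is a finger/Whitney system for a finger-first representative of $[\alpha]$'' is at least defined on every homotopy class.

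Next I would argue well-definedness. Suppose $\alpha_0$ and $\alpha_1$ are two finger-first loops, both homotopic rel basepoint to representatives of the same class $[\alpha]$ (in particular homotopic rel endpoints to each other). Concatenating the homotopies exhibiting this gives a relative homotopy $\widetilde H:[0,1]^2\to \Emb(\sqcup^k S^2,\#_k S^2\times S^2)$ with $\widetilde H_0=\alpha_0$, $\widetilde H_1=\alpha_1$; since both boundary paths are in finger-first position (hence, after the local adjustment, finger-first ordered), Theorem \ref{T: 2-par ordering} lets us homotope $\widetilde H$ rel $\partial([0,1]^2)$ to an ordered homotopy. Theorem \ref{T: fw system moves} then says the finger/Whitney system $\Delta_0$ for $\alpha_0$ differs from the system $\Delta_1$ for $\alpha_1$ by a finite sequence of $\mF\mW$-moves and isotopies of the data. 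The sequence of lemmas at the end of \S\ref{homotopy invariance} — Lemma \ref{L: invariance by isotopy.} for isotopies through $\mE_{const.}$, and the invariance lemmas for disc/sphere slides, birth/death moves, $x^3$-moves, and saddle moves (including those touching cross discs) — shows each such move leaves $\bfI$ unchanged. Chaining these equalities gives $\bfI(\Delta_0)=\bfI(\Delta_1)$, so the value depends only on $[\alpha]$; this is precisely the content of Proposition \ref{P: ff independence}, and the Corollary is its restatement as a statement about $\pi_1$.

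I do not expect any genuine obstacle here: all the hard analysis has already been carried out, and this final step is purely organizational — checking that ``finger-first representative exists,'' ``any two are connected by an ordered homotopy,'' and ``each $\mF\mW$-move preserves $\bfI$'' assemble into a proof that the function descends to $\pi_1$. The only point requiring a word of care is that the homotopy $\widetilde H$ produced by concatenation need not itself be ordered or even finger-first ordered on interior slices, but this is exactly why Theorem \ref{T: 2-par ordering} is invoked: it upgrades an arbitrary relative homotopy between two ordered paths to an ordered one rel boundary, after which Theorem \ref{T: fw system moves} applies. (The separate claims of Theorem \ref{T:mainthm} — that $\bfI$ is a homomorphism, is surjective, and behaves as stated under stabilization — are handled by the remarks already in \S\ref{homotopy invariance}, so they are not part of this Corollary's proof.)
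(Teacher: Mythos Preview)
Your proposal is correct and takes essentially the same approach as the paper: cite existence of a finger-first representative (Construction \ref{Constr. arc-extensions}) and then invoke Proposition \ref{P: ff independence} for independence of that choice. The paper's own proof is a two-line version of exactly this, so your only excess is unpacking the content of Proposition \ref{P: ff independence} rather than simply citing it.
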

\begin{proof}
    Every class $[\alpha_t]$ has a finger-first representative (\cite[\S4]{Qu} or Construction \ref{Constr. arc-extensions}) and by Proposition \ref{P: ff independence} the choice of representative does not change $\mD$.
\end{proof}

\begin{figure}
    \centering
        \labellist                             
            \hair 10pt
            \pinlabel $R_i$ at 80 0
            \pinlabel $G_j$ at 0 280
            \pinlabel $f_{1,1}$ at 200 460
            \pinlabel $f_{1,2}$ at 350 460
            \pinlabel $f_{1,3}$ at 450 460
            \pinlabel $f_{2,1}$ at 650 460
            \pinlabel $f_{2,2}$ at 750 460
            \pinlabel $f_{n,1}$ at 970 460
            \pinlabel $f_{n,2}$ at 1070 460
            \pinlabel $f_{n,3}$ at 1170 460
            \pinlabel $f_{n,4}$ at 1270 460
            \pinlabel $\cdots$ at 870 100
        \endlabellist
    \includegraphics[width=.8\linewidth]{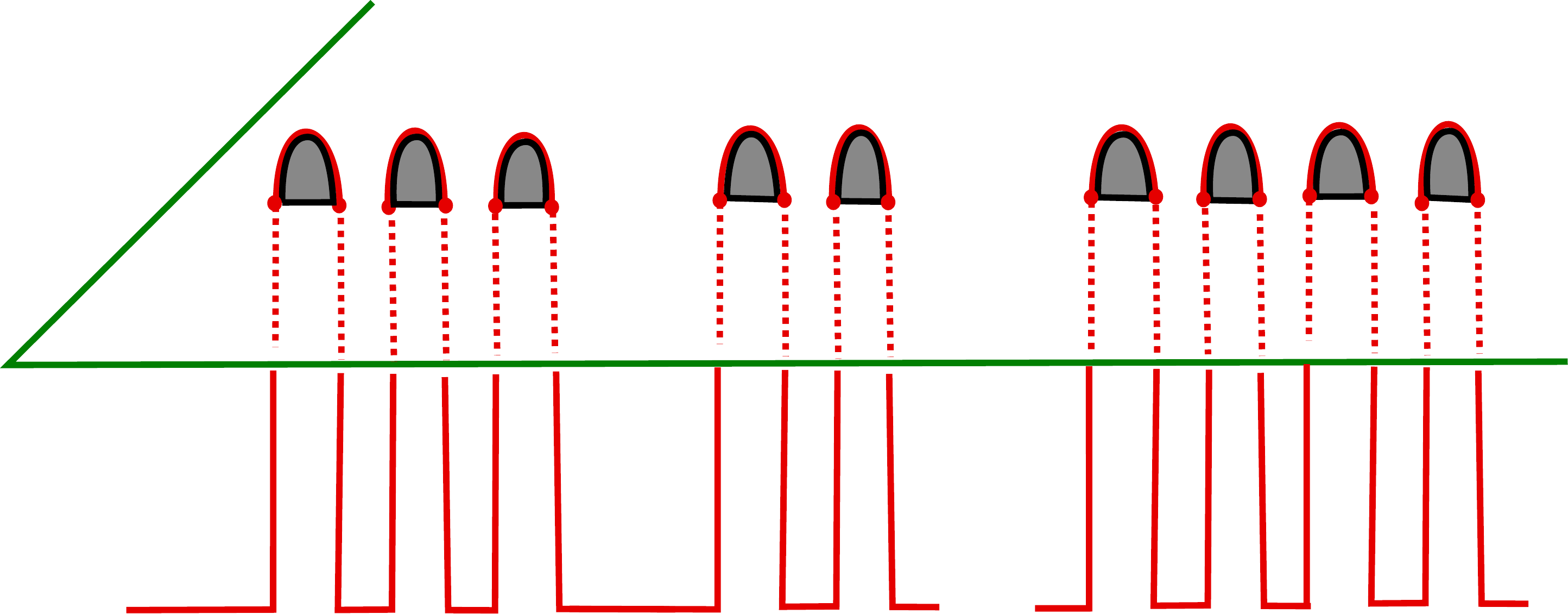}
    \caption{Shown is a local form for the $\mF_{ij}$ finger discs after performing a standardizing isotopy. We have shown the indexing convention where the disc $f_{l,k}$ is the $k$-th finger discs in the $l$-th circle.} \label{F: localstdcrossedfingers}
\end{figure}
\begin{figure}
    \centering
    \includegraphics[width=\linewidth]{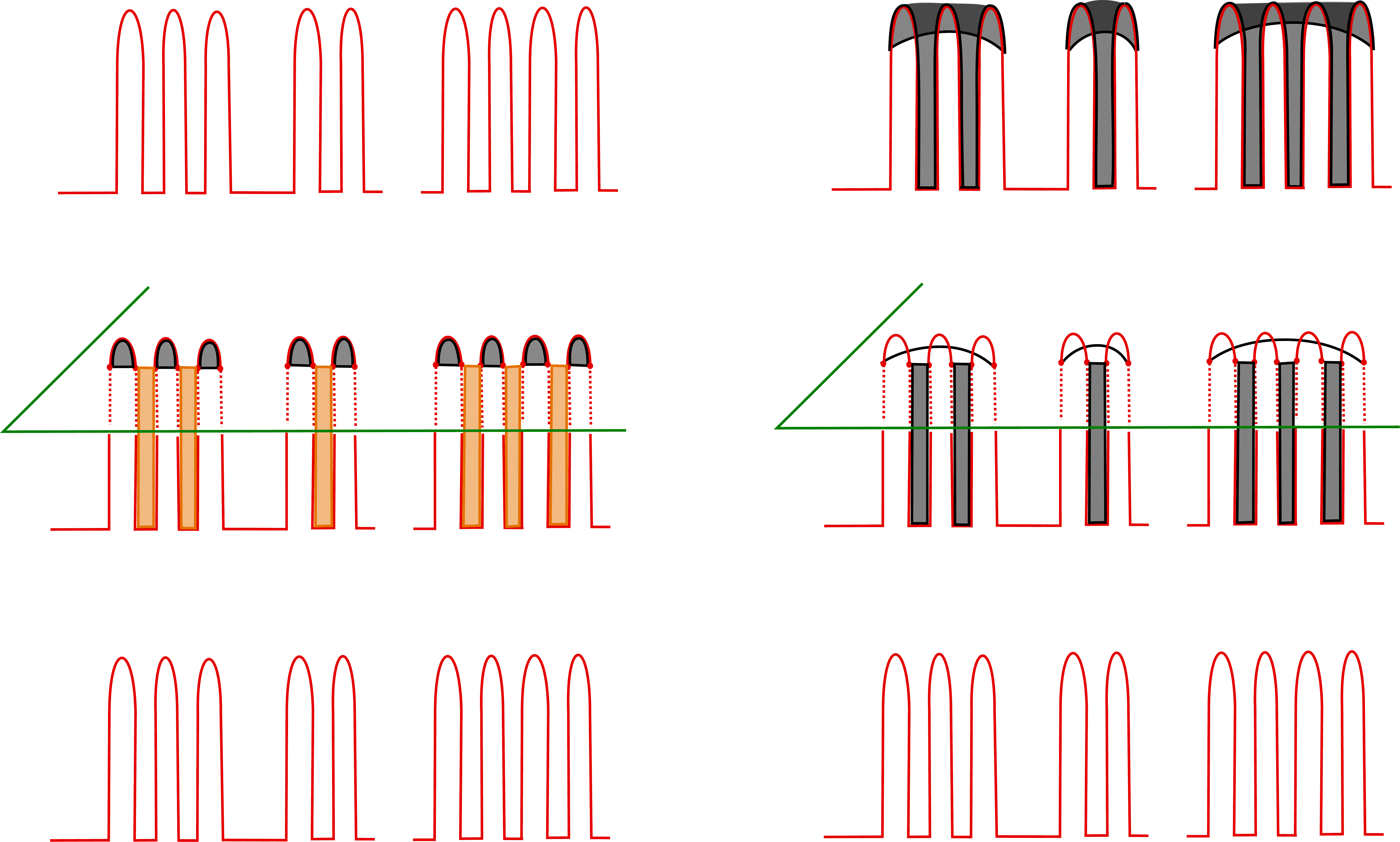}
    \caption{Shown in orange on the left are the local special sum square discs. On the right are the new finger discs obtained by performing the sum square move along the SSS discs.} \label{F: localstdcrossfingerSSSdiscs} 
\end{figure}

\begin{lemma}\label{L: Similarly matched}
    Given a finger-Whitney system $(\mF, \mW)\in EA$, it can be modified by crossed SSS moves to a new system $(\mF', \mW)\in EA$ such that the crossed discs are similarily matched and the uncrossed discs are unchanged.
\end{lemma}
\begin{proof}
    The first step in the proof is to organize the original finger discs appropriately. Then we will construct the SSS-discs locally so that when the sum squared move is applied, the resulting discs $\mF'$ will be similarly matched with the $\mW$.
    
    We will first index the $(i,j)$ crossed discs. Now note that $\mG_{j}\cap(\partial \mF_{i,j} \cup \partial \mW_{i,j})$ forms a collection of immersed circles. We choose an enumeration of the circles  and let $C_n$ denote the nth circle in the list. Next we index the discs so that $f_{n,l}$ denotes the $l$-th finger disc belonging to the $n$-th circle $C_n$. Let $p_{n,l}^+$ and $p_{n,l}^-$ denote the points in $R_i\cap G_j$ paired off by $f_{n,l}$. Then $w_{n,l}$ will denote the Whitney disc that pairs off $p_{n,l}^+$ and $p_{n,l+1}^-$. If $C_n$ has $m_n$ finger discs, then $w_{n,m_n}$ will pair off $p_{n,m_n}^+$ and $p_{n,1}^-$. 
    
    We now apply a standardizing isotopy to $\mR$ that arranges for each $i\neq j$, the crossed finger discs to appear in a local form as shown in Figure \ref{F: localstdcrossedfingers}. In this local form, the finger discs are group by circle, and arranged in increasing order from left to right. This is always possible since $\#^k S^2\times S^2 \setminus (\mR^{std}\cup \mG)$ is simply connected, so we are free to isotope the finger arcs into this local position. Then this isotopy of the finger arcs can be used to generate the standarizing isotopy.  
    
    Having arranged the finger discs as shown in Figure \ref{F: localstdcrossedfingers}, there are a set of SSS discs, shown in the left column of Figure \ref{F: localstdcrossfingerSSSdiscs}, that when applied to the crossed discs results in a new set of discs $\mF'$ that are similarly matched with the the Whitney crossed discs $\mW_c$. By construction, applying the SSS move to the cross discs does not change the uncrossed discs and thus preserves the EA ordering.
\end{proof}

\begin{remark}
    Note that the same argument can be applied to the Whitney discs as well, that is to say, we can apply the SSS move to a set of SSS discs for $\mW_c$ to change $(\mF, \mW)$ to $(\mF, \mW')$, leaving the uncrossed discs unchanged so that $\mF_c$ are similarly matched with $\mW'$.
\end{remark}

\begin{lemma}\label{L: FEA-position.}
    Let $(\mF, \mW)$ be an EA system such that the boundaries of the crossed disc are disjoint from the uncrossed discs and the crossed discs $\mF_c$ are similarly matched with $\mW_c$. Then there is a sequence of crossed clasping operations, disc slides and boundary twists applied to $\mF$ such that the new disc system $(\mF', \mW)\in FEA$ and preserves the uncrossed discs.
\end{lemma}

\begin{figure}
\begin{subfigure}{.5\textwidth}
    \centering
    \labellist
      \pinlabel {$\iddots$} at 420 130
      \pinlabel {$\iddots$} at 680 400
    \endlabellist
    \includegraphics[width=.8\linewidth]{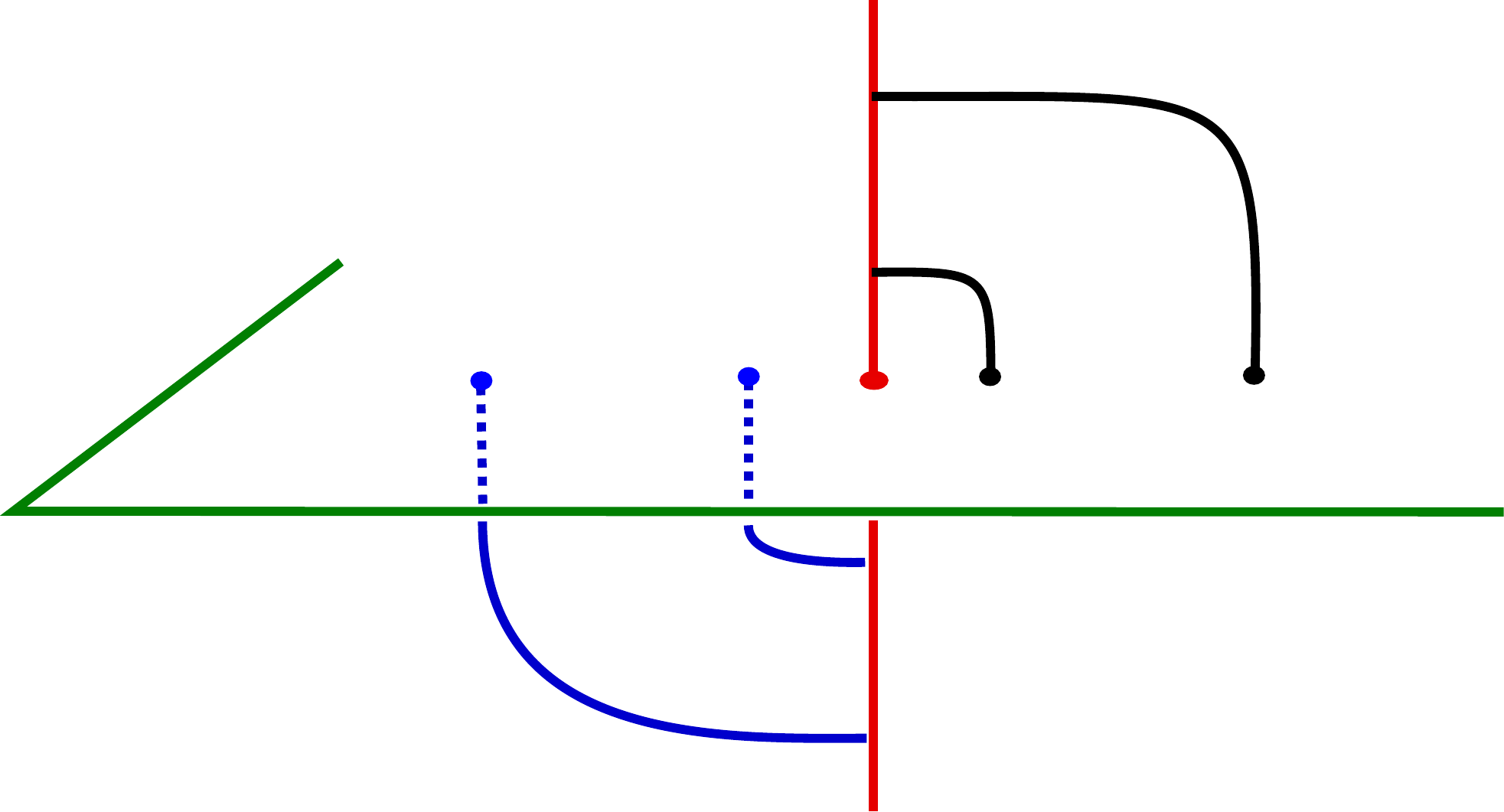}
    \caption{} \label{F: homfwarcconfig}
\end{subfigure}
\begin{subfigure}{.5\textwidth}
    \centering
    \labellist
      \pinlabel {$\cdots$} at 380 300
      \pinlabel {$\cdots$} at 700 220
    \endlabellist
    \includegraphics[width=.8\linewidth]{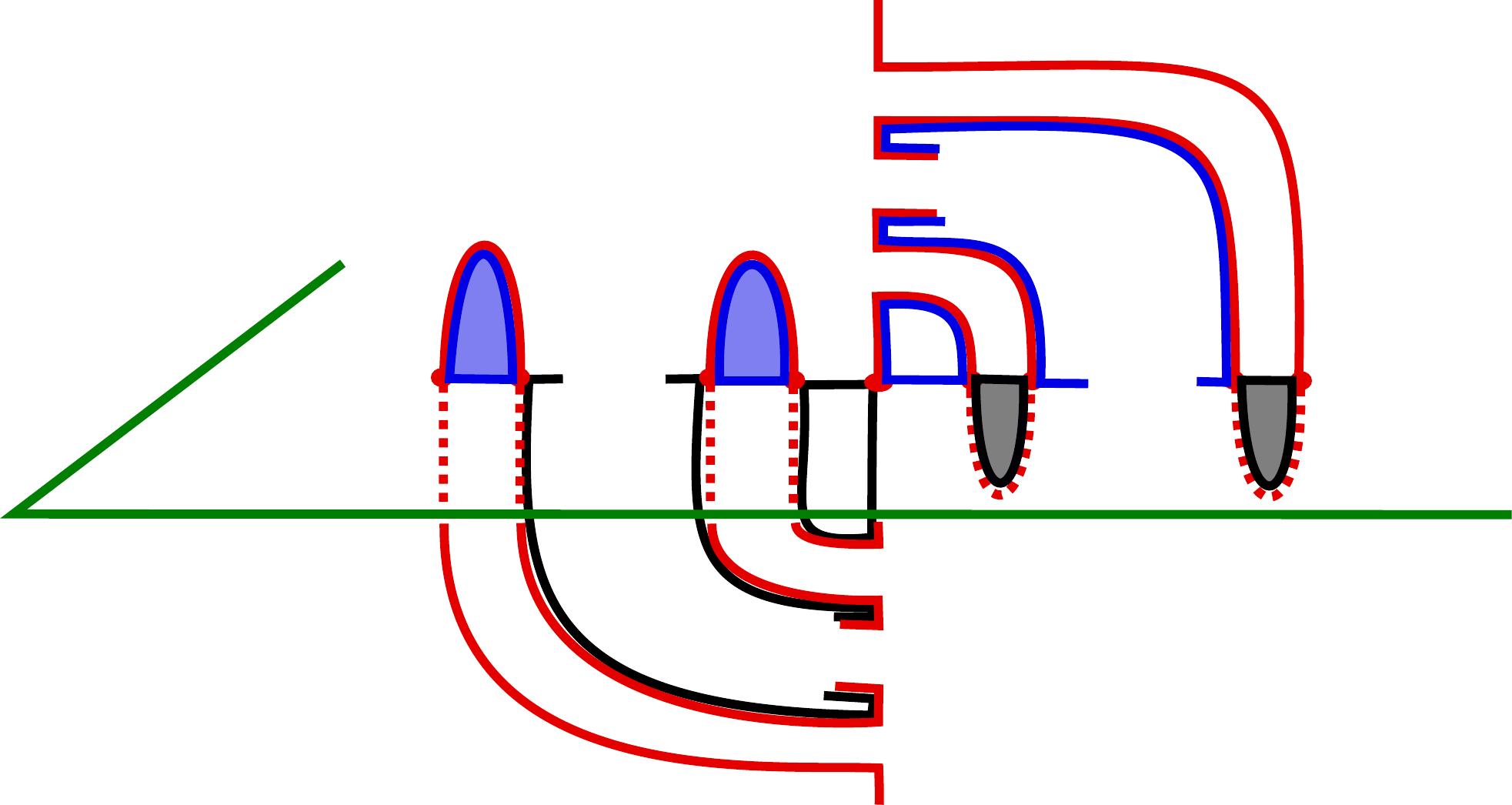}
    \caption{} \label{F: homfwdiscconfig}
\end{subfigure}
\caption{On the left shows the extended arc for the upper finger arcs (in black) and the lower Whitney arcs (in blue). On the right we see the finger/Whitney system after the deformation into finger-first postion.}
\end{figure}

\begin{proof}[Proof of Theorem \ref{T:mainthm}] We will work with the traces of our loops. First, by Proposition \ref{P: EA paths} and Lemma \ref{boundary clearing}, every homotopy class $[\alpha]\in  \pi_1\Emb(\sqcup_{i=1}^k S^2_i, \#^k S^2\times S^2),\mR^{std})$ has a representative in $EA$ where the boundaries of the cross discs are disjoint from the uncross discs (so $CU_{ij} =0$ for all $i,j$). For a given $\alpha$ with $(\mF, \mW) \in \EA$, we can organize the boundaries of the cross and uncross discs, both in the finger/Whitney system $(\mF, \mW)$ and after performing the Whitney moves along the discs in $\mF$ or in $\mW$. We will refer to this as working below the finger points or above the Whitney points, and will ignore the interiors of the discs for now. We note that on $R_{i}$, the complement of the arc of uncrossed discs is contractible. So we can perform an isotopy and concentrate all the cross term boundaries into a contractible region $U^\mR_{i}$. The same holds for $G_i$ as well as for any $i$. 

The same is true for the boundaries of the finger discs above the Whitney points or for the boundaries of the Whitney discs below the finger points. We shall explain this for the finger discs about the Whitney points with the same description holding for the situation below the finger points with the roles of fingers and Whitneys being interchanged. Above the Whitney points, i.e., after performing the Whitney moves along the discs in $\mW$, the boundaries of the uncrossed finger discs $\mF_{ii}$ create the edges of embedded linear graph, one on $R_i$ and the other on $G_i$, with the vertices consisting of the boundaries of the Whitney arcs and the single intersection $R_i\cap G_i$. The boundaries of the finger cross discs and Whitney cross arcs will create a complicated graph that is disjoint from the uncrossed paths and contained in $U^\mR_i$ ($U^\mG_i$ respectively).

Now since all the boundaries of the cross term discs are confined to $U_i^\mR \cup U^\mG_i$, we can perform the clasping operation and cross special sum square moves to the cross discs, keeping their boundaries inside $U^\mR_i\cup U^\mG_i$, and transform the given system to one in FEA. Note that the presence of clasping might change the homotopy class of $\alpha$. Never the less, we have by Corollary~\ref{full reduction} that $\mD(\alpha) = \mD(\alpha')$, where $\alpha'$ is the path in FEA obtained from $\alpha$. 
 
We now prove the homomorphism property. Take two EA paths $\alpha_1$ and $\alpha_2$ and consider the loop $\alpha_1*\alpha_2$ with finger-Whitney systems $(\mR_{1/4}, \mG, \mF_1, \mW_1)$ and $(\mR_{3/4}, \mG, \mF_2, \mW_2)$ respectively. These systems appear at height $t= 1/4$ and $t=3/4$ of the trace $\tr(\alpha_1*\alpha_2)$ respectively. At height $t=1/2$ we have $\mR^{std}$. As explained above, we group the boundaries for the finger cross disc for $\alpha_1$ into their contractible sets $U_{1,i}^\mR$ and $U_{1,i}^\mG$ above the Whitney points and group the boundaries for the Whitney cross discs into their contractible sets below the finger points for $\alpha_2$, extending these down to heights $t=1/2$. We also extend the Whitney arcs for $\alpha_1$ and finger arcs for $\alpha_2$ past $t=1/2$ respectively so that the following are true:
\begin{enumerate}
    \item For each $i$, the uncrossed finger arcs and Whitney disc boundaries for $\alpha_2$ and Whitney arcs and finger disc boundaries for $\alpha_1$ appear as in Figure \ref{F: homfwarcconfig} and
    \item $U^\mR_{i,1} \cap U^\mR_{i,2}=\varnothing$ and $U^\mG_{i,1} \cap U^\mG_{i,2}=\varnothing$. 
\end{enumerate}

Note that both (1) and (2) can be arranged by performing an isotopy to the various data in the level sets $(1/2 - \epsilon, 1/2]$ and $[1/2,1/2+\epsilon)$ for some small $\epsilon >0$. We now use the extended finger and Whitney arcs to move the finger moves for $\alpha_2$ down and the Whitney moves for $\alpha_1$ up in order to combine both systems. By our preparation, we have that for each $i$, the boundaries of the uncross discs will appear as in Figure \ref{F: homfwdiscconfig} while the cross disc systems boundaries will be contained in the disjoint sets $U_{i,l}^\mR$ ($U_{i,l}^\mG$ in $G_i$) for $l = 1,2$.

Now we can compute $\mD(\alpha_1*\alpha_2)$. To compute $\I (\alpha_1*\alpha_2)$, note that we have arranged for the system to be in $EA$ already. Moreover, the EA ordering of the discs is such that $(\mF_2,\mW_2)$ forms the first set of discs and $(\mF_1, \mW_1)$ forms the last set. Since $\mF_2$ was above $\mW_1$ originally, the interior of the disks in $\mF_2$ will be disjoint from the discs in $\mW_1$. There might be intersections between $\mF_1$ and $\mW_2$, but since the finger discs for $\mF_1$ will occur later in the EA ordering, those intersections do not contribute to $\I$. Therefore when computing $\I_i(\mF_1\cup \mF_2,\mW_1\cup \mW_2)$ for each $i,$ it follows from the definition that

\begin{equation}\label{Eq: I sum}
    \I_i(\mF_1\cup \mF_2,\mW_1\cup \mW_2) = \I_i(\mF_1, \mW_1)+\I_i(\mF_2, \mW_2).
\end{equation}
Since this holds for every $i$, It follows that $\I(\alpha_1*\alpha_2) = \I(\alpha_1)+\I(\alpha_2)$. 

For $C$, we again note that the Whitney discs for $\alpha_1$ will be disjoint from the finger discs for $\alpha_2$. Therefore the only additional intersections that may occur are between the fingers discs for $\alpha_1$ and the Whitney discs for $\alpha_2$. Moreover, these intersections will be all interior intersections.

In order to compute $\mC$ cleanly, we will convert the combined system $(\mF_1\cup \mF_2, \mW_1\cup \mW_2)$ to a system in FEA. As we argued above, each system of discs $(\mF_l, \mW_l)$, $l = 1,2$, can be converted to an FEA system using the clasping operation, disc slides, boundary twistsing, and cross special sum square moves with the defining information confined to the sets $U_{i,l}^\mR$ and $U_{i,l}^\mG$ before combining the systems. Since this can be done to either $\mF_l$ or $\mW_l$ (Lemma \ref{L: FEA-position.}, Lemma \ref{L: Similarly matched}) we will apply the transformations to $\mF_1$ and $\mW_2$, keeping $\mW_1$ and $\mF_2$ fixed. Performing the operations in this way preserves the way the two systems are combined since $\alpha_1([1/2, 1])$ and $\alpha_2([0,1/2])$ are not affected by the above operations.\footnote{Here we refer to the paths before concatenating.} We have then argued that transforming $(\mF_1, \mW_1)$ to $(\mF'_1, \mW_1)$ and $(\mF_2, \mW_2)$ to $(\mF_2, \mW'_2)$ is equivalent to transforming $(\mF_1\cup \mF_2,\mW_1\cup \mW_2)$ to $(\mF'_1\cup \mF_2,\mW_1\cup \mW'_2)$ by a sequence of clasping, cross-SSS moves, disc slides, and boundary twisting operations. Furthermore, by Corollary~\ref{full reduction}, we have that $\mD(\mF_1\cup \mF_2,\mW_1\cup \mW_2)=\mD(\mF'_1\cup \mF_2,\mW_1\cup \mW'_2)$. Since the transformations left the uncrossed discs unchanged, it follows that $\I(\mF_1\cup \mF_2,\mW_1\cup \mW_2)=\I(\mF'_1\cup \mF_2,\mW_1\cup \mW'_2)$. Thus it must be the case that $\mC(\mF_1\cup \mF_2,\mW_1\cup \mW_2)=\mC(\mF'_1\cup \mF_2,\mW_1\cup \mW'_2)$. Repeating this argument for each $\alpha_i$ before combining, it follows that $\I(\alpha_i) = \I(\alpha'_i)$ and $\mD(\alpha_i) = \mD(\alpha'_i)$ and therefore $\mC(\alpha_i) = \mC(\alpha'_i)$. 

We now show that $\mC(\mF'_1\cup \mF_2,\mW_1\cup \mW'_2) = \mC(\mF'_1,\mW_1) + \mC(\mF_2,\mW'_2)$. This will then complete the proof. First, we recall that $\mF_2$ and $\mW_1$ are completly disjoint. Furthermore, since $\partial (\mF'_1\cup \mF_2)_c = \partial (\mW_1\cup \mW'_2)_c$, each $f_{i,j}\cup w_{i,j}$ forms an immersed 2-sphere in the complement of $\mR \cup \mG$. Therefore $[f^l_{ij, k}\cup w^l_{ij,k}] = 0 \in H_2(\#^k S^2\times S^2)$ for $i\neq j$ and $l= 1,2$. Here the indices denote the $k$th disc in $\mF_l$ or $\mW_l$ between $R_i$ and $G_j$. Now, the only intersections which might change $\mC$ happen between $f_{ij,k}^1$ and $w_{ji, k}^2$ for $i\neq j$. Since $[f^l_{ij, k}\cup w^l_{ij,k}] = 0$ we must have the algebraic intersection $[f^1_{ij, k}\cup w^1_{ij,k}] \cdot [f^2_{ji, k'}\cup w^2_{ji,k'}] = 0$. And since $w^1_{ij,k}\cap f^2_{ji,k'}=\varnothing$, it must be the case that $|f^1_{ij,k}\cap w^2_{ji,k'}| = 0 \mod 2$. Since this holds always, it then follows that 
\[
\mC(\mF'_1\cup \mF_2,\mW_1\cup \mW'_2) = \mC(\mF'_1,\mW_1) + \mC(\mF_2,\mW'_2).
\]
\end{proof}

\printbibliography[heading=bibintoc]

\end{document}